\title{An elementary recursive bound for effective Positivstellensatz and
Hilbert 17-th problem
}
\author{Henri Lombardi\thanks{
Laboratoire of Math\'ematiques
(UMR CNRS 6623)
 UFR des Sciences et Techniques,
Universit\'e of Franche-Comt\'e 
25030 Besan\c{c}on cedex
FRANCE. E-mail: lombardi@math.univ-fcomte.fr
}\\
 Daniel Perrucci \thanks{Departamento de Matem\'atica, 
 FCEN, Universidad de Buenos Aires and IMAS CONICET-UBA,
ARGENTINA. Partially supported by the grants UBACYT 20020120100133 and PIP 099/11 CONICET. 
E-mail: perrucci@dm.uba.ar
}
 \\
 Marie-Fran\c{c}oise Roy\thanks{IRMAR (UMR CNRS 6625),
 Universit\'{e} de Rennes 1,
 Campus de Beaulieu 35042 Rennes cedex FRANCE. E-mail: marie-francoise.roy@univ-rennes1.fr
\newline \textbf{MSC Classification:} 12D15, 14P99, 13J30.
\newline \textbf{Keywords:} Hilbert 17-th problem, Positivstellensatz, Real Nullstellensatz, degree bounds, elementary recursive functions.
 } }
\newtheorem{theorem}{Theorem}[subsection]
\newtheorem{Tlemma}[theorem]{Technical Lemma}
\newtheorem{lemma}[theorem]{Lemma}
\newtheorem{definition}[theorem]{Definition}
\newtheorem{proposition}[theorem]{Proposition}
\newtheorem{remark}[theorem]{Remark}
\newtheorem{example}[theorem]{Example}
\newtheorem{notation}[theorem]{Notation}
\newenvironment{proof}[1]{
\trivlist \item[\hskip\labelsep{\bf#1}]}{\hfill\mbox{$\Box$}
\endtrivlist}
\newcommand{\hide}[1]{}
\newcommand \A {\mathbf{A}}
\newcommand \K {\mathbf{K}}
\newcommand \N {\mathbb{N}}
\newcommand \R {\mathbf{R}}
\newcommand \D {\mathbf{D}}
\newcommand \F {\mathbf{F}}
\renewcommand \L {{\bf L}}
\newcommand \cD {\mathcal{D}}
\newcommand \cF {\mathcal{F}}
\newcommand \cH {\mathcal{H}}
\newcommand \cP {\mathcal{P}}
\newcommand \cQ {\mathcal{Q}}
\newcommand \scM {\mathscr{M}}
\newcommand \scI {\mathscr{Z}}
\newcommand \scP {\mathscr{N}}
\newcommand \scZ {\mathscr{Z}}
\newcommand \scN {\mathscr{N}}
\newcommand \Gram {\mathrm{Gram}}
\newcommand \fra[2] {{\textstyle{\frac{#1}{#2}}}}
\newcommand \sign{\mathrm{sign}}
\newcommand \inv{\mathrm{inv}}
\newcommand {\s}{\sign}
\newcommand \Id{{\rm Id}}
\newcommand \Res{{\rm Res}}
\newcommand \Der{{\rm Der}}
\newcommand \Thom{{\rm Thom}}
\newcommand \Quot{{\rm Quot}}
\newcommand \re{_{\rm Re}}
\newcommand \im{_{\rm Im}}
\newcommand \lda{\left\downarrow\ }
\newcommand \rda{\ \right\downarrow}
\date{version: \today}
\begin{document}  

\maketitle
\begin{abstract}  
We prove an elementary recursive bound on the degrees 
for Hilbert 17-th problem.
More precisely we 
express a nonnegative polynomial as a sum of squares of rational functions, 
and we obtain as degree estimates for the numerators and denominators the following tower of five exponentials
$$2^{
2^{
2^{d^{4^{k}}}
}
}
$$ where
$d$ is the degree and $k$ is the number of variables of the
input polynomial.
Our method is based on the proof of an elementary recursive bound on the degrees for 
Stengle's Positivstellensatz. More precisely we give an algebraic certificate of the emptyness of the 
realization of a system of sign conditions and we obtain
as degree bounds for this certificate  a tower of five exponentials, 
namely
$$
2^{
2^{\left(2^{\max\{2,d\}^{4^{k}}}+ s^{2^{k}}\max\{2, d\}^{16^{k}{\rm bit}(d)} \right)}
}$$
where
$d$ is a bound on the degrees, $s$ is the number of polynomials and $k$ is the number of variables of the
input polynomials.

\end{abstract}

\tableofcontents

\section{Introduction}\label{sec:Introduction}
\setcounter{equation}{0}

Throughout this paper, we denote by $\N$ the set of nonnegative integers, by $\N_*$ 
the set of positive integers, by $\mathbb{R}$ the field of real numbers,  by $\K$ an ordered field,
by $\K_+$ the set of positive elements of $\K$ 
and by $\R$ a real 
closed extension of $\K$.

\subsection{Hilbert 17-th problem}\label{pstvz_and_hilb17}

Hilbert 17-th problem asks whether a real multivariate polynomial taking only nonnegative values
is a sum of squares of rational functions (\cite{Hilb_ref_f}, \cite{Hilb_ref_g}, \cite{Hilb_ref_e}). 
E. Artin gave a positive answer proving the 
following statement \cite{Artin}.

\begin{theorem}[Hilbert 17-th problem]
\label{artin}
Let $P\in \mathbb{R}[x_1, \dots, x_k]$. If $P$ takes only nonnegative values in $\mathbb{R}^k$, then $P$ is a sum of squares 
in $\mathbb{R}(x_1, \dots, x_k)$.
\end{theorem}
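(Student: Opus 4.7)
The plan is to argue by contradiction: assume $P$ is not a sum of squares in $\mathbb{R}(x_1,\dots,x_k)$ and produce a real point where $P$ is strictly negative, contradicting the hypothesis. Following Artin, the mechanism has two stages: first produce an abstract ordering of the rational function field in which $P$ is negative, then transfer this negativity back to $\mathbb{R}^k$ using the fact that $\mathbb{R}$ sits as a real closed subfield of the resulting ordered extension.

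For the first stage, I would invoke the Artin--Schreier theory of formally real fields. In $F = \mathbb{R}(x_1,\dots,x_k)$, the set $\Sigma F^2$ of sums of squares is a preordering. If $P$ is not in $\Sigma F^2$, then the preordering generated by $\Sigma F^2$ together with $-P$, namely $\{s - Pt : s,t \in \Sigma F^2\}$, remains proper: a hypothetical identity $-1 = s - Pt$ (with $t\neq 0$, since $F$ is formally real) would yield $P = (s+1)t/t^2$, expressing $P$ itself as a sum of squares. By Zorn's lemma this proper preordering extends to a maximal one, which by Artin--Schreier is the positive cone of a total order $\leq$ on $F$. In this order $P \leq 0$, and since $P \neq 0$ we get $P < 0$.

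For the second stage, I would pass to the real closure $R$ of $(F,\leq)$. Since $\mathbb{R}$ is itself real closed, it embeds as a real closed subfield of $R$, and the coordinates $x_1,\dots,x_k \in F \subseteq R$ witness in $R$ the existential sentence
$$\exists\, \xi_1, \dots, \xi_k : P(\xi_1, \dots, \xi_k) < 0,$$
a first-order statement in the language of ordered fields with coefficients in $\mathbb{R}$. The Tarski--Seidenberg transfer principle (equivalently, model completeness of the theory of real closed fields) then forces this sentence to hold in $\mathbb{R}$ itself, producing real numbers $a_1,\dots,a_k$ with $P(a_1,\dots,a_k) < 0$ and contradicting the hypothesis.

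The main obstacle in this classical route is the transfer principle: its proof requires either establishing quantifier elimination for real closed fields (via Sturm sequences or cylindrical algebraic decomposition) or a direct model-theoretic argument for model completeness. Combined with the appeal to Zorn's lemma in the first stage, this step makes the argument wholly nonconstructive and yields no control on the degrees of the sums of squares whose existence it asserts. Producing explicit, let alone elementary recursive, degree bounds, as the present paper aims to do via an effective Positivstellensatz, therefore requires a fundamentally different, quantitative approach.
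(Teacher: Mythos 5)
Your argument is correct, and it is essentially Artin's original proof, which the paper cites but does not reproduce: the paper instead obtains this statement as a corollary of the Positivstellensatz. Concretely, the paper applies Theorem \ref{thPST} to the unrealizable system $P<0$ to get an identity $P^{2e}+N_1-N_2\cdot P=0$ with $N_1,N_2$ sums of squares (times positive constants), and then rewrites
$P=\frac{N_2\cdot P^2\cdot(P^{2e}+N_1)}{(P^{2e}+N_1)^2}$,
which exhibits $P$ as a sum of squares of rational functions with denominators vanishing only where $P$ does (Theorem \ref{th:Hilbert_by_Stengle}); the real content of the paper is then a constructive proof of the Positivstellensatz itself, obtained by translating a variant of cylindrical algebraic decomposition into explicit algebraic identities via weak inferences. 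Your route — extend the preordering $\Sigma F^2$ by $-P$, check properness, invoke Zorn and Artin--Schreier to order $\mathbb{R}(x_1,\dots,x_k)$ with $P<0$, pass to the real closure, and transfer the existential sentence back to $\mathbb{R}$ by model completeness — is shorter and is the standard existence proof, but it is exactly the kind of argument the paper is written to replace: both Zorn's lemma and the semantic transfer principle are nonconstructive and give no information about the number or degrees of the squares, whereas the paper's longer algebraic route yields the explicit tower-of-five-exponentials bound of Theorem \ref{Theoremfinal17} and the additional structural information (positive scalars from $\K$, a single common denominator vanishing only on the zero set of $P$). You correctly identify this trade-off yourself in your closing paragraph.
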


\subsection{Positivstellensatz}
\setcounter{equation}{0}

In order to give the statement of the Positivstellensatz, we 
will deal with 
finite 
conjunctions of equalities, strict inequalities and nonstrict inequalities  
on
polynomials in $\K[x]$, where 
$x = (x_1,\ldots, x_k)$ is a set of variables.

\begin{definition}\label{def1.1.2}
\noindent A {\em system of sign conditions} ${\cal F}$ in $\K[x]$ is a list of three finite
(possibly empty) subsets $[{\cal F}_{\neq}, \, {\cal F}_\ge,\, {\cal F}_=]$
of $\K[x]$, representing the 
conjunction
$$
\left \{
\begin{array}{cc} 
P
\ne 0 &  \hbox{for } P \in {\cal F}_{\ne},     \\[1mm] 
P
\ge 0 &  \hbox{for } P \in {\cal F}_{\ge},   \\[1mm]    
P
 = 0 &  \hbox{for } P \in {\cal F}_{=}. 
\end{array} \right.
$$
\end{definition}

Since
the condition $P
\le 0$ is equivalent to $-P
\ge 0$,
the condition $P
> 0$ is equivalent to $P
\ne 0 \wedge P
\ge 0$ and  
the condition $P
 <0$ is equivalent to $P
 \ne 0 \wedge -P 
 \ge 0$,
any finite conjunction of equalities, strict inequalities and nonstrict inequalities can be represented by
a system of sign conditions as in Definition  \ref{def1.1.2}. 
Throughout this paper, by a slight abuse of language, we will refer to such more general conjunctions 
as systems of sign conditions, when we should be strictly speaking referring to the systems of sign 
conditions associated to such conjunctions.

If $P\in \K[x]$ and $\xi=(\xi_1, \dots, \xi_k)\in \L^k$ where $\L$ is a field extension of $\K$,
we denote by $P(\xi) \in \L$ the result of the substitution of $x$ by $\xi$.

\begin{definition}
For an ordered extension $\L$ of $\K$,
the \emph{realization in $\L$ of} a system of sign conditions ${\cal F}$  in $\K[x]$
is the set 
$$
{\rm Real}({\cal F}, \L) = 
\{ \xi \in \L^k  \ | \bigwedge_{P\in {\cal F}_{\ne}} P(\xi)\not=0, \bigwedge_{P\in {\cal F}_{\ge}} P(\xi)\ge 0,
\bigwedge_{P\in {\cal F}_{=}} P(\xi)=0\}.
$$
If ${\rm Real}({\cal F}, \L)$ is the empty set, 
we say that ${\cal F}$ is \emph{unrealizable} in $\L$. 
\end{definition}

Stengle's Positivstellensatz, which we will refer from now on simply as the 
Positivstellensatz, states that if a system $\cF$ 
is unrealizable in 
$\R$, there is an 
algebraic identity which certifies this
fact. To describe such an identity, we introduce the following notation and definitions.

\begin{notation}
Let ${\cal P}$ be a finite subset of $\K[x]$. We denote by
\begin{itemize}
\item ${\cal P}^2 $ the set of squares of elements of ${\cal P}$,
\item ${\scM}({\cal P})$ the multiplicative monoid generated by ${\cal
P}$,
\item  ${\scN}({\cal P})_{\K[x]}$ the nonnegative cone generated by ${\cal P}$
in $\K[x]$,
which is  the set of elements of  type $\sum_{1 \le i \le m} \omega_i V_i^2 \cdot N_i$
with 
$\omega_i \in \K_+$,   $V_i \in \K[x]$ and $N_i \in {\scM}({\cal P})$ for $1 \le i \le m$,
\item ${\scZ}({\cal P})_{\K[x]}$ the ideal generated by ${\cal P}$ in $\K[x]$.
\end{itemize}
When the ring $\K[x]$ is clear from the context, we simply write 
${\scN}({\cal P})$ for ${\scN}({\cal P})_{\K[x]}$ and ${\scZ}({\cal P})$ for ${\scZ}({\cal P})_{\K[x]}$. 
\end{notation}

\begin{definition}
A system of sign conditions ${\cal F}$ in $\K[x]$
is 
{\em incompatible}
if there is an algebraic identity  
\begin{equation}\label{def_incomp}
S + N + Z = 0
\end{equation}
with $S\in {\scM}({\cal F}_{\neq}^2)$, $N \in
{\scN}({\cal F}_\ge )_{\K[x]}$ and $Z \in {\scZ}({\cal F}_=)_{\K[x]}$.
The 
identity $(\ref{def_incomp})$ is called an {\em incompatibility} of ${\cal F}$.
We use the notation
$$\lda {\cal F} \rda_{\K[x]}
$$
to mean that an incompatibility of ${\cal F}$ is provided.
We denote  simply
$$\lda {\cal F} \rda
$$
when the ring $\K[x]$ is clear from the context.
The polynomials $S$, $N$ and $Z$ are called the \emph{monoid}, \emph{cone} and \emph{ideal  
part} of the incompatibility.

\end{definition}

An incompatibility (\ref{def_incomp}) 
of $\cF $
is a certificate that ${\cal F}$ is unrealizable in any
ordered extension $\L$ of $\K$. 
Indeed, suppose
that there exsists $\xi \in {\rm Real}({\cal F}, \L)$.
Then
$$
S(\xi)>0, \ N(\xi) \ge 0, \  \hbox{and} \  Z(\xi)=0, 
$$
which is impossible since $
S+N+Z=0.
$
\begin{example}
\label{exsquare-square}
The identity
\begin{equation}
\label{square-square}
P^2-P^2=0
\end{equation}
is an incompatibility of ${\cal F}_1 =[\{P\},\emptyset,\{P\}]$, 
since $P^2 \in \scM(\{P\}^2)$,  $0 \in \scN(\emptyset)$ and 
$-P^2 \in \scZ(\{P\})$.
For simplicity we write
$$
\lda  P \ne  0, \  P=0 \rda
$$
to mean $
\lda {\cal F}_1 \rda
$. 
 
The identity (\ref{square-square}) is  also and incompatibility of 
${\cal F}_2 =[\{P\},\{P,-P\},\emptyset]$, 
since $P^2 \in \scM(\{P\}^2)$,
$-P^2 \in \scN(\{P, -P\})$ and $0 \in \scZ(\emptyset)$. 
For simplicity, and following the procedure explained before so that every 
system of sign conditions is as in Definition \ref{def1.1.2},
we write
$$\lda P > 0, \ P \le 0 \rda$$
to mean $
\lda {\cal F}_2 \rda
$. 

Similarly, the identity (\ref{square-square})
also shows that 
$$
\lda P > 0, \ P=0 \rda, \quad \lda P < 0, \ P=0   \rda, \quad
\lda P < 0, \ P \ge 0   \rda \quad 
\hbox{and} \quad \lda P > 0, \ P < 0\rda.
$$
\end{example}

\begin{notation} Let $\cF = [\cF_{\ne}, \, \cF_{\ge}, \, \cF_{=}]$
and $\cF'= [\cF'_{\ne}, \, \cF'_{\ge}, \, \cF'_{=}]$ be systems of sign conditions in 
$\K[x]$. We denote by $\cF, \ \cF'$ the system 
$[\cF_{\ne} \cup \cF'_{\ne}, \, \cF_{\ge} \cup \cF'_{\ge} , \, \cF_{=}\cup \cF'_{=}]$. 
\end{notation}

Note that  $\lda \cF \rda$ implies  $\lda {\cal F}, \ \cF' \rda$.

A major concern in this paper are degrees of incompatibilities in the Positivstellensatz. 
To deal with them, we introduce below the following definitions.  

\begin{definition}

Let $\cP$ be a finite set in $\K[x]$. 
\begin{itemize}
\item For $N = \sum_{1 \le i \le m}\omega_i V_i^2 \cdot N_i \in {\scN}({\cal P})$, 
with $\omega_i \in \K_+$, $V_i \in \K[x]$ and $N_i \in {\scM}({\cal P})$ 
for $1 \le i \le m$,
we say that $\omega_i  V_i^2 \cdot N_i$ are the \emph{components} of $N$ in ${\scN}({\cal P})$. 
\item For $Z=\sum_{1 \le i \le m} W_i \cdot P_i \in {\scZ}({\cal P})$ 
with $W_i \in \K[x]$ and $P_i \in {\cal P}$
 for $1 \le i \le m$, we say that $W_i \cdot  P_i$ are the 
\emph{components} of $Z$ in ${\scZ}({\cal P})$.
\end{itemize}
\end{definition}

Note that $N \in {\scN}({\cal P})$ and $Z \in {\scZ}({\cal P})$ 
can be written as a sum of components in many different ways. So, when we talk of the components of $N$
or $Z$, the ones we refer to should be clear from the context.

\begin{definition}\label{defDegI} Let $\cF$ by a system of sign conditions in $\K[x]$. 
The {\em degree of the incompatibility}
\begin{equation}\label{eq_def_degree}
S + N + Z = 0
\end{equation}
with $S \in {\scM}( {\cal F}_{\neq}^2)$,
$N 
\in {\scN}({\cal P})$, 
and 
$Z 
 \in {\scZ}({\cal F}_=) $
is the maximum of the 
total degrees in $x$ of $S$, the
components of $N$ and the components
of $Z$.
For a subset of variables $w \subset x$, the degree in $w$ of the 
incompatibility (\ref{eq_def_degree}) 
is the maximum of the 
total
degrees in $w$ of $S$, the
components of $N$ and the components
of $Z$.
\end{definition}

Contrary to the common convention, 
we consider the 
degree of the zero polynomial as $0$. In this way, we have for instance
the incompatibility 
$ 0 = 0 $ of degree $0$
which proves $\lda  0  \ne 0 \rda$.

The Positivstellensatz is the following theorem.

\begin{theorem}[Positivstellensatz]\label{thPST}
Let 
${\cal F}$
be a system of
sign conditions in $\K[x]$.
The following are equivalent:
\begin{enumerate}
\item ${\cal F}$ is unrealizable in $\R$,
\item ${\cal F}$ is unrealizable in every ordered extension of $\K$,
\item ${\cal F}$ is incompatible.
\end{enumerate}
\end{theorem}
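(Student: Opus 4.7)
The theorem is a cycle of three implications, two of which are immediate. The implication $(3) \Rightarrow (2)$ has already been observed in the text right after the definition of incompatibility: for any $\xi \in \mathrm{Real}(\cF, \L)$ with $\L$ an ordered extension of $\K$, one has $S(\xi) > 0$, $N(\xi) \ge 0$ and $Z(\xi) = 0$, so the identity $S + N + Z = 0$ would force a strictly positive quantity to vanish. The implication $(2) \Rightarrow (1)$ is trivial, as $\R$ is itself an ordered extension of $\K$. The genuine content is $(1) \Rightarrow (3)$, which I would split into $(1) \Rightarrow (2)$ and $(2) \Rightarrow (3)$.

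The step $(1) \Rightarrow (2)$ is a Tarski--Seidenberg transfer. If $\cF$ is realized in some ordered extension $\L$ of $\K$, embed $\L$ into its real closure $\widetilde{\L}$; both $\widetilde{\L}$ and $\R$ are real closed fields containing the real closure $\widetilde{\K}$ of $\K$, which is unique up to a unique $\K$-isomorphism. By model completeness of the theory of real closed fields, $\widetilde{\L}$ and $\R$ are elementarily equivalent over $\widetilde{\K}$, so the existential formula with coefficients in $\K$ asserting the realizability of $\cF$ passes from $\widetilde{\L}$ to $\R$.

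The step $(2) \Rightarrow (3)$ is proved by contraposition. Assume $\cF$ is not incompatible. Set $I = \scZ(\cF_=)_{\K[x]}$ and $U = \prod_{P \in \cF_{\ne}} P$, and work in the localization $A = (\K[x]/I)[1/U]$. The crucial observation is that in $A$ every $S \in \scM(\cF_{\ne}^2)$ is a unit whose inverse is itself a square, which lets one convert back and forth between incompatibilities of $\cF$ in $\K[x]$ and relations $-1 \in T_A$ in $A$, where $T_A \subset A$ is the preorder generated by the images of $\cF_\ge$. Non-incompatibility therefore gives $-1 \notin T_A$. By Zorn's lemma, enlarge $T_A$ to a maximal preorder $T^*$ of $A$ with $-1 \notin T^*$; the standard argument of real algebra shows that $\mathfrak{p} = T^* \cap (-T^*)$ is a prime ideal of $A$ and that $T^*/\mathfrak{p}$ is the positive cone of a total ordering on $A/\mathfrak{p}$. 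The ordered fraction field $\L$ of $A/\mathfrak{p}$ is then an ordered extension of $\K$ in which the image of $x$ realizes $\cF$: the equalities of $\cF_=$ hold by construction of $I$, the nonvanishings of $\cF_{\ne}$ hold because each such polynomial is a unit in $A$, and the inequalities of $\cF_\ge$ hold because their images lie in $T^*/\mathfrak{p}$.

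The main obstacle is the real-algebra extension lemma just invoked: a maximal proper preorder $T^*$ on a commutative ring satisfies $T^* \cup (-T^*) = A$ and $T^* \cap (-T^*)$ is prime. It is proved via the identity $4ab = (a+b)^2 - (a-b)^2$ together with the maximality of $T^*$. Once this is granted, the localization at $U$ and the quotient by $I$ are precisely what ensure that the resulting ordered field realizes every sign condition of $\cF$, not only the nonstrict inequalities.
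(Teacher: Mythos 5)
Your proof is correct, but it is not the route the paper takes. For Theorem \ref{thPST} the paper only observes that $3.\Rightarrow 2.$ and $2.\Rightarrow 1.$ are clear and then refers the hard implication $1.\Rightarrow 3.$ to the literature (\cite{Ste,BCR,Du,Efr,Kri,Ris}); the actual proof it develops, occupying Sections \ref{Weak.inferences}--\ref{secMainTh}, is a constructive one that produces the incompatibility explicitly by eliminating variables one at a time (Theorem \ref{elimination_theorem}) and keeping track of degrees, culminating in Theorem \ref{Theoremfinal}. What you give instead is the classical Krivine--Stengle argument: reduce $1.\Rightarrow 3.$ to $2.\Rightarrow 3.$ via a Tarski--Seidenberg transfer through real closures, then prove the contrapositive of $2.\Rightarrow 3.$ by localizing $\K[x]/\scZ(\cF_=)$ at $\prod_{P\in\cF_{\ne}}P$, observing that incompatibility of $\cF$ is equivalent to $-1$ lying in the preorder generated there by $\cF_\ge$ and the positive elements of $\K$, and extending that preorder by Zorn's lemma to an ordering whose residue field realizes $\cF$. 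Each step checks out (in particular the key point that elements of $\scM(\cF_{\ne}^2)$ become squares of units after localization, so the two normalizations of ``$-1$ is in the cone'' match, and the routine verification that $\K$ injects order-preservingly into the resulting ordered field). The trade-off is exactly the one the paper is written to overcome: your argument is shorter and more conceptual but, through Zorn's lemma and model completeness, gives no control whatsoever on the degree of the certificate, whereas the paper's proof is long precisely because it yields the elementary recursive bound of Theorem \ref{Theoremfinal}. If you intend your proof only as a proof of the qualitative statement of Theorem \ref{thPST}, it is a valid and standard alternative; it cannot, however, substitute for the paper's argument in the rest of the development.
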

$3.\implies 2.$ and $2.\implies 1.$ are clear, the difficult part is to prove
$1.\implies 3.$

This statement comes from \cite{Ste} (see also \cite{BCR,Du,Efr,Kri,Ris}).

An immediate consequence of the Positivstellensatz (Theorem \ref{thPST}) is the Real Nullstellensatz. 

\begin{theorem}[Real Nullstellensatz]
Let $P, P_1, \dots, P_s \in 
\K[x_1, \dots, x_k]$.
 If 
$P$ vanishes on the common zero set of $P_1, \dots, P_s$ in $\R^k$, 
there is an identity
$$
P^{2e} + N  = Z
$$
with $N \in {\scN}(\emptyset)_{\K[x_1,\ldots,x_k]}$, 
and 
$Z  \in {\scZ}({P_1,\ldots,P_s})_{\K[x_1,\ldots,x_k]} $
\end{theorem}
\begin{proof}{Proof.} Apply Theorem \ref{thPST} (Positivstellensatz) to the system of sign conditions $[\{P\},\emptyset,\{P_1,\ldots,P_s\}]$, 
which corresponds to
$P \ne 0, P_1 = 0, \dots, P_s = 0$.
\end{proof}

As another consequence of the Positivstellensatz (Theorem \ref{thPST}), we have
an improved version of Hilbert 17-th problem due to Stengle \cite{Ste}.

\begin{theorem}[Improved Hilbert 17-th problem]\label{th:Hilbert_by_Stengle}
\label{stengle}
Let $P\in\K[x_1,\ldots,x_k]$ be a polynomial of degree $d$. If $P$ is nonnegative in $\R^k$, then 
$$
P =  \sum_{i}\omega_i \frac{P_i^2}{Q^2}
$$
with $\omega_i \in \K_+,  P_i \in \K[x_1,\ldots,x_k], Q \in \K[x_1,\ldots,x_k]$, $Q$
vanishing only at points where $P$ vanishes.
\end{theorem}

\begin{proof}{Proof.} Since $P$ is nonnegative in $\R^k$, 
by  Theorem \ref{thPST} (Positivstellensatz) applied 
to the system
$[\{P\},\{-P\},\emptyset]$, which corresponds to the sign condition 
$P < 0$, we have an identity
$$
P^{2e}+ N_1 - N_2 \cdot P=0
$$
with $e \in \N$ and 
$N_1, N_2 \in \scN(\emptyset)_{\K[x_1,\ldots,x_k]}$. 
Therefore
\begin{equation}\label{eq:Pos_impl_Hilb}
P=\frac{N_2 \cdot  P^2}{P^{2e}+ N_1}=\frac{N_2 \cdot P^2  \cdot (P^{2e}+ N_1)} {(P^{2e}+ N_1)^2}.
\end{equation}
The result follows by expanding the numerator of the last expression in (\ref{eq:Pos_impl_Hilb}). 
\end{proof}

\subsection{Historical background on constructive proofs and degree bounds}

In order to compare different degree bounds, in this section we use the notions of primitive recursive function
and elementary recursive function (see \cite[Chapter 1]{Rose}).

With respect to Hilbert 17-th problem, Artin's proof of Theorem \ref{artin} is non-constructive 
and uses Zorn's lemma.  
Later, Kreisel and Daykin produced the first constructive proofs \cite{Kr,Kr2,Day,Del} of this result, 
providing primitive recursive degree bounds.

For the Positivstellensatz, the original proofs were also non-constructive and 
used Zorn's lemma. The first constructive proof was given in 
\cite{Lom.a,Lom.b,Lom.c}, and it is
based on the translation into algebraic identities of  Cohen-H\"ormander's quantifier elimination
algorithm
\cite{Coh,Hor,BCR}. Following this construction,
primitive recursive degree estimates for the incompatibility of the input system were obtained
in \cite{Lom.e}. In order to state this result precisely, we introduce the 
following notation. 

\begin{notation}\label{not:set_of_pol_under_eq_rel}
Let ${\cal F} = [{\cal F}_{\neq}, \; {\cal F}_\ge, \; {\cal F}_=]$ be a system of sign conditions
in $\K[x]$. We denote by  $|{\cal F}|$ a subset of
 ${\cal F}_{\neq}  \cup {\cal F}_\ge \cup {\cal F}_=$
 such that for every  $P\in {\cal F}_{\neq}  \cup {\cal F}_\ge \cup {\cal F}_=$
one and only one element of $\{P,-P\}$ is in $|{\cal F}|$.
\end{notation}

The first  known degree bound for the Positivstellensatz is the following result
(see \cite[Th\'eor\`eme 26]{Lom.e}), which is, in fact, still the only known
degree bound till now. 

\begin{theorem}[Positivstellensatz with primitive recursive
degree estimates] 
Let ${\cal F}$ be a system of sign conditions
in $\K[x_1, \dots, x_k]$, such that
 $ \#|{\cal F}| = s$ and the degree of every polynomial 
in ${\cal F}$ is bounded by $d$.
If ${\rm Real}({\cal F}, \R)$ is empty, 
one can
compute an incompatibility $\lda {\cal F} \rda$ 
with degree bounded by
$$
 2^{  2^{\iddots^{d{\rm log}(d) + {\rm log log}(s) + c}}   }  
$$
where $c$ is a universal constant and the height of the exponential tower is $k+4$. 
\end{theorem}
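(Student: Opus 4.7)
The plan is to translate the Cohen--H\"ormander real quantifier elimination procedure into the construction of an explicit incompatibility, proceeding by induction on the number of variables $k$ while carefully tracking the degree at each step. The base case $k=0$ is immediate: the elements of $|\cF|$ are constants in $\K$, and unrealizability reduces to a contradiction among sign conditions on these constants, which yields an incompatibility of degree $0$ by elementary arithmetic in $\K$.

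For the inductive step, regard each $P\in |\cF|$ as a polynomial in $x_k$ with coefficients in $\K[x_1,\ldots,x_{k-1}]$. Cohen--H\"ormander's elimination rests on a case analysis over all possible sign conditions on the leading coefficients and on the successive $x_k$-derivatives $P, P', P'',\ldots$ down to a top derivative lying in $\K[x_1,\ldots,x_{k-1}]$; these tops, together with all pairwise pseudo-remainders one must consider, assemble into a finite family $\mathcal{G}\subset\K[x_1,\ldots,x_{k-1}]$ to which the inductive hypothesis will eventually be applied. For each fixed sign assignment on $\mathcal{G}$, Thom's encoding of real roots pins down the sign of every $P\in |\cF|$ on every open interval in $x_k$ and at every real root, so $\cF$ is replaced in each case by finitely many sign-condition systems $\cF_j$ in $\K[x_1,\ldots,x_{k-1}]$. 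Since $\cF$ is unrealizable in $\R^k$, every such $\cF_j$ is unrealizable in $\R^{k-1}$, so the inductive hypothesis furnishes an incompatibility $\lda \cF_j \rda_{\K[x_1,\ldots,x_{k-1}]}$. The key constructive ingredient is to turn each elementary reduction used in the case analysis --- Euclidean pseudo-division in $x_k$, Rolle-type sign propagation between consecutive roots of a derivative, evaluation at endpoints and at infinity --- into an explicit polynomial identity, so that the incompatibilities of the $\cF_j$ can be spliced together, using the algebraic partition given by the sign cases on $\mathcal{G}$, into a global incompatibility $\lda \cF \rda_{\K[x]}$.

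The central difficulty, and what fixes the shape of the bound, is the arithmetic of degrees through the recursion. A single elimination step replaces a system of $s$ polynomials of degree $\le d$ in $k$ variables by on the order of $d^{O(s)}$ subsystems in $k-1$ variables, each containing polynomials whose degrees and numbers grow polynomially in $d$ and $s$ (coming from pseudo-division remainders and iterated derivatives). Splicing the partial incompatibilities multiplies degrees by the sizes of the intermediate monoid and cone parts. Unrolling this recursion $k$ times produces a tower of exponentials of height $k$, with an additional constant number of levels absorbed by the base case and by the overhead of the splicing identities; a careful combinatorial accounting then delivers the stated bound of height $k+4$ with inner exponent $d\log d+\log\log s + c$. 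The main obstacle will be to organize the case gluing so that the degree bound grows by exactly one exponential per level: any sloppiness --- redundant case splitting, or pseudo-remainder sequences whose lengths are not controlled --- will either blow up the height of the tower or worsen the asymptotic form inside the top exponent.
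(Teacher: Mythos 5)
The paper does not actually prove this theorem: it quotes it from \cite{Lom.e}, and the method it attributes to that source --- translating Cohen--H\"ormander quantifier elimination into algebraic identities and tracking degrees through the recursive elimination of variables, one exponential level per variable plus a constant overhead --- is exactly the route you propose. Your plan is therefore the standard (and historically the actual) one; the only caveat is that the entire content of the statement is the quantitative bound, and your sketch defers precisely that bookkeeping (why the tower height is $k+4$ and why the top exponent is $d\log(d)+\log\log(s)+c$) to an unexecuted ``careful combinatorial accounting,'' which is where all of the work of \cite{Lom.e} lies.
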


A different constructive proof of the Real Nullstellensatz and Hilbert 17-th problem was given in 
\cite{Sch}, providing also primitive recursive degree bounds 
for the incompatibility it produces.

On the other hand, lower bounds on the degrees for the Positivstellensatz are given in \cite{GV2}, 
where for $k \ge 2$, an example is provided
of an incompatible system ${\cal F}$ in $\K[x_1, \dots, x_k]$ with $|{\cal F}| = k$ and the degree
of every polynomial in  ${\cal F}$ bounded by $2$, such that every incompatibility of the system 
has degree at least $2^{k-2}$. Concerning Hilbert 17-th problem, 
an example of a 
nonnegative polynomial of degree $4$ in $k$ variables, such that
in any decomposition as a sum of squares of rational functions, the degree of  
some denominator 
is bounded from below by a linear function in $k$, appears in \cite{BlGoPf}.

The huge gap between the 
best known lower bound on the degrees for the Positivstellensatz, 
which is singly exponential, and the best upper 
bound on the degrees known up to now, which is primitive recursive,
is in strong contrast with the state of the art for the Hilbert Nullstellensatz. For this result, 
elementary recursive upper degree bounds are already known since \cite{Herm}.
Indeed, it is easy using resultants to obtain a doubly exponential bound on the degree of a 
Nullstellensatz identity
\cite{VW,BPRbook}.
More recent and sophisticated results give singly exponential degree estimates \cite{Brown,CGH,Kol,Jel}, which are
known to be optimal.

\subsection{Our results}

The aim of this paper
is to 
provide for the first time 
elementary recursive estimates on the degrees of the polynomials involved in the 
Positivstellensatz, Real Nullstellensatz and Hibert 17th problem. 
The existence of such bounds is a long-standing open question.

\begin{notation}
We denote by ${\rm bit}(d)$ the number of bits of the natural number $d$, defined by
$${\rm bit}(d)=
\left \{ \begin{array}{ll}
          1 & \hbox{if } d = 0, \cr
	    k & \hbox{if } d \ne 0 \hbox{ and } 2^{k-1}\le d < 2^k.
         \end{array}
\right.
$$
\end{notation}

We can state now the main results of this paper.

\begin{theorem}[Positivstellensatz with elementary recursive
degree estimates]\label{Theoremfinal}
Let ${\cal F}$ be a system of sign conditions
in $\K[x_1, \dots, x_k]$, such that
 $ \#|{\cal F}| = s$ and the degree of every polynomial 
in ${\cal F}$ is bounded by $d$.
If ${\rm Real}({\cal F}, \R)$ is empty, 
one can
compute an incompatibility $\lda {\cal F} \rda$ 
with degree bounded by
$$
2^{
2^{\left(2^{\max\{2,d\}^{4^{k}}}+s^{2^{k}}\max\{2, d\}^{16^{k}{\rm bit}(d)} \right)}.
}$$
\end{theorem}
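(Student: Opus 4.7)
The plan is to prove Theorem~\ref{Theoremfinal} by induction on the number of variables $k$, revisiting the algebraic translation of Cohen--H\"ormander elimination that underlies the primitive recursive bound of \cite{Lom.e}, but systematically replacing the pseudo-Euclidean computations by the signed subresultant (Sylvester--Habicht) sequence. The crucial point is that subresultants have coefficients whose sizes are polynomial in those of the input, while iterated pseudo-remainders can double degrees at every step of the inner recursion. This single change on the inner machinery is what will convert the primitive recursive bound into an elementary recursive one.

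For the univariate base case $k=1$, given an incompatible system $\cF$ in $\K[x_1]$ of size $s$ and degree $d$, the approach is to use the signed subresultant polynomials of pairs of elements of $\cF$, together with their B\'ezout-type cofactor identities, to produce an incompatibility $\lda\cF\rda$ of degree polynomial in $s$ and $d$. The key facts are that the signs of all polynomials in $\cF$ at all real roots of any fixed polynomial in $\cF$ are determined by signs of principal subresultant coefficients, and that each such sign determination translates into an explicit algebraic identity of controlled degree.

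For the inductive step from $k-1$ to $k$, I would write the polynomials of $\cF$ as polynomials in $x_k$ with coefficients in $\K[x_1,\dots,x_{k-1}]$ and apply Cohen--H\"ormander elimination. This produces a finite family of auxiliary polynomials in $\K[x_1,\dots,x_{k-1}]$ (leading coefficients, principal subresultant coefficients, pseudo-remainders of pairs in $\cF$ with respect to $x_k$) together with finitely many sub-systems $\cF'_1,\dots,\cF'_N$ of sign conditions on these auxiliaries, with the property that $\cF$ is realizable at $(\xi_1,\dots,\xi_k)$ if and only if some $\cF'_j$ is realizable at $(\xi_1,\dots,\xi_{k-1})$. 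Apply the inductive hypothesis to each $\cF'_j$ to obtain incompatibilities $\lda\cF'_j\rda$, and lift each one back to $\cF$ by combining it with the Sylvester--Habicht identities connecting the auxiliary polynomials to $\cF$, and with Taylor-type expansions in $x_k$ that convert univariate sign information into algebraic identities along the lines of the base case. Careful bookkeeping will show that one elimination step grows degrees by a factor of roughly $d^{O(1)}$ and multiplies the number of polynomials and sub-systems by roughly $(sd)^{O(1)}$.

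The main obstacle will be the precise quantitative control of the degree growth through this lift. Each elimination step combines two effects: the degrees of the incompatibility returned by the inductive call get multiplied by degrees of the auxiliary polynomials, and then these auxiliaries must be re-expressed in terms of the original $\cF$ via subresultant cofactor identities, each contributing a single-exponential blow-up. Composing through $k$ steps will produce the $\max\{2,d\}^{4^{k}}$ and $s^{2^{k}}\max\{2,d\}^{16^{k}\,\mathrm{bit}(d)}$ terms inside the tower, while the two outer exponentials come from the final combinatorial combination of the $N$ sub-systems (one exponential) and the consolidation of all lifted pieces into a single incompatibility of $\cF$ (a second exponential). The technical core is therefore a Technical Lemma asserting that a single Cohen--H\"ormander elimination step preserves the elementary recursive character of the bound by contributing only a single exponential in the size and degree parameters; once this is in hand, Theorem~\ref{Theoremfinal} follows by $k$-fold accumulation.
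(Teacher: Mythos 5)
Your proposal is missing the central difficulty of the theorem, which is not the choice of eliminating family but the translation of the correctness of the elimination step into explicit algebraic identities. Knowing that ``$\cF$ is realizable at $(\xi_1,\dots,\xi_k)$ iff some $\cF'_j$ is realizable at $(\xi_1,\dots,\xi_{k-1})$'' is a semantic statement; to lift an incompatibility $\lda \cF'_j \rda$ back to $\lda \cF \rda$ one must introduce the real and complex roots of the polynomials of $\cF$ (in the last variable) as auxiliary variables \emph{inside} Positivstellensatz identities, certify their existence, their Thom encodings, their order, and the signs of the other polynomials at them, all by identities. In the paper this is done through weak-existence versions of the Intermediate Value Theorem, of Laplace's proof of the Fundamental Theorem of Algebra (to get the full decomposition into real irreducible factors with multiplicities), Hermite's theory in two forms, and Sylvester's inertia law; these steps are precisely where the outer exponentials of the tower come from (the IVT and the FTA each contribute exponential blow-ups applied to the already doubly exponential degrees produced by the projections). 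Your accounting, which attributes the outer exponentials to ``combinatorial combination of the sub-systems,'' is therefore wrong, and your key Technical Lemma --- that a single elimination step contributes only one exponential --- would fail: already one step of the paper's Theorem \ref{elimination_theorem} costs a quantity of the shape $2^{2^{\,\mathrm{poly}(p,s)\,p^{p}}}$ in the degree $p$ of the polynomials being eliminated, because of the root-introduction machinery. For the same reason the base case claim that $k=1$ yields an incompatibility of degree polynomial in $s$ and $d$ is unsubstantiated; no such construction is given and none is known along the lines you sketch.

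Two further points. First, the eliminating family you propose (leading coefficients and principal subresultant coefficients of \emph{pairs} of elements of $\cF$) is the classical CAD projection; its sufficiency is normally proved via delineability and semi-algebraic connectedness, which cannot be converted into identities. The paper explicitly replaces it by principal minors of Hermite matrices of each $P$ against products of up to $\mathrm{bit}(\deg P)$ of its derivatives (and one extra polynomial), and explicitly notes that pairs from the initial family do not suffice. Second, the framing ``keep Cohen--H\"ormander but swap pseudo-remainders for subresultants'' misidentifies the source of the non-elementary growth in \cite{Lom.e}: it is the recursive saturation structure of Cohen--H\"ormander (and the corresponding recursive construction of identities), not merely the degree doubling of pseudo-remainders, that forces a tower of height growing with $k$; the paper abandons that structure entirely in favour of a one-shot projection per variable whose correctness is re-proved algebraically. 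The intuition that subresultants keep per-step degree growth polynomial is sound and is indeed used in the paper, but by itself it does not yield a proof.
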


As 
an immediate consequence
of Theorem \ref{Theoremfinal} we also get the following result. 

\begin{theorem}[Real Nullstellensatz with elementary recursive
degree estimates]
Let $P, P_1, \dots, P_s$ $\in 
\K[x_1, \dots, x_k]$ with degree bounded by $d$. If 
$P$ vanishes on the common zero set of $P_1, \dots, P_s$ in $\R^k$, 
there is an identity
$$
P^{2e} + N  = Z
$$
with $N \in {\scN}(\emptyset)_{\K[x_1,\ldots,x_k]}$, 
and 
$Z  \in {\scZ}({P_1,\ldots,P_s})_{\K[x_1,\ldots,x_k]}$
of degree bounded by
$$
2^{
2^{\left(2^{\max\{2,d\}^{4^{k}}}+(s+1)^{2^{k}}\max\{2, d\}^{16^{k}{\rm bit}(d)} \right)}.
}$$
\end{theorem}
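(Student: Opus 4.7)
The plan is to derive this statement as an essentially immediate corollary of Theorem \ref{Theoremfinal}, by choosing the right system of sign conditions, following the same strategy that was used above to deduce Theorem \ref{th:Hilbert_by_Stengle} from the classical Positivstellensatz.

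The first step is to consider the system $\cF = [\{P\},\, \emptyset,\, \{P_1, \ldots, P_s\}]$, which encodes the conjunction $P \ne 0,\ P_1 = 0, \ldots, P_s = 0$. The hypothesis that $P$ vanishes on the common zero set of $P_1, \ldots, P_s$ in $\R^k$ is exactly the statement that ${\rm Real}(\cF, \R)$ is empty. Since $|\cF| = s+1$ and every polynomial appearing in $\cF$ has degree at most $d$, Theorem \ref{Theoremfinal} produces an incompatibility
$$
S + N + Z = 0
$$
with $S \in \scM(\{P\}^2)$, $N \in \scN(\emptyset)$, $Z \in \scZ(\{P_1, \ldots, P_s\})$, and degree bounded by the expression appearing in Theorem \ref{Theoremfinal} with $s+1$ in place of $s$.

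The second step is to observe that the structure of each summand already has exactly the shape required by the theorem. The monoid $\scM(\{P\}^2)$ consists precisely of the even powers $P^{2e}$ for $e \in \N$, so $S = P^{2e}$ for some $e$. The cone $\scN(\emptyset)$ is, by definition, the set of finite sums of squares of polynomials in $\K[x]$ multiplied by positive elements of $\K$, and $\scZ(\{P_1, \ldots, P_s\})$ is the ideal generated by $P_1, \ldots, P_s$ in $\K[x_1, \dots, x_k]$. Substituting these descriptions into the incompatibility $S + N + Z = 0$ yields the identity claimed in the statement.

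There is no real obstacle in this argument beyond the invocation of Theorem \ref{Theoremfinal}: all of the analytic and combinatorial work establishing the elementary recursive bound is done there. The only bookkeeping is the shift from $s$ to $s+1$, reflecting the fact that the input data here consists of the $s+1$ polynomials $P, P_1, \ldots, P_s$, and this accounts for the factor $(s+1)^{2^{k}}$ appearing in the degree bound in the statement.
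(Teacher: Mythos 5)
Your proposal is correct and matches the paper's intended argument: the theorem is stated there as an immediate particular case of Theorem \ref{Theoremfinal}, obtained exactly by applying it to the system $P \ne 0,\ P_1 = 0, \dots, P_s = 0$, which has $\#|\cF| \le s+1$ polynomials of degree at most $d$, and then reading off the shapes of $\scM(\{P\}^2)$, $\scN(\emptyset)$ and $\scZ(\{P_1,\dots,P_s\})$. Nothing further is needed.
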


The final main theorem of this paper is the following result. 

\begin{theorem} [Hilbert 17-th problem with elementary recursive
degree estimates]
\label{Theoremfinal17}
Let $P\in\K[x_1,\ldots,x_k]$ be a polynomial of degree $d$. If $P$ is nonnegative in $\R^k$, then 
$$
P =  \sum_{i}\omega_i \frac{P_i^2}{Q^2}
$$
with $\omega_i \in \K_+,  P_i \in \K[x_1,\ldots,x_k], Q \in \K[x_1,\ldots,x_k]$, $Q$
vanishing only at points where $P$ vanishes
and
$\deg P^2_i$
for every $i$ and
$\deg Q^2$
bounded by $$2^{
2^{
2^{d^{4^{k}}}
}
}
.$$
\end{theorem}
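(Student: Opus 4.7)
The plan is to quantify the proof of Theorem~\ref{stengle} by feeding the system $\cF=[\{P\},\{-P\},\emptyset]$, which represents the condition $P<0$, into the effective Positivstellensatz of Theorem~\ref{Theoremfinal}. Since $P$ is nonnegative on $\R^k$, $\cF$ is unrealizable in $\R$, and since $\#|\cF|=1$ and $\cF_{=}=\emptyset$, Theorem~\ref{Theoremfinal} applied with $s=1$ produces an incompatibility $P^{2e}+N=0$ with $e\in\N$ and $N\in\scN(\{-P\})_{\K[x]}$, all of whose components have degree bounded by the value $B$ of that Positivstellensatz bound at $s=1$.

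Next, I would split $N$ by the parity of the exponent of $-P$ in each of its components: this gives $N=N_1-P\cdot N_2$ with $N_1,N_2\in\scN(\emptyset)_{\K[x]}$, the components of $N_1$ of degree at most $B$ and those of $N_2$ of degree at most $B-\deg P$. Following the proof of Theorem~\ref{stengle}, rewriting the incompatibility as $P^{2e}+N_1=P\cdot N_2$ gives
\[
P=\frac{N_2\cdot P^{2}\cdot(P^{2e}+N_1)}{(P^{2e}+N_1)^{2}}.
\]
Writing $N_1=\sum_j\mu_j U_j^2$ and $N_2=\sum_i\omega_i V_i^2$ with $\omega_i,\mu_j\in\K$, $\omega_i,\mu_j>0$, the numerator expands as
\[
\sum_i\omega_i(V_iP^{e+1})^2+\sum_{i,j}\omega_i\mu_j(V_iU_jP)^2,
\]
and setting $Q:=P^{2e}+N_1$ yields the required representation $P=\sum_\ell\omega_\ell P_\ell^2/Q^2$. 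The condition on $Q$ is immediate from nonnegativity: $P^{2e}(\xi)\ge 0$ and $N_1(\xi)\ge 0$ on $\R^k$, so $Q(\xi)=0$ forces $P(\xi)=0$.

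The main obstacle is the final degree bookkeeping. From the bounds on the components of $N_1$ and of $P\cdot N_2$, together with $2e\deg P\le B$, a routine count gives $\deg(V_iP^{e+1})^2,\deg(V_iU_jP)^2\le 2B+d$ and $\deg Q^2\le 2B$. What remains is the verification that, when $B$ is replaced by its explicit expression from Theorem~\ref{Theoremfinal} at $s=1$, the quantity $2B+d$ is bounded by $2^{2^{2^{d^{4^k}}}}$; this is a manipulation of iterated exponentials that absorbs the auxiliary summand $\max\{2,d\}^{16^k\mathrm{bit}(d)}$ appearing in the inner exponent of the Positivstellensatz bound into the outer levels of the tower.
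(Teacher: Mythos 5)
Your algebraic reduction is exactly the paper's: apply the effective Positivstellensatz to the single condition $P<0$, split the cone part by the parity of the exponent of $-P$ to get $P^{2e}+N_1-N_2\cdot P=0$, and rewrite as in the proof of Theorem \ref{th:Hilbert_by_Stengle} with $Q=P^{2e}+N_1$. The gap is in the last step, the degree verification, and it is not a routine manipulation: it is false that $2B+d\le 2^{2^{2^{d^{4^k}}}}$ when $B$ is the bound of Theorem \ref{Theoremfinal} at $s=1$. That bound has inner exponent $2^{\max\{2,d\}^{4^k}}+\max\{2,d\}^{16^k{\rm bit}(d)}$, which strictly exceeds $2^{d^{4^k}}$ (for $d=2$, $k=1$ the extra summand $2^{32}$ even dominates the main term $2^{16}$); hence already $B>2^{2^{2^{d^{4^k}}}}$ before you double it for $\deg Q^2$. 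There is no slack at the outer levels of the tower into which the auxiliary summand could be absorbed.

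This is precisely why the paper does not deduce Theorem \ref{Theoremfinal17} as a corollary of Theorem \ref{Theoremfinal}. Instead it reruns the whole variable-elimination induction of Section \ref{secMainTh} for the one-polynomial system, proving the sharper invariant that at stage $i$ the incompatibility has exponents and degree bounded by $2^{\left(2^{2^{d^{4^{k-i}}}}-2\right)}$; this uses the specialized function ${\rm g}_9$ and Technical Lemma \ref{tlem_sect7_1} (item \ref{tlem_sect7_1:itii}) in place of ${\rm g}_8$ and item \ref{tlem_sect7_1:iti}, exploiting that with $s=1$ the eliminating families $\cP_i$ stay smaller. The ``$-2$'' of headroom in the second level of the tower is exactly what survives the final squaring of the denominator and multiplication by $P^2$, giving $\deg Q^2\le 2^{\left(2^{2^{d^{4^k}}}-1\right)}\le 2^{2^{2^{d^{4^k}}}}$. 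To repair your proof you would need to either prove this sharper single-polynomial Positivstellensatz bound yourself, or weaken the statement of the theorem to the tower one obtains from the $s=1$ case of Theorem \ref{Theoremfinal}.
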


We sketch now a 
very brief description of 
the strategy we follow in our proof of Theorem \ref{Theoremfinal} and Theorem 
\ref{Theoremfinal17}. If a system of sign conditions
$\cF$ in $\K[x]$ 
is unrealizable in $\R$, we want to construct an incompatibility 
of $\cF$.
The idea is to transform a proof of the fact that
$\cF$ 
is unrealizable into a construction of an incompatibility.
This was already the strategy used by \cite{Lom.b, Lom.e};
the proof that $\cF$ is unrealizable
was using  Cohen-H\"ormander quantifier elimination method \cite{Coh,Hor,BCR}
and was giving primitive recursive bounds for the final incompatibility.

In the current paper, the proof that  $\cF$ 
is unrealizable has to be  based on more powerful
tools than  Cohen-H\"ormander quantifier elimination method  to obtain elementary recursive
degree bounds, 
but it also has to 
remain \emph{on the algebraic side}, 
so that we are able to turn it into a construction of an incompatibility.

The methods to prove the unrealizability in $\R$ of a system $\cF$ are composed of many steps. 
Therefore, we
need to know how to turn each of this steps into the construction of a new incompatibility. 
This is in general a very hard task and requires 
transforming standard and rather abstract proofs
into very concrete proofs, in a way such that 
the outcome is so transparent that 
it becomes possible to read these new proofs
as algebraic certificates or as constructions of algebraic certificates from other ones. 
More explicitly, in order to construct incompatibilities, we first need to associate to a well-chosen
existing proof of the preceeding results, some specific algebraic identities.
Then, using the key notions of weak inference and weak existence  coming from \cite{Lom.e},
we have to show how to 
translate these modified proofs into constructions of incompatibilities.
This translation is far from straightforward,
relies heavily on the selected proof 
and the associated algebraic identities, and, as said before, should be 
done at each step for the corresponding specific result, most of the times in a different way.
Indeed, the methods we develop here to consctruct incompatibilities associated to some 
well known results in mathematics may actually be 
of interest independent of our main results.

Since a single step of a proof that a system 
$\cF$
is 
unrealizable in $\R$ which cannot be transformed into the construction of an incompatibility is enough to 
ruin the whole construction, it is clear that 
the choice of the 
 method we use to prove that $\cF$ is unrealizable, 
taking into account 
which steps
compose this method,
is of major importance.

Proving that that a system of sign conditions
$\cF$
is 
unrealizable in $\R$  
is an instance of the Existential Theory of the Reals,
which is a special case of Quantifier Elimination of the theory of real
closed fields.
Most of the proofs of Quantifier Elimination are based on the elimination of variables one after the other. 
More recent methods eliminate in one step a block of variables
\cite{Gri88,GV,R92,BPRQE,DH,BPRbook}.

The first proofs of Quantifier Elimination for the reals by Tarski, Seidenberg, Cohen or Hormander \cite{Tarski51,Seidenberg54,Coh,Hor} were all 
providing primitive recursive 
algorithms.
The situation changed with the Cylindrical Algebraic Decomposition  method \cite{Col,Loj} and elementary recursive algorithms where obtained \cite{Mo}. 
Cylindrical Algebraic Decomposition, 
is in fact doubly exponential (see for example \cite{BPRbook}). 

Singly exponential degree bounds, 
have been obtained for the Existential Theory of the Reals  \cite{Gri88,GV,R92,BPRQE,DH,BPRbook}
by eliminating in one step 
the block of existential
variables.
But these 
singly exponential 
results are based on the critical point method which seems \emph{too geometric} 
to be translated into algebraic identities,
and this is why we choose to use the technique of 
elimination of one variable after the other.
In order to obtain our main results we need a method such 
that for each of its steps we are able to produce an incompatibility, and therefore
we are led to design a suitable 
new elimination method
with this property.
This new elimination method produces a new purely algebraic proof of Quantifier Elimination which is elementary recursive \cite{PerR2}.

Our proof translates into constructions of incompatibilities several main ingredients. Some of them are 
classical mathematical facts, but many of them come from much more recent results in computer algebra. These main ingredients are:
\begin{itemize}
\item the Intermediate Value Theorem for polynomials,
 \item Laplace's proof of the Fundamental Theorem of Algebra, 
 \item Hermite's quadratic form, for real root counting with polynomial constraints,
  \item subresultants whose signs are determining the signature of Hermite's quadratic form,
 \item Sylvester's inertia law,
 \item Thom's lemma characterizing real algebraic numbers by sign conditions, and sign determination,
  \item a 
  new elimination method
 reducing one by one the number of variables to consider.
\end{itemize}

Finally, for any unrealizable system of sign conditions
we are able to construct an explicit incompatibility 
and prove that the  degree bound of this incompatibility is elementary recursive.
More precisely the five levels of exponentials in Theorem \ref{Theoremfinal} and Theorem 
\ref{Theoremfinal17} come from the following facts
\begin{itemize}
\item 
eliminating all variables one after another produces univariate polynomials of doubly exponential degree,
 \item Laplace's proof of the Fundamental Theorem of Algebra introduces a polynomial of 
 exponential 
 degree,
 \item the construction of incompatibilities for the Intermediate Value Theorem produces algebraic identities of doubly exponential degrees.
\end{itemize}

Applying Laplace's proof of the 
Fundamental Theorem of Algebra to a univariate polynomial of doubly 
exponential degree, coming from 
the elimination process
produces 
a polynomial of triple 
exponential degree.
Since the Intermediate Value Theorem adds two more exponents to the degree of the final incompatibility, we obtain by our method a tower of five exponents.

We are lucky enough that  the other ingredients of our construction do not increase the height of the tower above five exponentials.
Full details will be provided in the paper.

\subsection{Organization of the paper}

Since the paper is very long, a 
significant
effort is made to keep the organization simple.

In Section \ref{Weak.inferences} we
describe the concepts of weak inference and weak existence and we include many
lemmas showing examples of them, with degree estimates, which correspond each
to a very simple mathematical fact. These lemmas are used a large number of times in 
the rest of the paper and can be considered
as the basis steps we use to obtain our results. 

From Section \ref{section_ivt} to \ref{sect_elim_of_one_var} we develop weak inference versions of  
different theorems. 
In Section 
\ref{section_ivt} we give a weak inference version of the Intermediate
Value Theorem for polynomials. 
In Section
\ref{section_fta} we give a weak inference version of the classical 
Laplace's proof of the Fundamental Theorem of Algebra and finally get a weak 
inference version of the factorization of a real polynomial into factors of degrees one and two.  
In Section 
\ref{sect_sylv_inertia_law}, which is independent from Section \ref{section_ivt}  and Section
\ref{section_fta}, we obtain 
incompatibilities expressing the impossibility for a polynomial 
to have a
number of real roots in  conflict with the rank and signature of its Hermite's quadratic form, 
through an incompatibility version of  Sylvester's Inertia Law. 
In Section \ref{sect_elim_of_one_var} we 
show how to eliminate a 
variable in a family of polynomials under weak inference form. 
As said before, all these results may be of interest independent of our main results. 
Finally, in Section \ref{secMainTh} we prove Theorem \ref{Theoremfinal} and Theorem 
\ref{Theoremfinal17}.

Each of Sections \ref{section_ivt} to \ref{sect_elim_of_one_var}
contains a final theorem which is the only
result from the section which is used outside the section, and it is used only in one of the 
remaining sections, as illustrated in the following diagram. 
$$\begin{array}{ccccccc}
\hbox{Section }\ref{section_ivt} & \rightarrow & \hbox{Section }\ref{section_fta} \cr
& & & \searrow \cr
&&&& \hbox{Section }\ref{sect_elim_of_one_var} & \rightarrow 
\hbox{Section }\ref{secMainTh} \cr
& &  & \nearrow \cr
& & \hbox{Section }\ref{sect_sylv_inertia_law} \cr
\end{array}
$$

A final annex provides the details of the proofs of several technical lemmas comparing 
the values of numerical functions which we use in our degree estimates.

\paragraph{Acknowledgements}
We would like to thank the anonymous referee for his/her relevant remarks. Special thanks to Saugata Basu for his linguistic advice.

\section{Weak inference and weak existence} \label{Weak.inferences}
\setcounter{equation}{0}

In this section we describe the concepts of weak inference (Definition \ref{def2.1.1})
and weak existence (Definition \ref{defWE})
introduced in \cite{Lom.e},
improving and making more precise results from \cite{Lom.d} (see also \cite{clr}).
These are mechanisms to construct new incompatibilities 
from other ones already available.
Most of the work we do in the paper is to
develop weak inference and weak existence versions of known mathematical and algorithmical results,
and obtain the corresponding degree estimates; therefore, these notions are central to our work.
Several examples of the use of these notions, which play a role in the other sections of the paper, are provided, the most important being
the case by case reasoning (see Subsection
\ref{casebycase}).

\subsection{Weak inference}

The idea behind the concept of weak inference is the following: 
let $\cF$, $\cF_1$, \dots, $\cF_m$, be systems of sign conditions  in $\K[u]=\K[u_1,\dots,u_n]$.
Suppose that we know that for every $
\vartheta
=(\vartheta_1,\dots,\vartheta_n) \in \R^n$
if the system $\cF$ is satisfied at $\vartheta$, then at least one of the systems
$\cF_1$, \dots, $\cF_m$ 
is also satisfied at $\vartheta$. If we are given initial incompatibilities  
$\lda {\cF}_1, \ {\cH}\rda_{\K[v]}$, \dots, $\lda {\cF}_m,  \ {\cH}\rda_{\K[v]}, v \supset u$, this means that 
all the systems 
$[{\cF}_1, \  {\cH}]$, \dots, $[{\cF}_m, \  {\cH}]$ are unrealizable. Then we can conclude that the
system $[\cF, \ \cH]$ is also unrealizable in $\R$ and we would like an incompatibility
$\lda \cF, \ \cH \rda_{\K[v]}$ to certify this fact. A weak inference is an explicit way to construct this 
final incompatibility 
from the given initial ones.

\begin{definition}[Weak Inference] \label{def2.1.1} Let $\cF, \cF_1, \dots, \cF_m$
be systems of sign conditions  in $\K[u]$.  
A {\em weak inference}
$$ \cF \ \ \ \vdash \ \ \  \bigvee_{1 \le j \le m} {\cF}_j 
$$
is a construction that, 
for any system of sign conditions $\cH$ in $\K[v]$ with $v\supset u$,
and any incompatibilities  
$$\lda {\cF}_1,  \ {\cH}\rda_{\K[v]}, \dots, \lda {\cF}_m, \  {\cH}\rda_{\K[v]}$$
called the \emph{initial} incompatibilities,
produces  
an incompatibility 
$$\lda \cF,  \ \cH \rda_{\K[v]}$$
called the \emph{final} incompatibility.
\end{definition}

Whenever we prove a weak inference, we also provide a description of the monoid part 
and a bound for the degree 
in the final incompatibility.
This information is necessary to obtain the degree bound in our main results.

\subsubsection{Basic rules}

In the following lemmas we give  some simple examples of weak inferences, most of them involving a one-term disjunction to the right 
(that is with $m=1$ in Definition \ref{def2.1.1}).

\begin{lemma}\label{lemma_basic_sign_rule_1} Let $P_1, P_2, \ldots, P_m \in \K[u]$. Then

\begin{eqnarray}
 P_1 > 0  &\vdash& P_1 \ge 0,  \label{lemma_basic_sign_rule:1}\\
P_1 > 0 &\vdash& P_1 \ne 0, \label{lemma_basic_sign_rule:1.5}\\
&\vdash & P^2_1 \ge 0, \label{lemma_basic_sign_rule:2} \\
 P_1 \ne 0  & \vdash &P_1^2 > 0,\label{lemma_basic_sign_rule:3} \\
 P_1 = 0  & \vdash  &P_1 \cdot P_2 = 0,  \label{lemma_basic_sign_rule:4}\\
\bigwedge_{1\le j \le m}  P_j \ne 0  & \vdash & \prod_{1\le j \le m} P_j \ne 0, \label{lemma_basic_sign_rule:5}\\
\bigwedge_{1\le j \le m}  P_j \ge 0  & \vdash & \prod_{1\le j \le m} P_j \ge 0, \label{lemma_basic_sign_rule:6} \\
\bigwedge_{1\le j \le m}  P_j > 0 & \vdash & \prod_{1\le j \le m} P_j > 0. \label{lemma_basic_sign_rule:7}
\end{eqnarray}

Moreover, in all cases, the initial incompatibility serves as the final incompatibility. 
\end{lemma}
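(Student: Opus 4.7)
The plan is to treat each of the seven (really eight, counting (\ref{lemma_basic_sign_rule:1.5})) inferences uniformly: unpack the definition of weak inference, take the initial incompatibility $S+N+Z=0$ for the right-hand system joined with an arbitrary $\cH$, and verify that the very same polynomials $S$, $N$, $Z$ certify the left-hand system joined with $\cH$. In each case the only thing to check is the membership of $S$ in $\scM(\cF_{\ne}^2)$, of $N$ in $\scN(\cF_{\ge})$, and of $Z$ in $\scZ(\cF_{=})$ for the new system $\cF$; the algebraic identity $S+N+Z=0$ itself is untouched. This matches the final sentence of the statement, which asserts that the initial incompatibility is reusable verbatim.

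For (\ref{lemma_basic_sign_rule:1}) and (\ref{lemma_basic_sign_rule:1.5}), the inference enlarges $\cF_{\ne}\cup\cF_{\ge}$ (since $P_1>0$ stands for $P_1\ne 0\wedge P_1\ge 0$), so the corresponding monoid, cone, and ideal only grow, and every memberhip is preserved trivially. For (\ref{lemma_basic_sign_rule:4}), a generic component $W\cdot(P_1P_2)$ of $Z$ rewrites as $(W P_2)\cdot P_1$, displaying $Z$ as an element of $\scZ(\{P_1\}\cup\cH_{=})$. For (\ref{lemma_basic_sign_rule:5}), powers $(\prod_j P_j)^{2a}=\prod_j(P_j^a)^2$ lie in $\scM(\{P_1,\dots,P_m\}^2)$, so the monoid part is absorbed into the larger monoid of the conclusion.

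The inferences (\ref{lemma_basic_sign_rule:2}), (\ref{lemma_basic_sign_rule:3}), (\ref{lemma_basic_sign_rule:6}) and (\ref{lemma_basic_sign_rule:7}) all rest on the same square-absorption trick. For (\ref{lemma_basic_sign_rule:2}), a typical component $\omega V^2\cdot(P_1^2)^a M'$ of $N$ with $M'\in\scM(\cH_{\ge})$ becomes $\omega(V P_1^a)^2\cdot M'$, which lies in $\scN(\cH_{\ge})$, so the initial incompatibility is already an incompatibility of $\cH$ alone. For (\ref{lemma_basic_sign_rule:3}), the occurrences of $P_1^2$ in the monoid part rewrite as $(P_1^2)^{2a}=(P_1)^{4a}\in\scM(\{P_1\}^2)$ and the ones in the cone part are absorbed exactly as in (\ref{lemma_basic_sign_rule:2}). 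For (\ref{lemma_basic_sign_rule:6}), given $(\prod_j P_j)^a=\prod_j P_j^a$, split each exponent as $P_j^a=(P_j^{\lfloor a/2\rfloor})^2\cdot P_j^{a\bmod 2}$, so $(\prod_j P_j)^a=W^2\cdot M$ with $M\in\scM(\{P_1,\dots,P_m\})$; consequently $\omega V^2\cdot(\prod_j P_j)^a M'=\omega(VW)^2\cdot M\cdot M'\in\scN(\{P_1,\dots,P_m\}\cup\cH_{\ge})$. Case (\ref{lemma_basic_sign_rule:7}) is obtained by combining the arguments for (\ref{lemma_basic_sign_rule:5}) and (\ref{lemma_basic_sign_rule:6}) on the monoid and cone parts respectively.

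There is no real obstacle here, only bookkeeping; the mildly delicate point is (\ref{lemma_basic_sign_rule:6}), where parity of the exponent $a$ forces the explicit extraction of the square factor from $P_j^a$. Since the initial incompatibility is reused as-is, there is nothing to say about the monoid part or the degree beyond observing that both are identical to those of the initial incompatibility, which will be exactly the information needed by the later sections to propagate degree bounds through chains of weak inferences.
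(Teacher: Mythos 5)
Your proposal is correct and follows the same approach as the paper: the initial incompatibility is reused verbatim, and one only reinterprets the memberships of $S$, $N$, $Z$ in the monoid, cone, and ideal of the new system, using exactly the square-absorption observations you describe. The paper writes out only item (\ref{lemma_basic_sign_rule:7}) and declares the remaining cases "very similar," so your case-by-case verification is simply a more complete record of the same argument.
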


\begin{proof}{Proof.}
Since the proof of all the items is very similar, we only prove (\ref{lemma_basic_sign_rule:7})
which is the least obvious one. 
Consider the initial incompatibility
$$
S\cdot \Big(\prod_{1\le j \le m} P_i\Big)^{2e} + N_0 + N_1 \cdot \prod_{1\le j \le m} P_i + Z  = 0
$$
with 
$S \in \scM(\cH_{\ne}^2)$, $N_0, N_1 \in \scN(\cH_{\ge})$ and $Z \in \scZ(\cH_{=})$,
where $\cH = [\cH_{\ne}, \, \cH_{\ge}, \, \cH_{=}]$ is a system of sign conditions
in $\K[v]$ with $v \supset u$. This proves the claim since 
 $$S\cdot \Big(\prod_{1\le j \le m} P_i\Big)^{2e}=S\cdot \prod_{1\le j \le m} P_i^{2e} \in 
\scM( (\cH_{\ne} \cup \{P_1, \dots, P_m\})^2),$$  
 $N_0 + N_1 \cdot \prod_{1\le i \le m} P_i \in \scN(\cH_{\ge} \cup \{P_1,\ldots, P_m\} )$ and  
$ Z \in  \scZ(\cH_{=}).$
\end{proof}

\begin{lemma}\label{lemma_basic_sign_rule_by_scalar} Let $\alpha \in \K, P \in \K[u]$. 

\noindent If $\alpha > 0$,
\begin{eqnarray}
P \ge 0   &\vdash&  \alpha P \ge 0, \label{lemma_basic_sign_rule_by_scalar:1} \\
P > 0   &\vdash&  \alpha P >0 .\label{lemma_basic_sign_rule_by_scalar:1bis}
\end{eqnarray}
If $\alpha < 0$,
\begin{eqnarray}
P \ge 0   &\vdash&  \alpha P \le 0, \label{lemma_basic_sign_rule_by_scalar:2} \\
P > 0   &\vdash&  \alpha P <0.\label{lemma_basic_sign_rule_by_scalar:2bis}
\end{eqnarray}
For any $\alpha $,
\begin{eqnarray}
P = 0   &\vdash&  \alpha P = 0 .\label{lemma_basic_sign_rule_by_scalar:3}
\end{eqnarray}

Moreover, in all cases, up to a division by an element of $\K_+$,  the initial incompatibility serves as the final incompatibility. 
\end{lemma}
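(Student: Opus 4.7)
The plan is to verify each inference by taking a generic initial incompatibility and rewriting every monomial in $\alpha P$ (or in $-\alpha P$, when we re-express $\alpha P \le 0$ as $-\alpha P \ge 0$ and $\alpha P < 0$ as $\alpha P \ne 0,\ -\alpha P \ge 0$) as a monomial in $P$ with a rescaled scalar coefficient in $\K$; depending on whether $\alpha P$ shows up in the $\ne$ part of $\cF_j$ or only in the $\ge$/$=$ part, a global division by a positive scalar of $\K$ either will or will not be needed.

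For (\ref{lemma_basic_sign_rule_by_scalar:1}), (\ref{lemma_basic_sign_rule_by_scalar:2}) and (\ref{lemma_basic_sign_rule_by_scalar:3}), the polynomial $\alpha P$ appears only in the cone or ideal part of the initial system $[\cF_j,\cH]$. A typical cone component has the shape $\omega V^2 \cdot M \cdot (\beta P)^k$ with $\omega>0$, $M \in \scM(\cH_{\ge})$ and $\beta \in \{\alpha,-\alpha\}$ chosen to be positive. I would expand $(\beta P)^k = \beta^k P^k$, absorb $\beta^k>0$ into the scalar (obtaining $\omega\beta^k>0$), and split $P^k$ as $(P^{\lfloor k/2\rfloor})^2$ or $(P^{\lfloor k/2\rfloor})^2\cdot P$ according to the parity of $k$, yielding a legitimate component of $\scN(\cH_{\ge}\cup\{P\})$. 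For (\ref{lemma_basic_sign_rule_by_scalar:3}) the rewriting is even simpler: a generic ideal component $W\cdot\alpha P$ is exactly $(\alpha W)\cdot P \in \scZ(\cH_{=}\cup\{P\})$, valid for any $\alpha\in\K$. In these three items no division is needed and the same identity serves as the final incompatibility.

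For (\ref{lemma_basic_sign_rule_by_scalar:1bis}) and (\ref{lemma_basic_sign_rule_by_scalar:2bis}), the polynomial $\alpha P$ in addition appears in $\cF_{j,\ne}$, so the monoid part $S \in \scM((\cH_{\ne}\cup\{\alpha P\})^2)$ of the initial incompatibility contains a certain number of factors $(\alpha P)^2 = \alpha^2 P^2$. Collecting these I would write $S = \beta^{2L}\, S'$, where $\beta\in\{\alpha,-\alpha\}$ is positive, $2L$ is the total multiplicity of the $\alpha P$-squares in $S$, and $S' \in \scM((\cH_{\ne}\cup\{P\})^2)$. The cone part is handled exactly as in the previous paragraph, and the ideal part is untouched. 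Dividing the initial identity $S+N+Z=0$ through by the positive scalar $\beta^{2L}\in\K$ then produces the required final incompatibility, which explains the phrase \emph{up to a division by a positive element of $\K$}.

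I expect no real obstacle here; the only subtlety will be keeping track of when $\alpha P$ contributes to the monoid part, which is precisely the situation in which the division by a positive scalar of $\K$ becomes unavoidable. Since both the monoid, cone and ideal parts are left essentially unchanged (only rescaled by a positive factor of $\K$), no increase of the degree of the incompatibility occurs, which is the point of the ``moreover'' statement and will be useful for the degree estimates later in the paper.
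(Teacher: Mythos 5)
Your proposal is correct and fills in exactly the routine verification that the paper dismisses with the single word ``Immediate'': rescaling each monoid, cone and ideal component by the appropriate positive power of $\alpha$ (or $-\alpha$), with the global division by $\alpha^{2e}$ needed precisely when $\alpha P$ occurs in the monoid part, in the same style as the paper's written-out proof of Lemma \ref{lemma_basic_sign_rule_1}. No gap.
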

\begin{proof}{Proof.} Immediate.
\end{proof}

\begin{lemma} \label{lemma_greatereq_and_lowereq_then_eq}
Let $P \in \K[u]$. Then
$$
P \ge 0, \ P \le 0 \ \ \ \vdash \ \ \  P = 0.
$$

If we have an initial incompatibility in $\K[v]$ where $v\supset u$ with monoid part 
$S$
and degree in $w \subset v$ bounded by $\delta_w$,
the final incompatibility has the same monoid part 
and degree in $w$ bounded by 
$\delta_w + \max\{\delta_w - \deg_w P, 0\}$.
\end{lemma}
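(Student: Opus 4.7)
The plan is to start from an initial incompatibility of the combined system $[P=0,\;\cH]$ in $\K[v]$ and transform it into an incompatibility of $[P\ge 0,\;P\le 0,\;\cH]$ by rewriting the single component of the ideal part coming from $P$ as a cone combination using both $P$ and $-P$. By Definition~\ref{def2.1.1}, the initial incompatibility has the form
$$S + N + Z + W\cdot P = 0,$$
where $S\in\scM(\cH_{\ne}^2)$, $N\in\scN(\cH_{\ge})$, $Z\in\scZ(\cH_{=})$ and $W\cdot P$ is the component of the ideal part produced by the new equation $P=0$.

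The key algebraic identity I will use is
$$4\,W\cdot P \;=\; (W+1)^2\cdot P \;+\; (W-1)^2\cdot (-P),$$
which is verified by expanding $(W+1)^2-(W-1)^2 = 4W$. The first term on the right belongs to $\scN(\{P\})$ and the second to $\scN(\{-P\})$, each with positive rational weight $\tfrac{1}{4}$. Substituting this decomposition into the initial incompatibility and dividing by $4$ (or, equivalently, multiplying the identity by $4$ first) yields
$$S \;+\; \Bigl(N + \tfrac{1}{4}(W+1)^2\cdot P + \tfrac{1}{4}(W-1)^2\cdot(-P)\Bigr) \;+\; Z \;=\; 0,$$
which is an incompatibility of $[P\ge 0,\;P\le 0,\;\cH]$ in $\K[v]$, with the monoid part $S$ unchanged, as required.

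For the degree estimate, note that the component $W\cdot P$ has degree in $w$ bounded by $\delta_w$. Since $\deg_w(W\cdot P)=\deg_w W+\deg_w P$ when $W\ne 0$, either $W=0$ (in which case the new components vanish and there is nothing to check), or $\deg_w W\le \delta_w-\deg_w P$, which in particular forces $\deg_w P\le \delta_w$. In the latter case the two new components $\tfrac{1}{4}(W\pm 1)^2\cdot (\pm P)$ have degree in $w$ at most $2\deg_w W+\deg_w P\le 2(\delta_w-\deg_w P)+\deg_w P = \delta_w + (\delta_w-\deg_w P) = \delta_w + \max\{\delta_w-\deg_w P,0\}$. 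All other components of the final incompatibility already appeared in the initial one, so their degrees are bounded by $\delta_w\le \delta_w+\max\{\delta_w-\deg_w P,0\}$, giving the claimed global bound.

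There is no real obstacle here: the only nontrivial step is recognizing the identity $4WP=(W+1)^2P+(W-1)^2(-P)$, a standard polarization trick that converts an arbitrary multiplier $W$ into a combination of two squares; once this is in hand the rest is bookkeeping on degrees and on the monoid, cone and ideal memberships.
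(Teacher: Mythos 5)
Your proof is correct and follows essentially the same route as the paper: the paper's own argument rewrites $W\cdot P$ as $\tfrac14(1+W)^2\cdot P+\tfrac14(1-W)^2\cdot(-P)$ (identical to your polarization identity) and treats the $W=0$ case separately, exactly as you do. Your degree bookkeeping also matches the stated bound.
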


\begin{proof}{Proof.}
Consider the initial incompatibility
$$
S + N + Z + W \cdot P = 0
$$
with 
$S \in \scM(\cH_{\ne}^2),$ $N \in \scN(\cH_{\ge})$, $Z \in \scZ(\cH_{=})$ and $W \in \K[v]$,
where $\cH = [\cH_{\ne}, \, \cH_{\ge}, \, \cH_{=}]$ is a system of sign conditions in $\K[v]$.
If $W$ is the zero polynomial there is nothing to do; otherwise 
we rewrite the initial incompatibility as 
$$\textstyle
S + N + \frac14 (1+W)^2 \cdot P + \frac14 (1-W)^2 \cdot (-P)  + Z = 0.
$$
This proves the claim since
$S \in \scM(\cH_{\ne}^2)$, $N + \frac14 (1+W)^2 \cdot P + \frac14 (1-W)^2 \cdot (-P) \in 
\scN(\cH_{\ge} \cup \{P, -P\})$ and $Z \in \scZ(\cH_{=})$.
The degree bound follows easily. 
\end{proof}

\begin{lemma}\label{lemma_sum_of_pos_and_zer_is_pos} Let $P_1, \dots, P_m \in \K[u]$. Then
\begin{eqnarray}
\bigwedge_{1 \le j \le m}  P_j = 0   &\vdash& \sum_{1 \le j \le m}P_j = 0, \label{lemma_sum_of_pos_and_zer_is_pos:1}\\
\bigwedge_{1 \le j \le m'}  P_j \ge 0, \ \bigwedge_{m' + 1 \le j \le m}  P_j = 0 &\vdash& \sum_{1 \le j \le m}P_j \ge 0. \label{lemma_sum_of_pos_and_zer_is_pos:2}
\end{eqnarray}

In both cases, if we have an initial incompatibility
 in $\K[v]$ where $v\supset u$ with monoid part  $S$ and degree in $w \subset v$ 
bounded by $\delta_w$, the final incompatibility has the same monoid part 
and degree in $w$ bounded by 
$\delta_w + \max\{\deg_w  P_j \ | \ 1 \le j \le m\} - \deg_w \sum_{1 \le j \le m}P_j .$
\end{lemma}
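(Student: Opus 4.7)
The plan is to take the initial incompatibility and distribute each occurrence of $\sum_{j} P_j$ across the sum, so that every resulting summand becomes a valid component of a cone or ideal generated by the individual $P_j$. The monoid part $S$ is never touched, so it passes unchanged to the final incompatibility; this is also why the degree estimate does not depend on the monoid part.

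For (\ref{lemma_sum_of_pos_and_zer_is_pos:1}), the equality hypothesis places $\sum_j P_j$ only in the ideal part of the initial incompatibility. After collecting the components that use it, one may write this incompatibility as
$$S + N + Z + W \cdot \sum_{1 \le j \le m} P_j = 0,$$
with $S \in \scM(\cH_\ne^2)$, $N \in \scN(\cH_\ge)$, $Z \in \scZ(\cH_=)$ and $W \in \K[v]$. Rewriting $W \cdot \sum_j P_j = \sum_j W \cdot P_j$ exhibits each $W \cdot P_j$ as a component of $\scZ(\cH_= \cup \{P_1, \dots, P_m\})$, giving the final incompatibility directly.

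For (\ref{lemma_sum_of_pos_and_zer_is_pos:2}), the inequality hypothesis places $\sum_j P_j$ in the monoid factors of cone components. A generic such component has the form $\omega V^2 \cdot M \cdot \bigl(\sum_j P_j\bigr)^e$ with $\omega > 0$, $V \in \K[v]$, $M \in \scM(\cH_\ge)$, and $e \ge 0$. First I absorb every pair of factors $\sum_j P_j$ into $V^2$ by replacing $V$ with $V \cdot \bigl(\sum_j P_j\bigr)^{\lfloor e/2 \rfloor}$, reducing to $e \in \{0,1\}$ without changing the component's total degree. Components with $e = 0$ already lie in $\scN(\cH_\ge) \subset \scN(\cH_\ge \cup \{P_1,\dots,P_{m'}\})$. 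Components with $e = 1$, after distribution, become $\sum_j \omega V^2 \cdot M \cdot P_j$: for $j \le m'$ the summand $\omega V^2 \cdot (M \cdot P_j)$ is a cone component of $\scN(\cH_\ge \cup \{P_1,\dots,P_{m'}\})$ via extension of the monoid, and for $j > m'$ the summand $\omega V^2 \cdot M \cdot P_j$ is an ideal component of $\scZ(\{P_{m'+1},\dots,P_m\})$ with coefficient $\omega V^2 \cdot M \in \K[v]$. Combined with the ideal-part rewriting of the previous paragraph, this yields the desired final incompatibility.

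The degree bound is then immediate: after the normalization step, a component with $e = 1$ of degree $\le \delta_w$ in $w$ satisfies $\deg_w(\omega V^2 M) \le \delta_w - \deg_w\bigl(\sum_j P_j\bigr)$, whence each derived component has degree at most $\delta_w + \max_j \deg_w P_j - \deg_w\bigl(\sum_j P_j\bigr)$. The only delicate point is the normalization reducing $e$ to $\{0,1\}$: without it, a naive multinomial expansion of $\bigl(\sum_j P_j\bigr)^e$ would multiply the excess $\max_j \deg_w P_j - \deg_w\bigl(\sum_j P_j\bigr)$ by $e$ whenever cancellation occurs in $\sum_j P_j$, and the stated bound would be lost.
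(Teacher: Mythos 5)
Your proof is correct and follows essentially the same route as the paper's: distribute $W\cdot\sum_{j}P_j$ over the ideal part for the first item, and split $N_1\cdot\sum_{j}P_j$ into cone components (for $j\le m'$) and ideal components (for $j>m'$) for the second. The only difference is presentational: you make explicit the absorption of even powers of $\sum_{j}P_j$ into the squares $V_i^2$, which the paper leaves implicit by writing the cone part of the initial incompatibility directly in the normalized form $N_0+N_1\cdot\sum_{j}P_j$ with $N_0,N_1\in\scN(\cH_{\ge})$.
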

\begin{proof}{Proof.} 
We first prove item 
\ref{lemma_sum_of_pos_and_zer_is_pos:1}. 
Consider the initial incompatibility
$$
S + N +  Z + W \cdot \sum_{1 \le j \le m} P_j  = 0
$$
with 
$S \in \scM(\cH_{\ne}^2), N \in \scN(\cH_{\ge})$, $Z \in \scZ(\cH_{=})$ and $W \in \K[v]$,
where $\cH = [\cH_{\ne}, \, \cH_{\ge}, \, \cH_{=}]$ is a system of sign conditions
in $\K[v]$. 
We rewrite 
this equation as 
$$
S + N +  Z + \sum_{1 \le j \le m} W \cdot P_j = 0.
$$
This proves the claim since 
 $S \in \scM( \cH_{\ne} ^2)$,  
 $N \in \scN(\cH_{\ge})$ and  
$ Z + \sum_{1 \le j \le m} W \cdot P_j \in  \scZ(\cH_{=} \cup \{P_{1}, \dots, P_m \}).$
The degree bound follows easily.

Now we prove item 
\ref{lemma_sum_of_pos_and_zer_is_pos:2}. 
Consider the initial incompatibility
$$
S + N_0 + N_1 \cdot \sum_{1 \le j \le m} P_j + Z  = 0
$$
with 
$S \in \scM(\cH_{\ne}^2), N_0, N_1 \in \scN(\cH_{\ge})$ and $Z \in \scZ(\cH_{=})$,
where $\cH = [\cH_{\ne}, \, \cH_{\ge}, \, \cH_{=}]$ is a system of sign conditions
in $\K[v]$. 
We rewrite  
this equation as 
$$
S + N_0 + \sum_{1 \le j \le m'} N_1 \cdot P_j + Z + \sum_{m'+1 \le j \le m} N_1 \cdot P_j= 0.
$$
This proves the claim since 
 $S \in \scM( \cH_{\ne} ^2)$,  
 $N_0 + \sum_{1 \le j \le m'} N_1 \cdot P_j \in \scN(\cH_{\ge} \cup \{P_1, \dots, P_{m'}\} )$ and  
$ Z + \sum_{m'+1 \le j \le m} N_1 \cdot P_j \in  \scZ(\cH_{=} \cup \{P_{m'+1}, \dots, P_m \}).$
The degree bound follows easily.   
\end{proof}

\begin{lemma}\label{lemma_sum_ne_e_e} Let $P_1, \dots, P_m \in \K[u]$. Then
$$
P_1 \ne 0, \ \bigwedge_{2 \le j \le m}  P_j = 0
\ \ \ \vdash \ \ \  
\sum_{1 \le j \le m}P_j \ne 0.
$$ 

If we have an initial incompatibility in $\K[v]$ where $v\supset u$ with monoid part 
$
S(\sum_{1 \le j \le m}P_j)^{2e}
$
and  degree in $w \subset v$ bounded by $\delta_w$,
the final incompatibility has monoid part 
$S \cdot P_1^{2e}$
and degree in $w$ bounded by 
$\delta_w + 2e
\Big(\max\{\deg_w  P_j \ | \ 1 \le j \le m\} - \deg_w  \sum_{1 \le j \le m}P_j\Big).$
\end{lemma}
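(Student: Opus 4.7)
The plan is to start from the initial incompatibility
$$S\cdot\left(\sum_{j=1}^m P_j\right)^{2e} + N + Z = 0$$
with $S\in\scM(\cH_{\ne}^2)$, $N\in\scN(\cH_{\ge})$ and $Z\in\scZ(\cH_{=})$, and to transform it into an incompatibility whose monoid part is $S\cdot P_1^{2e}$ by using the equalities $P_2=\cdots=P_m=0$ to replace $\sum P_j$ by $P_1$ modulo the ideal $\scZ(\{P_2,\dots,P_m\})$.

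The key algebraic tool is the factorization $a^{2e}-b^{2e}=(a-b)\sum_{i=0}^{2e-1}a^{2e-1-i}b^i$, which I would apply with $a=\sum_{j=1}^m P_j$ and $b=P_1$. Since $a-b=\sum_{j\ge 2}P_j$, one obtains
$$\left(\sum_{j=1}^m P_j\right)^{2e} - P_1^{2e} = \left(\sum_{j=2}^m P_j\right)\cdot R, \qquad R=\sum_{i=0}^{2e-1}\left(\sum_{j=1}^m P_j\right)^{2e-1-i}P_1^i.$$
Substituting this into the initial identity yields
$$S\cdot P_1^{2e} + N + \left(Z - \sum_{j=2}^m S\cdot R\cdot P_j\right) = 0,$$
which is the sought final incompatibility: indeed, since $P_1\in\cF_{\ne}$, the new monoid part $S\cdot P_1^{2e}$ lies in $\scM((\cH_{\ne}\cup\{P_1\})^2)$; the cone part $N$ is unchanged; and the new ideal part lies in $\scZ(\cH_{=}\cup\{P_2,\ldots,P_m\})$.

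Finally, I would track the degrees in $w$. From the hypothesis that the initial incompatibility has degree in $w$ bounded by $\delta_w$, its monoid part gives $\deg_w S\le \delta_w - 2e\deg_w\sum_j P_j$. Writing $\mu=\max_{1\le j\le m}\deg_w P_j$ and $\sigma=\deg_w\sum_j P_j$, the new monoid has degree in $w$ at most $\delta_w+2e(\deg_w P_1-\sigma)\le \delta_w+2e(\mu-\sigma)$, and each new ideal component $S\cdot R\cdot P_j$ satisfies $\deg_w(S\cdot R\cdot P_j)\le(\delta_w-2e\sigma)+(2e-1)\mu+\mu=\delta_w+2e(\mu-\sigma)$. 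The cone and the original ideal contributions are unchanged, so the final incompatibility has degree in $w$ bounded by $\delta_w+2e(\mu-\sigma)$, as claimed.

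There is essentially no obstacle: the only subtle aspect is that $\sigma$ may be strictly smaller than $\mu$ due to cancellation of leading terms among the $P_j$, and the claimed bound is calibrated precisely to absorb this gap; the computation above handles it cleanly.
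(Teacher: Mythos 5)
Your proof is correct and essentially the same as the paper's: the paper obtains the remainder $Z_2\in\scZ(\{P_2,\dots,P_m\})$ by collecting the cross terms in the multinomial expansion of $S\cdot(\sum_{1\le j\le m}P_j)^{2e}$, whereas you package the same remainder via the factorization $a^{2e}-b^{2e}=(a-b)R$; both yield the same identity and the same degree bound. One small slip: the substitution produces the ideal part $Z+\sum_{2\le j\le m}S\cdot R\cdot P_j$ (with a plus sign), not $Z-\sum_{2\le j\le m}S\cdot R\cdot P_j$, though this is immaterial for membership in $\scZ(\cH_{=}\cup\{P_2,\dots,P_m\})$ and for the degree estimate.
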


\begin{proof}{Proof.} 
Consider the initial incompatibility
$$
S \cdot \Big(\sum_{1 \le j \le m} P_j\Big)^{2e} + N  + Z  = 0
$$
with 
$S \in \scM(\cH_{\ne}^2), N \in \scN(\cH_{\ge})$ and $Z \in \scZ(\cH_{=})$,
where $\cH = [\cH_{\ne}, \, \cH_{\ge}, \, \cH_{=}]$ is a system of sign conditions
in $\K[v]$. 
We rewrite  
this equation as 
$$
S \cdot P_1^{2e}  + N + Z + Z_2 = 0
$$
where $Z_2 \in  \scZ(\{P_{2}, \dots, P_m \})$ is the sum of all the terms in the expansion of 
$S \cdot (\sum_{1 \le j \le m} P_j)^{2e}$ which involve at least one of  $P_{2}, 
\dots, P_m$.
This proves the claim since  
 $S \cdot P_1^{2e} \in \scM( (\cH_{\ne} \cup \{P_1\}) ^2)$,  
 $N \in \scN(\cH_{\ge})$ and  
$ Z + Z_2  \in  \scZ(\cH_{=} \cup \{P_{2}, \dots, P_m \}).$
The degree bound follows easily.   
\end{proof}

\begin{lemma}\label{lemma_sum_of_pos_is_pos} Let $P_1, \dots, P_m \in \K[u]$. Then
$$
P_1 > 0, \ \bigwedge_{2 \le j \le m'}  P_j \ge 0, \ \bigwedge_{m' + 1 \le j \le m}  P_j = 0  
\ \ \ \vdash \ \ \  
\sum_{1 \le j \le m}P_j > 0.
$$ 

If we have an initial incompatibility in $\K[v]$ where $v\supset u$ with monoid part 
$
S \cdot \big(\sum_{1 \le j \le m}P_j\big)^{2e}
$
and  degree in $w \subset v$ bounded by $\delta_w$,
the final incompatibility has monoid part 
$S \cdot P_1^{2e}$
and degree in $w$ bounded by 
$\delta_w + \max\{1, 2e\}
\Big(\max\{\deg_w  P_j \ | \ 1 \le j \le m\} - \deg_w  \sum_{1 \le j \le m}P_j\Big).$
\end{lemma}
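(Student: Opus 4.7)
The plan is to argue as in Lemmas \ref{lemma_sum_of_pos_and_zer_is_pos} and \ref{lemma_sum_ne_e_e}, combining their two ideas since the hypothesis $\sum_{1\le j\le m}P_j > 0$ is being used simultaneously as a monoid element (from the ``$\ne 0$'' part) and as a cone element (from the ``$\ge 0$'' part). Accordingly, I would start from an initial incompatibility of the expected shape
\[
S \cdot \Big(\sum_{1\le j\le m}P_j\Big)^{2e} + N_0 + N_1 \cdot \sum_{1\le j\le m}P_j + Z = 0,
\]
with $S \in \scM(\cH_{\ne}^2)$, $N_0, N_1 \in \scN(\cH_\ge)$, $Z \in \scZ(\cH_=)$, for some system $\cH$ of sign conditions in $\K[v]$, $v \supset u$ (here $S$ is itself a square, being a product of squares). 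The goal is to rewrite the left-hand side so that the new monoid part becomes $S \cdot P_1^{2e}$, the cone part lives in $\scN(\cH_\ge \cup \{P_1,\ldots,P_{m'}\})$, and the ideal part lives in $\scZ(\cH_= \cup \{P_{m'+1},\ldots,P_m\})$.

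For the monoid contribution, I would expand $(\sum_{1\le j\le m}P_j)^{2e}$ by the multinomial theorem. The pure term $P_1^{2e}$ furnishes the new monoid $S \cdot P_1^{2e}$. Any mixed term $\prod_j P_j^{\alpha_j}$ having some $\alpha_j \ge 1$ with $j \ge m'+1$ is absorbed into the ideal part. Every remaining mixed term involves only $P_1,\ldots,P_{m'}$ and rewrites as $\bigl(\prod_j P_j^{\lfloor\alpha_j/2\rfloor}\bigr)^2 \cdot \prod_j P_j^{\alpha_j \bmod 2}$; multiplied by the square $S$ and by a positive rational multinomial coefficient, it yields a valid cone component in $\scN(\cH_\ge \cup \{P_1,\ldots,P_{m'}\})$. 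The term $N_1 \cdot \sum_j P_j$ is handled by distributivity: splitting the sum at the index $m'$, writing $N_1 = \sum_i \omega_i V_i^2 M_i$ with $M_i \in \scM(\cH_\ge)$ and multiplying by each $P_j$ produces cone components for $j \le m'$ and ideal components for $j > m'$. Reassembling with $N_0$ and $Z$ gives the final incompatibility.

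For the degree bound, each newly produced cone or ideal component from the monoid expansion has $w$-degree at most $\deg_w S + 2e\max_j \deg_w P_j$; since $\deg_w S + 2e\deg_w(\sum_j P_j) \le \delta_w$, this contribution is at most $\delta_w + 2e\bigl(\max_j \deg_w P_j - \deg_w(\sum_j P_j)\bigr)$. The components arising from $N_1 \cdot P_j$ satisfy the analogous bound with factor $1$ in place of $2e$, while $N_0$, $Z$, and $S\cdot P_1^{2e}$ themselves remain within $\delta_w$. Taking the worst of these yields the stated bound $\delta_w + \max\{1, 2e\}\bigl(\max_j\deg_w P_j - \deg_w(\sum_j P_j)\bigr)$. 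The only genuinely delicate point is the bookkeeping that simultaneously treats the monoid use (responsible for the factor $2e$) and the cone use (responsible for the factor $1$) of $\sum_j P_j$, and routes the terms with indices $j\le m'$ versus $j>m'$ into the correct compartment; once this is set up, the argument is purely formal and completely parallel to the preceding lemmas.
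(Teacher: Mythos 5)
Your proposal is correct and follows essentially the same route as the paper: rewrite the expansion of $S\cdot(\sum_j P_j)^{2e}$ by sending the pure term to the new monoid part $S\cdot P_1^{2e}$, the mixed terms involving some $P_j$ with $j>m'$ to the ideal part, the remaining mixed terms to the cone part, and split $N_1\cdot\sum_j P_j$ at the index $m'$. Your explicit rewriting of each surviving mixed term as a square times a product of the $P_j$ with exponents in $\{0,1\}$ just makes precise a membership the paper asserts without comment, and your degree bookkeeping matches the stated bound.
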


\begin{proof}{Proof.} 
Consider the initial incompatibility
$$
S \cdot \Big(\sum_{1 \le j \le m} P_j\Big)^{2e} + N_0 + N_1 \cdot \sum_{1 \le j \le m} P_j + Z  = 0
$$
with 
$S \in \scM(\cH_{\ne}^2), N_0, N_1 \in \scN(\cH_{\ge})$ and $Z \in \scZ(\cH_{=})$,
where $\cH = [\cH_{\ne}, \, \cH_{\ge}, \, \cH_{=}]$ is a system of sign conditions
in $\K[v]$. 
We rewrite 
this equation as 
$$
S \cdot P_1^{2e}  + N_0 + N_2 + \sum_{1 \le j \le m'} N_1 \cdot P_j + Z + Z_2 + \sum_{m'+1 \le j \le m} N_1 \cdot P_j = 0,
$$
where $N_2 \in \scN( \{P_1, \dots, P_{m'} \})$ is the sum of all the terms in the expansion of 
$S \cdot (\sum_{1 \le j \le m} P_j)^{2e}$ which do not involve any of $P_{m'+1}, 
\dots, P_m$ with the exception of the term $S \cdot P_1^{2e}$
and $Z_2 \in  \scZ(\{P_{m'+1}, \dots, P_m \})$ is the sum of all the terms in the expansion of 
$S\cdot (\sum_{1 \le j \le m} P_j)^{2e}$ which involve at least one of $P_{m'+1}, 
\dots, P_m$.
This proves the claim since  
 $S \cdot P_1^{2e} \in \scM( (\cH_{\ne} \cup \{P_1\}) ^2)$,  
 $N_0 + N_2 + \sum_{1 \le j \le m'} N_1 \cdot P_j  \in \scN(\cH_{\ge} \cup \{P_1, \dots, P_{m'}\} )$ and  
$ Z + Z_2 + \sum_{m'+1 \le j \le m} N_1 \cdot P_j \in  \scZ(\cH_{=} \cup \{P_{m'+1}, \dots, P_m \}).$
The degree bound follows easily.   
\end{proof}

\begin{lemma}\label{lem:comb_lin_zero_zero} Let 
$m_1, \dots, m_n \in \N_*$ and $P_{j,k}, Q_{j,k} \in \K[u]$ for $1 \le j \le m_k, 1 \le k \le n$. Then 
$$
\bigwedge_{1 \le k \le n, \atop
1 \le j \le m_k} P_{j,k} = 0 
\ \ \ \vdash \ \ \  
\bigwedge_{1 \le k \le n} \sum_{1 \le j \le m_k} P_{j,k}\cdot Q_{j,k} = 0.
$$ 

If we have an initial incompatibility in $\K[v]$ where $v\supset u$ with monoid part 
$S$
and degree in $w \subset v$ bounded by $\delta_w$,
the final incompatibility has the same monoid part 
and degree in $w$ bounded by 
$$\delta_w + \max\Big\{\max\{\deg_w P_{j,k}\cdot Q_{j,k} \ | \ 1 \le j \le m_k\} - 
\deg_w \sum_{1 \le j \le m_k} P_{j,k}\cdot Q_{j,k} \ | \ 1 \le k \le n\Big\}.$$
\end{lemma}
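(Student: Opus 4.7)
The plan is to mimic closely the strategy used in item \ref{lemma_sum_of_pos_and_zer_is_pos:1} of Lemma \ref{lemma_sum_of_pos_and_zer_is_pos}, handling the $n$ linear combinations simultaneously. Starting from an initial incompatibility of $[\bigwedge_{1\le k\le n}\sum_{1\le j\le m_k} P_{j,k}\cdot Q_{j,k} = 0, \ \cH]$ in $\K[v]$, the ideal part will decompose as a contribution from $\cH_=$ plus a sum of the form $\sum_{1\le k\le n} W_k\cdot\sum_{1\le j\le m_k} P_{j,k}\cdot Q_{j,k}$, with $W_k\in\K[v]$.

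The key manipulation is then to distribute and regroup: rewrite each term $W_k\cdot \sum_{1\le j\le m_k} P_{j,k}\cdot Q_{j,k}$ as $\sum_{1\le j\le m_k} (W_k\cdot Q_{j,k})\cdot P_{j,k}$. The resulting components have the form $(W_k\cdot Q_{j,k})\cdot P_{j,k}$, so they belong to $\scZ(\cH_= \cup \{P_{j,k}\ |\ 1\le k\le n,\ 1\le j\le m_k\})$. The monoid part $S \in \scM(\cH_{\ne}^2)$ and the cone part $N \in \scN(\cH_\ge)$ remain untouched, which is why the final monoid part is identical to the initial one.

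For the degree in $w$, the only components whose degree changes are the ones coming from the linear combinations. For fixed $k$, the initial component has degree $\deg_w W_k + \deg_w\sum_{1\le j\le m_k} P_{j,k}\cdot Q_{j,k}$, which is at most $\delta_w$, while the new components have degree $\deg_w W_k + \deg_w(P_{j,k}\cdot Q_{j,k})$ for $1\le j\le m_k$. Thus the degree increases by at most $\max\{\deg_w(P_{j,k}\cdot Q_{j,k})\ |\ 1\le j\le m_k\} - \deg_w\sum_{1\le j\le m_k} P_{j,k}\cdot Q_{j,k}$, and taking the maximum over $k$ gives exactly the claimed bound.

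There is essentially no obstacle here; the argument is purely formal rewriting. The only small subtlety is to verify that the bound is stated with the right maximum (over $k$ outside, over $j$ inside) and that when some $W_k$ vanishes the corresponding summand can simply be dropped without affecting the bound. All this follows from the same pattern already used repeatedly in this subsection.
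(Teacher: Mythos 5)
Your proposal is correct and follows exactly the route the paper intends: the paper's proof is stated as "an easy adaptation of the proof of Lemma \ref{lemma_sum_of_pos_and_zer_is_pos} (item \ref{lemma_sum_of_pos_and_zer_is_pos:1})" combined with the basic rule $P_1=0 \vdash P_1\cdot P_2=0$, which is precisely your distribute-and-regroup of $W_k\cdot\sum_j P_{j,k}\cdot Q_{j,k}$ into components $(W_k\cdot Q_{j,k})\cdot P_{j,k}$, with the identical degree accounting.
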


\begin{proof}{Proof.} Follows from Lemmas \ref{lemma_basic_sign_rule_1} (item 
\ref{lemma_basic_sign_rule:4}) and an easy adaptation of the proof of 
Lemma \ref{lemma_sum_of_pos_and_zer_is_pos}
(item \ref{lemma_sum_of_pos_and_zer_is_pos:1}). 
\end{proof}

\begin{lemma} \label{lem_prod_le_g_then_fact_le}
Let $P_1, P_2 \in \K[u]$. Then
$$
P_1 \cdot P_2 \ge 0, \ P_2 > 0  \ \ \ \vdash \ \ \  P_1 \ge 0.
$$

If we have an initial incompatibility in $\K[v]$ where $v\supset u$ with monoid part 
$S$
and degree in $w \subset v$ bounded by $\delta_w$,
the final incompatibility has monoid part 
$S\cdot P_2^2$
and degree in $w$ bounded by 
$
\delta_w + 2\deg_w P_2
$.
\end{lemma}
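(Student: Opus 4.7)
The plan is to start from a generic initial incompatibility witnessing $\lda \cH, P_1 \ge 0 \rda_{\K[v]}$ and multiply the whole identity by $P_2^2$, which is legal because $P_2 \ne 0$ is among our new hypotheses (it follows from $P_2 > 0$ by Lemma \ref{lemma_basic_sign_rule_1}, item \ref{lemma_basic_sign_rule:1.5}), so multiplying by $P_2^2$ simply augments the monoid part.

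Concretely, I would write the initial incompatibility as
$$
S + N_0 + N_1 \cdot P_1 + Z = 0,
$$
with $S \in \scM(\cH_{\ne}^2)$, $N_0, N_1 \in \scN(\cH_{\ge})$ and $Z \in \scZ(\cH_{=})$, where the term involving $P_1$ has been isolated since $P_1 \ge 0$ is the only hypothesis of $[\cH, P_1 \ge 0]$ we are removing. Multiplying throughout by $P_2^2$ yields
$$
S \cdot P_2^2 + N_0 \cdot P_2^2 + N_1 \cdot P_2 \cdot (P_1 \cdot P_2) + Z \cdot P_2^2 = 0.
$$
The key observation is the factorization $N_1 \cdot P_1 \cdot P_2^2 = (N_1 \cdot P_2) \cdot (P_1 \cdot P_2)$, which lets us absorb the factor $P_1 \cdot P_2 \ge 0$ into the cone: since $N_1 \in \scN(\cH_{\ge})$ and both $P_2$ and $P_1 \cdot P_2$ are in the augmented nonnegative list, $N_1 \cdot P_2 \cdot (P_1 \cdot P_2) \in \scN(\cH_{\ge} \cup \{P_2, P_1 \cdot P_2\})$. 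Similarly $N_0 \cdot P_2^2 \in \scN(\cH_{\ge})$ because multiplying by a square preserves membership in $\scN$, and $Z \cdot P_2^2 \in \scZ(\cH_{=})$. Finally $S \cdot P_2^2 \in \scM((\cH_{\ne} \cup \{P_2\})^2)$, as announced.

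The degree bound is then immediate: the only operation performed on each component was multiplication by $P_2^2$, so the degree in any subset $w \subset v$ increases by at most $2\deg_w P_2$, giving the claimed bound $\delta_w + 2\deg_w P_2$. There is no real obstacle here; the only subtlety worth double-checking is that the trick of pairing one copy of $P_2$ with $P_1$ to form $P_1 \cdot P_2$ (which is the hypothesis we actually have), while letting the other copy sit in the nonnegative list, relies on $P_2$ being in $\cH_{\ge}$ in the final system—hence the necessity of including $P_2 > 0$ rather than merely $P_2 \ne 0$ in the left-hand side of the weak inference.
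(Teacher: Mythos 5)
Your proposal is correct and follows exactly the paper's own argument: isolate the term $N_1\cdot P_1$ in the initial incompatibility, multiply the whole identity by $P_2^2$, and reclassify $N_1\cdot P_1\cdot P_2^2$ as $N_1\cdot P_2\cdot(P_1\cdot P_2)\in\scN(\cH_{\ge}\cup\{P_1\cdot P_2, P_2\})$, with the degree bound following from the single multiplication by $P_2^2$. Your closing observation is also on point, with the small addition that $P_2>0$ is needed not only to place $P_2$ in the nonnegative list for the cone but also to justify $S\cdot P_2^2$ as the new monoid part.
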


\begin{proof}{Proof.}
Consider the initial incompatibility
$$
S + N_0 + N_1\cdot P_1 + Z  = 0
$$
with 
$S \in \scM(\cH_{\ne}^2)$, $N_0, N_1 \in \scN(\cH_{\ge})$ and $Z \in \scZ(\cH_{=})$,
where $\cH = [\cH_{\ne}, \, \cH_{\ge}, \, \cH_{=}]$ is a system of sign conditions
in $\K[v]$.
We multiply this equation by $P_2^2$ and we obtain
$$
S\cdot P_2^2 + N_0 \cdot P_2^2 + N_1\cdot P_1\cdot P_2^2 + Z\cdot P_2^2  = 0.
$$
This proves the claim since 
$S \cdot P_2^2 \in \scM((\cH_{\ne}\cup\{P_2\})^2)$, 
$N_0\cdot P_2^2 + N_1\cdot P_1 \cdot P_2^2 \in \scN(\cH_{\ge}\cup \{P_1 \cdot P_2, P_2\})$
and $Z \cdot P_2^2 \in \scZ(\cH_{=})$.
The degree bound follows easily. 
\end{proof}

\begin{lemma} \label{lem_prod_g_g_then_fact_g}
Let $P_1, P_2 \in \K[u]$. Then
$$
P_1 \cdot P_2 > 0, \ P_2 > 0  \ \ \ \vdash \ \ \  P_1 > 0.
$$

If we have an initial incompatibility in $\K[v]$ where $v\supset u$ with monoid part 
$S \cdot P_1^{2e}$
and degree in $w \subset v$ 
bounded by $\delta_w$,
the final incompatibility has monoid part 
$S \cdot P_2^2$ if $e = 0$ and $S\cdot (P_1 \cdot P_2)^{2e}$ if $e \ge 1$
and degree in $w$ bounded by 
$
\delta_w + 2\max\{1, e\}\deg_w P_2
$ in both cases.
\end{lemma}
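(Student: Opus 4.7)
The plan is to mimic the argument of the previous Lemma \ref{lem_prod_le_g_then_fact_le}, multiplying the initial incompatibility by a suitable power of $P_2$ in order to trade the factor $P_1$ appearing in monoid and cone parts for the factors $P_1 \cdot P_2$ and $P_2$, which are the ones allowed as hypotheses in the final incompatibility. The only subtlety is that the strict inequality $P_1 > 0$ in the initial hypothesis permits $P_1^{2e}$ to appear in the monoid part (for some $e \in \N$), and the case $e = 0$ must be handled separately from $e \ge 1$ since the bookkeeping in the monoid part differs.

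Concretely, I would start with the initial incompatibility
$$
S \cdot P_1^{2e} + N_0 + N_1 \cdot P_1 + Z = 0
$$
with $S \in \scM(\cH_{\ne}^2)$, $N_0, N_1 \in \scN(\cH_{\ge})$ and $Z \in \scZ(\cH_{=})$, where $\cH$ is a system of sign conditions in $\K[v]$. If $e = 0$, I multiply the identity through by $P_2^2$; in the resulting equation, $S \cdot P_2^2 \in \scM((\cH_{\ne} \cup \{P_2\})^2)$, and the term $N_1 \cdot P_1 \cdot P_2^2$ can be rewritten as $(N_1 \cdot P_2) \cdot (P_1 \cdot P_2)$, which lies in $\scN(\cH_{\ge} \cup \{P_1 \cdot P_2, P_2\})$, while $N_0 \cdot P_2^2$ and $Z \cdot P_2^2$ stay in $\scN(\cH_{\ge})$ and $\scZ(\cH_{=})$ respectively.

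If $e \ge 1$, I instead multiply by $P_2^{2e}$, obtaining
$$
S \cdot (P_1 \cdot P_2)^{2e} + N_0 \cdot P_2^{2e} + N_1 \cdot P_1 \cdot P_2^{2e} + Z \cdot P_2^{2e} = 0.
$$
Here $S \cdot (P_1 \cdot P_2)^{2e} \in \scM((\cH_{\ne} \cup \{P_1 \cdot P_2\})^2)$, the term $N_0 \cdot P_2^{2e} = N_0 \cdot (P_2^e)^2$ is in $\scN(\cH_{\ge})$, and the crucial term is reorganized as $N_1 \cdot P_1 \cdot P_2^{2e} = N_1 \cdot (P_2^{e-1})^2 \cdot P_2 \cdot (P_1 \cdot P_2)$, which lies in $\scN(\cH_{\ge} \cup \{P_1 \cdot P_2, P_2\})$; finally $Z \cdot P_2^{2e} \in \scZ(\cH_{=})$.

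The degree bound then follows immediately: in both cases the multiplication adds $2\max\{1,e\}\deg_w P_2$ to the degree in $w$. There is no real obstacle here; the only point demanding a moment of care is the separate treatment of $e=0$ versus $e\ge 1$ in the description of the monoid part, so that the statement of the lemma matches the actual output of the construction.
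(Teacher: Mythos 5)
Your proof is correct and follows essentially the same route as the paper's: for $e=0$ multiply by $P_2^2$ (the paper simply refers back to the proof of Lemma \ref{lem_prod_le_g_then_fact_le}), and for $e \ge 1$ multiply by $P_2^{2e}$ and reassign the terms to the monoid, cone and ideal parts exactly as you do. The bookkeeping of memberships and the degree bound match the paper's.
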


\begin{proof}{Proof.}
Consider the initial incompatibility
$$
S\cdot P_1^{2e} + N_0 + N_1\cdot P_1 + Z  = 0
$$
with 
$S \in \scM(\cH_{\ne}^2)$, $N_0, N_1 \in \scN(\cH_{\ge})$ and $Z \in \scZ(\cH_{=})$,
where $\cH = [\cH_{\ne}, \, \cH_{\ge}, \, \cH_{=}]$ is a system of sign conditions
in $\K[v]$. If $e = 0$, we proceed as in the proof of Lemma \ref{lem_prod_le_g_then_fact_le}.
If $e \ge 1$, we multiply this equation by $P_2^{2e}$ and we obtain
$$
S\cdot (P_1 \cdot P_2)^{2e} + N_0 \cdot P_2^{2e} + N_1 \cdot P_1 \cdot P_2^{2e} + Z\cdot P_2^{2e}  = 0.
$$
This proves the claim since 
$S \cdot (P_1 \cdot P_2)^{2e} \in \scM((\cH_{\ne}\cup\{P_1 \cdot P_2, P_2\})^2)$, 
$N_0 \cdot P_2^{2e} + N_1 \cdot P_1 \cdot P_2^{2e} \in \scN(\cH_{\ge}\cup \{P_1 \cdot  P_2, P_2\})$
and $Z \cdot P_2^{2e} \in \scZ(\cH_{=})$.
The degree bound follows easily. 
\end{proof}

\begin{lemma}\label{lemma_sum_prod_pos_then_pos}
Let $P_1, P_2 \in \K[u]$. Then
$$P_1 + P_2 > 0, \ P_1 \cdot P_2 \ge 0 \ \ \ \vdash \ \ \ P_1 \ge 0,\ P_2 \ge 0.$$

If we have an initial incompatibility in $\K[v]$ where $v \supset u$ with monoid part
$S$ and degree in $w \subset v$ bounded by $\delta_w$,
the final incompatibility has monoid part 
$S \cdot (P_1 + P_2)^{2}
$
and degree in $w$ bounded by 
$
\delta_w +  2\max\{\deg_w  P_1, \deg_w  P_2\}
$.
\end{lemma}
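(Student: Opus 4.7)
{Proof sketch.}
My plan is to reduce the result to a single multiplication of the initial incompatibility by the square $(P_1 + P_2)^2$, using the key identity
$$
(P_1 + P_2) \cdot P_i = P_i^2 + P_1 P_2 \qquad (i = 1, 2),
$$
which expresses the otherwise unsigned quantity $(P_1+P_2) P_i$ as the sum of a square and a hypothesis from $\scN(\{P_1 P_2\})$.

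First I would fix the shape of the initial incompatibility. Since $P_1^2$ and $P_2^2$ are squares that can be absorbed into the squared factors of cone components, any element of $\scN(\cH_{\ge} \cup \{P_1, P_2\})$ may be written as $N_0 + N_1 P_1 + N_2 P_2 + N_{12} P_1 P_2$ with $N_0, N_1, N_2, N_{12} \in \scN(\cH_{\ge})$. Thus the initial incompatibility takes the form
$$
S + N_0 + N_1 \cdot P_1 + N_2 \cdot P_2 + N_{12} \cdot P_1 P_2 + Z = 0
$$
with $S \in \scM(\cH_{\ne}^2)$ and $Z \in \scZ(\cH_=)$, where $\cH$ is a system of sign conditions in $\K[v]$.

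Next I would multiply this identity by $(P_1 + P_2)^2$ and, in the two cross terms, substitute
$$
N_i \cdot P_i \cdot (P_1 + P_2)^2 \;=\; N_i \cdot P_i^2 \cdot (P_1 + P_2) \;+\; N_i \cdot P_1 P_2 \cdot (P_1 + P_2)
$$
for $i = 1, 2$. Collecting the resulting terms, I would classify them: the new monoid part is $S \cdot (P_1 + P_2)^2 \in \scM((\cH_{\ne} \cup \{P_1+P_2\})^2)$; the term $Z \cdot (P_1+P_2)^2$ stays in $\scZ(\cH_=)$; and every remaining term is visibly a component of $\scN(\cH_{\ge} \cup \{P_1+P_2,\, P_1 P_2\})$, either because it carries an explicit square factor ($N_0 (P_1+P_2)^2$, $N_i P_i^2 (P_1+P_2)$, $N_{12}(P_1+P_2)^2 P_1 P_2$) or because it is an $N_i$ times the product of the two new hypotheses $P_1+P_2$ and $P_1 P_2$. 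This yields a valid incompatibility of $\cH, \, P_1+P_2 > 0, \, P_1 P_2 \ge 0$ with the claimed monoid part.

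The degree estimate is then immediate: multiplication by $(P_1 + P_2)^2$ raises the degree in $w$ by at most $2\deg_w(P_1+P_2) \le 2 \max\{\deg_w P_1, \deg_w P_2\}$, and the algebraic rewriting introduces no further increase. There is no genuine obstacle here; the only subtlety is recognizing the identity $(P_1+P_2) P_i = P_i^2 + P_1 P_2$ as the mechanism that converts the mixed terms into legitimate components of the enlarged nonnegative cone.
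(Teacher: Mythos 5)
Your proposal is correct and follows essentially the same route as the paper: the paper likewise multiplies the initial incompatibility $S + N_0 + N_1 P_1 + N_2 P_2 + N_3 P_1 P_2 + Z = 0$ by $(P_1+P_2)^2$ and regroups the cross terms as $N_i P_i^2 (P_1+P_2) + N_i (P_1+P_2) P_1 P_2$, which is exactly your identity $(P_1+P_2)P_i = P_i^2 + P_1 P_2$ multiplied through by $(P_1+P_2)$. The classification of terms and the degree bound match the paper's.
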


\begin{proof}{Proof.}
Consider the initial incompatibility
$$
S + N_0 + N_1 \cdot P_1 + N_2 \cdot P_2 + N_3 \cdot P_1 \cdot P_2+ Z  = 0
$$
with 
$S \in \scM(\cH_{\ne}^2)$, $N_0, N_1, N_2, N_3 \in \scN(\cH_{\ge})$ and $Z \in \scZ(\cH_{=})$,
where $\cH = [\cH_{\ne}, \, \cH_{\ge}, \, \cH_{=}]$ is a system of sign conditions
in $\K[v]$. We multiply this equation by $(P_1 + P_2)^2$ and we rewrite it as 
$$
S \cdot (P_1 + P_2)^2 + N_0 \cdot (P_1 + P_2)^2 + N_1 \cdot P_1^2 \cdot (P_1 + P_2)  + N_2 \cdot P_2^2 \cdot (P_1 + P_2) + 
$$
$$ 
 + 
 (N_1 + N_2) \cdot (P_1 + P_2) \cdot P_1 \cdot P_2
+ N_3 \cdot (P_1 + P_2)^2 \cdot P_1 \cdot P_2 +
Z \cdot (P_1 + P_2)^2  = 0.
$$
This proves the claim since
$S \cdot (P_1 + P_2)^2 \in \scM((\cH_{\ne}\cup\{P_1 + P_2\})^2)$, 
$
N_0 \cdot (P_1 + P_2)^2 + N_1 \cdot P_1^2 \cdot (P_1 + P_2)  + N_2 \cdot P_2^2 \cdot (P_1 + P_2) 
 + 
 (N_1 + N_2) \cdot (P_1 + P_2) \cdot P_1 \cdot P_2 + N_3 \cdot (P_1 + P_2)^2 \cdot P_1 \cdot P_2
\in \scN(\cH_{\ge}\cup \{P_1 + P_2,  P_1 \cdot P_2 \})$
and $Z \cdot (P_1 + P_2)^2 \in \scZ(\cH_{=})$.
The degree bound follows easily. 
\end{proof}

\begin{lemma} \label{lem_prod_zero_at_least_one_fact_zero}
Let $P_1, \dots, P_m \in \K[u]$. Then
$$
\prod_{1 \le j \le m}P_j = 0  \ \ \ \vdash \ \ \  \bigvee_{1 \le j \le m}P_j = 0.
$$

If we have initial incompatibilities in $\K[v]$ where $v\supset u$ 
with monoid part 
$S_j$ and degree in $w \subset v$ bounded by $\delta_{w, j}$, 
the final incompatibility has monoid part 
$
\prod_{1 \le j \le m}S_j
$
and degree in $w$ bounded by 
$\sum_{1 \le j \le m}\delta_{w, j}.$
\end{lemma}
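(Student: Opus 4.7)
The plan is to combine the $m$ initial incompatibilities by multiplication. For each $j \in \{1,\dots,m\}$, the hypothesis of the weak inference gives an incompatibility of $[P_j = 0, \cH]$, which has the form
$$S_j + N_j + Z_j + W_j \cdot P_j = 0,$$
with $S_j \in \scM(\cH_{\ne}^2)$, $N_j \in \scN(\cH_{\ge})$, $Z_j \in \scZ(\cH_{=})$ and $W_j \in \K[v]$, and of degree in $w$ bounded by $\delta_{w,j}$. Rewriting as $S_j + N_j + Z_j = -W_j \cdot P_j$ and taking the product over $j$, I get
$$\prod_{1 \le j \le m}(S_j + N_j + Z_j) \;=\; (-1)^{m}\Bigl(\prod_{1 \le j \le m} W_j\Bigr)\cdot \prod_{1 \le j \le m} P_j.$$

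Next I would expand the left-hand side as a sum of $3^m$ products of the form $\prod_j T_j$ with $T_j \in \{S_j, N_j, Z_j\}$, and classify each term. The single all-$S$ term is $\prod_j S_j$, which lies in $\scM(\cH_{\ne}^2)$ since each $S_j$ is a square in this monoid and the monoid is multiplicatively closed. Any term containing at least one factor $Z_j$ lies in $\scZ(\cH_{=})$ because $\scZ(\cH_{=})$ is an ideal in $\K[v]$. Each remaining term involves only $S_j$'s and $N_j$'s with at least one $N_j$ appearing; writing the $S_j$ factors as explicit squares shows such a product is of the form (square) $\cdot$ (element of $\scN(\cH_{\ge})$), hence lies in $\scN(\cH_{\ge})$, which is closed under multiplication. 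Thus the left-hand side can be written as $\prod_j S_j + N + Z$ with $N \in \scN(\cH_{\ge})$ and $Z \in \scZ(\cH_{=})$, yielding the final incompatibility
$$\prod_{1 \le j \le m} S_j \;+\; N \;+\; Z \;+\; (-1)^{m+1}\Bigl(\prod_{1 \le j \le m} W_j\Bigr)\cdot \prod_{1 \le j \le m} P_j \;=\; 0,$$
whose monoid part is exactly $\prod_j S_j$ as required.

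The degree bound then falls out immediately: in each term of the expansion, the $j$-th factor has degree in $w$ bounded by $\delta_{w,j}$, and likewise the product $\bigl(\prod_j W_j\bigr)\cdot \prod_j P_j$ has degree in $w$ at most $\sum_j \deg_w(W_j P_j) \le \sum_j \delta_{w,j}$, so the overall degree in $w$ of the final incompatibility is bounded by $\sum_{1 \le j \le m} \delta_{w,j}$. There is no real obstacle here; the only care needed is in the classification step, to check that products of cone elements with monoid elements stay in the cone, which is why I would write out the $S_j$ factors as squares when arguing that the mixed $S$/$N$ terms land in $\scN(\cH_{\ge})$.
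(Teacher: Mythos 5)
Your proof is correct and follows essentially the same route as the paper: pass the ideal terms to the right, multiply the $m$ identities, and sort the terms of the expansion into the monoid part $\prod_j S_j$, a cone part (all terms with only $S$'s and $N$'s, at least one $N$), and an ideal part (all terms containing some $Z_j$, plus $(-1)^{m+1}\prod_j W_j \cdot \prod_j P_j$). The degree accounting is also the same, so there is nothing to add.
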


\begin{proof}{Proof.}
Consider for $1 \le j \le m$ the initial incompatibility
$$
S_j + N_j + Z_j + W_j \cdot P_j  = 0
$$
with 
$S_j \in \scM(\cH_{\ne}^2)$, $N_j \in \scN(\cH_{\ge})$,  $Z_j  \in \scZ(\cH_{=})$
and $W_j \in \K[v]$, 
where $\cH = [\cH_{\ne}, \, \cH_{\ge}, \, \cH_{=}]$ is a system of sign conditions
in $\K[v]$. 
We pass $W_j \cdot P_j$ to the right hand side 
in the 
initial incompatibility,
we multiply all the results and we pass $(-1)^m\prod_{1 \le j \le m}W_j \cdot P_j$ 
to the left hand side. We obtain 
$$
\prod_{1 \le j \le m}S_j + N + Z + (-1)^{m+1} \prod_{1 \le j \le m}W_j \cdot P_j = 0 
$$
where $N\in \scN(\cH_{\ge})$ is the sum of all the terms in the expansion of 
$\prod_{1 \le j \le m}(S_j + N_j)$ with the exception of the term $\prod_{1 \le j \le m}S_j$
and 
$Z \in \scZ(\cH_{=})$ is the sum of all the terms in the expansion of 
$\prod_{1 \le j \le m}(S_j + N_j + Z_j)$ which involve at least one of $Z_1, \dots, Z_m$. 
This proves the claim since
$\prod_{1 \le j \le m}S_j \in \scM(\cH_{\ne}^2)$, 
$
N
\in \scN(\cH_{\ge})$
and $Z + (-1)^{m+1} \prod_{1 \le j \le m}W_jP_j \in \scZ(\cH_{=} \cup \{\prod_{1 \le j \le m}P_j\})$.
The degree bound follows easily. 
\end{proof}

\subsubsection{Sums of squares}

The following remark states a very useful algebraic identity.

\begin{remark} \label{lemCS}
Let $\A$ be a commutative ring and $A_1, \dots, A_m,$ $B_1, \dots, B_m\in\A$. Consider
the sum of squares
$$
{\rm N}(A_1, \dots, A_m, B_1, \dots, B_m) = 
\sum_{\sigma  \in \{-1, 1\}^m, \atop
\sigma \ne (1, \dots, 1)} 
\Big(\sum_{1\le j \le m} \sigma(j) A_jB_j \Big)^2 
+ 2^m\sum_{1 \le j, j' \le m, \, j \ne j'} (A_jB_{j'})^2.
$$
Then
\begin{equation}\label{equation_identity_sum_of_squares}
\Big(\sum_{1\le j \le m} A_jB_j \Big)^2 + {\rm N}(A_1, \dots, A_m, B_1, \dots, B_m) = 2^m 
\sum_{1\le j \le m} A_j^2
\cdot 
\sum_{1\le j \le m} B_j^2.
\end{equation}
\end{remark}

We can now prove some more weak inferences. 

\begin{lemma}
\label{sos_non_pos_disjunct}
Let $P_1, \dots, P_m\in\K[u]$. Then
$$
\sum_{1 \le j \le m}  P_j^2  = 0
\ \ \ \vdash \ \ \   \bigwedge_{1 \le j \le m} P_j = 0. 
$$ 

If we have an initial incompatibility 
in $\K[v]$ where $v\supset u$ with monoid part 
$S$
and degree in $w \subset v$ bounded by $\delta_w$,
the final incompatibility has monoid part 
$S^{2}$
and degree in $w$ bounded by 
$$ 2\Big(\delta_w + \max\{\deg_w P_j \ | \ 1 \le j \le m\} - \min\{\deg_w  P_j \ | \ 1 \le j \le m\}\Big).$$
\end{lemma}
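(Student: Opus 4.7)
{Proof sketch.}
The plan is to start from a generic initial incompatibility of $[\bigwedge_{j} P_j=0, \cH]$, namely
$$S + N + Z + \sum_{1 \le j \le m} W_j\cdot P_j = 0$$
with $S \in \scM(\cH_{\ne}^2)$, $N \in \scN(\cH_{\ge})$, $Z \in \scZ(\cH_=)$ and $W_j \in \K[v]$,
and to convert it into an incompatibility of $[\sum_j P_j^2 = 0,\ \cH]$ by squaring the identity and applying the Cauchy--Schwarz-type algebraic identity stated in Remark \ref{lemCS}.

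Concretely, I would rewrite the initial incompatibility as $S+N+Z = -\sum_j W_j P_j$, square both sides, and apply identity \eqref{equation_identity_sum_of_squares} with $A_j = W_j$ and $B_j = P_j$ to obtain
$$(S+N)^2 + 2(S+N)Z + Z^2 + {\rm N}(W_1,\dots,W_m,P_1,\dots,P_m) \,-\, 2^m\Big(\sum_{1 \le j \le m} W_j^2\Big)\cdot \sum_{1 \le j \le m} P_j^2 \;=\; 0.$$
Expanding $(S+N)^2 = S^2 + 2SN + N^2$ and regrouping, this is a final incompatibility with:
\begin{itemize}
\item monoid part $S^2 \in \scM(\cH_{\ne}^2)$;
\item cone part $2SN + N^2 + {\rm N}(W_1,\dots,W_m,P_1,\dots,P_m) \in \scN(\cH_{\ge})$ (using that $S$ is a square, so $2SN \in \scN(\cH_{\ge})$, and that ${\rm N}(\cdot)$ is a sum of squares);
\item ideal part $2(S+N)Z + Z^2 \in \scZ(\cH_=)$ together with the single new component $-2^m\big(\sum_j W_j^2\big)\cdot \sum_j P_j^2 \in \scZ(\{\sum_j P_j^2\})$.
\end{itemize}

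For the degree bound, set $D = \max_j \deg_w P_j$ and $d = \min_j \deg_w P_j$. Since each $W_j P_j$ appears as a component of the initial incompatibility, $\deg_w W_j \le \delta_w - \deg_w P_j \le \delta_w - d$. The components arising from $(S+N+Z)^2$ all have $w$-degree at most $2\delta_w$. The delicate contribution comes from the cross-terms $(W_j P_{j'})^2$ inside ${\rm N}(W,P)$ and from the new component $-2^m(\sum_j W_j^2)\cdot\sum_j P_j^2$, both of which are bounded by $2(\delta_w + D - d)$, matching exactly the announced bound $2(\delta_w + \max_j \deg_w P_j - \min_j \deg_w P_j)$.

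The only potentially tricky point is bookkeeping: one has to verify that after squaring and expanding, every piece lands in the correct monoid, cone, or ideal of the enlarged system $[\sum_j P_j^2 = 0, \cH]$, and that the worst-case degree occurs precisely in the Cauchy--Schwarz correction term. Once the identity in Remark \ref{lemCS} is in hand, this bookkeeping is routine.
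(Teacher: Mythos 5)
Your proposal is correct and follows essentially the same route as the paper: pass $\sum_j W_j\cdot P_j$ to the right, square, add ${\rm N}(W_1,\dots,W_m,P_1,\dots,P_m)$ and substitute via the identity of Remark \ref{lemCS}, yielding the monoid part $S^2$, cone part $2S\cdot N+N^2+{\rm N}(\cdot)$ and ideal part $2(S+N)\cdot Z+Z^2-2^m\big(\sum_j W_j^2\big)\cdot\sum_j P_j^2$, exactly as in the paper's $N_1$ and $Z_1$. Your degree bookkeeping, using $\deg_w W_j\le \delta_w-\deg_w P_j$ and $\deg_w\sum_j P_j^2=2\max_j\deg_w P_j$, matches the paper's (terser) justification.
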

\begin{proof}{Proof.}
Consider the initial incompatibility 
$$S+N+Z+\sum_{1\le j \le m}W_j\cdot P_j=0$$
with 
$S \in \scM(\cH_{\ne}^2)$, $N \in \scN(\cH_{\ge})$,  
$Z \in \scZ(\cH_{=})$ and $W_j\in \K[v]$ for $1 \le j \le m$,
where $\cH$ $=$ $[\cH_{\ne}, \, \cH_{\ge}, \, \cH_{=}]$ is a system of sign conditions
in $\K[v]$. First, we pass $\sum W_j \cdot P_j$
to the right hand side, we raise to the square, we add  
${\rm N}(W_1, \dots, W_m, P_1, \dots, P_m)$ defined as in Remark \ref{lemCS} and we substitute
using (\ref{equation_identity_sum_of_squares}). Then we pass  
$2^m  \sum W_j^2
\cdot \sum P_j^2
$ to the left hand side and we obtain
$$ 
S^{2}+N_1 + {\rm N}(W_1, \dots, W_m, P_1, \dots, P_m) + Z_1 - 2^m
\sum_{1\le j \le m} W_j^2
\cdot
\sum_{1\le j \le m} P_j^2
= 0
$$
where $N_1 = 2N\cdot S  + N^2$ and $Z_1 = 2Z \cdot S  + 2Z \cdot N + Z^2$.
This proves the claim since  
$S^{2} \in \scM(\cH_{\ne}^2)$, $N_1 + {\rm N}(W_1, \dots, W_m, P_1, \dots, P_m) \in \scN(\cH_{\ge})$ and 
 $Z_1 - 2^m\sum W_j^2\cdot
\sum P_j^2 \in \scZ(\cH_{=}  \cup \{ \sum P_j^2 \} )$. 
The degree bound follows easily taking into account that 
$\deg_w \sum P_j^2 = 2 \max \{\deg_w  P_j\}$. 
\end{proof}

\begin{lemma}\label{comb_ne_some_elem_ne}
Let $P_1, \dots, P_m, Q_1, \dots, Q_m \in \K[u]$. Then
$$
\sum_{1 \le j \le m}P_j \cdot Q_j \ne 0 \ \ \ \vdash \ \ \ \sum_{1 \le j \le m}P_j^2  \ne 0.
$$

If we have an initial incompatibility in $\K[v]$ where $v \supset u$ with monoid part 
$
S \cdot (\sum_{1 \le j \le m}P_j^2 )^{2e}
$ 
and degree in $w \subset v$ bounded by $\delta_w$,
the final 
incompatibility has monoid part
$
S \cdot (\sum_{1 \le j \le m}P_j \cdot Q_j)^{4e}
$ 
and degree in $w$ bounded by 
$\delta_w + 4e\max\{\deg_w Q_j \ | \ 1 \le j \le m\}.$
\end{lemma}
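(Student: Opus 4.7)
The plan is to apply the sum-of-squares identity of Remark \ref{lemCS} with $A_j = P_j$ and $B_j = Q_j$, which gives
$$\Bigl(\sum_{1 \le j \le m} P_jQ_j\Bigr)^2 + {\rm N}(P_1,\dots,P_m,Q_1,\dots,Q_m) = 2^m \Bigl(\sum_{1 \le j \le m} P_j^2\Bigr)\Bigl(\sum_{1 \le j \le m} Q_j^2\Bigr),$$
with the middle term a sum of squares (hence in $\scN(\emptyset)$). Expanding the $2e$-th power by the binomial theorem, I obtain an identity
$$2^{2em}\Bigl(\sum P_j^2\Bigr)^{2e}\Bigl(\sum Q_j^2\Bigr)^{2e} = \Bigl(\sum P_jQ_j\Bigr)^{4e} + N'',$$
where $N''$ is the sum of the cross terms $\binom{2e}{k}(\sum P_jQ_j)^{2k}\,{\rm N}(\dots)^{2e-k}$ for $0 \le k < 2e$; each of these terms is a positive rational multiple of a product of squares, so $N'' \in \scN(\emptyset)$.

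Next, starting from an initial incompatibility of $[\sum P_j^2 \ne 0,\ \cH]$ written in the form
$$S \cdot \Bigl(\sum P_j^2\Bigr)^{2e} + N + Z = 0,$$
with $S \in \scM(\cH_{\ne}^2)$, $N \in \scN(\cH_\ge)$ and $Z \in \scZ(\cH_=)$, I would multiply through by $2^{2em}(\sum Q_j^2)^{2e}$ and use the identity above to replace $2^{2em}(\sum P_j^2)^{2e}(\sum Q_j^2)^{2e}$ by $(\sum P_jQ_j)^{4e} + N''$. This yields
$$S \cdot \Bigl(\sum P_jQ_j\Bigr)^{4e} + S \cdot N'' + 2^{2em}\Bigl(\sum Q_j^2\Bigr)^{2e} N + 2^{2em}\Bigl(\sum Q_j^2\Bigr)^{2e} Z = 0.$$
Since $S$ is itself a product of squares and $(\sum Q_j^2)^{2e}$ is a square, each of $S\cdot N''$ and $2^{2em}(\sum Q_j^2)^{2e}N$ lies in $\scN(\cH_\ge)$, the ideal part remains in $\scZ(\cH_=)$, and the monoid part is $S \cdot (\sum P_jQ_j)^{4e} \in \scM((\cH_{\ne}\cup\{\sum P_jQ_j\})^2)$. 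This is precisely an incompatibility of $[\sum P_jQ_j \ne 0,\ \cH]$ with the required monoid part.

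For the degree estimate, multiplying by $(\sum Q_j^2)^{2e}$ inflates the degree in $w$ of any component by at most $4e\max\{\deg_w Q_j\}$, which immediately handles the cone and ideal components since they had degree in $w$ at most $\delta_w$ to begin with. For the new monoid component,
$$\deg_w S + 4e\,\deg_w\Bigl(\sum P_jQ_j\Bigr) \le \deg_w S + 4e\max\{\deg_w P_j\} + 4e\max\{\deg_w Q_j\} \le \delta_w + 4e\max\{\deg_w Q_j\},$$
using that $\deg_w S + 4e\max\{\deg_w P_j\} = \deg_w\bigl(S\cdot(\sum P_j^2)^{2e}\bigr) \le \delta_w$; the $S\cdot N''$ term obeys the same bound since $\deg_w N'' \le \deg_w\bigl(2^{2em}(\sum P_j^2)^{2e}(\sum Q_j^2)^{2e}\bigr)$. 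The main thing to check carefully is therefore not the identity itself but the bookkeeping: that $N''$ is genuinely a sum of squares (so no stray terms leak out of $\scN(\emptyset)$) and that the naive triangle-inequality bound $\deg_w\sum P_jQ_j \le \max\{\deg_w P_j\}+\max\{\deg_w Q_j\}$ is exactly what is needed to absorb the contribution of the $(\sum P_jQ_j)^{4e}$ factor into $\delta_w + 4e\max\{\deg_w Q_j\}$.
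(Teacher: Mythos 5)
Your proof is correct and follows essentially the same route as the paper: multiply the initial incompatibility by $2^{2me}(\sum Q_j^2)^{2e}$ and substitute using the identity of Remark \ref{lemCS}; your $N''$ is exactly the paper's $N_1$ up to the factor $S$, and your degree bookkeeping (using $\deg_w \sum P_j^2 = 2\max\{\deg_w P_j\}$) matches the intended estimate.
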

\begin{proof}{Proof.}
Consider the initial incompatibility
$$
S \cdot \Big(\sum_{1 \le j \le m}P_j^2 \Big)^{2e} + N + Z  = 0
$$
with 
$S \in \scM(\cH_{\ne}^2)$, $N \in \scP(\cH_{\ge})$ and $Z \in \scI(\cH_{=})$,
where $\cH = [\cH_{\ne}, \, \cH_{\ge}, \, \cH_{=}]$ is a system of sign conditions in $\K[v]$.
We multiply this equation  by $2^{2me}(\sum Q_j^2 )^{2e}$, we substitute using
(\ref{equation_identity_sum_of_squares}) and we obtain
$$
S\cdot \Big(\sum_{1 \le j \le m}P_j \cdot Q_j \Big)^{4e} + N_1 + 
2^{2me}N \cdot \Big(\sum_{1\le j \le m} Q_j^2 \Big)^{2e} + 2^{2me}Z\cdot \Big(\sum_{1\le j \le m} Q_j^2 \Big)^{2e}  = 0
$$
where $N_1$ is the  
sum of all the terms in the expansion of 
$S \cdot ((\sum_{1\le j \le m} P_j\cdot Q_j)^2 + {\rm N}(P_1, \dots,  P_m, Q_1, \dots, Q_m))^{2e}$ 
with the exception of the term $S \cdot (\sum_{1\le j \le m} P_j \cdot Q_j)^{4e}$.
This proves the claim since 
$
S\cdot (\sum_{1\le j \le m} P_j \cdot Q_j )^{4e} \in \scM((\cH_{\ne} \cup \{ \sum_{1\le j \le m} P_j \cdot Q_j\})^2)$, 
$N_1 + 2^{2me}N \cdot (\sum_{1\le j \le m} Q_j^2 )^{2e} \in \scN(\cH_{\ge})$ and $
2^{2me}Z\cdot (\sum  Q_j^2 )^{2e} \in \scZ(\cH_{=})
$.
The degree bound follows easily.
\end{proof}

\subsubsection{Case by case reasoning}
\label{casebycase}

We will refer to the weak inferences in the following lemmas as ``case by case reasoning'', 
which
enable us to consider separately the different possible sign conditions in each case.

\begin{lemma} \label{CasParCas_1}
Let $P \in \K[u]$. Then
$$
\vdash \ \ \ P \ne 0 \ \, \vee \ \, P = 0.
$$ 

If we have initial incompatibilities in $\K[v]$ where $v\supset u$ with monoid part 
$S_{1}\cdot P^{2e}$ and $S_{2}$
and degree in $w \subset v$ bounded by $\delta_{w, 1}$
and $\delta_{w, 2}$,  
the final incompatibility has monoid part 
$S_{1}\cdot S_{2}^{2e}$
and degree in $w$ bounded by 
$\delta_{w, 1} + 2e(\delta_{w, 2} - \deg_w P)$.
\end{lemma}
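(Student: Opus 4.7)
The plan is to combine the two initial incompatibilities by eliminating $P$ between them through a multinomial‐expansion trick. Write them as
\[
S_1\cdot P^{2e} + N_1 + Z_1 = 0, \qquad S_2 + N_2 + Z_2' + W\cdot P = 0,
\]
where in the second identity I have isolated the single ideal component $W\cdot P$ stemming from the new equation $P=0$, so that $Z_2'\in\scZ(\cH_{=})$. These are exactly what Definition \ref{def2.1.1} provides once one spells out what the monoid parts $S_1\cdot P^{2e}$ and $S_2$ mean for the two systems $[\{P\},\emptyset,\emptyset],\cH$ and $[\emptyset,\emptyset,\{P\}],\cH$.

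The key algebraic step is to rearrange the second identity as $WP=-(S_2+N_2+Z_2')$ and raise to the $2e$-th power, which, because $2e$ is even, gives
\[
W^{2e}P^{2e} = (S_2+N_2+Z_2')^{2e} = S_2^{2e} + N_3 + Z_3,
\]
where the multinomial expansion splits naturally into the pure-$S_2$ term, a remainder $N_3\in\scN(\cH_{\ge})$ gathering those summands with no factor of $Z_2'$, and a remainder $Z_3\in\scZ(\cH_{=})$ gathering the rest. The inclusion $N_3\in\scN(\cH_{\ge})$ uses crucially that $S_2\in\scM(\cH_{\ne}^2)$ is itself a perfect square, so every summand of the form (positive rational)$\cdot S_2^i\cdot (\text{monomial in components of }N_2)$ is a square times an element of $\scM(\cH_{\ge})$. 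Next I would multiply this identity by $S_1$ and substitute $S_1\cdot P^{2e}=-N_1-Z_1$ from the first incompatibility, producing
\[
S_1\cdot S_2^{2e} + \bigl(W^{2e}N_1 + S_1\cdot N_3\bigr) + \bigl(W^{2e}Z_1 + S_1\cdot Z_3\bigr) = 0.
\]
This is the desired final incompatibility: its monoid part is $S_1\cdot S_2^{2e}\in\scM(\cH_{\ne}^2)$; its cone part is in $\scN(\cH_{\ge})$ because both $W^{2e}$ and $S_1$ are squares, so multiplication by either preserves $\scN(\cH_{\ge})$; and its ideal part is in $\scZ(\cH_{=})$.

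For the degree estimate, the main subtlety, and the step I expect to require the most care, is the observation that $S_1\cdot P^{2e}$ being a single monoid summand of the first incompatibility forces $\deg_w S_1\le \delta_{w,1}-2e\deg_w P$, while $W\cdot P$ being a single ideal component of the second forces $\deg_w W\le \delta_{w,2}-\deg_w P$. Combined with the obvious $\deg_w S_2\le \delta_{w,2}$ and the componentwise bound $\deg_w\le 2e\delta_{w,2}$ for every component of $N_3$ and $Z_3$ coming out of the multinomial expansion, each of the five terms above then has degree in $w$ bounded by $\delta_{w,1}+2e(\delta_{w,2}-\deg_w P)$, as claimed. The only genuine bookkeeping hurdle is in the multinomial expansion itself, where the sign-cone classification and the componentwise degree accounting must be verified term by term.
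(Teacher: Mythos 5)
Your proof is correct and follows essentially the same route as the paper's: isolate $W\cdot P$ in the second incompatibility, raise to the $2e$-th power, sort the multinomial expansion into monoid, cone and ideal parts, and eliminate $P^{2e}$ against the first incompatibility (the paper merely multiplies by $S_1$ one step earlier and treats the degenerate cases $e=0$ and $W=0$ explicitly, which your identities also handle). Your explicit degree bookkeeping, including the observation that $\deg_w S_1\le\delta_{w,1}-2e\deg_w P$ and $\deg_w W\le\delta_{w,2}-\deg_w P$, correctly fills in what the paper dismisses as "follows easily."
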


\begin{proof} {Proof.}
Consider the initial incompatibilities
\begin{equation}\label{init_inc_neq_lema_cas_par_cas_1}
S_1\cdot P^{2e} + N_1 + Z_1   = 0
\end{equation}
and
\begin{equation}\label{init_inc_eq_lema_cas_par_cas_1}
S_2 + N_2 + Z_2 + W\cdot P = 0
\end{equation}
with 
$S_1, S_2 \in \scM(\cH_{\ne}^2)$, $N_1, N_2 \in \scN(\cH_{\ge})$, $Z_1, Z_2 \in \scZ(\cH_{=})$
and $W \in \K[v]$,
where $\cH$ $=$ $[\cH_{\ne}, \, \cH_{\ge}, \, \cH_{=}]$ is a system of sign conditions in 
$\K[v]$.
If $e = 0$ we take (\ref{init_inc_neq_lema_cas_par_cas_1}) as the final incompatibility. If $e\ne 0$ we
proceed as follows. 
We pass $W \cdot P$ to the right hand side in (\ref{init_inc_eq_lema_cas_par_cas_1}), we raise both
sides to the $(2e)$-th power and we multiply the result by $S_1$. We obtain 
\begin{equation}\label{aux_inc_eq_lema_cas_par_cas_1}
S_1 \cdot S_2^{2e} + N_3 + Z_3 = S_1\cdot W^{2e} \cdot P^{2e}  
\end{equation}
where $N_3 \in \scN(\cH_{\ge})$ is the sum of all the terms in the expansion of 
$S_1\cdot (S_2 + N_2 + Z_2)^{2e}$ which do not involve $Z_2$ with the exception of the term $S_1\cdot S_2^{2e}$
and $Z_3 \in \scZ(\cH_{=})$ is the sum of all the terms in the expansion of 
$S_1\cdot (S_2 + N_2 + Z_2)^{2e}$ which involve $Z_2$.
If $W$ is the zero polynomial, we take (\ref{aux_inc_eq_lema_cas_par_cas_1}) as 
the final incompatibility. Otherwise, 
we multiply (\ref{init_inc_neq_lema_cas_par_cas_1}) by $W^{2e}$, we substitute 
$S_1\cdot W^{2e} \cdot P^{2e}$ using 
(\ref{aux_inc_eq_lema_cas_par_cas_1}) and we obtain 
$$
S_1\cdot S_2^{2e} +  N_1\cdot W^{2e} + N_3 + Z_1\cdot W^{2e} + Z_3 = 0.
$$
This proves the claim since 
$S_1\cdot S_2^{2e} \in \scM(\cH_{\ne}^2)$, 
$
N_1\cdot W^{2e} + N_3
\in \scN(\cH_{\ge})$
and $Z_1\cdot W^{2e} + Z_3 \in \scZ(\cH_{=})$.
The degree bound follows easily. 
\end{proof}

\begin{lemma} \label{CasParCas_2}
Let $P \in \K[u]$. Then
$$
P \ne  0  \ \ \  \vdash \ \ \ P > 0 \ \,  \vee \ \, P < 0.
$$ 

If we have initial incompatibilities in $\K[v]$ where $v\supset u$ with monoid part 
$S_{1}\cdot P^{2e_1}$ and $S_{2}\cdot P^{2e_2}$
and degree in $w \subset v$ bounded by $\delta_{w, 1}$
and $\delta_{w, 2}$,  
the final incompatibility has monoid part 
$S_{1}\cdot S_{2}\cdot P^{2(e_1 + e_2)}$
and degree in $w$ bounded by 
$\delta_{w, 1} + \delta_{w, 2}$.
\end{lemma}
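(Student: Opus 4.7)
The plan is to obtain the final incompatibility by multiplying the two initial ones and adding a simple correction term that cancels the odd-in-$P$ contributions. I would first rewrite the initial incompatibilities in the shape
$$
A_1 + N_{1,1}\cdot P = 0, \qquad A_2 - N_{2,1}\cdot P = 0,
$$
where $A_i = S_i\cdot P^{2e_i} + N_{i,0} + Z_i$ collects the monoid, $P$-independent cone, and ideal parts of the initial incompatibility for the case $P > 0$ (resp.\ $P < 0$), and $N_{i,1} \in \scN(\cH_{\ge})$ is the coefficient of the added strict inequality contribution.

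Since $N_{1,1}\cdot P = -A_1$ and $N_{2,1}\cdot P = A_2$, multiplying these identities gives $A_1\cdot A_2 = -N_{1,1}\cdot N_{2,1}\cdot P^2$, which I would rewrite as the algebraic identity
$$
A_1\cdot A_2 + N_{1,1}\cdot N_{2,1}\cdot P^2 = 0.
$$
This is my candidate for the final incompatibility, and expanding $A_1\cdot A_2$ the leading term is precisely $S_1\cdot S_2\cdot P^{2(e_1+e_2)}$; this lies in $\scM((\cH_{\ne}\cup\{P\})^2)$ since $S_1, S_2\in \scM(\cH_{\ne}^2)$ and $P^{2(e_1+e_2)} = (P^2)^{e_1+e_2} \in \scM(\{P\}^2)$.

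I would then verify that the remaining eight terms of $A_1\cdot A_2$ together with the added $N_{1,1}\cdot N_{2,1}\cdot P^2$ partition into a cone part and an ideal part. The four terms $S_1\cdot P^{2e_1}\cdot N_{2,0}$, $N_{1,0}\cdot S_2\cdot P^{2e_2}$, $N_{1,0}\cdot N_{2,0}$ and $N_{1,1}\cdot N_{2,1}\cdot P^2$ lie in $\scN(\cH_{\ge})$, using that $S_i\cdot P^{2e_i}$ and $P^2$ are squares and that $\scN(\cH_{\ge})$ is closed under products; the five terms involving at least one $Z_i$ land in $\scZ(\cH_{=})$.

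For the degree bound, every term in the expansion of $A_1\cdot A_2$ is a product of a component of the first initial incompatibility and a component of the second, so its degree in $w$ is at most $\delta_{w,1}+\delta_{w,2}$; likewise the correction rewrites as $(N_{1,1}\cdot P)\cdot(N_{2,1}\cdot P)$ and obeys the same bound. I do not foresee a substantive obstacle here: the only subtlety is selecting the specific combination of the two initial incompatibilities that produces exactly the required monoid part $S_1\cdot S_2\cdot P^{2(e_1+e_2)}$ rather than a weaker one.
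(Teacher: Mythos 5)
Your proposal is correct and follows essentially the same route as the paper: write each initial incompatibility as $A_i \pm N'_i\cdot P = 0$, multiply the two relations $N'_1\cdot P = -A_1$ and $N'_2\cdot P = A_2$ to get $A_1\cdot A_2 + N'_1\cdot N'_2\cdot P^2 = 0$, and sort the expanded terms into monoid, cone and ideal parts exactly as you describe. The degree bound by $\delta_{w,1}+\delta_{w,2}$ is also obtained the same way.
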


\begin{proof}{Proof.}
Consider the initial incompatibilities
\begin{equation}\label{init_inc_ge_lema_cas_par_cas_2}
S_1\cdot P^{2e_1} + N_1 + N'_1\cdot P + Z_1   = 0
\end{equation}
and
\begin{equation}\label{init_inc_le_lema_cas_par_cas_2}
S_2\cdot P^{2e_2} + N_2 - N'_2\cdot P + Z_2   = 0
\end{equation}
with 
$S_1, S_2 \in \scM(\cH_{\ne}^2)$, $N_1, N'_1, N_2, N'_2 \in \scN(\cH_{\ge})$ and 
$Z_1, Z_2 \in \scZ(\cH_{=})$,
where $\cH$ $=$ $[\cH_{\ne}, \, \cH_{\ge}, \, \cH_{=}]$ is a system of sign conditions in $\K[v]$.
We pass $N'_1\cdot P$ and $-N'_2 \cdot P$ to the right hand side in 
(\ref{init_inc_ge_lema_cas_par_cas_2}) and 
(\ref{init_inc_le_lema_cas_par_cas_2}), 
we multiply the results and we pass $-N'_1\cdot N'_2 \cdot P^2$ to the left hand side. We
obtain 
$$
S_{1}\cdot S_{2} \cdot P^{2(e_1 + e_2)} +  N_3 + N'_1\cdot N'_2 \cdot P^2 + Z_3 = 0
$$
where $N_3 =  N_1\cdot S_2 \cdot P^{2e_2} + N_2 \cdot S_1 \cdot P^{2e_1}  + N_1 \cdot N_2$ and $Z_3 = 
Z_1 \cdot S_2 \cdot P^{2e_2} + Z_2 \cdot S_1 \cdot P^{2e_1} + Z_1 \cdot N_2 + Z_2 \cdot N_1  + Z_1 \cdot Z_2$.
This proves the claim since
$S_1 \cdot S_2 \cdot P^{2(e_1 + e_2)}  \in \scM((\cH_{\ne}\cup\{P\})^2)$, 
$
N_3 + N'_1 \cdot N'_2 \cdot P^2 
\in \scN(\cH_{\ge})$
and $Z_3 \in \scZ(\cH_{=})$.
The degree bound follows easily. 
\end{proof}

\begin{lemma} \label{CasParCas_3}
Let $P \in \K[u]$. Then
$$
\vdash \ \ \ P > 0  \ \, \vee \ \, P < 0 \  \, \vee \ \, P = 0.
$$

If we have initial incompatibilities in $\K[v]$ where $v\supset u$ with monoid part 
$S_{1} \cdot P^{2e_1},  S_{2} \cdot P^{2e_2}$ and $S_{3}$
and degree in $w \subset v$ bounded by $\delta_{w, 1}$,
$\delta_{w, 2}$ and $\delta_{w, 3}$, 
the final incompatibility has monoid part 
$S_{1} \cdot S_{2} \cdot S_3^{2(e_1 + e_2)}$
and degree in $w$ bounded by 
$\delta_{w, 1} + \delta_{w, 2} + 2(e_1 + e_2)(\delta_{w, 3} - \deg_w P)$.
\end{lemma}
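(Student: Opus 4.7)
The plan is to derive this three-way case distinction by composing the two previously established case-by-case lemmas: first use Lemma \ref{CasParCas_1} to split into $P \ne 0 \vee P = 0$, and inside the $P \ne 0$ branch further split using Lemma \ref{CasParCas_2} into $P > 0 \vee P < 0$. Since the lemmas we will chain have already been proved with explicit monoid-part and degree-bound tracking, the whole argument reduces to verifying that the bounds compose in precisely the claimed way.

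More concretely, I would first feed the two initial incompatibilities with monoid parts $S_1 \cdot P^{2e_1}$ and $S_2 \cdot P^{2e_2}$ (the $P>0$ and $P<0$ branches) into Lemma \ref{CasParCas_2}. The output is an incompatibility with the system $\cH$ augmented by $P \ne 0$, having monoid part $S_1 \cdot S_2 \cdot P^{2(e_1+e_2)}$ and degree in $w$ bounded by $\delta_{w,1}+\delta_{w,2}$. This intermediate incompatibility is exactly of the shape required as the $P \ne 0$ input to Lemma \ref{CasParCas_1}, with exponent $e = e_1 + e_2$.

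Next I would apply Lemma \ref{CasParCas_1} using this intermediate incompatibility together with the given initial incompatibility for $P = 0$ (monoid part $S_3$, degree bound $\delta_{w,3}$). Substituting into the bound from Lemma \ref{CasParCas_1}, the resulting monoid part is
$$(S_1 \cdot S_2) \cdot S_3^{2(e_1 + e_2)} \;=\; S_1 \cdot S_2 \cdot S_3^{2(e_1 + e_2)},$$
and the degree in $w$ is bounded by
$$(\delta_{w,1} + \delta_{w,2}) + 2(e_1 + e_2)\bigl(\delta_{w,3} - \deg_w P\bigr),$$
which matches the statement of the lemma exactly.

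There is no real obstacle: the argument is purely a bookkeeping composition, and the only point requiring care is the transport of the exponent $e_1 + e_2$ from the output of Lemma \ref{CasParCas_2} into the $2e$ slot of Lemma \ref{CasParCas_1}'s monoid-part and degree formulas. In particular one should check the degenerate situations (e.g.\ $e_1 = e_2 = 0$ or $W = 0$ in the underlying proof of Lemma \ref{CasParCas_1}) do not affect the stated bound, but these are already absorbed by the earlier lemma statements.
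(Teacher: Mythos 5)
Your proposal is correct and matches the paper's proof, which likewise obtains the result by composing Lemma \ref{CasParCas_2} (to merge the $P>0$ and $P<0$ branches into a $P\ne 0$ incompatibility with monoid part $S_1\cdot S_2\cdot P^{2(e_1+e_2)}$) with Lemma \ref{CasParCas_1} (taking $e=e_1+e_2$). The bookkeeping of the monoid part and degree bound is exactly as you describe.
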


\begin{proof}{Proof.}
Follows from Lemmas \ref{CasParCas_1} and \ref{CasParCas_2}.
\end{proof}

\begin{lemma}\label{lem:multiple_case_by_case}
Let $P_1, \dots, P_m \in \K[u]$. Then
$$\vdash \ \ \ \bigvee_{J \subset \{1, \dots, m\}}
\Big(  \bigwedge_{j \not \in J} P_j \ne 0 ,  \ \bigwedge_{j  \in J} P_j = 0  \Big).   
$$ 

If we have initial incompatibilities in $\K[v]$ where $v\supset u$ 
with monoid part 
$
S_{J}\cdot \prod_{j \not \in J}P_j^{2e_{J, j}}
$,
degree in $w \subset v$ bounded by $\delta_{w}$,
and $e_{J, j} \le e \in \N_*$, 
the final incompatibility has monoid part 
$$
\prod_{J \subset \{1, \dots, m\}}S_{J}^{e'_{J}}
$$
with $e'_J \le 2^{2^{m+1} - m -2}e^{2^m - 1}$
and degree in $w$ bounded by 
$
2^{2^{m+1}  -2}e^{2^m - 1}\delta_{w}
$.
\end{lemma}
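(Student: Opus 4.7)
The plan is to proceed by induction on $m$. The base case $m=1$ is essentially Lemma \ref{CasParCas_1}: there are only two subsets $J=\emptyset$ and $J=\{1\}$, the initial monoid parts take the form $S_\emptyset \cdot P_1^{2e_{\emptyset,1}}$ and $S_{\{1\}}$, and Lemma \ref{CasParCas_1} produces a final monoid part $S_\emptyset \cdot S_{\{1\}}^{2e_{\emptyset,1}}$ together with a degree bound $\delta_{w,1}+2e_{\emptyset,1}(\delta_{w,2}-\deg_w P_1) \le (1+2e)\delta_w \le 4e\delta_w$, matching the claimed $M_1 \le 2e$ and $D_1 \le 4e$.

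For the inductive step, partition the $2^m$ subsets $J \subset \{1,\ldots,m\}$ according to whether $1 \in J$. For each subset $J$ with $1 \notin J$, rewrite the given initial incompatibility (whose monoid part is $S_J \cdot P_1^{2e_{J,1}} \cdot \prod_{j \notin J, j\ne 1} P_j^{2e_{J,j}}$) as an incompatibility in the enlarged context $\cH \cup \{P_1 \ne 0\}$ with monoid part $\tilde S_J \cdot \prod_{j \notin J, j\ne 1} P_j^{2e_{J,j}}$, where $\tilde S_J = S_J \cdot P_1^{2e_{J,1}}$ now belongs to $\scM((\cH_{\ne} \cup \{P_1\})^2)$. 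Applying the inductive hypothesis to the family $(P_2,\ldots,P_m)$ in this context yields an intermediate incompatibility of $\cH, P_1 \ne 0$ whose monoid part factors as $\hat S_1 \cdot P_1^{2E_1}$, with $\hat S_1 = \prod_{J \not\ni 1} S_J^{\tilde e_J}$, $\tilde e_J \le M_{m-1}$, and $E_1 = \sum_{J \not\ni 1} e_{J,1}\tilde e_J \le 2^{m-1} e\, M_{m-1}$. Symmetrically, the subsets $J$ with $1 \in J$ (identified with subsets of $\{2,\ldots,m\}$ in the context $\cH \cup \{P_1 = 0\}$) yield by induction an intermediate incompatibility of $\cH, P_1 = 0$ with monoid part $\hat S_2 = \prod_{J \ni 1} S_J^{\tilde e'_J}$, $\tilde e'_J \le M_{m-1}$. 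Both intermediate incompatibilities have degree in $w$ bounded by $D_{m-1}\delta_w$.

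A single application of Lemma \ref{CasParCas_1} to these two intermediate incompatibilities, with $E_1$ playing the role of the exponent, produces the desired final incompatibility with monoid part $\hat S_1 \cdot \hat S_2^{2E_1}$. For $J \not\ni 1$ the exponent in this product is $\tilde e_J \le M_{m-1}$, and for $J \ni 1$ it is $2E_1 \tilde e'_J \le 2 \cdot 2^{m-1}eM_{m-1} \cdot M_{m-1} = 2^m e M_{m-1}^2$. Similarly, the degree bound from Lemma \ref{CasParCas_1} is at most $D_{m-1}\delta_w + 2E_1 \cdot D_{m-1}\delta_w \le 2^{m+1} e\, M_{m-1}\, D_{m-1}\,\delta_w$. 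With the inductive hypotheses $M_{m-1} = 2^{2^m - m - 1}e^{2^{m-1}-1}$ and $D_{m-1} = 2^{2^m-2}e^{2^{m-1}-1}$, a direct calculation gives $M_m \le 2^m e\, M_{m-1}^2 = 2^{2^{m+1}-m-2} e^{2^m -1}$ and $D_m \le 2^{m+1}e\, M_{m-1}\, D_{m-1} = 2^{2^{m+1}-2} e^{2^m -1}$, closing the induction. The main obstacle is not conceptual but purely the bookkeeping needed to check that these doubly-exponential recurrences close to exactly the stated closed forms, and that one may safely absorb the factors $P_1^{2e_{J,1}}$ into the enlarged context without losing the uniform bound $e_{J,j}\le e$ required to apply the inductive hypothesis.
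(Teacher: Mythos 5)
Your proof is correct and follows exactly the route the paper indicates: the paper's own proof is the single sentence ``induction on $m$ using Lemma \ref{CasParCas_1},'' and your argument (splitting subsets $J$ by whether $1\in J$, absorbing $P_1^{2e_{J,1}}$ into the monoid part of the enlarged context, applying the inductive hypothesis twice, and closing with one application of Lemma \ref{CasParCas_1}) is precisely the intended induction, with the recurrences $M_m = 2^m e M_{m-1}^2$ and $D_m = 2^{m+1}e M_{m-1}D_{m-1}$ closing to the stated bounds.
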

 
\begin{proof}{Proof.} Easy to prove by induction on $m$ using Lemma \ref{CasParCas_1}.
\end{proof}

\begin{lemma}\label{lem:multiple_case_by_case_intermediate}
Let $P_1, \dots, P_m \in \K[u]$. Then
$$\bigwedge_{1 \le j \le m} P_j \ne 0 \ \ \ \vdash \ \ \ \bigvee_{J \subset \{1, \dots, m\}}
\Big(  \bigwedge_{j  \in J} P_j > 0 ,  \ \bigwedge_{j  \not \in J} P_j < 0  \Big).   
$$ 

If we have initial incompatibilities in $\K[v]$ where $v\supset u$ 
with monoid part 
$
S_{J} \cdot \prod_{j}P_j^{2e_{J, j}}
$,
degree in $w \subset v$ bounded by $\delta_{w}$,
and $e_{J, j} \le e \in \N$,  
the final incompatibility has monoid part 
$$
\prod_{J \subset \{1, \dots, m\}}S_{J}
\cdot
\prod_{1 \le j \le m}P_j^{2e'_j}
$$
with $e'_J \le 2^me$ and degree in $w$ bounded by 
$2^m\delta_{w}$.
\end{lemma}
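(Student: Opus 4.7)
The plan is to proceed by induction on $m$, using Lemma \ref{CasParCas_2} as the engine of the inductive step. The base case $m=1$ is exactly Lemma \ref{CasParCas_2}. For the inductive step, I would group the $2^m$ initial incompatibilities into $2^{m-1}$ pairs according to the sign of $P_m$: for each $J' \subset \{1,\dots,m-1\}$, pair the subsets $J'$ and $J' \cup \{m\}$, which agree on the prescribed signs of $P_1,\dots,P_{m-1}$ but differ on the sign of $P_m$. To each pair I would apply Lemma \ref{CasParCas_2} with $P = P_m$, reading $S_J \cdot \prod_{j\neq m} P_j^{2e_{J,j}}$ as the ``$S$'' factor of that lemma and $e_{J,m}$ as the exponent of $P_m$. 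This discharges the disjunction $P_m>0 \vee P_m<0$ at the cost of introducing $P_m \neq 0$ into the context $\cH$, yielding $2^{m-1}$ intermediate incompatibilities indexed by $J' \subset \{1,\dots,m-1\}$.

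Next I would invoke the induction hypothesis applied to $P_1,\dots,P_{m-1}$, with extra context $\cH \cup \{P_m \neq 0\}$, to combine these $2^{m-1}$ intermediate incompatibilities into a single final one under $[\bigwedge_{j\le m-1} P_j\neq 0, \ \cH \cup \{P_m \neq 0\}]$, which is the same as $[\bigwedge_{j\le m} P_j\neq 0, \ \cH]$, as desired.

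For the bookkeeping, each application of Lemma \ref{CasParCas_2} simply \emph{multiplies} the two input monoid parts, since $(S_1 \cdot P^{2e_1})\cdot(S_2 \cdot P^{2e_2}) = S_1\cdot S_2 \cdot P^{2(e_1+e_2)}$. Iterating across the $m$ rounds, the final monoid part is therefore the product of all $2^m$ initial monoid parts, namely
$$
\prod_{J \subset \{1,\dots,m\}} S_J \ \cdot \ \prod_{1\le j \le m} P_j^{2 e'_j}, \qquad e'_j \;=\; \sum_{J \subset \{1,\dots,m\}} e_{J,j} \;\le\; 2^m e,
$$
since $\#\{J \subset \{1,\dots,m\}\} = 2^m$ and each $e_{J,j}\le e$; this matches the claim. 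Similarly, Lemma \ref{CasParCas_2} records the final degree as the \emph{sum} of the two input degrees, so after $m$ rounds the degree in $w$ is bounded by $2^m \delta_w$.

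There is no real obstacle here; the proof is a routine induction, and the only care needed is to verify that the monoid and degree bounds compose under iterated application of Lemma \ref{CasParCas_2} without extra multiplicative losses beyond the factor $2^m$, which is transparent from the multiplicative and additive forms noted above.
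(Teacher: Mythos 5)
Your proof is correct and follows exactly the route the paper intends: the paper's own proof is just the one-line remark that the result follows ``by induction on $m$ using Lemma \ref{CasParCas_2}'', and your pairing of $J'$ with $J'\cup\{m\}$, the absorption of $\prod_{j\neq m}P_j^{2e_{J,j}}$ into the monoid part of the context (legitimate since those $P_j\neq 0$ are in the hypotheses), and the multiplicative/additive bookkeeping giving $2^m e$ and $2^m\delta_w$ are precisely the details that remark leaves implicit.
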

 
\begin{proof}{Proof.} 
Easy to prove by induction on $m$ using Lemma \ref{CasParCas_2}.
\end{proof}

\begin{lemma}\label{lem:multiple_case_by_case_with_signs}
Let $P_1, \dots, P_m \in \K[u]$. Then
$$\vdash \ \ \ \bigvee_{J \subset \{1, \dots, m\}
\atop
J' \subset \{1, \dots, m\}\setminus J}
\Big(  \bigwedge_{j  \in J'}  P_j > 0,  \bigwedge_{j  \not \in J \cup J'}
  P_j < 0, \ \bigwedge_{j  \in J} P_j = 0  \Big).   
$$ 

If we have initial incompatibilities in $\K[v]$ where $v\supset u$ 
with monoid part 
$
S_{J,J'} \cdot \prod_{j \not \in J}P_j^{2e_{J, J', j}}
$,
degree in $w \subset v$ bounded by $\delta_{w}$,
and $e_{J, J', j} \le e \in \N_*$, 
the final incompatibility has monoid part 
$$
\prod_{J \subset \{1, \dots, m\} \atop J' \subset \{1, \dots, m\}\setminus J}S_{J,J'}^{e'_{J,J'}}
$$
with $e'_{J,J'} \le 2^{2^{m+1} + m2^m - 2m -2}e^{2^m - 1}$
and degree in $w$ bounded by 
$
2^{2^{m+1} + m2^m -2}e^{2^m - 1}\delta_{w}
$.
\end{lemma}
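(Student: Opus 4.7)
The statement is the three-sign analogue of Lemma \ref{CasParCas_3}: every $P_j$ is either strictly positive, strictly negative, or zero, giving $3^m$ sign vectors. The natural plan is to mimic Lemma \ref{CasParCas_3}'s reduction to Lemmas \ref{CasParCas_1} and \ref{CasParCas_2}, here by composing Lemma \ref{lem:multiple_case_by_case} (zero versus non-zero) with Lemma \ref{lem:multiple_case_by_case_intermediate} (positive versus negative, assuming non-zero). The two applications have to be carried out in a specific order: I first resolve the inner strict-sign distinction for each fixed $J$, and then feed the resulting intermediate incompatibilities, one per $J$, into the outer zero/non-zero split.

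For Step 1, I fix $J \subset \{1, \dots, m\}$ and regard the equalities $\bigwedge_{j \in J} P_j = 0$ as part of an enlarged outer context $\cH' = [\bigwedge_{j \in J} P_j = 0,\ \cH]$. Lemma \ref{lem:multiple_case_by_case_intermediate}, applied to the $m - |J|$ polynomials $\{P_j : j \notin J\}$ with the $2^{m-|J|}$ input incompatibilities $\lda \cF_{J,J'},\ \cH \rda$ indexed by $J' \subset \{1, \dots, m\} \setminus J$, yields an intermediate incompatibility of $[\bigwedge_{j \notin J} P_j \ne 0,\ \bigwedge_{j \in J} P_j = 0,\ \cH]$ whose monoid part is $\bigl(\prod_{J'} S_{J,J'}\bigr)\cdot \prod_{j \notin J} P_j^{2\tilde{e}_{J,j}}$, with $\tilde{e}_{J,j} \le 2^{m-|J|} e \le 2^m e$, and whose degree in $w$ is bounded by $2^{m-|J|} \delta_w \le 2^m \delta_w$. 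For Step 2, I apply Lemma \ref{lem:multiple_case_by_case} to $P_1, \dots, P_m$ with outer context $\cH$, using the $2^m$ intermediate incompatibilities from Step 1 as inputs; the relevant parameters of that lemma are replaced by $e \mapsto 2^m e$ and $\delta_w \mapsto 2^m \delta_w$, and the rôle of the $S_J$ is played by $\prod_{J'} S_{J,J'}$. Writing $e'_{J,J'} = e''_J$ for the resulting exponent attached to each $S_{J,J'}$, one computes
\[e'_{J,J'} \le 2^{2^{m+1} - m - 2}(2^m e)^{2^m - 1} = 2^{2^{m+1} + m 2^m - 2m - 2}\, e^{2^m - 1}\]
and the degree bound
\[2^{2^{m+1} - 2}(2^m e)^{2^m - 1}\cdot 2^m \delta_w = 2^{2^{m+1} + m 2^m - 2}\, e^{2^m - 1}\,\delta_w,\]
matching the statement exactly.

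No new algebraic identity is required; the entire proof is a clean composition of the two previous lemmas together with the exponent arithmetic. The only things to check carefully are that the equalities $\bigwedge_{j \in J} P_j = 0$ appearing in the target $\cF_{J,J'}$ fit into the outer context of Lemma \ref{lem:multiple_case_by_case_intermediate} without disturbing its monoid-part bookkeeping (which is immediate, since they contribute only to the ideal part), and that the exponent inflation $2^{m-|J|}$ from the inner step is correctly propagated through the outer step so that the final monoid exponent ends up as $2^{2^{m+1} + m 2^m - 2m - 2}\, e^{2^m - 1}$ rather than something larger; this is precisely the content of the two arithmetic identities displayed above.
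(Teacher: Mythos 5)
Your proposal is correct and follows exactly the paper's route: the paper's own proof is the one-line remark that the lemma follows from Lemmas \ref{lem:multiple_case_by_case} and \ref{lem:multiple_case_by_case_intermediate}, composed in precisely the order you describe (inner strict-sign split for each fixed $J$, then the outer zero/non-zero split). Your exponent and degree arithmetic, $2^{2^{m+1}-m-2}(2^m e)^{2^m-1}=2^{2^{m+1}+m2^m-2m-2}e^{2^m-1}$ and $2^{2^{m+1}-2}(2^m e)^{2^m-1}\cdot 2^m\delta_w = 2^{2^{m+1}+m2^m-2}e^{2^m-1}\delta_w$, checks out and supplies the bookkeeping the paper leaves implicit.
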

 
\begin{proof}{Proof.} Follows from Lemmas \ref{lem:multiple_case_by_case} 
and \ref{lem:multiple_case_by_case_intermediate}.
\end{proof}

\subsection{Weak existence} \label{secBasicWE}

Weak inferences are constructions to obtain new incompatibilities from other 
incompatibilities already known. It will be  useful sometimes to introduce 
in the new incompatibilities, new sets of auxiliary variables.
Weak existence is a generalization of 
weak inference which 
enables us to do so. 

\begin{definition}[Weak Existence] \label{defWE} 
Consider disjoint sets of variables 
$u=(u_1,\ldots,u_n)$, $t_0=(t_{0,1},\ldots,t_{0,r_0})$,  $t_1=(t_{1,1},\ldots,t_{1,r_1}), \dots,$ 
$t_m=(t_{m,1},\ldots,t_{m,r_m})$.
Let $\cF(
t_0)$  be a system of sign conditions in $\K[u][t_0]$ and $\cF_1(
t_1) \dots, \cF_m(
t_m)$
be
systems of sign conditions  in $\K[u][t_1], \dots, \K[u][t_m]$. 
A {\em weak existence}
$$
\exists t_0 \ [\; \cF(
t_0) \;] \ \ \ \vdash \ \ \  
\bigvee_{1 \le j \le m} \exists t_j  \ [\; {\cF}_j(
t_j) \;]  
$$
is a construction that, 
given  any system of sign conditions $\cH
$ in $\K[v]$ with $v \supset u$, $v$ disjoint from 
$t_0, t_1, \dots, t_m$, 
 and \emph{initial} incompatibilities  
$$\lda {\cF}_1(
t_1),  \ {\cH}(v) \rda_{\K[v][t_1]}, \dots, \lda {\cF}_m(
t_m),  \ {\cH}(v)\rda_{\K[v][t_m]}$$
produces  
an incompatibility 
$$\lda \cF(
 t_0), \,  \cH(v) \rda_{\K[v][t_0]}$$
called the \emph{final} incompatibility.
\end{definition}

Note that the sets of variables $t_1, \dots, t_m$ 
which appear in the initial incompatibilities have been eliminated in the final incompatibility and
also the set of variables $t_0$ which do not appear in the initial incompatibilities 
has been introduced in the final incompatibility.

Most of the times, it will not be the case that we want to introduce and eliminate sets of variables 
simultaneously.  So,  for instance,
we write
$$
\cF
  \ \ \ \vdash \ \ \  
\bigvee_{1 \le j \le m} \exists t_j  \ [\; {\cF}_j(
t_j) \;] 
$$
for a weak existence in which the sets of variables $t_1, \dots, t_m$ have been eliminated but no 
new set of variables has been introduced.
We also write 
$$
\exists t_0 \ [\; \cF(
t_0) \;]    \ \ \ \vdash \ \ \  
\bigvee_{1 \le j \le m} \ {\cF}_j
$$
for a weak existence in which no sets of variables have been eliminated but a
new set of variables has been introduced.

We illustrate the concept of weak existence with a few lemmas. In general, 
we need to make a careful analysis of 
the degree bounds considering also the auxiliary variables.

\begin{lemma}  \label{lemma_weak_existence_inverse_sign}
Let  $P\in\K[u]$. Then 
$$
P \ne 0 \ \ \  \vdash \ \ \ \exists t \ [\; t\ne 0, \ P \cdot t = 1 \;].
$$

Suppose we have an initial incompatibility in $\K[v][ t]$ where $v \supset u$ and
$t \not \in v$, 
with monoid part $S \cdot t^{2e}$, 
degree in $w \subset v$ bounded by $\delta_w$ 
and degree in $t$ bounded by $\delta_t$. 
Let $\bar \delta_t$ be the smallest even number greater than or 
equal to $\delta_t$.
Then, the final 
incompatibility has monoid part $S \cdot P^{\bar \delta_t - 2e}$ and degree in $w$  bounded by 
$\delta_w + \bar \delta_t \deg_w P$.
\end{lemma}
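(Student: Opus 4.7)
The plan is to eliminate the auxiliary variable $t$ from the initial incompatibility by formally substituting $t = 1/P$ and clearing denominators. Write the initial incompatibility in $\K[v,t]$ as
$$S(v) \cdot t^{2e} + N(v,t) + Z(v,t) + W(v,t)\,(Pt - 1) = 0$$
with $S \in \scM(\cH_{\ne}^2)$, $N \in \scN(\cH_{\ge})$, $Z \in \scZ(\cH_{=})$, $W \in \K[v,t]$, noting that the only possibly $t$-dependent piece coming from $\cF_1$ is the annihilator of $Pt-1$. The key algebraic identity is
$$P^{\bar\delta_t}\,t^k \;=\; P^{\bar\delta_t - k}\,(Pt)^k \;\equiv\; P^{\bar\delta_t - k} \pmod{Pt - 1}$$
for each $k \le \bar\delta_t$. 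I would multiply the initial identity by $P^{\bar\delta_t}$ and apply this term by term.

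The choice of $\bar\delta_t$ as the smallest \emph{even} integer $\ge \delta_t$ is what preserves the algebraic structure of the nonnegative cone. For each component $\omega_i V_i(v,t)^2 M_i(v)$ of $N$ with $\alpha_i = \deg_t V_i$, I would write $P^{\alpha_i} V_i(v,t) = V_i^{*}(v, Pt)$ for a polynomial $V_i^{*} \in \K[v,y]$, so that
$$P^{\bar\delta_t}\,\omega_i V_i^2 M_i \;=\; \omega_i\,P^{\bar\delta_t - 2\alpha_i}\,V_i^{*}(v, Pt)^2 \cdot M_i \;\equiv\; \omega_i\,\bigl(P^{(\bar\delta_t - 2\alpha_i)/2}\,V_i^{*}(v,1)\bigr)^2 \cdot M_i \pmod{Pt - 1}.$$
Since $\bar\delta_t - 2\alpha_i$ is a nonnegative even integer (using $2\alpha_i \le \delta_t \le \bar\delta_t$), the reduction lands in $\scN(\cH_{\ge})$. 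The monoid term becomes $S \cdot P^{\bar\delta_t - 2e} \in \scM((\cH_{\ne} \cup \{P\})^2)$, the ideal term reduces to an element of $\scZ(\cH_{=})$, and the $W\,(Pt-1)$ term vanishes mod $Pt-1$.

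After this reduction, the transformed identity splits as $A(v) + (Pt - 1)\,B(v,t) = 0$ in $\K[v,t]$, where $A(v) \in \K[v]$ is the sum of the reduced pieces. Since $P$ is a nonzero polynomial, $Pt - 1$ is a nonzero polynomial in $\K[v,t]$ which is monic in $t$; comparing degrees in $t$ forces $B = 0$ and hence $A(v) = 0$ is a polynomial identity in $\K[v]$. This is the sought incompatibility $\lda P \ne 0,\, \cH \rda_{\K[v]}$ with monoid part $S \cdot P^{\bar\delta_t - 2e}$. The degree in $w$ bound $\delta_w + \bar\delta_t \deg_w P$ follows because the multiplication by $P^{\bar\delta_t}$ contributes at most $\bar\delta_t \deg_w P$ to the $w$-degree of each piece, and each substitution $(Pt)^k \mapsto 1$ or $t^k \mapsto P^{\bar\delta_t - k}/P^{\bar\delta_t - k}$ only redistributes this weight.

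The main obstacle is the bookkeeping that ensures the cone part stays a sum of squares times elements of $\cH_{\ge}$ after multiplying by $P^{\bar\delta_t}$; this is exactly what forces the parity choice in $\bar\delta_t$, since we need $\bar\delta_t - 2\alpha_i$ to be even so that $P^{\bar\delta_t - 2\alpha_i}$ is itself a square and $P$ enters the final incompatibility only through $\{P\}^2$ in the monoid, never as a spurious element of $\cH_{\ge}$.
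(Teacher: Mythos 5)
Your proof is correct and follows essentially the same route as the paper's: multiply the initial identity by $P^{\bar\delta_t}$, reduce each piece modulo $P\cdot t-1$ using $(Pt)^k\equiv 1$ (the paper phrases this as taking remainders of $P^{\frac12\bar\delta_t}V_i(t)$ and $P^{\bar\delta_t}W_j(t)$ in the division by $Pt-1$, which is your $V_i^*(v,Pt)\mapsto V_i^*(v,1)$ in different notation), exploit the evenness of $\bar\delta_t$ so the cone part stays a sum of squares, and kill the leftover multiple of $Pt-1$ by a degree-in-$t$ comparison. The only nit is that $Pt-1$ is not monic in $t$ (its leading coefficient is $P$), but your degree argument only needs $P$ to be a nonzero polynomial, so the conclusion stands.
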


\begin{proof}{Proof.}
Consider the initial incompatibility in $\K[v][ t]$
\begin{equation}\label{inc:init_inc_lemma_inverse}
S \cdot t^{2e} + \sum_i \omega_i V^2_i(t) \cdot N_i    +
\sum_j
W_j(t) \cdot Z_j + W(t) \cdot (P \cdot t - 1) = 0
\end{equation}
with $S \in {\scM}({{\cH}_{\neq}}^2)$, $\omega_i \in \K$,  $\omega_i > 0$,
$V_i(t) \in \K[v][t]$ and $N_i \in  {\scM}({\cH}_\ge)$ for every $i$,  $W_j(t) \in \K[v][t]$
and $Z_j \in {\cH}_=$ for every $j$ and $W(t) \in\K[v][t]$,
where $\cH = [\cH_{\ne}, \cH_{\ge}, \cH_{=}]$ is a system of sign conditions in $\K[v]$.

For every $i$, let $V_{i0}$ be the remainder of $P^{\frac12 \bar \delta_t} \cdot V(t)$ in the division 
by $Pt-1$ considering $t$ as the main variable; note that $\deg_w V_{i0} \le \deg_w V_i(t) + 
\frac12 \bar \delta_t \deg_w P$. 
Similarly, for every $j$, let $W_{j0}$ be the remainder of $P^{\bar \delta_t} \cdot W_j(t)$ in the division 
by $Pt-1$ considering $t$ as the main variable; 
note that $\deg_w W_{j0} \le \deg_w W_j(t) + 
\bar \delta_t \deg_w P$.

We multiply (\ref{inc:init_inc_lemma_inverse}) by $P^{\bar \delta_t}$
and deduce that there exists $W'(t) \in \K[v][t]$ such that
$$
S \cdot P^{\bar \delta_t - 2e} + \sum_i \omega_i V^2_{i0}  \cdot N_i    +
\sum_j W_{j0} \cdot Z_j + W'(t) \cdot (P \cdot t - 1) = 0.
$$
Looking at the degree in $t$, we have that $W'(t)$ is the zero polynomial. This proves
the claim since $S  \cdot P^{\bar \delta_t - 2e} \in {\scM}
(({\cH}_{\neq} \cup P)^2)$, 
$\sum \omega_i V^2_{i0} \cdot N_i
\in  {\scM}({\cH}_\ge)$ 
and $\sum W_{j0} \cdot Z_j \in {\cH}_=$. The degree bound follows easily. 
\end{proof}

\begin{lemma}\label{lemma_square_root_real_number}
Let $P \in \K[u]$. Then
$$
P \ge 0 \ \ \ \vdash \ \ \ \exists t \  [\; t^2 = P\;].
$$ 

If we have an initial incompatibility in $\K[v][ t]$ where  $v \supset u$ and
$t \not \in v$, 
with monoid part 
$S$, 
degree in $w \subset v$ bounded by $\delta_w$
and  
degree in $t$ bounded by $\delta_t$,
the final 
incompatibility has the same monoid part
and degree in $w$ bounded by 
$
\delta_w + \frac12\delta_t\deg_w P.
$

\end{lemma}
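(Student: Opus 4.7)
The plan is to take an initial incompatibility of $\cF_1(u,t_1)=[\emptyset,\emptyset,\{t^2-P\}]$ together with $\cH$ in $\K[v,t]$, and reduce modulo $t^2-P$ to eliminate $t$ and produce the final incompatibility of $[\emptyset,\{P\},\emptyset]$ together with $\cH$ in $\K[v]$. Concretely, the initial incompatibility has the shape
$$
S+\sum_i \omega_i V_i(t)^2\cdot N_i+\sum_j W_j(t)\cdot Z_j+W(t)\cdot(t^2-P)=0,
$$
with $S\in\scM(\cH_{\ne}^2)$, $\omega_i\in\K_{>0}$, $V_i(t),W_j(t),W(t)\in\K[v,t]$, $N_i\in\scM(\cH_{\ge})$ and $Z_j\in\cH_{=}$.

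First I would work in the quotient ring $\K[v,t]/(t^2-P)\cong \K[v]\oplus \K[v]\cdot\bar t$, in which $\bar t^2=P$. Writing the remainders of division by $t^2-P$ (with $t$ as main variable) as $V_i(t)\equiv a_i+b_i\bar t$ and $W_j(t)\equiv c_j+d_j\bar t$ with $a_i,b_i,c_j,d_j\in\K[v]$, the identity above becomes, after reduction,
$$
\Bigl(S+\sum_i \omega_i(a_i^2+Pb_i^2)\cdot N_i+\sum_j c_j\cdot Z_j\Bigr)+\bar t\cdot\Bigl(2\sum_i \omega_i a_ib_i\cdot N_i+\sum_j d_j\cdot Z_j\Bigr)=0.
$$
Since $\{1,\bar t\}$ is a $\K[v]$-basis of the quotient, both coefficients vanish. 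Keeping only the first (constant in $\bar t$) one gives
$$
S+\sum_i \omega_i\,b_i^2\cdot(P\cdot N_i)+\sum_i \omega_i\,a_i^2\cdot N_i+\sum_j c_j\cdot Z_j=0,
$$
which is an incompatibility of $\cH$ together with $P\ge 0$: the monoid part is still $S\in\scM(\cH_{\ne}^2)$, the products $P\cdot N_i$ lie in $\scM(\cH_{\ge}\cup\{P\})$ so the cone part belongs to $\scN(\cH_{\ge}\cup\{P\})$, and the last sum belongs to $\scZ(\cH_{=})$. This is exactly the final incompatibility required by the weak existence, with the same monoid part $S$.

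For the degree bound, I would use the congruences $t^{2\ell}\equiv P^{\ell}$ and $t^{2\ell+1}\equiv \bar t P^{\ell}$, together with the constraint $\deg_t V_i^2\le\delta_t$ (hence $\deg_t V_i\le\delta_t/2$) coming from the bound on the degree in $t$ of the components. This yields $\deg_w a_i,\deg_w b_i\le \deg_w V_i+\lfloor\delta_t/4\rfloor\deg_w P$, and an analogous estimate $\deg_w c_j\le \deg_w W_j+\lfloor\delta_t/2\rfloor\deg_w P$. A direct calculation then bounds $\deg_w(\omega_i a_i^2 N_i)$, $\deg_w(\omega_i b_i^2 P N_i)$ and $\deg_w(c_j Z_j)$ all by $\delta_w+\tfrac12\delta_t\deg_w P$, matching the claim.

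The main obstacle to watch for is bookkeeping of the degree in $t$: one must notice that the bound $\delta_t$ applies to the components (in particular to $V_i^2$, not $V_i$), which gives the crucial factor $\tfrac12$ in $\tfrac12\delta_t\deg_w P$; otherwise the bound would be off by a factor of two. Apart from this, the argument is a clean "pass to the quotient modulo $t^2-P$" reduction, entirely analogous in spirit to Lemma~\ref{lemma_weak_existence_inverse_sign} where division by $Pt-1$ played the same role.
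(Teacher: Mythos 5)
Your proposal is correct and follows essentially the same route as the paper: reduce $V_i$ and $W_j$ modulo $t^2-P$ (the paper writes the remainders explicitly as $V_{i1}t+V_{i0}$ and argues via degrees in $t$ that the stray terms vanish, which is exactly your "both coefficients in the basis $\{1,\bar t\}$ vanish" observation), then keep the constant coefficient, absorbing $P\cdot N_i$ into $\scN(\cH_{\ge}\cup\{P\})$. Your degree bookkeeping, including the factor $\tfrac12$ coming from the bound $\delta_t$ applying to the components $V_i^2N_i$ rather than to $V_i$, matches the paper's.
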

\begin{proof}{Proof.} 
Consider the initial incompatibility in $\K[v][t]$
\begin{equation}\label{eq:sq_root_pos_pol}
S + \sum_{i}  \omega_iV_i^2(t) \cdot N_i + \sum_{j}W_{j}(t) \cdot Z_j + W(t) \cdot (t^2 - 
P) = 0
\end{equation}
with $S \in \scM( \cH_{\ne}^2)$, $\omega_i \in \K,$ $\omega_i >0$, $V_i(t) \in \K[v][ t]$ and $N_i \in
\scM(\cH_{\ge})$ for every $i$, $W_j(t) \in \K[v][t]$ and $Z_j \in \cH_{=}$ for every $j$ and $W(t) \in \K[v][t]$,
where $\cH = [\cH_{\ne}, \cH_{\ge}, \cH_{=}]$ is a system of sign conditions in $\K[v]$. 

For every $i$, let $V_{i1} \cdot t + V_{i0}$ be the remainder of $V_i(t)$ in the division by $t^2 - P$ considering
$t$ as the main variable; note that 
$\deg_w V_{i0} \le \deg_w V_i(t) + \frac14\delta_t\deg_w P$ 
and
$\deg_w V_{i1} \le \deg_w V_i(t) + \frac14(\delta_t-2)\deg_w P$. 
Similarly, for every $j$, let $W_{j1}\cdot t + W_{j0}$ be the remainder of $W_j(t)$ in the division by $t^2 - P$ 
considering
$t$ as the main variable; note that 
$\deg_w W_{j0} \le \deg_w W_j(t) + \frac12 \delta_t\deg_w P$.  

From (\ref{eq:sq_root_pos_pol}) we deduce that exists $W'(t) \in \K[v][ t]$ such that 
$$
S + \sum_{i}  \omega_i( V_{i1} \cdot t + V_{i0} )^2 \cdot  N_i + \sum_{j}(W_{j1} \cdot t + W_{j0}) \cdot Z_j  + W'(t) \cdot (t^2 - 
P) = 0.
$$

We rewrite this equation as 
$$
S + \sum_{i}  \omega_i( V_{i1}^2 \cdot P  + V_{i0}^2) \cdot  N_i + \sum_{j}W_{j0} \cdot Z_j + W'''\cdot t  + W''(t)\cdot(t^2 - 
P) = 0.
$$
for some $W''' \in \K[v]$ and $W''(t) \in \K[v][t]$.

Looking at the degrees in $t$, we have that $W''(t)$ is the zero polynomial; and looking
again at the degree in $t$, we have that then also $W'''$ is the zero polynomial. 
This proves the claim since 
$S \in \scM(\cH_{\ne}^2)$, $\sum  \omega_i( V_{i1}^2 \cdot P  + V_{i0}^2) \cdot  N_i \in \scN(\cH_{\ge} \cup \{P\})$ 
and $\sum W_{j0} \cdot Z_j \in \scZ(\cH_{=})$. 
The degree bound follows easily. 
\end{proof}

\begin{lemma}\label{lemma_square_root_real_number_pos_pos}
Let $P \in \K[u]$. Then
$$
P > 0 \ \ \ \vdash \ \ \ \exists t \  [\;t>0, \  t^2 = P\;].
$$ 

If we have an initial incompatibility in $\K[v][ t]$ where $v \supset u$ and $t \not \in v$, 
with monoid part 
$S \cdot t^{2e}$, 
degree in $w \subset v$ bounded by $\delta_w$
and degree in $t$ bounded by $\delta_t$,
the final 
incompatibility has monoid part
$S^2 \cdot P^{2e}$
and degree in $w$ bounded by 
$
2\delta_w + (\max\{1, 2e\} + \delta_t)\deg_w P.$
\end{lemma}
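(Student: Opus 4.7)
The plan is to construct the final incompatibility by forming the product $E(t)\cdot E(-t)=0$ in $\K[v,t]$ and reducing it modulo $t^2-P$, using the initial identity itself to cancel the terms whose signs would otherwise be indeterminate. Writing the initial incompatibility as
\[ E(t) = M + C_1(t) + C_2(t)\cdot t + Z(t) + W(t)(t^2-P) = 0, \]
with $M = S\cdot t^{2e}$, $C_1(t)=\sum_i\omega_i V_i(t)^2 N_i$, $C_2(t)=\sum_j\omega'_j U_j(t)^2 N'_j$, and $Z(t)=\sum_k W_k(t) Z_k$, I note that substituting $t\mapsto -t$ yields the polynomial identity $E(-t)=0$, so $E(t)\cdot E(-t)=0$ is an identity in $\K[v,t]$ that is even in $t$ by symmetry.

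Expanding $E(t)\cdot E(-t)$ produces two problematic contributions: the sign-indeterminate $M(C_2(t)-C_2(-t))t$ and the nonpositive-after-reduction $-C_2(t)C_2(-t)t^2$. The critical trick is to use $E(t)=0$ to express $C_1(t) = -M - C_2(t)t - Z(t) - W(t)(t^2-P)$ (and the analogous identity for $C_1(-t)$) and substitute into the cross term $(C_2(t)C_1(-t)-C_1(t)C_2(-t))t$. A direct computation gives
\[ (C_2(t)C_1(-t) - C_1(t)C_2(-t))t = -M(C_2(t)-C_2(-t))t + 2\,C_2(t)C_2(-t)t^2 + I_1, \]
with $I_1\in\scZ(\cH_=\cup\{t^2-P\})$. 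Plugging this back, the $M(C_2(t)-C_2(-t))t$ contributions cancel and the $t^2$-terms combine with opposite sign to $+C_2(t)C_2(-t)t^2$, leaving
\[ E(t)\cdot E(-t) = M^2 + M(C_1(t)+C_1(-t)) + C_1(t)C_1(-t) + C_2(t)C_2(-t)t^2 + I = 0, \]
with $I\in\scZ(\cH_=\cup\{t^2-P\})$ collecting every ideal contribution.

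Each of the four non-ideal terms is now manifestly a sum of squares in $\K[v,t]$ multiplied by elements of $\scM(\cH_\ge)$ and possibly by $t^2$; the products $C_1(t)C_1(-t)=\sum_{i,i'}\omega_i\omega_{i'}(V_i(t)V_{i'}(-t))^2 N_i N_{i'}$ and its $C_2$ analogue are even in $t$ because swapping indices corresponds to substituting $t\mapsto -t$. Reducing modulo $t^2-P$ replaces every $t^{2k}$ by $P^k$, collapsing the identity to $\K[v]$: $M^2$ becomes the target monoid $S^2P^{2e}$; the three remaining non-ideal terms reduce to elements of $\scN(\cH_\ge\cup\{P\})$ (with the $t$-linear residues arising in each $(V_i(t)V_{i'}(-t))^2 \bmod (t^2-P)$ cancelling under the antisymmetric index sum); and $I$ reduces to $\scZ(\cH_=)$. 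Tracking exponents yields the claimed bound: $2\delta_w$ from doubling in the product, $\delta_t\deg_w P$ from the $t^{2k}\mapsto P^k$ substitution, and $\max\{1,2e\}\deg_w P$ absorbing the monoid reduction $t^{4e}\to P^{2e}$ along with the $t^2\to P$ in the $C_2(t)C_2(-t)t^2$ term.

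The main obstacle is the substitution identity for the cross term $(C_2(t)C_1(-t)-C_1(t)C_2(-t))t$; without it, $E(t)\cdot E(-t)$ retains sign-indeterminate and negative contributions that cannot be assembled into a genuine element of $\scN(\cH_\ge\cup\{P\})$. Once that identity is in hand, the remaining verifications---manifest nonnegativity of the surviving four terms, vanishing of $t$-linear residues by antisymmetry, ideal membership of $I$, and degree bookkeeping---are routine.
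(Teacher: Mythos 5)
Your proof is correct and follows essentially the same route as the paper: the product $E(t)\cdot E(-t)$ with the cancellation of the odd-in-$t$ cross terms is exactly what the paper obtains by substituting $t\mapsto -t$ and invoking Lemma \ref{CasParCas_2}, and your reduction modulo $t^2-P$ with the $t$-linear residues vanishing (by pairing each index term with its image under $t\mapsto -t$) is precisely the content of Lemma \ref{lemma_square_root_real_number}, while the $t^{4e}\to P^{2e}$ and $t^2\to P$ replacements correspond to the paper's use of Lemma \ref{lemma_sum_of_pos_is_pos}. You have simply unrolled these modular weak inferences into one explicit computation, and your degree bookkeeping matches the stated bound.
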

\begin{proof}{Proof.} 
Consider the initial incompatibility in $\K[v][t]$
\begin{equation}
\label{inc:init_lemma_square_root_real_number_pos_pos}
S \cdot t^{2e} + N_1(t) + N_2(t)t   + Z(t)  + W(t) \cdot (t^2 - P)  = 0
\end{equation}
with
$S\in {\scM}({\cal H}_{\neq}^2)$, $N_1(t), N_2(t) \in {\scN}({\cal H}_\ge )_{\K[v][t]}$, 
$Z(t) \in {\scZ}({\cal H}_= )_{\K[v][t]}$ and $W(t)\in \K[v][ t]$, 
where $\cH$ is a system of sign conditions in $\K[v]$.

We substitute $t = -t$ in (\ref{inc:init_lemma_square_root_real_number_pos_pos}) and we 
obtain 
\begin{equation}\label{inc:aux_lemma_sq_root_real_pos_pos}
\lda 
t < 0, \
t^2 = P, \ 
\cH
\rda _{\K[v][t]}
\end{equation}
with the same monoid part and degree bounds.

Then we apply to 
(\ref{inc:init_lemma_square_root_real_number_pos_pos}) 
and (\ref{inc:aux_lemma_sq_root_real_pos_pos})
the weak inference
$$
 t  \not= 0 \ \ \ \vdash \ \ \  t > 0 \ \,  \vee \ \,  t  < 0.  
$$
By Lemma \ref{CasParCas_2}, we obtain 
$$
\lda 
t \ne 0, \
t^2 = P, \ 
\cH
\rda_{\K[v][t]}
$$
with monoid part $S^2 \cdot t^{4e}$, degree in $w$ bounded by $2\delta_w$ and 
degree in $t$ bounded by $2\delta_t$.
Since the exponent of $t$ in the monoid part is a multiple of $4$, this incompatibility is also an 
incompatibility 
\begin{equation}\label{inc:aux_lemma_sq_root_real_pos_pos_2}
\lda 
t^2 > 0, \
t^2 = P, \ 
\cH
\rda_{\K[v][t]}.
\end{equation}

Then we apply to (\ref{inc:aux_lemma_sq_root_real_pos_pos_2}) the weak inference
$$
P > 0, \ t^2 = P \ \ \ \vdash \ \ \ t^2 > 0.
$$
By Lemma \ref{lemma_sum_of_pos_is_pos}, we obtain 
\begin{equation}\label{inc:aux_lemma_sq_root_real_pos_pos_3}
\lda 
P > 0, \
t^2 = P, \ 
\cH
\rda_{\K[v][t]}
\end{equation}
with monoid part $S^2 \cdot P^{2e}$, 
degree in $w$ 
bounded by $2\delta_w + \max\{1, 2e\}\deg_w P$ and degree in $t$ bounded by $2\delta_t$. 

Finally 
we apply to (\ref{inc:aux_lemma_sq_root_real_pos_pos_3}) the weak inference 
$$
P \ge 0 \ \ \ \vdash \ \ \ \exists t \  [\; t^2 = P\;].
$$ 
By Lemma \ref{lemma_square_root_real_number}, we obtain 
$$
\lda
P > 0, \ \cH
\rda_{\K[v]}
$$
with the same monoid part and degree in $w$ bounded by 
$2\delta_w + (\max\{1, 2e\} +  \delta_t)\deg_w P$, 
which serves as the final incompatibility. 
\end{proof} 

\begin{remark}
 In the preceeding lemmas, we have no case of a weak existence with an existential variable to the left. The first example of such a situation appears later in the paper, when we deal with the Intermediate Value Theorem  in Section \ref{section_ivt}.
\end{remark}

\subsection{Complex numbers}

We introduce  the  conventions we follow to deal with complex
variables  in the context of weak inference, which has been originally defined to be well adapted to a 
real setting. 

\begin{notation}[Complex Variables]\label{real-imaginary}
A complex variable, always named $z$, represents two variables corresponding to its
real and imaginary  parts, always named $a$ and $b$,
so that $z=a+ib$. 
We also use $z$ to denote a set of complex variables and $a$ and $b$ to denote the set
of real and imaginary parts of $z$.

Let $z = (z_1, \dots, z_n)$ and $P \in \K[i][u][ z]$. 
We denote by $P\re \in \K[u][ a, b]$ and $P\im \in \K[u][ a, b]$
the real and imaginary parts of $P$.
The expression
$P = 0$ 
is an abbreviation for
$$P\re = 0, \ P\im =0,$$
and the expression
$P \not=0$ 
is an abbreviation for
$$P\re^2+P\im^2\not=0.$$
\end{notation}

We illustrate the use of complex variables with
some lemmas.

\begin{lemma}\label{lemma_square_root_complex_ne} Let $C, D \in \K[u]$. Then
$$
C + iD \ne 0 \ \ \ \vdash \ \ \ \exists z  \ [\; z \ne 0, \   z^2  = C + iD \;],
$$
where $z$ is a complex variable
(using Notation \ref{real-imaginary})
If we have
an initial incompatibility 
in $\K[v][ a, b]$ where $v \supset u$ and $a, b \not \in v$, with 
monoid part $S  \cdot (a^2 + b^2)^{2e}$,  
degree in $w\subset v$ bounded by $\delta_w$ 
and
degree in $(a,b)$ bounded by $\delta_z$,
the final incompatibility has
monoid part $S^{4} \cdot (C^2 + D^2)^{2(2e+1)}$
and degree in $w$ bounded by 
$4\delta_w + (20 +  24e +  8\delta_z)\max\{\deg_w C, \deg_w D\}.$

\end{lemma}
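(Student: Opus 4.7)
The plan is to reduce the existence of a complex square root to three successive real square root extractions, using the classical identities $a^2 = (r+C)/2$, $b^2 = (r-C)/2$, $2ab = D$ for $z = a + ib$ with $r = \sqrt{C^2+D^2}$. By Notation \ref{real-imaginary}, the hypothesis $C + iD \ne 0$ unfolds as $C^2 + D^2 \ne 0$; since $C^2+D^2$ is a sum of squares this refines to $C^2 + D^2 > 0$, and Lemma \ref{lemma_square_root_real_number_pos_pos} applies to introduce a fresh variable $r$ with $r > 0$ and $r^2 = C^2 + D^2$. The algebraic identities $2r(r \pm C) = (r \pm C)^2 + D^2$ (which use $r^2 = C^2 + D^2$) exhibit $2r(r+C)$ and $2r(r-C)$ as sums of squares, so combined with $2r > 0$, Lemma \ref{lem_prod_le_g_then_fact_le} yields $r + C \ge 0$ and $r - C \ge 0$. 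Two invocations of Lemma \ref{lemma_square_root_real_number} then introduce $a, b$ with $a^2 = (r+C)/2$ and $b^2 = (r-C)/2$.

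In this extended setting, $a^2 - b^2 = C$ and $a^2 + b^2 = r > 0$ (hence $a^2 + b^2 \ne 0$) are polynomial identities, and $(2ab)^2 = 4a^2b^2 = (r+C)(r-C) = r^2 - C^2 = D^2$. Since this last identity only gives $2ab = \pm D$, I would apply the given initial incompatibility twice: once with $(a,b)$ as introduced, and once after the substitution $b \mapsto -b$, which preserves the monoid $S \cdot (a^2+b^2)^{2e}$ while replacing the hypothesis $2ab = D$ by $2ab = -D$. The factorization $(2ab-D)(2ab+D) = (2ab)^2 - D^2$ together with Lemma \ref{lem_prod_zero_at_least_one_fact_zero} combines the two into a single incompatibility whose only new constraint is $(2ab)^2 = D^2$, which is now automatic from the setup. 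Eliminating $b$, $a$ and $r$ in that order via the weak existences that introduced them yields $\lda C^2 + D^2 > 0, \cH \rda_{\K[v]}$, and hence $\lda C + iD \ne 0, \cH \rda_{\K[v]}$.

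The principal obstacle is the bookkeeping of the monoid part and the $w$-degree through this chain. The case combination via Lemma \ref{lem_prod_zero_at_least_one_fact_zero} squares the initial monoid part, producing an $S^2$ and a $(a^2+b^2)^{4e}$ factor; the derivations of $r \pm C \ge 0$ via Lemma \ref{lem_prod_le_g_then_fact_le} inject additional factors of $r$; and the final elimination of $r$ via Lemma \ref{lemma_square_root_real_number_pos_pos} squares the outer factor from $S^2$ to $S^4$ and converts all accumulated even powers of $r$ into powers of $C^2 + D^2$. A second subtlety is that before each Lemma \ref{lemma_square_root_real_number} elimination one must first trade the dependence of the monoid on the variable to be eliminated for an extra contribution to the ideal part, using the relations $b^2 = (r-C)/2$ and $a^2 = (r+C)/2$ to rewrite $(a^2+b^2)^{4e}$ as a polynomial in $r, C$ alone. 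A careful accounting of all these steps, together with the $w$-degree increases from each weak existence, yields the claimed monoid $S^4 \cdot (C^2+D^2)^{2(2e+1)}$ and the $w$-degree bound $4\delta_w + (20 + 24e + 8\delta_z)\max\{\deg_w C, \deg_w D\}$.
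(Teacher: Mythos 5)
Your proposal is correct and follows essentially the same construction as the paper's proof: the $b\mapsto -b$ substitution combined with the factorization $(2ab-D)(2ab+D)=(2ab)^2-D^2$ and Lemma \ref{lem_prod_zero_at_least_one_fact_zero}, the auxiliary variable $r$ (the paper's $t$) with $r^2=C^2+D^2$ introduced via Lemma \ref{lemma_square_root_real_number_pos_pos}, the decomposition $a^2=\frac12(r+C)$, $b^2=\frac12(r-C)$ eliminated by Lemma \ref{lemma_square_root_real_number}, and the conversion of the monoid factor $(a^2+b^2)^{4e}$ into $r^{4e}$ before eliminating $a,b$. The only cosmetic difference is how $r\pm C\ge 0$ is obtained: you use the identities $2r(r\pm C)=(r\pm C)^2+D^2$ with Lemma \ref{lem_prod_le_g_then_fact_le}, whereas the paper derives $t^2-C^2\ge 0$ from $D^2\ge 0$ and applies Lemma \ref{lemma_sum_prod_pos_then_pos}; both routes are valid and yield the same bounds.
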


\begin{proof}{Proof.}
Consider the initial incompatibility in $\K[v][a,b]$
\begin{equation}
\label{inc_dep_quad_aux_ne}
S \cdot (a^2 + b^2)^{2e} + N(a,b)   + Z(a,b)  + W_1(a,b) \cdot (a^2 - b^2 - C)  +  
W_2(a,b) \cdot (2a \cdot b-D)  = 0
\end{equation}
with
$S\in {\scM}({\cal H}_{\neq}^2)$, $N(a,b) \in {\scN}({\cal H}_\ge )_{\K[v][a,b]}$, 
$Z(a,b) \in {\scZ}({\cal H}_= )_{\K[v][a,b]}$ and $W_1(a,b), W_2(a,b) \in \K[v][ a, b]$, 
where $\cH$ is a system of sign conditions in $\K[v]$.

We substitute $b = -b$ in (\ref{inc_dep_quad_aux_ne}) and we obtain 
\begin{equation}\label{inc:aux_lemma_sq_root_complex}
 \lda z \ne 0, \ 
z^2 = C - iD, \ \cH 
\rda_{\K[v][a,b]}
\end{equation}
with the same monoid part and degree bounds. 

Then we apply to (\ref{inc_dep_quad_aux_ne}) and (\ref{inc:aux_lemma_sq_root_complex}) 
the weak inference
$$ (2a \cdot b)^2 = D^2 \ \ \ \vdash \ \ \  2a \cdot b = D \ \, \vee \ \, 2a \cdot b = -D.$$
By Lemma \ref{lem_prod_zero_at_least_one_fact_zero}, we obtain 
\begin{equation}\label{eq:aux_lemma_square_root_complex_number}
\lda 
z \ne 0, \ a^2 - b^2 = C, \
(2a \cdot b)^2 = D^2, \
\cH
\rda_{\K[v][a,b]}
\end{equation}
with monoid part $S^2 \cdot (a^2 + b^2)^{4e}$, 
degree in $w$ bounded by $2\delta_w$ and degree in $(a, b)$ bounded by $2\delta_z$.

We consider a new auxiliary variable $t$. Taking into account the identities
$$
\begin{array}{rcl}
 a^2 - b^2 - C&=&\Big(a^2 - \frac12 (t+C) \Big) - \Big(b^2 - \frac12(t-C) \Big),\\
 (2a \cdot b)^2-D^2&=&\Big(a^2 - \frac12 (t+C) \Big) \cdot 4b^2 +  
\Big(b^2 - \frac12(t-C)\Big)\cdot2(t+C) + (t^2 - C^2 - D^2),
\end{array}
$$
we apply to (\ref{eq:aux_lemma_square_root_complex_number}) the weak inference
$$
a^2 = \frac12(t+C), \ b^2 = \frac12(t-C), \ t^2 = C^2 + D^2  \ \ \  \vdash \ \ \ 
a^2 - b^2 = C, \ (2a\cdot b)^2 = D^2. 
$$
By Lemma \ref{lem:comb_lin_zero_zero}, 
we obtain 
\begin{equation}\label{eq:aux_lemma_square_root_complex_number_2}
\lda 
z \ne 0, \
a^2 = \frac12 (t+C), \
b^2 = \frac12(t-C), \ 
t^2 = C^2 + D^2,  \
\cH
\rda
_{\K[v][a,b,t]}
\end{equation}
with monoid part $S^2\cdot (a^2 + b^2)^{4e}$, degree in $w$
bounded by $2\delta_w + 2\deg_w C$, 
degree in $(a,b)$ bounded by $2\delta_z$ and degree in $t$ bounded by $2$.

Then we apply to (\ref{eq:aux_lemma_square_root_complex_number_2}) the weak 
inference
$$
t \ne 0, \ a^2 = \frac12(t+C), \ b^2 = \frac12(t-C)  \ \ \  \vdash \ \ \ 
z \ne 0. 
$$
By Lemma \ref{lemma_sum_ne_e_e} we obtain 
\begin{equation}\label{eq:aux_lemma_square_root_complex_number_2_aux}
\lda 
t \ne 0, \
a^2 = \frac12 (t+C), \
b^2 = \frac12(t-C), \ 
t^2 = C^2 + D^2,  \
\cH
\rda
_{\K[v][a,b,t]}
\end{equation}
with monoid part $S^2 \cdot t^{4e}$, degree in $w$
bounded by $2\delta_w + (2+4e)\deg_w C$, 
degree in $(a,b)$ bounded by $2\delta_z$ and degree in $t$ bounded by $2 + 4e$.

Then we successively apply  to (\ref{eq:aux_lemma_square_root_complex_number_2_aux}) the
weak inferences
$$
\begin{array}{rcl}
t+C \ge 0 & \ \, \vdash \, \  & \exists a \ [\;  a^2 = \frac12(t+C) \;], \\[3mm]
t-C \ge 0 & \vdash & \exists b \ [\; b^2 = \frac12(t-C) \;]. 
\end{array}
$$
By Lemma \ref{lemma_square_root_real_number}, we obtain
\begin{equation}\label{eq:aux_lemma_square_root_complex_number_3}
\lda
t \ne 0, \ t + C \ge 0, \ t-C \ge 0, \ t^2 = C^2 + D^2, \ \cH
\rda_{\K[v][t]}
\end{equation}
with monoid  part $S^2\cdot t^{4e}$, 
degree in $w$ bounded by 
$2 \delta_w + (2 + 4e + 2\delta_z)\deg_w C$, 
and degree in $t$ bounded by $2 + 4e + 2\delta_z$.

Finally we successively apply to (\ref{eq:aux_lemma_square_root_complex_number_3})   the
weak inferences
$$
\begin{array}{rcl}
t > 0, \ t^2 - C^2 \ge 0 &   \vdash & t+C \ge 0,\ t-C \ge 0, \\[3mm]
D^2 \ge 0, \ t^2 = C^2+ D^2 & \vdash & t^2 - C^2 \ge  0, \\[3mm]
& \vdash & D^2 \ge 0, \\[3mm]
C^2 + D^2 > 0 & \vdash & \exists t \ [\; t > 0,  \   t^2 = C^2 + D^2  \;].\\[3mm]
\end{array}
$$
By Lemmas \ref{lemma_sum_prod_pos_then_pos}, 
\ref{lemma_sum_of_pos_and_zer_is_pos} (item \ref{lemma_sum_of_pos_and_zer_is_pos:2}), 
\ref{lemma_basic_sign_rule_1} (item \ref{lemma_basic_sign_rule:2})
and 
\ref{lemma_square_root_real_number_pos_pos}, 
we obtain an incompatibility in $\K[v]$
$$
\lda
C^2 + D^2 > 0, \
\cH 
\rda_{\K[v]}
$$
with monoid part $S^4\cdot (C^2 + D^2)^{2(2e+1)}$ and degree in $w$ bounded by 
$4\delta_w + (20 + 24e +  8\delta_z)$ $\max\{\deg_w C, \deg_w D\}$. 
Note that this incompatibility is also an incompatibility 
\begin{equation}
\lda
C^2 + D^2 \ne 0, \
\cH 
\rda_{\K[v]}
\end{equation}
with the same degree bound, which serves as the final incompatibility.
\end{proof}

\begin{lemma}\label{lemma_square_root_complex} Let $C, D \in \K[u]$. Then
$$
\vdash \ \ \ \exists z  \ [\;   z^2  = C + iD\;],
$$
where $z$ is a complex variable
(using Notation \ref{real-imaginary}).

If we have
an initial incompatibility in  $\K[v][ a, b]$ where $v \supset u$ and $a, b \not \in v$, with 
monoid part $S$,  
degree in $w\subset v$ bounded by $\delta_w$ 
and
degree in $(a,b)$ bounded by $\delta_z$,
the final incompatibility has
monoid part $S^{8}$
and degree in $w$ bounded by 
$
8\delta_w + (20 + 8\delta_z)\max\{\deg_w C, \deg_w D\}.
$

\end{lemma}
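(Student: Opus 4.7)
The strategy is a case by case reasoning on whether $C^2 + D^2$ vanishes or not, using Lemma \ref{CasParCas_1} with $P = C^2 + D^2$. The case $C^2 + D^2 \ne 0$ reduces immediately to the previously proved Lemma \ref{lemma_square_root_complex_ne}, and the degenerate case $C^2 + D^2 = 0$ (equivalently $C = D = 0$, where $z = 0$ is a trivial square root of $C + iD$) will be treated by a simple specialization of the initial incompatibility.

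For the case $C^2 + D^2 \ne 0$, I apply Lemma \ref{lemma_square_root_complex_ne} to the initial incompatibility, viewed as having $e = 0$, obtaining an incompatibility $\lda C^2 + D^2 \ne 0, \cH \rda_{\K[v]}$ with monoid part $S^4 \cdot (C^2 + D^2)^2$ and $w$-degree bounded by $4\delta_w + (20 + 8\delta_z) \max\{\deg_w C, \deg_w D\}$. For the case $C^2 + D^2 = 0$, I specialize the variables $a$ and $b$ both to $0$ in the polynomial identity underlying the initial incompatibility. Since $S \in \scM(\cH_{\ne}^2)$ lies in $\K[v]$ and does not depend on $(a,b)$, and since the nonnegative cone and ideal generated by the elements of $\cH$ are preserved under specialization, this produces a polynomial identity
$$S + N(0,0) + Z(0,0) - W_1(0,0) \cdot C - W_2(0,0) \cdot D = 0 \in \K[v]$$
which is an incompatibility $\lda C = 0, D = 0, \cH \rda_{\K[v]}$ with monoid part $S$ and $w$-degree bounded by $\delta_w$ (specialization cannot increase $w$-degrees). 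Then I apply Lemma \ref{sos_non_pos_disjunct} with $P_1 = C$, $P_2 = D$ to obtain $\lda C^2 + D^2 = 0, \cH \rda_{\K[v]}$ with monoid part $S^2$ and $w$-degree bounded by $2(\delta_w + \max\{\deg_w C, \deg_w D\} - \min\{\deg_w C, \deg_w D\})$.

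Finally I combine the two cases via Lemma \ref{CasParCas_1} applied with $e = 1$, yielding the final incompatibility $\lda \cH \rda_{\K[v]}$ with monoid part $S^4 \cdot (S^2)^{2} = S^8$ as claimed. For the $w$-degree, the key observation is that $\deg_w(C^2 + D^2) = 2\max\{\deg_w C, \deg_w D\}$: since $\K$ is ordered, no cancellation of the $w$-leading coefficient can occur in a sum of two squares of polynomials, as $(\ell_C)^2 + (\ell_D)^2$ (with $\ell_C, \ell_D \in \K[u \setminus w]$ the respective leading coefficients) vanishes only if both $\ell_C$ and $\ell_D$ vanish. Using this, the extra term $2e(\delta_{w,2} - \deg_w(C^2 + D^2))$ contributed by Lemma \ref{CasParCas_1} simplifies to $4\delta_w - 4\min\{\deg_w C, \deg_w D\} \le 4\delta_w$, and the total $w$-degree becomes $8\delta_w + (20 + 8\delta_z)\max\{\deg_w C, \deg_w D\}$ as required. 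The main obstacle is precisely this final degree bookkeeping, since the target bound matches exactly and leaves essentially no slack; the specialization step must be set up carefully so that the Case $C^2 + D^2 = 0$ incompatibility carries the minimal possible $w$-degree.
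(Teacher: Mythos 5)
Your proposal is correct and follows essentially the same route as the paper: case split on $C^2+D^2\ne 0$ versus $C^2+D^2=0$ via Lemma \ref{CasParCas_1}, with the first case handled by Lemma \ref{lemma_square_root_complex_ne} (with $e=0$) and the second by evaluating $a=b=0$ and invoking Lemma \ref{sos_non_pos_disjunct}. Your explicit remark that $\deg_w(C^2+D^2)=2\max\{\deg_w C,\deg_w D\}$ because sums of squares cannot cancel over an ordered field is a point the paper uses only implicitly, and your bookkeeping correctly recovers the stated bound.
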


\begin{proof}{Proof.} Consider the initial incompatibility in $\K[v][a,b]$
\begin{equation}
\label{inc_dep_quad_aux}
S + N(a,b)   + Z(a,b)  + W_1(a,b) \cdot (a^2 - b^2 - C)  +  
W_2(a,b)(2a \cdot b-D)  = 0
\end{equation}
with
$S\in {\scM}({\cal H}_{\neq}^2)$, $N(a,b) \in {\scN}({\cal H}_\ge )_{\K[v][a,b]}$, 
$Z(a,b) \in {\scZ}({\cal H}_= )_{\K[v][a,b]}$ and $W_1(a,b), W_2(a,b) \in \K[v][ a, b]$, 
where $\cH$ is a system of sign conditions in $\K[v]$. 

We proceed by case by case reasoning. First we consider the case $C^2+ D^2  \ne 0$.
We apply to (\ref{inc_dep_quad_aux}) the weak inference
$$
C^2 + D^2 \ne 0 \ \ \ \vdash \ \ \ \exists z \ [\; z \ne 0, \ z^2 = C + iD\;].
$$
By Lemma \ref{lemma_square_root_complex_ne} we obtain 
\begin{equation}\label{inc:aux_case1_sq_root_comp_number_aux}
\lda
C^2 + D^2 \ne 0, \
\cH 
\rda_{\K[v]}
\end{equation}
with monoid part $S^4 \cdot (C^2 + D^2)^2$ and degree in $w$ bounded by 
$4\delta_w + (20 + 8\delta_z)$ $\max\{\deg_w C,$ $\deg_w D\}$. 

We consider then the case $C^2 + D^2 = 0$. 
We evaluate $a = b = 0$ in  (\ref{inc_dep_quad_aux}) and we apply the weak inference
$$
C^2 + D^2 = 0 \ \ \ \vdash \ \ \  C =0, \ D = 0.
$$
By Lemma \ref{sos_non_pos_disjunct}, we obtain
\begin{equation} \label{inc:aux_case2_sq_root_comp_number}
\lda C^2 + D^2 = 0, \ \cH \rda_{\K[v]}
\end{equation}
with monoid part $S^2$ and 
degree in 
$w$ bounded by $2\delta_w + 2\max\{\deg_w C, \deg_w D\}$.

Finally we apply to (\ref{inc:aux_case1_sq_root_comp_number_aux}) and (\ref{inc:aux_case2_sq_root_comp_number}) the weak inference
$$  \vdash \ \ \ C^2 + D^2 \ne 0 \ \, \vee \ \, C^2 + D^2 = 0.$$ 
By Lemma \ref{CasParCas_1}, we obtain 
$$ 
\lda
\cH
\rda
_{\K[v]}
$$
with monoid part $S^8$
and degree in $w$ bounded by
$
8\delta_w + (20 + 8\delta_z)\max\{\deg_w C, \deg_w D\}    
$, which serves as the final incompatibility. 
\end{proof}

\begin{lemma}\label{lemma_complex_root_quadratic} 
Let $E
 =  y^2 + G \cdot y + H 
 \in \K[i][u][y]$. Then  
$$  
\vdash \ \ \ \exists z  \ [\; E( z) = 0\;],
$$
where $z$ is a complex variable
(using Notation \ref{real-imaginary}).

If we have
an initial incompatibility in $\K[v][ a, b]$ where  $v \supset u$ and $a, b \not \in v$, with 
monoid part $S$,  
degree in $w\subset v$ bounded by $\delta_w$  
and degree in $(a,b)$ bounded by $\delta_z$,
the final incompatibility has
monoid part 
$
S^{8}
$
and degree in $w$ bounded by 
$8\delta_w + (40 + 24 \delta_z)\max\{\deg_w  G, \deg_w  H\}.$
\end{lemma}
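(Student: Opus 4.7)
The plan is to reduce the statement to the complex square root case via completing the square. The key algebraic identity
$$
4\, E\!\left(\frac{z_1 - G}{2}\right) \;=\; z_1^2 - (G^2 - 4H)
$$
shows that if a complex $z_1$ satisfies $z_1^2 = G^2 - 4H$, then $z = (z_1 - G)/2$ is a root of $E$. So I will first construct an incompatibility involving $z_1^2 = G^2 - 4H$ and then eliminate $z_1$ using Lemma \ref{lemma_square_root_complex}.

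Writing $G = G_1 + iG_2$, $H = H_1 + iH_2$, and setting $C = G_1^2 - G_2^2 - 4H_1$, $D = 2G_1 G_2 - 4H_2$, I have $G^2 - 4H = C + iD$ and $\max\{\deg_w C, \deg_w D\} \le \max\{2\deg_w G, \deg_w H\}$. Given the initial incompatibility $\lda E(z)=0, \ \cH \rda_{\K[v,a,b]}$, I would perform the substitution $a \mapsto (a_1 - G_1)/2$, $b \mapsto (b_1 - G_2)/2$, where $a_1, b_1$ are new variables representing the real and imaginary parts of $z_1$. Thanks to the identity above, the polynomials $E\re(z)$ and $E\im(z)$ are transformed into $(a_1^2 - b_1^2 - C)/4$ and $(2 a_1 b_1 - D)/4$, so the substituted identity becomes an incompatibility $\lda z_1^2 = C + iD, \ \cH \rda_{\K[v, a_1, b_1]}$. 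The monoid part $S$, being in $\K[v]$, is unchanged; the degree in $(a_1, b_1)$ remains bounded by $\delta_z$; and the $w$-degree grows to at most $\delta_w + \delta_z \deg_w G$, since each monomial of degree at most $\delta_z$ in $(a,b)$ is replaced by a polynomial of degree at most $\delta_z \deg_w G$ in $w$.

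Applying Lemma \ref{lemma_square_root_complex} to $C$ and $D$ then eliminates $a_1, b_1$ and yields the final incompatibility $\lda \cH \rda_{\K[v]}$ with monoid part $S^8$ and $w$-degree bounded by
$$
8(\delta_w + \delta_z \deg_w G) + (20 + 8\delta_z)\max\{2 \deg_w G, \deg_w H\}.
$$
A short case distinction on whether $\deg_w H$ exceeds $2\deg_w G$ shows that this quantity is at most $8\delta_w + (40 + 24\delta_z)\max\{\deg_w G, \deg_w H\}$, matching the claim. The only step requiring some care is verifying that, after substitution, each individual component of the new incompatibility still has $w$-degree controlled by $\delta_w + \delta_z \deg_w G$; this is routine bookkeeping using the triangle inequality for $w$-degrees of products.
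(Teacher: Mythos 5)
Your proof is correct and follows essentially the same route as the paper: complete the square to reduce to a complex square root condition, substitute to obtain an incompatibility of the form $\lda z_1^2 = C + iD, \ \cH\rda$ with monoid part $S$, $w$-degree at most $\delta_w + \delta_z\deg_w G$ and $(a_1,b_1)$-degree at most $\delta_z$, and then invoke Lemma \ref{lemma_square_root_complex}. The only (immaterial) difference is a rescaling — you take $z_1^2 = G^2-4H$ where the paper uses $z'^2 = \frac14 G^2 - H$ — and the final degree comparison goes through identically in both versions.
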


\begin{proof}{Proof.}
Consider the initial incompatibility in $\K[v][ a, b]$
\begin{equation}\label{inc_dep_quad}
S + N(a,b)  + Z(a,b)  + W_1(a,b) \cdot E\re (a,b)  +  
W_2(a,b) \cdot E\im (a,b)  = 0
\end{equation}
with
$S\in {\scM}({\cal H}_{\neq}^2)$, $N(a,b) \in {\scN}({\cal H}_\ge )_{\K[v][a,b]}$, 
$Z(a,b) \in {\scZ}({\cal H}_= )_{\K[v][a,b]}$ and $W_1(a,b), W_2(a,b) \in \K[v][ a, b]$, 
where $\cH$ is a system of sign conditions in $\K[v]$.

Let  $C =  \frac14G\re ^2 - \frac14G\im ^2 - H\re \in \K[u]$ 
and $D =  \frac12 G\re G\im  - H\im \in \K[u]$.  
Then we have 
$$
\begin{array}{rcl}
 E\re (a, b) &= &
 a^2 - b^2 +  G\re  \cdot a  -  G\im  \cdot  b + H\re  = \Big(a + \frac12G\re \Big)^2 - \Big(b + \frac12G\im \Big)^2 - C,\\
 E\im (a, b) &=&
2a \cdot b +  G\im  \cdot a +  G\re  \cdot b   +  H\im  = 2\Big(a + \frac12 G\re \Big) \cdot \Big(b + \frac12 G\im \Big) - D.
\end{array}
$$
We substitute  $a = a - \frac12G\re $ and $b = b - 
\frac12G\im $ in (\ref{inc_dep_quad}) and   
we obtain 
\begin{equation} \label{inc:lemma_quad_comp_root_aux}
 \lda
z^2 = C + i D, \ \cH
\rda
_{\K[v][a,b]}
\end{equation}
with monoid part $S$, degree in $w$
bounded by $\delta_w + \delta_z \deg_wG$ and 
degree in $(a,b)$ bounded by $\delta_z$.

Finally we apply to (\ref{inc:lemma_quad_comp_root_aux}) the weak inference
$$ \vdash \ \ \ \exists\, z \ [\;z^2 = C + iD\;].$$
By Lemma \ref{lemma_square_root_complex}, we obtain 
$$
\lda
\cH
\rda
_{\K[v]}
$$
with monoid part $S^8$
and degree in $w$
bounded by 
$8\delta_w + (40 + 24 \delta_z)\max\{\deg_w G, \deg_w H\}$, which serves as the final incompatibility.
\end{proof}

\subsection{Identical polynomials}

We introduce the notation we use to deal with polynomial identities in the weak inference 
context.

\begin{notation}[Identical Polynomials] Let $P
 = \sum_{0 \le h \le p}C_h\cdot y^h, Q
  = \sum_{0 \le h \le p}D_h\cdot y^h 
\in \K[u][ y]$. The 
expression $P
\equiv Q
$ is an abbreviation for 
$$
\bigwedge_{0\leq h\leq p} C_h = D_h.
$$ 
\end{notation}
Note that $P
 \equiv Q
 $ is a conjunction of polynomial equalities in 
$\K[u]$.

We illustrate the use of this notation with a few lemmas.

\begin{lemma}\label{lem_sust_equiv_greater}
Let $P
, Q
 \in \K[u][y]$ with $\deg_y P  = \deg_y Q$. Then 
$$P
 \equiv Q
 , \ Q
  > 0 \ \ \   \vdash \ \ \ P
   > 0.$$  

If we have an initial incompatibility in $\K[v]$ where $v \supset (u,y)$, with monoid part 
$S \cdot P
^{2e}$
and degree in $w\subset v$ bounded by $\delta_w$,
the final incompatibility has monoid part 
$S \cdot Q
^{2e}$
and degree in $w$ bounded by 
$$
\delta_w + \max\{1, 2e\}\big(\max\{\deg_w P
,    \deg_w  Q
\} - \deg_w P
\big).
$$
\end{lemma}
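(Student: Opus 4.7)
{Proof plan.}
The plan is to start from the initial incompatibility
$$
S\cdot P(y)^{2e} + N_0 + N_1\cdot P(y) + Z = 0,
$$
where $S\in\scM(\cH_{\ne}^2)$, $N_0,N_1\in\scN(\cH_{\ge})$ and $Z\in\scZ(\cH_{=})$, and to produce a final incompatibility of the same shape with $P(y)$ replaced by $Q(y)$ and new equalities $C_h-D_h=0$ absorbed in the ideal part. Writing $P(y)=\sum_{0\le h\le p}C_h y^h$ and $Q(y)=\sum_{0\le h\le p}D_h y^h$, the hypothesis $P(y)\equiv Q(y)$ adds the polynomials $C_h-D_h\in\K[u]$ to $\cH_{=}$, and the identity
$$
P(y)-Q(y)=\sum_{0\le h\le p}(C_h-D_h)\,y^h
$$
places $P(y)-Q(y)$ in the new ideal.

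Next I would substitute $P(y)^{2e}=Q(y)^{2e}+(P(y)-Q(y))\cdot R(y)$, where $R(y)=\sum_{j=0}^{2e-1}P(y)^{j}Q(y)^{2e-1-j}$ (with the convention that $R(y)=0$ if $e=0$), and rewrite $N_1\cdot P(y)=N_1\cdot Q(y)+N_1\cdot(P(y)-Q(y))$. The initial incompatibility then becomes
$$
S\cdot Q(y)^{2e}+N_0+N_1\cdot Q(y)+Z' = 0
$$
with
$$
Z' \;=\; Z \;+\; \sum_{0\le h\le p}(C_h-D_h)\cdot\bigl[S\,y^h R(y)\bigr] \;+\; \sum_{0\le h\le p}(C_h-D_h)\cdot\bigl[N_1\, y^h\bigr].
$$
Here $S\cdot Q(y)^{2e}\in\scM(\cH_{\ne}^2)$, $N_0+N_1\cdot Q(y)\in\scN(\cH_{\ge}\cup\{Q(y)\})$, and every new summand in $Z'$ is a component in $\scZ(\cH_{=}\cup\{C_h-D_h\}_{0\le h\le p})$, so this is a genuine incompatibility of $[P(y)\equiv Q(y),\ Q(y)>0,\ \cH]$.

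The routine but somewhat delicate step is checking the degree bound. The key observation is that $\deg_w((C_h-D_h)\,y^h)\le \max\{\deg_w P(y),\deg_w Q(y)\}$, regardless of whether $y\in w$ or not, because the decomposition $P(y)=\sum C_h y^h$ already controls $\deg_w(C_h\,y^h)\le\deg_w P(y)$ (and similarly for $D_h$). Combined with $\deg_w S\le\delta_w-2e\deg_w P(y)$ and $\deg_w(\omega_i V_i^2 N_i)\le\delta_w-\deg_w P(y)$ for each component of $N_1$, a direct case analysis on the three sources of components (the new monoid part, the components of $N_1\cdot Q(y)$, and the new components of $Z'$) yields in every case a bound of the form
$$
\delta_w+\max\{1,2e\}\bigl(\max\{\deg_w P(y),\deg_w Q(y)\}-\deg_w P(y)\bigr),
$$
since the difference $\max\{\deg_w P(y),\deg_w Q(y)\}-\deg_w P(y)$ appears once in the components of $N_1\cdot Q(y)$ and with a factor $2e$ in the monoid part and in the new ideal components coming from $R(y)$.

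The main (minor) obstacle is the bookkeeping in the degree bound, which needs to treat uniformly the cases $e=0$ and $e\ge 1$ and the cases $y\in w$ versus $y\notin w$; no new algebraic idea is required beyond the polynomial identity $P(y)^{2e}-Q(y)^{2e}=(P(y)-Q(y))R(y)$ and the fact that $P(y)-Q(y)$ lies in the ideal generated by the $C_h-D_h$.
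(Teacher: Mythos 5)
Your proof is correct and follows essentially the same route as the paper, which simply invokes Lemma \ref{lemma_basic_sign_rule_1} (item \ref{lemma_basic_sign_rule:4}) and Lemma \ref{lemma_sum_of_pos_is_pos} applied to the decomposition $P(y)=Q(y)+\sum_h(C_h-D_h)\,y^h$; your explicit rewriting via $P^{2e}-Q^{2e}=(P-Q)R$ is exactly what the proof of Lemma \ref{lemma_sum_of_pos_is_pos} does when it expands $S\cdot(\sum_j P_j)^{2e}$ and collects the terms involving the ideal generators. The degree bookkeeping, including the observation that $\deg_w((C_h-D_h)y^h)\le\max\{\deg_w P,\deg_w Q\}$ in both the $y\in w$ and $y\notin w$ cases, matches the stated bound.
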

\begin{proof}{Proof.}
Follows from Lemmas \ref{lemma_basic_sign_rule_1} (item \ref{lemma_basic_sign_rule:4}) and 
\ref{lemma_sum_of_pos_is_pos}. 
\end{proof}

\begin{lemma}\label{lem:two_linear_factor}
Let $P
\in \K[u][y]$ with $\deg_y P \ge 2$. Then
$$ P(t_1) = 0,  \  {\rm Quot}(P,y-t_1)(t_2) = 0    \  \ \  \vdash \ \ \ P
 \equiv 
(y-t_1) \cdot (y-t_2) \cdot {\rm Quot}(P, (y - t_1)(y-t_2)).$$

If we have an initial incompatibility in $\K[v]$ where 
$v \supset (u,t_1,t_2)$  with monoid part 
$S$  and  degree in $w\subset v$ bounded by $\delta_w$,
the final incompatibility has the same monoid part 
and degree in $w$ bounded by 
$$ 
\delta_w + \max\{\deg_w (t_1 \cdot \Quot(P, y-t_1)(t_2)), \deg_w P(t_1) \}
- \deg_w (-t_1 \cdot \Quot(P, y-t_1)(t_2) + P(t_1)).$$
\end{lemma}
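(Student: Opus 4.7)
The plan is to reduce the polynomial identity
$$ P(y) \equiv (y - t_1) \cdot (y - t_2) \cdot \Quot(P, (y-t_1)(y-t_2)) $$
to a short finite conjunction of scalar equalities in $\K[u, t_1, t_2]$, and then invoke Lemma \ref{lem:comb_lin_zero_zero} for linear combinations of zeros.

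First, I would establish the following underlying polynomial identity in $y$. Writing $Q_1(y) = \Quot(P, y - t_1)$, Euclidean division gives $P(y) = (y - t_1) \cdot Q_1(y) + P(t_1)$, and then dividing $Q_1$ by $y - t_2$ produces a quotient $Q_2(y)$ with $Q_1(y) = (y - t_2) \cdot Q_2(y) + Q_1(t_2)$. Substituting one into the other yields
$$ P(y) = (y - t_1)(y - t_2) \cdot Q_2(y) + Q_1(t_2) \cdot y + \bigl( P(t_1) - t_1 \cdot Q_1(t_2) \bigr). $$
Since $(y - t_1)(y - t_2)$ is monic of degree $2$ in $y$, uniqueness of Euclidean division forces $Q_2(y) = \Quot(P, (y - t_1)(y - t_2))$. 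Consequently, viewed in $\K[u, t_1, t_2][y]$, the polynomial $P(y) - (y - t_1)(y - t_2) \cdot \Quot(P, (y - t_1)(y - t_2))$ is identically equal to $Q_1(t_2) \cdot y + (P(t_1) - t_1 \cdot Q_1(t_2))$.

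Comparing coefficients of $y^h$, the polynomial identity to be established thus unfolds into the conjunction of two non-trivial scalar equalities in $\K[u, t_1, t_2]$, namely
$$ P(t_1) - t_1 \cdot Q_1(t_2) = 0 \qquad \text{and} \qquad Q_1(t_2) = 0, $$
together with trivial $0 = 0$ identities arising from the coefficients of $y^h$ for $h \ge 2$. Each non-trivial equality is an explicit $\K[u, t_1, t_2]$-linear combination of $P(t_1)$ and $\Quot(P, y - t_1)(t_2)$: the first as $1 \cdot P(t_1) + (-t_1) \cdot Q_1(t_2)$, and the second as $0 \cdot P(t_1) + 1 \cdot Q_1(t_2)$.

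The weak inference is then immediate from Lemma \ref{lem:comb_lin_zero_zero}. The monoid part of the final incompatibility coincides with that of the initial one, and the only non-trivial contribution to the degree increase comes from the $y^0$-coefficient equality, which yields exactly the stated bound $\delta_w + \max\{\deg_w (t_1 \cdot \Quot(P, y-t_1)(t_2)), \deg_w P(t_1) \} - \deg_w (-t_1 \cdot \Quot(P, y-t_1)(t_2) + P(t_1))$. There is no real obstacle beyond carefully bookkeeping the iterated Euclidean division and tracking the degrees through Lemma \ref{lem:comb_lin_zero_zero}.
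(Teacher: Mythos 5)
Your proof is correct and follows essentially the same route as the paper: it establishes the identity $P(y) = (y-t_1)(y-t_2)\cdot\Quot(P,(y-t_1)(y-t_2)) + \Quot(P,y-t_1)(t_2)\cdot y - t_1\cdot\Quot(P,y-t_1)(t_2) + P(t_1)$ (which the paper simply states) and then applies Lemma \ref{lem:comb_lin_zero_zero}, with the degree bookkeeping handled exactly as in the paper.
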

\begin{proof}{Proof.} 
Because of the  identity in $\K[u][t_1,t_2, y]$
$$
P
 = (y-t_1) \cdot (y-t_2) \cdot \Quot(P, (y-t_1)(y-t_2)) 
+ 
\Quot(P, y-t_1)(t_2)\cdot y -t_1 \cdot \Quot(P, y-t_1)(t_2) + P(t_1),
$$
the lemma follows from Lemma \ref{lem:comb_lin_zero_zero}.
\end{proof}

\begin{lemma}\label{quadratic_factor}
Let $P
\in \K[u][y]$ with $\deg_y P \ge 2$. Then
$$ P(z) = 0,  \ b \ne 0 \ \ \ \vdash \ \ \ P
 \equiv ((y-a)^2  + b^2) \cdot 
{\rm Quot}(P, (y-a)^2  + b^2).$$

If we have an initial incompatibility in $\K[v]$ where 
$v \supset (u,a,b)$  with monoid part 
$S$  and  degree in $w\subset v$ bounded by $\delta_w$,
the final incompatibility has monoid part 
$S \cdot b^2$ and degree in $w$ bounded by 
$\delta_w + \deg_w b^2 + \deg_w P$.
\end{lemma}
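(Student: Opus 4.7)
The plan is to use Euclidean division of $P(y)$ by $(y-a)^2+b^2$ and to exploit the fact that the complex variable $z=a+ib$ is a formal root of this quadratic. First I would perform the division in $\K[u,a,b][y]$, obtaining
$$P(y) = \big((y-a)^2+b^2\big)\cdot Q(y) + R_1\cdot y + R_0,$$
where $Q=\Quot\big(P,(y-a)^2+b^2\big)$ and $R_0,R_1\in\K[u,a,b]$. By uniqueness of Euclidean division, the right-hand side of the weak inference, namely $P(y)\equiv((y-a)^2+b^2)\cdot Q(y)$, reduces to the two polynomial equalities $R_0=0$ and $R_1=0$ (the other coefficient identities being trivially $0=0$ and contributing zero components to any incompatibility).

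The key observation is then that substituting formally $y=z=a+ib$ in the division identity, since $(z-a)^2+b^2=(ib)^2+b^2=0$, we obtain the equality in $\K(i)[u,a,b]$
$$P(a+ib) = R_1\cdot (a+ib)+R_0 = (R_0+a\cdot R_1)+i\,(b\cdot R_1).$$
Separating real and imaginary parts yields the polynomial identities in $\K[u,a,b]$
$$P\re = R_0+a\cdot R_1, \qquad P\im = b\cdot R_1,$$
from which, multiplying the first by $b$ and substituting the second, we extract the consequences $b\cdot R_0 = b\cdot P\re - a\cdot P\im$ and $b\cdot R_1 = P\im$.

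By the preceding remarks, an initial incompatibility in $\K[v]$ has the shape
$$S + N + Z + W_0\cdot R_0 + W_1\cdot R_1 = 0$$
with $S\in\scM(\cH_{\ne}^2)$, $N\in\scN(\cH_\ge)$, $Z\in\scZ(\cH_=)$ and $W_0,W_1\in\K[v]$. I would multiply this identity by $b^2$ and rewrite it using the two consequences above as
$$S\cdot b^2 + N\cdot b^2 + Z\cdot b^2 + (W_0\cdot b^2)\cdot P\re + (W_1\cdot b - W_0\cdot a\cdot b)\cdot P\im = 0,$$
which is an incompatibility of $\lda P\re=0,\ P\im=0,\ b\ne 0,\ \cH\rda$, that is, of $\lda P(z)=0,\ b\ne 0,\ \cH\rda$, with monoid part $S\cdot b^2\in\scM((\cH_{\ne}\cup\{b\})^2)$ as required. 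The degree bound $\delta_w+\deg_w b^2+\deg_w P$ follows by inspection, since the degrees in $w$ of $P\re$ and $P\im$ are controlled by $\deg_w P$, and the factor $b^2$ contributes $\deg_w b^2$. The main subtlety is the rigorous justification of the two identities $P\re=R_0+a\cdot R_1$ and $P\im=b\cdot R_1$ via formal evaluation at $y=a+ib$ in $\K(i)[u,a,b]$; once these are in hand, the rest is an elementary rearrangement and a routine degree check.
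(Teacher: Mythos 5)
Your proof is correct and follows essentially the same route as the paper: both rest on the identity expressing the remainder of the Euclidean division of $P(y)$ by $(y-a)^2+b^2$ in terms of $P\re(a,b)$ and $P\im(a,b)$ (the paper writes it with denominators $b$, you derive it by formal evaluation at $y=a+ib$), followed by multiplication by $b^2$ to reach the monoid part $S\cdot b^2$. The only cosmetic difference is that you fold the paper's final application of Lemma \ref{lem:comb_lin_zero_zero} into the direct rewriting of the ideal part in terms of $P\re$ and $P\im$.
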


\begin{proof}{Proof.}
Because of the  identity in $\K[u][a,b, y]$
$$
P
= ((y-a)^2 + b^2) \cdot \Quot(P, (y-a)^2 + b^2) 
+ 
\frac{P\im (a,b)}{b}y 
+ \frac{b P\re (a,b) -a  \cdot P\im (a,b)}{b},
$$
the initial incompatibility is of type 
\begin{equation} \label{inc:dep_lemma_quad_factor}
S + N + Z + W_1\frac{P\im (a,b)}{b} + W_2 \frac{b  \cdot  P\re (a,b) -a  \cdot P\im (a,b)}{b} = 0
\end{equation}
with 
$S\in {\scM}({\cal H}_{\neq}^2)$, $N \in {\scN}({\cal H}_\ge )$, 
$Z \in {\scZ}({\cal H}_=)$ and $W_1, W_2 \in \K[v]$, 
where $\cH$ is a system of sign conditions in $\K[v]$. 

We multiply (\ref{inc:dep_lemma_quad_factor})  by $b^2$ and we 
obtain an incompatibility
\begin{equation} \label{inc:dep_lemma_quad_factor_2}
\lda
b \ne 0, \ b \cdot P\im(a,b) = 0, \ b^2 \cdot P\re(a,b) - a \cdot b \cdot P\im(a,b) = 0, \ \cH
\rda
_{\K[v]}
\end{equation}
with monoid part 
$S \cdot b^2$ and degree in $w$ bounded by 
$\delta_w + \deg_w b^2$. 

Finally we apply to (\ref{inc:dep_lemma_quad_factor_2}) the weak inference
$$
P(z) = 0 \ \ \ \vdash \ \ \ b \cdot P\im(a,b) = 0, \ b^2 \cdot P\re(a,b) - a \cdot b \cdot P\im(a,b) = 0.
$$
By Lemma \ref{lem:comb_lin_zero_zero}, we obtain an incompatibility
$$
\lda
P(z) = 0, \ b \ne 0, \ \cH
\rda
_{\K[v]}
$$
with the same monoid part
and, after some analysis, degree in $w$ bounded by $\delta_2 + \deg_w b^2 + \deg_w P$, 
which serves as
the final incompatibility.
\end{proof}

\begin{notation}
\label{ResR}
 We denote
$${\rm R}
(z,z')=\Res_y((y - a)^2 + b^2, (y - a')^2 + b'^2)$$
where $\Res_y$ is  the 
resultant polynomial in the variable $y$.
Note that
$$
{\rm R}(
z,z') = ((a-a')^2+(b-b')^2) \cdot ((a-a')^2+(b+b')^2).
$$ 
\end{notation}

\begin{lemma}\label{lem:resultant_zero_then_eq}
$$
{\rm R}(
z,z')=0
\ \ \ \vdash
\ \ \ (y - a)^2 + b^2 \equiv (y - a')^2 + {b'}^2.
$$  

If we have an initial incompatibility in $\K[v]$ where 
$v \supset (a,b, a', b')$  with monoid part 
$S$  and  degree in $w\subset v$ bounded by $\delta_w$,
the final incompatibility has monoid part 
$S^4$ and degree in $w$ bounded by 
$$4\Big(\delta_w + \max\{ \deg_w a-a', \deg_w b- b' \} - \min\{\deg_w a-a', \deg_w b- b'\}\Big).$$
\end{lemma}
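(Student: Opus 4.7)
The plan is to build the final incompatibility by working backward from the factorization ${\rm R}(z,z') = F_1 \cdot F_2$, where $F_1 = (a-a')^2 + (b-b')^2$ and $F_2 = (a-a')^2 + (b+b')^2$, peeling off one layer at a time until we reach the hypotheses of the initial incompatibility. Recall that the given initial incompatibility $\lda (y-a)^2+b^2 \equiv (y-a')^2+{b'}^2, \ \cH \rda$ unpacks as $\lda -2(a-a')=0, \ a^2+b^2-{a'}^2-{b'}^2=0, \ \cH \rda$.

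First, apply Lemma \ref{lem_prod_zero_at_least_one_fact_zero} to ${\rm R}(z,z') = F_1 \cdot F_2$. This reduces the target $\lda {\rm R}(z,z')=0, \ \cH \rda$ to two sub-incompatibilities $\lda F_1 = 0, \ \cH \rda$ and $\lda F_2 = 0, \ \cH \rda$, whose monoid parts multiply. Second, since each $F_i$ is a sum of two squares, apply Lemma \ref{sos_non_pos_disjunct} in each case: from $F_1=0$ derive $a-a'=0$ and $b-b'=0$, and from $F_2=0$ derive $a-a'=0$ and $b+b'=0$. Each such application squares the monoid from $S$ to $S^2$, so combined with the case split this yields $S^2 \cdot S^2 = S^4$ as required.

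Third, feed the initial incompatibility into each of the two remaining targets via Lemma \ref{lem:comb_lin_zero_zero}, using the algebraic identities
$$-2(a-a') = (-2)\cdot(a-a'), \qquad a^2+b^2-{a'}^2-{b'}^2 = (a+a')(a-a') + (b+b')(b-b')$$
in case 1, and the analogous identity $a^2+b^2-{a'}^2-{b'}^2 = (a+a')(a-a') + (b-b')(b+b')$ in case 2. Each identity expresses a coefficient equality of the initial incompatibility as an explicit $\K[v]$-linear combination of the two equalities produced by the sum-of-squares step, so Lemma \ref{lem:comb_lin_zero_zero} applies directly and completes the construction.

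The main obstacle is tracking the degree-in-$w$ bound through the chain. Lemma \ref{lem:comb_lin_zero_zero} is essentially degree-preserving here (the correction terms are small, since the components $(a-a')(a+a')$ and $(b\pm b')(b\mp b')$ produce the polynomial $a^2+b^2-{a'}^2-{b'}^2$ without excess cancellation in the generic case). The doubling and $\max - \min$ correction come entirely from Lemma \ref{sos_non_pos_disjunct}, and since $\deg_w(b+b')$ is controlled by the quantity $\deg_w(b-b')$ appearing in the final bound, the two cases contribute compatibly. Finally Lemma \ref{lem_prod_zero_at_least_one_fact_zero} sums the two case degrees, producing the stated factor of $4$ and yielding the claimed bound $4(\delta_w + \max\{\deg_w(a-a'), \deg_w(b-b')\} - \min\{\deg_w(a-a'), \deg_w(b-b')\})$.
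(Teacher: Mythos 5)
Your proposal is correct and follows essentially the same route as the paper: the same factorization ${\rm R}(z,z')=F_1\cdot F_2$, the same chain Lemma \ref{lem:comb_lin_zero_zero} (which packages the paper's uses of Lemmas \ref{lemma_basic_sign_rule_1} and \ref{lemma_sum_of_pos_and_zer_is_pos}), then Lemma \ref{sos_non_pos_disjunct}, then Lemma \ref{lem_prod_zero_at_least_one_fact_zero}, yielding the same monoid part $S^4$ and the same degree accounting. The only difference is that you present the construction goal-directed (peeling the target) while the paper runs it forward from the initial incompatibility; the underlying construction is identical.
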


\begin{proof}{Proof.}
Consider the initial incompatibility  
\begin{equation} \label{inc:init_inc_lemma_eq_res_cuad}
\lda a - a' = 0, \ a^2 + b^2 - {a'}^2 - {b'}^2 = 0, \ \cH \rda_{\K[v]}
\end{equation}
where $\cH$ is a system of sign conditions in $\K[v]$. 
On the one hand, we successively apply to (\ref{inc:init_inc_lemma_eq_res_cuad})
the weak inferences
$$
\begin{array}{rcl}
a^2 - {a'}^2 = 0, \ b^2 - {b'}^2 = 0 & \ \, \vdash \, \   & a^2 + b^2 - {a'}^2 - {b'}^2 = 0, \\[3mm]
a-a' = 0 & \vdash & a^2 - {a'}^2 = 0, \\[3mm]
b-b' = 0 & \vdash & b^2 - {b'}^2 = 0,\\[3mm]
(a-a')^2+(b-b')^2 = 0 & \vdash & a-a' = 0, \ b-b' = 0. \\[3mm]
\end{array}
$$
By Lemmas \ref{lemma_sum_of_pos_and_zer_is_pos}
(item \ref{lemma_sum_of_pos_and_zer_is_pos:1}), 
\ref{lemma_basic_sign_rule_1} (item \ref{lemma_basic_sign_rule:4}) and
\ref{sos_non_pos_disjunct} we obtain an incompatibility
\begin{equation}\label{inc:aux_inc_lemma_eq_res_cuad}
\lda 
(a-a')^2+(b-b')^2 = 0, \ \cH
\rda
_{\K[v]}
\end{equation}
with monoid part $S^2$ and degree in $w$ bounded by 
$2(\delta_w + \max\{ \deg_w a-a', \deg_w b- b' \} - \min\{\deg_w a-a', \deg_w b- b'\})$.
On the other hand, in a similar way we obtain from (\ref{inc:init_inc_lemma_eq_res_cuad})
an incompatibility
\begin{equation}\label{inc:aux_inc_lemma_eq_res_cuad_2}
\lda 
(a-a')^2+(b+b')^2 = 0, \ \cH
\rda 
_{\K[v]}
\end{equation}
with the same monoid part and degree bound. 
Since
$$
{\rm R}(
z,z')= ((a-a')^2+(b-b')^2) \cdot ((a-a')^2+(b+b')^2),
$$  
the proof is finished by applying to (\ref{inc:aux_inc_lemma_eq_res_cuad}) and 
(\ref{inc:aux_inc_lemma_eq_res_cuad_2}) the weak inference
$$
{\rm R}(
z,z')=0
\ \ \
\vdash \ \ \
(a-a')^2+(b-b')^2 = 0  \ \, \vee \ \,
(a-a')^2+(b+b')^2 = 0. 
$$
By Lemma \ref{lem_prod_zero_at_least_one_fact_zero}, we obtain an incompatibility
$$ 
\lda 
{\rm R}(
z,z')=0, 
\
\cH \rda
_{\K[v]}
$$ with monoid part $S^4$ and degree in $w$ bounded by 
$$4\Big(\delta_w + \max\{ \deg_w a-a', \deg_w b- b' \} - \min\{\deg_w a-a',
\deg_w b- b'\}\Big),$$
which serves as the 
final incompatibility. 
\end{proof}

\subsection{Matrices}

We introduce the notation we use to  deal with matrix identities in the context of weak inference.

\begin{notation}[Identical Matrices]
Let ${\bm A} = (A_{ij})_{1 \le i,j \le p},$ $\bm B = (B_{ij})_{1 \le i,j \le p} 
\in \K[u]^{p \times p}$. The expression $\bm A \equiv \bm B$ is an abbreviation for  
$$
\bigwedge_{1 \le i \le p, \atop 1 \le j \le p}{A}_{ij} = {B}_{ij}.
$$
We denote by $\bm 0$ the matrix with all its entries equal to $0$. 
\end{notation}

We illustrate the use of this notation with two lemmas.

\begin{lemma} \label{lemma_transit_equiv_matric} 
Let ${\bm A}, \bm B \in \K[u]^{p \times p}$. Then 
$$
{\bm A} \equiv {\bm 0}, \  {\bm B} \equiv {\bm 0} 
\ \ \  \vdash \ \ \   \bm A + \bm B \equiv {\bm 0}.
$$

If we have an initial incompatibility in $\K[v]$ where $v \supset u$
with monoid part $S$ and  degree in 
$w\subset v$ bounded by $\delta_w$,
the final incompatibility has the same monoid part 
and degree in $w$ bounded by 
$$
\delta_w + 
\max\{\max\{\deg_w A_{ij}, \deg_w B_{ij}\} - \deg_w A_{ij} + B_{ij} \, | \, 1 \le i \le p, \, 1 \le j \le p \}.
$$
\end{lemma}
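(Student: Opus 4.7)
The plan is to reduce this matrix-level weak inference to a direct entrywise application of Lemma \ref{lem:comb_lin_zero_zero}, which already handles exactly the pattern ``from a conjunction of zero equalities, deduce a conjunction of linear combinations being zero''. Unfolding the abbreviation $\equiv$, the hypothesis $\bm A \equiv \bm 0, \ \bm B \equiv \bm 0$ is nothing but the conjunction of equalities $A_{ij} = 0$ and $B_{ij} = 0$ for $1 \le i,j \le p$, while the conclusion $\bm A + \bm B \equiv \bm 0$ is the conjunction of equalities $A_{ij} + B_{ij} = 0$ for $1 \le i,j \le p$.

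Concretely, I would index the pairs $(i,j)$ by $k = 1, \dots, p^2$, and for each such $k$ with corresponding pair $(i,j)$, take $m_k = 2$, $P_{1,k} = A_{ij}$, $P_{2,k} = B_{ij}$, and $Q_{1,k} = Q_{2,k} = 1$. Then
$$
\sum_{1 \le j \le m_k} P_{j,k} \cdot Q_{j,k} \;=\; A_{ij} + B_{ij},
$$
so Lemma \ref{lem:comb_lin_zero_zero} delivers exactly the weak inference
$$
\bigwedge_{1 \le i,j \le p} A_{ij} = 0, \ \bigwedge_{1 \le i,j \le p} B_{ij} = 0 \ \ \ \vdash \ \ \ \bigwedge_{1 \le i,j \le p} (A_{ij} + B_{ij}) = 0,
$$
which is precisely the statement we want.

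For the degree bound, one simply specializes the general bound from Lemma \ref{lem:comb_lin_zero_zero}. Since every $Q_{j,k} = 1$, we have $\deg_w (P_{j,k}\cdot Q_{j,k}) = \deg_w P_{j,k}$, and the bound reduces to
$$
\delta_w + \max_{1 \le i,j \le p} \bigl( \max\{\deg_w A_{ij}, \deg_w B_{ij}\} - \deg_w (A_{ij} + B_{ij}) \bigr),
$$
which matches the stated bound; the monoid part is preserved because Lemma \ref{lem:comb_lin_zero_zero} leaves it untouched. There is no genuine obstacle here: the content is entirely in Lemma \ref{lem:comb_lin_zero_zero}, and the present lemma is a straightforward repackaging of that result in matrix notation, provided for convenience in the sequel.
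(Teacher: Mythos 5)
Your proposal is correct and matches the paper's own proof, which simply cites Lemma \ref{lem:comb_lin_zero_zero}; your explicit instantiation with $m_k=2$, $P_{1,k}=A_{ij}$, $P_{2,k}=B_{ij}$, $Q_{1,k}=Q_{2,k}=1$ is exactly the intended reduction, and the degree bound specializes as you describe.
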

\begin{proof}{Proof.}
Follows from Lemma \ref{lem:comb_lin_zero_zero}. \end{proof}

\begin{lemma} \label{lemma_prod_equiv_matric}
Let $ {\bm A}, {\bm B}, {\bm C}  \in \K[u]^{p \times p}$.
Then
$$
{\bm A} \equiv {\bm 0} \ \ \ \vdash \ \ \   {\bm B} \cdot {\bm A} \cdot {\bm C} \equiv {\bm {0}}. 
$$

If we have an initial incompatibility in $\K[v]$ where $v \supset u$
with  monoid part $S$
and  degree in $w\subset v$ bounded by
$\delta_w$,
the final incompatibility has the same monoid part and degree in $w$ bounded by
$\delta_w + \deg_w {\bm B} + \deg_w{\bm A}  + \deg_w{\bm C}$.
\end{lemma}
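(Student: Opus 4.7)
The plan is to reduce this directly to Lemma \ref{lem:comb_lin_zero_zero}. Writing out the matrix product entry by entry, we have
$$(\bm B \cdot \bm A \cdot \bm C)_{il} = \sum_{1 \le j \le p, \, 1 \le k \le p} A_{jk} \cdot (B_{ij} \cdot C_{kl})$$
for every $1 \le i \le p$ and $1 \le l \le p$. Since the hypothesis $\bm A \equiv \bm 0$ is exactly the conjunction of the $p^2$ equalities $A_{jk} = 0$, each entry $(\bm B \cdot \bm A \cdot \bm C)_{il}$ is expressed as a linear combination of the $A_{jk}$ with coefficients $B_{ij} \cdot C_{kl} \in \K[u]$. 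The conclusion $\bm B \cdot \bm A \cdot \bm C \equiv \bm 0$ is precisely the conjunction of all these $p^2$ equalities $(\bm B \cdot \bm A \cdot \bm C)_{il} = 0$, so this is in exactly the form handled by Lemma \ref{lem:comb_lin_zero_zero} (with the outer index there playing the role of the pair $(i,l)$ and the inner index playing the role of the pair $(j,k)$).

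The preservation of the monoid part is immediate since Lemma \ref{lem:comb_lin_zero_zero} preserves it. For the degree bound, Lemma \ref{lem:comb_lin_zero_zero} yields a final bound of
$$\delta_w + \max_{i,l} \Big(\max_{j,k}\deg_w(A_{jk} \cdot B_{ij} \cdot C_{kl}) - \deg_w(\bm B \cdot \bm A \cdot \bm C)_{il}\Big).$$
Using the trivial estimate $\deg_w(\bm B \cdot \bm A \cdot \bm C)_{il} \ge 0$ together with $\deg_w(A_{jk} \cdot B_{ij} \cdot C_{kl}) \le \deg_w \bm A + \deg_w \bm B + \deg_w \bm C$, this simplifies to the stated bound $\delta_w + \deg_w \bm B + \deg_w \bm A + \deg_w \bm C$.

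There is no real obstacle here; the only thing worth being careful about is that Lemma \ref{lem:comb_lin_zero_zero} deals with several linear combinations simultaneously, so a single application handles all $p^2$ entries of $\bm B \cdot \bm A \cdot \bm C$ at once, rather than needing to iterate. Given how directly this reduces to the earlier lemma, the proof in the paper is expected to consist of little more than the one-line pointer to Lemma \ref{lem:comb_lin_zero_zero}.
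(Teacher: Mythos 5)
Your proposal is correct and matches the paper's argument exactly: the paper's entire proof is the one-line reduction to Lemma \ref{lem:comb_lin_zero_zero}, and your entrywise expansion $(\bm B \cdot \bm A \cdot \bm C)_{il} = \sum_{j,k} A_{jk}\cdot(B_{ij}\cdot C_{kl})$ together with the degree estimate is precisely the intended instantiation. Your anticipation that the paper would consist of little more than the pointer to that lemma is accurate.
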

\begin{proof}{Proof.} Follows from Lemma \ref{lem:comb_lin_zero_zero}.
\end{proof}


\section{Intermediate Value Theorem}\label{section_ivt}
\setcounter{equation}{0}

In this section we prove a weak existence version of the Intermediate Value Theorem for polynomials
(Theorem \ref{ivt})
and  
we apply it to prove the weak existence of a real root for a polynomial of odd degree (Theorem \ref{odd_degree_real_root_existence}).

The only result extracted from Section \ref{section_ivt} used in the rest of the paper  is 
the last result of the section, which is 
Theorem 
 \ref{odd_degree_real_root_existence} (Real Root of an Odd Degree Polynomial as a weak existence), and is used three times in Section \ref{section_fta}.

\subsection{Intermediate Value Theorem}\label{IVT_subsection}

We define the following auxiliary function, which plays a key role in the estimates
of the growth of degrees in the construction of incompatibilities related to the Intermediate Value Theorem. 

\begin{definition}
\label{defg1}
Let ${\rm g}_1: \N \times \N \to \N$, 
$${\rm g}_1\{k, p\} = 2^{3\cdot2^k}p^{k+1}.$$
We extend the definition of ${\rm g}_1$ with ${\rm g}_1\{-1,0\} = 2$.
\end{definition}

\begin{Tlemma}\label{lemma:aux_fun_g_1} For every $(k, p) \in \N \times \N$, 
$$4p{\rm g}_1\{k-1,k\}{\rm g}_1\{k,p\} \le {\rm g}_1\{k+1,p\}.$$
\end{Tlemma}
\begin{proof}{Proof.} Easy. \end{proof}

\begin{theorem}[Intermediate Value Theorem as a weak existence]\label{ivt}
Let $P
 = \sum_{0 \le h \le p}C_h \cdot y^h\in \K[u][ y]$. Then
$$
\exists (t_1,t_2)  \  [\; C_p \ne 0, \   P(t_1)\cdot P(t_2) \le 0 \;] 
\ \ \ \vdash  \ \ \
\exists t \ [\; P(t) = 0 \;].
$$

If we have
an initial incompatibility in 
$\K[v][t]$  where $v \supset u$ and  $t, t_1, t_2 \not \in v$, 
with 
monoid part $S$, 
degree in $w \subset v$ bounded by 
$\delta_w$ 
and 
degree in $t$ bounded by $\delta_t$,
the final incompatibility has
monoid part
$S^{e}\cdot C_p^{2f}$
with 
$e \le {\rm g}_1\{p-1,p\}$, $f \le {\rm g}_1\{p-1,p\} \delta_t$, 
degree in $w$
bounded by  
$
{\rm g}_1\{p-1,p\}(\delta_w + \delta_t \deg_w P)$
and the degree in $(t_1,t_2)$ bounded by 
${\rm g}_1\{p-1,p\} \delta_t$.

\end{theorem}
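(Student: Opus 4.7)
The plan is to proceed by induction on the degree $p$ of $P$. In the base case $p=0$, the hypothesis $C_0 \ne 0$ together with $C_0 \cdot C_0 \le 0$ is itself incompatible: Lemma~\ref{lemma_basic_sign_rule_1} (item \ref{lemma_basic_sign_rule:3}) gives $C_0^2 > 0$ from $C_0 \ne 0$, contradicting $C_0^2 \le 0$, so the initial incompatibility is not even needed and the degree bound is trivial. In the base case $p=1$ we have $P(y) = C_1 y + C_0$ with $C_1 \ne 0$, and the unique root is $t = -C_0/C_1$. I would use Lemma~\ref{lemma_weak_existence_inverse_sign} to introduce an auxiliary variable for $1/C_1$, then eliminate $t$ from the initial incompatibility by performing Euclidean division by $C_1 t + C_0$ (clearing denominators by powers of $C_1$, which is exactly where the $C_p^{2f}$ factor begins to accumulate). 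The resulting final incompatibility uses only $t_1, t_2$ through the provided sign-change hypothesis, and the bounds match ${\rm g}_1\{0,1\} = 8$.

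For the inductive step $p \ge 2$, my plan is algebraic bisection combined with degree reduction via the derivative. Introduce the midpoint $t_m = (t_1+t_2)/2$ as an auxiliary variable and perform case-by-case reasoning (Lemma~\ref{CasParCas_3}) on the sign of $P(t_m)$. In the case $P(t_m) = 0$, the root is $t_m$ itself and we apply the initial incompatibility after substituting $t = t_m$. Otherwise the trichotomy combined with $P(t_1) P(t_2) \le 0$ forces at least one of $P(t_1)P(t_m) \le 0$ or $P(t_m)P(t_2) \le 0$, and we would recurse on the corresponding sub-interval. Since bisection alone does not reduce $\deg P$, I would additionally apply the inductive hypothesis to the derivative $P'$ of degree $p-1$ in order to locate critical points of $P$ inside $[t_1, t_2]$; these critical points partition the interval into monotone sub-intervals on which $P$ has a unique root that can be produced algebraically, essentially by a linearisation argument reducing to the base case $p=1$.

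The main obstacle will be tracking the growth of the monoid exponents and degrees precisely enough to land on the target bound ${\rm g}_1\{p-1, p\} = 2^{3 \cdot 2^{p-1}} p^{p}$. The doubly exponential factor $2^{3 \cdot 2^{p-1}}$ is symptomatic of $p-1$ nested applications of case-by-case reasoning, each of which roughly squares the monoid exponent (compare Lemma~\ref{lem:multiple_case_by_case}). Technical Lemma~\ref{lemma:aux_fun_g_1} is precisely the absorption inequality $4p \cdot {\rm g}_1\{k-1,k\} \cdot {\rm g}_1\{k,p\} \le {\rm g}_1\{k+1,p\}$ needed to pass from the inductive bound at degree $p-1$ to the bound at degree $p$, with the factor $4p$ absorbing the constant inflation from one recursive call (the Euclidean division cost, linear in $p$, times the constant blow-up from the trichotomy case split). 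The $C_p^{2f}$ factor in the monoid part, with $f \le {\rm g}_1\{p-1,p\} \delta_t$, reflects the fact that the leading coefficient $C_p$ must be repeatedly introduced to clear denominators from division steps eliminating $t$, with exponent growing linearly in $\delta_t$ exactly as expected from dividing a polynomial of $t$-degree $\delta_t$ by one of degree $p$.
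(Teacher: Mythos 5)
Your base cases are acceptable in spirit (the paper handles $p=0$ by evaluating $t=0$, passing $Q(0)\cdot C_0$ to the right, squaring, and passing back; and it disposes of $p=1$ inside the general reduction, where the cofactor is forced to vanish by degree count), but your inductive step for $p\ge 2$ has a genuine gap. Recursing by bisection on a sub-interval $[t_1,t_m]$ or $[t_m,t_2]$ invokes the very statement being proved for the \emph{same} polynomial $P$ of the \emph{same} degree $p$, so there is no well-founded induction and the construction does not terminate. Your proposed repair --- locating the critical points of $P$ via the inductive hypothesis applied to $P'$ and then ``linearising'' on monotone sub-intervals --- does not reduce to the case $p=1$: a monotone polynomial of degree $p$ is not linear, and producing its root on an interval where it changes sign is exactly the Intermediate Value Theorem for degree $p$ again; knowing the sign of $P'$ on each sub-interval would in any case require sign-determination machinery far beyond what is available at this point. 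Most importantly, your plan never explains how the given initial incompatibility, which is an algebraic identity of the form $S + \sum_i\omega_i V_i^2(t)\cdot N_i + \sum_j W_j(t)\cdot Z_j + Q(t)\cdot P(t)=0$ in $\K[v,t]$, is to be transformed into one in $\K[v,t_1,t_2]$; handling the cofactor $Q(t)$ is the entire content of the theorem.

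The paper's mechanism is Artin's argument, not bisection. One multiplies the identity by $C_p^{\bar\delta_t}$ and reduces every coefficient polynomial modulo $P(t)$, obtaining a new cofactor $Q'(t)$ of degree $q\le p-2$. Substituting $t=t_1$ and $t=t_2$ shows that $-Q'(t_i)\cdot P(t_i)$ equals a sum of squares plus monoid and ideal parts, hence $Q'(t_1)Q'(t_2)P(t_1)P(t_2)$ is essentially nonnegative; combined with the hypothesis $P(t_1)\cdot P(t_2)\le 0$ this transfers the sign change from $P$ to the \emph{lower-degree} polynomial $Q'$, and the outer induction is applied to $Q'$ (via the intermediate incompatibility (\ref{inc:aux_th_IVT_00})), not to $P'$ or to sub-intervals. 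Since $Q'$ need not have a nonzero leading coefficient, an inner induction on its coefficients $D_0,\dots,D_q$ with the case split $D_k\ne 0\ \vee\ D_k=0$ of Lemma \ref{CasParCas_1} is interleaved with these applications; this nesting of at most $p-1$ case splits, each of which multiplies exponents of incompatibilities, is the actual source of the factor $2^{3\cdot 2^{p-1}}$ in ${\rm g}_1\{p-1,p\}$ and of the role of Technical Lemma \ref{lemma:aux_fun_g_1}. Your intuition about the origin of the doubly exponential bound and of the $C_p^{2f}$ factor (clearing denominators in the reduction modulo $P$, with exponent linear in $\delta_t$) is essentially correct, but the induction must be carried by the cofactor $Q'$, and without that idea the proof cannot be completed along the lines you describe.
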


Note that the degree estimates obtained are doubly exponential in the degree of $P$ with respect to $y$.

The proof is based on an induction on the degree of $P$ with respect to $y$,
which is an adaptation of the proof by Artin \cite{Artin} that if a field is real
(i.e. -1 is not a sum of squares) its extension by an irreducible polynomial of odd degree
is also real.

\begin{proof}{Proof:}
Consider the initial incompatibility in $\K[v][t]$
\begin{equation}\label{orig_inc_ivt}
S + \sum_{i}  \omega_iV_i^2(t) \cdot N_i + \sum_{j}W_{j}(t)\cdot Z_j + Q(t)\cdot P(t) = 0
\end{equation}
with $S \in \scM( \cH_{\ne}^2)$, $\omega_i \in \K, \omega_i >0$, $V_i(t) \in \K[v][ t]$ and $N_i \in
\scM(\cH_{\ge})$ for every $i$, $W_j(t) \in \K[v][t]$ and $Z_j \in \cH_{=}$ for every $j$ and $Q(t) \in \K[v][t]$,
where $\cH = [\cH_{\ne}, \cH_{\ge}, \cH_{=}]$ is a system of sign conditions in $\K[v]$.

The proof proceeds by induction on $p$. 
For $p = 0$, 
$P(t) = C_0$ and $P(t_1)\cdot P(t_2) = C_0^2$. We evaluate $t = 0$ in (\ref{orig_inc_ivt}),
we pass the term $Q(0)\cdot C_0$ to the right hand side, we square both
sides and we pass $Q^2(0)\cdot C^2_0$ back to the left hand side.
We take the result as the final incompatibility.

Suppose now $p \ge 1$. 
If $Q(t)$ is the zero polynomial, we evaluate $t=0$ in (\ref{orig_inc_ivt}) and we take the result 
as the final incompatibility. From now, we suppose that $Q(t)$ is not the zero polynomial
and therefore, $\delta_t \ge p$. 
We denote by $\bar \delta_t$ the smallest even number
greater than or equal to $\delta_t$.   
For every $i$, let $\tilde V_i(t) \in \K[v][t]$ be 
the remainder of 
$C_p^{\frac 12\bar \delta_t}\cdot V_i(t)$ 
in the division by $P(t)$ 
considering $t$ as the main variable; then $\deg_w \tilde V_i(t) \le \deg_w V_i(t) + 
\frac12 \bar \delta_t \deg_w P$. 
Similarly, for every $j$, let $\tilde W_j(t) \in \K[v][t]$ be 
the remainder of 
$C_p^{\bar \delta_t}\cdot W_j(t)$ 
in the division by $P(t)$ 
considering $t$ as the main variable; then $\deg_w \tilde W_j(t) \le 
\deg_w W_j(t) + 
\bar \delta_t \deg_w P$.

We multiply (\ref{orig_inc_ivt}) by $C_p^{\bar \delta_t}$ and
we deduce that exists $Q'(t) \in \K[v][t]$  such that
\begin{equation}\label{main_inc_ivt}
S \cdot C_p^{\bar \delta_t} + \sum_i \omega_i \tilde V_i^2(t)\cdot N_i + \sum_j  \tilde W_j(t)\cdot Z_j+
Q'(t)\cdot P(t) = 0.
\end{equation}
Since the degree in $w$ 
of 
$S\cdot C_p^{\bar \delta_t}$, $\tilde V_i^2(t)\cdot N_i$ for every $i$ and
$\tilde W_j(t) \cdot Z_j$ for every $j$
is bounded by 
$\delta_w + \bar\delta_t \deg_w P$,  the degree in $w$ of 
$Q'(t)\cdot P(t)$ is also bounded by the same quantity.

If 
$Q'(t)$ is the zero polynomial, 
we evaluate $t = 0$ in (\ref{main_inc_ivt}) and 
take the result as the final incompatibility.
In particular, for $p=1$, $\deg_t \tilde V_i(t) = 0$ for every $i$ and 
$\deg_t \tilde W_i(t) = 0$ for every $j$; looking at the degree in $t$  in (\ref{main_inc_ivt}), we deduce that $Q'(t)$ is the zero polynomial
and we are done.

From now on, we 
suppose $p \ge 2$ and that $Q'(t)$ is not the zero polynomial. 
Let $q= \deg_t \tilde Q'(t)$; 
looking again at the degree in $t$  in (\ref{main_inc_ivt})
we have $q \le p-2$. Let
$Q'(t) = \sum_{0 \le \ell \le q}D_\ell \cdot t^\ell$ and, for $0 \le k \le q+1$, 
$Q'_{k-1}(t) = \sum_{0 \le \ell \le k-1}D_\ell \cdot t^\ell$.
We will prove, by a new induction on $k$, that for $0 \le k \le q+1$, 
we have 
$$ \lda  C_p \ne 0, \
Q'_{k-1}(t_1)\cdot Q'_{k-1}(t_2)\le 0,   \ 
\bigwedge_{k \le \ell \le q } D_\ell = 0 , \
 \cH
\rda
_{\K[v][ t_1, t_2]}$$
of type
\begin{equation} \label{eq_induc_ivt}
S^{e_k}\cdot C_p^{2f_k}
+
 N_{k,1}(t_1, t_2) - N_{k,2}(t_1, t_2)\cdot Q'_{k-1}(t_1)\cdot Q'_{k-1}(t_2) + Z_k(t_1, t_2)
+  \sum_{k \le \ell \le q}D_\ell \cdot R_{k,\ell}(t_1, t_2)  = 0
\end{equation}
with 
$N_{k,1}(t_1, t_2), N_{k,2}(t_1, t_2) \in \scN(\cH_{\ge})_{\K[v][ t_1, t_2]}$, 
$Z_k(t_1, t_2) \in \scZ(\cH_{=})_{\K[v][ t_1, t_2]}$, $R_{k, \ell}(t_1, t_2)$ $\in$ $\K[v][ t_1, t_2]$ for every $\ell$,  
$e_k \le {\rm g}_1\{k,p\}-2$,  $f_k \le ({\rm g}_1\{k,p\}-2)\delta_t$, 
degree in $w$ 
bounded by 
$({\rm g}_1\{k,p\}-4)(\delta_w + \delta_t \deg_w P)$ and
degree in $(t_1,t_2)$ bounded by $({\rm g}_1\{k,p\}-4)\delta_t$.

For $k = 0$, we simply evaluate $t = 0$ in (\ref{main_inc_ivt}).
Suppose now that we have an equation like (\ref{eq_induc_ivt}) 
for some $0 \le k \le q$. 
We will obtain an equation like (\ref{eq_induc_ivt}) for $k+1$.

\begin{itemize}

\item We rewrite (\ref{main_inc_ivt}) in this way:
$$
S\cdot C_p^{\bar \delta_t} + \sum_i \omega_i \tilde V_i(t)^2 \cdot N_i   + \sum_j \tilde W_j(t) \cdot Z_j 
+  P(t)\cdot \sum_{k+1 \le \ell \le q} D_\ell \cdot  t^\ell +
 P(t)\cdot Q'_k(t)
 = 0
$$
to obtain
\begin{equation}\label{inc:aux_th_IVT_00} \lda   C_p \ne 0, \
\bigwedge_{k+1 \le \ell \le q}D_\ell = 0, \ 
 Q'_{k}(t) = 0, \
\cH
\rda_{\K[v][ t]}\end{equation}
with degree in $w$ bounded by $\delta_w + \bar \delta_t \deg_w P$ and degree in $t$ bounded by $2(p-1)$. 
Since $k < p$, by the inductive hypothesis on $p$, we have a procedure 
to obtain from (\ref{inc:aux_th_IVT_00}) an incompatibility 
\begin{equation}\label{inc:aux_th_IVT_1}
\lda   C_p \ne 0, \
D_k \ne 0,  \  
Q'_{k}(t_1)\cdot Q'_{k}(t_2)\le 0, \  
\bigwedge_{k+1 \le \ell \le q}D_\ell = 0, \ 
\cH
\rda
_{\K[v][ t_1, t_2]}
\end{equation}
with monoid part $S^{e'}\cdot C_p^{\bar \delta_t e'}\cdot D_k^{2f'}$ 
with $e' \le {\rm g}_1\{k-1,k\}$,  $f' \le 2{\rm g}_1\{k-1,k\}(p-1)$, 
degree in $w$ bounded by 
${\rm g}_1\{k-1,k\}(\delta_w + \bar \delta_t \deg_w P + 
2(p-1)(\delta_w + \bar\delta_t \deg_w P)) = 
{\rm g}_1\{k-1,k\}(2p-1)(\delta_w + \bar\delta_t \deg_w P)$
and degree in $(t_1,t_2)$ bounded by $2{\rm g}_1\{k-1,k\}(p-1)$.

\item On the other hand, we substitute
$$ 
Q'_{k-1}(t_1)\cdot  Q'_{k-1}(t_2) = Q'_{k}(t_2)\cdot Q'_{k}(t_2) + D_k \cdot
\big( -t_1^k \cdot Q'_k(t_2) - t_2^k\cdot  Q'_k(t_1) + D_k \cdot t_1^k\cdot t_2^k    \big)
$$
in (\ref{eq_induc_ivt})
and we
obtain 
\begin{equation}\label{inc:aux_th_IVT_2}
\lda   C_p \ne 0, \
Q'_{k}(t_1)\cdot Q'_{k}(t_2)\le 0,   \
\bigwedge_{k \le \ell  \le q} D_\ell = 0, \
\cH
\rda 
_{\K[v][ t_1, t_2]}
\end{equation}
with monoid part $S^{e_k}\cdot C_p^{2f_k}$, degree in $w$ bounded by 
${\rm g}_1\{k,p\}(\delta_w + \delta_t \deg_w P)$ and 
degree in $(t_1,t_2)$ 
bounded by $({\rm g}_1\{k,p\}-4)\delta_t + 2k$.

\item Finally we apply 
to
(\ref{inc:aux_th_IVT_1}) and 
(\ref{inc:aux_th_IVT_2})
the weak inference 
$$
\vdash \ \ \ D_k \ne 0 \ \, \vee \ \, D_k = 0.
$$ 
By Lemma \ref{CasParCas_1}, we obtain
$$ \lda  C_p \ne 0, \
 Q'_{k}(t_1)\cdot Q'_{k}(t_2)\le 0, \ 
\bigwedge_{k+1 \le \ell \le q}  D_\ell = 0,  \ 
\cH
\rda
_{\K[v][ t_1, t_2]}$$
with monoid part 
$S^{e_{k+1}}\cdot C_p^{2f_{k+1}}$ with 
$e_{k+1} = e' + 2 e_kf'$
and $f_{k+1} = 
\frac{1}{2} \bar \delta_t e' + 2f_kf'$, 
degree in $w$ bounded by 
$
{\rm g}_1\{k-1,k\}(2p-1)(\delta_w + \bar\delta_t \deg_w P) +
2f'{\rm g}_1\{k,p\}(\delta_w + \delta_t \deg_w P)  
$ and degree in $(t_1,t_2)$ bounded by $2{\rm g}_1\{k-1,k\}(p-1) + 2f'(({\rm g}_1\{k,p\}-4)\delta_t + 2k)$.  
The bounds 
$e_{k+1} \le {\rm g}_1\{k+1,p\}- 2$ and 
$f_{k+1}  \le ({\rm g}_1\{k+1,p\} - 2)\delta_t$
follow using Lemma \ref{lemma:aux_fun_g_1}
since
$${\rm g}_1\{k-1,k\} + 4({\rm g}_1\{k,p\}-2){\rm g}_1\{k-1,k\} (p-1)   \le
4p{\rm g}_1\{k-1,k\}{\rm g}_1\{k,p\} - 2  \le
{\rm g}_1\{k+1,p\}- 2.$$
The degree bounds also follow using Lemma \ref{lemma:aux_fun_g_1} since
$$ 2{\rm g}_1\{k-1,k\}(2p-1) + 4{\rm g}_1\{k-1,k\}{\rm g}_1\{k,p\}(p-1)
\le 
4p{\rm g}_1\{k-1,k\}{\rm g}_1\{k,p\} - 4
\le
{\rm g}_1\{k+1,p\}- 4$$
and
\begin{eqnarray*}
&&2{\rm g}_1\{k-1,k\}(p-1) + 4{\rm g}_1\{k-1,k\}(({\rm g}_1\{k,p\}-4)\delta_t + 2k)(p-1)    
\le  \\
&\le& ( 4p{\rm g}_1\{k-1,k\}{\rm g}_1\{k,p\} - 4)\delta_t \le \\
&\le& ({\rm g}_1\{k+1,p\}-4)\delta_t.
\end{eqnarray*}
\end{itemize}

So, for $k = q+1$, we have 
\begin{equation}\label{final_eq_1_ivt}
S^{e_{q+1}}\cdot C_p^{2f_{q+1}} + N_{q+1,1}(t_1, t_2) + Z_{q+1}(t_1, t_2) = 
N_{q+1,2}(t_1, t_2) \cdot Q'(t_1)\cdot  Q'(t_2). 
\end{equation} 
On the other hand, substituting $t = t_1$ and $t = t_2$ in (\ref{main_inc_ivt}) 
we have 
\begin{equation}\label{final_eq_2_ivt}
S \cdot C_p^{\bar\delta_t}+ \sum_i \omega_i\tilde V_i(t_1)^2\cdot N_i + \sum_j \tilde W_j(t_1)\cdot Z_j = 
- Q'(t_1)\cdot P(t_1) 
\end{equation}
and
\begin{equation}\label{final_eq_3_ivt}
S\cdot C_p^{\bar\delta_t} + \sum_i \omega_i \tilde V_i(t_2)^2\cdot N_i + \sum_j  \tilde W_j(t_2)\cdot Z_j = 
- Q'(t_2)\cdot P(t_2). 
\end{equation}
Multiplying (\ref{final_eq_1_ivt}), (\ref{final_eq_2_ivt}) and (\ref{final_eq_3_ivt})  
and passing terms to the left hand side
we obtain
\begin{equation}\label{eq:final_4_ivt}
S^{e_{q+1} + 2}\cdot C_p^{2(f_{q+1} + \bar\delta_t)} + 
N(t_1, t_2) -  N_{q+1,2}(t_1, t_2)\cdot {Q'}^2(t_1)\cdot {Q'}^2(t_2)\cdot P(t_2)\cdot P(t_2) + Z(t_1, t_2) = 0
\end{equation} 
for some $N(t_1, t_2) \in \scN(\cH_{\ge})_{\K[v][t_1, t_2]}$
and $Z(t_1, t_2) \in \scZ(\cH_{=})_{\K[v][t_1, t_2]}$.
Equation (\ref{eq:final_4_ivt}) serves as the final incompatibility, taking into account that
$e_{q+1} + 2 \le {\rm g}_1\{q+1,p\}$,
$f_{q+1} + \bar\delta_t \le  {\rm g}_1\{q+1,p\}\delta_t$,
the degree in $w$ is bounded by 
$({\rm g}_1\{q+1,p\} - 4)(\delta_w + \delta_t \deg_w P)
+ 2(\delta_w + \bar \delta_t \deg_w P) \le
{\rm g}_1\{q+1,p\}(\delta_w + \delta_t \deg_w P)
$, 
the degree in $(t_1, t_2)$ is bounded by 
$({\rm g}_1\{q+1,p\}-4)\delta_t + 4p-4 \le {\rm g}_1\{q+1,p\}\delta_t$ and ${\rm g}_1\{q+1,p\}\le {\rm g}_1\{p-1,p\}$. 
\end{proof}

\subsection{Real root of a polynomial of odd degree}

Now we prove the weak existence of a real root for a 
monic polynomial of 
odd degree as a consequence of Theorem \ref{ivt} (Intermediate Value Theorem as a weak existence).

\begin{theorem}[Real Root of an Odd Degree Polynomial as a weak existence]\label{odd_degree_real_root_existence}
Let $p$ be an odd number and $P
 = y^p + \sum_{0 \le h \le p-1}C_h \cdot y^h \in \K[u][ y]$. Then
$$ \vdash  \ \ \ \exists t \  [\;  P(t) = 0\;].$$

If we have
an initial incompatibility in $\K[v][ t]$ where $v \supset u$ and $t \not \in v$, with 
monoid part $S$,  
degree in $w \subset v$ bounded by $\delta_w$ 
and 
degree in $t$ bounded by $\delta_t$,
the final incompatibility has
monoid part
$S^{e}$ 
with 
$e \le {\rm g}_1\{p-1,p\}$ and degree in $w$ bounded by 
$
3{\rm g}_1\{p-1,p\}(\delta_w + \delta_t \deg_w P)
$
(see Definition \ref{defg1}).
\end{theorem}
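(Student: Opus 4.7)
The strategy is to reduce this theorem to the Intermediate Value Theorem (Theorem~\ref{ivt}) applied with witnesses $t_1 := -M$ and $t_2 := M$ for a carefully chosen $M \in \K[u]$ large enough that $P(M) \ge 0$ and $-P(-M) \ge 0$ can be certified as sums of squares. Since $P$ is monic, the IVT hypothesis $C_p \neq 0$ becomes $1 \neq 0$ (automatic): the factor $C_p^{2f_0}$ in the IVT output monoid collapses to $1$. The analytic idea, standard since Artin \cite{Artin}, is that a monic polynomial of odd degree takes values of opposite sign at $\pm M$ when $M$ dominates the coefficients, and the content of the proof is the algebraic translation of this fact into a sum-of-squares identity.

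The plan is to first apply Theorem~\ref{ivt} to the initial incompatibility $\lda P(t) = 0, \cH \rda_{\K[v,t]}$, producing
\[
\lda C_p \neq 0,\ P(t_1)\cdot P(t_2) \le 0,\ \cH \rda_{\K[v, t_1, t_2]}
\]
with monoid part $S^{e_0}$ (using $C_p = 1$), $e_0 \le {\rm g}_1\{p-1,p\}$, degree in $w$ bounded by ${\rm g}_1\{p-1,p\}(\delta_w + \delta_t\deg_w P)$ and degree in $(t_1, t_2)$ bounded by ${\rm g}_1\{p-1,p\}\delta_t$. Then I would substitute $t_1 \mapsto -M$ and $t_2 \mapsto M$ with $M := 1 + \sum_{h=0}^{p-1} C_h^2 \in \K[u]$; this $M$ lies in $\scN(\emptyset)$ and satisfies $\deg_w M \le 2\deg_w P$. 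The substitution yields $\lda P(-M)\cdot P(M) \le 0,\ \cH \rda_{\K[v]}$ with monoid still $S^{e_0}$ and $w$-degree bounded by
\[
{\rm g}_1\{p-1,p\}(\delta_w + \delta_t\deg_w P) + {\rm g}_1\{p-1,p\}\delta_t \cdot 2\deg_w P \;\le\; 3\,{\rm g}_1\{p-1,p\}(\delta_w + \delta_t \deg_w P),
\]
which accounts for the factor $3$ in the target bound.

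The main obstacle is eliminating the hypothesis $P(-M)\cdot P(M) \le 0$, which amounts to producing an explicit sum-of-squares identity $-P(-M)\cdot P(M) \in \scN(\emptyset)_{\K[u]}$; once available, substituting it into the incompatibility yields $\lda \cH \rda_{\K[v]}$ with no further degree growth and the monoid part still $S^{e_0}$. The building blocks for this certificate are: since $p$ is odd, the factorization
\[
P(M)\cdot(-P(-M)) \;=\; (M^p + O(M))^2 - E(M)^2,
\]
where $E(M) = \sum_{h \text{ even},\, h<p} C_h M^h$ and $O(M) = \sum_{h \text{ odd},\, h<p} C_h M^h$; the choice of $M$ yields the key identity
\[
M^2 - 4C_h^2 \;=\; \bigl((C_h-1)^2 + \textstyle\sum_{j\neq h}C_j^2\bigr)\bigl((C_h+1)^2 + \textstyle\sum_{j\neq h}C_j^2\bigr) \;\in\; \scN(\emptyset),
\]
encoding $M \ge 2|C_h|$ algebraically; and AM-GM-type squares $(M^{\alpha/2} \pm C_h M^{(2h-\alpha)/2})^2 \ge 0$ (for suitable even $\alpha \le 2h$) which allow each term $C_h M^h$ to be absorbed into powers of $M$. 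Combining these ingredients, possibly with the help of auxiliary variables introduced and eliminated via Lemma~\ref{lemma_square_root_real_number} (which preserves the monoid part), produces the desired SOS certificate, completing the proof with monoid part $S^{e}$, $e \le {\rm g}_1\{p-1,p\}$, consistent with the theorem's statement.
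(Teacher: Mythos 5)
Your strategy coincides with the paper's: apply Theorem~\ref{ivt} to the initial incompatibility (with the factor $C_p^{2f}$ disappearing because $P$ is monic), then substitute explicit values $t_1, t_2$ of the form $\pm(\text{constant} + \sum_h C_h^2)$, and your degree bookkeeping --- in particular the origin of the factor $3$ from $\deg_w M \le 2\deg_w P$ times the $(t_1,t_2)$-degree bound ${\rm g}_1\{p-1,p\}\delta_t$ --- is exactly the paper's computation.

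The one place where your write-up falls short of a proof is the sum-of-squares certificate itself, which is the actual mathematical content of this step. The paper isolates it as Lemma~\ref{lemPMPP}: with $E = p + \sum_{0\le h\le p-1} C_h^2$, each of $P(E)$ and $-P(-E)$ \emph{separately} is an element of $\scN(\emptyset)$ (indeed a sum of weighted squares plus a positive constant), proved by a clean induction on the Horner polynomials ${\rm Hor}_i(P)(E) = p-i+\sum_{h\le p-i-1}C_h^2 + N_i + \omega_i$; the product $P(E)\cdot(-P(-E))$ is then in $\scN(\emptyset)$ because the cone is multiplicatively closed, and this is what absorbs the $P(t_1)\cdot P(t_2)\le 0$ hypothesis after substitution. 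You instead attempt to certify the product directly via the difference-of-squares identity $P(M)\cdot(-P(-M)) = (M^p+O(M))^2 - E(M)^2$, list correct ingredients (your identity $M^2 - 4C_h^2 = (M-2C_h)(M+2C_h)$ with each factor visibly a sum of squares does certify $M\ge 2|C_h|$), but then conclude with ``combining these ingredients \dots produces the desired SOS certificate'' without actually assembling it. Note that certifying $(M^p+O(M))^2 - E(M)^2 \in \scN(\emptyset)$ essentially requires one-sided bounds on $M^p + O(M) \pm E(M)$, i.e., exactly the two one-sided certificates the paper proves; so the factorization does not buy you anything, and you should simply prove the analogue of Lemma~\ref{lemPMPP} (the Horner induction works verbatim with your $M = 1 + \sum_h C_h^2$, with the constant $1$ in place of $p$). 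Until that identity is written down, the proof is a correct plan rather than a complete argument.
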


To prove Theorem \ref{odd_degree_real_root_existence} we first give in Lemma \ref{lemPMPP}, for a monic polynomial of odd degree,
 a real value where 
it is positive
and a real value where it is negative. Then, we apply the weak existence version 
of the Intermediate Value Theorem from Theorem \ref{ivt}.

\begin{lemma}\label{lemPMPP}
Let $p$ be an odd number, $P
 = y^p + \sum_{0 \le h \le p-1} C_h \cdot y^h\in \K[u][ y]$ and 
$E = p + \sum_{0 \le h \le p-1} C^2_h \in \K[u]$. Then both $P(E)$ and $-P(-E)$ 
are sums of squares in $\K[u]$ 
multiplied by elements in $\K_+$ 
plus 
an element in $\K_+$. 
\end{lemma}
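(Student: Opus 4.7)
{Proof proposal.}
Since $p$ is odd, $(-E)^p = -E^p$, hence
$-P(-E) = E^p + \sum_{h=0}^{p-1}(-1)^{h+1}C_h E^h.$
So both $P(E)$ and $-P(-E)$ are of the form
$$Q := E^p + \sum_{h=0}^{p-1} \epsilon_h C_h E^h, \qquad \epsilon_h \in \{-1,+1\},$$
and it suffices to show that every such $Q$ equals a positive element of $\K$ plus a sum of squares in $\K[u]$ multiplied by positive elements of $\K$. The heuristic is that $E = p + \sum C_h^2$ was designed so that $E \ge 1$ ``algebraically'' and so that $E^p = p E^{p-1} + (\sum C_h^2)E^{p-1}$ contains enough ``room'' to absorb each lower-order term $\epsilon_h C_h E^h$.

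The plan is to complete the square for each mixed term. For $h\in\{0,\ldots,p-1\}$, set $\alpha_h=\lfloor h/2\rfloor$ and $\beta_h=\lceil h/2\rceil$, so $\alpha_h+\beta_h=h$. Since $p$ is odd, if $h$ is odd then $h\le p-2$, so in all cases $2\alpha_h,\,2\beta_h\le p-1$. The identity
$$(C_h E^{\alpha_h} + \epsilon_h E^{\beta_h})^2 = C_h^2 E^{2\alpha_h} + 2\epsilon_h C_h E^h + E^{2\beta_h}$$
gives $\epsilon_h C_h E^h = \tfrac12\bigl[(C_h E^{\alpha_h} + \epsilon_h E^{\beta_h})^2 - C_h^2 E^{2\alpha_h} - E^{2\beta_h}\bigr]$. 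Summing over $h$ and substituting $E^p = pE^{p-1} + (\sum_h C_h^2)E^{p-1}$ in order to distribute one copy of $\tfrac12 E^{p-1}$ to each of the $p$ indices and one copy of $\tfrac12 C_h^2 E^{p-1}$ to each $h$, I arrive at
$$Q \;=\; \tfrac12 E^p \;+\; \tfrac12\!\sum_{h=0}^{p-1}\bigl(E^{p-1}-E^{2\beta_h}\bigr) \;+\; \tfrac12\!\sum_{h=0}^{p-1} C_h^2\bigl(E^{p-1}-E^{2\alpha_h}\bigr) \;+\; \tfrac12\!\sum_{h=0}^{p-1}\bigl(C_h E^{\alpha_h}+\epsilon_h E^{\beta_h}\bigr)^2.$$

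It then remains to check that each of the four groups of terms is a positive element of $\K$ plus a sum of squares weighted by positive elements of $\K$. Two auxiliary facts suffice. First, multinomial expansion of $E^k=(p+\sum_h C_h^2)^k$ shows $E^k=p^k + (\text{SOS with positive coefficients in }\K)$ for every $k\ge 1$, since every cross-term is a positive rational multiple of a product of even powers of the $C_h$. Second, for $j\ge 1$ one has the factorization $E^j-1=(E-1)(1+E+\cdots+E^{j-1})$, in which $E-1=(p-1)+\sum_h C_h^2$ is either a square (case $p=1$) or a positive scalar plus SOS (case $p\ge 3$), and $1+E+\cdots+E^{j-1}$ is a positive scalar plus SOS by the first fact. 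Multiplying, $E^j-1$ itself is a positive scalar plus SOS with positive coefficients (or pure SOS when $p=1$). Applying these to $E^{p-1}-E^{2\beta_h}=E^{2\beta_h}(E^{p-1-2\beta_h}-1)$ and $C_h^2(E^{p-1}-E^{2\alpha_h})=C_h^2 E^{2\alpha_h}(E^{p-1-2\alpha_h}-1)$ (where the extra factor $C_h^2$ turns any surviving positive scalar into a square with a positive coefficient) shows both sums of those brackets are SOS with positive coefficients; the term $\tfrac12 E^p$ contributes the positive scalar $\tfrac12 p^p$ plus SOS; and the last sum is manifestly SOS.

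The main obstacle is purely book-keeping: verifying that the class of expressions of the form ``positive element of $\K$ plus sum of squares of $\K[u]$ weighted by positive elements of $\K$'' is stable under the products and sums used above (which it is, since (positive)$\cdot$(positive+SOS)=positive+SOS and (SOS)$\cdot$(positive+SOS)=SOS), and carefully handling the degenerate case $p=1$, where $p-1=0$ makes $E-1=C_0^2$ a mere square rather than a strictly positive scalar plus SOS. In that case one can check the conclusion directly: $P(E)=(1+C_0^2)+C_0=\tfrac34+(C_0+\tfrac12)^2$ and $-P(-E)=\tfrac34+(C_0-\tfrac12)^2$.
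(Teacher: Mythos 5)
Your proof is correct, and it takes a genuinely different route from the paper's. The paper proceeds by induction along the Horner scheme: it sets ${\rm Hor}_0(P)=1$, ${\rm Hor}_i(P)=C_{p-i}+y\cdot{\rm Hor}_{i-1}(P)$, and shows inductively that ${\rm Hor}_i(P)(E)=p-i+\sum_{h\le p-i-1}C_h^2+N_i+\omega_i$ with $N_i\in\scN(\emptyset)$ and $\omega_i>0$; at each step the single new coefficient $C_{p-i}$ is absorbed via $(C_{p-i}+\frac12)^2=C_{p-i}^2+C_{p-i}+\frac14$, consuming one unit of the integer budget $p$ built into $E$. You instead write down a single closed-form identity: you complete the square for every mixed term $\epsilon_hC_hE^h$ simultaneously via $(C_hE^{\alpha_h}+\epsilon_hE^{\beta_h})^2$, and pay for the resulting negative corrections $-\frac12C_h^2E^{2\alpha_h}-\frac12E^{2\beta_h}$ by expanding half of $E^p$ as $\frac12(p+\sum_hC_h^2)E^{p-1}$ and distributing its pieces term by term, then checking that each residual $E^{p-1}-E^{2\gamma}=E^{2\gamma}(E^{p-1-2\gamma}-1)$ is SOS-plus-positive via $E-1=(p-1)+\sum_hC_h^2$ and $E^k=p^k+\mathrm{SOS}$. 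I verified your identity (including the parity bookkeeping $2\alpha_h,2\beta_h\le p-1$, which uses that $p$ is odd) and the closure properties you invoke; your worry about $p=1$ is in fact moot in the general formula, since there every exponent $p-1-2\gamma$ is zero and the factor $E-1$ never appears, but your direct verification covers it anyway. The paper's induction is more economical to state and mirrors the inductive style used elsewhere in the paper; your argument yields a fully explicit one-shot certificate, at the cost of somewhat heavier bookkeeping. Both are elementary and both suffice for the use made of the lemma (only the existence of the decomposition matters, not its degree).
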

\begin{proof} {Proof.}
We only prove the claim for $P(E)$  and the 
respective claim for $-P(-E)$ follows by considering the polynomial $-P(-y)
$.

We consider the Horner polynomials of $P$, ${\rm Hor}_0(P) = 1$, 
${\rm Hor}_{i}(P) = C_{p-i} + y\cdot {\rm Hor}_{i-1}(P)$ for $1 \le i \le p$. 
We will prove by induction on $i$ that for $1 \le i \le p$, 
\begin{equation}\label{eq:point_pol_pos}
{\rm Hor}_i(P)(E) = p - i + \sum_{0 \le h \le p-i-1}C_h^2 + N_i + \omega_i
\end{equation}
with $N_i \in \scN(\emptyset) $
 and $\omega_i$ in $\K_+$. 

For $i = 1$ we have
$$
{\rm Hor}_1(P)(E) = C_{p-1} + p + \sum_{0 \le h \le p-1}C_h^2 = 
p-1 + \sum_{0 \le h \le p-2}C_h^2 + \Big(C_{p-1} + \frac12\Big)^2 + \frac34.
$$

Suppose now that we have an equation like (\ref{eq:point_pol_pos}) for some $1 \le i-1 \le p-1$.
Then we have
$$
\begin{array}{rcl}
 {\rm Hor}_{i}(P)(E) &= &C_{p-i} + \Big(p +  \sum_{0 \le h \le p}C_h^2 \Big)\cdot \Big( 
p - i + 1 + \sum_{0 \le h \le p-i}C_h^2 + N_{i - 1} + \omega_{i-1}\Big) =\\[4mm]
&=& p - i + \sum_{0 \le h \le p-i-1}C_h^2 + N_i + \omega_i
\end{array}
$$
by taking
$$
N_i = \Big(p - 1 +  \sum_{0 \le h \le p}C_h^2 \Big)\cdot \Big( 
p - i + 1 + \sum_{0 \le h \le p-i}C_h^2 + N_{i - 1} + \omega_{i-1}
\Big) +  N_{i - 1}  + \Big(C_{p-i} + \frac 12\Big)^2   
$$
and $\omega_i =  \omega_{i-1} + \frac34$.

Finally, since ${\rm Hor}_p(P) = P$, the claim follows by considering equation (\ref{eq:point_pol_pos})
for $i = p$. 
\end{proof}

\begin{proof}{Proof of Theorem \ref{odd_degree_real_root_existence}:}
We apply to the initial incompatibility the weak inference
$$
\exists (t_1,t_2) \  [\;  P(t_1)\cdot P(t_2) \le 0 \;] 
\ \ \ \vdash  \ \ \
\exists t \ [\; P(t) = 0 \;].
$$
By Theorem \ref{ivt} (Intermediate Value Theorem as a weak existence),  we obtain an incompatibility 
with
monoid part
$S^{e}$
with 
$e \le {\rm g}_1\{p-1,p\}$, 
degree in $w$
bounded by  
$
{\rm g}_1\{p-1,p\}(\delta_w + \delta_t \deg_w P)
$ and degree in $(t_1,t_2)$ bounded by ${\rm g}_1\{p-1,p\} \delta_t$. 
Then we simply substitute $t_1 = E$ and $t_2 = -E$ where $E$ is defined as in Lemma
\ref{lemPMPP}. The degree bound follows easily. 
\end{proof}


\section{Fundamental Theorem of Algebra}\label{section_fta}
\setcounter{equation}{0}

In this section, we follow the approach of a famous algebraic proof of the Fundamental Theorem of Algebra due to Laplace
to give a weak existence form of this theorem
(Theorem  \ref{every_pol_has_a_compl_root}).
This approach is based on an induction
on the power of 2 appearing in the degree of the polynomial, 
the base case being the case of polynomials of odd degree.

We then apply Theorem  \ref{every_pol_has_a_compl_root} to obtain a weak disjunction of 
the possible decompositions of a polynomial into irreducible
real factors according to the number of real and complex roots (Theorem  \ref{thLaplace}).
Finally we obtain a weak disjunction of the possible decompositions of a polynomial into irreducible
real factors taking into account multiplicities
(Theorem \ref{thLaplacewithmult}). 

Apart from many results from Section \ref{Weak.inferences}, the only result from Section \ref{section_ivt}
used in this section is Theorem \ref{odd_degree_real_root_existence} 
(Real Root of an Odd Degree Polynomial as a weak existence), and it is used once in the base case of the induction 
in the proof of Theorem \ref{every_pol_has_a_compl_root} (Fundamental Theorem of Algebra as a weak existence), 
once in the proof of Lemma \ref{lem:factors_lin_or_quad} and once in the proof of 
Theorem \ref{thLaplace} (Real Irreducible Factors as a weak existence).

On the other hand, the only result extracted from Section \ref{section_fta} used in the rest of the paper  is  Theorem 
 \ref{thLaplacewithmult} (Real Irreducible Factors with Multiplicities as a weak existence), which is used only once in 
 Section \ref{sect_elim_of_one_var}.

\subsection{Fundamental Theorem of Algebra}

In order to prove
a weak existence version of the Fundamental Theorem of Algebra
in Theorem \ref{every_pol_has_a_compl_root}, 
we need some auxiliary notation, definitions and results.

\begin{notation}\label{notation:n_p}
For $p\in \N_*$, we denote by ${\rm r}\{p\}$ the biggest nonnegative integer $r$ such that 
$2^r$ divides $p$
and by ${\rm n}\{p\}$ the combinatorial number ${{p}\choose{2}}$.   
\end{notation}

Laplace's proof of the Fundamental Theorem of Algebra \cite{Lap} is very well known (see for example \cite{BPRbook_ed1}).
It is based on an 
inductive reasoning on ${\rm r}\{p\}$, where $p$ is the degree of 
the 
polynomial $P \in \R[y]$ for which the existence of a complex root is being proved. 
The result is true for a polynomials of odd degree for which ${\rm r}\{p\}=0$.
An auxiliary polynomial of degree  ${\rm n}\{p\}$ is constructed, and has a complex root by induction,
taking into account that ${\rm r}\{{\rm n}\{p\}\} = {\rm r}\{p\} - 1$.
A complex root of $P$ is then produced by solving a quadratic equation.

Following Laplace's approach, 
we define auxiliary polynomials. 

\begin{definition} Let $p \ge 1$, $c = (c_0, \dots, c_{p-1})$, $y' = (y'_0, \dots, y'_{{\rm n}\{p\}})$ and 
$y'' = (y''_{0,1}, \dots,$ 
$y''_{0,{\rm n}\{p\}},$ $y''_{1,2}, \dots,$ $y''_{1,{\rm n}\{p\}},$ $\dots,$ $y''_{{\rm n}\{p\}-1,{\rm n}\{p\}})$ be 
sets
of variables.  
We denote by $\overline{\K(c)}$ the algebraic closure of $\K(c).$ 
We consider 
\begin{itemize}
\item $P
 = y^p + \sum_{0 \le h \le p-1} c_h \cdot y^h \in \K[c][y]$,  

\item 
for $0 \le k \le {\rm n}\{p\}$, 
$$
Q_k
 = \prod_{1 \le i < j \le p}(y'_k - k(t_i+t_j) - t_it_j) 
\in \K[c][y'_k]
$$
where $t_1, \dots, t_p \in \overline{\K(c)}$ are the roots of $P
$ considering $y$ as the main variable,

\item for $0 \le k < \ell \le {\rm n}\{p\}$, 
$$
R_{k,\ell}
 = {y''_{k,\ell}}^2 - 
\frac{y'_\ell - y'_k}{\ell-k}y''_{k,\ell} + \frac{\ell y'_k - ky'_\ell}{\ell - k} \in \K[y'_k, y'_\ell, y''_{k,\ell}].
$$
\end{itemize}
\end{definition}

\begin{remark}\label{rem:degree_pol_aux_laplace}
For $0 \le k \le {\rm n}\{p\}$, 
$p-1$ of the factors in the definition of $Q_k$ have degree in $t_1$ equal to $1$ and 
the remaining factors have degree in $t_1$ equal to $0$. From this, it can be deduced that
$\deg_c Q_k \le p-1$ and also that $\deg_{(c, y'_k)} Q_k = {\rm n}\{p\}$ (see \cite[Section 2.1]{BPRbook}). 
\end{remark}

\begin{lemma}\label{Laplace} 
We denote by $\overline{\K}$ the algebraic closure of $\K$. 
For any $\gamma \in \overline{\K}^{p} ,\psi' \in \overline{\K}^{{\rm n}\{p\}+1}$ 
and $\psi''\in \overline{\K}^{{{{\rm n}\{p\}+1}\choose {2}}}$,
if $Q_k(\gamma, \psi'_k) = 0$ for $0 \le k \le {\rm n}\{p\}$ and 
$R_{k,\ell}(\psi'_k,\psi'_{\ell}, \psi''_{k,\ell} ) = 0$ for 
$0\le k< \ell \le {\rm n}\{p\}$, then 
$$\prod_{0 \le k < \ell \le {\rm n}\{p\}}P(\gamma, \psi''_{k,\ell})=0.$$
\end{lemma}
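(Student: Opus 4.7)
The plan is to exploit Laplace's pigeonhole trick: there are ${\rm n}\{p\}+1$ pigeons (the values of $k$) but only ${\rm n}\{p\}={p\choose 2}$ pigeonholes (the unordered pairs $\{i,j\}$). So first I would pass to the algebraic closure and let $\tau_1,\dots,\tau_p\in\overline{\K}$ denote the roots of $P(\gamma,y)$, so that by definition of $Q_k$, for every $k\in\{0,1,\dots,{\rm n}\{p\}\}$ the hypothesis $Q_k(\gamma,\psi'_k)=0$ forces the existence of a pair $1\le i_k<j_k\le p$ with
$$\psi'_k \;=\; k(\tau_{i_k}+\tau_{j_k})+\tau_{i_k}\tau_{j_k}.$$

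Since there are $\binom{p}{2}+1={\rm n}\{p\}+1$ indices $k$ and only ${\rm n}\{p\}$ possible pairs, the pigeonhole principle yields two indices $0\le k<\ell\le{\rm n}\{p\}$ together with a single pair $i<j$ such that both $\psi'_k=k(\tau_i+\tau_j)+\tau_i\tau_j$ and $\psi'_\ell=\ell(\tau_i+\tau_j)+\tau_i\tau_j$. Subtracting and taking a suitable linear combination, I would then solve the linear $2\times2$ system (whose determinant is $\ell-k\ne 0$) to obtain
$$\tau_i+\tau_j \;=\; \frac{\psi'_\ell-\psi'_k}{\ell-k},\qquad \tau_i\tau_j \;=\; \frac{\ell\psi'_k-k\psi'_\ell}{\ell-k}.$$

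Plugging these identities into the definition of $R_{k,\ell}$ gives the factorization
$$R_{k,\ell}(\psi'_k,\psi'_\ell,Y)\;=\;Y^{2}-(\tau_i+\tau_j)Y+\tau_i\tau_j\;=\;(Y-\tau_i)(Y-\tau_j).$$
The hypothesis $R_{k,\ell}(\psi'_k,\psi'_\ell,\psi''_{k,\ell})=0$ therefore forces $\psi''_{k,\ell}\in\{\tau_i,\tau_j\}$, so in either case $P(\gamma,\psi''_{k,\ell})=0$. Since one factor of the product vanishes, the full product $\prod_{0\le k<\ell\le{\rm n}\{p\}}P(\gamma,\psi''_{k,\ell})$ vanishes as well.

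I do not anticipate a real obstacle here: the pigeonhole step is combinatorial, and once it is executed the remaining algebraic manipulations are elementary (inverting a Vandermonde-like $2\times 2$ system and recognizing a quadratic with prescribed roots). The only bookkeeping care is that working over $\overline{\K}$ is legitimate for this purely algebraic statement, and that the denominators $\ell-k$ are nonzero since $k<\ell$.
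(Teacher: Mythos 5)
Your proof is correct and follows essentially the same route as the paper's: the same pigeonhole argument over the ${\rm n}\{p\}+1$ indices versus the ${\rm n}\{p\}$ unordered pairs of roots, the same $2\times 2$ solve for $\tau_i+\tau_j$ and $\tau_i\tau_j$, and the same identification of $R_{k,\ell}(\psi'_k,\psi'_\ell,Y)$ with $(Y-\tau_i)(Y-\tau_j)$. No gaps.
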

\begin{proof}{Proof.}
 For every $0 \le k \le {\rm n}\{p\}$, the condition $Q_k(\gamma, \psi'_k) = 0$ implies
that there exists a pair of  roots $\tau_k, \tau'_k \in \overline{\K}$ of $P(\gamma, y)$ such that
$\psi'_k = k(\tau_k + \tau'_k) + \tau_k \tau'_k$. 
Since there are at most ${\rm n}\{p\}$ different pairs of roots of $P(\gamma, y)$, 
by the pigeon hole principle there exist
indices $(k,\ell)$, $0 \le k < \ell \le {\rm n}\{p\}$ and roots 
$\tau, \tau' \in \overline{\K}$ of $P(\gamma, y)$ such that 
$\psi'_k=k(\tau+\tau')+\tau\tau'$ and $\psi'_\ell=\ell(\tau+\tau')+\tau\tau'$.
Then, we have
$$\tau+\tau'=\frac{ \psi'_\ell- \psi'_k}{\ell-k},\  
\tau\tau'=\frac{\ell \psi'_k - k \psi'_\ell}{\ell - k},
$$
so that the two roots of $R_{k,\ell}(\psi'_k,\psi'_\ell,y''_{k,\ell})$ are $\tau$ and $\tau'$
and therefore $\psi''_{k, \ell}$ is a root of $P(\gamma, y)$, what proves the claim.
\end{proof}

The preceding statement is transformed into an algebraic identity  using Effective
Nullstellensatz (\cite[Theorem 1.3]{Jel}).

\begin{lemma}\label{nullstellensatzLaplace} 
There is an identity in $\K[c][y', y'']$
$$
\begin{array}{rcl}
\displaystyle{\prod_{0 \le k < \ell \le {\rm n}\{p\}}P(c, y''_{k,\ell})^m} & =    &
\displaystyle{\sum_{0 \le k \le {\rm n}\{p\}}W_k(c, {y'},{y''})\cdot Q_k(c, y'_k)} \ + \\[6mm]
& +  &  
\displaystyle{\sum_{0 \le k < \ell \le {\rm n}\{p\}}
W_{k,\ell}(c, {y'},{y''})\cdot R_{k,\ell}(y'_k,y'_\ell,y''_{k,\ell})}
\end{array}
$$ 
such that all the terms have degree in $(c, y', y'')$ bounded by 
${\rm n}\{p\}^{{\rm n}\{p\}+1}2^{{{{\rm n}\{p\}+1}\choose 2}}(1 + {{{\rm n}\{p\}+1}\choose{2}}p)$.
\end{lemma}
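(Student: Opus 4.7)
The plan is to combine Lemma \ref{Laplace} with the effective Nullstellensatz of Jelonek, cited as [Jel, Theorem 1.3]. Lemma \ref{Laplace} already asserts that on every common zero in $\overline{\K}^N$ (with $N = p + ({\rm n}\{p\}+1) + {{{\rm n}\{p\}+1}\choose 2}$ equal to the total number of coordinates of $(c, y', y'')$) of the family $\{Q_k\}_{0 \le k \le {\rm n}\{p\}} \cup \{R_{k,\ell}\}_{0 \le k < \ell \le {\rm n}\{p\}}$, the polynomial $\prod_{k<\ell} P(c, y''_{k,\ell})$ vanishes. By classical Hilbert's Nullstellensatz, some power of this product lies in the ideal generated by the $Q_k$ and the $R_{k,\ell}$ in $\K[c,y',y'']$, which produces an identity of the required shape with some cofactors $W_k$, $W_{k,\ell}$ and some exponent $m$.

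To pin down the explicit degree bound, I would invoke [Jel, Theorem 1.3] directly. The relevant input data are the $({\rm n}\{p\}+1) + {{{\rm n}\{p\}+1}\choose 2}$ generating polynomials, whose degrees are at most ${\rm n}\{p\}$ for the $Q_k$ (by Remark \ref{rem:degree_pol_aux_laplace}) and exactly $2$ for the $R_{k,\ell}$, together with the target polynomial $\prod_{k<\ell} P(c, y''_{k,\ell})$ of degree ${{{\rm n}\{p\}+1}\choose 2} \cdot p$. Feeding these parameters into Jelonek's bound gives simultaneously a control on the exponent $m$ and on the degrees of the cofactors $W_k$, $W_{k,\ell}$, and I expect this bound to come out precisely as ${\rm n}\{p\}^{{\rm n}\{p\}+1} 2^{{{{\rm n}\{p\}+1}\choose 2}} (1 + {{{\rm n}\{p\}+1}\choose 2}\, p)$, as required.

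The step is not deep in content; the main point requiring care is the bookkeeping in Jelonek's estimate. In its sharpest form the bound behaves like the product of the degrees of the input polynomials (truncated by the number of variables) multiplied by a linear factor in the degree of the target. I would need to verify that each of the ${\rm n}\{p\}+1$ polynomials $Q_k$ contributes a factor ${\rm n}\{p\}$ (giving the ${\rm n}\{p\}^{{\rm n}\{p\}+1}$ term), that each of the ${{{\rm n}\{p\}+1}\choose 2}$ polynomials $R_{k,\ell}$ contributes a factor $2$ (giving the $2^{{{{\rm n}\{p\}+1}\choose 2}}$ term), and that the target polynomial $\prod P(c,y''_{k,\ell})$ contributes exactly the last linear factor $1 + {{{\rm n}\{p\}+1}\choose 2}\, p$. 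Once this identification with the statement of [Jel, Theorem 1.3] has been spelled out, the algebraic identity together with its degree estimate is immediate, and there is no further work to do within the current lemma.
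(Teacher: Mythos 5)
Your overall strategy coincides with the paper's: Lemma \ref{Laplace} supplies the geometric vanishing statement, and Jelonek's effective Nullstellensatz \cite[Theorem 1.3]{Jel} converts it into an identity with explicit exponent and degree control. However, there is a gap in the step you defer to ``bookkeeping'': applying \cite[Theorem 1.3]{Jel} directly to the \emph{affine} polynomials $Q_k$, $R_{k,\ell}$ and the target $\prod_{k<\ell}P(c,y''_{k,\ell})$ does not produce the stated bound. The paper first introduces an auxiliary homogenizing variable $\bar y$ and passes to the homogenizations $P^{[h]}$, $Q_k^{[h]}$, $R_{k,\ell}^{[h]}$; crucially, the target of the homogeneous Nullstellensatz is not $\prod P^{[h]}$ but $\bar y\cdot \prod P^{[h]}$, because the vanishing guaranteed by Lemma \ref{Laplace} is only available off the hyperplane at infinity, so the extra factor $\bar y$ is needed for the projective statement to hold. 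Jelonek's theorem is then applied in this homogeneous setting, yielding the exponent $m={\rm n}\{p\}^{{\rm n}\{p\}+1}2^{{{{\rm n}\{p\}+1}\choose 2}}$ (the product of the degrees of the generators, as you correctly identify) and, since every term in the resulting identity is homogeneous of degree $m\cdot\deg(\bar y\cdot\prod P^{[h]})=m(1+{{{\rm n}\{p\}+1}\choose 2}p)$, the stated degree bound; the affine identity follows by evaluating $\bar y=1$.

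The ``$+1$'' in the linear factor $1+{{{\rm n}\{p\}+1}\choose 2}p$ is exactly the degree contributed by the factor $\bar y$. Your affine computation would feed a target of degree ${{{\rm n}\{p\}+1}\choose 2}p$ into the theorem, so the identification you ``expect to come out precisely'' cannot be obtained the way you describe; moreover, without the homogenization it is not clear that the version of Jelonek's theorem being cited applies at all in the form you need (with the exponent equal to the product of the degrees and with simultaneous control of all cofactor degrees). To close the gap you should carry out the homogenization, verify the projective vanishing statement $\bar\psi\prod P^{[h]}=0$ on the common zeros of the homogenized generators, apply the theorem there, and dehomogenize.
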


\begin{proof}{Proof.}
Consider an auxiliary variable $\bar y$ 
and the polynomials $P^{[h]}(c, y, \bar y)$, $Q_k^{[h]}(c, y'_k, \bar y)$ 
and  $R_{k,\ell}^{[h]}(y'_k,  y'_\ell, y''_{k, \ell}, \bar y)$
obtained respectively from 
$P(c, y)$, $Q_k(c, y'_k)$ 
and  $R_{k,\ell}(y'_k,  y'_\ell,  y''_{k, \ell})$ by homogeneization. 

It is clear from Lemma \ref{Laplace} that 
for any $\gamma \in \overline{\K}^{p}$, 
$\psi' \in \overline{\K}^{{\rm n}\{p\}+1}$,  
$\psi'' \in \overline{\K}^{{{{\rm n}\{p\}+1}\choose {2}}}$ and $\bar \psi \in \overline{\K}$,
if $Q_k^{[h]}(\gamma,  \psi'_k, \bar \psi) = 0$ for $0 \le k \le {\rm n}\{p\}$ and 
$R_{k,\ell}^{[h]}(\psi'_k,  \psi'_{\ell}, \psi''_{k,\ell}, 
\bar \psi) = 0$ for 
$0\le k< \ell \le {\rm n}\{p\}$, then 
$$
\bar \psi
\prod_{0 \le k < \ell \le {\rm n}\{p\}}P^{[h]}(\gamma,   
\psi''_{k,\ell}, \bar \psi)
=0.
$$

Following \cite[Theorem 1.3]{Jel}, we have an identity 
\begin{equation}\label{ident:aux_nullst}
\begin{array}{rcl}
\displaystyle{\bar y^m \cdot
\prod_{0 \le k < \ell \le {\rm n}\{p\}}P^{[h]}(c, 
y''_{k,\ell}, \bar y)^m }
&= &
\displaystyle{\sum_{0 \le k \le {\rm n}\{p\}}
W^{[h]}_k(c,  {y'}, {y''}, \bar y) \cdot
Q_k^{[h]}(c, y'_k, \bar y) }
+  \\[6mm]
& + &   \displaystyle{\sum_{0 \le k < \ell \le {\rm n}\{p\}}
W^{[h]}_{k,\ell}(c, {y'},  {y''}, \bar y)\cdot
R_{k,\ell}^{[h]}(y'_k,  y'_\ell, y''_{k, \ell}, \bar y)}
\end{array}
\end{equation}
with $m = {\rm n}\{p\}^{{\rm n}\{p\}+1}2^{{{{\rm n}\{p\}+1}\choose 2}}$ 
and 
$W^{[h]}_k$ and $W^{[h]}_{k, \ell}$ homogeneous polynomials 
such that all the terms in (\ref{ident:aux_nullst})  have degree in $(c, y', y'', \bar y)$
equal to $m(1 + {{{\rm n}\{p\}+1}\choose{2}}p)$.
The lemma follows by 
evaluating $\bar y = 1$
in (\ref{ident:aux_nullst}). 
\end{proof}

The following  function
plays a key role in the estimates
of the 
degrees in the 
weak inference 
version of the Fundamental Theorem of Algebra.

\begin{definition} \label{defg2}
Using Notation \ref{notation:n_p}, let ${\rm g}_2:\N_* \to {\mathbb R}$, ${\rm g}_2\{p\} = 2^{2^{3(\frac{p}2)^{2^{{\rm r}\{p\}}}}}$. 
\end{definition}

\begin{Tlemma}\label{aux_ineq_tfa}  Let $p \in \N_*$.
\begin{enumerate}
\item \label{aux_ineq_tfa:0} If $p \ge 3$ is an odd number, then $3{\rm g}_1\{p-1,p\} \le {\rm g}_2\{p\}$. 
\item \label{aux_ineq_tfa:1}
 If $p \ge 4$ is an even number, then 
$
\frac3{16}p^{9}m
8^{{{\rm n}\{p\}+1\choose 2}}{\rm g}_2^{{\rm n}\{p\}+1}\{{\rm n}\{p\}\}
\le {\rm g}_2\{p\}
$, where
$m = {\rm n}\{p\}^{{\rm n}\{p\}+1}2^{{\rm n}\{p\}+1\choose 2}$.
\end{enumerate}
\end{Tlemma}

\begin{proof}{Proof.} See Section \ref{section_annex}. 
\end{proof}

\begin{theorem}[Fundamental Theorem of Algebra as a weak existence]\label{every_pol_has_a_compl_root} 
Let $p \ge 1$ and $P
 = y^p +  \sum_{0 \le h \le p-1}C_h \cdot y^h\in \K[u][ y]$. Then
$$
\vdash  \ \ \
\exists z \ [\; P(z) = 0 \;],
$$
where $z=a+ib$ is a complex variable
(see Notation \ref{real-imaginary}).

If we have
an initial incompatibility in  $\K[v][a,b]$ where $v \supset u$ and $a,b \not \in v$, 
with 
monoid part $S$, 
degree in $w \subset v$ bounded by 
$\delta_w$ and
degree in $(a,b)$ bounded by $\delta_z$,
the final incompatibility has
monoid part
$S^{e}$
with 
$e \le {\rm g}_2\{p\}$,
and degree in $w$
bounded by  
$
{\rm g}_2\{p\}(\delta_w + \delta_z \deg_w P)$. 
\end{theorem}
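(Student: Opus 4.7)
I proceed by induction on ${\rm r}\{p\}$, mirroring Laplace's classical argument. For the base case ${\rm r}\{p\} = 0$, $p$ is odd: from the initial incompatibility in $\K[v, a, b]$, substituting $b = 0$ yields an incompatibility $\lda P(a) = 0, \cH \rda_{\K[v, a]}$ since $P\im(a, 0)$ vanishes identically and $P\re(a, 0) = P(a)$. Applying Theorem \ref{odd_degree_real_root_existence} then eliminates the variable $a$ to produce the final incompatibility $\lda \cH \rda_{\K[v]}$, and Technical Lemma \ref{aux_ineq_tfa} (item \ref{aux_ineq_tfa:0}) provides the bound $3\,{\rm g}_1\{p-1, p\} \le {\rm g}_2\{p\}$ for $p \ge 3$; the degenerate case $p = 1$ is handled by direct substitution $a = -C_0$.

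For the inductive step, $p$ is even and I use the theorem for $p' = {\rm n}\{p\}$, which satisfies ${\rm r}\{{\rm n}\{p\}\} = {\rm r}\{p\} - 1$. I introduce fresh complex variables $z'_k$ for $0 \le k \le {\rm n}\{p\}$ and $z''_{k,\ell}$ for $0 \le k < \ell \le {\rm n}\{p\}$. For each pair $(k, \ell)$, substituting $z$ by $z''_{k,\ell}$ in the initial incompatibility produces $\lda P(z''_{k,\ell}) = 0, \cH \rda$. Rewriting each complex equation $P(z''_{k,\ell}) = 0$ as the single real equation $P\re(z''_{k,\ell})^2 + P\im(z''_{k,\ell})^2 = 0$ via Lemma \ref{sos_non_pos_disjunct} and combining the resulting ${{\rm n}\{p\}+1\choose 2}$ incompatibilities by Lemma \ref{lem_prod_zero_at_least_one_fact_zero} yields an incompatibility whose sign condition is equivalent to $\prod_{k<\ell} P(z''_{k,\ell}) = 0$ as a complex equation. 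The identity of Lemma \ref{nullstellensatzLaplace} expresses $\prod_{k<\ell} P(z''_{k,\ell})^m$ as a complex-polynomial combination of the $Q_k(z'_k)$ and the $R_{k,\ell}(z'_k, z'_\ell, z''_{k,\ell})$; splitting this identity into real and imaginary parts and invoking Lemma \ref{lem:comb_lin_zero_zero} substitutes the above sign condition by the conjunction $\bigwedge_k Q_k(z'_k) = 0 \wedge \bigwedge_{k<\ell} R_{k,\ell}(z'_k, z'_\ell, z''_{k,\ell}) = 0$, producing
$$\lda \bigwedge_{0 \le k \le {\rm n}\{p\}} Q_k(z'_k) = 0, \bigwedge_{0 \le k < \ell \le {\rm n}\{p\}} R_{k,\ell}(z'_k, z'_\ell, z''_{k,\ell}) = 0, \cH \rda.$$

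The final phase eliminates the auxiliary complex variables. For each pair $(k, \ell)$, since $R_{k,\ell}$ is quadratic in $z''_{k,\ell}$, Lemma \ref{lemma_complex_root_quadratic} eliminates $z''_{k,\ell}$, raising the monoid exponent to the $8$th power per application; after these ${{\rm n}\{p\}+1\choose 2}$ eliminations only $\bigwedge_k Q_k(z'_k) = 0$ remains alongside $\cH$. Then the inductive hypothesis of the theorem is applied to each of the ${\rm n}\{p\}+1$ polynomials $Q_k$, of degree ${\rm n}\{p\}$ in $z'_k$, eliminating $z'_k$ and multiplying the monoid exponent by at most ${\rm g}_2\{{\rm n}\{p\}\}$ per application. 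The result is $\lda \cH \rda_{\K[v]}$, and the compounded monoid exponent together with the degree in $w$, once the combinatorial degree contributions from the Nullstellensatz identity (bounded in terms of $p^9 m$) are absorbed, fit within the inequality of Technical Lemma \ref{aux_ineq_tfa} (item \ref{aux_ineq_tfa:1}), giving $e \le {\rm g}_2\{p\}$ and degree in $w$ bounded by ${\rm g}_2\{p\}(\delta_w + \delta_z \deg_w P)$. The main obstacle is the careful tracking of monoid parts and degrees through this multi-stage elimination so that the many multiplicative contributions stay within the envelope of ${\rm g}_2\{p\}$.
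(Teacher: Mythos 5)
Your proposal follows essentially the same route as the paper: induction on ${\rm r}\{p\}$, the odd base case via Theorem \ref{odd_degree_real_root_existence} after setting $b=0$, and the even step via Laplace's auxiliary polynomials $Q_k$ and $R_{k,\ell}$, the effective Nullstellensatz identity of Lemma \ref{nullstellensatzLaplace}, quadratic root extraction by Lemma \ref{lemma_complex_root_quadratic}, and recursion on the $Q_k$, with the bounds absorbed by Technical Lemma \ref{aux_ineq_tfa}. The only points glossed over are minor bookkeeping that the paper treats explicitly: the case $p=2$ is handled directly by Lemma \ref{lemma_complex_root_quadratic} (the estimate of Technical Lemma \ref{aux_ineq_tfa} covers only even $p\ge 4$), and each incompatibility $\lda P(z''_{k,\ell})=0,\ \cH\rda$ must be repeated $m$ times in the application of Lemma \ref{lem_prod_zero_at_least_one_fact_zero} so that the resulting sign condition matches the $m$-th power appearing in the Nullstellensatz identity.
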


\begin{proof}{Proof.} 
Consider the initial incompatibility in $\K[v][ a, b]$
\begin{equation} \label{inc_dep_fta}
S + N(a,b)  + Z(a,b)  + W_{1}(a,b) \cdot P\re (a,b)  +  
W_{2}(a,b) \cdot P\im (a,b)  = 0
\end{equation}
with
$S\in {\scM}({\cal H}_{\neq}^2)$, $N(a,b) \in {\scN}({\cal H}_\ge )_{\K[v][a,b]}$, 
$Z(a,b) \in {\scZ}({\cal H}_= )_{\K[v][a,b]}$ and $W_{1}(a,b),$ $W_{2}(a,b)$ $\in$ $\K[v][ a, b]$, 
where $\cH$ is a system of sign conditions in $\K[v]$.

The proof proceeds by induction on ${\rm r}\{p\}$. For ${\rm r}\{p\} = 0$, i.e. $p$ is odd,
we evaluate $b = 0$ in 
(\ref{inc_dep_fta}) and, since $P\im (a,b)$ is a multiple of $b$ and $P\re (a,0) = P(a)$, 
we obtain an incompatibility of type
\begin{equation}\label{inc:aux_th_fta_1}
S + N'(a)  + Z'(a)  + W'(a) \cdot P(a) = 0 
\end{equation}
with
$N'(a) \in {\scN}({\cal H}_\ge )_{\K[v][a]}$, 
$Z'(a) \in {\scZ}({\cal H}_= )_{\K[v][a]}$ and $W'(a) \in$ $\K[v][ a]$.  
For $p = 1$, we substitute $a = -C_0$ and we take the result as the final incompatibility. 
For odd $p \ge 3$, we apply to (\ref{inc:aux_th_fta_1}) the weak inference 
$$ \vdash  \ \ \ \exists a \  [\;  P(a) = 0\;].$$
By Theorem \ref{odd_degree_real_root_existence} 
(Real Root of an Odd Degree Polynomial as a weak existence)
we obtain an incompatibility with monoid part
$S^{e}$ with $e \le {\rm g}_1\{p-1,p\}$ and degree in $w$ bounded by 
$3{\rm g}_1\{p-1,p\}(\delta_w + \delta_z\deg_w P)$,
which serves as the final incompatibility taking into account
Lemma  \ref{aux_ineq_tfa} (item \ref{aux_ineq_tfa:0}).

Suppose now ${\rm r}\{p\} \ge 1$, then $p$ is even. If $W_1(a,b)$ and $W_2(a,b)$ in (\ref{inc_dep_fta}) are both the zero polynomial, 
we evaluate $a = 0$ and $b = 0$ in (\ref{inc_dep_fta}) and we take the result
as the final incompatibility. From now, we suppose that
$W_1(a,b)$ and $W_2(a,b)$ are not both the zero polynomial and therefore,  
$\delta_z \ge p$. 

For $p=2$, 
the result follows from Lemma \ref{lemma_complex_root_quadratic}.

So we suppose $p \ge 4$ and , from now on, we denote $n={\rm n}\{p\}$, and  $m=n^{n+1}2^{{{n+1}\choose{2}}}$.

For $0 \le k < \ell \le n$, we substitute $a = a''_{k,\ell}, b = b''_{k,\ell}$ in 
(\ref{inc_dep_fta}) and we apply the 
weak inference
$$
P\re^2(a''_{k,\ell}, b''_{k,\ell}) + P\im^2(a''_{k,\ell}, b''_{k,\ell}) = 0 \ \ \ 
\vdash \ \ \  P(z''_{k,\ell}) = 0. 
$$
By Lemma \ref{sos_non_pos_disjunct}, we obtain  
\begin{equation}\label{inc:lemma_prod_comp_zero_pol_aux_2}
\lda
P\re^2(a''_{k,\ell}, b''_{k,\ell}) + P\im^2(a''_{k,\ell}, b''_{k,\ell}) = 0, \ \cH
\rda
_{\K[v][ a''_{k,\ell}, b''_{k,\ell}]}
\end{equation}
with monoid part $S^2$, degree in $w$ bounded by 
$2(\delta_{w} + \deg_w C_0)$ and degree in $(a''_{k,\ell}, b''_{k,\ell})$ bounded by $2\delta_z$.

Then we apply to the incompatibilities (\ref{inc:lemma_prod_comp_zero_pol_aux_2}) for
$0 \le k < \ell \le n$, each one repeated
$m$
times, the weak inference
$$
\prod_{0 \le k < \ell \le n} \big(  P\re^2(a''_{k,\ell}, b''_{k,\ell}) 
+ P\im^2(a''_{k,\ell}, b''_{k,\ell})  \big)^m = 0 \ \ \
\vdash
\ \ \ \bigvee_{0 \le k < \ell \le n, \atop 1 \le j \le m} 
P\re^2(a''_{k,\ell}, b''_{k,\ell}) + P\im^2(a''_{k,\ell}, b''_{k,\ell}) = 0.
$$
By Lemma \ref{lem_prod_zero_at_least_one_fact_zero},
we obtain 
\begin{equation}\label{inc:lemma_prod_comp_zero_pol_aux_3}
\lda
\prod_{0 \le k < \ell \le n} \big(  P\re^2(a''_{k,\ell}, b''_{k,\ell}) + 
P\im^2(a''_{k,\ell}, b''_{k,\ell})  \big)^m = 0, \ 
\cH
\rda
_{\K[v][ a'', b'']}
\end{equation}
with
monoid part $S^{2m {{n+1}\choose{2}}}$,  
degree in $w$ bounded by 
$2m{n+1\choose 2} (\delta_{w} + \deg_w C_0 )$ and 
degree in $(a''_{k,\ell}, b''_{k,\ell})$ bounded by $2m\delta_z$ for $0 \le k < \ell \le n$.

By Lemma \ref{nullstellensatzLaplace}, we have an identity
$$
\begin{array}{cl}
& \displaystyle{ \prod_{0 \le k < \ell \le n} \big(  P\re^2(a''_{k,\ell}, b''_{k,\ell}) 
+ P\im^2(a''_{k,\ell}, b''_{k,\ell})  \big)^m} = 
\\[4mm]
= &
\displaystyle{ \Big(
\sum_{0 \le k \le n}  
(W_k)\re
\cdot (Q_k)\re - (W_k)\im \cdot (Q_k)\im
+ 
\sum_{0 \le k < \ell \le n} 
(W_{k, \ell})\re\cdot (R_{k, \ell})\re- (W_{k, \ell})\im \cdot (R_{k, \ell})\im
\Big)^2 }
+\\[4mm]
+ & 
\displaystyle{ \Big(
\sum_{0 \le k  \le n}
(W_k)\re \cdot (Q_k)\im + (W_k)\im \cdot (Q_k)\re
+ 
\sum_{0 \le k < \ell \le n}
(W_{k, \ell})\re \cdot (R_{k, \ell})\im + (W_{k, \ell})\im \cdot(R_{k, \ell})\re
\Big)^2 }
\end{array}
$$
and then we apply to (\ref{inc:lemma_prod_comp_zero_pol_aux_3}) the weak inference 
$$
\bigwedge_{0\leq k\leq n} Q_k(C, z'_k) =0, 
\
\bigwedge_{0\le k < \ell \le n}
R_{k,\ell}(z'_k,z'_{\ell}, z''_{k,\ell}) = 0 
\ \ \ \vdash 
$$
$$ \vdash \ \ \
\prod_{0 \le k < \ell \le n} \big(  P\re^2(a''_{k,\ell}, b''_{k,\ell}) 
+ P\re^2(a''_{k,\ell}, b''_{k,\ell})  \big)^m = 0.
$$
By Lemma \ref{lem:comb_lin_zero_zero}, we obtain 
\begin{equation}\label{inc:thm_fta_aux_1}
\lda
\bigwedge_{0\leq k\leq n} Q_k(C, z'_k) =0, 
\
\bigwedge_{0\le k < \ell \le n}
R_{k,\ell}(z'_k,z'_{\ell}, z''_{k,\ell}) = 0 , \
\cH
\rda
_{\K[v][ a',b',a'', b'']}
\end{equation}
with the same 
monoid part, degree in $w$ bounded by 
$$
2m \Big({n+1\choose 2}\delta_{w} + \Big(1 +  {n+1\choose 2}p  \Big)\deg_w P\Big) \le 
 m 
\Big( \frac14p^4\delta_{w} 
+  \frac14p^5 \deg_w P
\Big),
$$ 
degree in 
$(a'_k, b'_k)$ 
bounded by $2m(1 + {n+1\choose 2}p) \le \frac14 mp^5$ for $0 \le k \le n$ and
degree in $(a''_{k,\ell}, b''_{k,\ell})$ bounded by 
$ 2m(1 -p + {n+1\choose 2}p + \delta_z) \le m( \frac14 p^5 + 2\delta_z) $ 
for $0 \le k < \ell \le n$.

Then we fix an arbitrary order $(k_1, \ell_1), \dots, (k_{n+1\choose2}, \ell_{n+1\choose2})$
of all the pairs $(k, \ell)$ with $0 \le k < \ell \le n$ and we 
we successively apply to (\ref{inc:thm_fta_aux_1}) for 
$1 \le h \le {n+1\choose2}$
the weak inference
$$  \vdash \ \ \ 
\exists z''_{k_h,\ell_h}  \ [\; 
R_{k_h,\ell_h}(z'_{k_h}, z'_{\ell_h}, z''_{k_h,\ell_h} ) = 0
\;].$$
By Lemma \ref{lemma_complex_root_quadratic}, we obtain 
\begin{equation}\label{inc:thm_fta_aux_2}
\lda
\bigwedge_{0\leq k\leq n} Q_k(C, z'_k) =0, 
\
\cH
\rda
_{\K[v][ a', b']}
\end{equation}
with monoid part 
$
S^{2m {{n+1}\choose {2}}8^{{{n+1}\choose 2}}}
$ and degree in $w$ bounded by 
$$   
\delta'_w := 
m 
\Big( \frac14p^4\delta_{w} 
+  \frac14p^5 \deg_w P
\Big)8^{{n+1\choose 2}} 
.
$$
In order to obtain a bound for 
the degree in $(a'_k, b'_k)$ of (\ref{inc:thm_fta_aux_2}) for $0 \le k \le n$, 
we do the following analysis. 
Consider a fixed $0 \le k_0 \le n$. 
For $1 \le h \le {n+1\choose2}$,  
$\deg_{(a'_{k_0}, b'_{k_0})} R_{k_h,\ell_h} = 0$ if 
$k_0 \ne k_h, \ell_h$ and $\deg_{(a'_{k_0}, b'_{k_0})}R_{k_h,\ell_h} = 1$ otherwise. 
Again by Lemma \ref{lemma_complex_root_quadratic}, there will be ${{n}\choose {2}}$ values of $h$ for 
which the bound 
for the degree in 
$(a'_{k_0}, b'_{k_0})$ is multiplied by $8$ and $n$ values of $h$ for which the bound 
for the degree in 
$(a'_{k_0}, b'_{k_0})$
is multiplied by $8$ and then increased by 
$40 + m(6 p^5 +48\delta_z)8^{h-1}$. 
It is easy to see that the worst case for the degree bound in 
$(a'_{k_0}, b'_{k_0})$
is when these $n$ values of $h$ are 
$1, \dots, n$, and that, in this case, after the application of the first $h \le n$ weak inferences, 
the degree in $(a'_{k_0}, b'_{k_0})$ of the incompatibility we obtain is bounded by
$$
\frac14mp^58^{h} + 40\Big(\sum_{0 \le j \le h-1}8^j\Big) + 
m(6p^5 +48\delta_z)h8^{h-1}.
$$
From this, we conclude that  the degree in
$(a'_{k}, b'_{k})$ of (\ref{inc:thm_fta_aux_2}) is bounded by 
$$
\frac14mp^58^{n+1\choose2} + 40\Big(\sum_{0 \le j \le n-1}8^j\Big)8^{n\choose2} + 
m(6p^5 +48\delta_z)n8^{{{n+1\choose2}}-1} 
\le m\Big( \frac3{8}p^7 +3p^2\delta_z\Big)8^{n+1\choose2} =: \delta'_{z'}$$
for $0 \le k \le n$.

Finally we successively apply to (\ref{inc:thm_fta_aux_2}) for every $0 \le k \le n$  the weak inference
$$  \vdash \ \ \
\exists z'_{k}  \   [\; Q_{k}(C, z'_k)= 0 \;].$$
Since ${\rm r}\{{\rm n}\{p\}\} = {\rm r}\{p\}-1$, by the inductive hyphotesis, we obtain 
\begin{equation}\label{inc:thm_fta_aux_3}
\lda \cH \rda 
_{\K[v]}
\end{equation}
with monoid part 
$
S^{2m{{n+1}\choose {2}}8^{{{n+1}\choose 2}}{e'}^{n+1}}
$
with 
$e' \le {\rm g}_2\{n\}$.
Also, when applying the weak inference corresponding to the index $k$, 
the bound for the degree in $w$ 
is increased by ${\rm g}_2^{k}\{n\} \delta'_{z'} (p-1) \deg_w P$ 
and then 
multiplied by ${\rm g}_2\{n\}$ (see Remark \ref{rem:degree_pol_aux_laplace}). 
It is easy to see that, after the application of this weak inference, the degree in $w$ of the
incompatibility we obtain is bounded by 
$$
{\rm g}_2^{k+1}\{n\}
(    
\delta'_{w} + 
(k+1) \delta'_{z'} (p-1)\deg_w P
).
$$
Therefore, the degree in $w$ of (\ref{inc:thm_fta_aux_3}) is bounded by 
$$
{\rm g}_2^{n+1}\{n\}
(    
\delta'_{w} + 
(n+1) \delta'_{z'} (p-1)\deg_w P
) \le 
{\rm g}_2^{n+1}\{n\}m
\Big(    
\frac14p^4\delta_{w} +
\frac3{16}p^9\delta_z\deg_w P
\Big)8^{{n+1\choose 2}}.
$$
The incompatibility (\ref{inc:thm_fta_aux_3}) serves as the final incompatibility since
$$
2m{{n+1}\choose {2}}8^{{{n+1}\choose 2}}{\rm g}_2^{n+1}\{n\}
\le \frac14p^4m8^{{{n+1}\choose 2}}{\rm g}_2^{n+1}\{n\} \le
\frac3{16}p^9m8^{{{n+1}\choose 2}}{\rm g}_2^{n+1}\{n\}
\le {\rm g}_2\{p\}
$$
and
$$
{\rm g}_2^{n+1}\{n\}m
\Big(    
\frac14p^4\delta_{w} +
\frac3{16}p^9\delta_z\deg_w P
\Big)8^{{n+1\choose 2}}  
\le
$$
$$
\le
{\rm g}_2^{n+1}\{n\}\frac3{16}p^9m
\Big(    
\delta_{w} 
+ \delta_z \deg_w P \Big)8^{{n+1\choose 2}} \le
{\rm g}_2\{p\}(\delta_w + \delta_z \deg_w P)
$$
using Lemma \ref{aux_ineq_tfa} (item \ref{aux_ineq_tfa:1}).
\end{proof}

\subsection{Decomposition of a polynomial
 into irreducible real factors}

We obtain now a weak disjunction on the possible decompositions of a polynomial into irreducible
real factors. 

We prove first an auxiliary lemma.

\begin{lemma}\label{lem:factors_lin_or_quad}
Let $p \ge 2$ be an even number and  
$P
 = y^p + \sum_{0 \le h \le p-1}C_h \cdot y^h  \in \K[u][ y]$. Then 
$$
  \vdash \ \ \  \exists  (t_1, t_2) \ [\; P
   \equiv (y-t_1) \cdot (y-t_2) \cdot {\rm Quot}(P,(y-t_1)(y-t_2)) \;] 
 \ \, \vee
$$
$$
\vee \ \, 
\exists  z \
[\; P
 \equiv ((y-a)^2  + b^2)\cdot{\rm Quot}(P, (y-a)^2  + b^2),  \ b\ne 0\;],
$$
where $z=a+ib$ is a complex variable
(see Notation \ref{real-imaginary}).

Suppose we have initial incompatibilities in 
$\K[v][ t_1, t_2]$
and $\K[v][ a, b]$
where $v \supset u$ and  $t_1, t_2, a, b \not \in v$, with 
monoid part 
$S_{1}$ and $S_{2}\cdot b^{2e}$ and degree in $w \subset v$ 
bounded by $\delta_{w}$.
Suppose also that the first initial incompatibility has 
degree in $t_1$ and degree in $t_2$ bounded by $\delta_{t}$
and the second initial incompatibility has degree in $(a, b)$ bounded by $\delta_z$.
Then, the final incompatibility has monoid part 
$S_1^{2(e+1)f} \cdot S_2^{f'}$ with 
$f \le {\rm g}_1\{p-2, p-1\}{\rm g}_2\{p\}$
and 
$f' \le {\rm g}_2\{p\}$ and degree in $w$ bounded by 
$$
{\rm g}_2\{p\}\Big(
(1 +  6{\rm g}_1\{p-2,p-1\}(e+1))\delta_{w} +
\Big(3 +  \delta_{z}  + 6{\rm g}_1\{p-2, p-1\}(e+1)(2 +
(p+1)\delta_{t}) \Big) \deg_w P
\Big),
$$
\end{lemma}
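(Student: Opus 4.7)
The plan is to combine the two initial incompatibilities by case-by-case reasoning on the sign of $b$, and then eliminate the complex variable $z=a+ib$ using Theorem \ref{every_pol_has_a_compl_root} (the Fundamental Theorem of Algebra as a weak existence). For the case $b\neq 0$, I would start from the second initial incompatibility and apply Lemma \ref{quadratic_factor}, which provides exactly the weak inference $P(z)=0,\ b\neq 0 \vdash P(y)\equiv((y-a)^2+b^2)\cdot\Quot(P,(y-a)^2+b^2)$; this produces an incompatibility $\lda P(z)=0,\ b\neq 0,\ \cH \rda$ in $\K[v,a,b]$ with monoid part $S_2\cdot b^{2(e+1)}$.

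The case $b=0$ is more delicate: when $b=0$, the root $z=a$ is real, and since $p$ is even, $\Quot(P,y-a)$ is a monic polynomial of odd degree $p-1$, which must itself have a real root. Concretely, I would start from the first initial incompatibility and apply Lemma \ref{lem:two_linear_factor} to obtain $\lda P(t_1)=0,\ \Quot(P,y-t_1)(t_2)=0,\ \cH \rda$ in $\K[v,t_1,t_2]$. Viewing $\Quot(P,y-t_1)(t_2)$ as a monic polynomial in $t_2$ of odd degree $p-1$, I then apply Theorem \ref{odd_degree_real_root_existence} to eliminate $t_2$, producing $\lda P(t_1)=0,\ \cH \rda$ in $\K[v,t_1]$ with monoid part $S_1^{{\rm g}_1\{p-2,p-1\}}$. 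After substituting $t_1=a$ and exploiting the identity $P(a)=P\re(a,b)-b^2\cdot G(a,b)$ for some $G\in\K[u,a,b]$ (valid since $P\re(a,b)-P(a)$ is divisible by $b^2$), together with Lemma \ref{lem:comb_lin_zero_zero}, one obtains the weak inference $P\re(a,b)=0,\ b=0\vdash P(a)=0$, which applied to the previous incompatibility yields $\lda P(z)=0,\ b=0,\ \cH \rda$ in $\K[v,a,b]$ with the same monoid part.

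Next I combine the two branches by case-by-case reasoning on $b$ via Lemma \ref{CasParCas_1}: the $b\neq 0$ incompatibility fits the template $S\cdot P^{2(e+1)}$ with $S=S_2$ and $P=b$, so the resulting monoid is $S_2\cdot S_1^{2(e+1){\rm g}_1\{p-2,p-1\}}$, giving $\lda P(z)=0,\ \cH \rda$ in $\K[v,a,b]$. Applying Theorem \ref{every_pol_has_a_compl_root} to the weak existence $\vdash \exists z\,[\,P(z)=0\,]$ then eliminates $(a,b)$ and raises the exponent of the monoid by a factor at most ${\rm g}_2\{p\}$, producing $\lda \cH \rda$ in $\K[v]$ with monoid part $S_1^{2(e+1){\rm g}_1\{p-2,p-1\}{\rm g}_2\{p\}}\cdot S_2^{{\rm g}_2\{p\}}$, matching the claimed bounds $f\leq {\rm g}_1\{p-2,p-1\}{\rm g}_2\{p\}$ and $f'\leq{\rm g}_2\{p\}$.

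The main obstacle will be the careful bookkeeping of the degree in $w$. The $\delta_t$-contribution of the first initial incompatibility is amplified first by $3{\rm g}_1\{p-2,p-1\}$ through Theorem \ref{odd_degree_real_root_existence} (plus an additive term proportional to $\delta_t\deg_w P$), then by $2(e+1)$ inside Lemma \ref{CasParCas_1}, and finally by ${\rm g}_2\{p\}$ through Theorem \ref{every_pol_has_a_compl_root}; the $\delta_z$-contribution of the second initial incompatibility passes through Lemma \ref{quadratic_factor} and the FTA elimination. Assembling these multiplicative and additive contributions yields the stated bound, in which the $(p+1)\delta_t\deg_w P$ factor originates from the $\Quot(P,y-t_1)(t_2)$ term in Lemma \ref{lem:two_linear_factor}, subsequently amplified by $6{\rm g}_1\{p-2,p-1\}(e+1){\rm g}_2\{p\}$.
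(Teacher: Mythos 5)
Your proposal follows essentially the same route as the paper's proof: the $b\ne 0$ branch via Lemma \ref{quadratic_factor}, the $b=0$ branch via Lemma \ref{lem:two_linear_factor} followed by Theorem \ref{odd_degree_real_root_existence} applied to the odd-degree quotient and the substitution $t_1=a$, then combination by Lemma \ref{CasParCas_1} and elimination of $z$ by Theorem \ref{every_pol_has_a_compl_root}, with the same monoid bookkeeping yielding $S_1^{2(e+1)e'f'}\cdot S_2^{f'}$. The argument and the degree accounting are correct (up to stating the exponent ${\rm g}_1\{p-2,p-1\}$ as an equality where it should be an upper bound).
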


\begin{proof}{Proof.}
Consider the initial incompatibilities, 
\begin{equation}\label{inc:lemma_2_real_or_comp_init_1}
\lda
P
 \equiv (y-t_1) \cdot (y-t_2) \cdot {\rm Quot}(P,(y-t_1)(y-t_2)), \ \cH
\rda
_{\K[v][ t_1,t_2]}
\end{equation}
and 
\begin{equation}\label{inc:lemma_2_real_or_comp_init_2}
\lda
P
 \equiv ((y-a)^2  + b^2) \cdot {\rm Quot}(P, (y-a)^2  + b^2),  \ b\ne 0, \ \cH
\rda
_{\K[v][a,b]},
\end{equation}
where $\cH$ is a system of sign conditions in $\K[v]$.

We successively apply to (\ref{inc:lemma_2_real_or_comp_init_1}) the weak inferences
$$
\begin{array}{rcl}
P(t_1) = 0, \ {\rm Quot}(P, y-t_1)(t_2) = 0 & \ \, \vdash \, \ & 
P
\equiv (y-t_1) \cdot (y-t_2) \cdot {\rm Quot}(P, (y-t_1)(y-t_2)), \\[3mm]
& \vdash & \exists t_2 \ [\; {\rm Quot}(P, y-t_1)(t_2) = 0 \;]. \\[3mm]
\end{array}
$$
By Lemma \ref{lem:two_linear_factor} and  
Theorem \ref{odd_degree_real_root_existence} (Real Root of an Odd Degree Polynomial as a weak existence), we obtain 
\begin{equation}\label{inc:lemma_2_real_or_comp_aux_1}
\lda
P(t_1) = 0, \ \cH
\rda
_{\K[v][t_1]}
\end{equation}
with monoid part $S_1^{e'}$ with $e' \le {\rm g}_1\{p-2, p-1\}$ and, after some analysis,  degree in $w$ bounded by 
$3{\rm g}_1\{p-2, p-1\}(\delta_{w}  + (1 + \delta_{t})\deg_w P)$ and degree in $t_1$ bounded by 
$3{\rm g}_1\{p-2, p-1\}( 1 + p\delta_{t})$. 

Then we substitute $t_1 = a$ in (\ref{inc:lemma_2_real_or_comp_aux_1}) and, 
taking into account that 
$P\re(a,b) - P(a)$ is a multiple of $b$,  we apply the weak inference 
$$
P(z) = 0,  \ b = 0 \ \ \   \vdash  \ \ \ P(a) = 0. 
$$
By Lemma \ref{lem:comb_lin_zero_zero}, we obtain 
\begin{equation}\label{inc:lemma_2_real_or_comp_aux_2}
\lda
P(z) = 0, \ b=0,  \ \cH
\rda
_{\K[v][ a, b]}
\end{equation}
with the same monoid part and bound for the degree in $w$ and degree in $(a,b)$ bounded by 
$3{\rm g}_1\{p-2, p-1\}( 1  + p\delta_{t})$.

On the other hand, we apply to (\ref{inc:lemma_2_real_or_comp_init_2}) the weak inference 
$$
P(z) = 0,  \ b \ne 0 \ \ \ \vdash  \ \ \ P 
\equiv ((y-a)^2  + b^2) \cdot {\rm Quot}(P, (y-a)^2  + b^2).
$$
By Lemma \ref{quadratic_factor}, we obtain 
\begin{equation}\label{inc:lemma_2_real_or_comp_aux_3}
\lda
P(z) = 0, \ b\ne 0,  \ \cH
\rda
_{\K[v][ a, b]}
\end{equation}
with monoid part $S_2 \cdot b^{2(e+1)}$,  degree in $w$ bounded by $\delta_{w} + \deg_w P$ and 
degree in $(a, b)$ bounded by $\delta_{z} + 2$.

Then we apply to (\ref{inc:lemma_2_real_or_comp_aux_3}) and (\ref{inc:lemma_2_real_or_comp_aux_2}) 
the weak inference
$$
\vdash \ \ \ b \ne 0 \ \, \vee \, \ b = 0.
$$ 
By Lemma \ref{CasParCas_1}, we obtain 
\begin{equation}\label{inc:lemma_2_real_or_comp_aux_4}
\lda
P(z) = 0,   \ \cH
\rda
_{\K[v][ a, b]}
\end{equation}
with monoid part $S_1^{2(e+1)e'}\cdot S_2$, degree in $w$ bounded by 
$$\delta_{w} + \deg_w P 
+ 6{\rm g}_1\{p-2, p-1\}(e+1)(\delta_{w}  + (1 + \delta_{t})\deg_w P)$$ 
and degree in $(a, b)$ bounded by
$$\delta_{z} + 2 
+ 6{\rm g}_1\{p-2, p-1\}(e+1)(1 + p\delta_{t}).$$

Finally we apply to (\ref{inc:lemma_2_real_or_comp_aux_4}) the weak inference 
$$
\vdash \ \ \ \exists z \ [\;  P(z) = 0 \;].
$$
By Theorem \ref{every_pol_has_a_compl_root} (Fundamental Theorem of Algebra as a weak existence), we obtain 
$$ \lda \cH \rda_{\K[v]}$$
with monoid part 
$S_1^{2(e+1)e'f'}\cdot S_2^{f'}$ with $f' \le {\rm g}_2\{p\}$ and degree in $w$ bounded by 
$$
{\rm g}_2\{p\}\Big(
(1 + 6{\rm g}_1\{p-2, p-1\}(e+1))\delta_{w} +
\Big( 3 + \delta_{z} + 6{\rm g}_1\{p-2, p-1\}(e+1)(2 +
(p+1)\delta_{t}) \Big) \deg_w P
\Big),
$$
which serves as the final incompatibility. 
\end{proof}

We define a new auxiliary function.

\begin{definition}
Let ${\rm g}_3: \N \to {\mathbb R}$, ${\rm g}_3\{p\} = 2^{ 2^{3(\frac{p}2)^p + 1}}$. 
\end{definition}

\begin{Tlemma}\label{tlem:aux_comp_fact} Let $p \in \N_*$. 
\begin{enumerate}
\item \label{it:tlem_aux_comp_fact_1} If $p \ge 3$ is an odd number, then 
$3(2p+1){\rm g}_1\{p-1, p\}{\rm g}_3\{p-1\} \le {\rm g}_3\{p\}$. 
\item \label{it:tlem_aux_comp_fact_2}  If $p \ge 4$ is an even number, 
then $6p^3{\rm g}_1\{p-2, p-1\}{\rm g}_2\{p\}{\rm g}_3^2\{p-2\} \le {\rm g}_3\{p\}$. 
\end{enumerate}
\end{Tlemma} 

\begin{proof}{Proof.} See Section \ref{section_annex}.
\end{proof}

We now prove the weak disjunction on the possible decompositions taking into account only the number of 
real and complex roots.

\begin{theorem}[Real Irreducible Factors as a weak existence] \label{thLaplace} 
Let $p \ge 1$ and $P
= y^p + \sum_{0 \le h \le p-1} C_h \cdot  y^h  \in \K[u][ y]$. 
Then  
$$
\vdash \ \ \  
\bigvee_{ m+2n = p} 
\exists (t_{m},z_n) \ 
\Big[\;
P
\equiv  
\prod_{1 \le j \le m}(y-t_{m,j})
\cdot
\prod_{1 \le k \le n}((y - a_{n,k})^2 + b_{n,k}^2), 
\ 
\bigwedge_{1\le k\le n} b_{n,k} \ne 0 \;\Big], 
$$
where $t_m=( t_{m,1},\dots,t_{m,m})$ 
is a set of variables and $z_n = (z_{n,1},\dots,z_{n,n})$
is set of complex variables with $z_{n,k}=a_{n,k}+ib_{n,k}$
(see Notation \ref{real-imaginary}).

Suppose we have initial incompatibilities in 
$\K[v][ t_m, a_n, b_n]$ 
where
$v \supset u$ and $t_m,  a_n, b_n$ are disjoint from $v$, with 
monoid part 
$S_{m,n}\cdot \prod_{1 \le k \le n}b_{n,k}^
{2e_{n,k}}$
with $e_{n,k} \le e$,  
degree in $w \subset v$ bounded by $\delta_{w}$, 
degree in $t_{m,j}$ bounded by $\delta_{t}$ for $1 \le j \le m$
and
degree in $(a_{n,k}, b_{n,k})$ bounded by $\delta_{z}$ for $1 \le k \le n$.
Then, the final incompatibility has monoid part 
$\prod_{m + 2n = p}S_{m, n}^
{f_{m,n}}$
with $f_{m,n}
\le (e+1)^{2^{\lfloor \frac{p}2 \rfloor}-1}{\rm g}_3\{p\}$ 
and degree in $w$ bounded by 
$(e+1)^{2^{\lfloor \frac{p}2 \rfloor}-1}{\rm g}_3\{p\}(\delta_w + \max\{\delta_t, \delta_z\}\deg_w P)$.
\end{theorem}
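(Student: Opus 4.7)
The plan is to proceed by induction on $p$. Since $p = m + 2n$ forces $m$ to have the same parity as $p$, for odd $p$ every index in the disjunction has $m \ge 1$, and for even $p$ each index has either $m \ge 2$ or $n \ge 1$. The two small cases are essentially immediate: for $p = 1$ the single partition $(1, 0)$ forces $t_{1,1} = -C_0$, so substitution into the unique initial incompatibility suffices; for $p = 2$, Lemma \ref{lem:factors_lin_or_quad} applies directly, since $\Quot(P, (y-t_1)(y-t_2))$ and $\Quot(P, (y-a)^2+b^2)$ are both $1$ and the conditions $P \equiv (y-t_1)(y-t_2)$ and $P \equiv (y-a)^2+b^2, \ b\ne 0$ match the two initial incompatibilities verbatim.

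For odd $p \ge 3$, I would introduce a fresh variable $t$ and apply the inductive hypothesis to $\Quot(P, y - t) \in \K[u, t, y]$, which has degree $p - 1$ (even) in $y$. The initial incompatibilities required for that inductive call, indexed by partitions $(m', n')$ of $p - 1$, are obtained from the given initial incompatibility for $(m, n) = (m' + 1, n')$ of $p$ by relabeling $t_{m,1} := t$ and using the identity $P(y) = (y - t) \cdot \Quot(P, y - t) + P(t)$: combined with Lemma \ref{lem:comb_lin_zero_zero}, the conjunction of $P(t) = 0$ and $\Quot(P, y-t) \equiv \prod_j (y - t'_{m',j}) \cdot \prod_k ((y-a_{n,k})^2 + b_{n,k}^2)$ implies the full product identity for $P$, so the initial incompatibility for $P$ transfers. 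The inductive hypothesis then yields $\lda P(t) = 0, \, \cH \rda_{\K[v, t]}$, and Theorem \ref{odd_degree_real_root_existence} removes $P(t) = 0$ and the variable $t$ to produce the final $\lda \cH \rda_{\K[v]}$.

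For even $p \ge 4$, I would instead apply Lemma \ref{lem:factors_lin_or_quad} as the outermost step. This requires two initial incompatibilities: a linear one in $\K[v, t_1, t_2]$ for $P \equiv (y - t_1)(y - t_2) \cdot \Quot(P, (y - t_1)(y - t_2))$ and a quadratic one in $\K[v, a, b]$ for $P \equiv ((y - a)^2 + b^2) \cdot \Quot(P, (y - a)^2 + b^2), \ b \ne 0$. Each is produced by applying the inductive hypothesis at degree $p - 2$ to the corresponding quotient, after building initial incompatibilities for that inductive call partition by partition of $p - 2$ from the given ones: the linear-side initial for $(m', n')$ comes from the given initial for $(m' + 2, n')$ via Lemma \ref{lem:two_linear_factor}, converting $P(t_1) = 0$, $\Quot(P, y - t_1)(t_2) = 0$ and the inner equivalence into the full product identity for $P$, while the quadratic-side initial for $(m', n')$ comes from the given initial for $(m', n' + 1)$ via Lemma \ref{quadratic_factor} analogously. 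Combining the two outputs by Lemma \ref{lem:factors_lin_or_quad} produces the final $\lda \cH \rda_{\K[v]}$.

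The main obstacle will be the bookkeeping needed to match the target monoid-exponent $(e + 1)^{2^{\lfloor p/2 \rfloor} - 1} {\rm g}_3\{p\}$ and the $w$-degree bound. The doubling of inductive applications in the even step is exactly what produces the square ${\rm g}_3^2\{p - 2\}$ in Technical Lemma \ref{tlem:aux_comp_fact} (item \ref{it:tlem_aux_comp_fact_2}), while the ${\rm g}_2\{p\}$ and ${\rm g}_1\{p-2, p-1\}$ factors there absorb the cost of Lemma \ref{lem:factors_lin_or_quad} (itself relying on Theorem \ref{every_pol_has_a_compl_root}); the odd-step inequality in item \ref{it:tlem_aux_comp_fact_1} analogously absorbs the cost of Theorem \ref{odd_degree_real_root_existence}. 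The exponent $(e + 1)^{2^{\lfloor p/2 \rfloor} - 1}$ tracks the successive doubling of the $b$-monomial multiplicity in the monoid part each time Lemma \ref{lem:factors_lin_or_quad} is applied, starting from the factor $(e+1)$ it introduces at each even step. Once the recursion is laid out carefully, Technical Lemma \ref{tlem:aux_comp_fact} supplies all the numerical inequalities needed to close the induction.
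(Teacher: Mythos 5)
Your proposal is correct and follows essentially the same route as the paper: base cases $p=1$ (substitution) and $p=2$ (Lemma \ref{lem:factors_lin_or_quad}), odd $p$ reduced to the even case $p-1$ via the quotient by $y-t$ followed by Theorem \ref{odd_degree_real_root_existence}, and even $p$ reduced to two inductive calls at degree $p-2$ combined by Lemma \ref{lem:factors_lin_or_quad}, with the bounds closed by Technical Lemma \ref{tlem:aux_comp_fact}. The only cosmetic discrepancy is that the equivalence-substitution steps producing the inputs to the inductive calls are instances of Lemma \ref{lem:comb_lin_zero_zero} rather than of Lemmas \ref{lem:two_linear_factor} and \ref{quadratic_factor}, which are instead invoked inside Lemma \ref{lem:factors_lin_or_quad} itself.
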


\begin{proof}{Proof.}
Consider for $m, n \in \N$ such that $m + 2n = p$ the initial incompatibility
\begin{equation}\label{inc:dep_th_fact_just_num_r_c}
\lda
P
 \equiv  
\prod_{1 \le j \le m}(y-t_{m,j})
\cdot
\prod_{1 \le k \le n}((y - a_{n,k})^2 + b_{n,k}^2)
, \ 
\bigwedge_{1\le k\le n} b_{n,k} \ne 0, \
\cH
\rda
_{\K[v][ t_m, a_n, b_n]}
\end{equation}
where $\cH$ is a system of sign conditions in $\K[v]$. 
If $\max\{\delta_t, \delta_z\}=0$, the result follows by simply taking any 
of the initial incompatibilities 
as the final incompatibility. So from now we suppose
$\max\{\delta_t, \delta_z\}\ge 1$.

We first prove the result for even $p$ by induction. 
For $p = 2$ the result follows from Lemma \ref{lem:factors_lin_or_quad}.
Suppose now $p \ge 4$.

For $m, n \in \N$ such that $m + 2n = p$ with $m \ge 2$, we apply to 
(\ref{inc:dep_th_fact_just_num_r_c}) the weak inference
$$
P
 \equiv (y-t_{m,1}) \cdot (y-t_{m,2}) \cdot \Quot(P, (y-t_{m,1})(y-t_{m,2})),
$$
$$ 
\Quot(P, (y-t_{m,1}) \cdot (y-t_{m,2})) \equiv 
\prod_{3 \le j \le m}(y-t_{m,j})
\cdot
\prod_{1 \le k \le n}((y - a_{n,k})^2 + b_{n,k}^2)
\ \ \ \vdash
$$
$$
\vdash \ \ \
P
 \equiv
\prod_{1 \le j \le m}(y-t_{m,j})
\cdot
\prod_{1 \le k \le n}((y - a_{n,k})^2 + b_{n,k}^2).
$$ 
which is a particular case of the weak inference in Lemma \ref{lem:comb_lin_zero_zero}. 
After a careful  analysis,  we obtain 
\begin{equation}\label{inc:aux_th_fact_just_num_r_c_1}
\begin{array}{c}
\Big \downarrow \  P
 \equiv (y-t_{m,1}) \cdot (y-t_{m,2}) \cdot \Quot(P, (y-t_{m,1}) \cdot (y-t_{m,2})), \\[4mm]
\Quot(P, (y-t_{m,1}) \cdot (y-t_{m,2})) \equiv 
\displaystyle{
\prod_{3 \le j \le m}(y-t_{m,j})
\cdot
\prod_{1 \le k \le n}((y - a_{n,k})^2 + b_{n,k}^2)}, \\[4mm]
\displaystyle{ \bigwedge_{1\le k\le n} b_{n,k} \ne 0}, \
\cH  \ \Big \downarrow
_{\K[v][ t_m, a_n, b_n]}
\end{array}
\end{equation}
with the same monoid part,  
degree in $w$ bounded by $\delta_{w} + \deg_w P$,
degree in $t_{m,1}$ and in $t_{m,2}$ bounded by $\delta_{t} + p-2$, 
degree in $t_{m,j}$ bounded by $\delta_{t}$ for $3 \le j \le m$ and 
degree in $(a_{n,k}, b_{n,k})$ bounded by $\delta_{z}$ for $1 \le k \le n$.

Then we substitute $t_{m,1} = t_1$ and $t_{m, 2} = t_2$ 
in the incompatibilities (\ref {inc:aux_th_fact_just_num_r_c_1})
and we apply to these incompatibilities the weak inference 
$$
\vdash \ \ \  
\bigvee_{ m+2n = p, \atop m \ge 2} 
\exists (t'_{m},z_n) \ 
\Big[\;
\Quot(P, (y-t_{1}) \cdot (y-t_{2})) \equiv  
\prod_{3 \le j \le m}(y-t_{m,j})
\cdot
\prod_{1 \le k \le n}((y - a_{n,k})^2 + b_{n,k}^2)
,$$
$$  
\bigwedge_{1\le k\le n} b_{n,k} \ne 0 \;\Big] 
$$
where $t'_m = (t_{m,3}, \dots, t_{m, m})$.  Since $\deg_y \Quot(P, (y-t_{1}) \cdot (y-t_{2})) 
= p-2$, by the inductive hyphotesis we obtain  
\begin{equation}\label{inc:aux_th_fact_just_num_r_c_2}
\lda
P
 \equiv (y-t_1) \cdot (y-t_2) \cdot \Quot(P, (y-t_1)(y-t_2)), \ 
\cH
\rda
_{\K[v][ t_1, t_2]}
\end{equation}
with monoid part
$$\prod_{m + 2n = p, \atop m \ge 2} S_{m, n}^{f_{m-2, n}
}$$ 
with $f_{m-2, n} 
\le (e+1)^{2^{ \frac{p-2}2 }-1}{\rm g}_3\{p-2\}$, 
degree in $w$ bounded by 
$$
\delta'_w := (e+1)^{2^{ \frac{p-2}2 }-1}{\rm g}_3\{p-2\}
(\delta_w + (1 +  \max\{\delta_t, \delta_z\})\deg_w P),
$$
and degree in $t_1$ and degree in $t_2$ bounded by 
$$
\delta'_t := (e+1)^{2^{ \frac{p-2}2 }-1}{\rm g}_3\{p-2\}
(
\delta_t + (1 + \max\{\delta_t, \delta_z\})(p-2)
).
$$

On the other hand, we obtain in a similar way, from the initial incompatibilities
(\ref{inc:dep_th_fact_just_num_r_c}) for
$m, n \in \N$ such that $m + 2n = p$ with $n \ge 1$, 
\begin{equation}\label{inc:aux_th_fact_just_num_r_c_3}
\lda
P
 \equiv ((y-a)^2 + b^2) \cdot \Quot(P, (y-a)^2 + b^2), \ b\ne 0, \ 
\cH
\rda
_{\K[v][ a,b]}
\end{equation}
with, defining
$E=\sum_{m + 2n = p, \, n \ge 1}e_{n,1}
f_{m, n-1}$,
monoid part
$$
\prod_{m + 2n = p, \atop n \ge 1} S_{m, n}^{f_{m, n-1}}
\cdot
b^{2 E
}
$$ 
with $f_{m, n-1} 
\le (e+1)^{2^{ \frac{p-2}2 }-1}{\rm g}_3\{p-2\}$, 
degree in $w$ bounded by 
$\delta'_w$
and degree in $(a,b)$ bounded by 
$$
\delta'_z := 
(e+1)^{2^{ \frac{p-2}2 }-1}{\rm g}_3\{p-2\}
(
\delta_z + (1+ \max\{\delta_t, \delta_z\})(p-2)
).
$$

Finally, we apply to (\ref{inc:aux_th_fact_just_num_r_c_2}) and
(\ref{inc:aux_th_fact_just_num_r_c_3}) the weak inference 
$$
  \vdash \ \ \  \exists  (t_1, t_2) \ [\; P
   \equiv (y-t_1) \cdot (y-t_2) \cdot {\rm Quot}(P,(y-t_1) \cdot (y-t_2)) \;] 
 \ \, \vee
$$
$$
\vee \ \, 
\exists  z \
[\; P
 \equiv ((y-a)^2  + b^2) \cdot {\rm Quot}(P, (y-a)^2  + b^2),  \ b\ne 0\;] .
$$
By Lemma \ref{lem:factors_lin_or_quad}, we obtain 
\begin{equation}\label{inc:fin_theorem_comp_fact}
\lda \cH
\rda 
_{\K[v]}
\end{equation}
with
monoid part 
$$
\Big(
\prod_{m + 2n = p, \atop m \ge 2} S_{m, n}^{f_{m-2, n}
}
\Big)^
{2(E+1)f}
\cdot
\Big(\prod_{m + 2n = p, \atop n \ge 1} S_{m, n}^{f_{m, n-1}
}\Big)^{f'},
$$ 
with $f \le {\rm g}_1\{p-2, p-1\}{\rm g}_2\{p\}$ and $f' \le {\rm g}_2\{p\}$.
Therefore, for $m \ge 2$ and $n \ge 1$, we take 
$$
f_{m,n}
= 2f_{m-2, n}
(E+1)
f
+ f_{m, n-1}
f' \le 
(e+1)^{2^{ \frac{p}2 }-1}p{\rm g}_1\{p-2, p-1\}{\rm g}_2\{p\}{\rm g}^2_3\{p-2\} \le 
(e+1)^{2^{ \frac{p}2 }-1}{\rm g}_3\{p\} 
$$
using Lemma \ref{tlem:aux_comp_fact} (item \ref{it:tlem_aux_comp_fact_2}). 
Also we take $f_{0, \frac{p}2} = f_{0, \frac{p}2-1}f' \le 
(e+1)^{2^{ \frac{p}2 }-1}{\rm g}_3\{p\}$ and 
$f_{p, 0} = 2f_{p-2, 0}
(E+1)f \le 
(e+1)^{2^{ \frac{p}2 }-1}{\rm g}_3\{p\}$ in a similar way. 
Again by Lemma \ref{lem:factors_lin_or_quad},
the degree in $w$ of (\ref{inc:fin_theorem_comp_fact}) is bounded by  
\begin{eqnarray*}
&&{\rm g}_2\{p\}\Big(
(1 +  6{\rm g}_1\{p-2, p-1\}(E+1))\delta'_{w} + \\
& & +
\Big(3 +  \delta'_{z}  + 6{\rm g}_1\{p-2, p-1\}(E+1)(2 +
(p+1)\delta'_{t}) \Big) \deg_w P
\Big) \le 
\\
&\le&
(e+1)^{2^{\frac{p}2}-1}6p^3{\rm g}_1\{p-2, p-1\}{\rm g}_2\{p\}{\rm g}_3^2\{p-2\}(\delta_w + \max\{\delta_t, 
\delta_z\}\deg_w P) \le \\
&\le&
(e+1)^{2^{\frac{p}2}-1}{\rm g}_3\{p\}(\delta_w + \max\{\delta_t, 
\delta_z\}\deg_w P)
\end{eqnarray*}
using Lemma \ref{tlem:aux_comp_fact} (item \ref{it:tlem_aux_comp_fact_2}).
Therefore (\ref{inc:fin_theorem_comp_fact}) serves as the final incompatibility.

Now we prove the result for odd  $p$. For $p =1$
note that we only have to consider $m = 1$ and $n = 0$; therefore we can take $e = 0$. 
We simply substitute $t_{1,1} = -C_1$ in 
(\ref{inc:dep_th_fact_just_num_r_c}) and take the result as the final incompatibility.
Suppose now $p \ge 3$. 

For $m, n \in \N$ such that $m + 2n = p$  we apply to 
(\ref{inc:dep_th_fact_just_num_r_c}) the weak inference
$$
P
 \equiv (y-t_{m,1})\cdot \Quot(P, y-t_{m,1}),
$$
$$ 
\Quot(P, y-t_{m,1}) \equiv 
\prod_{2 \le j \le m}(y-t_{m,j})
\cdot
\prod_{1 \le k \le n}((y - a_{n,k})^2 + b_{n,k}^2)
\ \ \ \vdash
$$
$$
\vdash \ \ \
P
 \equiv
\prod_{1 \le j \le m}(y-t_{m,j})
\cdot
\prod_{1 \le k \le n}((y - a_{n,k})^2 + b_{n,k}^2).
$$
which is a particular case of the weak inference in Lemma \ref{lem:comb_lin_zero_zero}.
After a careful analysis, we obtain 
\begin{equation}\label{inc:aux_th_fact_just_num_r_c_5}
\begin{array}{c}
\Big \downarrow \
P(t_{m,1}) = 0, \\[3mm]
\Quot(P, y-t_{m,1}) \equiv 
\prod_{2 \le j \le m}(y-t_{m,j})
\cdot
\prod_{1 \le k \le n}((y - a_{n,k})^2 + b_{n,k}^2), \\[3mm]
\bigwedge_{1\le k\le n} b_{n,k} \ne 0
, \
\cH  \ \Big \downarrow
_{\K[v][ t_m, a_n, b_n]}
\end{array}
\end{equation}
with the same  monoid part, 
degree in $w$ bounded by $\delta_{w} + \deg_w P$,
degree in $t_{m,1}$ bounded by $\delta_t + p-1$,
degree in $t_{m,j}$ bounded by $\delta_t$ for $2 \le j \le m$ and 
degree in $(a_{n,k}, b_{n,k})$ bounded by $\delta_z$ for $1 \le k \le n$.

Then we substitute $t_{m,1} = t$  
in the incompatibilities (\ref {inc:aux_th_fact_just_num_r_c_5})
and we apply to these incompatibilities the weak inference 
$$
\vdash \ \ \  
\bigvee_{ m+2n = p} 
\exists (t'_{m},z_n) \ 
\Big[\;
\Quot(P, y-t) \equiv  
\prod_{2 \le j \le m}(y-t_{m,j})
\cdot
\prod_{1 \le k \le n}((y - a_{n,k})^2 + b_{n,k}^2),
\ 
\bigwedge_{1\le k\le n} b_{n,k} \ne 0 \;\Big] 
$$
where $t'_m = (t_{m,2}, \dots, t_{m, m})$.  Since $\deg_y \Quot(P, y-t)$ is
an even number greater than or equal to $2$, we obtain 
\begin{equation}\label{inc:aux_th_fact_just_num_r_c_6}
\lda P(t) = 0, \ \cH \rda
_{\K[v][t]}
\end{equation}
with monoid part
$\prod_{m + 2n = p}S_{m,n}^{f_{m-1,n}}$ with $f_{m-1, n} \le (e+1)^{2^{\frac{p-1}2}-1}{\rm g}_3\{p-1\}$, 
degree in $w$ bounded by 
$$
\delta''_w := 
(e+1)^{2^{\frac{p-1}2}-1}{\rm g}_3\{p-1\}(\delta_w + (1 + \max\{\delta_t, \delta_z \}) \deg_w P)
$$ 
and
degree in $t$ bounded by 
$$
\delta''_t := 
(e+1)^{2^{\frac{p-1}2}-1}{\rm g}_3\{p-1\}(\delta_t + (1 + \max\{\delta_t, \delta_z \}) (p-1)). 
$$
  
Finally, since $p$ is odd, we apply to (\ref{inc:aux_th_fact_just_num_r_c_6}) 
the weak inference 
$$ \vdash \ \ \ \exists t \ [\; P(t) = 0\;].$$
By 
Theorem \ref{odd_degree_real_root_existence} (Real Root of an Odd Degree Polynomial as a weak existence) we obtain 
\begin{equation} \label{inc:fin_inc_the_fact_odd}
\lda \cH \rda
_{\K[v]}
\end{equation}
with monoid part
$
(\prod_{m + 2n = p}{S_{m,n}
^{f_{m-1,n}}})
^{e'}
$
with $e' \le {\rm g}_1\{p-1, p\}$. Therefore, for every $m$ and $n$, we take $
f_{m,n} = f_{m-1, n}e' \le
(e+1)^{2^{\frac{p-1}2}-1}{\rm g}_1\{p-1, p\}{\rm g}_3\{p-1\} \le (e+1)^{2^{\frac{p-1}2}-1}
{\rm g}_3\{p\}$ using Lemma \ref{tlem:aux_comp_fact} (item \ref{it:tlem_aux_comp_fact_1}). 
Again by Theorem \ref{odd_degree_real_root_existence}, the  
degree in $w$ of (\ref{inc:fin_inc_the_fact_odd}) is bounded by 
$$
\begin{array}{lll}
&3{\rm g}_1\{p-1, p\}(\delta''_w + \delta''_t \deg_w P ) \le \\
  \le &  (e+1)^{2^{\frac{p-1}2}-1}3(2p+1){\rm g}_1\{p-1, p\}{\rm g}_3\{p-1\}(\delta_w + 
\max\{\delta_t, \delta_z\}\deg_w P) \le 
\\
 \le&  (e+1)^{2^{\frac{p-1}2}-1}{\rm g}_3\{p\}(\delta_w + 
\max\{\delta_t, \delta_z\}\deg_w P).
\end{array}
$$
using Lemma \ref{tlem:aux_comp_fact} (item \ref{it:tlem_aux_comp_fact_1}). 
Therefore (\ref{inc:fin_inc_the_fact_odd}) serves as the final incompatibility.
\end{proof}

\subsection{Decomposition of a polynomial
 into irreducible real factors with multiplicities}
 
In order to prove the 
weak inference of the  decomposition into irreducible factors taking multiplicities into account, 
we introduce some notation and definitions. 

\begin{notation} \label{not:aux_fact} Let $m \in \N$. We introduce the following notation: 
For $m \in \N_*$, 
$$\Lambda_{m} = \{ {\bm \mu} = (\mu_1\ge \dots \ge \mu_{\# {\bm \mu}})  \ | \ \mu_i \in {\mathbb N}_* \hbox{ for } 1 \le i \le {\# {\bm \mu}}, 
\ |{\bm \mu}|=\sum_{1 \le i \le {\# {\bm \mu}}} \mu_i = {m}\};$$  $\Lambda_0$ is the set with a single element equal to 
an empty vector. 
\end{notation}

\begin{definition}
\label{defFact}
Let  $p \ge 1$,  $P
 = y^p + \sum_{0 \le h \le p-1} C_h \cdot y^h\in \K[u][y]$,  
$({\bm \mu}, {\bm \nu}) \in \Lambda_{m}\times \Lambda_{n}$ with
${m}+2{n}=p$,
$t = (t_1, \dots, t_{\# {\bm \mu}})$ 
and $z = (z_1, \dots, z_{\# {\bm \nu}})$
a set of complex variables with $z_k=a_k+ib_k$
(see Notation \ref{real-imaginary}).
We define
$${\rm F}^{{\bm \mu},{\bm \nu}}
 = y^p + \sum_{0 \le h \le p-1}{\rm F}_{h}^{{\bm \mu},{\bm \nu}}
 \cdot y^h =  
\prod_{1 \le j \le {\# {\bm \mu}}}(y-t_j)^{\mu_j} 
\cdot
\prod_{1 \le k \le {\# {\bm \nu}}}((y-a_k)^2 + b_k^2)^{\nu_k}
\in \mathbb{Z}[t, a,b][y]. 
$$
Using Notation \ref{ResR},
we  define the system of sign conditions 
$${\rm Fact}(P)^{{\bm \mu},{\bm \nu}}(t, z)$$ in $\K[u][t,a,b]$
describing the decomposition of $P$ into irreducible real factors:
$$ 
P
\equiv 
{\rm F}^{{\bm \mu},{\bm \nu}}
,  \ 
\bigwedge_{ 1 \le j < j'  \le \# {\bm \mu}} t_{j} \ne t_{j'}, \
\bigwedge_{1 \le k  \le \# {\bm \nu}}  b_k \ne 0, 
\bigwedge_{1 \le k < k' \le \# {\bm \nu}}
  {\rm R}(z_{k},z_{k'}) \ne 0.
$$ 
\end{definition}

Before proving the weak disjunction on the possible decompositions taking multiplicities
into account, we define a new auxiliary function. 

\begin{definition}
\label{defg4}
Let ${\rm g}_4: \N \to {\mathbb R}$, ${\rm g}_4\{p\} = 2^{ 2^{3(\frac{p}2)^p + 2} }$. 
\end{definition}

\begin{Tlemma}\label{tlem:aux_comp_fact_with_mult} For every $p \in \N_*$,
$2^{(p^2 - p + 2)2^{\frac12 p^2}}{\rm g}_3\{p\} \le {\rm g}_4\{p\}$. 
\end{Tlemma} 
\begin{proof}{Proof.}
Easy.
\end{proof}

\begin{theorem}[Real Irreducible Factors with Multiplicities as a weak existence] 
\label{thLaplacewithmult} 
Let $p \ge 1 $ and $P
 = y^p + \sum_{0 \le h \le p-1} C_h \cdot y^h \in \K[u][ y]$. Then 
$$
\vdash \ \ \ \bigvee_{ {m} + 2{n} = p  \atop ({\bm \mu},{\bm \nu}) \in 
\Lambda_{m} \times \Lambda_{n}}
\exists (t_{{\bm \mu}}, z_{{\bm \nu}}) \ [\; {\rm Fact}(P)^{{\bm \mu},{\bm \nu}}(t_{{\bm \mu}}, z_{{\bm \nu}}) \;],
$$
where $t_{{\bm \mu}} = (t_{{\bm \mu},1},\dots,t_{{\bm \mu},{\# {\bm \mu}}})$ is a set of variables and
$z_{{\bm \nu}} = (z_{{\bm \nu},1},\dots,z_{{\bm \nu},{\# {\bm \nu}}})$ is a set of complex variables with
$z_{{\bm \nu},k}=a_{{\bm \nu},k}+ib_{{\bm \nu},k}$
(see Notation \ref{real-imaginary}).

Suppose we have initial incompatibilities in 
$\K[v][ t_{{\bm \mu}}, a_{{\bm \nu}},
b_{{\bm \nu}}]$  where
$v \supset u$, and $t_{{\bm \mu}}, a_{{\bm \nu}},
b_{{\bm \nu}}$ are disjoint from $v$, with 
monoid part 
$$
S_{{\bm \mu},{\bm \nu}}
\cdot
\prod_{1 \le j < j' \le {\# {\bm \mu}}}(t_{{\bm \mu},j} - t_{{\bm \mu},j'})^{2e^{{\bm \mu}, {\bm \nu}}_{ {j}, j'}} 
\cdot
\prod_{1 \le k \le {\# {\bm \nu}}}b_{{\bm \nu},k}^{2f^{{\bm \mu}, {\bm \nu}}_{k}}
\cdot 
\prod_{1 \le k < k' \le {\# {\bm \nu}}}
 {\rm R}(
 z_{{\bm \nu}, k},z_{{\bm \nu}, k'} )
^{2g^{{\bm \mu}, {\bm \nu}}_{ {k}, k'}},
$$
with
$e^{{\bm \mu}, {\bm \nu}}_{j, j'} \le e \in \N_*$ for $1 \le j < j' \le {\# {\bm \mu}}$,
$f^{{\bm \mu}, {\bm \nu}}_{ k} \le f \in \N_*$ for $1 \le k \le \# {\bm \nu}$
and 
$g^{{\bm \mu}, {\bm \nu}}_{k, k'} \le g \in \N_*$ for $1 \le k < k' \le \# {\bm \nu}$,
degree in $w \subset v$ bounded by $\delta_{w}$,
degree in $t_{{\bm \mu},j}$ bounded by $\delta_{t} \ge p$ for $1 \le j \le {\# {\bm \mu}}$, 
and 
degree in $(a^{{\bm \nu}}_{k}, b^{{\bm \nu}}_{k})$ bounded by $\delta_{z} \ge p$ for 
$1 \le k \le \# {\bm \nu}$. 
Then, the final incompatibility has monoid part 
$$
\prod_{{m} + 2{n} = p \atop ({\bm \mu}, {\bm \nu}) \in \Lambda_{m} \times \Lambda_{n}} S_{{\bm \mu}, {\bm \nu}}^{h_{{\bm \mu}, {\bm \nu}}
}
$$
with $h_{{\bm \mu}, {\bm \nu}} \le \max\{e, g\}^{2^{\frac12 p^2}}f^{2^{\frac12 p}}{\rm g}_4\{p\}$ 
and degree in $w$ bounded by 
$\max\{e, g\}^{2^{\frac12 p^2}}f^{2^{\frac12 p}}{\rm g}_4\{p\}(\delta_w + \max\{\delta_t, \delta_z \}\deg_w P)$.
\end{theorem}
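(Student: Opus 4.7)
The plan is to derive this theorem from Theorem \ref{thLaplace} (which handles factorization into simple factors without multiplicities) by performing case-by-case reasoning on which of the simple factors coincide; each coincidence pattern will match exactly one of the target types $({\bm \mu}, {\bm \nu})$.

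First, I would apply Theorem \ref{thLaplace} as the outermost step of the construction, reducing the task to producing, for each $(m,n)$ with $m+2n=p$, an incompatibility
$$
\lda P(y) \equiv \prod_{j=1}^{m}(y-t_{m,j}) \cdot \prod_{k=1}^{n}((y-a_{n,k})^2+b_{n,k}^2),\ \bigwedge_{k} b_{n,k}\ne 0,\ \cH \rda_{\K[v,t_m,a_n,b_n]},
$$
in which the $t_{m,j}$ are not yet assumed distinct and the complex factors may still coincide in the sense that ${\rm R}(z_{n,k},z_{n,k'})=0$. For fixed $(m,n)$, I would then apply Lemma \ref{lem:multiple_case_by_case_with_signs} to the family of $\binom{m}{2}+\binom{n}{2}\le\tfrac{1}{2}p^2$ polynomials consisting of the differences $t_{m,j}-t_{m,j'}$ and of the resultants ${\rm R}(z_{n,k},z_{n,k'})$. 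Each resulting case is parametrized by a pair $(J,K)$ with $J\subset\{(j,j'):1\le j<j'\le m\}$, $K\subset\{(k,k'):1\le k<k'\le n\}$, together with signs for the nonvanishing differences; associated to $(J,K)$ are the partitions $\equiv_J,\equiv_K$ and the multiplicity vectors $({\bm \mu}_J,{\bm \nu}_K)\in\Lambda_m\times\Lambda_n$ of Notation \ref{not:aux_fact}.

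Second, for each such case I would pick one representative in each equivalence class and rename them $t_{{\bm \mu},i}$ and $z_{{\bm \nu},\ell}$. The equalities $t_{m,j}=t_{m,j'}$ for $(j,j')\in J$ translate, via Lemma \ref{lem:comb_lin_zero_zero}, into replacing each non-representative $(y-t_{m,j})$ by $(y-t_{{\bm \mu},i})$, producing the multiplicity $\mu_i$; the equalities ${\rm R}(z_{n,k},z_{n,k'})=0$ for $(k,k')\in K$ are converted via Lemma \ref{lem:resultant_zero_then_eq} into the polynomial identities $(y-a_{n,k})^2+b_{n,k}^2\equiv (y-a_{n,k'})^2+b_{n,k'}^2$, producing the multiplicity $\nu_\ell$. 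This rewrites the factorization as $\prod_i(y-t_{{\bm \mu},i})^{\mu_i}\cdot\prod_\ell((y-a_{{\bm \nu},\ell})^2+b_{{\bm \nu},\ell}^2)^{\nu_\ell}$, which together with the distinctness conditions (the nonvanishing $t_{{\bm \mu},i}-t_{{\bm \mu},i'}$, $b_{{\bm \nu},\ell}$, and ${\rm R}(z_{{\bm \nu},\ell},z_{{\bm \nu},\ell'})$ supplied by the case itself) is exactly ${\rm Fact}(P)^{{\bm \mu}_J,{\bm \nu}_K}$. I would then feed this into the initial incompatibility for $({\bm \mu}_J,{\bm \nu}_K)$: the distinctness factors $(t_{{\bm \mu},j}-t_{{\bm \mu},j'})^{2e}$, $b_{{\bm \nu},k}^{2f}$ and ${\rm R}(\cdot,\cdot)^{2g}$ appearing in its monoid part are absorbed by the nonvanishing sign conditions of the current case.

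The main obstacle will be tracking the growth of the monoid part and the degrees through these nested reductions. Lemma \ref{lem:multiple_case_by_case_with_signs} applied to $\le\tfrac{1}{2}p^2$ polynomials contributes a monoid exponent roughly of order $2^{O(p^2)}\max\{e,g\}^{2^{p^2/2}}$, matching the target $\max\{e,g\}^{2^{p^2/2}}$ factor; the $f^{2^{p/2}}$ factor emerges from the compounding of the $b$-exponents through the Theorem \ref{thLaplace} step, and the factor ${\rm g}_3\{p\}$ is the contribution of Theorem \ref{thLaplace} itself. The required inequality $2^{(p^2-p+2)2^{p^2/2}}{\rm g}_3\{p\}\le{\rm g}_4\{p\}$ is exactly the content of Lemma \ref{tlem:aux_comp_fact_with_mult}, ensuring the final bound is below ${\rm g}_4\{p\}$ as needed. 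The degree-in-$w$ estimate is controlled by the same multiplicative structure, and scales linearly with $\delta_w+\max\{\delta_t,\delta_z\}\deg_w P$ once the substitutions $t_{m,j}\mapsto t_{{\bm \mu},i}$ (which preserve degrees in $w$) and the Lemma \ref{lem:resultant_zero_then_eq} identifications are carried through.
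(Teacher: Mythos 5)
Your proposal is correct and follows essentially the same route as the paper: Theorem \ref{thLaplace} applied as the outermost (final) step, case-by-case reasoning on which of the differences $t_{m,j}-t_{m,j'}$ and resultants ${\rm R}(z_{n,k},z_{n,k'})$ vanish, and identification of coinciding factors via class representatives using Lemmas \ref{lem:comb_lin_zero_zero} and \ref{lem:resultant_zero_then_eq} (packaged in the paper as the auxiliary Lemma \ref{lem:aux_complete_fact}), with Technical Lemma \ref{tlem:aux_comp_fact_with_mult} closing the degree bookkeeping. The only deviation is your use of the full sign version (Lemma \ref{lem:multiple_case_by_case_with_signs}) where the paper uses the more economical zero/nonzero version (Lemma \ref{lem:multiple_case_by_case}); this is harmless since only the vanishing pattern determines $({\bm \mu}_J,{\bm \nu}_K)$ and the resulting constants still fit under ${\rm g}_4\{p\}$.
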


For the proof of Theorem \ref{thLaplacewithmult}, we need an auxiliary notation and lemma. 

\begin{notation} \label{not:aux_fact2} 
 To  $J \subset \{(j, j') \ | \ 1 \le j < j' \le {m}\}$, we associate the smallest equivalence relation 
$\simeq_J$ 
 on $\{1,\ldots,m\}$ such that
$(j,j')\in J$ implies $j
\simeq_J j'$.
We define ${\bm \mu}_J\in \Lambda_m$ as the non-increasing vector of cardinalities of the equivalence classes for $
\simeq_J$ and ${\cal C}_1, \dots, {\cal C}_{
{\# {\bm \mu}}_J}$  the 
equivalence classes defined by $\simeq
_J$.

Similarly, to $K  \subset \{(k, k') \ | \ 1 \le k < k' \le {n}\}$, 
 we associate the smallest equivalence relation 
$\simeq_K$ 
 on $\{1,\ldots,n\}$ such that
$(k,k')\in K$ implies $k
\simeq_K k'$. We define 
${\bm \nu}_K\in \Lambda_m$ as the non-increasing vector of cardinalities of the equivalence classes for $
\simeq_K$ and ${\cal C}'_1, \dots, {\cal C}'_{
{\# {\bm \nu}}_K}$  the 
equivalence classes defined by $\simeq
_K$.
\end{notation}

\begin{lemma}\label{lem:aux_complete_fact}
Let $p \ge 1$, $P
 = y^p + \sum_{0 \le h \le p-1} C_h \cdot y^h \in \K[u][ y]$, ${m}, {n} \in \N$
with ${m} + 2{n} = p$, $J \subset \{(j, j') \ | \ 1 \le j < j' \le {m}\}$ and 
$K  \subset \{(k, k') \ | \ 1 \le k < k' \le {n}\}$. 
Then 
$$
\exists (t', z') \ \Big[\; 
P 
 \equiv 
\prod_{1 \le j' \le {m} } (y-t'_{j'}) 
\cdot
\prod_{1 \le k' \le {n}} ((y - a'_{k'})^2 + {b_{k'}'}^2)   
,  
\bigwedge_{1 \le j'_1 < j'_2 \le {m}, \atop (j'_{1}, j'_{2}) \in J} 
t'_{j'_1} = {t'_{j'_2}}, \ 
\bigwedge_{1 \le j'_1 < j'_2 \le {m}, \atop (j'_{1}, j'_{2}) \not \in J} 
t'_{j'_1} \ne {t'_{j'_2}}, 
$$
$$
\bigwedge_{1 \le k' \le {n}} b'_{k'} \ne 0, \
 \bigwedge_{1 \le k'_1 < k'_2 \le {n}, \atop (k'_{1}, k'_{2}) \in K} 
 {\rm R}( z'_{k},z'_{k'}) = 0, 
\bigwedge_{1 \le k'_1 < k'_2 \le {n}, \atop (k'_{1}, k'_{2}) \not \in K} 
 {\rm R}(z'_{k},z'_{k'})\ne 0 \; \Big]
\ \ \ \
\vdash$$
$$\vdash  \ \ \ 
\exists (t, z) \ [\;  {\rm Fact}(P)^{{\bm \mu}_J,{\bm \nu}_K}(t, z) \; ], 
$$
where $t' = (t'_1, \dots, t'_{m})$, 
$z' = (z'_1, \dots, z'_{n})$
is a set of complex variables 
with $z'_{k'}=a'_{k'}+ib'_{k'}$
$t = (t_1, \dots, t_{{\# {\bm \mu}}_J})$
and $z = (z_1, \dots, z_{{\# {\bm \nu}}_K})$
is a set of complex variables 
with $z_k=a_k+ib_k$
(see Notation \ref{real-imaginary}).

Suppose we have an initial incompatibility in $\K[v][ t, a, b]$ where $v \supset u$ and
$t, a, b$ are disjoint from  $v$, 
with monoid part 
$$
S\cdot
\prod_{1 \le j < j' \le 
{\# {\bm \mu}}_J}(t_{j} - t_{j'})^{2e_{{j}, j'}} 
\cdot
\prod_{1 \le k \le {\# {\bm \nu}}_K}b_{k}^{2f_{k}}
\cdot
\prod_{1 \le k < k' \le {\# {\bm \nu}}_K}
 {\rm R}(z_{k},z_{k'}),
^{2g_{{k}, k'}},
$$
with
$e_{j, j'} \le e$ for $1 \le j < j' \le 
{\# {\bm \mu}}_J$,
$f_{k} \le f$ for $1 \le k \le
{\# {\bm \nu}}_K$ 
and 
$g_{k, k'} \le g$ for $1 \le k < k' \le 
{\# {\bm \nu}}_K$,
degree in $w \subset v$ bounded by $\delta_{w}$,
degree in $t_{j}$ bounded by $\delta_{t} \ge p$ for $1 \le j \le 
{\# {\bm \mu}}_J$, 
and
degree in $(a_{k}, b_{k})$ bounded by $\delta_{z} \ge p$ 
 for $1 \le k \le
{\# {\bm \nu}}_K$.
Then, the final incompatibility has monoid part 
$$
S^h
\cdot
\prod_{1 \le j'_1 < j'_2 \le {m}, \atop (j'_{1}, j'_{2}) \not \in J} 
(t'_{j'_1} - {t'_{j'_2}})^{2e'_{j'_1, j'_2}}
\cdot
\prod_{1 \le k' \le {n}}{b'_{k'}}^{2f'_{k'}}
\cdot
\prod_{1 \le k'_1 < k'_2 \le {n}, \atop (k'_{1}, k'_{2}) \not \in K} 
 {\rm R}(z'_{k},z'_{k'})
^{2g'_{k'_1, k'_2}}
$$
with $h \le 2^{{n}({n}-1)}$, 
$e'_{j'_1, j'_2} \le 2^{{n}({n}-1)}e$ 
for $1 \le j'_1 < j'_2 \le {m}, (j'_1, j'_2) \not \in J$,
$f'_{k'}  \le  2^{{n}({n}-1)}f$ for $1 \le k' \le {n}$
and 
$g'_{k'_1, k'_2}  \le  2^{{n}({n}-1)}g$ 
for $1 \le k'_1 < k'_2 \le {n}, (k'_1, k'_2) \not \in K$, 
degree in $w$ bounded by $2^{{n}({n}-1)}\delta_w$,
degree in $t'_{j'}$ bounded by $2^{{n}({n}-1)}\delta_t$ for $1 \le j' \le {m}$, 
and
degree in $(a'_{k'}, b'_{k'})$ bounded by $2^{{n}({n}-1)}\delta_z$ 
for $1 \le k' \le {n}$.
\end{lemma}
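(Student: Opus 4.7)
My plan is to prove the lemma by explicit variable substitution combined with iterated applications of Lemmas \ref{lem:comb_lin_zero_zero} and \ref{lem:resultant_zero_then_eq}, the latter being responsible for the exponential blow-up in the monoid exponent. Concretely, for each equivalence class of $\equiv_J$ (resp.\ $\equiv_K$), I pick a representative index $r_j\in\{1,\dots,m\}$ (resp.\ $r_k\in\{1,\dots,n\}$), and I substitute $t_j:=t'_{r_j}$, $a_k:=a'_{r_k}$, $b_k:=b'_{r_k}$ into the given initial incompatibility. After this substitution, every factor $(t_j-t_{j'})^{2e_{j,j'}}$ becomes $(t'_{r_j}-t'_{r_{j'}})^{2e_{j,j'}}$ with $(r_j,r_{j'})\notin J$; every $b_k^{2f_k}$ becomes $(b'_{r_k})^{2f_k}$; and every ${\rm R}(z_k,z_{k'})^{2g_{k,k'}}$ becomes ${\rm R}(z'_{r_k},z'_{r_{k'}})^{2g_{k,k'}}$ with $(r_k,r_{k'})\notin K$, so the shape of the final monoid part already matches what we want.

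The remaining task is to derive, inside this substituted incompatibility, the polynomial identity
\[
P(y)\equiv\prod_{j=1}^{\#{\bm\mu}_J}(y-t'_{r_j})^{\mu_{J,j}}\cdot\prod_{k=1}^{\#{\bm\nu}_K}\bigl((y-a'_{r_k})^2+(b'_{r_k})^2\bigr)^{\nu_{K,k}}
\]
from the hypothesis $P(y)\equiv\prod_{j'=1}^{m}(y-t'_{j'})\cdot\prod_{k'=1}^{n}((y-a'_{k'})^2+(b'_{k'})^2)$ together with the equalities $t'_{j'_1}=t'_{j'_2}$ for $(j'_1,j'_2)\in J$ and ${\rm R}(z'_{k'_1},z'_{k'_2})=0$ for $(k'_1,k'_2)\in K$. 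For the linear part this is purely an ideal membership: the difference of the two right-hand sides lies in the ideal generated by the $t'_{j'_1}-t'_{j'_2}$ with $(j'_1,j'_2)\in J$, so Lemma \ref{lem:comb_lin_zero_zero} handles it without touching the monoid part (only adding to the degree).

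For the quadratic part, I would first apply Lemma \ref{lem:resultant_zero_then_eq} once for each pair $(k'_1,k'_2)\in K$ to produce, inside the incompatibility, the polynomial identities $(y-a'_{k'_1})^2+(b'_{k'_1})^2\equiv(y-a'_{k'_2})^2+(b'_{k'_2})^2$. Since each such application fourth-powers the monoid contribution, the composition over at most $\binom{n}{2}$ pairs multiplies the exponent of $S$ by at most $4^{\binom{n}{2}}=2^{n(n-1)}$, which is exactly the bound $h\le 2^{n(n-1)}$ claimed. Once these identities are available, a second use of Lemma \ref{lem:comb_lin_zero_zero} rewrites $\prod_{k}((y-a'_{r_k})^2+(b'_{r_k})^2)^{\nu_{K,k}}$ as $\prod_{k'=1}^{n}((y-a'_{k'})^2+(b'_{k'})^2)$, completing the derivation of the polynomial identity and delivering the final incompatibility.

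The main obstacle will be bookkeeping rather than conceptual: I must verify that after all these chained applications the claimed bounds on $h$, $e'_{j'_1,j'_2}$, $f'_{k'}$, $g'_{k'_1,k'_2}$ and on the degrees in $w$, in each $t'_{j'}$ and in each $(a'_{k'},b'_{k'})$ all come out at most $2^{n(n-1)}$ times the original quantities. The $t'$-degree and $w$-degree grow only additively from Lemma \ref{lem:comb_lin_zero_zero}, so the factor $2^{n(n-1)}$ is dominated by the multiplicative blow-up from the $\binom{n}{2}$ uses of Lemma \ref{lem:resultant_zero_then_eq}, which justifies the uniform bound stated in the lemma.
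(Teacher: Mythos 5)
Your proposal is correct and follows essentially the same route as the paper: substitute a representative of each $\equiv_J$- and $\equiv_K$-class into the initial incompatibility, use Lemma \ref{lem:comb_lin_zero_zero} to pass from the identity with multiplicities to the product of distinct factors together with the equalities indexed by $J$ and the quadratic-factor identities indexed by $K$, and then eliminate the latter by $|K|\le\binom{n}{2}$ applications of Lemma \ref{lem:resultant_zero_then_eq}, whose fourth-power effect on the monoid part yields exactly the $4^{\binom{n}{2}}=2^{n(n-1)}$ bound. The only difference is narrative direction (you describe the forward inference, the paper constructs the incompatibility backward), which does not affect the construction or the bounds.
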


\begin{proof}{Proof.}
Consider the initial incompatibility
\begin{equation} \label{inc:init_lemma_equal_roots_leads_mult}
\lda 
{\rm Fact}(P)^{{\bm \mu}_J,{\bm \nu}_K}(t, z), \ 
\cH
\rda
_{\K[v][t, a,b]}
\end{equation}
where $\cH$ is a system of sign conditions in $\K[v]$.

First, for $1 \le j \le 
{\# {\bm \mu}}_J$
and $1 \le k \le 
{\# {\bm \nu}}_K
$, we choose $\alpha(j) \in 
{\cal C}_j$ and $\beta(k) \in {\cal C}'_k$
(using Notation \ref{not:aux_fact2})
and we substitute $t_j = t'_{\alpha(j)}$ 
and $(a_k, b_k) = (a'_{\beta(k)}, b'_{\beta(k)})$
in (\ref{inc:init_lemma_equal_roots_leads_mult}). 
Then
we  apply 
the weak inference
$$
P
 \equiv 
\prod_{1 \le j' \le {m} } (y-t'_{j'}) 
\cdot 
\prod_{1 \le k' \le {n}} ((y - a'_{k'})^2 + {b_{k'}'}^2) ,  
\bigwedge_{1 \le j'_1 < j'_2 \le {m}, \atop (j'_{1}, j'_{2}) \in J} 
t'_{j'_1} = {t'_{j'_2}},$$
$$   
\bigwedge_{1 \le k'_1 < k'_2 \le {n}, \atop (k'_{1}, k'_{2}) \in K} 
(y - {a'_{k'_1}})^2 + {b'_{k'_1}}^2 \equiv (y - {a'_{k'_2}})^2 + {b'_{k'_2}}^2
\ \ \ \vdash
$$
$$
\vdash \ \ \
P
 \equiv {\rm F}^{{\bm \mu}_J,{\bm \nu}_K}. 
$$
By Lemma \ref{lem:comb_lin_zero_zero}, we obtain
\begin{equation}\label{inc:aux_lemma_equal_roots_leads_mult}
\begin{array}{c}
\Big \downarrow \
\displaystyle{
P
 \equiv 
\prod_{1 \le j' \le {m} } (y-t'_{j'}) 
\cdot
\prod_{1 \le k' \le {n}} ((y - a'_{k'})^2 + {b_{k'}'}^2)  
\bigwedge_{1 \le j'_1 < j'_2 \le {m}, \atop (j'_{1}, j'_{2}) \in J} 
t'_{j'_1} = {t'_{j'_2}}}, \\[6mm]
\displaystyle{  
\bigwedge_{ 1 \le j <  j' \le 
{\# {\bm \mu}}_J} 
t'_{\alpha(j)} \ne t'_{\alpha(j')}, 
\
\bigwedge_{1 \le k \le
{\# {\bm \nu}}_K} b'_{\beta(k)} \ne 0, \
\bigwedge_{1 \le k'_1 < k'_2 \le {n}, \atop (k'_{1}, k'_{2})  \in K} 
(y - {a'_{k'_1}})^2 + {b'_{k'_1}}^2 \equiv (y - {a'_{k'_2}})^2 + {b'_{k'_2}}^2
,} \\[6mm]
\displaystyle{ 
\bigwedge_{1 \le k < k' \le 
{\# {\bm \nu}}_K} 
 {\rm R}(z'_{\beta(k)}, z'_{\beta(k)})
\ne 0, 
\ \cH
\ \Big \downarrow }
_{\K[v][ t', a', b']}
\end{array}
\end{equation}
with monoid part 
$$
S
\cdot
\prod_{1 \le j <  j' \le
{\# {\bm \mu}}_J} 
(t'_{\alpha(j)} - {t'_{\alpha(j')}})^{2e_{j, j'}}
\cdot
\prod_{1 \le k  \le 
{\# {\bm \nu}}_K}{b'_{\beta(k)}}^{2f_{k}}
\cdot
\prod_{1 \le k < k' \le 
{\# {\bm \nu}}_K} 
 {\rm R}( z'_{\beta(k)},z'_{\beta(k')})
^{2g_{k, k'}}
$$
and, after some analysis, degree in $w$ bounded by $\delta_w$,
degree in $t'_{j'}$ bounded by $\delta_t$ for $1 \le j' \le {m}$, 
and degree in $(a'_{k'}, b'_{k'})$ bounded by $\delta_z$ for $1 \le k' \le {n}$ (using $\delta_t \ge p$ and 
$\delta_z \ge p$). Note that
for $1 \le j < j' \le 
{\# {\bm \mu}}_J$, if $\alpha(j) < \alpha(j')$ then 
$(\alpha(j), \alpha(j')) \not \in J$ and if 
$\alpha(j') < \alpha(j)$ then 
$(\alpha(j'), \alpha(j)) \not \in J$, and a similar fact holds for 
$1 \le k < k' \le 
{\# {\bm \nu}}_K$.

Finally, we successively apply to (\ref{inc:aux_lemma_equal_roots_leads_mult}) 
for $(k'_1, k'_2) \in K$
the weak inference
$$ {\rm R}(z'_{k'_1},z'_{k'_1})
= 0 \ \ \ \vdash
\ \ \ (y - a'_{k'_1})^2 + {b'_{k'_1}}^2 \equiv (y - a'_{k'_2})^2 + {b'_{k'_2}}^2.
$$ 
The proof is easily finished using Lemma \ref{lem:resultant_zero_then_eq}.
\end{proof}

\begin{proof}{Proof of Theorem \ref{thLaplacewithmult}.} Consider for 
$({\bm \mu}, {\bm \nu}) \in \Lambda_{m} \times \Lambda_{n}$ the initial incompatibility
\begin{equation}\label{inc:init_theor_fact_}
\lda
{\rm Fact}(P)^{{\bm \mu},{\bm \nu}}(t_{\bm \mu}, z_{\bm \nu}), \ \cH
\rda 
_{\K[v][ t_{\bm \mu}, a_{\bm \nu}, b_{\bm \nu}]}
\end{equation}
where $\cH$ is a system of sign conditions in $\K[v]$. 

For each ${m}$ and ${n}$, for each $J \subset \{(j, j') \ | \ 1 \le j < j' \le {m}\}$ and 
$K \subset \{(k, k') \ | \ 1 \le k < k' \le {n}\}$, we apply to the incompatibility
(\ref{inc:init_theor_fact_}) corresponding to $({\bm \mu}_J, {\bm \nu}_K)$
(see Notation \ref{not:aux_fact2})
the weak inference 
$$
\exists (t_{m}, z_{n}) \ \Big[\; 
P  \equiv 
\prod_{1 \le j \le {m} } (y-t_{{m},j}) 
\cdot
\prod_{1 \le k \le {n}} ((y - a_{{n},k})^2 + b^2_{{n},k}),
\bigwedge_{1 \le j < j' \le {m}, \atop (j, j') \in J} 
t_{{m}, j} = {t_{{m},j'}}, 
$$ 
$$
\bigwedge_{1 \le j < j' \le {m}, \atop (j, j') \not \in J} 
t_{{m},j} \ne {t_{{m},j'}}, \ 
\bigwedge_{1 \le k \le {n}} b_{{n},k} \ne 0, \ 
 \bigwedge_{1 \le k < k' \le {n}, \atop (k, k') \in K} 
{\rm R}(z_{{n},k},z_{{n},k'})
= 0, 
\bigwedge_{1 \le k < k' \le {n}, \atop (k, k') \not \in K} 
{\rm R}(z_{{n},k},z_{{n},k'})
\ne 0 \; \Big] 
\ \ \ 
\vdash$$
$$\vdash \ \ \
\exists (t_{{\bm \mu}_J}, z_{{\bm \nu}_K}) \ [\;  {\rm Fact}(P)^{{\bm \mu}_J,{\bm \nu}_K}(t_{{\bm \mu}_J}, z_{{\bm \nu}_K}) \; ],
$$
where $t_{m} = (t_{{m},1}, \dots, t_{{m},{m}})$ and 
$z_n = (z_{{n},1}, \dots, z_{{n},{n}})$. By Lemma \ref{lem:aux_complete_fact}
we obtain
\begin{equation}
\label{inc:aux_complet_fact_2} 
\begin{array}{c}
\displaystyle{\Big \downarrow \ 
P  \equiv 
\prod_{1 \le j \le {m} } (y-t_{{m},j})
\cdot
\prod_{1 \le k \le {n}} ((y - a_{{n},k})^2 + b^2_{{n},k}),
\bigwedge_{1 \le j < j' \le {m}, \atop (j, j') \in J} 
t_{{m},j} = {t_{{m},j'}}, } \\[6mm]  
\displaystyle{
\bigwedge_{1 \le j < j' \le {m}, \atop (j, j') \not \in J} 
t_{{m},j} \ne {t_{{m},j'}}, \
\bigwedge_{1 \le k \le {n}} b_{{n},k} \ne 0, } \\[6mm] 
\displaystyle{ \bigwedge_{1 \le k < k' \le {n}, \atop (k, k') \in K} 
 {\rm R}( z_{{n},k},z_{{n},k'})
= 0,
\bigwedge_{1 \le k < k' \le {n}, \atop (k, k') \not \in K} 
{\rm R}(z_{{n},k},z_{{n},k'})
\ne 0, \ 
\cH \ \Big \downarrow}
_{\K[v][ t_{m}, a_{n}, b_{n}]}
\end{array}
\end{equation}
with monoid part 
$$
S_{{\bm \mu}_J, {\bm \nu}_K}^{h_{J, K}}
\cdot
\prod_{1 \le j < j' \le {m}, \atop (j, j') \not \in J} 
(t_{{m},j} - {t_{{m},j'}})^{2e_{J, K, j, j'}}
\cdot
\prod_{1 \le k \le {n}}b_{{n},k}^{2f_{J, K, k}}
\cdot
\prod_{1 \le k < k' \le {n}, \atop (k, k') \not \in K} 
{\rm R}(z_{{n},k},z_{{n},k'})
^{2g_{J, K, k, k'}} 
$$
with $h_{J,K}  \le 2^{{n}({n}-1)}$, 
$e_{J, K,  j, j'} \le 2^{{n}({n}-1)}e$ for $1 \le j < j' \le {m}, (j, j') \not \in J$,
$f_{J, K, k} \le 2^{{n}({n}-1)}f$ for $1 \le k \le n$
and
$g_{J, K, k, k'} \le 2^{{n}({n}-1)}g$ for $1 \le k < k' \le {n},
(k, k') \not \in K$,
degree in $w$ bounded by $2^{{n}({n}-1)}\delta_w$,
degree in $t_{{m},j}$ bounded by $2^{{n}({n}-1)}\delta_t$ for $1 \le j \le {m}$ 
and
degree in $(a_{{n},k}, b_{{n},k})$ bounded by $2^{{n}({n}-1)}\delta_z$ 
for $1 \le k \le n$.

Then, for each ${m}$ and ${n}$,  we apply to incompatibilities (\ref{inc:aux_complet_fact_2}) for every 
$J \subset \{(j, j') \ | \ 1 \le j < j' \le {m}\}$ and 
$K \subset \{(k, k') \ | \ 1 \le k < k' \le {n}\}$, the weak inference
$$
P  \equiv 
\prod_{1 \le j \le {m} } (y-t_{{m},j}) 
\cdot
\prod_{1 \le k \le {n}} ((y - a_{{n},k})^2 + b^2_{{n},k})  
, \ 
\bigwedge_{1 \le k \le {n}} b_{{n},k} \ne 0
\ \ \ \vdash
$$
$$
\vdash \ \ \
\bigvee_{J,\, K} 
\Big(
P  \equiv 
\prod_{1 \le j \le {m} } (y-t_{{m},j}) 
\cdot
\prod_{1 \le k \le {n}} ((y - a_{{n},k})^2 + b^2_{{n},k}),
\bigwedge_{1 \le j < j' \le {m}, \atop (j, j') \in J} 
t_{{m},j} = {t_{{m},j'}}, \ 
\bigwedge_{1 \le j < j' \le {m}, \atop (j, j') \not \in J} 
t_{{m},j} \ne {t_{{m},j'}}, \ 
$$
$$ 
\bigwedge_{1 \le k \le {n}} b_{{n},k} \ne 0, \
\bigwedge_{1 \le k < k' \le {n}, \atop (k, k') \in K} 
{\rm R}(z_{{n},k},z_{{n},k'})
= 0,
\bigwedge_{1 \le k < k' \le {n}, \atop (k, k') \not \in K} 
{\rm R}(z_{{n},k},z_{{n},k'})
\ne 0 
\Big).
$$
By Lemma \ref{lem:multiple_case_by_case} and taking into account 
that there are at most $2^{\frac 12 p(p-1)}$ pairs of subsets $(J,K)$ and 
many different pairs may lead to the same pair of vectors $({\bm \mu}_J, {\bm \nu}_K)$, 
we obtain 
\begin{equation}
\label{inc:aux_complet_fact_3} 
\lda P  \equiv 
\prod_{1 \le j \le {m} } (y-t_{{m},j}) 
\cdot
\prod_{1 \le k \le {n}} ((y - a_{{n},k})^2 + b^2_{{n},k})  
, \ 
\bigwedge_{1 \le k \le {n}} b_{{n},k} \ne 0, \ 
\cH \rda
_{\K[v][ t_{m}, a_{n}, b_{n}]}
\end{equation}
with monoid part 
$$
\prod_{({\bm \mu}, {\bm \nu}) \in \Lambda_{m} \times \Lambda_{n}}S_{{\bm \mu}, {\bm \nu}}^{h'_{{\bm \mu},{\bm \nu}}}
\cdot
\prod_{1 \le k \le {n}}b_{{n},k}^{2f'_{{n},k}}
$$ 
with $h'_{{\bm \mu},{\bm \nu}} \le \max\{e, g\}^{2^{\frac12 p(p-1)}-1}
2^{({n}^2 - {n} + 2)2^{\frac12 p(p-1)} -  2}$
and
$f'_{{n},k} \le \max\{e, g\}^{2^{\frac12 p(p-1)}-1}f2^{({n}^2 - {n} + 2)2^{\frac12 p(p-1)} - 2}$
for $1 \le k \le {n}$, 
degree in $w$ bounded by 
$\max\{e, g\}^{2^{\frac12 p(p-1)}-1}
2^{({n}^2 - {n} + 2)2^{\frac12 p(p-1)} -  2}
\delta_w$, 
degree in $t_{{m},j}$ bounded by 
$\max\{e, g\}^{2^{\frac12 p(p-1)}-1}2^{({n}^2 - {n} + 2)2^{\frac12 p(p-1)} -  2}\delta_t$
for $1 \le j \le {m}$ 
and degree in $(a_{n,k}, b_{n,k})$ bounded by 
$\max\{e, g\}^{2^{\frac12 p(p-1)}-1}2^{({n}^2 - {n} + 2)2^{\frac12 p(p-1)} -  2}\delta_z$. 

Finally, we apply to incompatibilities (\ref{inc:aux_complet_fact_3}) for every ${m}$ and ${n}$ such
that ${m} + 2{n} = p$ the weak inference
$$
\vdash \ \ \  
\bigvee_{ {m}+2{n} = p} 
\exists (t_{{m}},z_{n}) \ 
\Big[\;
P
 \equiv  
\prod_{1 \le j \le {m}}(y-t_{{m},j})
\cdot
\prod_{1 \le k \le {n}}((y - a_{{n},k})^2 + {b_{{n},k}}^2),
\ 
\bigwedge_{1\le k\le {n}} b_{{n},k} \ne 0 \;\Big]. 
$$
By Theorem \ref{thLaplace} (Real Irreducible Factors as a weak existence) and using Lemma \ref{tlem:aux_comp_fact_with_mult}, 
we obtain 
$$ 
\lda \cH \rda 
_{\K[v]}
$$ 
with monoid part 
$$
\prod_{{m} + 2{n} = p \atop ({\bm \mu}, {\bm \nu}) \in \Lambda_{m} \times \Lambda_{n}} S_{{\bm \mu}, {\bm \nu}}^{h_{{\bm \mu}, {\bm \nu}}}
$$
with $h_{{\bm \mu}, {\bm \nu}} \le \max\{e, g\}^{2^{\frac12 p^2}}f^{2^{\frac12 p}}{\rm g}_4\{p\}$ 
and degree in $w$ bounded by 
$\max\{e, g\}^{2^{\frac12 p^2}}f^{2^{\frac12 p}}{\rm g}_4\{p\}(\delta_w + \max\{\delta_t, \delta_z \}\deg_w P)$,
which serves as the final incompatibility.
\end{proof}


\section{Hermite's Theory}  \label{sect_sylv_inertia_law}
\setcounter{equation}{0}

In this section we study Hermite's theory and Sylvester's inertia law
in the context of weak inferences and incompatibilities.
Hermite's theory has two aspects: on one hand, the rank and signature of Hermite's quadratic form determine the number of real roots,
and on the other hand, sign conditions on 
the principal minors of Hermite's quadratic form
also determine its rank and signature.

In Subsection \ref{subsecHerm_fact}, we explain how the rank and signature of Hermite's quadratic 
form is related to real root counting (Theorem \ref{hermite})
and we transform this statement into a 
weak inference of a  
diagonalization formula (Theorem \ref{fact_Her_matrix_fact}).
In Subsection \ref{subsecHerm_bez}, we explain that 
the rank and signature of Hermite's quadratic form are also determined by sign conditions on 
principal minors, which are closely related to subresultants (Theorem \ref{bezoutiansignature})
and we transform this statement into a weak inference 
of a different diagonalization 
formula 
(Theorem \ref{thm:her_through_bez}).
In Subsection \ref{subsecSylv}, we produce an incompatibility for Sylvester's inertia law, 
expressing the impossibility for a quadratic form to have two diagonal forms
with distinct rank and signature (Theorem \ref{thSylv2}).
Finally in Subsection \ref{subsecHerm_incomp}, 
combining results from the preceding  subsections,
 we produce
an incompatibility expressing the impossibility for a polynomial 
to have a
number of real roots in  conflict with the rank and signature of its Hermite's quadratic form
predicted by the signs of its principal minors (Theorem \ref{hermitetrsubresw}). 

In this section we use many results from Section \ref{Weak.inferences}, but it 
is absolutely independent from  the results from Section \ref{section_ivt}  and Section
\ref{section_fta}. 

On the other hand, the only result extracted from Section \ref{sect_sylv_inertia_law} used in the rest of the paper 
is Theorem \ref{hermitetrsubresw} (Hermite's Theory as an incompatibility), which produces an incompatibility used only twice in Section \ref{sect_elim_of_one_var}.

\subsection{Signature of Hermite's quadratic form and real root counting}\label{subsecHerm_fact}

 In this section, $\K$ is as usual an ordered field and $\R$ is a real closed field containing $\K$.
 Moreover, $\D$ is a domain and $\F$ is a  field 
of characteristic $0$
containing $\D$.
A typical example of this situation is the following:  $\K$ is the field of rational numbers, $\R$ the field of
real algebraic numbers, $\D=\K[c]$ the polynomials in a finite number of variables 
with coefficients in $\K$ and $\F$ the corresponding field of fractions.

We now recall the definition of 
Hermite's quadratic form \cite{Her,BPRbook} and its role in real root counting.

\begin{notation}\label{def:rksi}
 For a symmetric matrix ${\bm A}\in \K^{p \times p}$, we denote by ${\rm Si}({\bm A})$ 
and ${\rm Rk}({\bm A})$
the signature and rank of 
${\bm A}$ respectively.
\end{notation}

\begin{definition}[Hermite Quadratic Form]
Let  $P, Q
\in \D[y]$ 
with $\deg P
= p \ge 1$ and $P
$ monic.  
The Hermite's matrix ${\rm Her}(P;Q) \in \D^{p \times p}$
is the matrix defined for $1 \le j_1, j_2 \le p$ by 
$$
{\rm Her}(P;Q)_{j_1, j_2} = 
{\rm Tra}
(Q
\cdot y^{j_1 + j_2 - 2})
$$
where  ${\rm Tra}(A
)$ is the trace of the linear mapping of multiplication by $A
\in \F[y]$
in the $\F$-vector space $\F[y]/P
$.
\end{definition}

\begin{theorem}[Hermite's Theory (1)]
\label{hermite} Let $P, Q
\in \K[y]$ with $= p \ge 1$, $P$ monic. Then
\begin{eqnarray*}
{\rm Rk}({\rm Her}(P;Q)) &=& \#\{\alpha + i\beta  \in\R[i] \, | \, P(\alpha + i\beta)=0, Q(\alpha + i\beta)\not=0\}, \\
{\rm Si}({\rm Her}(P;Q)) &=& \#\{\theta \in\R \, | \, P(\theta)=0, Q(\theta)>0\}-\#\{\theta \in\R \, | \,  P(\theta)=0, Q(\theta)<0\}. 
\end{eqnarray*}
\end{theorem}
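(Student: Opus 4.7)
The plan is to diagonalize ${\rm Her}(P;Q)$ explicitly over $\R[i]$ by means of a Vandermonde decomposition, then convert it into a real decomposition to read off both the rank and the signature. First I would factor $P$ in $\R[i][y]$ as $P(y) = \prod_{k=1}^{s}(y-\theta_k)^{n_k} \prod_{j=1}^{t}((y-a_j)^2 + b_j^2)^{m_j}$, with $\theta_k \in \R$ the distinct real roots and $z_j = a_j + ib_j$, $\bar z_j$ (with $b_j \ne 0$) the distinct complex ones. By the Chinese Remainder Theorem, $\R[i][y]/P(y)$ splits as the product of the local rings $\R[i][y]/(y-\zeta)^{m_\zeta}$ over the distinct roots $\zeta$, and a direct computation in the basis $1, y-\zeta, \ldots, (y-\zeta)^{m_\zeta-1}$ shows that the trace of multiplication by any $A(y)$ on such a factor equals $m_\zeta A(\zeta)$. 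Since the trace is unaffected by extension of scalars from $\R$ to $\R[i]$, summing over $\zeta$ yields
\[
{\rm Her}(P;Q)_{j_1,j_2} = \sum_{\zeta} m_\zeta Q(\zeta)\,\zeta^{j_1+j_2-2},
\]
so that ${\rm Her}(P;Q) = V^{T} M V$ over $\R[i]$, where $V$ is the $r\times p$ Vandermonde matrix $(\zeta^{l-1})$ indexed by the $r = s + 2t$ distinct roots and $M$ is the $r\times r$ diagonal matrix with entries $m_\zeta Q(\zeta)$. Since $V$ is a Vandermonde with distinct rows it has rank $r$, so the rank of ${\rm Her}(P;Q)$ equals the rank of $M$, namely $\#\{\zeta : Q(\zeta) \ne 0\}$; invariance of the rank under base change gives the first formula.

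For the signature I would introduce the block-diagonal change of basis $U$ with identity blocks on real roots and blocks $\bigl(\begin{smallmatrix}1 & i\\ 1 & -i\end{smallmatrix}\bigr)$ on each conjugate pair, so that $V = U\tilde V$ with $\tilde V$ a real $r\times p$ matrix still of rank $r$. Then ${\rm Her}(P;Q) = \tilde V^{T} N \tilde V$, where $N = U^{T} M U$ is real and block-diagonal: each real root $\theta_k$ contributes a $1\times 1$ block $n_k Q(\theta_k)$, and each conjugate pair $(z_j, \bar z_j)$ contributes, using $Q(\bar z_j) = \overline{Q(z_j)}$, the symmetric $2\times 2$ block $2m_j\bigl(\begin{smallmatrix}\mathrm{Re}\,Q(z_j) & -\mathrm{Im}\,Q(z_j)\\ -\mathrm{Im}\,Q(z_j) & -\mathrm{Re}\,Q(z_j)\end{smallmatrix}\bigr)$ whose determinant is $-4m_j^2|Q(z_j)|^2 \le 0$. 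Since $\tilde V$ has full row rank, Sylvester's inertia law implies that the signature of ${\rm Her}(P;Q)$ equals the signature of $N$: the real-root blocks contribute $\sign Q(\theta_k)$ each, while every conjugate-pair block contributes $0$ (either indefinite of type $(1,1)$ when $Q(z_j) \ne 0$, or zero when $Q(z_j) = 0$), which is precisely the claimed signature formula.

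The main obstacle is the signature analysis for the complex conjugate pairs: one must verify that $U^{T} M U$ is real and symmetric on each such block, and that its determinant is nonpositive so that its signature vanishes in every case; one must also apply Sylvester's inertia law through the surjective but non-injective map $\tilde V\colon \R^{p}\to \R^{r}$, so that the $p-r$ trivial directions in $\ker \tilde V$ contribute $0$ to both rank and signature. The remaining ingredients, the Vandermonde rank argument and the CRT-based trace computation, are routine.
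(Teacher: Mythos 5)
Your proof is correct and follows essentially the same route as the paper's: a Vandermonde-type congruence ${\rm Her}(P;Q) = {\bm B}\cdot{\bm D}\cdot{\bm B}^{\mathrm t}$ built from the factorization of $P$ over $\R$, followed by Sylvester's inertia law and a direct count of rank and signature on the (block-)diagonal side. The only real difference is cosmetic: where you keep the real symmetric $2\times 2$ block of a conjugate pair and observe that its determinant $-4m_j^2|Q(z_j)|^2\le 0$ forces its signature to vanish, the paper fully diagonalizes that block by absorbing a square root of $2Q(\alpha_k+i\beta_k)$ into the change-of-basis matrix (the factor ${\rm Sq}_{{\bm \kappa}}$), obtaining diagonal entries $\pm\nu_k\kappa_k$ --- a form it needs later for the weak-inference versions, but not for this statement.
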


Even though this result is well known, we give here a detailed proof of Theorem \ref{hermite}
which we will follow later on to 
obtain a weak inference counterpart of it. 
We introduce first some more auxiliary notation and definitions.

\begin{definition}
\label{notation:sign}
For $\alpha \in \R$, its sign is defined as follows:
$$\begin{cases}
   {\rm sign} (\alpha) = 0 & {\rm if } \ \alpha = 0,\\
     {\rm sign} (\alpha) = 1 & {\rm if } \ \alpha > 0,\\
    {\rm sign} (\alpha) = -1 & {\rm if } \ \alpha < 0.\\
   \end{cases}$$
From now on, for $P\in \K[v],\tau \in \{-1,0,1\}$, we freely use ${\rm sign}(P)= \tau$, to mean
$$\begin{cases}
 P = 0 & {\rm if} \ \tau = 0,\\
    P>0 & {\rm if} \ \tau=1,\\
    P<0 & {\rm if} \ \tau = -1 .\\
   \end{cases}$$
   \end{definition}

   Similarly we define the invertibility of an element of $\R[i]$.
\begin{definition}
\label{inv}
For $\alpha+i\beta \in \R[i]$, its invertibility is defined as follows:
$$\begin{cases}
   {\rm inv} (\alpha+i\beta) = 0 & {\rm if} \ \alpha = 0, \ \beta = 0,\\
     {\rm inv} (\alpha+i\beta) = 1 & {\rm if} \ \alpha^2+\beta^2 \not=0.\\
   \end{cases}$$
From now on, for $P(z)=P\re(a,b)+i P\im(a,b)\in \K[i][v][z],\kappa \in \{0,1\}$, we 
freely use ${\rm inv}(P)= \kappa$, to mean
$$\begin{cases}
 P\re(a,b)= 0, \ P\im(a,b)= 0 & {\rm if} \ \kappa = 0,\\
 P\re(a,b)^2+P\im(a,b)^2 \not=0 & {\rm if} \ \kappa=1.
   \end{cases}$$
\end{definition}

\begin{remark}\label{rem:diag}
 If ${\bm D} \in \K^{p \times p}$ is a diagonal matrix, with diagonal elements $D_1,\ldots,D_p$, 
 \begin{eqnarray*}
{\rm Rk}({\bm D}) &=& \sum_{1 \le i \le p} \inv(D_i), \\
{\rm Si}({\bm D}) &=& \sum_{1 \le i \le p} \sign(D_i). 
\end{eqnarray*}
\end{remark}

\begin{notation}
\begin{itemize}
\item  
For $p \in \N_*$ and $j \in \N$ we denote by ${\rm A}_{p,j} \in \mathbb{Z}[c_0, \dots, c_{p-1}]$ the unique 
polynomial 
such that 
$${\rm A}_{p,j} \Big( {\rm Coef}(y_1,\ldots,y_p) \Big) = 
\sum_{1 \le k \le p}y_k^j \in \mathbb{Z}[y_1, \dots, y_p],$$
where 
 ${\rm Coef}(y_1,\ldots,y_p)$ is the vector whose $j$-th entry, $j=0,\ldots,p-1$,  is
 $$ {\rm Coef}_j(y_1,\ldots,y_p)=(-1)^{p-j}\sum_{K \subset \{1, \dots, p\} \atop |K| = p-j} \prod_{k \in K}y_k.$$
Note that  $\deg {\rm A}_{ p, j} = j$ (see \cite[Proof of Theorem 3, Chapter 7]{CLO}).

\item For $j \in \N$ and $({\bm \mu}, {\bm \nu}) \in \Lambda_{m} \times \Lambda_{n}$,
let
$t=(t_1, \dots, t_{\# {\bm \mu}})$ , 
$a=(a_1, \dots,a_{\#  {\bm \nu}})$, 
$b=(b_1, \dots, b_{\# {\bm \nu}})$ 
be sets of variables ,
$z_i=a_i+b_i$ and $z=(z_1, \dots,z_{\#  {\bm \nu}})$.
We denote
by ${\rm N}_{j}^{{\bm \mu},{\bm \nu}} \in$ $\mathbb{Z}[t,a,b]$
the Newton sum polynomial
$$
{\rm N}_{j}^{{\bm \mu},{\bm \nu}}
 = 
\sum_{1 \le i \le \# {\bm \mu}} \mu_it_i^j
+ \sum_{1 \le k \le \# {\bm \nu}} 2\nu_k (z_k^j)_{\re}.
$$

\end{itemize}

\end{notation}

\begin{remark}\label{rem2:form_herm_matrix}

\begin{itemize}
\item Let $p \in \N_*$, $({\bm \mu}, {\bm \nu}) \in \Lambda_{m} \times \Lambda_{n}$ with ${m} + 2n = p$, 
$t = (t_1, \dots, t_{\# {\bm \mu}})$ is a set of variables
and 
$z = (z_1, \dots, z_{\# {\bm \nu}})$
is a set of complex variables.
Following Definition \ref{defFact}, for $j \in \N$ we have
$$
{\rm A}_{p,j}({\rm F}_{0}^{{\bm \mu},{\bm \nu}}(t, z), \dots, 
{\rm F}_{p-1}^{{\bm \mu},{\bm \nu}}(t, z))
= {\rm N}_{j}^{{\bm \mu},{\bm \nu}}(t, z)
$$
in $\mathbb{Z}[t, a, b]$. 

\item Let $p \in \N_*$, $P
= 
y^p + \sum_{0 \le h \le p-1}\gamma_h  y^h$, 
$Q
 = \sum_{0 \le h \le q}\gamma'_h  y^h \in 
{\D}[y]$.
For  $1 \le j_1, j_2 \le p$, 
$$
 {\rm Her}(P;Q)_{j_1, j_2}
= 
\sum_{0 \le h \le q} \gamma'_h {\rm A}_{p,h + j_1 + j_2 - 2}(\gamma_0, \dots, \gamma_{p-1}) 
$$
(see \cite[Proposition 4.54]{BPRbook}). 
\end{itemize}
\end{remark}

\begin{notation} Let $p \in \N_*$, $({\bm \mu}, {\bm \nu}) \in \Lambda_{m} \times \Lambda_{n}$ with ${m} + 2n = p$, 
$t = (t_1, \dots, t_{\# {\bm \mu}})$ and 
$z = (z_1, \dots, z_{\# {\bm \nu}})$. 

\begin{itemize} 
\item For ${\bm \kappa} \in \{0, 1\}^{\{1, \dots, {\# {\bm \nu}}\}}$, we denote
${\rm Di}_{Q}^{{\bm \mu},{\bm \nu},{\bm \kappa}}(t)$ the diagonal matrix with entries 
$$(\mu_1 Q(t_1), \dots, \mu_{\# {\bm \mu}}Q(t_{\# {\bm \mu}}),
\nu_1 \kappa_1,  
-\nu_1 \kappa_1, \dots, \dots,
\nu_{\# {\bm \nu}} \kappa_{\# {\bm \nu}},  
-\nu_{\# {\bm \nu}} \kappa_{\# {\bm \nu}},
0, \dots, 0).
$$

\item 
We denote by
${\rm V}(t,z)$
the $p\times p$ matrix
$$
{
\left(
\begin{array}{cccccccccccc}
1 & \dots & 1  & 1 & 0 & \dots & \dots & 1 & 0 & 0 & \dots & 0 \cr
t_1 & \dots & t_{\# {\bm \mu}}  & a_1 & b_1 & \dots & \dots & a_{\# {\bm \nu}} & b_{\# {\bm \nu}} & 0 & \dots & 0 \cr
\vdots &  & \vdots  & \vdots & \vdots &  &  & \vdots & \vdots & \vdots &  & \vdots \cr
\vdots &  & \vdots  & \vdots & \vdots &  &  & \vdots & \vdots & \vdots &  & \vdots \cr
\vdots &  & \vdots  & \vdots & \vdots &  &  & \vdots & \vdots & \vdots &  & \vdots \cr
\vdots &  & \vdots  & \vdots & \vdots &  &  & \vdots & \vdots & \vdots &  & \vdots \cr
\vdots &  & \vdots  & \vdots & \vdots &  &  & \vdots & \vdots & \vdots &  & \vdots \cr
\vdots &  & \vdots  & \vdots & \vdots &  &  & \vdots & \vdots & \vdots &  & \vdots \cr
\vdots &  & \vdots  & \vdots & \vdots &  &  & \vdots & \vdots & 0 &  & 0 \cr
\vdots &  & \vdots & \vdots & \vdots &  &  & \vdots & \vdots &
1 & \dots & 0 \cr
\vdots &  & \vdots  & \vdots & \vdots &  &  & \vdots & \vdots & \vdots & \ddots  & \vdots \cr
t_1^{p-1} & \dots & t_{\# {\bm \mu}}^{p-1}  & (z_1^{p-1})_{\rm Re} & (z_1^{p-1})_{\rm Im} & \dots & \dots & (z_{\# {\bm \nu}}^{p-1})_{\rm Re} & (z_{\# {\bm \nu}}^{p-1})_{\rm Im} & 0 & \dots & 1 \cr
\end{array}
\right).
}
$$

\item  For ${\bm \kappa} \in \{0, 1\}^{\{1, \dots, {\# {\bm \nu}}\}}$ and 
$z' = (z'_k)_{\kappa_k = 1}$ 
we denote by 
${\rm Sq}_{{\bm \kappa}}(z')$ the $p \times p$ block diagonal matrix having the 
first ${\# {\bm \mu}}$ diagonal elements equal 
$1$, the next 
${\# {\bm \nu}}$ diagonal blocks of size $2$ equal to 
$$
\left\{ \begin{array}{ll}
\left(\begin{array}{cc}
       a'_k & b'_k  \cr
 -b'_k & a'_k
\end{array} \right) & \hbox{if }\kappa_k= 1,\\[4.5mm]
\hbox{the identity matrix of size }2  & \hbox{if }\kappa_k = 0,
\end{array}\right. 
$$
and the last $p - \# {\bm \mu}- 2\# {\bm \nu}$ diagonal elements equal to $1$. 

\item We denote by ${\rm B}_{{\bm \kappa}}(t, z, z')$ the matrix 
${\rm V}(t,z) \cdot{\rm Sq}_{{\bm \kappa}}(z')$.
\end{itemize}
\end{notation}

\begin{lemma}\label{lemma:detarminant_sim_vandermonde}
$$\det ({\rm V}(t,z)) = 
\prod_{1 \le j < j' \le \# {\bm \mu}}(t_{j'} - t_{j})
\cdot
\prod_{1 \le j \le \# {\bm \mu}, \atop 1 \le k \le \# {\bm \nu}}((a_k-t_j)^2 + b_k^2 ) 
\cdot
\prod_{1 \le k \le \# {\bm \nu}} b_k 
\cdot
\prod_{1 \le k < k' \le \# {\bm \nu}} 
{\rm R}(
z_{k},z_{k'}).
$$   
\end{lemma}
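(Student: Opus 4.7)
{Proof proposal.}
The plan is to reduce the computation to a classical Vandermonde determinant by means of a suitable change of columns. First I would deal with the trivial block. The last $p - \#{\bm \mu} - 2\#{\bm \nu}$ columns of ${\rm V}(t,z)$ have zeros in their first $\#{\bm \mu} + 2\#{\bm \nu}$ rows and form an identity block in their last $p - \#{\bm \mu} - 2\#{\bm \nu}$ rows; hence a Laplace expansion along these columns reduces the computation to the determinant of the top-left $(\#{\bm \mu} + 2\#{\bm \nu}) \times (\#{\bm \mu} + 2\#{\bm \nu})$ submatrix, which I shall call $\bm V$.

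Next I would introduce the auxiliary complex Vandermonde matrix $\widetilde{\bm V}$ of size $\#{\bm \mu} + 2\#{\bm \nu}$ whose columns are indexed, in the order
$$t_1,\dots,t_{\#{\bm \mu}},\ z_1,\bar z_1,\ z_2,\bar z_2,\ \dots,\ z_{\#{\bm \nu}},\bar z_{\#{\bm \nu}},$$
and whose $\xi$-column is $(1,\xi,\xi^2,\ldots,\xi^{\#{\bm \mu}+2\#{\bm \nu}-1})^T$. Since $(z_k^j)\re = \tfrac12(z_k^j + \bar z_k^j)$ and $(z_k^j)\im = \tfrac{1}{2i}(z_k^j - \bar z_k^j)$, the columns of $\bm V$ are obtained from those of $\widetilde{\bm V}$ by applying, on each pair $(z_k,\bar z_k)$, the $2 \times 2$ change-of-basis matrix $\begin{pmatrix} 1/2 & -i/2 \\ 1/2 & i/2 \end{pmatrix}$, whose determinant is $i/2$. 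Therefore
$$\det(\bm V) = (i/2)^{\#{\bm \nu}} \cdot \det(\widetilde{\bm V}).$$

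Now I would apply the usual Vandermonde formula $\det(\widetilde{\bm V}) = \prod_{i < j}(\xi_j - \xi_i)$ and split the product into four groups according to whether the indices come from real roots, pair a real root with a complex one, are a conjugate pair $(z_k,\bar z_k)$, or pair two distinct complex roots. The respective contributions are
$$
\prod_{1 \le j < j' \le \#{\bm \mu}}(t_{j'}-t_j),\quad
\prod_{j,k}(z_k-t_j)(\bar z_k - t_j) = \prod_{j,k}\bigl((a_k-t_j)^2+b_k^2\bigr),
$$
$$
\prod_{1 \le k \le \#{\bm \nu}}(\bar z_k - z_k) = \prod_{k}(-2ib_k),\quad
\prod_{1 \le k < k' \le \#{\bm \nu}}(z_{k'}-z_k)(\bar z_{k'}-z_k)(z_{k'}-\bar z_k)(\bar z_{k'}-\bar z_k) = \prod_{k<k'}{\rm R}(z_k,z_{k'}),
$$
the last identity following because the four factors pair up into two complex-conjugate products equal to $(a_k-a_{k'})^2+(b_k-b_{k'})^2$ and $(a_k-a_{k'})^2+(b_k+b_{k'})^2$. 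Finally, combining the prefactor $(i/2)^{\#{\bm \nu}}$ with $\prod_k(-2ib_k)$ gives $\prod_k b_k$ (since $(i/2)(-2i)=1$), which matches the claimed formula exactly. The only mildly delicate step is keeping careful track of the $i$'s and signs in the column change and in the conjugate-pair product; everything else is routine bookkeeping.
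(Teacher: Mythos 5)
Your proof is correct and follows exactly the route the paper intends: the paper's own proof is just the one-line remark "Easy computation from the formula for the usual Vandermonde determinant," and your block reduction, complex change of basis with determinant $i/2$ per conjugate pair, and regrouping of the Vandermonde factors (with $(i/2)(-2ib_k)=b_k$) supply precisely the missing bookkeeping.
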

\begin{proof}{Proof.} Easy computation from the formula for the usual Vandermonde determinant.
\end{proof}

We can now give a proof of Theorem \ref{hermite} (Hermite's Theory (1)).

\begin{proof}{Proof of Theorem \ref{hermite}.} 
Consider 
the decomposition of $P$ into irreducible factors in $\R[y]$
$$
P
=
\prod_{1 \le j \le \# {\bm \mu}}(y - \theta_j)^{\mu_j}
\cdot
\prod_{1 \le k \le \# {\bm \nu}}((y- \alpha_k)^2 + \beta_k^2)^{\nu_k},
$$
with $\theta = (\theta_1, \dots, \theta_{\# {\bm \mu}}) \in \R^{\# {\bm \mu}}, \alpha = (\alpha_1, \dots, \alpha_{\# {\bm \nu}}) \in \R^{\# {\bm \nu}}$
and $\beta = (\beta_1, \dots, \beta_{\# {\bm \nu}}) \in \R^{\# {\bm \nu}}$ and
${\bm \kappa} \in \{0,1\}^{\{1, \dots, {\# {\bm \nu}}\}}$ defined by $\kappa_k=1$ if $Q(\alpha_k+i\beta_k) \ne 0$ and 
$\kappa_k=0$ otherwise.

For $1 \le k \le {\# {\bm \nu}}$ with $\kappa_k = 1$, we consider a square root 
$\alpha'_k + i\beta'_k$ of 
$2Q(\alpha_k+i\beta_k)$.
Since $\det({\rm V}(\theta,\alpha + i\beta)) \ne 0$ by Lemma \ref{lemma:detarminant_sim_vandermonde} and 
$\det({\rm Sq}_{{\bm \kappa}}(\alpha' + i\beta')) \ne 0$ by an easy computation, we have that 
$\det({\rm B}_{{\bm \kappa}}(\theta,\alpha + i\beta,\alpha' + i\beta'))\ne 0$.

Using Remark \ref{rem2:form_herm_matrix}, it can be checked that 
$$
{\rm Her}(P;Q) = 
{\rm B}_{{\bm \kappa}}(\theta,\alpha + i\beta,\alpha' + i\beta') \cdot
{\rm Di}_{Q}^{{\bm \mu},{\bm \nu},{\bm \kappa}}
(\theta) \cdot
{\rm B}_{{\bm \kappa}}(\theta,\alpha + i\beta,\alpha' + i\beta')^{\mathrm t}.
$$ The proof concludes then by simply noting that, by Remark \ref{rem:diag},
\begin{eqnarray*}
{\rm Rk}({\rm Di}_{Q}^{{\bm \mu},{\bm \nu},{\bm \kappa}}(\theta)) &=& \#\{\alpha + i \beta \in\R[i] \, 
| \, P(\alpha + i \beta)=0, Q(\alpha + i \beta)\not=0\}, \\
{\rm Si}({\rm Di}_{Q}^{{\bm \mu},{\bm \nu},{\bm \kappa}}(\theta)) &=& \#\{\theta \in\R \, | \, P(\theta)=0, Q(\theta)>0\}-\#\{\theta \in\R \, | \,  P(\theta)=0, Q(\theta)<0\}.
\end{eqnarray*}
\end{proof}

Now we give a weak inference version of Theorem \ref{hermite} (Hermite's Theory (1)), using Definition \ref{defFact}. 
Note that for the 
first time in this paper, the set of variables $w$ in the  
statement of the theorem is not an arbitrary set of variables included in $v$.
This is enough for our purposes and enables us to obtain a more precise result. 
In fact, many times from here on we will make a similar distinction for the set of variables $w$.

\begin{theorem}[Hermite's Theory (1) as a weak existence]\label{fact_Her_matrix_fact} Let $p \ge 1$, $P
= y^p + 
\sum_{0 \le h \le p-1}C_h \cdot y^h \in \K[u][y]$, 
$m$, $n \in \N$ with
${m}+2n=p$, $({\bm \mu},{\bm \nu}) \in \Lambda_{m} \times  \Lambda_{n}$,
$t = (t_1, \dots, t_{\# {\bm \mu}})$, $z = (z_1, \dots, z_{\# {\bm \nu}})$, 
$Q
= \sum_{0 \le h \le q}D_h \cdot y^h \in \K[u][y]$, ${\bm \kappa} \in \{0,  1\}^{\{1, \dots,{{\# {\bm \nu}}}\}}$
and $s({\bm \kappa}) = \#\{k \ | \ 1 \le k \le {\# {\bm \nu}}, \kappa_k = 1 \}$. 
Then
$$
{\rm Fact}(P)^{{\bm \mu},{\bm \nu}}(t,z), \ 
\bigwedge_{1 \le k \le {\# {\bm \nu}}}{\inv}(Q(z_k)) = \kappa_k  \ \ \ \vdash$$
$$
\vdash \ \ \ \exists z' \ 
[\; {\rm Her}(P; Q) \equiv
{\rm B}_{{\bm \kappa}}(t,z,z')\cdot
{\rm Di}_{Q}^{{\bm \mu},{\bm \nu},{\bm \kappa}}
(t)\cdot
{\rm B}_{{\bm \kappa}}(t,z,z')^{\rm t}, \ \det({\rm B}_{\kappa}(t,z,z'))\ne 0 \;]
$$
where $z' = (z'_{k})_{\kappa_k = 1}$. 

Suppose we have an initial incompatibility 
in variables $(v, a', b')$ where  $v \supset (u,t, a, b)$ and  $(a', b')$ are disjoint from $v$,  
with monoid part
$
S \cdot \det({\rm B}_{{\bm \kappa}}(t,z,z'))^{2e}
$, degree in $w$ bounded by $\delta_w$ for some subset of variables $w \subset v$ 
disjoint from $(t, a, b)$, 
degree in $t_j$ bounded by $\delta_t$, 
degree in $(a_k, b_k)$ bounded by $\delta_z$ 
and degree in 
$(a'_k, b'_k)$ bounded by $\delta_{z'}$.
Then 
the final incompatibility has monoid part
$$
S^{2^{2s({\bm \kappa})}}
\cdot
\prod_{1 \le j < j' \le \# {\bm \mu}}
(t_{j'} - t_{j})
^{ 2^{2s({\bm \kappa})+1} e} 
\cdot
\prod_{1 \le k \le \# {\bm \nu}}b_k 
^{2^{2s({\bm \kappa})+1}(2{\# {\bm \mu} + 1)e}}
\cdot
$$
$$
\cdot
\prod_{1 \le k < k' \le \# {\bm \nu}} 
{\rm R}(
z_{k},z_{k'})
^{2^{2s({\bm \kappa})+1}e}
\cdot
\prod_{1 \le k \le \# {\bm \nu}, \atop \kappa_k = 1}
(Q_{\re}^2(z_k) + Q_{\im}^2(z_k))^{2e'_k}
$$
with $e'_k \le 2^{2s({\bm \kappa}) - 2}(2e+1)$,
degree in $w$ bounded by 
$$
2^{2s({\bm \kappa})}\Big(\delta_w  + (2s({\bm \kappa})(3e + \delta_{z'}) + q + 2p +6)
\max\{\deg_w P, \deg_w Q \}\Big),
$$ 
degree in $t_j$ bounded by 
$
2^{2s({\bm \kappa})}(\delta_t + q + 2p - 2)
$ and
degree in $(a_k, b_k)$ bounded by $2^{2s({\bm \kappa})}(\delta_z + (6 + 2(3e + \delta_{z'}))q + 2p - 2)$. 
\end{theorem}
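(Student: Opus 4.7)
{Proof proposal.}
The plan is to reverse-engineer the decomposition argument used in the proof of Theorem \ref{hermite}: starting from the initial incompatibility that assumes the diagonalization ${\rm Her}(P;Q) \equiv {\rm B}_{{\bm\kappa}}(t,z,z')\cdot{\rm Di}_{Q}^{{\bm \mu},{\bm \nu},{\bm \kappa}}(t)\cdot{\rm B}_{{\bm\kappa}}(t,z,z')^{\rm t}$ together with $\det({\rm B}_{{\bm\kappa}}(t,z,z'))\ne 0$, I will introduce one auxiliary complex variable $z'_k$ for each $k$ with $\kappa_k=1$ constrained by ${z'_k}^2 \equiv 2Q(z_k)$, verify that the factorization ${\rm Fact}(P)^{{\bm\mu},{\bm\nu}}(t,z)$ together with these constraints forces both the matrix identity and the invertibility of ${\rm B}_{{\bm\kappa}}$, and then eliminate each $z'_k$ using Lemma \ref{lemma_square_root_complex_ne}.

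First I would check that the claimed matrix identity is a polynomial identity in $(t,z,z')$ once we impose ${z'_k}^2 \equiv 2Q(z_k)$ and $P(y)\equiv {\rm F}^{{\bm\mu},{\bm\nu}}(t,z,y)$. Entry by entry, Remark \ref{rem2:form_herm_matrix} gives ${\rm Her}(P;Q)_{j_1,j_2} = \sum_h D_h\,{\rm A}_{p,h+j_1+j_2-2}(C_0,\dots,C_{p-1})$, while the right hand side expands to $\sum_h D_h\,{\rm N}_{h+j_1+j_2-2}^{{\bm\mu},{\bm\nu}}(t,z)$ after using the identities ${a'_k}^2+{b'_k}^2\equiv 2(Q_{\re}(z_k) + iQ_{\im}(z_k))_{\re}$ coming from ${z'_k}^2 \equiv 2Q(z_k)$ and standard power-sum formulas. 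The equivalence then follows from $P\equiv {\rm F}^{{\bm\mu},{\bm\nu}}$ by Remark \ref{rem2:form_herm_matrix} combined with Lemmas \ref{lem:comb_lin_zero_zero} and \ref{lemma_transit_equiv_matric}. For the non-vanishing of $\det({\rm B}_{{\bm\kappa}})$, I use the factorisation $\det({\rm B}_{{\bm\kappa}}(t,z,z')) = \det({\rm V}(t,z))\cdot\det({\rm Sq}_{{\bm\kappa}}(z'))$: Lemma \ref{lemma:detarminant_sim_vandermonde} writes $\det({\rm V}(t,z))$ as the product of the pairwise differences $t_{j'}-t_j$, the sums of squares $(a_k-t_j)^2+b_k^2$ (nonzero since $b_k\ne 0$), the imaginary parts $b_k$, and the resultants ${\rm R}(z_k,z_{k'})$, all of which are nonzero by ${\rm Fact}(P)^{{\bm\mu},{\bm\nu}}(t,z)$; while $\det({\rm Sq}_{{\bm\kappa}}(z')) = \prod_{\kappa_k=1}({a'_k}^2+{b'_k}^2)$ is nonzero because ${z'_k}^2\equiv 2Q(z_k)$ combined with $\inv(Q(z_k))=1$.

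Putting these two verifications together via Lemma \ref{lemma_basic_sign_rule_1}\,(\ref{lemma_basic_sign_rule:5}) and Lemma \ref{lem_prod_le_g_then_fact_le}-type manipulations, I transform the initial incompatibility with monoid part $S\cdot\det({\rm B}_{{\bm\kappa}})^{2e}$ into an incompatibility in $\K[v,a',b']$ of $({\rm Fact}(P)^{{\bm\mu},{\bm\nu}}(t,z), \bigwedge\kappa_k\inv(Q(z_k))=\kappa_k, \bigwedge_{\kappa_k=1}{z'_k}^2\equiv 2Q(z_k), \cH)$, whose monoid part is $S \cdot \prod_{j<j'}(t_{j'}-t_j)^{2e}\cdot\prod_k b_k^{2(2\#{\bm\mu}+1)e}\cdot \prod_{k<k'}{\rm R}(z_k,z_{k'})^{2e}\cdot\prod_{\kappa_k=1}({a'_k}^2+{b'_k}^2)^{2e}$. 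Then I eliminate the $z'_k$ one by one, applying Lemma \ref{lemma_square_root_complex_ne} (after absorbing the scalar $2$) to the hypothesis $Q(z_k)\ne 0\vdash\exists z'_k[z'_k\ne 0, {z'_k}^2=2Q(z_k)]$. Each elimination replaces a factor $({a'_k}^2+{b'_k}^2)^{2e'}$ by $(Q_{\re}^2(z_k)+Q_{\im}^2(z_k))^{2(2e'+1)}$ and raises the rest of the monoid part to the fourth power; iterating $s({\bm\kappa})$ times yields the announced exponent $2^{2s({\bm\kappa})}$ on $S$ and on the various ${\rm Fact}$-factors, and the geometric exponent $e'_k \le 2^{2s({\bm\kappa})-2}(2e+1)$ on $Q_{\re}^2(z_k)+Q_{\im}^2(z_k)$.

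The main obstacle will be the degree bookkeeping in the last step: each application of Lemma \ref{lemma_square_root_complex_ne} multiplies the degree in $w$ by $4$ and adds a term proportional to $\max\{\deg_w Q_{\re}(z_k),\deg_w Q_{\im}(z_k)\}\le q\deg_w Q + (2p-2)\deg_w z_k$ scaled by the current $e$ and $\delta_{z'}$, so I must check that, after $s({\bm\kappa})$ iterations, the accumulated degree in $w$ indeed fits inside $2^{2s({\bm\kappa})}\big(\delta_w + (2s({\bm\kappa})(3e+\delta_{z'}) + q + 2p + 6)\max\{\deg_w P, \deg_w Q\}\big)$, and likewise for the degrees in $t_j$ and $(a_k,b_k)$. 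The bound for the $(a_k,b_k)$-degree is the delicate one, because the $Q$-degree contribution $q$ is multiplied by $(6+2(3e+\delta_{z'}))$, which is exactly the output of Lemma \ref{lemma_square_root_complex_ne} applied with the running exponent; a careful induction on the elimination order, analogous to the one done in the proof of Theorem \ref{every_pol_has_a_compl_root}, should close the estimate.
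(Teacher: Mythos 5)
Your proposal is correct and follows essentially the same route as the paper's proof: factor $\det({\rm B}_{{\bm\kappa}})$ via Lemma \ref{lemma:detarminant_sim_vandermonde} and reduce to the factors of ${\rm Fact}(P)^{{\bm\mu},{\bm\nu}}$, verify the matrix identity entry by entry from Remark \ref{rem2:form_herm_matrix} modulo the relations ${z'_k}^2 = 2Q(z_k)$ (and $Q(z_k)=0$ when $\kappa_k=0$) using Lemma \ref{lem:comb_lin_zero_zero}, then eliminate each $z'_k$ with Lemma \ref{lemma_square_root_complex_ne} and track the resulting exponent growth by induction on $s({\bm\kappa})$. The degree bookkeeping you flag as the delicate point is exactly the induction the paper carries out.
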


\begin{proof}{Proof.}
We apply to the initial incompatibility the weak inference
$$
{\rm Fact}(P)^{{\bm \mu},{\bm \nu}}(t,z), 
\
\bigwedge_{1 \le j \le \# {\bm \mu}, \atop 1 \le k \le \# {\bm \nu}} (a_k - t_j)^2 + b_k^2 \ne 0, 
\ \bigwedge_{1 \le k \le \# {\bm \nu}, \atop \kappa_k = 1} z'_k \ne 0 
\ \ \ \vdash
\ \ \ \det({\rm B}_{{\bm \kappa}}(t,z, z')) \ne 0. 
$$
By Lemma \ref{lemma_basic_sign_rule_1} (item \ref{lemma_basic_sign_rule:5}) according to 
Lemma \ref{lemma:detarminant_sim_vandermonde}, we obtain an incompatibility with monoid part
$$
S
\cdot
\Big( \prod_{1 \le j < j' \le \# {\bm \mu}}(t_{j'} - t_{j})
\cdot
\prod_{1 \le j \le \# {\bm \mu}, \atop 1 \le k \le \# {\bm \nu}}((a_k-t_j)^2 + b_k^2 )
\cdot
\prod_{1 \le k \le \# {\bm \nu}} b_k 
\cdot
\prod_{1 \le k < k' \le {\# {\bm \nu}}} 
{\rm R}(
z_{k},z_{k'})
\cdot
\prod_{1 \le k \le {\# {\bm \nu}}, \atop \kappa_k = 1} ({a'_k}^2 + {b'_k}^2 )\Big)^{2e}
$$
and the same degree bounds. 

Then we successively apply for $1 \le j \le \# {\bm \mu}$ and $1 \le k \le \# {\bm \nu}$
the weak inferences
$$
\begin{array}{rcl}
(a_k - t_j)^2 + b_k^2 > 0 &  \ \, \vdash \, \  & (a_k - t_j)^2 + b_k^2 \ne 0, \\[3mm]
(a_k - t_j)^2  \ge 0, \ b_k^2 > 0 & \vdash & (a_k - t_j)^2 + b_k^2 > 0, \\[3mm]
& \vdash & (a_k - t_j)^2  \ge 0, \\[3mm]
b_k \ne 0 & \vdash & b_k^2  > 0.
\end{array}
$$
By Lemmas \ref{lemma_basic_sign_rule_1} (items \ref{lemma_basic_sign_rule:1.5},
\ref{lemma_basic_sign_rule:2} and \ref{lemma_basic_sign_rule:3}) and 
\ref{lemma_sum_of_pos_is_pos}, we obtain an incompatibility with monoid part
$$
S\cdot
\prod_{1 \le j < j' \le \# {\bm \mu}}(t_{j'} - t_{j})
^{2e} 
\cdot
\prod_{1 \le k \le \# {\bm \nu}}b_k 
^{2(2\# {\bm \mu} + 1)e}
\cdot
\prod_{1 \le k < k' \le \# {\bm \nu}}
{\rm R}(
z_{k},z_{k'})
^{2e}
\cdot
\prod_{1 \le k \le \# {\bm \nu}, \atop \kappa_k = 1} ({a'_k}^2 + {b'_k}^2 )
^{2e}
$$
and the same degree bounds.

For $1 \le j_1, j_2 \le p$, 
by Remark 
\ref{rem2:form_herm_matrix}, 
we
have 
\begin{eqnarray*}
&&{\rm Her}(P;Q)_{j_1, j_2} 
-
({\rm B}_{{\bm \kappa}}(t,z,z')\cdot
{\rm Di}_{Q}^{{\bm \mu},{\bm \nu}, {\bm \kappa}}(t)\cdot
{\rm B}_{{\bm \kappa}}(t,z,z')^{\rm t})_{j_1, j_2}
=\\
&=&
 \sum_{0 \le h \le q} 
D_h \cdot \Big( {\rm A}_{p, h + j_1 + j_2 - 2}(C_0, \dots, C_{p-1}) - 
{\rm A}_{p, h + j_1 + j_2 -2}({\rm F}_{0}^{{\bm \mu},{\bm \nu}}(t, z), \dots, 
{\rm F}_{p-1}^{{\bm \mu},{\bm \nu}}(t, z))
\Big)
+\\
&+&
\sum_{1 \le k \le {\# {\bm \nu}}, \atop \kappa_k = 0}2\nu_k
\Big(Q(z_k)_{\re}(z_k^{j_1 + j_2 - 2})_{\re} - Q(z_k)_{\im}(z_k^{j_1 + j_2 - 2})_{\im}\Big)  
+\\
&+& 
\sum_{1 \le k \le {\# {\bm \nu}}, \atop \kappa_k = 1}\nu_k
\Big(
\big(2Q(z_k)_{\re} - ({a'_k}^2 - {b'_k}^2) \big)\cdot(z_k^{j_1 + j_2 - 2})_{\re}
- \big(2Q(z_k)_{\im} - 2a'_kb'_k\big)\cdot(z_k^{j_1 + j_2 - 2})_{\im} 
\Big).
\end{eqnarray*}
Therefore, we apply the weak inference
$$
{\rm Fact}(P)^{{\bm \mu},{\bm \nu}}(t, z),  \  
\bigwedge_{1 \le k \le {\# {\bm \nu}}, \atop \kappa_k = 0} 
Q(z_k) = 0, \ 
\bigwedge_{1 \le k \le {\# {\bm \nu}}, \atop \kappa_k = 1} 
{z'_k}^2 = 2Q(z_k)
\ \ \ \vdash $$
$$
\vdash
\ \ \
{\rm Her}(P; Q) \equiv
{\rm B}_{{\bm \kappa}}(t,z,z')\cdot
{\rm Di}_{Q}^{{\bm \mu},{\bm \nu},{\bm \kappa}}(t)\cdot
{\rm B}_{{\bm \kappa}}(t,z,z')^{\rm t}. 
$$
By Lemma \ref{lem:comb_lin_zero_zero}, after some analysis, 
we obtain an incompatibility with the same monoid part, 
degree in $w$ bounded by $\delta_w  + \deg_w Q + (q + 2p - 2)\deg_w P$, 
degree in $t_j$ bounded by $\delta_t + q + 2p - 2$,
degree in $(a_k, b_k)$ bounded by $\delta_z + q + 2p - 2$
and degree in $(a'_k, b'_k)$ bounded by $\delta_{z'}$.

Suppose that $\{k \ | \ 1 \le k  \le {\# {\bm \nu}}, \kappa_k = 1 \} = \{k_1, \dots, k_{s({\bm \kappa})}\}$.
Finally we apply for $1 \le s \le s({\bm \kappa})$ the weak inference
$$
Q(z_{k_s}) \ne 0 \ \ \
\vdash 
\ \ \  \exists z'_{k_s} \ [\; z'_{k_s} \ne 0, \ {z'_{k_s}}^2 = 2Q(z_{k_s})\;]. 
$$
Using Lemma \ref{lemma_square_root_complex_ne},
it is easy to prove by induction on $s$ that, for $1 \le s \le s({\bm \kappa})$, after the 
application of the weak inference corresponding to index $s$, we obtain
an incompatibility with monoid part 
$$
S^{2^{2s}}
\cdot
\prod_{1 \le j < j' \le \# {\bm \mu}}
(t_{j'} - t_{j})
^{ 2^{2s+1} e} 
\cdot
\prod_{1 \le k \le \# {\bm \nu}}b_k 
^{2^{2s+1}(2\# {\bm \mu} + 1)e}
\cdot $$
$$ \cdot
\prod_{1 \le k < k' \le \# {\bm \nu}} 
{\rm R}(
z_{k},z_{k'})
^{2^{2s+1}e}
\cdot
\prod_{1 \le i \le s} 
(Q_{\re}^2(z_{k_i}) + Q_{\im}^2(z_{k_i}))^{2^{2s}e + 2^{2s-2i+1}}
\cdot
\prod_{s+1 \le i \le s({\bm \kappa})} ({a'_{k_i}}^2 + {b'_{k_i}}^2)^{2^{2s+1}e}
,
$$
degree in $w$ bounded by 
$2^{2s}(\delta_w  + \deg_w Q  + (q + 2p - 2)\deg_w P) 
+ (\frac{20}3(2^{2s}-1) + s2^{2s+1}(3e + \delta_{z'}))\deg_w Q$,
degree in $t_j$ bounded by $2^{2s}(\delta_t + q + 2p - 2)$, 
degree in $(a_k, b_k)$ bounded by $2^{2s}(\delta_z + q + 2p - 2)$, 
degree in $(a_{k_i}, b_{k_i})$ bounded by 
$2^{2s}(\delta_z + q + 2p - 2) + 2^{2(s-i)}(20 + 2^{2i+1}(3e + \delta_{z'}))q$ 
for $1 \le i \le s$, 
degree in $(a_{k_i}, b_{k_i})$ bounded by 
$2^{2s}(\delta_z + q + 2p - 2)$
for $s+1 \le i \le s({\bm \kappa})$ and  
degree in $(a'_{k_i}, b'_{k_i})$ bounded by 
$2^{2s}\delta_{z'}$
for $s+1 \le i \le s({\bm \kappa})$. 
Therefore, the incompatibility we obtain after the application of the $s({\bm \kappa})$ weak inferences
serves as the final incompatibility. 
\end{proof}

\subsection{Signature of Hermite's 
quadratic form and signs of principal minors}\label{subsecHerm_bez}

The preceeding method to compute the signature of the Hermite's quadratic form is
based on the factorization of $P$ over a real closed field; therefore, it involves algebraic
numbers. 
We explain now
another way to compute this signature using only operations in the ring of coefficients of $P$ and $Q$,
through the principal minors of the Hermite's matrix.
Most of these results are classical \cite{Gan,BPRbook} but we need them under precise algebraic identity form.
   
\begin{notation}
\label{notation:testcoeff}
\begin{itemize}
\item Let $P, Q
\in \D[y]$ with $\deg P
= p \ge 1$ and $P$ monic.  
For $0 \le j \le p-1$, we denote by
${\rm HMi}_j(P;Q)$  the $(p-j)$-th principal minor of ${\rm Her}(P;Q)$ and by   
${\rm HMi}(P;Q)$ the list $ [{\rm HMi}_{0}(P;Q), \dots, {\rm HMi}_{p-1}(P;Q)]$ in 
$\D$. We additionally define ${\rm HMi}_{p}(P;Q) = 1$. 

\item Given 
a sign condition
$\tau \in \{-1, 0, 1\}^{\{0,\dots,{p-1}\}}$  we 
denote by $d(\tau)$ the strictly
decreasing sequence  $(d_0,\ldots,d_s)$ of natural numbers  
defined by $d_0=p$ and 
$\{d_1, \dots, d_s\} = \{ j \ | \ 0 \le j \le p-1, \tau(j) \ne 0\}$.  
\item For $k \in \N, \varepsilon_k = (-1)^{k(k-1)/2}$.

\end{itemize}

\end{notation}

\begin{theorem}[Hermite's Theory (2)]
\label{bezoutiansignature}
Let $P, Q
\in \K[y]$ with $\deg P
 = p \ge 1$, $P$ monic, $\tau \in \{-1, 0, 1\}^{\{0, \dots, p-1\}}$  be
the  sign condition 
defined by 
$\tau(i)={\rm sign}({\rm HMi}_i(P; Q))$
and $d(\tau) = (d_0, \dots, d_{s})$.
Then 
\begin{eqnarray*}
 {\rm Rk}({\rm Her}(P;Q))&=& p - d_s, \\
 {\rm Si}({\rm Her}(P;Q))&=&
\sum_{1 \le i \le s, \atop d_{i-1}-d_i \, \hbox{\scriptsize {odd}}} 
\varepsilon_{d_{i-1}-d_{i}} 
\tau(d_{i-1})\tau(d_i). 
\end{eqnarray*}
\end{theorem}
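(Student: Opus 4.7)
The plan is to diagonalize ${\rm Her}(P;Q)$ by congruence via symmetric Gauss reduction and then read off the rank and signature from the resulting block-diagonal form, invoking Sylvester's law of inertia. The key structural fact is that ${\rm Her}(P;Q)$ is a Hankel matrix: its $(j_1,j_2)$-entry ${\rm Tra}(Q(y) \cdot y^{j_1+j_2-2})$ depends only on $j_1+j_2$. This Hankel property is what forces the block structure from which the signature formula emerges.

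I would first treat the generic case in which every ${\rm HMi}_j$ is nonzero, so $d(\tau) = (p, p-1, \dots, 0)$ and every gap $d_{i-1}-d_i$ equals one with $\varepsilon_1 = 1$. Symmetric Gauss reduction then yields ${\rm Her}(P;Q) = L \cdot D \cdot L^{\mathrm t}$ with $L$ lower unitriangular and $D$ diagonal, whose entries are the pivots ${\rm HMi}_j/{\rm HMi}_{j+1}$ for $0 \le j \le p-1$ (using the convention ${\rm HMi}_p = 1$). Sylvester's inertia law then gives ${\rm Rk}({\rm Her}(P;Q)) = p = p - d_s$ and $\Si({\rm Her}(P;Q)) = \sum_{j=0}^{p-1} \tau(j)\tau(j+1)$, which matches the stated formula. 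This is Jacobi's classical theorem.

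Next I would treat the general case where some principal minors vanish. Suppose ${\rm HMi}_{d_{i-1}}$ and ${\rm HMi}_{d_i}$ are consecutive nonvanishing principal minors separated by a gap $h := d_{i-1}-d_i \ge 2$, with ${\rm HMi}_{d_i+1} = \dots = {\rm HMi}_{d_{i-1}-1} = 0$. The Hankel structure, combined with these vanishing conditions, forces the corresponding $h \times h$ block in the reduction to be congruent (via further unitriangular operations) to $c \cdot J_h$, where $J_h$ is the reversal permutation matrix and $c$ is a scalar. The determinant identity $c^h \cdot \varepsilon_h = \det(c J_h) = {\rm HMi}_{d_i}/{\rm HMi}_{d_{i-1}}$ then pins down $\sign(c) = \varepsilon_h \tau(d_{i-1})\tau(d_i)$ when $h$ is odd. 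Since $J_h$ has signature $1$ for $h$ odd and $0$ for $h$ even, the contribution of this block to the signature is $\varepsilon_h \tau(d_{i-1})\tau(d_i)$ when $h$ is odd and zero when $h$ is even; its rank is $h$ in both cases. Summing over all blocks yields ${\rm Rk}({\rm Her}(P;Q)) = \sum_{i=1}^s (d_{i-1}-d_i) = p-d_s$ and the stated signature formula.

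The main technical obstacle is justifying the reduction of each $h \times h$ block to the anti-diagonal form $c \cdot J_h$; this is where the Hankel structure plays its decisive role, since the vanishing pattern ${\rm HMi}_{d_i+1}=\dots={\rm HMi}_{d_{i-1}-1}=0$ imposes enough constraints on the block entries to force this special form via Schur complements. This is a classical result in the theory of Hankel forms going back to Frobenius; once it is in place, the remaining steps are routine applications of Sylvester's inertia law combined with the determinant identity above.
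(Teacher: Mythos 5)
Your route --- treat ${\rm Her}(P;Q)$ purely as a Hankel matrix, reduce it by Schur complements and unitriangular congruences to blocks $c\cdot J_h$, and invoke Frobenius --- is genuinely different from the paper's proof, which changes basis to the subresultant polynomials to obtain a block Hankel-triangular form (Lemma \ref{lem:fact_herm_mat_through_bez}), diagonalizes each block explicitly (Lemma \ref{lem:signat_block_tr_han}), and then uses the Structure Theorem of Subresultants to convert the leading coefficients $T_{d_{i-1}-1}$ into the principal subresultant coefficients ${\rm sR}_{d_i}$, which is where the signs $\varepsilon_{d_{i-1}-d_i}$ enter. But your argument has a genuine gap: the statement is \emph{false} for general Hankel matrices, so it cannot follow from ``the Hankel structure combined with the vanishing conditions'' alone. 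Take the $3\times 3$ Hankel matrix built from $s_0=1$, $s_1=s_2=s_3=0$, $s_4=1$, namely
$$
\begin{pmatrix} 1 & 0 & 0 \cr 0 & 0 & 0 \cr 0 & 0 & 1 \end{pmatrix}.
$$
Its leading principal minors are $D_1=1$, $D_2=D_3=0$, so $d(\tau)=(3,2)$ and the formulas of the theorem predict rank $1$ and signature $1$, whereas the matrix has rank $2$ and signature $2$.

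The failure occurs in the trailing stretch after the last nonvanishing leading principal minor: for a general Hankel matrix the corresponding Schur-complement block need not vanish, so the rank formula $p-d_s$ is unavailable, and Frobenius's signature rule itself carries the hypothesis $D_r\neq 0$ with $r$ the rank --- exactly the point being glossed over. (Your interior-block analysis, including $\sign(c)=\varepsilon_h\,\tau(d_{i-1})\tau(d_i)$ and the signature of $J_h$ being $1$ or $0$ according to the parity of $h$, is consistent and the determinant bookkeeping is right; the demonstrably missing piece is the trailing block, and the anti-triangular form of the interior blocks also deserves a proof rather than a bare citation.) What rescues ${\rm Her}(P;Q)$ is structure invisible in the finite Hankel matrix itself: its principal minors are the principal subresultant coefficients of $P$ and $R={\rm Rem}(P'Q,P)$ (Proposition \ref{prop:conn_herm_subr}), and the Structure Theorem of Subresultants both forces everything below ${\rm sRes}_{d_s}$ to vanish identically (so the trailing block really is the zero block) and supplies the relation ${\rm sR}_{d_i}=\varepsilon_{d_{i-1}-d_i}\,T_{d_{i-1}-1}^{d_{i-1}-d_i}/{\rm sR}_{d_{i-1}}^{d_{i-1}-d_i-1}$ from which the stated signs come. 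Equivalently, one needs that ${\rm Her}(P;Q)$ is the truncation of an infinite Hankel matrix of finite rank generated by the rational function $P'Q/P$; some such input must be imported, and an unconditional appeal to ``Frobenius's classical theorem'' does not supply it.
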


As in the previous subsection, even though this result is well known, we give here a detailed proof of Theorem \ref{bezoutiansignature}
which we will follow later on to 
obtain a weak inference counterpart of it. 
First, we introduce some more notations and definitions, in order to make a link between Hermite's matrix and subresultants.

\begin{definition}[Subresultants]\label{def:subres} Let $P
, R \in \D[y]$ with $\deg P
 = p \ge 1$ 
and
$\deg R
 = r < p$. 

\begin{itemize}
\item For $0 \le
  j \le r$, the  Sylvester-Habicht matrix ${\rm SyHa}_j (P, R) \in \D^{(p + r - 2j) \times 
(p + r - j)}$ is the matrix 
  whose rows are the polynomials
  \[ y^{r - j - 1}\cdot  P, \ldots, P, R, \ldots, y^{p - j - 1}\cdot R, \]
  expressed in the monomial basis $y^{p+r-j-1},\ldots,y,1$.
\item For $0 \le j \le r$, the $j$-th subresultant polynomial of $P,R$, ${\rm sResP}_j (P, R)
\in \D[y]$ 
  is the polynomial determinant of ${\rm SyHa}_j (P, R)$, i.e.
  $${\rm sResP}_j (P, R)=\sum_{0 \le i \le j} \det({\rm SyHa}_{j,i}(P, R)) \cdot y^i$$
  where ${\rm SyHa}_{j,i}(P, R)\in \D^{(p + r - 2j) \times 
(p + r - 2j)}$ is the matrix obtained by taking the
  $p+r-2j-1$ first
  columns 
  and the $(p+r-j-i)$-th column of  ${\rm SyHa}_j (P, R)$.

 By convention, we
  extend this definition 
  with
  \begin{eqnarray*}
    {\rm sResP}_p (P, R) & = & P,\\
    {\rm sResP}_{p - 1} (P, R) & = & R,\\
    {\rm sResP}_j (P, R) & = & 0 \qquad \hbox{ for } r < j < p - 1.
  \end{eqnarray*}

\item For $0 \le j \le r$, the $j$-th signed subresultant coefficient of $P$ and $R$, ${\rm sRes}_j (P,
R)\in \D$  is the coefficient of $y^j$ in
${\rm sResP}_j (P, R)$. 

By convention, we extend this definition with  
\begin{eqnarray*}
    {\rm sRes}_p(P, R) & = & 1,\\
    {\rm sRes}_j(P, R) & = & 0 \qquad \hbox{ for } r < j \le p - 1.
  \end{eqnarray*}

\item For $0 \le j \le p$, ${\rm sResP}_j (P, R)$ is said to be defective 
if $\deg {\rm sResP}_j (P, R) < j$ or, equivalently, if ${\rm sRes}_j (P, R)
= 0$.

\item For $0 \le j \le r$, the $j$-th subresultant cofactors of $P, R$,  
${\rm sResU}_j (P, R), {\rm sResV}_j (P, R)
\in \D[y]$ 
are  the determinants of the matrices obtained by taking the first $p + r - 2j - 1$ first columns of 
${\rm SyHa}_j (P, R)$ and a last column equal to $(y^{r-j-1}, \dots, 1, 0, \dots, 0)$
and equal to $(0, \dots, 0, 1, \dots, y^{p-j-1})$, respectively.

By convention we extend these definitions with
  $$\begin{array}{rclrcll}
    {\rm sResU}_p (P, R) & = & 1,   & {\rm sResV}_p (P, R) & = & 0, \\
    {\rm sResU}_{p - 1} (P, R) & = & 0,   & {\rm sResV}_{p-1} (P, R) & = & 1,\\
    {\rm sResU}_j (P, R) & = & 0, &  {\rm sResV}_{j} (P, R) & = & 0 \, \qquad \hbox{ for } r < j < p - 1,\\
    {\rm sResU}_{-1} (P, R) & = & -{\rm sRes}_{0} (P, R)\cdot R,   & {\rm sResV}_{-1} (P, R) 
& = & {\rm sRes}_{0} (P, R) \cdot P. \\
  \end{array}$$

\end{itemize}

\begin{remark}
When $P
$ is monic, 
the definitions of subresultant polynomials, signed subresultant coefficients and subresultant
cofactors, 
are independent of the degree $r < p$ of $R
$ (see for instance \cite{GLRR}). 
Therefore,
we can 
artificially 
consider the degree of $R
$ as $p-1$, 
specialize
its first $p-r - 1$ coefficients as $0$ and obtain the same result. 
\end{remark}

\end{definition}

The connection between the subresultant coefficients and the Hermite's matrix is the following.

\begin{proposition}\label{prop:conn_herm_subr} Let $P
, Q
\in \D[y]$ 
with $\deg P 
= p \ge 1$, $P
$ monic and let 
$R
$ be the 
remainder of $P'
\cdot Q
$ in the division by $P
$. Then for $0 \le j \le p$
$${\rm HMi}_{j}(P;Q)={\rm sRes}_{j}(P,R).$$ 
\end{proposition}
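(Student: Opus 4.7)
The plan is to prove this identity via a diagonalization/Cauchy-Binet argument, following the standard reduction to the separable case. Both sides of the claimed equality are polynomial expressions in the coefficients of $P$ and $Q$: the coefficients of $R$ are polynomial in those of $P, Q$ (from the Euclidean division algorithm, since $P$ is monic), the entries of ${\rm Her}(P;Q)$ are polynomial in the coefficients of $P, Q$ (Remark \ref{rem2:form_herm_matrix}), and the signed subresultant ${\rm sRes}_j(P, R)$ is polynomial in the coefficients of $P, R$. Consequently it suffices to verify the identity when $\D$ is replaced by an algebraic closure of $\F$ and $P$ is separable, with distinct roots $\theta_1, \dots, \theta_p$; the general case then follows by Zariski density (or equivalently by observing that an identity of polynomials with integer coefficients, verified on a dense subset, is a formal identity).

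Under the separability assumption, the Chinese Remainder Theorem gives $\F[y]/P \cong \prod_{k=1}^p \F(\theta_k)$, so that ${\rm Tra}(A) = \sum_{k=1}^p A(\theta_k)$ for every $A \in \F[y]$. In particular, for $1 \le i_1, i_2 \le p$,
$${\rm Her}(P;Q)_{i_1, i_2} = \sum_{k=1}^p Q(\theta_k)\, \theta_k^{i_1 + i_2 - 2},$$
which is the matrix factorization ${\rm Her}(P;Q) = V \cdot D_Q \cdot V^{\mathrm t}$ with $V = (\theta_k^{i-1})_{1 \le i,k \le p}$ the Vandermonde matrix of the roots and $D_Q = {\rm diag}(Q(\theta_1), \dots, Q(\theta_p))$. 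The Cauchy-Binet formula applied to the leading $(p-j) \times (p-j)$ submatrix then yields
$${\rm HMi}_j(P;Q) = \sum_{I \subset \{1, \dots, p\},\, |I| = p-j} V_I^2 \prod_{k \in I} Q(\theta_k),$$
where $V_I = \det\big((\theta_k^{i-1})_{1 \le i \le p-j,\, k \in I}\big)$ is the sub-Vandermonde determinant on the columns indexed by $I$.

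Next I would invoke the classical Poisson-type root formula for signed subresultants (see for instance \cite{BPRbook}), which expresses
$${\rm sRes}_j(P, R) = \sum_{I \subset \{1, \dots, p\},\, |I| = p-j} V_I^2 \prod_{k \in I} \frac{R(\theta_k)}{\prod_{\ell \in \{1, \dots, p\} \setminus I}(\theta_k - \theta_\ell)},$$
with the sign conventions tracked by the $\varepsilon_k$ of Notation \ref{notation:testcoeff}. Since $R \equiv P'Q \pmod{P}$, we have $R(\theta_k) = P'(\theta_k) Q(\theta_k) = Q(\theta_k) \prod_{\ell \ne k}(\theta_k - \theta_\ell)$. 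Substituting this identity, the ratio inside the product over $k \in I$ collapses to $Q(\theta_k) \prod_{\ell \in I,\, \ell \ne k}(\theta_k - \theta_\ell)$, and the double product $\prod_{k \in I} \prod_{\ell \in I,\, \ell \ne k}(\theta_k - \theta_\ell)$ equals $V_I^2$ up to a sign $(-1)^{\binom{|I|}{2}}$ coming from the antisymmetry of the Vandermonde. Matching this $I$-th summand with the corresponding summand in the Cauchy-Binet expansion of ${\rm HMi}_j(P;Q)$ completes the proof.

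The main obstacle I expect is bookkeeping the signs: making sure that the factor $(-1)^{\binom{p-j}{2}}$ picked up by the double-product calculation is exactly the global sign built into the \emph{signed} subresultant (as opposed to the bare resultant-like minor) via the $\varepsilon_k$ convention of Notation \ref{notation:testcoeff}. Once this combinatorial sign has been reconciled --- which is precisely what distinguishes ${\rm sRes}_j$ from alternative conventions --- the $I$-by-$I$ matching establishes the equality term by term and the proposition follows.
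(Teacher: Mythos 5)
Your overall architecture --- reduce to the separable case by density, diagonalize ${\rm Her}(P;Q)$ as $V\cdot{\rm diag}(Q(\theta_1),\dots,Q(\theta_p))\cdot V^{\rm t}$ via the trace formula, apply Cauchy--Binet to the leading principal minors, and compare with a root formula for ${\rm sRes}_j(P,R)$ using $R(\theta_k)=P'(\theta_k)Q(\theta_k)$ --- is a legitimate classical route. It is in fact different from the paper's, which gives no argument at all and simply cites \cite[Remark 9.24 and Lemma 9.29]{BPRbookcurrent}, where the identification goes through the Hankel structure of ${\rm Her}(P;Q)$ and the power-series expansion of $R/P$ rather than through Cauchy--Binet.

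However, the root formula you invoke is not correct as stated, and this is more than a sign issue: the factor $V_I^2$ should not appear in it. The correct formula is
$${\rm sRes}_j(P,S)=\varepsilon_{p-j}\sum_{|I|=p-j}\ \prod_{k\in I}S(\theta_k)\ \Big/\prod_{k\in I,\ \ell\notin I}(\theta_k-\theta_\ell),$$
with no Vandermonde factor in the numerator. Test your version at $p=2$, $j=0$, $Q=1$: then $R=P'=2y-(t_1+t_2)$, and a direct computation of the $3\times3$ Sylvester--Habicht determinant gives ${\rm sRes}_0(P,R)=(t_1-t_2)^2={\rm HMi}_0(P;Q)$, whereas your formula yields $V^2\,R(t_1)R(t_2)=(t_1-t_2)^2\cdot\big(-(t_1-t_2)^2\big)=-(t_1-t_2)^4$. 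Consequently the final matching you describe does not close: after substituting $R(\theta_k)=Q(\theta_k)\prod_{\ell\ne k}(\theta_k-\theta_\ell)$ and using $\prod_{k,\ell\in I,\,k\ne\ell}(\theta_k-\theta_\ell)=\varepsilon_{p-j}V_I^2$, your expression collapses to $\varepsilon_{p-j}\sum_I V_I^4\prod_{k\in I}Q(\theta_k)$, which differs from the Cauchy--Binet expansion $\sum_I V_I^2\prod_{k\in I}Q(\theta_k)$ by the $I$-dependent factor $\varepsilon_{p-j}V_I^2$ in each summand --- not by a global sign that the $\varepsilon_k$ convention could absorb. With the corrected formula the same substitution produces $\varepsilon_{p-j}\cdot\varepsilon_{p-j}\sum_I V_I^2\prod_{k\in I}Q(\theta_k)={\rm HMi}_j(P;Q)$ and the proof does go through; so the fix is local, but as written the key step fails.
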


\begin{proof}{Proof.}
See \cite[Lemma 9.26 and Proposition 4.55]{BPRbook}.
\end{proof}

We now explain how to diagonalize Hermite's matrix using an alternative method. 
The first step is to transform it into a block Hankel triangular matrix, using 
subresultants.

\begin{notation} 
\begin{itemize} 
\item Given $\alpha = (\alpha_1, \dots, \alpha_p) \in \D^p$, we denote by 
${\rm HanT}_p(\alpha)\in \D^{p \times p}$ the 
Hankel triangular matrix 
defined for $1 \le i, j \le p$ by ${\rm HanT}_p(\alpha)_{ij} = 0$ if $i+j \le p$ and 
${\rm HanT}_p(\alpha)_{ij} = \alpha_{2p + 1 - i - j}$ if $i+j \ge p+1$.

\item Given $S
 = \sum_{0 \le h \le p}\alpha_hy^h \in \D[y]$, we denote by 
${\rm HanT}_p(S)\in \D^{p \times p}$ the 
Hankel triangular matrix ${\rm HanT}_p(\alpha_1, \dots, \alpha_p)$.
\end{itemize}

\end{notation}

\begin{notation}\label{not:subres_and_more} 
Let $P
, R
\in \D[y]$ 
with $\deg P
 = p \ge 1$ and $\deg R
  = r < p$.
Let 
$d=(d_0,\ldots,d_s)$ be
the sequence of degrees of the non-defective subresultant polynomials of $P$ and $R$ and 
$d_{-1} = p+1$. Note that $d_0 = p$ and $d_1 = r$.

\begin{itemize}
\item For $1 \le i \le s$, let $R_i
 = {\rm sResP}_{d_{i-1} - 1}(P,R) \in \D[y]$. 
By the Structure Theorem for Subresultants (\cite[Theorem 8.30]{BPRbook} ),
$\deg R_i
 = d_i$.

\item For $1 \le i \le s$, 
$$
\begin{array}{rcl}
{\rm sR}_{d_i}&=&{\rm sRes}_{d_{i}}(P,R) \in \D,\\
T_{d_{i-1}-1}&=&{\rm lcoeff}({\rm sResP}_{d_{i-1}-1}(P,R)) \in \D.
\end{array}
$$
We extend this definiton with 
$$
\begin{array}{rcl}
{\rm sR}_{p}&=& 1 \in \D,\\
T_{p}&=& 1 \in \D.
\end{array}
$$

\item For $1 \le i \le s$, 
\begin{eqnarray*}
 \tilde F_{d_{i}-1}
  & = &
{\rm sResU}_{d_{i-2}-1}(P,R) \cdot {\rm sResV}_{d_{i}-1}(P,R)  -
\\[3mm]
&&- {\rm sResU}_{d_{i}-1}(P,R) \cdot {\rm sResV}_{d_{i-2}-1}(P,R) \in \D[y],\\
F_{d_{i}-1}
& = &\frac{1}{{\rm sR}_{d_{i}} \cdot {\rm sR}_{d_{i-1}} \cdot  T_{d_{i-1}-1} \cdot  T_{d_{i-2}-1}}  \cdot 
\tilde F_{d_{i}-1}
 \in \F[y].
\end{eqnarray*}
As seen in the proof of \cite[Proposition 8.36]{BPRbook}, 
$\tilde F_{d_{i}-1}$ is the quotient of $T_{d_{i-1}-1} \cdot {\rm sR}_{d_{i}} \cdot {\rm sResP}_{d_{i-2}-1}(P, R)$
in the division by ${\rm sResP}_{d_{i-1}-1}(P, R)$; therefore, for 
$1 \le i \le s$, $\deg_y \tilde F_{d_{i}-1}= \deg_y F_{d_{i}-1} = d_{i-1} - d_i$,  
$
{\rm lcoeff} (\tilde F_{d_{i}-1}) = {\rm sR}_{d_{i}} \cdot T_{d_{i-2}-1}$
and 
$
{\rm lcoeff} (F_{d_{i}-1}) = \frac{1}{{\rm sR}_{d_{i-1}} \cdot T_{d_{i-1}-1}}.$

\item Let ${\rm HanB}_{P;R} \in \F^{p \times p}$ be a 
block Hankel triangular matrix 
composed by $s$ or $s+1$ blocks according to $d_s = 0$ or $d_s  > 0$.
For $1 \le i \le s$, the $i$-th block of ${\rm HanB}_{P;R} \in \F^{p \times p}$, of size $d_{i-1}-d_i$, is 
${\rm HanT}_{d_{i-1}-d_i}(F_{d_{i}-1})$ and, if $d_s >0$, there is a final $0$ block of size $d_s$.

\item Let us take now $P
$ monic and  let $Q
 \in \D[y]$. Consider $R
 \in \D[y]$ to be the remainder of $P'
 \cdot Q
 $ in the division by $P
 $.
Let ${\rm M}_{P;Q}\in \D^{p \times p}$ be the matrix of the basis ${\cal R} := $
$$\{y^{d_0-d_1-1} \cdot R_1,\ldots,R_1,\ldots, y^{d_{i-1}-d_{i}-1} \cdot R_{i},\ldots,R_{i},\ldots,
 y^{d_{s-1}-d_{s}-1} \cdot R_{s},\ldots, R_{s},y^{d_{s} -1}, \dots, 
1\}
$$
of the subspace of $\F[y]$ of polynomials 
of degree less than $p$, 
in the Horner basis of $P$,  
$${\rm Hor}(P) := \{y^{p-1} + \sum_{0 \le h \le p-2}\gamma_{h+1}y^h, \dots, 1\}.$$

\end{itemize}

\end{notation} 

In order to prove Theorem \ref{bezoutiansignature} (Hermite's Theory (2)), we also use Bezoutians, which 
we recall now. 

\begin{definition}[Bezoutian]\label{def:bezoutian} 
Let $P, R
\in \D[y]$, with $\deg P
 = p \ge 1$ and $\deg R = r < p$.
The Bezoutian of $P$ and $R$ is defined as 
$$
{\rm Bez}(P,R)=\frac{P(x)\cdot R(y)-R(x)\cdot P(y)}{x-y} \in \D[x, y].
$$
If ${\cal B}= \{b_1
,\ldots b_{p}
\}$ is a basis of $\F[y]/P
$,
${\rm Bez}(P,R)$ can be uniquely written as
$$
{\rm Bez}(P,R)=\sum_{1 \le i,j \le p}\alpha_{i,j} \cdot b_i(x)\cdot b_j(y).
$$
The Bezoutian matrix ${\rm Bez}_{\cal B}(P;R) \in \F^{p \times p}$ 
is the symmetric matrix with $(i,j)$-th entry equal to the coefficient $\alpha_{i,j}$
of $b_i(x)\cdot b_j(y)$ in ${\rm Bez}(P,R)$.
\end{definition}

\begin{lemma}\label{lem:fact_herm_mat_through_bez_0} 
Following Notation \ref{not:subres_and_more},
$$
{\rm Bez}_{{\cal R}}(P;R)= {\rm HanB}_{P;R}.
$$
\end{lemma}

\begin{proof} {Proof.} 
Since for any $S
 = \sum_{0 \le h \le p}\alpha_hy^h \in \D[y]$ we have that
$$
\frac{S(x) - S(y)}{x-y} = \sum_{1 \le i \le p} \ \sum_{p+1-i \le j \le p} \alpha_{2p+1-i-j}\cdot x^{p-i}\cdot y^{p-j},
$$
in order to prove the claim we have to prove that 
$$
{\rm Bez}(P, R) = \sum_{1 \le i \le s}\frac{F_{d_i - 1}(x) - F_{d_i - 1}(y)}{x-y}\cdot R_i(x)\cdot R_i(y).
$$
This will be done by induction on $s$, which, by the 
Structure Theorem for Subresultants ( \cite[Theorem 8.30]{BPRbook}) is  equal to the length of the remainder sequence of
$P$ and $R$. 

If $s = 0$ then $R
$ is the zero polynomial and the statement is clear.  
Now suppose that $s \ge 1$, therefore $R
$ is not the zero polynomial,
and let $S
$ be the remainder of 
$P
$ in the division by $R
$. Note that $S
$ is the zero polynomial if and only if $s = 1$. 
We also have that $R
 = R_1
 $ and, 
since 
${\rm sR}_{p} = T_{p} = 1$,  
${F}_{r-1}
$ is the quotient of $P
$ in the division by $R
$, this is to say
$$
P
 = {F}_{r-1}
 \cdot  R_1
  + S
$$
and therefore
\begin{equation}\label{aux_equation_bez}
{\rm Bez}(P, R)  =    \frac{{F}_{r-1}(x) - {F}_{r-1}(y)}{x-y} \cdot R_1(x)\cdot R_1(y) +   {\rm Bez} (R, -S).
\end{equation}

For $s = 1$  
equation (\ref{aux_equation_bez})
proves the claim. Suppose now that $s \ge 2$. 
We define $R'_2
, \dots, R'_s
$, 
${\rm sR}'_{d_1}, \dots, {\rm sR}'_{d_{s}}$, 
${T}'_{d_1}, {T}'_{d_1-1}, \dots, {T}'_{d_{s-1}-1}$
and $F'_{d_2 - 1}
, \dots,  F'_{d_s - 1}
$ as we did in Notation \ref{not:subres_and_more}, 
but this time we consider all definitions depending on the polynomials $R$ and $-S$ instead of $P$ and $R$. 
If $\beta$ is the leading coefficient of $R$, we have 
$$
\begin{array}{rcl}
R_2
 & =  & -\varepsilon_{p-r}\cdot \beta^{p-r+1}\cdot S
 ,  \\[2mm]
{\rm sR}_{d_1}& = &\varepsilon_{p-r}\cdot \beta^{p-r}, \\[2mm]
T_{d_0 - 1} &=& \beta, \\[2mm]
R'_2
 & = & -S
 .
\end{array}
$$
In addition, by
Proposition \cite[8.35]{BPRbook}, there exists $\lambda \in \D, \lambda \ne 0$,
such that 
$$
\begin{array}{rclll}
{\rm sR}_{d_i} & = &  \lambda \cdot {\rm sR}'_{d_i}&   & \hbox{ for } 2 \le i \le s, \\[2mm]
T_{d_{i-1} - 1} & = &  \lambda \cdot T'_{d_{i-1} - 1}&   & \hbox{ for } 3 \le i \le s, \\[2mm]
R_i
 & =&  \lambda \cdot R'_i
  & & \hbox{ for } 3 \le i \le s.
\end{array}
$$ 
From this we first deduce that 
$$
F_{d_2 - 1}  =   \frac{1}{{\rm sR}_{d_1}T_{d_{0} - 1}}\cdot {\rm Quot}( R_1 , R_2) =  
 \frac{1}{\varepsilon_{p-r}\cdot \beta^{p-r+1}}\cdot {\rm Quot}( R , -\varepsilon_{p-r}\cdot \beta^{p-r+1}\cdot S) = 
$$
$$
= \frac{1}{(\varepsilon_{p-r}\cdot \beta^{p-r+1})^2}\cdot {\rm Quot}( R , -S)  = \frac{1}{(\varepsilon_{p-r}\cdot \beta^{p-r+1})^2}\cdot  F'_{d_2 - 1},
$$
second, since $T_{d_{1} - 1}$ and $T'_{d_{1} - 1}$ are the leading coefficients of $R_2$ and $R'_2$ respectively,  that
$$
F_{d_3 - 1} = \frac{1}{{\rm sR}_{d_2}\cdot T_{d_{1} - 1}}\cdot {\rm Quot}( R_2 , R_3) 
=
\frac{1}{\lambda\cdot {\rm sR}'_{d_2}\cdot T'_{d_{1} - 1}}\cdot {\rm Quot}( R'_2 , \lambda \cdot R_3) =
\frac{1}{\lambda^2}\cdot F'_{d_3 - 1},
$$
and finally, that for $4 \le i \le s$, 
$$
F_{d_i - 1} = \frac{1}{{\rm sR}_{d_{i-1}}\cdot T_{d_{i-2} - 1}}\cdot {\rm Quot}( R_{i-1} , R_i) 
=
\frac{1}{\lambda^2 \cdot {\rm sR}'_{d_{i-1}}\cdot T'_{d_{i-2} - 1}}\cdot {\rm Quot}(\lambda \cdot \cdot R'_{i-1} , \lambda \cdot R'_i)  =
 \frac{1}{\lambda^2}\cdot F'_{d_i - 1}.
$$
Therefore for $2 \le i \le s$ we have that
$$\left(F_{d_i - 1}(x) - F_{d_i - 1}(y)\right)\cdot R_i(x)\cdot R_i(y)
= \left(F'_{d_i - 1}(x) - F'_{d_i - 1}(y)\right)\cdot R'_i(x)\cdot R'_i(y).$$

Finally, using equation (\ref{aux_equation_bez}) and the inductive hypothesis, 
  since the length of the remainder sequence of $R$ and $-S$ is $s-1$, 
we have that 
$$
\begin{array}{rcl}
{\rm Bez}(P, R)  & = & \displaystyle  \frac{{F}_{r-1}(x) - {F}_{r-1}(y)}{x-y}\cdot  R_1(x)\cdot R_1(y) +   {\rm Bez} (R, -S) = \\[4mm]
& = & \displaystyle \frac{{F}_{r-1}(x) - {F}_{r-1}(y)}{x-y} \cdot R_1(x)\cdot R_1(y) +
\sum_{2 \le i \le s}\frac{F'_{d_i - 1}(x) - F'_{d_i - 1}(y)}{x-y}\cdot R'_i(x)\cdot R'_i(y) = \\[4mm]
 & = &  \displaystyle \sum_{1 \le i \le s}\frac{F_{d_i - 1}(x) - F_{d_i - 1}(y)}{x-y}\cdot R_i(x)\cdot R_i(y)
\end{array}
 $$
as we wanted to prove. 
\end{proof}

\begin{lemma}\label{lem:fact_herm_mat_through_bez} Following Notation \ref{not:subres_and_more} with $R
$ the 
remainder of $P'
\cdot Q
$ in the division by $P
$,  
$${\rm Her}(P;Q)= {\rm M}_{P;Q} \cdot {\rm HanB}_{P;R} \cdot {\rm M}_{P;Q}^{\rm t}.  $$
\end{lemma}

\begin{proof}{Proof.} The claim follows from Lemma \ref{lem:fact_herm_mat_through_bez_0} and the fact that 
$$
{\rm Her}(P;Q) = {\rm Bez}_{{\rm Hor}(P)}(P;R) = {\rm M}_{P;Q} \cdot {\rm Bez}_{{\cal R}}(P;R) \cdot {\rm M}_{P;Q}^{\rm t}
$$ 
(see \cite[Proposition 9.20 and Proposition 4.55]{BPRbook}).
\end{proof}

We introduce some more definitions to transform the preceeding block Hankel form into a diagonal form.

\begin{definition}
For $p \in {\mathbb N}_*$ and a variable $c$,  
we define the diagonal matrix ${\rm Di}_p
\in {\mathbb Q}[c]^{p \times p}$ as follows: 
\begin{itemize}
\item If $p$ is odd, ${\rm Di}_p
$ has $c$ 
in the first $\frac12(p-1)$ diagonal entries, $\frac{1}{2}c$ in the
next diagonal entry and 
$-c$ in the last $\frac12(p-1)$ diagonal entries.
\item If $p$ is even, ${\rm Di}_p
$ has $c$ 
in the first $\frac12p$ diagonal entries and 
$-c$ in last $\frac12p$ diagonal entries. 
\end{itemize}
We also define for $c=(c_{1},\dots,c_{p})$ 
the matrix ${\rm E}_p
 \in {\mathbb Q}[c]^{p \times p}$  as follows:
\begin{itemize}
\item ${\rm E}_1
 = \left(\begin{array}{c}
              2 
             \end{array}\right)$,

\item ${\rm E}_2
 = \left(\begin{array}{cc}
              c_2 & 0 \cr
	      \frac12 c_1 & c_2
	      \end{array}\right)\left(\begin{array}{cc}
              1 & 1 \cr
	      1 & -1
	      \end{array}\right)$,

\item For odd $p \ge 3$,  ${\rm E}_{p}
 =$ 
$$
\left(\begin{array}{c|ccc|c}
              c_{p} &  & 0 &  & 0\cr \hline
c_{p-1} &  &   &  &  \cr
\vdots &  & c_{p}\cdot \Id  &  & 0 \cr
c_{2} &  &   &  &  \cr \hline
\frac12c_{1} &  & 0 &   & c_{p} \cr
	            \end{array}\right) 
\left(\begin{array}{c|ccc|c}
              1 &  & 0 &  & 1 \cr \hline
 &  &   &  &  \cr
0 &  & \Id  &  & 0 \cr
 &  &   &  &  \cr \hline
1 &  & 0 &   & -1 \cr
	            \end{array}\right)
\left(\begin{array}{c|ccc|c}
               c_{p}^{\frac12(p-3)} &  & 0 &  & 0\cr \hline
 &  &   &  &  \cr
0 &  & {\rm E}_{p-2}(c')
  &  & 0 \cr
 &  &   &  &  \cr \hline
0 &  & 0 &   & c_{p}^{\frac12(p-3)} \cr
	            \end{array}\right)
$$
with $c' = (c_{3}, \dots, c_{p})$.

\item For even $p \ge 4$,  ${\rm E}_{p}
 =$ 
$$
\left(\begin{array}{c|ccc|c}
              c_{p} &  & 0 &  & 0\cr \hline
c_{p-1} &  &   &  &  \cr
\vdots &  & c_{p} \cdot \Id  &  & 0 \cr
c_{2} &  &   &  &  \cr \hline
\frac12c_{1} &  & 0 &   & c_{p} \cr
	            \end{array}\right) 
\left(\begin{array}{c|ccc|c}
              1 &  & 0 &  & 1\cr \hline
 &  &   &  &  \cr
0 &  & \Id  &  & 0 \cr
 &  &   &  &  \cr \hline
1 &  & 0 &   & -1 \cr
	            \end{array}\right)
\left(\begin{array}{c|ccc|c}
              c_{p}^{\frac12(p-2)} &  & 0 &  & 0\cr \hline
 &  &   &  &  \cr
0 &  & {\rm E}_{p-2}(c')  &  & 0 \cr
 &  &   &  &  \cr \hline
0 &  & 0 &   & c_{p}^{\frac12(p-2)} \cr
	            \end{array}\right)
$$
with $c' = (c_{3}, \dots, c_{p})$.
\end{itemize}

Finally, for $S
 =  \sum_{0 \le h \le p}c_h  \cdot  y^h \in {\mathbb Q}[c_0, \dots, c_p ][y]$, we
denote by
 ${\rm E}_p(S) \in {\mathbb Q}[c_1,\ldots,c_p]^{p \times p}$ the matrix 
 ${\rm E}_p(c_1, \dots, c_p)$.

\end{definition}

\begin{lemma}\label{lem:signat_block_tr_han}
\begin{itemize}
\item For odd $p \in \N_*$ the degree of the entries of the matrix ${\rm E}_p
$ is $\frac12(p-1)$, 
$\det({\rm E}_p
) = (-1)^{\frac12(p-1)}2^{\frac12(p+1)}c_{p}^{\frac12p(p-1)}$
and
$$
{\rm HanT}_p
 ={\rm E}_p
 \cdot  {\rm Di}_p \Big(\frac12c_{p}^{2-p}\Big)  \cdot  {\rm E}_p ^{\rm t}.
$$

\item For even $p\in \N_*$ the degree of the entries of the matrix ${\rm E}_p$ is $\frac12p$, 
$\det({\rm E}_p )= (-2)^{\frac12p}c_{p}^{\frac12p^2}$
and
$$
{\rm HanT}_p ={\rm E}_p
  \cdot {\rm Di}_p  \Big(\frac12c_{p}^{1-p}\Big) \cdot  {\rm E}_p^{\rm t}.
$$

\end{itemize}
\end{lemma}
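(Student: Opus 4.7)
The proof proceeds by simultaneous induction on $p$, since the definition of ${\rm E}_p(c)$ is recursive in $p-2$. The base cases $p=1$ and $p=2$ are handled by direct calculation: for $p=1$ one has ${\rm E}_1(c) = (2)$, and $(2)\cdot (\tfrac14 c_1)\cdot (2) = c_1 = {\rm HanT}_1(c)$; for $p=2$, one expands ${\rm E}_2(c) = \bigl(\begin{smallmatrix} c_2 & c_2 \\ \tfrac12 c_1 + c_2 & \tfrac12 c_1 - c_2 \end{smallmatrix}\bigr)$ and verifies by a $2\times 2$ computation that the three claims hold (the determinant is $-2c_2^2$ and the sandwich with ${\rm Di}_2(\tfrac12 c_2^{-1})$ gives $\bigl(\begin{smallmatrix} 0 & c_2 \\ c_2 & c_1 \end{smallmatrix}\bigr) = {\rm HanT}_2(c)$).

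For the inductive step, write ${\rm E}_p(c) = A \cdot B \cdot C$, where $A$ is the lower-triangular leftmost factor (constant $c_p$ on the diagonal with $\tfrac12 c_1$ at position $(p,1)$ and $c_{p-1},\ldots,c_2$ down the rest of the first column), $B$ is the permutation-like middle factor, and $C$ is block-diagonal with central block ${\rm E}_{p-2}(c')$ and corner scalars $c_p^{(p-3)/2}$ (odd case) or $c_p^{(p-2)/2}$ (even case). The degree bound on entries is immediate from this product structure combined with the inductive hypothesis for ${\rm E}_{p-2}(c')$. For the determinant, $A$ is lower triangular with constant diagonal so $\det A = c_p^{p}$; expansion of $B$ along its identity block gives $\det B = -2$; and $\det C = c_p^{p-3}\det {\rm E}_{p-2}(c')$ in the odd case. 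The inductive hypothesis and a short exponent computation using $2p-3 + (p-2)(p-3)/2 = p(p-1)/2$ yield the claimed formula; the even case is analogous.

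The substantive work is the factorization identity, which I verify by applying the three factors successively to ${\rm Di}_p(\lambda)$, with $\lambda = \tfrac12 c_p^{2-p}$ (odd $p$) or $\lambda = \tfrac12 c_p^{1-p}$ (even $p$). First, $C \cdot {\rm Di}_p(\lambda) \cdot C^{\mathrm t}$ is block diagonal: its middle $(p-2)\times(p-2)$ block equals ${\rm E}_{p-2}(c') \cdot {\rm Di}_{p-2}(\lambda) \cdot {\rm E}_{p-2}(c')^{\mathrm t}$, which by the inductive hypothesis applied to $c' = (c_3,\ldots,c_p)$ equals $c_p^{-2}\cdot {\rm HanT}_{p-2}(c')$ (since the inductive $\lambda_{p-2}$ differs from $\lambda$ by a factor $c_p^{2}$), while the $(1,1)$ and $(p,p)$ corners carry $\pm \tfrac12 c_p^{-1}$. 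Next, the conjugation by $B$ swaps and combines the corners into the antidiagonal positions, producing $c_p^{-1}$ at $(1,p)$ and $(p,1)$ and zeros at $(1,1)$ and $(p,p)$, while leaving the middle block intact because $C$'s first and last rows/columns have no off-block entries. Finally, the conjugation by $A$ introduces the missing anti-diagonal stripes of ${\rm HanT}_p(c)$: the column $(c_{p-1},\ldots,c_2)^{\mathrm t}$ of $A$ combined with the antidiagonal $c_p^{-1}$ supplies the entries $c_2,\ldots,c_{p-1}$ (the two extra factors of $c_p$ coming from $A$ cancel both the $c_p^{-2}$ of the middle block and the $c_p^{-1}$ at the corners, after careful sign bookkeeping using $B$), and the $(p,p)$ entry picks up $c_1$ from the combination of $\tfrac12 c_1$ with the corner entry of the already-combined matrix.

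The main obstacle will be the bookkeeping in this last step: checking that the contributions to each anti-diagonal stripe of ${\rm HanT}_p(c)$ sum correctly and that the powers of $c_p$ balance. I carry out the calculation once for odd $p$; the even $p$ case is identical in structure except that ${\rm Di}_p$ has no central half-entry, so the scalar $\tfrac12 c_p^{1-p}$ plays the same role and the parity accounting in the determinant changes correspondingly. As a sanity check I verify the whole argument explicitly for $p=3$, where ${\rm E}_3(c) \cdot {\rm Di}_3(\tfrac12 c_3^{-1}) \cdot {\rm E}_3(c)^{\mathrm t}$ can be computed by hand and equals ${\rm HanT}_3(c)$, matching the general pattern of the inductive argument.
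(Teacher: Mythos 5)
Your proof is correct, and it is exactly the argument the paper intends: the paper's own proof of Lemma \ref{lem:signat_block_tr_han} consists of the single line ``The proof is easily done by induction on $p$,'' and your induction on $p$ via the three-factor decomposition ${\rm E}_p(c)=A\cdot B\cdot C$, with base cases $p=1,2$ and the reduction of the middle block to $c_p^{-2}\,{\rm HanT}_{p-2}(c')$ by the inductive hypothesis, is the natural way to carry it out. The determinant bookkeeping ($\det A=c_p^p$, $\det B=-2$, $2p-3+(p-2)(p-3)/2=p(p-1)/2$) and the conjugation computations all check out.
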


\begin{proof}{Proof.} Easy to prove by induction on $p$. 
\end{proof}

We can prove now Theorem \ref{bezoutiansignature} (Hermite's Theory (2)).

\begin{proof}{Proof of Theorem \ref{bezoutiansignature}.} 
Following Notation \ref{not:subres_and_more}, 
by 
Lemmas  \ref{lem:fact_herm_mat_through_bez} 
and \ref{lem:signat_block_tr_han}, 
it is clear that 
\begin{eqnarray*}
 {\rm Rk}({\rm Her}(P;Q))&=& p - d_s, \\
 {\rm Si}({\rm Her}(P;Q))&=&\sum_{1 \le i \le s, \atop d_{i-1}-d_i\ {\rm odd}} 
\sign({\rm sR}_{d_{i-1}}\cdot T_{d_{i-1}-1} ).
\end{eqnarray*}
By the Structure Theorem for Subresultants (\cite[Theorem 8.30]{BPRbook}), for $1 \le i \le s$, 
$${\rm sR}_{d_{i}}  =  \varepsilon_{d_{i-1} - d_{i}}  \frac{T_{d_{i-1} - 1}^{d_{i-1} - d_{i}}}
{{\rm sR}_{d_{i-1}}^{d_{i-1} - d_{i} -1}}.$$
Therefore, for $1 \le i \le s$ such that $d_{i-1} - d_{i}$ is odd,  
$\sign(T_{d_{i-1}-1}) =  \varepsilon_{d_{i-1}-d_{i}} \sign({\rm sR}_{d_{i}})$. 
The conclusion follows using Proposition \ref{prop:conn_herm_subr}.
\end{proof}

Before proving a related weak inference in Theorem \ref{thm:her_through_bez} (Hermite's Theory (2) as a weak existence), we give some auxiliary definitions.

\begin{definition} \label{def:bezfact}

Let $p, q \in \mathbb{N}$, $p \ge 1$.
Let
$c = (c_0, \dots, c_{p-1})$ be variables representing the coefficients of $P$,
$c' = (c'_0, \dots, c'_q)$ be variables represeting the coefficients of $Q$.
In the following definitions, we always consider $y$ as the main variable. 

\begin{itemize}
\item $P
= y^p + \sum_{0 \le h \le p-1}c_h \cdot y^h \in \K[c][ y]$,

\item $Q
 = \sum_{0 \le h \le q}c'_h \cdot y^h \in \K[c'][ y]$,

\item $R
\in \K[c, c'][ y]$ is the remainder of
$P' \cdot Q$ in the division by $P$,

\item for $0 \le j \le p$,  ${\rm sResP}_{j}
\in \K[c, c'][ y]$ is the
$j$-th subresultant polynomial of $P
$ and $R
$,

\item for $0 \le j \le p$, ${\rm sR}_{j}
\in \K[c,c']$
is the
$j$-th signed subresultant coefficient of $P
$ and $R
$,

\item for $-1 \le j \le p$,  ${\rm sResU}_{j}
\in \K[c, c'][ y]$ and ${\rm sResV}_{j}
\in \K[c, c'][ y]$
are the
$j$-th subresultant cofactors of $P
$ and $R
$.
\end{itemize}

Let now
$\tau \in \{-1, 0, 1\}^{\{0, \dots, p-1\}}$ 
be  a sign condition, 
$d(\tau) = (d_0, \dots, d_s)$ and $d_{-1} = p+1$. 

\begin{itemize}

\item for $0 \le i \le s$,
$T_{ d_{i-1}-1}^\tau
 \in \K[c, c']$ is the coefficient of degree $d_i$ in 
${\rm sResP}_{d_{i-1}-1}
$,

\item for $1 \le i \le s$, $R_{ i}^\tau
 \in \K[c, c'][y]$
is the remainder of  ${\rm sResP}_{d_{i-1} - 1}
$ in the division by $y^{d_i + 1}$,

\item ${\rm M}_{P;Q}^\tau
\in \K[c, c']^{p \times p}$ is the matrix of  
$$\{y^{d_0-d_1-1}\cdot  R_{1}^\tau,\ldots,R_{1}^\tau,\ldots, y^{d_{i-1}-d_{i}-1} \cdot R_{i}^\tau,
\ldots,R_{i}^\tau,\ldots,
 y^{d_{s-1}-d_{s}-1} \cdot R_{s}^\tau,\ldots, R_{s}^\tau,y^{d_{s} -1}, \dots, 
1\}
$$
in the Horner basis of $P$,  $\{y^{p-1} + \sum_{0 \le h \le p-2}c_{h+1} \cdot y^h, \dots, 1\}$,

\item for $1 \le i \le s$,  $\tilde F_{ d_{i}-1}^\tau
=
\sum_j \tilde F_{ d_{i}-1,j}^\tau
\cdot y^j  \in \K[c, c'][ y]$
is  
$${\rm sResU}_{d_{i-2}-1}
\cdot {\rm sResV}_{d_{i}-1}
- {\rm sResU}_{d_{i}-1}
 \cdot {\rm sResV}_{d_{i-2}-1}.
$$ 

\end{itemize}

In order  to avoid dealing with rational functions, we consider variables 
$\ell = (\ell_1, \dots, \ell_s)$ representing the inverses of 
$({\rm sR}_{d_{i}}
)_{1 \le i \le s}$ and
$\ell' = (\ell'_1, \dots, \ell'_s)$  variables representing the inverses of 
$(T_{d_{i-1}-1}^\tau
)_{1 \le i \le s}$.  
We additionally define $\ell_0 = \ell'_0 = 1$. 
We also consider variables
$a=(a_i)_{1 \le i \le s, \, d_{i-1}-d_i \hbox{\scriptsize even}}$
and 
$b=(b_i)_{1 \le i \le s, \, d_{i-1}-d_i \hbox{\scriptsize even}}$
which only purpose 
is to fix the sign of the diagonal elements in the even size blocks in the diagonal matrix  
${\rm Di}_{P;Q}^\tau$ defined below.

\begin{itemize}
\item For $1 \le i \le s$,  
$F_{ d_{i}-1}^\tau
 \in \K[c, c'][\ell,\ell'][ y]$
is 
$$\ell_{i-1} \cdot \ell'_i\cdot y^{d_{i-1} - d_i} + 
\ell_i\cdot \ell_{i-1}\cdot \ell'_i\cdot \ell'_{i-1}\Big(
\sum_{0 \le j \le d_{i-1}-d_i-1} \tilde F_{ d_{i}-1,j}^\tau
\cdot y^j
\Big),
$$
\item ${\rm E}_{ P; Q}^\tau
 \in \K[c, c'][ \ell, \ell']^{p \times p}$ is the block diagonal matrix  
composed by $s$ or $s + 1$ blocks according to $d_s = 0$ or $d_{s} >0$; 
for $1 \le i \le s$ the 
$i$-th  block is the matrix 
${\rm E}_{d_{i-1}-d_i}(F_{ d_i-1}^\tau)$, if $d_{s} >0$ 
the last block is the identity matrix of size $d_s$.
\item ${{\rm E}'}^\tau
 \in \K
 [a,b]^{p \times p}$ is the block diagonal matrix  
composed by $s$ or $s + 1$ blocks according to $d_s = 0$ or $d_{s} >0$; 
for $1 \le i \le s$, the 
$i$-th  block is the identity matrix of size 
$d_{i-1}-d_i$ if $d_{i-1}-d_i$ is odd and the matrix
$$
\left(  
\begin{array}{cccccc}
a_i &  0 & \dots & \dots  & 0 & b_i \cr
0 & \ddots & & & \iddots & 0\cr
\vdots & & a_i & b_i & & \vdots \cr
\vdots & & -b_i & a_i & & \vdots  \cr
0 & \iddots & & & \ddots \cr
-b_i &  0 & \dots & \dots  & 0 & a_i \cr
\end{array}
\right)
$$ of size $d_{i-1}-d_i$
if $d_{i-1}-d_i$ is even, if $d_{s} >0$ 
the last block is the identity matrix of size $d_s$.

\item ${\rm B}_{P;Q}^\tau
 = {\rm M}_{ P;Q}^\tau
\cdot {\rm E}_{ P;Q}^\tau
\cdot {{\rm E}'}^\tau
\in 
\K[c, c'] [\ell, \ell',a,b]^{p  \times p}$.

\item ${\rm Di}_{P;Q}^\tau
 \in \K[c, c'][\ell, \ell']^{p  \times p}$
is the diagonal matrix defined by blocks, 
composed by $s$ or $s + 1$ blocks according to $d_s = 0$ or $d_{s} >0$; 
for $1 \le i \le s$, the 
$i$-th  block is the diagonal matrix 
$$
{\rm Di}_{d_{i-1}-d_i}\Big(\frac12\varepsilon_{d_{i-1}-d_i}
\ell_{i-1}^2 \cdot {\ell'_i}^2 \cdot {\rm sR}_{d_{i-1}}^{2(d_{i-1} - d_i)-1}
\cdot {\rm sR}_{d_i}
\Big)
$$
if $d_{i-1}-d_i$ is odd and the matrix 
$$
{\rm Di}_{d_{i-1}-d_i}\Big(\frac12\Big)
$$ 
if $d_{i-1}-d_i$ is even, if $d_{s} >0$ 
the last block is the zero block of size $d_s$.

\end{itemize}

\end{definition}

\begin{remark}\label{rem:degree_bound_for_hmin} Following Definition \ref{def:subres} and Definition \ref{def:bezfact} and 
taking into account Remark \ref{rem2:form_herm_matrix},  it can be proved that:

\begin{itemize}

\item $\deg_{c}{\rm Her}(P; Q)
\le q + 2p - 2$,  $\deg_{c'} {\rm Her}(P; Q)
\le 1$, then $\deg_{(c, c')} {\rm Her}(P; Q)
\le q + 2p - 1$,

\item $\deg_{(c, c')} R
 \le q+2$,

\item for $0 \le j \le p-1$, $\deg_{(c,c')}{\rm sR}_{j}
\le (p - j)(q+3) - 1$, 
$\deg_{(c,c')}{\rm sR}_p
 = 0$, 

\item for $1 \le i \le s$, $\deg_{(c,c')}R_{i}^\tau
\le (p - d_{i-1} + 1)(q+3) - 1$,

\item $\deg_{(c,c')} {\rm M}_{P;Q}^\tau
 \le p(q+3) - 1$, 

\item for $1 \le i \le s$, $\deg_{(c,c', \ell, \ell')} F_{d_i-1}^\tau
\le 
(2p -  d_{i} - d_{i-2} + 1)(q+3) +2 \le  
(2p -  1)(q+3) +2$,

\item $\deg_{(c,c', \ell, \ell')}{\rm E}_{P;Q}^\tau
\le 
\frac12p( (2p -  1)(q+3) +2 )$, 

\item $\deg_{(a,b)} {{\rm E}'}^\tau
 \le 1$, 

\item 
$\deg_{(c,c', \ell, \ell')}{{\rm Di}}_{P;Q}^\tau
 \le 
4 + 2p(p(q+3)-1)$. 

\end{itemize}
We will use these degree bounds in Lemmas \ref{lem:nulls_ident_structure_theorem} and 
\ref{lem:nulls_ident_herm_bez}; but, in fact, a separate degree analysis on the set of variables $(c, c')$ and each variable $\ell_i$ and $\ell'_i$,
which can be easily done, 
will be needed in Theorem \ref{hermitetrsubresw} (Hermite's Theory as an incompatibility).
 
\end{remark}

We prove two auxiliary algebraic identities, using Effective Nullstellensatz (\cite[Theorem 1.3]{Jel}).

\begin{lemma} \label{lem:nulls_ident_structure_theorem} 
Let $p, q \in \mathbb{N}$, $p \ge 1$, 
$\tau \in \{-1, 0, 1\}^{\{0, \dots, p-1\}}$ 
be
a sign condition, 
$d(\tau) = (d_0, \dots, d_s)$, 
$c = (c_0, \dots, c_{p-1})$, $c' = (c'_0, \dots, c'_q)$,
$1 \le i \le s$ and $e = (p(q+3) - 1)^{d_{i-1} - d_i}$.
Following Definition \ref{def:bezfact}, 
there is an identity in $\K[c, c']$
$$
({\rm sR}_{d_{i-1}}
\cdot {\rm sR}_{d_i}
)^{e} = 
\sum_{d_i +1  \le j \le d_{i-1} - 1}{\rm sR}_{j}
 \cdot W_j
  +  T_{d_{i-1}-1}
  \cdot W
  $$ 
such that all the terms have degree in $(c, c')$ bounded by $2ep(q+3)$.  
\end{lemma}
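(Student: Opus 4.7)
The plan is to apply Jelonek's Effective Nullstellensatz \cite[Theorem~1.3]{Jel} following the homogenization technique already used in the proof of Lemma~\ref{nullstellensatzLaplace}. The mathematical content I need to verify before invoking the Nullstellensatz is the set-theoretic inclusion
$$
\bigl\{\,(\gamma,\gamma')\in\overline{\K}^{p+q+1}\ :\ T_{d_{i-1}-1}(\gamma,\gamma')=0,\ {\rm sR}_j(\gamma,\gamma')=0\ \text{for}\ d_i+1\le j\le d_{i-1}-1\,\bigr\}
\ \subseteq\ \bigl\{\,(\gamma,\gamma')\ :\ {\rm sR}_{d_{i-1}}(\gamma,\gamma')\cdot {\rm sR}_{d_i}(\gamma,\gamma')=0\,\bigr\}.
$$

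First I would verify this inclusion. Fix a specialization $(\gamma,\gamma')$ in the left-hand set. The hypothesis ${\rm sR}_j(\gamma,\gamma')=0$ for $d_i+1\le j\le d_{i-1}-1$ means that all subresultants ${\rm sResP}_j(\gamma,\gamma',y)$ in the intermediate range are defective, while $T_{d_{i-1}-1}(\gamma,\gamma')=0$ forces ${\rm sResP}_{d_{i-1}-1}(\gamma,\gamma',y)$ itself to have degree strictly less than $d_i$. The Structure Theorem of Subresultants \cite[Theorem~8.30]{BPRbook} then yields the polynomial identity
$$
{\rm sR}_{d_i}(\gamma,\gamma')\cdot {\rm sR}_{d_{i-1}}(\gamma,\gamma')^{d_{i-1}-d_i-1}
\ =\ \varepsilon_{d_{i-1}-d_i}\, T_{d_{i-1}-1}(\gamma,\gamma')^{d_{i-1}-d_i},
$$
precisely the relation already invoked in the proof of Theorem~\ref{bezoutiansignature}. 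The right-hand side vanishes, and since $\overline{\K}$ is a domain the product ${\rm sR}_{d_{i-1}}(\gamma,\gamma')\cdot {\rm sR}_{d_i}(\gamma,\gamma')$ vanishes as well. The boundary case $d_{i-1}=d_i+1$, in which the intermediate list is empty and $T_{d_{i-1}-1}$ reduces to ${\rm sR}_{d_i}$, is immediate.

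With the inclusion in hand, I would apply Jelonek's bound after the standard homogenization, exactly as in Lemma~\ref{nullstellensatzLaplace}, to produce an identity of the announced shape, and then dehomogenize. The degree bookkeeping is routine: by Remark~\ref{rem:degree_bound_for_hmin}, each of $T_{d_{i-1}-1}$ and the ${\rm sR}_j$ has degree in $(c,c')$ at most $p(q+3)-1$; there are $d_{i-1}-d_i$ such polynomials in the ideal and the total number of variables is $p+q+1$. Jelonek's estimate then yields the exponent $e\le (p(q+3)-1)^{d_{i-1}-d_i}$ and the uniform degree bound $2ep(q+3)$ on every term of the certificate, matching the statement. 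I expect the only delicate point will be making sure that the Structure Theorem applies at the specialization in the degenerate situation where the effective leading degree of ${\rm sResP}_{d_{i-1}-1}(\gamma,\gamma',y)$ drops strictly below $d_i$, but this is handled by the same discussion in \cite{BPRbook} on which the proof of Theorem~\ref{bezoutiansignature} relies.
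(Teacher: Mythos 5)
Your proposal is correct and follows essentially the same route as the paper, which establishes the identical set-theoretic fact in contrapositive form (if ${\rm sR}_{d_{i-1}}(\gamma,\gamma')$ and ${\rm sR}_{d_i}(\gamma,\gamma')$ are both nonzero and the intermediate ${\rm sR}_j(\gamma,\gamma')$ vanish, the Structure Theorem forces $T_{d_{i-1}-1}(\gamma,\gamma')\ne 0$) and then applies Jelonek's effective Nullstellensatz exactly as in Lemma \ref{nullstellensatzLaplace}. One minor point: when ${\rm sR}_{d_{i-1}}(\gamma,\gamma')\ne 0$ and the degree of ${\rm sResP}_{d_{i-1}-1}(\gamma,\gamma',y)$ has dropped strictly below $d_i$, the vanishing of ${\rm sR}_{d_i}(\gamma,\gamma')$ follows directly from the Structure Theorem's assertion that all ${\rm sR}_j$ strictly between that degree and $d_{i-1}$ vanish, rather than from your displayed relation (in which $T_{d_{i-1}-1}$ is then no longer the leading coefficient of ${\rm sResP}_{d_{i-1}-1}$), but the inclusion you need is correct as stated.
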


\begin{proof}{Proof.} 
We denote by $\overline{\K}$ the algebraic closure of $\K$. 
By the Structure Theorem for Subresultants (\cite[Theorem 8.30]{BPRbook}), for any $\gamma \in \overline{\K}^p, \gamma' \in \overline{\K}^{q+1},$
such that 
$$
{\rm sR}_{d_{i-1}}(\gamma, \gamma') \ne 0, \ \bigwedge_{d_{i-1} < j < d_i} {\rm sR}_j(\gamma, \gamma') = 0, \ 
{\rm sR}_{d_{i}}(\gamma, \gamma') \ne 0
$$ 
we have 
$$
T_{d_{i-1}-1}(\gamma, \gamma') \ne 0.
$$
The claim follows from a similar use of \cite[Theorem 1.3]{Jel} as in the proof of Lemma \ref{nullstellensatzLaplace}.
\end{proof}

\begin{lemma} \label{lem:nulls_ident_herm_bez} 
Let $p, q \in \mathbb{N}$, $p \ge 1$, 
$\tau \in \{-1, 0, 1\}^{\{0, \dots, p-1\}}$
be
a sign condition,
$d(\tau) = (d_0, \dots, d_s)$, 
$c = (c_0, \dots, c_{p-1})$, $c' = (c'_0, \dots, c'_q)$, 
$\ell = (\ell_1, \dots, \ell_s)$, 
$\ell' = (\ell'_1, \dots, \ell'_s)$, 
$a=(a_i)_{1 \le i \le s, \, d_{i-1}-d_i \hbox{\scriptsize even}}$,
$b=(b_i)_{1 \le i \le s, \, d_{i-1}-d_i \hbox{\scriptsize even}}$
and $e = 2^{2p}p^{4p}(q+3)^{3p}$.
Following Definition \ref{def:bezfact}, 
for $1 \le j_1, j_2 \le p$, there is an identity in 
$\K[c, c']
[\ell,\ell',a,b]$
\begin{eqnarray*}
&&\Big( {\rm Her}(P; Q)_{j_1, j_2}
- 
\big( {\rm B}_{P;Q}^\tau \cdot{\rm Di}_{P;Q}^\tau  \cdot {{\rm B}_{P;Q}^\tau}^{\rm t}\big)_{j_1, j_2}
(\ell,\ell',a,b) 
\Big)^e = 
\\
&=&
\sum_{0 \le j \le p-1, \atop \tau(j) = 0}{\rm sR}_{j}
 \cdot W_j
(\ell, \ell', a, b) 
 + 
\sum_{1 \le i \le s} (\ell_i \cdot {\rm sR}_{d_i} 
- 1) \cdot W'_i
(\ell, \ell', a, b) 
+
\\
&+&
\sum_{1 \le i \le s} (\ell'_i \cdot T_{d_{i-1}-1}
- 1) \cdot W''_i
(\ell, \ell', a, b) 
+
\\
&+&
\sum_{1 \le i \le s, \atop d_{i-1} - d_i \hbox{\scriptsize even}} 
(a_i^2 - b_i^2 - ({\rm sR}_{d_{i-1}}
\cdot T_{d_{i-1}-1}
)^{d_{i-1} - d_i -1}  ) \cdot W'''_i
(\ell, \ell', a, b) 
+
\\
&+&
\sum_{1 \le i \le s, \atop d_{i-1} - d_i \hbox{\scriptsize even}} a_i \cdot b_i \cdot W''''_i
( \ell, \ell', a, b) 
\end{eqnarray*}
such that all the terms have degree in $(c, c', \ell, \ell', a, b)$ bounded by $e( 4p^2(q+3) + p(q+3) + 5)$.  
\end{lemma}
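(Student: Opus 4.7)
The proof strategy mimics that of Lemma \ref{lem:nulls_ident_structure_theorem}. First I would establish the geometric (Nullstellensatz) statement: at every common zero $(\gamma,\gamma',\lambda,\lambda',\alpha,\beta) \in \overline{\K}^{\text{tot}}$ of the polynomials appearing on the right-hand side, every entry of ${\rm Her}(P;Q)(\gamma,\gamma') - \bigl({\rm B}_{P;Q}^\tau\cdot{\rm Di}_{P;Q}^\tau\cdot({\rm B}_{P;Q}^\tau)^{\rm t}\bigr)(\gamma,\gamma',\lambda,\lambda',\alpha,\beta)$ vanishes. Once this is in place, homogenising in a fresh variable and applying Jelonek's bound \cite[Theorem 1.3]{Jel} (exactly as in Lemma \ref{lem:nulls_ident_structure_theorem}) produces the explicit identity of the stated form, with the exponent $e$ and the overall degree bound coming from the Bézout number of the system, estimated via the degrees collected in Remark \ref{rem:degree_bound_for_hmin}.

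The geometric statement is the content of Theorem \ref{bezoutiansignature} and Lemma \ref{hermitesubres} recast with the auxiliary variables $\ell,\ell',z$. Under the vanishing of ${\rm sR}_j$ for $\tau(j)=0$ and the relations $\lambda_i{\rm sR}_{d_i}=1$, $\lambda'_i T_{d_{i-1}-1}=1$, the degrees $d(\tau)=(d_0,\ldots,d_s)$ are precisely the non-defective subresultant degrees of $P(\gamma,y)$ and $R(\gamma,\gamma',y)$, and the matrix ${\rm M}_{P;Q}^\tau$ specialises to the matrix ${\rm M}_{P;Q}$ of Notation \ref{not:subres_and_more}. Lemma \ref{hermitesubres} therefore yields ${\rm Her}(P;Q) = {\rm M}_{P;Q}\cdot{\rm HanB}_{P;Q}\cdot{\rm M}_{P;Q}^{\rm t}$. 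It remains to diagonalise the Hankel blocks ${\rm HanT}_{d_{i-1}-d_i}(F_{d_i-1}^\tau)$ via Lemma \ref{lem:signat_block_tr_han}. For odd block size $d_{i-1}-d_i$, the factor $(\lambda_{i-1}\lambda'_i)^{2-(d_{i-1}-d_i)}$ from Lemma \ref{lem:signat_block_tr_han} is matched, using $\lambda_i=1/{\rm sR}_{d_i}$, $\lambda'_i=1/T_{d_{i-1}-1}$ together with the structure theorem of subresultants, to the scalar $\tfrac12\varepsilon_{d_{i-1}-d_i}\ell_{i-1}^2{\ell'_i}^2{\rm sR}_{d_{i-1}}^{2(d_{i-1}-d_i)-1}{\rm sR}_{d_i}$ appearing in ${\rm Di}_{P;Q}^\tau$, exactly as in the proof of Theorem \ref{bezoutiansignature}.

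The remaining case, and the main obstacle, is the even block: here Lemma \ref{lem:signat_block_tr_han} yields $\tfrac12(\lambda_{i-1}\lambda'_i)^{1-(d_{i-1}-d_i)}$ on the diagonal, whose sign and magnitude both depend on ${\rm sR}_{d_{i-1}}T_{d_{i-1}-1}$, whereas the corresponding block of ${\rm Di}_{P;Q}^\tau$ is simply ${\rm Di}_{d_{i-1}-d_i}(\tfrac12)$. The role of the correction factor ${\rm E}'^\tau(z)$ is to absorb this discrepancy by signature-preserving conjugation. One has to verify directly, by block-matrix computation generalising the easy $p=2$ check $\bigl(\begin{smallmatrix}a&b\\-b&a\end{smallmatrix}\bigr) {\rm Di}_2(\tfrac12)\bigl(\begin{smallmatrix}a&b\\-b&a\end{smallmatrix}\bigr)^{\rm t} = {\rm Di}_2\bigl(\tfrac{a^2-b^2}{2}\bigr) + (\text{term in }ab)$, that the antidiagonal-with-$2\times2$-blocks shape of ${\rm E}'^\tau(z)$ gives ${\rm E}'^\tau\cdot{\rm Di}_{d_{i-1}-d_i}(\tfrac12)\cdot({\rm E}'^\tau)^{\rm t} \equiv {\rm Di}_{d_{i-1}-d_i}\bigl(\tfrac{a_i^2-b_i^2}{2}\bigr)$ modulo the ideal generated by $a_ib_i$, and that, combined with the relation $a_i^2-b_i^2 = ({\rm sR}_{d_{i-1}}T_{d_{i-1}-1})^{d_{i-1}-d_i-1}$, this supplies precisely the missing factor $(\lambda_{i-1}\lambda'_i)^{1-(d_{i-1}-d_i)}$.

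Assembling the odd and even blocks, one obtains ${\rm Her}(P;Q)={\rm B}_{P;Q}^\tau\cdot{\rm Di}_{P;Q}^\tau\cdot({\rm B}_{P;Q}^\tau)^{\rm t}$ at every common zero of the generators, which is the desired geometric statement. For the quantitative part, the degrees of the generators and of ${\rm Her}(P;Q)_{j_1,j_2}-\bigl({\rm B}_{P;Q}^\tau\cdot{\rm Di}_{P;Q}^\tau\cdot({\rm B}_{P;Q}^\tau)^{\rm t}\bigr)_{j_1,j_2}$ are all $O(p^2(q+3))$ by Remark \ref{rem:degree_bound_for_hmin}, the total number of variables is $O(p+q)$, and Jelonek's bound then yields an exponent $e \le 2^{2p}p^{4p}(q+3)^{3p}$ together with the overall degree estimate $e(4p^2(q+3)+p(q+3)+5)$ for each term of the resulting Nullstellensatz identity.
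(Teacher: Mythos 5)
Your proposal follows essentially the same route as the paper's proof: establish that the matrix identity ${\rm Her}(P;Q) = {\rm B}_{P;Q}^\tau\cdot{\rm Di}_{P;Q}^\tau\cdot({\rm B}_{P;Q}^\tau)^{\rm t}$ holds at every common zero of the generators over $\overline{\K}$ (via the Structure Theorem of Subresultants, Lemma \ref{lem:fact_herm_mat_through_bez} and Lemma \ref{lem:signat_block_tr_han}), then homogenise and apply \cite[Theorem 1.3]{Jel} exactly as in Lemma \ref{nullstellensatzLaplace}. Your explicit verification of the even-block case via ${\rm E}'^\tau(z)$ and the relations $a_i^2-b_i^2=({\rm sR}_{d_{i-1}}T_{d_{i-1}-1})^{d_{i-1}-d_i-1}$ and $a_ib_i=0$ is a detail the paper leaves implicit, but the argument is the same.
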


\begin{proof}{Proof.} 
We denote by $\overline{\K}$ the algebraic closure of $\K$. 
By the Structure Theorem for Subresultants (\cite[Theorem 8.30]{BPRbook}), Lemma \ref{lem:fact_herm_mat_through_bez}
and Lemma \ref{lem:signat_block_tr_han}, 
for any $\gamma \in \overline{\K}^p, \gamma' \in \overline{\K}^{q+1},$
$\lambda, \lambda' \in \overline{\K}^s$,
$\alpha, \beta \in 
\overline{\K}^{\# \{1 \le i \le s, \, d_{i-1}  - d_i \hbox{\scriptsize even} \}}$
such that 
$$
\bigwedge_{0\le j \le p-1} {\rm inv}({\rm sR}_{j}(\gamma, \gamma')) = \tau(j)^2, \  
\bigwedge_{1 \le i \le s} \lambda_i \cdot {\rm sR}_{d_i}(\gamma, \gamma') = 1, \ 
\bigwedge_{1 \le i \le s} \lambda'_i \cdot T_{d_{i-1}-1}(\gamma, \gamma') =  1,$$
$$
\bigwedge_{1 \le i \le s, \atop d_{i-1} - d_i \hbox{\scriptsize even}} 
\alpha_i^2 - \beta_i^2 = ({\rm sR}_{d_{i-1}}(\gamma,\gamma')\cdot T_{d_{i-1}-1}(\gamma,\gamma'))^{d_{i-1} - d_i -1},  \ 
\bigwedge_{1 \le i \le s, \atop d_{i-1} - d_i \hbox{\scriptsize even}} \alpha_i \cdot \beta_i =0,
$$ 
we have ${\rm Her}(P; Q)(\gamma, \gamma') = 
{\rm B}_{ P;Q}^\tau \cdot {\rm Di}_{P;Q}^\tau \cdot {{\rm B}_{P;Q}^\tau}^{\rm t}(\gamma, \gamma', \lambda, \lambda', \alpha, \beta)
\in \overline{\K}^{p \times p}$. Moreover, for $1 \le i \le s$, 
the condition 
$\lambda_i \cdot {\rm sR}_{d_i}(\gamma, \gamma') = 1$ clearly implies 
${\rm inv}({\rm sR}_{d_i}(\gamma, \gamma')) = 1$. 
The claim follows from a similar use of \cite[Theorem 1.3]{Jel} as in the proof of Lemma \ref{nullstellensatzLaplace}.
\end{proof}

From now on, we make a slight abuse of notation, denoting by
${\rm B}_{P;Q}^\tau(\ell, \ell', z)$ the matrix
${\rm B}_{P;Q}^\tau(\ell, \ell', a,b)$ 
where 
$z=a+ib$
is a complex variable,

We prove now the following related weak inference.

\begin{theorem}[Hermite's Theory (2) as a weak existence]\label{thm:her_through_bez}
Let $p \ge 1$, $P
 = y^p + \sum_{0 \le h \le p-1}C_h \cdot y^h \in \K[u][ y]$, $Q
  = 
\sum_{0 \le h \le q}D_h \cdot y^h \in \K[u][ y]$, $\tau \in \{-1, 0, 1\}^{\{0, \dots, p-1\}}$ 
be
a sign condition,  $d(\tau) = (d_0, \dots, d_s)$,
and $d'_i={d_{i-1}} - d_{i}$ for $i = 1, \dots, s$.  Then  
$$
\bigwedge_{0\le i \le p-1}{\rm sign}({\rm HMi}_i(P;Q)) = \tau(i) \ \ \ \vdash
$$
$$
\vdash \ \ \ \exists (\ell, \ell', z)  \ [\; {\rm Her}(P;Q) \equiv 
{\rm B}_{P;Q}^\tau(\ell, \ell', z)  \cdot  {\rm Di}_{P;Q}^\tau(\ell, \ell') \cdot { {\rm B}_{P;Q}^\tau}^{\rm t}(\ell, \ell', z),  \  
\det({\rm B}_{P;Q}^\tau(\ell, \ell', z)) \ne 0, 
$$
$$
\bigwedge_{1 \le i \le s, \atop 
d'_i
\hbox{\scriptsize odd}} 
{\rm sign}(\ell_{i-1}^2 \cdot {\ell_i'}^2 \cdot {\rm HMi}_{d_{i-1}}(P; Q)^{2
d'_i-1} \cdot {\rm HMi}_{d_i}(P; Q)) 
= \tau_{d_{i-1}}\tau_{d_i}\;] 
$$ 
where $\ell = (\ell_1, \dots, \ell_{s}), \ell' = (\ell'_1, \dots, \ell'_{s}), 
z = (z_i)_{1\le i \le s, \,
d'_i \hbox{\scriptsize even}}$.

Suppose we have an initial incompatibility in 
$\K[v][ \ell, \ell', a, b]$ where $v \supset u$ and $(\ell, \ell', a, b)$ are disjoint from $v$, 
with monoid part 
$$
S\cdot \det({\rm B}_{P;Q}^\tau(\ell, \ell', z))^{2e}
\cdot
\prod_{1 \le i \le s, \atop 
d'_i
\hbox{\scriptsize odd}} 
(\ell_{i-1}^2 \cdot {\ell_i'}^2 \cdot {\rm HMi}_{d_{i-1}}(P; Q)^{2
d'_i-1} \cdot {\rm HMi}_{d_i}(P; Q))^{2e_i}
$$
with $e \in \N_*$, $e_i \le e' \in \N_*$,
degree in $w \subset v$ bounded by $\delta_w$, 
degree in $\ell_i$ bounded by an even number $\delta_\ell$, 
degree in $\ell'_i$ bounded by an even number $\delta_{\ell'}$ 
and
degree in $(a_i, b_i)$ bounded by $\delta_z$.
Then the final incompatibility has monoid part 
$$
S^{f} \cdot 
\prod_{1 \le i \le s}{\rm HMi}_{d_{i}}(P;Q)^{2f_i}
$$
with $f \le 2^{3p}p^{4p+2}(q+3)^{3p}$, 
$$
f_i \le 2^{3p-1}p^{4p+2}(q+3)^{3p}(\delta_\ell + p^p(q+3)^p\delta_{\ell'} 
+ 10p^{p+2}(q+3)^{p+1}e + 4pe)
$$ 
and degree in $w$ bounded by 
$$
2^{3p}p^{4p+2}(q+3)^{3p}
\cdot
$$
$$
\cdot \Big( \delta_w 
+
\Big(
p^2(q+3)\delta_\ell + 3p^{p+1}(q+3)^{p+1} \delta_{\ell'} + 4p^2(q+3)\delta_z + 31p^{p+3}(q+3)^{p+2}e
\Big)
\max\{\deg_w P, \deg_w Q\}\Big).
$$ 
\end{theorem}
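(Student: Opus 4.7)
The plan is to transform the initial incompatibility, which assumes the matrix identity ${\rm Her}(P;Q) \equiv {\rm B}_{P;Q}^\tau \cdot {\rm Di}_{P;Q}^\tau \cdot {{\rm B}_{P;Q}^\tau}^{\rm t}$ together with the various non-vanishing and sign conditions on the auxiliary variables $(\ell, \ell', z)$, into one starting only from the hypothesis $\bigwedge_{0 \le i \le p-1} {\rm sign}({\rm HMi}_i(P;Q)) = \tau(i)$. The backbone of the construction is the algebraic identity provided by Lemma \ref{lem:nulls_ident_herm_bez}, which expresses a power of each entry of ${\rm Her}(P;Q) - {\rm B}_{P;Q}^\tau \cdot {\rm Di}_{P;Q}^\tau \cdot {{\rm B}_{P;Q}^\tau}^{\rm t}$ as an explicit combination of the polynomials ${\rm sR}_j(c,c')$ for $\tau(j)=0$, the polynomials $\ell_i \cdot {\rm sR}_{d_i}(c,c') - 1$ and $\ell'_i \cdot T^\tau_{d_{i-1}-1}(c,c') - 1$, and the polynomials $a_i^2 - b_i^2 - ({\rm sR}_{d_{i-1}} \cdot T^\tau_{d_{i-1}-1})^{d'_i - 1}$ and $a_i b_i$ corresponding to the even-size blocks. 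Applying Lemma \ref{lem:comb_lin_zero_zero} to the matrix identity in the initial incompatibility converts the latter into an incompatibility whose relevant hypotheses are exactly these generators vanishing, plus the determinant non-vanishing condition and the sign conditions on odd blocks.

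Next, the auxiliary variables $(a,b)$ corresponding to even-size blocks are eliminated by weak existence: the pair of identities $a_i^2 - b_i^2 = ({\rm sR}_{d_{i-1}} \cdot T^\tau_{d_{i-1}-1})^{d'_i - 1}$ together with $2 a_i b_i = 0$ encodes $z_i^2 = C_i + i D_i$ in the sense of Notation \ref{real-imaginary}, so Lemma \ref{lemma_square_root_complex} can be applied once per even block to eliminate the pair $(a_i, b_i)$. The variables $\ell_i$ are then eliminated by Lemma \ref{lemma_weak_existence_inverse_sign} applied to $P = {\rm sR}_{d_i}(P;Q) = {\rm HMi}_{d_i}(P;Q)$, which is non-zero by hypothesis since $\tau(d_i) \ne 0$ for $i \ge 1$ and $d_0 = p$ gives ${\rm HMi}_p(P;Q) = 1$. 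Similarly, the variables $\ell'_i$ are eliminated by Lemma \ref{lemma_weak_existence_inverse_sign} applied to $T^\tau_{d_{i-1}-1}(P;Q)$, but here a preliminary step is needed: Lemma \ref{lem:nulls_ident_structure_theorem} provides an identity expressing a power of ${\rm sR}_{d_{i-1}} \cdot {\rm sR}_{d_i}$ as a combination of ${\rm sR}_j$ for $d_i < j < d_{i-1}$ and $T^\tau_{d_{i-1}-1}$, and Lemma \ref{lem:comb_lin_zero_zero} converts the condition ``$T^\tau_{d_{i-1}-1}$ has given sign'' into one on ${\rm sR}_{d_{i-1}}, {\rm sR}_{d_i}$ (both of which are assumed non-zero). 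The determinant non-vanishing can be handled along the way, either by absorbing it through its factored form (the determinants of ${\rm M}_{P;Q}^\tau$, ${\rm E}_{P;Q}^\tau$, ${{\rm E}'}^\tau$ being products of ${\rm sR}_{d_i}$, $T^\tau_{d_{i-1}-1}$ and the $z_i$-invertibilities) via Lemma \ref{lemma_basic_sign_rule_1}. Finally, the sign conditions on the diagonal blocks for odd $d'_i$ need only be recast from conditions on $\ell_{i-1}^2 {\ell'_i}^2 \cdot {\rm HMi}_{d_{i-1}}^{2d'_i - 1} \cdot {\rm HMi}_{d_i}$, which after the elimination of $\ell, \ell'$ reduce to conditions on the ${\rm HMi}_{d_j}$'s, already available via $\tau$.

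The main obstacle is not conceptual but quantitative: the degree bookkeeping through the five successive eliminations is extremely delicate. The degree bounds of Lemma \ref{lem:nulls_ident_herm_bez} are already polynomial of degree $e \cdot (4p^2(q+3) + p(q+3) + 5)$ with $e = 2^{2p} p^{4p} (q+3)^{3p}$, and each application of Lemma \ref{lemma_weak_existence_inverse_sign} multiplies the monoid-part exponent by the corresponding even upper bound on the degree in $\ell_i$ (resp.\ $\ell'_i$). To achieve the stated final bounds, one must carry out the degree analysis separately in each of the sets of variables $(c,c')$, each individual $\ell_i$, each individual $\ell'_i$, and $(a_i,b_i)$, exploiting the fact that the entries of ${\rm B}_{P;Q}^\tau$ have bounded partial degree in each $\ell_i$ (as summarized in Remark \ref{rem:degree_bound_for_hmin}). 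The ordering in which the $\ell_i$, $\ell'_i$ are eliminated matters only for constants in the bound; the complex-root and inverse extractions must be composed carefully so that no exponential blow-up beyond what is stated occurs, and the final tidy-up using $\sR_{d_i} = {\rm HMi}_{d_i}(P;Q)$ via Proposition \ref{prop:conn_herm_subr} and the hypothesis $\bigwedge {\rm sign}({\rm HMi}_i(P;Q)) = \tau(i)$ absorbs the remaining ${\rm sR}_j$-factors into the asserted monoid part.
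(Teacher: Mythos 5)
Your proposal follows essentially the same route as the paper's proof: split $\det({\rm B}_{P;Q}^\tau)\ne 0$ into its factors, use Lemma \ref{lem:nulls_ident_herm_bez} with Lemma \ref{lem:comb_lin_zero_zero} to replace the matrix identity by the conditions on ${\rm sR}_j$, $\ell_i\cdot{\rm sR}_{d_i}-1$, $\ell'_i\cdot T_{d_{i-1}-1}-1$ and the even-block square roots, then eliminate $z_i$, $\ell'_i$ and $\ell_i$ by the weak-existence lemmas, using Lemma \ref{lem:nulls_ident_structure_theorem} to trade the $T_{d_{i-1}-1}$ factors for ${\rm sR}_{d_{i-1}}\cdot{\rm sR}_{d_i}$ factors. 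One small correction: since the determinant factorization puts $z_i\ne 0$ into the system and $(a_i^2+b_i^2)$ powers into the monoid part, the even-block elimination must use Lemma \ref{lemma_square_root_complex_ne} rather than Lemma \ref{lemma_square_root_complex}; apart from that, and from the degree bookkeeping you rightly flag but do not carry out, your steps match the paper's.
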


Note that in the weak inference in Theorem \ref{thm:her_through_bez}, the elements
$\ell_{i-1}^2 \cdot {\ell_i'}^2 \cdot {\rm HMi}_{d_{i-1}}(P; Q)^{2
d'_i-1} \cdot {\rm HMi}_{d_i}(P; Q)$ for $1 \le i \le s$, $d'_i$ odd, are, up to scalars, the only non-constant terms in  
the diagonal matrix ${\rm Di}_{P;Q}^\tau(\ell, \ell')$.

\begin{proof}{Proof.} 
Consider the initial incompatibility 
\begin{equation}\label{inc:init_them_fact_bezout}
\begin{array}{c}
\Big\downarrow \
{\rm Her}(P;Q) \equiv 
{\rm B}_{P;Q}^\tau(\ell, \ell', z)  \cdot {\rm Di}_{P;Q}^\tau(\ell,\ell') \cdot {{\rm B}_{P;Q}^\tau}^{\rm t}(\ell, \ell', z),  \  
\det({\rm B}_{P;Q}^\tau(\ell, \ell', z)) \ne 0, \\[3mm] 
\displaystyle{\bigwedge_{1 \le i \le s, \atop 
d'_i \hbox{\scriptsize odd}} 
{\rm sign}(\ell_{i-1}^2 \cdot {\ell_i'}^2 \cdot {\rm HMi}_{d_{i-1}}(P; Q)^{2
d'_i-1} \cdot {\rm HMi}_{d_i}(P; Q)) 
= \tau(d_{i-1})\tau({d_i})}, \ \cH 
\ \Big\downarrow_{\K[v][ \ell, \ell', a, b]}
\end{array}
\end{equation}
where $\cH$ is a system of sign conditions in $\K[v]$. 
By Proposition \ref{prop:conn_herm_subr},
for $0 \le j \le p$, ${\rm HMi}_{j}(P; Q) = {\rm sR}_j(C_0, \dots, C_{p-1}, D_0, \dots, D_q)$. 

Following Lemma \ref{lem:signat_block_tr_han}, $\det ({\rm B}_{P; Q}^\tau(\ell, \ell', z))$
is equal to
$$
\prod_{1 \le i \le s} T_{d_{i-1}-1}^{
d'_i} 
\cdot 
\prod_{1 \le i \le s, \atop 
d'_i \hbox{\scriptsize odd}} 
(-1)^{\frac12(
d'_i- 1)}2^{\frac12(
d'_i+ 1)}
(\ell_{i-1} \cdot \ell'_i)^{\frac12
d'_i (
d'_i-1)} 
\cdot
\prod_{1 \le i \le s, \atop 
d'_i\hbox{\scriptsize even}} 
(-2)^{\frac12
d'_i}
(\ell_{i-1} \cdot \ell'_i)^{\frac12
{d'_i}^2} \cdot
(a_i^2 + b_i^2)^{\frac12
d'_i}.
$$
Then we apply to (\ref{inc:init_them_fact_bezout}) the weak inference 
$$
\bigwedge_{1 \le i \le s} T_{d_{i-1}-1} \ne 0, \ 
\bigwedge_{1 \le i \le s} \ell_i \ne 0, \ 
\bigwedge_{1 \le i \le s} \ell'_i \ne 0, \ 
\bigwedge_{1 \le i \le s, \atop 
d'_i \hbox{\scriptsize even}} z_i \ne 0  
\ \ \ \vdash 
\ \ \ \det( {\rm B}_{P; Q}^\tau(\ell, \ell', z)) \ne 0.
$$
By Lemma  \ref{lemma_basic_sign_rule_1} (item \ref{lemma_basic_sign_rule:5})
we obtain an incompatibility 
\begin{equation}\label{inc:aux_them_fact_bezout_1}
\begin{array}{c}
\Big\downarrow \
{\rm Her}(P;Q) \equiv 
{\rm B}_{P;Q}^\tau(\ell, \ell', z) \cdot  {\rm Di}_{P;Q}^\tau(\ell, \ell') \cdot {{\rm B}_{P;Q}^\tau}^{\rm t}(\ell, \ell', z),  
 \\[6mm] 
\displaystyle{\bigwedge_{1 \le i \le s} T_{d_{i-1}-1} \ne 0, \ 
\bigwedge_{1 \le i \le s} \ell_i \ne 0, \ 
\bigwedge_{1 \le i \le s} \ell'_i \ne 0, \ 
\bigwedge_{1 \le i \le s, \atop 
d'_i \hbox{\scriptsize even}} z_i \ne 0,}
\\[6mm]
\displaystyle{\bigwedge_{1 \le i \le s, \atop 
d'_i \hbox{\scriptsize odd}} 
{\rm sign}(\ell_{i-1}^2 \cdot {\ell_i'}^2 \cdot {\rm sR}_{d_{i-1}}^{2
d'_i -1} \cdot {\rm sR}_{d_i}) 
= \tau(d_{i-1})\tau(d_i)}, \ \cH 
\ \Big\downarrow_{\K[v][ \ell, \ell', a, b]}
\end{array}
\end{equation}
with  monoid part
$$
S
\cdot
\prod_{1 \le i \le s} T_{d_{i-1}-1}^{2
d'_i e} 
\cdot
\prod_{1 \le i \le s, \atop 
d'_i \hbox{\scriptsize odd}} 
(\ell_{i-1} \cdot \ell'_i)^{
d'_i (
d'_i -1)e + 4e_i} \cdot 
({\rm sR}_{d_{i-1}}^{2
d'_i-1} \cdot {\rm sR}_{d_i})^{2e_i} 
\cdot
\prod_{1 \le i \le s, \atop 
d'_i \hbox{\scriptsize even}} 
(\ell_{i-1} \cdot \ell'_i)^{
{d'_i}^2e} \cdot (a_i^2 + b_i^2)^{
{d'_i} e} 
$$
and the same 
degree bounds. 

Let $\tilde e = 2^{2p}p^{4p}(q+3)^{3p}$. 
We pass in (\ref{inc:aux_them_fact_bezout_1})
all the terms in the ideal generated by 
$\{ ({\rm Her}(P;Q)  -
{\rm B}_{P;Q}^\tau \cdot {\rm Di}_{P;Q}^\tau \cdot {{\rm B}_{P;Q}^\tau}^{\rm t})_{j_1, j_2}
\ | \ 1 \le j_1 \le j_2 \le p \}$ 
to the right hand side, 
we raise both sides to the $(\frac 12 p(p+1)\tilde e)$-th power
and we pass all the terms back to the left hand side. 
It is easy to see that what we obtain
is an incompatibility  
$$
\begin{array}{c}
\displaystyle{\Big\downarrow \
\bigwedge_{1 \le j_1 \le j_2 \le p}
\Big(
{\rm Her}(P;Q)_{j_1, j_2} - 
({\rm B}_{ P;Q}^\tau(\ell, \ell', z) \cdot {\rm Di}_{P;Q}^\tau(\ell, \ell') \cdot  {{\rm B}_{P;Q}^\tau}^{\rm t}(\ell, \ell', z))_{j_1, j_2}
\Big)^{\tilde e} = 0},  
 \\[6mm] 
\displaystyle{
\bigwedge_{1 \le i \le s} T_{d_{i-1}-1} \ne 0, \ 
\bigwedge_{1 \le i \le s} \ell_i \ne 0, \ 
\bigwedge_{1 \le i \le s} \ell'_i \ne 0, \ 
\bigwedge_{1 \le i \le s, \atop 
d'_i \hbox{\scriptsize even}} z_i \ne 0,}
\\[6mm]
\displaystyle{\bigwedge_{1 \le i \le s, \atop 
d'_i \hbox{\scriptsize odd}} 
{\rm sign}(\ell_{i-1}^2 \cdot {\ell_i'}^2 \cdot {\rm sR}_{d_{i-1}}^{2
d'_i -1} \cdot {\rm sR}_{d_i}) 
= \tau(d_{i-1})\tau(d_i)}, \ \cH 
\ \Big\downarrow_{\K[v][ \ell, \ell', a, b]}.
\end{array}
$$
Following Lemma \ref{lem:nulls_ident_herm_bez} and applying Lemma \ref{lem:comb_lin_zero_zero}, 
we obtain an incompatibility
\begin{equation}\label{inc:aux_them_fact_bezout_3}
\begin{array}{c}
\Big\downarrow \displaystyle{
\bigwedge_{1 \le j \le p, \, \tau(j) = 0}{\rm sR}_j = 0, \ 
\bigwedge_{1 \le i \le s} \ell_i \cdot {\rm sR}_{d_i} = 1, \ 
\bigwedge_{1 \le i \le s} \ell'_i \cdot T_{d_{i-1}-1} = 1, }  \\[6mm]
\displaystyle{ \bigwedge_{1 \le i \le s, \atop 
d'_i \hbox{\scriptsize even}} z_i^2 = 
({\rm sR}_{d_{i-1}} \cdot T_{d_{i-1}-1})^{
d'_i - 1}
}, \ 
\displaystyle{\bigwedge_{1 \le i \le s} T_{d_{i-1}-1} \ne 0, \ 
\bigwedge_{1 \le i \le s} \ell_i \ne 0, \  
\bigwedge_{1 \le i \le s} \ell'_i \ne 0}, \\[6mm]
\displaystyle{\bigwedge_{1 \le i \le s, \atop 
d'_i \hbox{\scriptsize even}} z_i \ne 0,} \
\displaystyle{\bigwedge_{1 \le i \le s, \atop 
d'_i \hbox{\scriptsize odd}} 
{\rm sign}(\ell_{i-1}^2 \cdot {\ell_i'}^2 \cdot {\rm sR}_{d_{i-1}}^{2
d'_i -1} \cdot {\rm sR}_{d_i}) 
= \tau(d_{i-1})\tau(d_i)}, \ \cH 
\ \Big\downarrow_{\K[v][ \ell, \ell', a, b]}
\end{array}
\end{equation}
with monoid part 
$$
S^{\frac12 p(p+1)\tilde e}
\cdot
\prod_{1 \le i \le s}T_{d_{i-1}-1}^{
d'_i p(p+1)e\tilde e }
\cdot
\prod_{1 \le i \le s, \atop 
d'_i \hbox{\scriptsize odd}} 
(\ell_{i-1} \cdot \ell'_i)^{\frac12p(p+1)(
d'_i (
d'_i -1)e + 4e_i)\tilde e} \cdot 
({\rm sR}_{d_{i-1}}^{2
d'_i -1} \cdot {\rm sR}_{d_i})^{p(p+1)e_i\tilde e} \\
$$
$$
\cdot
\prod_{1 \le i \le s, \atop 
d'_i \hbox{\scriptsize even}} 
(\ell_{i-1} \cdot \ell'_i)^{ \frac12 p(p+1) 
{d'_i}^2e\tilde e} \cdot \prod_{1 \le i \le s, \atop 
d'_i \hbox{\scriptsize even}} 
(a_i^2 + b_i^2)^{\frac 12 p(p+1)
d'_i e \tilde e} := S_1 \cdot 
\prod_{1 \le i \le s, \atop 
d'_i \hbox{\scriptsize even}} 
(a_i^2 + b_i^2)^{\frac 12 p(p+1)
d'_i e \tilde e},
$$
degree in $w$ bounded by 
$$
\delta'_w := \tilde e \Big( 
\frac 12 p(p+1)
\delta_w  + (4p^2(q+3) + p(q+3) + 5 )\max\{\deg_w P, \deg_w Q \} \Big),  
$$ 
degree in $\ell_i$
bounded by   
$$\delta'_\ell := \tilde e \Big(\frac 12 p(p+1)\delta_\ell + 4p^2(q+3) + p(q+3) + 5\Big)$$
degree in $\ell'_i$
bounded by   
$$\delta'_{\ell'} := \tilde e \Big(\frac 12 p(p+1)\delta_{\ell'} + 4p^2(q+3) + p(q+3) + 5\Big)$$
and degree in $(a_i, b_i)$ bounded by 
$$
\delta'_z := \tilde e \Big(\frac 12 p(p+1)\delta_z + 4p^2(q+3) + p(q+3) + 5\Big).
$$

Then we successively apply to (\ref{inc:aux_them_fact_bezout_3}) for 
$1 \le i \le s$ with $
d'_i$ odd the weak inference
$$
{\rm sign}({\rm sR}_{d_i}) = \tau(i), \ {\rm sign}({\rm sR}_{d_{i-1}}) = \tau(i-1), \ 
\ell_{i-1}^2 > 0, \ {\ell'}_i^2 > 0
\ \ \ \vdash $$
$$ \vdash
\ \ \ 
{\rm sign}({\ell}_{i-1}^2 \cdot {\ell'}_i^2 \cdot {\rm sR}_{d_{i-1}}^{2
d'_i -1} \cdot {\rm sR}_{d_{i-1}}) = \tau(i)\tau(i-1).
$$
By Lemma  \ref{lemma_basic_sign_rule_1} (item \ref{lemma_basic_sign_rule:7}) we obtain 
an incompatibility with the same monoid part and degree bounds. 

Then we successively apply for 
$1 \le i \le s$ with $
d'_i$ even the weak inferences
$$
\begin{array}{rcl}
({\rm sR}_{d_{i-1}} \cdot T_{d_{i-1}-1})^{
d'_i-1} \ne 0 &  \ \, \vdash \, \
&
\exists z_i \ [\; z_i \ne 0, \ z_i^2 = ({\rm sR}_{d_{i-1}} \cdot T_{d_{i-1}-1})^{
d'_i - 1} \;], \\[3mm]
{\rm sR}_{d_{i-1}} \ne 0, \ T_{d_{i-1}-1} \ne 0
& \vdash &
({\rm sR}_{d_{i-1}} \cdot T_{d_{i-1}-1})^{d'_i-1} \ne 0.
\end{array}
$$
Let $\{1 \le i \le s \ | \ 
d'_i \hbox { even} \} = 
\{i_1 < \dots < i_{s'}\}$ and $i_0 = 0$.
Using Lemmas  \ref{lemma_square_root_complex_ne} and \ref{lemma_basic_sign_rule_1} 
(item \ref{lemma_basic_sign_rule:5}), it can be proved
by induction in $r$ that, for $0 \le r \le s'$, after the application of the weak inferences
corresponding to index $r$, we obtain an incompatibility with monoid part
$$
S_1^{4^r} \cdot
\prod_{r+1 \le j \le s'} 
(a_{i_j}^2 + b_{i_j}^2)^{\frac 12 4^rp(p+1)
d'_{i_j} e \tilde e}
\cdot
\prod_{1 \le j \le r} 
({\rm sR}_{d_{i_j-1}} \cdot T_{d_{i_j-1}-1})^{4^{r-j+1}(\frac 12 
\cdot4^{j-1}   p(p+1)
d'_{i_j} e \tilde e
+ 1) 
(d'_{i_j}-1)},
$$
degree in $w$ bounded by 
$$
4^{r} \Big(\delta'_w
 + \big(10   + 
3p^2(p+1)e \tilde e + 4\delta'_z \big)p(q+3)(p-d_{i_r})\max \{\deg_w P, \deg_w Q\}\Big), 
$$ 
degree in $\ell_i$ bounded by 
$4^{r}\delta'_\ell$
and degree in $\ell'_i$ bounded by 
$4^{r}\delta'_{\ell'}$
and degree in $(a_{i_j}, b_{i_j})$ bounded by 
$4^{r}\delta'_z$ for $r+1 \le j \le s'$.  At the end we obtain an incompatibility 
with monoid part 
$$
S^{\frac12  4^{s'} p(p+1)\tilde e }\cdot
\prod_{1 \le i \le s} \ell_i^{2g_i} \cdot {\ell'_i}^{2g'_i} \cdot {\rm sR}_{d_i}^{2h_i} \cdot T_{d_{i-1}-1}^{2h'_i}
$$
with 
$$
h_i \le  \frac12 4^{s'}\Big( 2p^2(p+1)e' + \frac12 p^3(p+1)e\Big)   \tilde e, \quad \quad 
h'_i \le 4^{s'-1}p^2(p+1)(p+2)e\tilde e, 
$$
degree in $w$ bounded by 
$$ 
\delta''_w := 4^{s'} \Big(\delta'_w
 + \big(10   + 
3p^2(p+1)e \tilde e + 4\delta'_z \big)p^2(q+3)\max \{\deg_w P, \deg_w Q\}\Big), 
$$ 
degree in $\ell_i$ bounded by 
$4^{s'}\delta'_\ell$
and degree in $\ell'_i$ bounded by 
$4^{s'}\delta'_{\ell'}$. An explicit bound for $g_i$ and $g'_i$ will not be necessary.

Then we successively apply for $1 \le i \le s$ the weak inferences
$$
\begin{array}{rcl}
{\rm sign}({\rm sR}_{d_i}) = \tau(i) & \vdash &  {\rm sR}_{d_i} \ne 0, \\[3mm]
\ell'_i \ne 0 &  \ \, \vdash \, \
&
{\ell'}_i^2 > 0, \\[3mm]
\ell_i \ne 0
& \vdash &
{\ell}_i^2 > 0, \\[3mm]
T_{d_{i-1}-1}\ne 0 & \vdash & \exists \ell'_i \ [\; \ell'_i \ne 0, \ \ell'_i \cdot T_{d_{i-1}-1} = 1 \;].
\end{array}
$$
By Lemmas \ref{lemma_basic_sign_rule_1} (items \ref{lemma_basic_sign_rule:1.5} and 
\ref{lemma_basic_sign_rule:3})
and \ref{lemma_weak_existence_inverse_sign}
 we obtain 
an incompatibility
\begin{equation}\label{inc:aux_them_fact_bezout_4}
\begin{array}{cc}
\Big \downarrow \ \
\bigwedge_{0\le i \le p-1}{\rm sign}({\rm HMi}_i(P;Q))=\tau(i), \ 
\bigwedge_{1 \le i \le s} \ell_i \cdot {\rm sR}_{d_i} = 1, \\[4mm]   
\bigwedge_{1 \le i \le s} \ell_i \ne 0, \ 
\bigwedge_{1 \le i \le s} T_{d_{i-1}-1} \ne 0, \ 
\cH \  \ {\Big \downarrow}_{\K[v][ \ell]}
\end{array}
\end{equation}
with  monoid part
$$
S^{\frac12  4^{s'} p(p+1)\tilde e }
\cdot
\prod_{1 \le i \le s} \ell_i^{2g_i} \cdot {\rm sR}_{d_i}^{2h_i} \cdot 
T_{d_{i-1}-1}^{2h'_i + 4^{s'}\delta'_{\ell'} - 2g_i},
$$
degree in $w$ bounded by $\delta''_w + s4^{s'}p(q+3)\delta'_{\ell'}\max \{\deg_w P, \deg_w Q\}$
and degree in $\ell_i$ bounded by 
$4^{s'}\delta'_\ell$.

For $1 \le i \le s$, we successively multiply (\ref{inc:aux_them_fact_bezout_4}) 
by the polynomial
$W(C, D)^{2h'_i + 4^{s'}\delta'_{\ell'} - 2g_i}$, where $W(C, D)$ is the 
polynomial from Lemma \ref{lem:nulls_ident_structure_theorem}, and we substitute  
$T_{d_{i-1}-1}\cdot W$ 
in the monoid part of the result using the identity from 
this lemma. We obtain 
\begin{equation}\label{inc:aux_them_fact_bezout_5}
\Big \downarrow \ 
\bigwedge_{0\le i \le p-1}{\rm sign}({\rm HMi}_i(P;Q))=\tau(i)\ 
\bigwedge_{1 \le i \le s} \ell_i \cdot {\rm sR}_{d_i} = 1, \ 
\bigwedge_{1 \le i \le s} \ell_i \ne 0, \  
\cH \ {\Big \downarrow}_{\K[v][ \ell]}
\end{equation}
with  monoid part
$$
S^{\frac12  4^{s'} p(p+1)\tilde e }
\cdot
\prod_{1 \le i \le s} \ell_i^{2g_i} \cdot {\rm sR}_{d_i}^{2h''_i}
$$
with
$$
h''_i \le h_i + 
p^p(q+3)^p 4^{s'-1}(
{p^2(p+1)(p+2)e\tilde e + 2\delta'_{\ell'}}
)
$$
degree in $w$ bounded by 
$$
\delta''_w + 4^{s'}\Big( s p(q+3)\delta'_{\ell'} 
+ p^{p+1}(q+3)^{p+1}(p^2(p+1)(p+2)e\tilde e +  2\delta'_{\ell'})
 \Big) \max \{\deg_w P, \deg_w Q\}
$$
and degree in $\ell_i$ bounded by 
$4^{s'}\delta'_\ell$. 

Finally we successively apply to (\ref{inc:aux_them_fact_bezout_5}) for $1 \le i \le s$ the weak inferences
$$
\begin{array}{rcl}
{\rm sR}_{d_i} \ne 0 
&  \ \, \vdash \, \
&
\exists \ell_i \ [\; \ell_i \ne 0,  \ 
\ell_i \cdot {\rm sR}_{d_i} = 1  \;], \\[3mm]
{\rm sign}({\rm sR}_{d_i}) = \tau(d_i) &
\vdash & {\rm sR}_{d_i} \ne 0. 
\end{array}
$$
By Lemmas 
\ref{lemma_weak_existence_inverse_sign} and
\ref{lemma_basic_sign_rule_1} (item \ref{lemma_basic_sign_rule:1.5})
 we obtain 
$$
\Big \downarrow \
\bigwedge_{0\le i \le p-1}{\rm sign}({\rm HMi}_i(P;Q)) = \tau(i), \ \cH \ {\Big \downarrow}_{\K[v]}
$$
with monoid part 
$$
S^{\frac12  4^{s'} p(p+1)\tilde e }
\cdot
\prod_{1 \le i \le s}{\rm sR}_{d_i}^{2h''_i + 4^{s'}\delta'_\ell - 2g_i}
$$
and degree in $w$ bounded by 
$$
\delta''_w + 4^{s'}\Big( s p(q+3)(\delta'_\ell + \delta'_{\ell'}) 
+ p^{p+1}(q+3)^{p+1}(p^2(p+1)(p+2)e\tilde e +  2\delta'_{\ell'})
 \Big) \max \{\deg_w P, \deg_w Q\}
$$
which serves as the final incompatibility, taking into account that $s' \le \frac{p}2$. 
\end{proof}

\subsection{Sylvester Inertia Law} 
\label{subsecSylv}

Sylvester Inertia Law states that two diagonal reductions
of a quadratic form in an ordered field
have  the same number of positive, negative and null coefficients. 
In order to obtain Sylvester Inertia Law as an incompatibility, we use 
linear algebra \`a la Gram. First, we  introduce some definitions, notation and properties. 
We refer to \cite{DGL} and \cite{LaTi} for further details and proofs.

\begin{definition}\label{defGk}
Let $\A$ be a commutative ring, ${\bm A}\in\A^{m \times n}$ and $k \in \N$. 
\begin{enumerate}

\item The {\em Gram's coefficient} $\Gram_k({\bm A})$ 
is the coefficient
$g_k$ of the polynomial
$$   \det(\mathrm{I}_m  +  y \cdot {\bm A} \cdot {\bm A}^\mathrm{t})  =  g_0  +  g_1  \cdot y  +  \cdots  +  g_m \cdot y^m,
$$
where $y$ is an indeterminate over $\A$. 

\item The matrix  ${\bm A}^{\ddag_k} \in \A^{n \times m}$ is the matrix 
$$
{\bm A}^{\ddag_k} = 
\Big(\sum_{0 \le i \le k-1} (-1)^i\Gram_{k-1-i}({\bm A})\cdot({\bm A}^\mathrm{t}\cdot{\bm A})^i \Big)\cdot 
{\bm A}^\mathrm{t}.  
$$
\end{enumerate}
\end{definition}

Note that $\Gram_k({\bm A})$ is an homogeneous polynomial of degree $2k$ in the entries of ${\bm A}$ and 
the entries of ${\bm A}^{\ddag_k}$ are homogeneous polynomials of degree
$2k-1$ in the entries of ${\bm A}$.
Note also that $\Gram_0({\bm A})= 1$ and
$\Gram_k({\bm A})= 0$ for $k>m$.
For $1 \le k \le m$, $\Gram_k({\bm A})$ is equal to the sum of the squares of all the $k$-minors 
of $\A$.

\begin{notation}
Let $\A$ be a commutative ring, ${\bm A}\in\A^{m \times n}$ and $k \in \N$. We denote by 
$\cD_{k}({\bm A})$ the ideal generated by all the $k$-minors of the matrix 
${\bm A}$.
 \end{notation}

\begin{proposition}
\label{propMoPe} 
Let $\A$ be a commutative ring, ${\bm A}\in\A^{m \times n}$,  $v\in \A^m$, $k \in \N$ and
let ${\bm A | \bm v}$
be the  matrix in $\A^{m \times (n+1)}$ obtained by  
adding ${\bm v}$ as a last column
to ${\bm A}$. Then
$$
 \Gram_k({\bm A}) \cdot {\bm v}  =  {\bm A} \cdot {\bm A}^{\ddag_k} \cdot {\bm v} \quad \mathrm{mod} \; 
\cD_{k+1}({\bm A} | {\bm v}).
$$
Moreover, this equation is given by homogeneous identities of degree $2k$
in the entries of ${\bm A}$ and of degree $1$ in the entries of ${\bm v}$.
\end{proposition}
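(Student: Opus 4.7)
The plan is to express $\Gram_k({\bm A}){\bm v} - {\bm A}\cdot{\bm A}^{\ddag_k}\cdot{\bm v}$ as the single column vector $g_k({\bm M}){\bm v}$ with ${\bm M}={\bm A}{\bm A}^{\mathrm{t}}$ and
\[
g_k({\bm M}) := \sum_{j=0}^{k}(-1)^j \Gram_{k-j}({\bm A})\,{\bm M}^j = \Gram_k({\bm A})\mathrm{I}_m - {\bm A}\cdot{\bm A}^{\ddag_k},
\]
the second equality being immediate from Definition \ref{defGk} and the identity ${\bm A}({\bm A}^{\mathrm{t}}{\bm A})^j = {\bm M}^j{\bm A}$. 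The task then reduces to showing, entry by entry, that $g_k({\bm M}){\bm v}\in\cD_{k+1}({\bm A}|{\bm v})$, which I will do by a direct Cauchy--Binet computation.

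First I would observe that $g_k({\bm M})$ is the coefficient of $y^k$ in the polynomial matrix $\mathrm{adj}(\mathrm{I}_m + y{\bm M}) \in \A[y]^{m\times m}$, reading it off from
\[
\mathrm{adj}(\mathrm{I}+y{\bm M}) = \det(\mathrm{I}+y{\bm M})\cdot(\mathrm{I}+y{\bm M})^{-1} = \Bigl(\sum_\ell \Gram_\ell({\bm A})\,y^\ell\Bigr)\Bigl(\sum_{j\ge 0}(-y)^j{\bm M}^j\Bigr).
\]
Cramer's column-replacement rule then gives $[g_k({\bm M}){\bm v}]_i$ as the coefficient of $y^k$ in $\det{\bm C}_i(y)$, where ${\bm C}_i(y)$ denotes $\mathrm{I}_m+y{\bm M}$ with its $i$-th column replaced by ${\bm v}$. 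Next, I would expand $\det{\bm C}_i(y)$ multilinearly in the other $m-1$ columns, each of which equals ${\bm e}_j + y\,{\bm A}\,({\bm A}_{j,\cdot})^{\mathrm{t}}$. Indexing the expansion by the subset $S\subseteq\{1,\dots,m\}\setminus\{i\}$ of columns where the $y$-piece is chosen, and cofactor-expanding against the remaining standard-basis columns ${\bm e}_j$ ($j\notin S\cup\{i\}$), the $y^{|S|}$-coefficient reduces to the determinant of a $(|S|+1)\times(|S|+1)$ matrix indexed by rows in $S\cup\{i\}$ whose column at position $j\in S$ is $({\bm A}({\bm A}_{j,\cdot})^{\mathrm{t}})_{S\cup\{i\}}$ and whose column at position $i$ is ${\bm v}_{S\cup\{i\}}$. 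That matrix factors as
\[
({\bm A}\,|\,{\bm v})_{S\cup\{i\},\,\cdot}\cdot\begin{pmatrix}({\bm A}_{S,\cdot})^{\mathrm{t}} & 0\\ 0 & 1\end{pmatrix},
\]
and applying Cauchy--Binet to the product -- noting that only $(|S|+1)$-column subsets on the right factor containing the last index $n+1$ give a non-zero minor, since the last column of the right factor is ${\bm e}_{n+1}$ -- yields, at the coefficient of $y^k$,
\[
[g_k({\bm M}){\bm v}]_i = \sum_{\substack{S\subseteq\{1,\dots,m\}\setminus\{i\}\\|S|=k}}\sum_{\substack{K\subseteq\{1,\dots,n\}\\|K|=k}}\pm\det\bigl([{\bm A}_{S\cup\{i\},K}\,|\,{\bm v}_{S\cup\{i\}}]\bigr)\cdot\det({\bm A}_{S,K}).
\]
Each first factor is a $(k+1)\times(k+1)$ minor of $({\bm A}|{\bm v})$ involving the appended column ${\bm v}$, hence in $\cD_{k+1}({\bm A}|{\bm v})$, which establishes the congruence entrywise. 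Homogeneity is immediate from the formula: each summand is linear in ${\bm v}$ (through a single column of one minor) and of total degree $2k$ in the entries of ${\bm A}$.

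The hard part will be the careful bookkeeping of signs and column orderings in the multilinear expansion, the cofactor reduction against the standard-basis columns, and the Cauchy--Binet step; the combinatorics is elementary but fiddly. A cleaner abstract packaging replaces the per-entry expansion by the global matrix identity $\sum_{|J|=k}{\bm A}_J\,\mathrm{adj}({\bm A}_J^{\mathrm{t}}{\bm A}_J)\,{\bm A}_J^{\mathrm{t}} = {\bm A}\cdot{\bm A}^{\ddag_k}$ combined, for each $k$-subset $J$, with a column-replacement identity showing $\det({\bm A}_J^{\mathrm{t}}{\bm A}_J){\bm v} - {\bm A}_J\,\mathrm{adj}({\bm A}_J^{\mathrm{t}}{\bm A}_J){\bm A}_J^{\mathrm{t}}{\bm v}\in\cD_{k+1}({\bm A}|{\bm v})^m$; this is the viewpoint adopted in \cite{DGL,LaTi}, cited at the start of the section.
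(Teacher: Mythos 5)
Your argument is correct. Note that the paper itself does not prove Proposition \ref{propMoPe}: it is stated as a known fact with the proof deferred to \cite{DGL} and \cite{LaTi} (the section opens with ``We refer to \cite{DGL} and \cite{LaTi} for further details and proofs''), so there is no in-paper proof to compare against; you are supplying a self-contained derivation of a result the authors only cite. The derivation itself is sound: the identity $\Gram_k({\bm A})\mathrm{I}_m - {\bm A}\cdot{\bm A}^{\ddag_k} = \sum_{j=0}^{k}(-1)^j\Gram_{k-j}({\bm A})({\bm A}{\bm A}^{\mathrm t})^j$ follows directly from Definition \ref{defGk} together with ${\bm A}({\bm A}^{\mathrm t}{\bm A})^j=({\bm A}{\bm A}^{\mathrm t})^j{\bm A}$; identifying this with the $y^k$-coefficient of $\mathrm{adj}(\mathrm{I}_m+y{\bm A}{\bm A}^{\mathrm t})$ is the standard Faddeev--LeVerrier-type identity; Cramer's rule in adjugate form, the multilinear expansion, the rank-one factorization of each $(|S|+1)\times(|S|+1)$ block through $({\bm A}|{\bm v})_{S\cup\{i\},\cdot}$, and Cauchy--Binet (with only the column subsets containing $n+1$ surviving) all go through, and the resulting summands are visibly $(k+1)$-minors of $({\bm A}|{\bm v})$ times $k$-minors of ${\bm A}$, which gives both the ideal membership and the bidegree $(2k,1)$. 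The one step you should make explicit is the justification of $\mathrm{adj}(\mathrm{I}+y{\bm M})=\det(\mathrm{I}+y{\bm M})(\mathrm{I}+y{\bm M})^{-1}$ over an arbitrary commutative ring $\A$: either work in $\A[[y]]$, where $\mathrm{I}+y{\bm M}$ is invertible and both sides of the coefficient identity are polynomials in $y$, or verify the identity over $\mathbb{Z}[x_{ij}]$ for a generic matrix and specialize. With that remark, and the sign bookkeeping you already flag, the proof is complete.
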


The following proposition plays a fundamental role to express Sylvester Inertia Law as an 
incompatibility.

\begin{proposition}
\label{thSylv0} 
Let ${\bm v}_1, \dots, {\bm v}_s, {\bm w}_1, \dots, {\bm w}_{t+1} \in \K[u]^{p}$ 
with $s\in \N_*,  t \in \N, s+t = p$, ${\bm A} \in \K[u]^{p \times p}$ 
be
a symmetric matrix, and let ${\bm V}\in \K[u]^{p \times s}$ be the matrix having
${\bm v}_1, \dots, {\bm v}_s$ as columns. Then, there is an incompatibility
$$
\Big \downarrow \Gram_s({\bm V}) \ne 0, \ 
\bigwedge_{1\leq i\leq s} {\bm v}_i^\mathrm{t} \cdot {\bm A} \cdot {\bm v}_i \ge 0, \ 
\bigwedge_{1\leq i< i'\leq s}{\bm v}_i^\mathrm{t}\cdot {\bm A} \cdot {\bm v}_{i'} = 0,
$$
$$ 
\bigwedge_{1\leq j\leq t+1} {\bm w}_j^\mathrm{t} \cdot {\bm A} \cdot {\bm w}_j < 0, \ 
\bigwedge_{1\leq j< j'\leq t+1}{\bm w}_j^\mathrm{t} \cdot {\bm A} \cdot {\bm w}_{j'} = 0 \ \Big \downarrow
$$
with monoid part
$$ 
\Gram_s({\bm V})^{2^{2(t+1)}} \cdot
\prod_{1 \le j \le t+1} 
({\bm w}_{j}^\mathrm{t} \cdot {\bm A} \cdot {\bm w}_j)^{2^{2(t-j)+3} }
$$
and  
degree in $w \subset u$ 
bounded by 
$$
\frac23(2^{2(t+1)}-1)\deg_w {\bm A} + 
\frac43\Big( 2^{2t+1}(3s+2) - 1\Big) 
\max\{\deg_w {\bm v}_i\ | \ 1 \le i \le s \} \cup 
\{ \deg_w {\bm w}_i \ | \ 1 \le j \le t+1 \}.
$$
\end{proposition}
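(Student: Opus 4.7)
{Proof plan.}
The plan is to proceed by induction on $t$, with base case $t=0$ and a case-by-case weak inference for the step.

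For the base case $t=0$, $s=p$ and $(\bm{V}\mid\bm{w}_1)$ is $p\times(p+1)$, forcing $\cD_{p+1}(\bm{V}\mid\bm{w}_1)=0$. Proposition~\ref{propMoPe} with $k=p$ therefore yields the exact identity $\Gram_p(\bm{V})\bm{w}_1=\bm{V}\bm{V}^{\ddag_p}\bm{w}_1$; setting $\bm{V}^{\ddag_p}\bm{w}_1=(\alpha_1,\ldots,\alpha_p)^\mathrm{t}$ and computing $(\Gram_p(\bm{V})\bm{w}_1)^\mathrm{t}\bm{A}(\Gram_p(\bm{V})\bm{w}_1)$ in two ways gives
$$\Gram_p(\bm{V})^2\,\bm{w}_1^\mathrm{t}\bm{A}\bm{w}_1 \;=\; \sum_i\alpha_i^2\,\bm{v}_i^\mathrm{t}\bm{A}\bm{v}_i \;+\; 2\!\!\sum_{1\le i<i'\le p}\!\!\alpha_i\alpha_{i'}\,\bm{v}_i^\mathrm{t}\bm{A}\bm{v}_{i'}.$$
Multiplying through by $\bm{w}_1^\mathrm{t}\bm{A}\bm{w}_1$ and rearranging puts the identity in $S+N+Z=0$ form with $S=\Gram_p(\bm{V})^2(\bm{w}_1^\mathrm{t}\bm{A}\bm{w}_1)^2\in\scM(\cF_{\ne}^2)$, $N=\sum_i\alpha_i^2(\bm{v}_i^\mathrm{t}\bm{A}\bm{v}_i)(-\bm{w}_1^\mathrm{t}\bm{A}\bm{w}_1)\in\scN(\cF_{\ge})$, and $Z\in\scZ(\cF_{=})$ collecting the cross-terms. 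Padding by an extra $\Gram_s(\bm{V})^2$ recovers the claimed exponent $2^{2(0+1)}=4$, and the degree bound follows from the homogeneity of $\Gram_p(\bm{V})$ and $\bm{V}^{\ddag_p}$ of respective degrees $2p$ and $2p-1$ in the entries of $\bm{V}$.

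For the inductive step I would apply the case-by-case weak inference of Lemma~\ref{CasParCas_1} on $\Gram_p(\bm{M}_{t+1})$, where $\bm{M}_{t+1}:=(\bm{V}\mid\bm{w}_1\mid\cdots\mid\bm{w}_t)$ is the $p\times p$ matrix obtained by appending all but the last $\bm{w}$ to $\bm{V}$. In the case $\Gram_p(\bm{M}_{t+1})\neq 0$, Proposition~\ref{propMoPe} with $k=p$ gives $\Gram_p(\bm{M}_{t+1})\bm{w}_{t+1}=\bm{M}_{t+1}\bm{c}$ with $\bm{c}=(\alpha_1,\ldots,\alpha_s,\beta_1,\ldots,\beta_t)^\mathrm{t}$. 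The key algebraic observation is that the mixed cross-terms $\alpha_i\beta_j\bm{v}_i^\mathrm{t}\bm{A}\bm{w}_j$ are not in $\scZ(\cF_=)$ individually but cancel in aggregate: multiplying the Proposition's identity on the left by $\bm{w}_j^\mathrm{t}\bm{A}$ (for $1\le j\le t$) and using the orthogonalities $\bm{w}_j^\mathrm{t}\bm{A}\bm{w}_{t+1}=\bm{w}_j^\mathrm{t}\bm{A}\bm{w}_{j'}=0$ for $j'\ne j$ yields $\sum_i\alpha_i\bm{v}_i^\mathrm{t}\bm{A}\bm{w}_j\equiv -\beta_j\bm{w}_j^\mathrm{t}\bm{A}\bm{w}_j\pmod{\scZ(\cF_{=})}$, whence
$$\Gram_p(\bm{M}_{t+1})^2\,\bm{w}_{t+1}^\mathrm{t}\bm{A}\bm{w}_{t+1}\;\equiv\;\sum_i\alpha_i^2\,\bm{v}_i^\mathrm{t}\bm{A}\bm{v}_i\;-\;\sum_{j=1}^t\beta_j^2\,\bm{w}_j^\mathrm{t}\bm{A}\bm{w}_j\pmod{\scZ(\cF_{=})};$$
multiplying by $\bm{w}_{t+1}^\mathrm{t}\bm{A}\bm{w}_{t+1}$ converts this into an incompatibility of the prescribed form. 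In the complementary case $\Gram_p(\bm{M}_{t+1})=0$, the non-trivial linear dependence among the columns of $\bm{M}_{t+1}$ (witnessed in particular by $\bm{M}_{t+1}^{\ddag_p}\bm{w}_{t+1}$) combined with $\Gram_s(\bm{V})\neq 0$ reduces the problem to a sub-configuration with $s{+}1$ positive and $t$ negative directions to which the induction hypothesis applies.

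The hard part will be matching the monoid part with the claimed form. The direct identity in the non-vanishing case carries $\Gram_p(\bm{M}_{t+1})$, which is not among the hypotheses and must be re-expressed using only $\Gram_s(\bm{V})$ and the data $\bm{w}_j^\mathrm{t}\bm{A}\bm{w}_j$, padding as necessary. The case-by-case weak inference of Lemma~\ref{CasParCas_1} then squares the exponents of $\Gram_s(\bm{V})$ and of $\bm{w}_j^\mathrm{t}\bm{A}\bm{w}_j$ for $j\le t$, producing the quadrupling $2^{2(t+1)}=4\cdot 2^{2t}$ on $\Gram_s(\bm{V})$ and the squaring $2^{2(t-j)+3}=4\cdot 2^{2((t-1)-j)+3}$ on each surviving $\bm{w}_j$, while the new factor $2$ on $\bm{w}_{t+1}^\mathrm{t}\bm{A}\bm{w}_{t+1}$ is produced by the final multiplication in the direct identity. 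The degree bound follows by tracking the polynomial degree contributions from each application of Proposition~\ref{propMoPe} and of the quadratic-form computation, the geometric accumulation yielding the factors $\frac23(2^{2(t+1)}-1)$ and $\frac43\bigl(2^{2t+1}(3s+2)-1\bigr)$.
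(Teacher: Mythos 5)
Your base case is sound, and the cancellation of the mixed cross-terms $\alpha_i\beta_j\,{\bm v}_i^\mathrm{t}{\bm A}{\bm w}_j$ in the non-degenerate branch of your inductive step is a correct (if roundabout) manipulation. But the inductive step has a genuine gap: the branch $\Gram_p({\bm M}_{t+1})=0$ is not handled. Your proposed reduction to ``a sub-configuration with $s{+}1$ positive and $t$ negative directions'' is dimensionally inconsistent with the proposition's hypothesis $s+t=p$ (you would have $p+1$ directions in a $p$-dimensional ambient space, so the statement being invoked is not an instance of the induction hypothesis), and, more fundamentally, in the incompatibility framework you cannot simply ``apply the induction hypothesis'' to new vectors: you must exhibit an algebraic identity certifying that the vanishing of $\Gram_p({\bm M}_{t+1})$ together with the original sign conditions is contradictory, and no such identity is sketched. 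This degenerate case is exactly as hard as the original problem, so the case split does not make progress. The exponent bookkeeping also does not close: Lemma \ref{CasParCas_1} with monoid part $S_1\cdot P^{2e}$, $e=1$, in the first branch yields $S_1\cdot S_2^{2}$, i.e.\ it doubles the exponents coming from the zero branch, whereas you need the quadrupling $2^{2t}\mapsto 2^{2(t+1)}$.

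The paper avoids the case split entirely. For each $j$ it applies Proposition \ref{propMoPe} to ${\bm V}_{s+j-1}=({\bm V}\mid{\bm w}_1\mid\cdots\mid{\bm w}_{j-1})$, moves the ${\bm w}_k$-terms to the left so that the quadratic form produces no mixed ${\bm v}$--${\bm w}$ cross-terms at all, and then — this is the step you are missing — converts the ideal term $D_{s+j}\in\cD_{s+j}({\bm V}_{s+j})$ into $\Gram_{s+j}({\bm V}_{s+j})\cdot R_{s+j}$ with $R_{s+j}$ an explicit sum of squares, by squaring and adding the Cauchy--Schwarz correction ${\rm N}(M_1,\dots,M_\ell,Q_1,\dots,Q_\ell)$ of Remark \ref{lemCS} (recall $\Gram_{s+j}$ is the sum of squares of the $(s{+}j)$-minors). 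The resulting identities are then multiplied together along $j=1,\dots,t+1$; the chain terminates because $\Gram_{s+t+1}({\bm V}_{s+t+1})$ is \emph{identically} zero (a $p\times(p+1)$ matrix has no $(p{+}1)$-minors), so no hypothesis on any intermediate Gram coefficient — and hence no case distinction — is ever needed. This is also what produces the factor-of-$4$ growth per step in the monoid exponents. To repair your proof you would need to replace the case split by this (or an equivalent) mechanism for absorbing the ideal $\cD_{s+j}$ into a multiple of $\Gram_{s+j}$.
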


\begin{proof}{Proof.} Let $\cH$ be the system of sign conditions whose 
incompatibility we want to obtain. 
Let 
$\delta_w = \deg_w {\bm A}$ and
$\delta'_w =  \max\{\deg_w {\bm v}_i \ | \ 1 \le i \le s \} \cup 
\{ \deg_w {\bm w}_i \ | \ 1 \le j \le t+1 \}$. 
For $0 \le j \le t+1$, we consider the matrix ${\bm V}_{s+j} \in {\bf A}^{p \times (s+j)}$ 
having the vectors ${\bm v}_1, \dots, {\bm v}_s, {\bm w}_1, \dots, {\bm w}_{j}$ as 
columns. We denote by $G_{s+j}$ the Gram's coefficient $\Gram_{s+j}({\bm V}_{s+j}) \in \K[u]$. 

For $1 \le j \le t+1$, we apply Proposition \ref{propMoPe}  to the matrix ${\bm V}_{s+j-1}$, 
the vector
${\bm w}_{j}$ and the number $s+j-1$. If for $1 \le k \le s+j-1$ we 
denote $H_{s+j-1, k}$ the $k$-th coordinate of the vector 
${\bm V}_{s+j-1}^{\ddag_{s+j-1}} \cdot {\bm w}_{j}$,
we obtain
\begin{equation}\label{eq:sylv_as_inc_minus0}
G_{s+j-1}\cdot{\bm w}_{j} - \sum_{1 \le k \le j-1}H_{s+j-1,s+k}\cdot{\bm w}_{k} =
\sum_{1 \le i \le s}H_{s+j-1,i}\cdot{\bm v}_{i}  
\quad \mathrm{mod} \; 
\cD_{s+j}({\bm V}_{s+j}).
\end{equation}
Next we apply to (\ref{eq:sylv_as_inc_minus0}) the quadratic form associated to ${\bm A}$. After passing some terms to the left
hand side, we obtain for $1 \le j \le t+1$, 
\begin{equation}\label{eq:sylv_as_inc_minus1}
G_{s+j-1}^2\cdot{\bm w}_{j}^{\mathrm{t}}\cdot{\bm A}\cdot{\bm w}_{j}  
+ \sum_{1 \le k \le j-1}H_{s+j-1,s+k}^2\cdot{\bm w}_{k}^{\mathrm{t}}\cdot{\bm A}\cdot{\bm w}_{k} -
\sum_{1 \le i \le s}H_{s+j-1,i}^2\cdot{\bm v}_{i}^{\mathrm{t}}\cdot{\bm A}\cdot{\bm v}_{i} + Z_j = D_{s+j}  
\end{equation}
with $Z_j \in \scZ( \cH_{=})$ and $D_{s+j}$ $\in$ $\cD_{s+j}({\bm V}_{s+j})$.
The degree in $w$ of the first three terms of (\ref{eq:sylv_as_inc_minus1}) and 
the components of $Z_j$ and $D_{s+j}$ is bounded by $\delta_w + (4(s+j)-2)\delta'_w$.

Raising (\ref{eq:sylv_as_inc_minus1}) to the square, we obtain 
\begin{equation} \label{eq:sylv_as_inc_0}
G_{s+j-1}^4\cdot({\bm w}_{j}^{\mathrm{t}}\cdot{\bm A}\cdot{\bm w}_{j})^2  
+ N_j + Z'_j 
= D_{s+j}^2  
\end{equation}
with $N_j \in \scN( \cH_{\ge})$ and $Z'_j \in \scZ( \cH_{=})$.
Let $M_1, \dots, M_\ell \in \K[u]$ be all the $(s+j)$-minors of the matrix ${\bm V}_{s+j}$
and 
consider $Q_1, \dots, Q_\ell \in \K[u]$ such that $D_{s+j} = \sum_{1 \le k \le \ell}M_k\cdot Q_k$. 
Note that for $1 \le k \le \ell$, $\deg_w M_k \le (s+j)\delta'_w$ and 
$\deg_w Q_k \le \delta_w + (3(s+j)-2)\delta'_w$. 
Adding to both sides of (\ref{eq:sylv_as_inc_0}) the sum of squares ${\rm N}(M_1, \dots, M_\ell, Q_1, \dots, Q_\ell)$
defined in Remark \ref{lemCS}, we obtain for $1 \le j \le t+1$, 
\begin{equation}\label{eq:sylv_as_inc}
G_{s+j-1}^4\cdot({\bm w}_{j}^{\mathrm{t}}\cdot{\bm A}\cdot{\bm w}_{j})^2  
+ N'_j + Z'_j 
= G_{s+j}\cdot R_{s+j}  
\end{equation}
with $N'_j \in \scN( \cH_{\ge})$ and $R_{s+j} = 2^{\ell}\sum_{1 \le k \le  \ell}Q_k^2$.  
The degree in $w$ of the first term of (\ref{eq:sylv_as_inc}) and 
the components of $N'_j$ and $Z'_j$ is bounded by $2\delta_w + (8(s+j)-4)\delta'_w$.

We will prove by induction on $h$ that for $1 \le h \le t+1$ we have an identity
\begin{equation}\label{eq:sylv_as_inc_2}
G_{s}^{4^{h}}\cdot \prod_{1 \le j \le h} 
({\bm w}_{j}^\mathrm{t}\cdot {\bm A} \cdot{\bm w}_{j})^{2 \cdot 4^{h-j} }
+ N''_h + Z''_h = G_{s+h}\cdot \prod_{1 \le j \le h}R_{s+j}^{4^{h-j}}
\end{equation}
with $N''_h \in \scN(\cH_{\ge})$, $Z''_h \in \scZ(\cH_{=})$
and degree in $w$ of the first term of (\ref{eq:sylv_as_inc_2}) 
and the components of $N''_h$ and $Z''_h$ bounded by 
$$
\frac23(4^h - 1)\delta_w + 
\frac23\Big( 4^h(3s+2) - 2\Big)\delta'_w.   
$$ 

For $h = 1$, we take equation (\ref{eq:sylv_as_inc}) for $j = 1$. 
Suppose now we have an equation like (\ref{eq:sylv_as_inc_2}) for some 
$1 \le h \le t$. We raise it to the 4-th power and we multiply the result by 
$({\bm w}_{h+1}^\mathrm{t}\cdot {\bm A}\cdot {\bm w}_{h+1})^2$. We obtain 
\begin{equation}\label{eq:sylv_as_inc_3}
G_{s}^{4^{h+1}}\cdot \prod_{1 \le j \le h+1} 
({\bm w}_{j}^\mathrm{t}\cdot {\bm A}\cdot {\bm w}_{j})^{2 \cdot 4^{h+1-j} }
+ N'''_h + Z'''_h = G_{s+h}^4\cdot({\bm w}_{h+1}^\mathrm{t}\cdot {\bm A}\cdot {\bm w}_{h+1})^2\cdot
\prod_{1 \le j \le h}R_{s+j}^{4^{h+1-j}}
\end{equation}
with $N'''_h \in \scN(\cH_{\ge})$ and $Z'''_h \in \scZ(\cH_{=})$. On the other hand, we 
multiply equation (\ref{eq:sylv_as_inc}) for $j = h+1$ by 
$\prod_{1 \le j \le h}R_{s+j}^{4^{h+1-j}}$ and we obtain 
\begin{equation}\label{eq:sylv_as_inc_4}
G_{s+h}^4\cdot({\bm w}_{h+1}^{\mathrm{t}}\cdot{\bm A}\cdot{\bm w}_{h+1})^2 \cdot \prod_{1 \le j \le h}R_{s+j}^{4^{h+1-j}} 
+ N''''_{h+1}+ Z''''_{h+1} 
= G_{s+h+1}\cdot\prod_{1 \le j \le h+1}R_{s+j}^{4^{h+1-j}}
\end{equation}
with $N''''_h \in \scN(\cH_{\ge})$ and $Z''''_h \in \scZ(\cH_{=})$.
Finally, by adding equations (\ref{eq:sylv_as_inc_3}) and (\ref{eq:sylv_as_inc_4}) and simplifying 
equal terms at both sides of the identity, we obtain an equation like (\ref{eq:sylv_as_inc_2})
for $h+1$. The degree bound follows easily. 

Taking into account that 
$G_s = \Gram_s({\bm V})$ and
$G_{s+t+1} = 0$ since ${\bm V}_{s+t+1}$ has only $p = s+t$ rows, 
the proposition follows by considering the incompatibility $\lda \cH \rda$ obtained taking
$h = t+1$ in equation (\ref{eq:sylv_as_inc_2}).
\end{proof}

\begin{lemma}\label{determinant_vs_gram}
Let ${\bm C} \in \K[u]^{p \times p}$, 
$1 \le s \le p$,  $1 \le i_1 < \dots < 
i_s \le p$ and
${\bm v}_1, \dots, {\bm v}_s \in \K[u]^p$ 
be 
the   columns $i_1, \dots, i_s$ of 
${\bm C}$.  Then 
$$
\det(\bm C) \ne 0 \ \ \  \vdash \ \ \  \Gram_s([\bm v_{1} | \dots  | \bm v_{s}]) \ne 0,
$$
where $[\bm v_{1} | \dots  | \bm v_{s}]$ is the matrix in $\K[u]^{p\times s}$ formed
by the vectors $\bm v_{1}, \dots, \bm v_{s}$ as columns.  
If we have an initial 
incompatibility in variables $v \supset u$ with monoid part
$S\cdot \Gram_s([\bm v_{1} | \dots  | \bm v_{s}])^{2e}$
and degree in $w \subset v$ bounded by $\delta_w$,
the final 
incompatibility has monoid part 
$S\cdot \det(\bm C)^{4e}$
and degree in $w$ bounded by 
$\delta_w + 4e(p-s)\deg_w {\bm C}$.
\end{lemma}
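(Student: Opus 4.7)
{Proof proposal.}
The plan is to reduce the statement to a direct polynomial identity between $\det({\bm C})^2$ and $\Gram_s([\bm v_1 | \dots | \bm v_s])$, obtained by combining Laplace expansion with the sum-of-squares identity of Remark \ref{lemCS}, and then to use this identity to transform the initial incompatibility.

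First I would invoke Laplace expansion of $\det({\bm C})$ along the columns $i_1,\ldots,i_s$, which yields
\[
\det({\bm C}) \;=\; \sum_{J \subset \{1,\ldots,p\},\;|J|=s} \varepsilon_J \, M_J \, M'_J,
\]
where $k = \binom{p}{s}$, each $M_J \in \K[u]$ is the $s$-minor of $[\bm v_1 | \dots | \bm v_s]$ formed by the rows indexed by $J$, each $M'_J \in \K[u]$ is the complementary $(p-s)$-minor of ${\bm C}$, and $\varepsilon_J \in \{-1,+1\}$. Applying Remark \ref{lemCS} with $A_J = \varepsilon_J M_J$ and $B_J = M'_J$ then gives a polynomial identity
\[
\det({\bm C})^2 + N_{\mathrm{aux}} \;=\; 2^{k}\,\Gram_s\bigl([\bm v_1|\cdots|\bm v_s]\bigr)\cdot R,
\]
where $R = \sum_J (M'_J)^2 \in \scN(\emptyset)$ and $N_{\mathrm{aux}} \in \scN(\emptyset)$. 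Crucially, $\deg_w M'_J \le (p-s)\deg_w {\bm C}$, so $\deg_w R \le 2(p-s)\deg_w {\bm C}$, while $\deg_w N_{\mathrm{aux}} \le 2p\deg_w {\bm C}$.

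Next, raising this identity to the $(2e)$-th power, I would write
\[
2^{2ke}\,\Gram_s^{2e}\cdot R^{2e} \;=\; \bigl(\det({\bm C})^2 + N_{\mathrm{aux}}\bigr)^{2e} \;=\; \det({\bm C})^{4e} + N'_{\mathrm{aux}},
\]
where $N'_{\mathrm{aux}} = \sum_{j=1}^{2e}\binom{2e}{j}\det({\bm C})^{2(2e-j)}N_{\mathrm{aux}}^{j}$ lies in $\scN(\emptyset)$ (each summand is an even power times an element of $\scN(\emptyset)$), and $\deg_w N'_{\mathrm{aux}} \le 4ep\deg_w{\bm C}$. The initial incompatibility has the form
\[
S\cdot \Gram_s^{2e} + N + Z = 0
\]
with $S \in \scM(\cH_{\ne}^2)$, $N \in \scN(\cH_{\ge})$, $Z \in \scZ(\cH_{=})$, for some system of sign conditions $\cH$ in $\K[v]$. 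Multiplying it by $2^{2ke} R^{2e}$ and substituting by the identity above, I obtain
\[
S\cdot \det({\bm C})^{4e} + S \cdot N'_{\mathrm{aux}} + 2^{2ke}\, N \cdot R^{2e} + 2^{2ke}\, Z \cdot R^{2e} = 0.
\]

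Finally I would read off the required conclusion: $S\cdot \det({\bm C})^{4e} \in \scM((\cH_{\ne}\cup\{\det({\bm C})\})^2)$, the sum $S\cdot N'_{\mathrm{aux}} + 2^{2ke}N\cdot R^{2e}$ lies in $\scN(\cH_{\ge})$, and $2^{2ke}Z\cdot R^{2e} \in \scZ(\cH_{=})$, giving the claimed final incompatibility. For the degree bound, since $\Gram_s$ has degree $2s\deg_w{\bm C}$ in $w$, the monoid part of the initial incompatibility forces $\deg_w S \le \delta_w - 4es\deg_w{\bm C}$, so $\deg_w(S\cdot\det({\bm C})^{4e}) \le \delta_w + 4e(p-s)\deg_w{\bm C}$, and the same bound holds for all other components by the degree estimates on $N'_{\mathrm{aux}}$ and $R^{2e}$. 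There is no real obstacle here; the only point that requires mild care is verifying that the auxiliary polynomial $N'_{\mathrm{aux}}$ obtained by expanding the binomial genuinely lies in $\scN(\emptyset)$, which follows from the fact that $\scN(\emptyset)$ is closed under sums and products.
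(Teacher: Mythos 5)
Your proposal is correct and follows essentially the same route as the paper: the paper also starts from the generalized Laplace expansion $\det({\bm C})=\sum_J \varepsilon_J M_J M'_J$ and then simply invokes Lemma \ref{comb_ne_some_elem_ne} (whose proof is exactly your multiplication by $2^{2ke}R^{2e}$ combined with the identity of Remark \ref{lemCS}), so you have in effect re-proved that lemma inline for this special case. The only point to tidy is the degree argument at the end: rather than asserting $\deg_w S\le \delta_w-4es\deg_w{\bm C}$ (which presumes $\deg_w\Gram_s=2s\deg_w{\bm C}$ exactly), one should bound $\deg_w\det({\bm C})\le \tfrac12\deg_w\Gram_s+(p-s)\deg_w{\bm C}$ and use $\deg_w(S\cdot\Gram_s^{2e})\le\delta_w$, which yields the stated bound in all cases.
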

\begin{proof}{Proof.} 
By the Generalized Laplace Expansion Theorem, $\det(\bm C)$ is a linear combination of 
the $s$ minors of $[\bm v_{1} | \dots  | \bm v_{s}]$, 
where the coefficients are, up to sign,  $p-s$ minors of the matrix formed with the remaining
columns of ${\bm C}$. 
Then, the lemma follows from 
Lemma \ref{comb_ne_some_elem_ne}.
\end{proof}

We can prove now an incompatibility version of Sylvester Inertia Law.

\begin{theorem}[Sylvester Inertia Law as an incompatibility]\label{thSylv2}
Let ${\bm A} \in \K[u]^{p \times p}$ be a symmetric matrix, 
$\bm B, \bm B' \in \K[u]^{p \times p}$, 
$\bm D$, $\bm D' \in \K[u]^{p \times p}$ 
be
diagonal matrices with 
$(\bm D)_{ii} =  D_{i}$ for $1 \le i \le p$ and 
$(\bm D')_{jj} =  D'_{j}$ for $1 \le j \le p$ 
and $\eta, \eta' \in \{-1, 0, 1\}^p$.
If the number of coordinates in $\eta$ and $\eta'$ equal to  $-1$, $0$ and $1$ 
is not respectively the same, there is an incompatibility
$$
\Big \downarrow {\bm A} \equiv \bm B \cdot \bm D \cdot  \bm B^{\mathrm{t}}, \ 
{\bm A} \equiv \bm B' \cdot \bm D' \cdot \bm B'^{\mathrm{t}}, \
\det(\bm B) \ne 0, \ \det(\bm B') \ne 0, $$
$$
\bigwedge_{1 \le i \le p}\sign(D_{i}) = \eta(i), \ 
\bigwedge_{1 \le j \le p}\sign(D'_{j}) = \eta'(j) \
\Big \downarrow
$$
with monoid part 
$$
\det(\bm B)^{2e}\cdot \det(\bm B)^{2e'} \cdot
\prod_{1 \le i \le p, \atop \eta(i) \ne 0}D_i^{2f_{i}}
\cdot
\prod_{1 \le j \le p, \atop \eta'(j) \ne 0}{D'_j}^{2f'_{j}}
$$
with
$ e, e' \le p2^{2p}$, $f_{ i}, f'_{j} \le 2^{2(p-1)}$
and  
degree in $w \subset u$
bounded by 
$$
2^{2p}\deg_w {\bm A} + 
p^22^{2p+1}
\max\{\deg_w {\bm B}, \deg_w {\bm B'}\} +
2^{2p+1}\max\{\deg_w {\bm D}, \deg_w {\bm D'}\}.
$$
\end{theorem}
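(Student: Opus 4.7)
The plan is to apply Proposition~\ref{thSylv0} to two families of $\bm A$-orthogonal vectors extracted from the two diagonalizations of $\bm A$ via the adjugate trick. Let $p_\sigma$ and $p'_\sigma$ for $\sigma \in \{-1,0,+1\}$ denote the multiplicities of sign $\sigma$ in $\eta$ and $\eta'$. Since the signatures disagree, at least one of $p_- \ne p'_-$ or $p_+ \ne p'_+$ must hold. By swapping the roles of the two decompositions, and in the residual case $p_- = p'_-$ replacing everywhere $\bm A, \bm D, \bm D', \eta, \eta'$ by their negatives (a weak inference from the original identities, since $\bm A \equiv \bm B \bm D \bm B^{\mathrm{t}}$ trivially implies $-\bm A \equiv \bm B (-\bm D) \bm B^{\mathrm{t}}$, and similarly for the primed side), we may assume $p_- < p'_-$.

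Introduce $\bm C := \mathrm{adj}(\bm B^{\mathrm{t}})$ and $\bm C' := \mathrm{adj}((\bm B')^{\mathrm{t}})$, whose entries are polynomials of degree at most $(p-1)\deg \bm B$ (resp.\ $(p-1)\deg \bm B'$) in the entries of $\bm B$ (resp.\ $\bm B'$). Since $\bm B^{\mathrm{t}} \bm C = \det(\bm B)\,\bm I$, multiplying the matrix identity $\bm A - \bm B \bm D \bm B^{\mathrm{t}} \equiv \bm 0$ on the left by $\bm C^{\mathrm{t}}$ and on the right by $\bm C$ via Lemma~\ref{lemma_prod_equiv_matric} yields the weak inference $\bm A \equiv \bm B \bm D \bm B^{\mathrm{t}} \ \vdash \ \bm C^{\mathrm{t}} \bm A \bm C \equiv \det(\bm B)^2 \bm D$. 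Reading off entries gives $\bm c_i^{\mathrm{t}} \bm A \bm c_i = \det(\bm B)^2 D_i$ and $\bm c_i^{\mathrm{t}} \bm A \bm c_{i'} = 0$ for $i \ne i'$, where $\bm c_i$ is the $i$-th column of $\bm C$; analogously for the primed system.

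Set $s = p - p_-$ and $t+1 = p_-+1$, so $s+t=p$. Let $I = \{i : \eta(i)\ge 0\}$, of cardinality $s$, and pick any $J \subset \{j : \eta'(j)=-1\}$ of cardinality $t+1$, which is possible because $p'_- > p_-$. Using the basic weak inferences of Section~\ref{Weak.inferences}, deduce $\bm c_i^{\mathrm{t}} \bm A \bm c_i \ge 0$ for $i\in I$ from $\eta(i)\ge 0$ together with $\det(\bm B)^2 \ge 0$, and analogously $\bm c_j'^{\mathrm{t}} \bm A \bm c_j' < 0$ for $j\in J$ from $\eta'(j)=-1$ and $\det(\bm B')\ne 0$. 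The nondegeneracy $\Gram_s((\bm c_i)_{i\in I}) \ne 0$ is obtained from $\det(\bm B)\ne 0$ via $\det(\bm C) = \det(\bm B)^{p-1}$ and Lemma~\ref{determinant_vs_gram}. Applying Proposition~\ref{thSylv0} with $(\bm c_i)_{i\in I}$ as $\bm v_1,\dots,\bm v_s$ and $(\bm c_j')_{j\in J}$ as $\bm w_1,\dots,\bm w_{t+1}$, and composing with the preceding weak inferences, produces the desired incompatibility of the original system.

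The main obstacle is the careful bookkeeping of monoid exponents and degrees. Proposition~\ref{thSylv0} yields a monoid part $\Gram_s^{2^{2(t+1)}} \prod_j (\bm w_j^{\mathrm{t}} \bm A \bm w_j)^{2^{2(t-j)+3}}$, and with $t+1 \le p$ these exponents are bounded by $2^{2p}$. The factor-doubling in Lemma~\ref{determinant_vs_gram} converts $\Gram_s^{2e}$ into $\det(\bm B)^{4e}$, while each $(\bm w_j^{\mathrm{t}} \bm A \bm w_j)^{2k} = (\det(\bm B')^2 D_j')^{2k}$ separates into $\det(\bm B')^{4k}\cdot (D_j')^{2k}$; summing these contributions across all $j$ yields the claimed bounds $e, e' \le p\cdot 2^{2p}$ and $f_i, f_j' \le 2^{2(p-1)}$. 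The degree in $w$ is additive through each weak inference, using $\deg_w \bm C \le (p-1)\deg_w \bm B$, and collects into the stated $2^{2p}\deg_w \bm A + p^2 \cdot 2^{2p+1}\max\{\deg_w \bm B, \deg_w \bm B'\} + 2^{2p+1}\max\{\deg_w \bm D, \deg_w \bm D'\}$.
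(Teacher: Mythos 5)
Your proposal is correct and follows essentially the same route as the paper: the same choice of adjugate columns as the two $\bm A$-orthogonal families, the same combination of Proposition \ref{thSylv0}, Lemma \ref{determinant_vs_gram} and Lemma \ref{lemma_prod_equiv_matric}, and the same exponent and degree bookkeeping. The only difference is cosmetic --- you spell out the swap-and-negate reduction to the case $p_- < p'_-$ that the paper compresses into ``without loss of generality.''
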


\begin{proof}{Proof.}
Let $\delta_w = \deg_w {\bm A}$, $\delta'_w = \max\{\deg_w {\bm B}, \deg_w {\bm B'}\}$
and $\delta''_w = \max\{\deg_w {\bm D}, \deg_w{\bm D'}\}$.
Without loss of generality, we suppose that there are at least $s$ coordinates 
$1\le k_1 < \dots < k_s \le p$
in $\eta$ equal to $0$
or $1$ and at least $t+1$ coordinates $1\le k'_1 < \dots< k'_{t+1} \le p$ in 
$\eta'$ equal to $-1$, with $s\in \N_*, t \in \N$ and  $s + t = p$. 
We take ${\bm v}_1, \dots, {\bm v}_s$ as the 
columns $k_1, \dots, k_s$ of ${\rm Adj}({\bm B})^{\mathrm t}$ and 
${\bm w}_1, \dots, {\bm w}_{t+1}$ as the 
columns $k'_1, \dots, k'_{t+1}$ of ${\rm Adj}({\bm B}')^{\mathrm t}$.

We successively apply to the incompatibility from Proposition \ref{thSylv0}
the weak inferences
$$
\begin{array}{rcl}
\det ({\rm Adj}({\bm B})^{\mathrm t}) \ne 0 & \ \, \vdash \, \ & 
\Gram_s({\bm V}) \ne 0, 
 \\[3mm]
\det ({\bm B}) \ne 0 & \ \, \vdash \, \ &  \det ({\rm Adj}({\bm B})^{\mathrm t}) \ne 0.
 \\[3mm]
\end{array}
$$
Since $\det ({\rm Adj}({\bm B})^{\mathrm t}) = \det({\bm B})^{p-1}$, 
by Lemmas  \ref{determinant_vs_gram}
and \ref{lemma_basic_sign_rule_1} (item \ref{lemma_basic_sign_rule:5}), 
we obtain an incompatibility with monoid part
$$ 
\det({\bm B})^{(p-1)2^{2t+3}}
\cdot
\prod_{1 \le j \le t+1} 
({\bm w}_{j}^\mathrm{t} \cdot {\bm A} \cdot {\bm w}_j)^{2^{2(t-j)+3} }
$$
and degree in $w$ bounded by 
$$
\frac23(2^{2(t+1)}-1)\delta_w + 
(p-1)\Big( 2^{2t+3}\Big(p+\frac23\Big) - \frac43 \Big) \delta'_w .
$$

Then we successively apply for $1 \le i \le s$ and for $1 \le j \le t+1$ the weak inferences
$$
\begin{array}{rcl}
{\bm v}_i^{\mathrm t} \cdot  {\bm A} \cdot {\bm v}_i = \det({\bm B})^2\cdot D_{k_i}, \ 
\det({\bm B})^2 \cdot D_{k_i} \ge 0 & \ \, \vdash \, \ &  {\bm v}_i^{\mathrm t} \cdot {\bm A} \cdot {\bm v}_i \ge 0, \\[3mm]
\det({\bm B})^2  \ge 0, \ D_{k_i} \ge 0 & \ \, \vdash \, \ &  \det({\bm B})^2 \cdot D_{k_i} \ge 0, \\[3mm]
 & \ \, \vdash \, \ &  \det({\bm B})^2  \ge 0, \\[3mm]
{\bm w}_j^{\mathrm t} \cdot  {\bm A} \cdot {\bm w}_j = \det({\bm B}')^2 \cdot D'_{k'_j}, \ 
\det({\bm B}')^2 \cdot D'_{k'_j} < 0  & \ \, \vdash \, \ & 
 {\bm w}_j^{\mathrm t} \cdot {\bm A} \cdot {\bm w}_j < 0, \\[3mm]
\det({\bm B}')^2 > 0, \ D'_{k'_j} < 0   & \ \, \vdash \, \ &  \det({\bm B}')^2 \cdot D'_{k'_j} < 0, \\[3mm]
\det({\bm B}') \ne 0  & \ \, \vdash \, \ &  \det({\bm B}')^2 > 0.
\end{array}
$$
By Lemmas \ref{lemma_basic_sign_rule_1} 
(items \ref{lemma_basic_sign_rule:2}, \ref{lemma_basic_sign_rule:3}, 
\ref{lemma_basic_sign_rule:6} and 
\ref{lemma_basic_sign_rule:7})  
\ref{lemma_sum_of_pos_and_zer_is_pos} (item \ref{lemma_sum_of_pos_and_zer_is_pos:2}), 
and \ref{lemma_sum_of_pos_is_pos},
we obtain an incompatibility with monoid part 
$$ 
\det({\bm B})^{(p-1)2^{2t+3}} 
\cdot
\det({\bm B}')^{\frac43 (2^{2(t+1)}-1)} 
\cdot
\prod_{1 \le j \le t+1} 
{D'_{{k'}_j}}^{ 2^{2(t-j)+3} }
$$
and degree in $w$ bounded by
$$
\frac23(2^{2(t+1)}-1)\delta_w + 
\Big(
p^22^{2t+3} - \frac13p2^{2t+3} - \frac132^{2t+4} + 2ps - \frac83p + \frac43
\Big)\delta'_w 
+
\Big(s + \frac23(2^{2(t+1)}-1)\Big)\delta''_w.
$$

Finally, we successively apply the weak inferences
$$
\begin{array}{rcl}
{\bm A} \equiv \bm B \cdot \bm D \cdot  \bm B^{\mathrm{t}} 
& \ \, \vdash \, \ &
{\rm Adj}(\bm B)\cdot {\bm A} \cdot {\rm Adj}(\bm B)^{\mathrm{t}} \equiv \det(\bm B)^2 \cdot 
\bm D, \\[3mm]
{\bm A} \equiv \bm B' \cdot \bm D' \cdot  \bm B'^{\mathrm{t}} & \ \, \vdash \, \ &
{\rm Adj}(\bm B')\cdot {\bm A} \cdot {\rm Adj}(\bm B')^{\mathrm{t}} \equiv \det(\bm B')^2 \cdot
\bm D'.
\end{array}
$$
By Lemma \ref{lemma_prod_equiv_matric}, we obtain an incompatibility with the same monoid
part and degree in $w$ bounded by 
$$
\frac43(2^{2t+1}+1)\delta_w + 
\Big(
p^22^{2t+3} - \frac13p2^{2t+3} - \frac132^{2t+4} + 2ps + \frac43p + \frac43
\Big)\delta'_w 
+
\Big(s + \frac43(2^{2t+1}+1)\Big)\delta''_w.
$$
which is the incompatibility we wanted to obtain. 
\end{proof}

\subsection{Hermite's quadratic form and Sylvester Inertia Law}\label{subsecHerm_incomp}

In order to obtain the main result of this section, we combine now Sylvester Inertia Law with 
the two methods we have considered to compute the signature of the Hermite's quadratic form.

\begin{notation} 
\label{not2sign}
Let $p \in \N_*$. 
\begin{itemize}

\item For $\tau \in \{-1, 0, 1\}^{\{0,\dots,p-1\}}$ and $d(\tau) = 
(d_0,\ldots,d_s)$, we denote by    
\begin{itemize}
\item ${\rm Rk}_{\rm HMi}(\tau) = p - d_s$, 
\item ${\rm Si}_{\rm HMi}(\tau) = 
\sum\limits_{1 \le i \le s, \atop d_{i-1}-d_i \mathrm{\, odd}} 
\varepsilon_{d_{i-1}-d_{i}} \tau({d_{i-1}) \tau(d_{i})}.$
\end{itemize}

\item For $m, n \in \N$ with $m + 2n = p$, ${\bm \eta} \in 
\{-1, 0, 1\}^{m}$ and 
${\bm \kappa} \in 
\{0, 1\}^{n}$, we denote by
\begin{itemize}
\item ${\rm Rk}_{\rm Fact}({\bm \eta},{\bm \kappa})$ the addition of the number of coordinates in ${\bm \eta}$ 
equal to $-1$ or $1$
and twice the number of coordinates in ${\bm \kappa}$ equal to $1$, 
\item ${\rm Si}_{\rm Fact}({\bm \eta})$ the number of coordinates in ${\bm \eta}$ 
equal to $1$ minus 
the number of coordinates in ${\bm \eta}$ 
equal to
$-1$.
\end{itemize}

\end{itemize}

\end{notation}

Note that ${\rm Rk}_{\rm HMi}(\tau)$ and ${\rm Si}_{\rm HMi}(\tau)$ are respectively
the rank and signature of the matrix ${\rm Her}(P; Q)$ if $\tau$ is the sign condition
satisfied by ${\rm HMi}(P; Q)$. 
Similarly, 
${\rm Rk}_{\rm Fact}({\bm \eta},{\bm \kappa})$ and ${\rm Si}_{\rm Fact}({\bm \eta})$
are respectively
the rank and signature of the matrix ${\rm Her}(P; Q)$ if in the decomposition into real irreducible
factors of $P$, 
${\bm \eta}$ is the sign condition 
satisfied by the real roots of $P$ at $Q$ and 
${\bm \kappa}$ is the invertibility condition satisfied
by the complex non-real roots of $P$ at $Q$.

We define a new auxiliary function. 

\begin{definition}\label{def:func_aux_hilb}
\label{defgH}
Let ${\rm g}_{H}: \N \times \N \to \N$, ${\rm g}_{H}\{p,q\} = 39 \cdot 2^{7p} p^{5p+6}(q+3)^{4p+2}$. 
\end{definition}

In the following theorem, we combine Sylvester Inertia Law with Hermite's Theory as an incompatibility. To do so, 
we use many previously given definitions and notation, namely 
Notation \ref{ResR},
Notation \ref{not:aux_fact}, 
Definition \ref{defFact},
Definition \ref{notation:sign},
Definition \ref{inv},
Notation \ref{notation:testcoeff}
and
Notation \ref{not2sign}.

\begin{theorem}[Hermite's Theory as an incompatibility]\label{hermitetrsubresw} Let $P
, Q
\in \K[u][ y]$ with $\deg_y P = p \ge 1$, 
$\deg_y Q = q$ and 
$P$ monic with respect to $y$. 
For $\tau \in \{-1, 0, 1\}^{\{0, \dots, p-1\}}$, 
$d(\tau) = (d_0, \dots, d_{s})$,
$m+2n=p$, $({\bm \mu}, {\bm \nu}) \in \Lambda_{m} \times \Lambda_{n}$,
${\bm \eta} \in \{-1 ,0, 1\}^{\# {\bm \mu}}$, ${\bm \kappa} \in \{ 0,  1\}^{\# {\bm \nu}}$
such that 
$({\rm Rk}_{\rm HMi}(\tau), {\rm Si}_{\rm HMi}(\tau)) \ne 
({\rm Rk}_{\rm Fact}({\bm \eta}, {\bm \kappa}), {\rm Si}_{\rm Fact}({\bm \eta}))$,  
$t = (t_1, \dots, t_{\# {\bm \mu}})$ and
$z  = (z_1, \dots, z_{\# {\bm \nu}})$, we have
$$
\Big\downarrow
\bigwedge_{0\leq i\leq p-1} \sign({\rm HMi}_i(P; Q)) = \tau(i), \ 
{\rm Fact}(P)^{{\bm \mu},{\bm \nu}}(t,z),
$$ 
$$
\bigwedge_{1\leq j\leq \# {\bm \mu}} {\rm sign}(Q(t_j)) = \eta_j, \
\bigwedge_{1\leq k\leq \# {\bm \nu}} {\inv}(Q(z_k)) = \kappa_k
\Big\downarrow
_{\K[u][ t, a, b]}
$$ 
with monoid part
$$
\prod_{1 \le i \le s}{\rm HMi}_{d_i}(P; Q)^{2g_i}
\cdot
\prod_{1 \le j < j' \le \# {\bm \mu}}(t_{j} - t_{j'})^{2e_{{j}, j'}} 
\cdot
\prod_{1 \le k \le \# {\bm \nu}}b_{k}^{2f_{k}}
\cdot
$$
$$
\cdot 
\prod_{1 \le k < k' \le {\# {\bm \nu}}}
{\rm R}(
z_{k},z_{k'})
^{2g_{{k}, k'}} 
\cdot
\prod_{1 \le j \le \# {\bm \mu}, \atop \eta_j \ne 0}Q(t_j)^{2e'_j}
\cdot
\prod_{1 \le k \le \# {\bm \nu}, \atop \kappa_k \ne 0}(Q^2_{\re}(z_k) + Q^2_{\im}(z_k))^{2f'_k}
$$
with $g_i, e_{j,j'}, f_k, g_{k, k'}, e'_j, f'_k \le {\rm g}_{H}\{p,q\}
$, 
degree in $w \subset u$ bounded by 
$
{\rm g}_{H}\{p,q\}\max\{\deg_w P, \deg_w Q\}
$ and degree in $t_j$ and degree in $(a_k, b_k)$ bounded by $
{\rm g}_{H}\{p,q\}$.
\end{theorem}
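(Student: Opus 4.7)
The plan is to combine the two diagonalizations of ${\rm Her}(P;Q)$ provided earlier in the section—the factorization-side one from Theorem \ref{fact_Her_matrix_fact} and the subresultant-side one from Theorem \ref{thm:her_through_bez}—and then invoke Sylvester Inertia Law in the incompatibility form of Theorem \ref{thSylv2}. Under the hypotheses, these two diagonalizations would force two different rank/signature pairs on the same symmetric matrix ${\rm Her}(P;Q)$, which is precisely what Sylvester forbids. Working backward from the target incompatibility, I peel off one weak existence at a time: first apply Theorem \ref{fact_Her_matrix_fact} to enlarge the hypotheses with ${\rm Her}(P;Q) \equiv {\rm B}_{\bm{\kappa}}(t,z,z') \cdot {\rm Di}_Q^{\bm{\mu},\bm{\nu},\bm{\kappa}}(t) \cdot {\rm B}_{\bm{\kappa}}(t,z,z')^{\mathrm t}$ and $\det({\rm B}_{\bm{\kappa}}(t,z,z')) \ne 0$, introducing fresh complex auxiliaries $z'$; then apply Theorem \ref{thm:her_through_bez} to further enlarge with ${\rm Her}(P;Q) \equiv {\rm B}_{P;Q}^\tau(\ell,\ell',\hat z) \cdot {\rm Di}_{P;Q}^\tau(\ell,\ell') \cdot {\rm B}_{P;Q}^\tau(\ell,\ell',\hat z)^{\mathrm t}$, $\det({\rm B}_{P;Q}^\tau(\ell,\ell',\hat z)) \ne 0$, and the sign conditions $\sign(\ell_{i-1}^2 \cdot {\ell'_i}^2 \cdot {\rm HMi}_{d_{i-1}}(P;Q)^{2d'_i-1} \cdot {\rm HMi}_{d_i}(P;Q)) = \tau(d_{i-1})\tau(d_i)$ for each $i$ with $d'_i$ odd, introducing fresh variables $\ell, \ell', \hat z$ (I rename that theorem's $z$ to $\hat z$ to avoid clashing with the $z$ already present).

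At that point Theorem \ref{thSylv2} applies with $\bm A = {\rm Her}(P;Q)$, $\bm B = {\rm B}_{\bm{\kappa}}(t,z,z')$, $\bm D = {\rm Di}_Q^{\bm{\mu},\bm{\nu},\bm{\kappa}}(t)$, $\bm B' = {\rm B}_{P;Q}^\tau(\ell,\ell',\hat z)$, $\bm D' = {\rm Di}_{P;Q}^\tau(\ell,\ell')$. The required sign information on the two diagonals is available: for $\bm D$, the entries $\mu_j Q(t_j)$ have sign $\eta_j$ (via Lemma \ref{lemma_basic_sign_rule_by_scalar}, since $\mu_j \in \N_*$), the entries $\pm \nu_k \kappa_k$ have signs $\pm \kappa_k$, and the trailing entries are zero; for $\bm D'$, using Proposition \ref{prop:conn_herm_subr} to identify ${\rm HMi}_j(P;Q)$ with ${\rm sR}_j$, each odd-$d'_i$ block has all entries of sign $\varepsilon_{d'_i}\tau(d_{i-1})\tau(d_i)$ on the first half (including the middle) and $-\varepsilon_{d'_i}\tau(d_{i-1})\tau(d_i)$ on the second half, each even-$d'_i$ block contributes the explicit signs $\pm 1$ in balanced numbers, and the final block of size $d_s$ is zero. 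A direct enumeration recovers $({\rm Rk}_{\rm Fact}(\bm{\eta},\bm{\kappa}), {\rm Si}_{\rm Fact}(\bm{\eta}))$ for $\bm D$ and $({\rm Rk}_{\rm HMi}(\tau), {\rm Si}_{\rm HMi}(\tau))$ for $\bm D'$; these pairs disagree by hypothesis, so Sylvester's incompatibility fires on the enlarged system, and the two weak existences then produce the final incompatibility.

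The main obstacle will be the degree and monoid-part bookkeeping, since three nontrivial constructions are composed. The dominant growth comes from Theorem \ref{thm:her_through_bez} with its $2^{3p}p^{4p+2}(q+3)^{3p}$ factor; combined with the $2^{2p}$ prefactor from Sylvester, the $2^{2s(\bm{\kappa})} \le 2^p$ factor from Theorem \ref{fact_Her_matrix_fact}, and the small polynomial prefactors from the bridging sign lemmas (Lemmas \ref{lemma_basic_sign_rule_by_scalar} and \ref{lemma_basic_sign_rule_1}), the composition should fit under ${\rm g}_H\{p,q\} = 39 \cdot 2^{7p} p^{5p+6}(q+3)^{4p+2}$. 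In particular, each monoid exponent among $g_i$, $e_{j,j'}$, $f_k$, $g_{k,k'}$, $e'_j$, $f'_k$ picks up controlled contributions from the three theorems, and careful tracking of how the initial monoid exponents produced by Sylvester (powers of $\det(\bm B)$, $\det(\bm B')$ and of the diagonal entries) interact with the exponent bounds in the hypotheses of the two weak existences is essential; the target uniform bound ${\rm g}_H\{p,q\}$ has been chosen large enough to absorb all of them simultaneously.
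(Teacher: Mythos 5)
Your proposal is correct and follows essentially the same route as the paper: instantiate the Sylvester incompatibility of Theorem \ref{thSylv2} at the two diagonalizations of ${\rm Her}(P;Q)$, then discharge the two matrix-identity hypotheses via the weak inferences of Theorem \ref{fact_Her_matrix_fact} and Theorem \ref{thm:her_through_bez}, with the rank/signature mismatch guaranteeing that Sylvester's hypothesis holds. The only cosmetic difference is the order in which the two weak inferences are composed (the paper applies Theorem \ref{fact_Her_matrix_fact} before Theorem \ref{thm:her_through_bez} in the forward construction), which is immaterial since they act on disjoint hypotheses and auxiliary variables.
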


\begin{proof}{Proof.}
We evaluate 
$$ {\bm A} = {\rm Her}(P;Q), \ 
{\bm B} = {\rm B}_{{\bm \kappa}}(t,z,z'), \ {\bm D} = 
{\rm Di}_{Q}^{{\bm \mu},{\bm \nu}, {\bm \kappa}}(t),
{\bm B}' = {\rm B}_{P;Q}^\tau(\ell, \ell', z''), \ {\bm D}' =   
{\rm Di}_{P;Q}^\tau(\ell,\ell')
$$
in the incompatibility from Theorem \ref{thSylv2} (Sylvester Inertia Law as an incompatibility),
where 
$z' = (z'_k)_{\kappa_k = 1}$, 
$\ell = (\ell_1, \dots, \ell_{s})$, 
$\ell' = (\ell'_1, \dots, \ell'_{s})$ and $z'' = (z''_i)_{d_{i-1} - d_i \, \hbox{\scriptsize{even}}}$
and we obtain 
\begin{equation}\label{inc:dep_final_inc_chap_5}
\begin{array}{c}
\Big \downarrow \ {\rm Her}(P;Q) \equiv {\rm B}_{{\bm \kappa}}(t,z,z')
\cdot 
{\rm Di}_{Q}^{{\bm \mu},{\bm \nu},{\bm \kappa}}(t)
\cdot {\rm B}_{{\bm \kappa}}^{\rm t}(t,z,z'),
\ \det({\rm B}_{{\bm \kappa}}(t,z,z')) \ne 0,
 \\[5mm]
{\rm Her}(P;Q) \equiv
{\rm B}_{P;Q}^\tau(\ell, \ell', z'')
\cdot
{\rm Di}_{P;Q}^\tau(\ell,\ell')
\cdot
{{\rm B}_{ P;Q}^\tau}^{\rm t}(\ell, \ell', z''), \ 
\det({\rm B}_{P;Q}^\tau(\ell, \ell', z'')) \ne 0,  \\[5mm]
\displaystyle{\bigwedge_{1\leq j\leq \# {\bm \mu}} {\rm sign}(Q(t_j)) = \eta_j}, 
\\[5mm]
\displaystyle{
\bigwedge_{1 \le i \le s, \atop d_{i-1}-d_i \hbox{\scriptsize odd}} 
{\rm sign}(\ell_{i-1}^2 \cdot {\ell_i'}^2 \cdot {\rm HMi}_{d_{i-1}}(P; Q)^{2({d_{i-1}} - d_{i})-1} \cdot {\rm HMi}_{d_i}(P; Q)) 
= \tau(d_{i-1})\tau(d_i)} \ \Big \downarrow
_{\K[u][ t, a, b, a', b', \ell, \ell', a'', b'']}
\end{array}
\end{equation}
with monoid part 
$$
\det({\rm B}_{{\bm \kappa}}(t,z,z'))^{2e_1} \cdot \det({\rm B}_{P;Q}^\tau(\ell, \ell', z''))^{2e_2}\cdot
\prod_{1 \le j \le \# {\bm \mu}, \atop \eta_j \ne 0}Q(t_j)^{2f_{1,j}}
\cdot
$$
$$
\prod_{1 \le i \le s, \atop d_{i-1} - d_i \hbox{\scriptsize{odd}}}
\big(\ell_{i-1}^2 \cdot {\ell_i'}^2 \cdot {\rm HMi}_{d_{i-1}}(P; Q)^{2({d_{i-1}} - d_{i})-1} \cdot {\rm HMi}_{d_i}(P; Q)
\big)^{2f_{2,i}}
$$ 
with $e_1, e_2 \le p2^{2p}$, 
$f_{1,j} \le 2^{2(p-1)}$ and 
$f_{2,i} \le p2^{2(p-1)}$, 
degree in $w$ bounded by $9p^4(q+3)2^{2p}\max\{\deg_w P, \deg_w Q \}$,  
degree in $t_j$ and 
degree in $(a_k, b_k)$ bounded by $p^32^{2p+1}$,
degree in $(a'_k, b'_k)$ bounded by $p^22^{2p+1}$,
degree in $\ell_i$  and
degree in $\ell'_i$ bounded by $5p^32^{2p}$
and
degree in $(a''_i, b''_i)$ bounded by $p^22^{2p+1}$.

Then we apply to (\ref{inc:dep_final_inc_chap_5})  the weak inference 
from 
Theorem \ref{fact_Her_matrix_fact} (Hermite's Theory (1) as a weak existence) and we obtain
\begin{equation}\label{inc:aux_final_inc_chap_5_1}
\begin{array}{c}
\Big \downarrow \ 
\displaystyle{{\rm Fact}(P)^{{\bm \mu},{\bm \nu}}(t,z), 
\ \bigwedge_{1\leq j\leq \# {\bm \mu}} {\rm sign}(Q(t_j)) = \eta_j, \ 
\bigwedge_{1 \le k \le \# {\bm \nu}}{\inv}(Q(z_k)) = \kappa_k},
{\rm B}_{P;Q}^\tau(\ell, \ell', z'')
 \\[6mm]
{\rm Her}(P;Q) \equiv
\cdot
{\rm Di}_{P;Q}^\tau(\ell, \ell')
\cdot
{{\rm B}_{ P;Q}^\tau}^{\rm t}(\ell, \ell', z''), \ 
\det({\rm B}_{P;Q}^\tau(\ell, \ell', z'')) \ne 0,\\[6mm]
\displaystyle{\bigwedge_{1 \le i \le s, \atop d_{i-1}-d_i \hbox{\scriptsize odd}} 
{\rm sign}(\ell_{i-1}^2 \cdot {\ell_i'}^2 \cdot {\rm HMi}_{d_{i-1}}(P; Q)^{2({d_{i-1}} - d_{i})-1} \cdot {\rm HMi}_{d_i}(P; Q)) 
= \tau(d_{i-1})\tau(d_i)}
\Big \downarrow
_{\K[u][ t, a, b, \ell, \ell', a'', b'']}
\end{array}
\end{equation}
with monoid part 
$$
\det({\rm B}_{P;Q}^\tau(\ell, \ell', z''))^{2^{2s({\bm \kappa}) + 1}e_2}
\cdot
\prod_{1 \le i \le s, \atop d_{i-1} - d_i \hbox{\scriptsize{odd}}}
\big(\ell_{i-1}^2 \cdot {\ell_i'}^2 \cdot {\rm HMi}_{d_{i-1}}(P; Q)^{2({d_{i-1}} - d_{i})-1} \cdot {\rm HMi}_{d_i}(P; Q)
\big)^{2^{2s({\bm \kappa}) + 1}f_{2,i}}
\cdot
$$
$$
\cdot
\prod_{1 \le j < j' \le \# {\bm \mu}}
(t_{j'} - t_{j})
^{ 2^{2s({\bm \kappa})+1} e_1} 
\cdot
\prod_{1 \le k \le \# {\bm \nu}}b_k 
^{2^{2s({\bm \kappa})+1}(2\# {\bm \mu} + 1)e_1}
\cdot
\prod_{1 \le k < k' \le {\# {\bm \nu}}} 
{\rm R}(
z_{k},z_{k'})
^{2^{2s({\bm \kappa})+1}e_1}\cdot
$$
$$
\cdot
\prod_{1 \le j \le \# {\bm \mu}, \atop \eta_j \ne 0}Q(t_j)^{2^{2s({\bm \kappa}) + 1}f_{1,j}}
\cdot
\prod_{1 \le k \le {\# {\bm \nu}}, \atop \kappa_k = 1} 
(Q_{\re}^2(z_k) + Q_{\im}^2(z_k))^{2f''_k}
$$
with $f''_k \le 2^{2s({\bm \kappa}) - 2}(2e_1 + 1)$,
degree in $w$ bounded by 
$$2^{2s({\bm \kappa})}
\Big( 2^{2p} ( 9p^4(q+3) + 
2s({\bm \kappa})(3p + 2p^2)) + q + 2p + 6\Big)
\max\{\deg_w P, \deg_w Q\},
$$
degree in $t_j$ bounded by 
$$
2^{2s({\bm \kappa})}(p^32^{2p+1} + q + 2p -2),
$$ 
degree in $(a_k, b_k)$ bounded by 
$$
2^{2s({\bm \kappa})}\Big(2^{2p+1}( p^3 + (3p + 2p^2)q) +  6q +  2p -2\Big),
$$ 
degree in $\ell_i$ and 
degree in $\ell'_i$ bounded by $5\cdot 2^{2s({\bm \kappa})}p^32^{2p}$,
and
degree in $(a''_i, b''_i)$ bounded by $2^{2s({\bm \kappa})}p^22^{2p+1}$; where 
$s({\bm \kappa}) = \#\{k \ | \ 1 \le k \le {\# {\bm \nu}}, \kappa_k = 1\}$.

Finally, we apply to (\ref{inc:aux_final_inc_chap_5_1}) the weak inference 
from 
Theorem
\ref{thm:her_through_bez} (Hermite's Theory (2) as a weak existence) and we obtain 
$$
\Big \downarrow \
\bigwedge_{0\le i \le p-1}{\rm sign}({\rm HMi}_i(P;Q))= \tau(i), \
{\rm Fact}(P)^{{\bm \mu},{\bm \nu}}(t,z), 
$$
$$ 
\bigwedge_{1\leq j\leq \# {\bm \mu}} {\rm sign}(Q(t_j)) = \eta_j, \ 
\bigwedge_{1 \le k \le \# {\bm \nu}}{\inv}(Q(z_k)) = \kappa_k \
{\Big \downarrow}_{\K[u][ t, a, b]}
$$
with monoid part 
$$
\prod_{1 \le i \le s} {\rm HMi}_{d_i}(P; Q)^{2g_i}  
\cdot
\prod_{1 \le j < j' \le \# {\bm \mu}}
(t_{j'} - t_{j})^{ 2^{2s({\bm \kappa})+1} e_1f} \cdot
\prod_{1 \le k \le {\# {\bm \nu}}}b_k 
^{2^{2s({\bm \kappa})+1}(2\# {\bm \mu} + 1)e_1f}
$$
$$ \cdot
\prod_{1 \le k < k' \le {\# {\bm \nu}}} 
{\rm R}(
z_{k},z_{k'})
^{2^{2s({\bm \kappa})+1}e_1f}
\cdot
\prod_{1 \le j \le \# {\bm \mu}, \atop \eta_j \ne 0}Q(t_j)^{2^{2s({\bm \kappa}) + 1}f_{1,j}f}
\cdot
\prod_{1 \le k \le {\# {\bm \nu}}, \atop \kappa_k = 1} 
(Q_{\re}^2(z_k) + Q_{\im}^2(z_k))^{2f''_kf}
$$
with 
\begin{eqnarray*}
g_i  &\le &
2^{5p + 2s({\bm \kappa}) -1}p^{4p+4}(q+3)^{3p}   
(5p  + 
5 p^{p+1}(q+3)^p   
+ 10p^{p+1}(q+3)^{p+1} + 1),\\
f&\le& 2^{3p}p^{4p+2}(q+3)^{3p},
\end{eqnarray*}
degree in $w$ bounded by 
$$
2^{3p + 2s({\bm \kappa})}p^{4p+2}(q+3)^{3p}
\Big(  
2^{2p} \Big(  
2s({\bm \kappa})(3p + 2p^2) + 17p^4(q+3) +  
5p^5(q+3) + 
$$
$$
+ 15p^{p+4}(q+3)^{p+1} 
+ 
 31p^{p+4}(q+3)^{p+2}\Big)
+ q + 2p + 6 
\Big)
\max\{\deg_w P, \deg_w Q\},
$$ 
degree in $t_j$ bounded by 
$$
2^{3p + 2s({\bm \kappa})}p^{4p+2}(q+3)^{3p}(p^32^{2p+1} + q + 2p -2),
$$
and 
degree in $(a_k, b_k)$ bounded by 
$$
2^{3p + 2s({\bm \kappa})}p^{4p+2}(q+3)^{3p}
\Big(2^{2p+1}( p^3 + (3p + 2p^2)q) +  6q +  2p -2\Big).
$$
It can be easily seen that this incompatibility satisfies the required bounds to be the final incompatibility.
\end{proof}

\section{Elimination of one variable} \label{sect_elim_of_one_var}
\setcounter{equation}{0}

The main results of this section are as follows: 
given a family $\mathcal{Q}$ of univariate polynomials depending on parameters, 
first, 
to define an eliminating family ${\rm Elim}(\mathcal{Q})$ of polynomials in the parameters, 
such that the signs of the polynomials in the eliminating family ${\rm Elim}(\mathcal{Q})$ determine 
the realizable sign conditions on $\mathcal{Q}$ 
and second, to translate this statement under weak inference form.

Classical Cylindrical Algebraic Decomposition (CAD) is a well known method for constructing 
an eliminating family, containing subresultants of pairs of polynomials  
of $\mathcal{Q}$ (in the case where the polynomials are all monic with respect to the
main variable). However in classical CAD the properties of the eliminating family
are proved using properties of semi-algebraically connected components of realization of sign conditions.
Since semi-algebraic connectivity is not available in our context, we cannot use CAD.

So we need to provide a new 
elimination method.
 This new 
  elimination
 method
 uses
the fact
that each real root of a polynomial is
uniquely determined by the signs it gives to the derivatives of the polynomial (Thom encoding).
The eliminating family of $\mathcal{Q}$  will consist of principal minors of Hermite matrices of pairs of 
polynomials $Q_1,Q_2$  where $Q_1$ belongs to $\mathcal{Q}$ and $Q_2$ is the product of 
(a small number of) derivatives of $Q_1$ and at most one polynomial in  $\mathcal{Q}$ or its square, according to sign determination.
Since minors of Hermite matrices coincide with subresultants (see
Proposition \ref{prop:conn_herm_subr}), the main difference between classical CAD and the 
new elimination method
presented here is that in the new 
method 
it is not sufficient to consider subresultants of pairs of polynomials in $\mathcal{Q}$.

In order to design our new 
elimination method, we proceed in severeal steps.
In Subsection  \ref{subsecThom} we first recall the Thom encodings, which characterize the real roots of a univariate polynomial by sign conditions on the derivatives and we prove some
weak inferences related to them. 
In Subsection \ref{sect_elim_family_fact_order} we consider a univariate polynomial $P$ depending on parameters and
define a family of eliminating polynomials in the parameters whose signs determine
the Thom encodings of the real roots of $P$ (and the sign of another polynomial at these roots).

In Subsection \ref{sect_factor_family}, we consider a whole family ${\cal Q}$ of univariate polynomials depending on parameters
and define the family  ${\rm Elim}(\mathcal{Q})$ whose signs determine 
the ordered list of real roots of all the polynomials in ${\cal Q}$.
Finally, in Subsection \ref{sect_cond_family}, we  deduce that the signs of
${\rm Elim}(\mathcal{Q})$ determine  
the realizable sign conditions on the family ${\cal Q}$. 
All the results in this section are first explained in usual mathematical terms, then translated into weak inferences.

Apart from many results from Section \ref{Weak.inferences}, the only results from previous sections 
used in this section are Theorem 
\ref{thLaplacewithmult} (Real Irreducible Factors with Multiplicities as a weak existence), which is used only once
in the proof of Theorem 
\ref{weaksigndet} (Fixing the Thom encodings as a weak existence) and  
Theorem \ref{hermitetrsubresw} (Hermite's Theory as an incompatibility), which is used once
in the proof of Theorem \ref{weaksigndet} (Fixing the Thom encodings as a weak existence)
and once in the proof of Theorem \ref{weaksigndetgen}
(Fixing the Thom encodings with a Sign as a weak existence).

On the other hand, the main result of the section, Theorem \ref{elimination_theorem}
(Elimination of One Variable as a weak inference)
which describes under weak inference form the fact that the signs of
${\rm Elim}(\mathcal{Q})$ determine  
the realizable sign conditions on ${\cal Q}$ will be the only result from the rest of the paper 
used in Section \ref{secMainTh}.

\subsection{Thom encoding of real algebraic numbers}
\label{subsecThom}

We start this section with a general definition. 

\begin{definition}\label{def:signcondition}
Let $\mathcal{Q}\subset \K[u]$ with $u=(u_1,\ldots,u_k)$.
A {\rm sign condition on a set $\mathcal{Q}$} is an element of 
$\{-1, 0, 1\}^{\mathcal{Q}}.$
The {realization of a sign condition $\tau$ on $\mathcal{Q}$} is defined  by
$${\rm Real}(\tau, \R)=\{\vartheta \in \R^k \mid \bigwedge_{Q\in \mathcal{Q}} {\rm sign}(Q(\vartheta))=\tau(Q)\}.$$
We use $${\rm sign}(\mathcal{Q})=\tau$$ to mean
 $$\bigwedge_{Q\in \mathcal{Q}} {\rm sign}(Q)=\tau(Q).$$
\end{definition}

It will be 
often
convenient 
to use the following abuse of notation.

\begin{notation}
 If $\tau\in \{1,0,-1\}^{\mathcal{Q}}$ is a sign condition on $\mathcal{Q}$
 and $\mathcal{Q'}\subset{\mathcal{Q}}$, we denote again by $\tau$ the restriction 
 $\tau\mid_{\mathcal{Q'}}$  of $\tau$ to ${\mathcal{Q'}}$.
\end{notation}

Now we recall the Thom encoding of real algebraic numbers \cite{CR} and explaining 
its main properties. We refer to \cite{BPRbook} for proofs.

\begin{definition}\label{def:Thom}
Let $P
= \sum_{0 \le h \le p}\gamma_hy^h \in \K[y]$ with $\gamma_p \ne 0$. 
We denote $\Der(P)$ the list formed by the first $p-1$ derivatives of $P$
and $\Der_+(P)$ the list formed by $P$ and $\Der(P)$. 
A real root $\theta$ of $P$ is uniquely determined by the sign condition on $\Der(P)$ evaluated at  $\theta$, i.e. the list of signs of 
$\Der(P)(\theta)$,  
which is called the {\em Thom encoding} of $\theta$ with respect to $P$.

By a slight abuse of notation, we identify sign conditions on $\Der(P)$ (resp. $\Der_+(P)$), i.e. elements of
$\{1,0,-1\}^{\Der(P)}$ (resp. $\{1,0,-1\}^{\Der_+(P)}$)  with $\{-1,0,1\}^{\{1,\ldots,p-1\}}$ (resp. $\{-1,0,1\}^{\{0,\ldots,p-1\}}$).
 For any sign condition
$\eta$ on $\Der(P)$ or $\Der_+(P)$, we extend its definition with $\eta(p)
= \sign(\gamma_p)$ if needed.
\end{definition}

Thom encoding 
not only 
characterizes 
the real roots of a polynomial, it can 
also be used to order real numbers as follows.

\begin{notation}\label{not_order_thom_encoding}
Let $P
= \sum_{0 \le h \le p}\gamma_hy^h \in \K[y]$. 
For $\eta_1, \eta_2$ 
sign conditions on $\Der_+(P)$, we use the notation $\eta_1 \prec_P \eta_2$
to indicate that $\eta_1 \ne \eta_2$ and, 
if $q$ is the biggest value of $k$ such that $\eta_1(k) \ne \eta_2(k)$, then
\begin{itemize}
 \item 
$\eta_1(q)
< \eta_2(q)
$ and $\eta_1(q+1) = 1$ or
 \item 
$\eta_1(q) > \eta_2(q)$ and $\eta_1(q+1) = -1$.
\end{itemize}
We use the notation $\eta_1 \preceq_P \eta_2$ to indicate
that either $\eta_1 = \eta_2$ or $\eta_1 \prec_P \eta_2$. 
\end{notation}

\begin{proposition}\label{lemma_order_thom_encoding} 
Let $P
 =\sum_{0 \le h \le p}\gamma_hy^h \in \K[y]$ with $\gamma_p \ne 0$
and $\theta_1, \theta_2 \in \R$. If 
$\sign(\Der_+(P)(\theta_1)) \prec_P \sign (\Der_+(P)(\theta_2))$ 
then $\theta_1 < \theta_2$.
\end{proposition}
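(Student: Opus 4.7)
The plan is to induct on $p = \deg_y P$. Write $\eta_i := \sign(\Der_+(P)(\theta_i))$ for $i=1,2$, and let $q$ be the largest index where $\eta_1$ and $\eta_2$ disagree. The base case $p = 1$ is immediate: $\eta_i(1) = \sign(\gamma_1) =: \epsilon$ is forced, so $q = 0$, and $\eta_1 \prec_P \eta_2$ translates directly, via the strict monotonicity of $P$ in the direction of $\epsilon$, into the desired inequality $\theta_1 < \theta_2$ by checking the three possible pairs of values of $(\eta_1(0),\eta_2(0))$.

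For the inductive step with $p \ge 2$, I split on $q$. If $q \ge 1$, I pass to $P'$, which has degree $p-1$ and nonzero leading coefficient $p\gamma_p$. The shifted encodings $\tilde\eta_i(k) := \eta_i(k+1)$ are precisely the Thom encodings of $\theta_1,\theta_2$ with respect to $P'$, their largest index of disagreement is $\tilde q = q-1$, and one checks directly that $\tilde\eta_1 \prec_{P'} \tilde\eta_2$ is equivalent to $\eta_1 \prec_P \eta_2$, since the slots entering the definition of $\prec$, namely $\tilde\eta_i(\tilde q) = \eta_i(q)$ and $\tilde\eta_i(\tilde q + 1) = \eta_i(q+1)$, match exactly (using the extension $\tilde\eta_i(p-1)=\sign(\gamma_p)$ when $q=p-1$). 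The inductive hypothesis applied to $P'$ then gives $\theta_1 < \theta_2$.

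If instead $q = 0$, then $\eta_1(k) = \eta_2(k)$ for all $k \ge 1$ and $\epsilon := \eta_1(1) \in \{-1, 1\}$. I invoke Thom's lemma, the classical result stating that for a family of polynomials closed under derivation the realization of any sign condition is empty, a single point, or an open interval (see e.g.\ \cite{BPRbook}), applied to $\{P', P'', \ldots, P^{(p)}\}$ with the signs dictated by $\eta_1$. Both $\theta_1$ and $\theta_2$ lie in this realization, and since $\eta_1 \ne \eta_2$ they are distinct, so the realization must be an open interval $I$ on which $\sign(P') \equiv \epsilon \ne 0$. Hence $P$ is strictly monotone on $I$. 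The definition of $\prec_P$ at $q=0$ forces $\sign(P(\theta_1)) < \sign(P(\theta_2))$ if $\epsilon = 1$ (and the reverse if $\epsilon = -1$), and strict monotonicity of $P$ on $I$ then yields $\theta_1 < \theta_2$. The only substantive ingredient is Thom's lemma, needed to rule out a zero of $P'$ strictly between $\theta_1$ and $\theta_2$; the rest is a straightforward descending induction via differentiation.
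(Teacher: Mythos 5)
Your proof is correct. The paper itself gives no argument for Proposition \ref{lemma_order_thom_encoding} (it defers to \cite{BPRbook}), but the route it takes for the neighbouring quantitative statements --- Lemma \ref{lem:eq_thom_enc_eq_root} and Propositions \ref{thWITL_eq} and \ref{thWITL_lower}, which are the weak-inference counterparts of this proposition --- is genuinely different from yours: there everything rests on the Mixed Taylor Formulas of Proposition \ref{prop2.4.4}, which produce a single algebraic identity $P^{(q)}(t_2)-P^{(q)}(t_1)=(t_2-t_1)\cdot S_o-S_e$ in which, under the given sign conditions on $\Der_+(P)$ at $t_1$ and $t_2$, one has $S_o>0$ and $S_e\ge 0$; the ordering of $t_1$ and $t_2$ is then read off directly from the sign of $(t_2-t_1)S_o$, with no induction and no appeal to connectedness. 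That identity-based argument is what the paper needs, since it converts verbatim into an incompatibility, whereas your argument invokes Thom's lemma, a connectivity statement that is exactly the kind of ingredient the paper is at pains to avoid in its constructive framework. As a proof of the proposition over a real closed field your approach is perfectly legitimate (Thom's lemma holds there and is proved in \cite{BPRbook}) and arguably more transparent: the $q\ge 1$ case reduces cleanly to $P'$ because the slots $\eta_i(q),\eta_i(q+1)$ entering $\prec_P$ shift correctly (including the leading-coefficient convention at $q=p-1$), and the $q=0$ case only needs $P'$ to keep the constant nonzero sign $\epsilon$ on the segment joining $\theta_1$ and $\theta_2$, which connectedness of the realization supplies. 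One cosmetic remark: you do not actually need the realization to be an \emph{open} interval --- connectedness alone gives strict monotonicity of $P$ on $[\theta_1,\theta_2]$ --- though openness does follow, since any $\eta_1(k)=0$ with $1\le k\le p-1$ would force the realization to be a single point, contradicting $\theta_1\ne\theta_2$.
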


Let $\theta_1, \theta_2 \in \R$, $\eta_1 = \sign(\Der_+(P)(\theta_1))$ and  
$\eta_2 = \sign(\Der_+(P)(\theta_2))$ with $\eta_1 \ne \eta_2$, and 
let $q$ 
be
as in Notation \ref{not_order_thom_encoding}. 
Note that 
it is not possible that there exists $k$ such that $q < k < p$ 
and $\eta_1(k) = \eta_2(k) = 0$. Otherwise,
$\theta_1$ and $\theta_2$ would be roots of 
$P^{(k)}$ with the same Thom encoding with respect to this polynomial,
and therefore $\theta_1 = \theta_2$, which is impossible since 
$\eta_1 \ne \eta_ 2$. 

Next we recall the mixed Taylor formulas, 
which play a central role
in proving the weak inference version of 
these results.

\begin{proposition} 
[Mixed Taylor Formulas]
\label{prop2.4.4}
Let $P 
= y^p + \sum_{0  \le h \le p-1}\gamma_hy^h \in \K[y]$. 
For
every $\varepsilon \in \{1,-1\}^{\{1,\ldots,p\}}$ with $\varepsilon(1) = 1$,
there exist $N_{\varepsilon,1}, \dots, N_{\varepsilon,p} \in \N_*$
such that 
\begin{equation}\label{mixed_taylor_formula}
P(t_2)=P(t_1)+ \sum_{1 \le k \le p}\varepsilon(k)
\fra{N_{\varepsilon,k}}{k!}P^{(k)}(a_k)\cdot(t_2 - t_1)^k  
\end{equation}
where,  for $1 \le k \le p-1$, $a_k=t_1$ if $\varepsilon(k) = \varepsilon({k+1})$
and $a_k=t_2$ otherwise.  
\end{proposition}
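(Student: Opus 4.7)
My plan is to prove the identity by induction on $p$, realizing the mixed Taylor formula as a controlled transformation of Taylor's classical expansion at $t_1$.

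The base case $p=1$ is immediate: $P(t_2)-P(t_1)=t_2-t_1=P'(a_1)(t_2-t_1)$, so $N_{\varepsilon,1}=1$ works. For the inductive step I would start from
$$P(t_2)\;=\;P(t_1)+\sum_{k=1}^{p}\frac{1}{k!}P^{(k)}(t_1)(t_2-t_1)^k,$$
which already realizes the desired identity in the special case $\varepsilon\equiv(1,\ldots,1)$ with all $N_{\varepsilon,k}=1$, and then process the indices $k=1,\ldots,p-1$ in increasing order. Whenever the rule forces $a_k=t_2$ (i.e.\ $\varepsilon(k)\neq\varepsilon(k+1)$), I would substitute
$$P^{(k)}(t_1)\;=\;P^{(k)}(t_2)-\sum_{j=1}^{p-k}\frac{1}{j!}P^{(k+j)}(t_1)(t_2-t_1)^j,$$
itself Taylor's expansion of $P^{(k)}$, into the $k$-th term. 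This flips the argument from $t_1$ to $t_2$ in that term and modifies each higher-order coefficient by a rational linear combination of the current ones, keeping the monomial structure in powers of $(t_2-t_1)$ intact.

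Writing after step $k$ the coefficient of $P^{(l)}(t_1)(t_2-t_1)^l$ for $l>k$ as $\varepsilon(k+1)M_{l,k}/l!$ and the finalized coefficient at $l=k$ as $\varepsilon(k)N_{\varepsilon,k}/k!$, the procedure yields $N_{\varepsilon,k+1}=M_{k+1,k}$ together with the recursion $M_{l,k+1}=M_{l,k}$ in the no-flip case and
$$M_{l,k+1}\;=\;\binom{l}{k+1}M_{k+1,k}-M_{l,k}$$
in the flip case, with initial data $M_{l,0}=1$. The sign flip from $\varepsilon(k+1)$ to $\varepsilon(k+2)=-\varepsilon(k+1)$ that accompanies a flip step is built in by the algebra, consistently with the combinatorial rule defining the $a_k$'s. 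Integrality of all $M_{l,k}$, and hence of each $N_{\varepsilon,k}\in\N$, is then immediate from the recursion.

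The main obstacle is the strict positivity $N_{\varepsilon,k}\in\N_*$, which in a flip step reduces to the strict inequality $M_{l,k}<\binom{l}{k+1}M_{k+1,k}$ for $l>k+1$. I would prove this by strong induction on $k$: the base case $k=0$ holds since $M_{l,0}=1<l\cdot M_{1,0}$ for $l>1$, and in the inductive step a careful case analysis on whether step $k+1$ is a flip or not, together with the recursion, preserves positivity. The guiding model is the strictly alternating pattern $\varepsilon(k)=(-1)^{k+1}$, where Pascal's identity $\binom{l}{k+1}=\binom{l-1}{k}+\binom{l-1}{k+1}$ yields the closed form $M_{l,k}=\binom{l-1}{k}$ with the strictly positive slack $\binom{l-1}{k+1}M_{k+1,k}$ after each flip; for general $\varepsilon$ I would maintain a suitable quantitative invariant comparing $M_{l,k}$ to this extremal benchmark.
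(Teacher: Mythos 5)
The paper does not actually prove Proposition \ref{prop2.4.4}; it only cites \cite{Lom.b} and \cite{War}, so your attempt at a self-contained derivation cannot be compared line-by-line with an in-paper argument. Your setup is sound and correctly derived: starting from the Taylor expansion at $t_1$, substituting $P^{(k)}(t_1)=P^{(k)}(t_2)-\sum_{j\ge 1}\frac{1}{j!}P^{(k+j)}(t_1)(t_2-t_1)^j$ at each flip index does produce the recursion $M_{l,k+1}=\binom{l}{k+1}M_{k+1,k}-M_{l,k}$ with $N_{\varepsilon,k+1}=M_{k+1,k}$, the sign bookkeeping is consistent with the rule defining the $a_k$, and integrality is immediate. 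The closed form $M_{l,k}=\binom{l-1}{k}$ in the strictly alternating case is also correct.

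However, there is a genuine gap exactly at the point you identify as "the main obstacle": the strict positivity $M_{l,k+1}>0$ at flip steps, i.e. $M_{l,k}<\binom{l}{k+1}M_{k+1,k}$ for $l>k+1$. This inequality is the entire content of the proposition (an identity with arbitrary rational coefficients would be trivial, and the paper needs $N_{\varepsilon,k}\in\N_*$ precisely so that these terms sit in the nonnegative cone), yet you defer it to an unspecified "suitable quantitative invariant" and a "careful case analysis" that is never carried out. The natural candidates do not close without further work. For instance, the invariant $M_{l,k}\le\binom{l-1}{k}M_{k+1,k}$ (equivalently, $l\mapsto M_{l,k}/\binom{l-1}{k}$ maximized at $l=k+1$) does imply the needed bound via Pascal's identity, but checking that a flip step preserves it reduces to
$$lM_{l,k}\le\binom{l}{k}M_{k+1,k}+(l-k-1)M_{l+1,k},$$
which holds with \emph{equality} in the alternating case $M_{l,k}=\binom{l-1}{k}$; so there is no slack, and one cannot get away with combining crude one-sided bounds (e.g. $M_{l+1,k}\ge M_{l,k}$ together with the upper bound fails, since it would require $(k+1)\binom{l-1}{k}\le\binom{l}{k}$, which is false for $l\ge k+2$). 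A correct proof must therefore track a sharper two-sided or monotonicity invariant on the whole sequence $(M_{l,k})_{l>k}$ and verify its propagation through both flip and non-flip steps; until you exhibit that invariant and the verification, the positivity claim — and hence the proposition — is not established.
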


Note that $a_p$ is not defined in (\ref{mixed_taylor_formula}), but this is not important
since $P^{(p)}$ is a constant.
A proof of Proposition \ref{prop2.4.4} can be found in \cite{Lom.b} and also in \cite{War}.

We prove now the weak inference version 
of
the main properties of Thom encoding.

\begin{proposition}\label{thWITL_eq} 
Let $p \ge 1$, $P
 = y^p + \sum_{0 \le h \le p-1}C_h \cdot y^h\in \K[u][ y]$,  
$\eta_1, \eta_2$ 
be
sign conditions on $\Der_+(P)$
such that exists $q$, $0 \le q \le p-1$, with $\eta_1(q)
= \eta_2(q) = 0$ and $\eta_1(k) = \eta_2(k) \ne 0$ for $q+1 \le k \le p-1$.
Then
$$
\sign (\Der_+(P)(t_1)) = \eta_1, \ \sign (\Der_+(P)(t_2)) = \eta_2  
\ \  \ \vdash \ \ \ t_1 = t_2.
$$

If we have an initial incompatibility in variables $v \supset (u, t_1, t_2)$ 
with monoid part 
$S$,
degree in $w$ bounded by $\delta_w$ for some subset of variables $w \subset v$ disjoint from 
$(t_1, t_2)$, degree in $t_1$ and degree in $t_2$ bounded by $\delta_{t}$, 
the final incompatibility has monoid part
$$
S \cdot P^{(q+1)}(t_1)^2 \cdot P^{(q+1)}(t_2)^2,
$$
degree in $w$ bounded by $2\delta_w + 14\deg_w P$ and 
degree in $t_1$ and degree in $t_2$ bounded by $2\delta_{t} + 14(p-q) -8$. 
\end{proposition}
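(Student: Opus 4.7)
The strategy is to use the mixed Taylor formula (Proposition \ref{prop2.4.4}) applied to $P^{(q)}$, combined with a case-by-case reasoning on the sign of $t_1-t_2$ via Lemma \ref{CasParCas_3}. The underlying mathematical idea is that with a suitable choice of $\varepsilon\in\{\pm1\}^{p-q}$ satisfying $\varepsilon(1)=1$, every summand in the Taylor expansion of $P^{(q)}(t_2)-P^{(q)}(t_1)$ acquires the same sign, and the first summand is strictly non-zero; since the left-hand side vanishes (by $\eta_1(q)=\eta_2(q)=0$), this is incompatible with $t_1\neq t_2$. Denote by $\eta(k)$ the common non-zero sign of $P^{(k)}(t_1)$ and $P^{(k)}(t_2)$ for $q+1\le k\le p-1$, with $\eta(p):=1$ since $P$ is monic.

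In the subcase $t_1=t_2$, the initial incompatibility $\lda\cH,t_1=t_2\rda$ serves directly, after trivially adding the sign conditions on $\Der_+(P)$ as weakenings. In the subcase $t_1<t_2$, define $\varepsilon_1(k):=\eta(q+1)\eta(q+k)$ for $1\le k\le p-q$, so that $\varepsilon_1(1)=1$. The mixed Taylor formula gives the algebraic identity
\[
P^{(q)}(t_2)-P^{(q)}(t_1)=\sum_{k=1}^{p-q}\varepsilon_1(k)\,\tfrac{N_{\varepsilon_1,k}}{k!}\,P^{(q+k)}(a^{1}_k)\,(t_2-t_1)^k,
\]
with each $a^1_k\in\{t_1,t_2\}$ prescribed by $\varepsilon_1$. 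Multiplying by $\eta(q+1)$, each right-hand-side summand becomes a positive scalar times $\eta(q+k)\cdot P^{(q+k)}(a^1_k)\cdot(t_2-t_1)^k$, which lies in the positive cone generated by $t_2-t_1$ once the sign hypotheses $\sign P^{(q+k)}(a^1_k)=\eta(q+k)$ are absorbed via Lemma \ref{lemma_basic_sign_rule_1}. The $k=1$ summand is moreover strictly positive using $P^{(q+1)}(a^1_1)\neq 0$, contributing the factor $(P^{(q+1)}(a^1_1))^2$ to the monoid part. Combined with the ideal relations $P^{(q)}(t_1)=P^{(q)}(t_2)=0$ and Lemma \ref{lemma_sum_of_pos_is_pos}, this produces an incompatibility in the subcase $t_1-t_2<0$. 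The subcase $t_1>t_2$ is handled symmetrically with $\varepsilon_2(k):=(-1)^{k+1}\eta(q+1)\eta(q+k)$, which satisfies $\varepsilon_2(1)=1$ and compensates the alternating signs of $(t_2-t_1)^k$ for $t_1>t_2$, producing $(P^{(q+1)}(a^2_1))^2$ in the monoid.

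Combining the three subcases via Lemma \ref{CasParCas_3} and majorizing $(P^{(q+1)}(a^j_1))^2$ by $(P^{(q+1)}(t_1))^2\cdot(P^{(q+1)}(t_2))^2$ (since $a^j_1\in\{t_1,t_2\}$), one obtains the announced monoid part $S\cdot(P^{(q+1)}(t_1))^2\cdot(P^{(q+1)}(t_2))^2$. The degree bounds $2\delta_w+14\deg_w P$ and $2\delta_t+14(p-q)-8$ follow by routine tracking through Lemmas \ref{CasParCas_3} and \ref{lemma_sum_of_pos_is_pos}, using that each summand of the Taylor identity has degree in $w$ at most $\deg_w P$ and degree in $(t_1,t_2)$ at most $p-q$. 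The principal technical obstacle is the sign bookkeeping, namely verifying that the chosen $\varepsilon_j$ genuinely yields uniform signs in the Taylor sum and that the "strict positivity" of the first summand transfers correctly through Lemma \ref{lemma_sum_of_pos_is_pos} to produce exactly the squared factors $(P^{(q+1)}(a_1^j))^2$; once this is set up, the degree estimate and combinatorial bookkeeping through the case-by-case reasoning are straightforward.
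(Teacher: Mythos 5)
Your sign analysis of the mixed Taylor formula is correct, but the way you combine the three subcases cannot yield the stated monoid part and degree bounds, and in this paper those bounds are the actual content of the proposition. In your split via Lemma \ref{CasParCas_3} (applied to $t_2-t_1$), the branch $t_1=t_2$ is the only one that consumes the initial incompatibility, so its monoid part is $S$; the two strict branches are incompatibilities of $\{t_2-t_1>0,\ \hbox{sign conditions}\}$ and $\{t_2-t_1<0,\ \hbox{sign conditions}\}$ built from scratch, and their monoid parts necessarily contain $(t_2-t_1)^{2e_j}$ with $e_j\ge 1$, because the only source of strict positivity in your Taylor sum is the $k=1$ term $\eta(q+1)P^{(q+1)}(a_1)\cdot(t_2-t_1)$, whose square must enter the monoid. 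Lemma \ref{CasParCas_3} then returns the monoid part $S_1\cdot S_2\cdot S^{2(e_1+e_2)}$, i.e.\ at least $S^{4}$, with degree in $w$ at least $4\delta_w+O(\deg_w P)$ --- not $S$ and $2\delta_w+14\deg_w P$ as claimed. Moreover, "majorizing $(P^{(q+1)}(a^j_1))^2$ by $(P^{(q+1)}(t_1))^2(P^{(q+1)}(t_2))^2$" is not a legitimate operation on incompatibilities: a monoid factor cannot simply be replaced by a larger product; the stated product has to be produced in the algebraic identity itself.

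The paper's route avoids the case split on the sign of $t_2-t_1$ altogether. It first proves the one-sided weak inference giving $t_1\le t_2$ (Lemma \ref{lem:eq_thom_enc_eq_root}) by choosing $\varepsilon(k)=(-1)^{k-1}\eta(q+k)$ (up to the normalization $\eta(q+1)=1$) and regrouping the Taylor identity as $P^{(q)}(t_2)-P^{(q)}(t_1)=(t_2-t_1)\cdot S_o-S_e$, where $S_o$ collects the odd-$k$ terms with one factor $(t_2-t_1)$ extracted and $S_e$ the even-$k$ terms; only even powers $(t_2-t_1)^{2j}$ remain inside $S_o$ and $S_e$, so $S_o>0$ and $S_e\ge 0$ hold unconditionally, the strictness of $S_o$ coming from its constant-in-$(t_2-t_1)$ term $N_{\varepsilon,1}P^{(q+1)}(a_1)$. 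Dividing out $S_o$ via Lemma \ref{lem_prod_le_g_then_fact_le} then costs only the factor $P^{(q+1)}(a_1)^2$ in the monoid. Finally the two inequalities $t_1\le t_2$ and $t_2\le t_1$ are merged by Lemma \ref{lemma_greatereq_and_lowereq_then_eq}, which preserves the monoid part exactly; this is what produces $S\cdot P^{(q+1)}(t_1)^2\cdot P^{(q+1)}(t_2)^2$ and the coefficient $2$ in front of $\delta_w$. To repair your argument you need this odd/even regrouping (or an equivalent device) in place of the case-by-case reasoning of Lemma \ref{CasParCas_3}.
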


In order to prove Proposition \ref{thWITL_eq}, we will prove first an auxiliary lemma.

\begin{lemma}\label{lem:eq_thom_enc_eq_root} 
Let $p \ge 1$, $P
 = y^p + \sum_{0 \le h \le p-1}C_h \cdot y^h\in \K[u][ y]$,  
$\eta_1, \eta_2$
be
sign conditions on $\Der_+(P)$
such that exists $q$, $0 \le q \le p-1$, with $\eta_1(q)
= \eta_2(q) = 0$ and $\eta_1(k) = \eta_2(k) \ne 0$ for $q+1 \le k \le p-1$.
Then
$$
\sign (\Der_+(P)(t_1)) = \eta_1, \ \sign (\Der_+(P)(t_2)) = \eta_2  
\ \  \ \vdash \ \ \ t_1 \le t_2.
$$

If we have an initial incompatibility in variables $v \supset (u, t_1, t_2)$ 
with monoid part 
$S$,
degree in $w$ bounded by $\delta_w$ for some subset of variables $w \subset v$ disjoint from 
$(t_1, t_2)$, degree in $t_1$ and degree in $t_2$ bounded by $\delta_{t}$, 
the final incompatibility has monoid part
$$
S \cdot P^{(q+1)}(a_1)^2
$$
where $a_1 = t_1$ if $q < p-1$ and $\eta_1(q+1)\eta_1(q+2) = -1$ and $a = t_2$ otherwise, 
degree in $w$ bounded by $\delta_w + 7\deg_w P$, 
and degree in $t_1$ and degree in $t_2$ bounded by $\delta_{t} + 7(p  -q) - 4$.\end{lemma}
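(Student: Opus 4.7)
My plan is to apply the mixed Taylor formula from Proposition \ref{prop2.4.4} to the polynomial $P^{(q)}$, expanded between $t_1$ and $t_2$, and to derive from it an algebraic identity that — after multiplying by a suitable square factor and using the Thom sign conditions — forces $(t_2-t_1)$ to be a nonnegative combination. Writing the formula with a sign vector $\varepsilon \in \{1,-1\}^{\{1,\ldots,p-q\}}$ satisfying $\varepsilon(1) = 1$, one obtains
\[
\sum_{k=1}^{p-q} \varepsilon(k)\,\tfrac{N_{\varepsilon,k}}{k!}\,P^{(q+k)}(a_k)\,(t_2-t_1)^k \;=\; P^{(q)}(t_2) - P^{(q)}(t_1),
\]
and the right-hand side vanishes modulo the equality conditions $P^{(q)}(t_1)=0=P^{(q)}(t_2)$ included in the hypothesis (these are exactly $\eta_1(q)=0$ and $\eta_2(q)=0$). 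The crucial step is to choose $\varepsilon(k) = (-1)^{k-1}\,\eta_1(q+1)\,\eta_1(q+k)$. This choice satisfies $\varepsilon(1)=1$, and the rule of Proposition \ref{prop2.4.4} gives $a_1 = t_1$ precisely when $\varepsilon(1) = \varepsilon(2)$, i.e.\ when $\eta_1(q+1)\,\eta_1(q+2) = -1$ (and $q<p-1$); when $q=p-1$ the convention $a_1 = t_2$ is irrelevant since $P^{(p)}$ is the constant $p!$. This matches exactly the description of $a_1$ in the statement.

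With this $\varepsilon$, and using $\eta_1(q+k) = \eta_2(q+k) = \mathrm{sign}(P^{(q+k)}(a_k))$ for $k \ge 1$, the quantity $\varepsilon(k)\,P^{(q+1)}(a_1)\,P^{(q+k)}(a_k)$ has sign $(-1)^{k-1}$ under the Thom sign conditions. Multiplying the mixed Taylor identity by $N_{\varepsilon,1}\,P^{(q+1)}(a_1)$ and grouping terms by parity of $k$, I would rewrite it as
\[
E\,(t_2-t_1) \;+\; T_{\mathrm{even}} \;=\; N_{\varepsilon,1}\,P^{(q+1)}(a_1)\,\bigl[P^{(q)}(t_2) - P^{(q)}(t_1)\bigr],
\]
where
\[
E \;=\; N_{\varepsilon,1}^{2}\,P^{(q+1)}(a_1)^{2} + \!\!\!\sum_{\substack{k\text{ odd}\\ k \ge 3}}\!\!\! \varepsilon(k)\,\tfrac{N_{\varepsilon,k}N_{\varepsilon,1}}{k!}\,P^{(q+1)}(a_1)\,P^{(q+k)}(a_k)\,(t_2-t_1)^{k-1}
\]
and
\[
T_{\mathrm{even}} \;=\; \sum_{k\text{ even}} \varepsilon(k)\,\tfrac{N_{\varepsilon,k}N_{\varepsilon,1}}{k!}\,P^{(q+1)}(a_1)\,P^{(q+k)}(a_k)\,(t_2-t_1)^{k}.
\]
A sign check then shows that $E$ is a nonnegative combination (a sum of the square $N_{\varepsilon,1}^{2}P^{(q+1)}(a_1)^{2}$, which is strictly positive because $\eta_1(q+1)\neq 0$, plus positive coefficients times even powers of $(t_2-t_1)$), while $-T_{\mathrm{even}}$ is a nonnegative combination (each summand has a negative coefficient times an even power of $(t_2-t_1)$).

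To turn this into the weak inference, I would start from the initial incompatibility $\lda t_1 \le t_2,\,\cH\rda$, of the form $S + N + N'\cdot(t_2-t_1) + Z = 0$, multiply it by $E$, and substitute $N'\cdot E\,(t_2-t_1)$ via the displayed identity. The square summand of $E$ contributes the factor $P^{(q+1)}(a_1)^{2}$ to the new monoid part $S\cdot P^{(q+1)}(a_1)^{2}$, matching the claim; the remaining parts of $S\cdot E$, together with $N\cdot E$ and $-N'\cdot T_{\mathrm{even}}$, lie in $\scN(\cH_{\ge})$ (using Lemma \ref{lemma_sum_of_pos_and_zer_is_pos} and Lemma \ref{lem_prod_le_g_then_fact_le}-style manipulations); the factor $P^{(q)}(t_2) - P^{(q)}(t_1)$ lies in the ideal generated by the equalities $P^{(q)}(t_1) = P^{(q)}(t_2) = 0$ from the hypothesis; and $Z\cdot E$ remains in $\scZ(\cH_=)$. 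The main technical obstacle is the precise accounting of degrees through the multiplication by $E$ — which has degree in $w$ of order $2\deg_w P$ and in $(t_1,t_2)$ of order $2(p-q)-2$ — combined with the substitution, to arrive at the stated bounds $\delta_w + 7\deg_w P$ and $\delta_t + 7(p-q)-4$.
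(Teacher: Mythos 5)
Your proposal is correct and follows essentially the same route as the paper: the same mixed Taylor formula applied to $P^{(q)}$ with the same choice of $\varepsilon$ (you fold the paper's normalization $\eta(q+1)=1$ into the definition $\varepsilon(k)=(-1)^{k-1}\eta_1(q+1)\eta_1(q+k)$), the same odd/even split into $S_o$ and $S_e$ (your $E$ and $-T_{\mathrm{even}}$ are, up to the factor $N_{\varepsilon,1}P^{(q+1)}(a_1)$, the paper's $S_o$ and $S_e$), and the same resulting monoid part $S\cdot P^{(q+1)}(a_1)^2$; the only difference is that you write out the final algebraic identity by hand where the paper routes the computation through Lemmas \ref{lem_prod_le_g_then_fact_le}, \ref{lemma_sum_of_pos_and_zer_is_pos}, \ref{lemma_sum_of_pos_is_pos} and \ref{lemma_basic_sign_rule_1}. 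The degree accounting you leave as an "obstacle" is exactly the routine bookkeeping those lemmas perform, so nothing essential is missing.
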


\begin{proof}{Proof.}
Consider the initial incompatibility 
\begin{equation}\label{inc:init_prop_thom_enc_eq}
\lda t_1 \le t_2, \ \cH \rda 
\end{equation}
where $\cH$ is a system of sign conditions in $\K[v]$. 
For $q \le k  \le p$ we denote $\eta(k) = \eta_1(k) = \eta_2(k)$. 
If $\eta(q+1) = -1$, we change $P$ by $-P$, $\eta_1$ by $-\eta_1$ and $\eta_2$ by $-\eta_2$; so without 
loss of generality we suppose $\eta(q+1) = 1$.

The mixed Taylor formula (Proposition \ref{prop2.4.4}) for $P^{(q)}$ and 
$\varepsilon =$ 
$[\eta(q+1), -\eta(q+2)$ $\dots,$ $(-1)^{p-q-1}\eta(p)]$
provides us the  identity  
\begin{eqnarray}\label{mixed_tay_for_applied_thom_lemma}
P^{(q)}(t_2) - P^{(q)}(t_1) & = & (t_2 - t_1) \cdot  S_{o}-S_{e}
\end{eqnarray}
where
$$
\begin{array}{rcl}
 S_{o}&=& N_{\varepsilon,1}P^{(q+1)}(a_1) + \sum_{3 \le k \le p-q, \atop k \, \hbox{\scriptsize {odd }}} 
\fra{N_{\varepsilon,k}}{k!}\eta(q+k)P^{(q+k)}(a_k)\cdot(t_2 - t_1)^{k-1}, \\[4mm]
S_{e}&=&\sum_{2 \le k \le p-q, \atop k \, \hbox{\scriptsize {even }}} 
\fra{N_{\varepsilon,k}}{k!}\eta(q+k)P^{(q+k)}(a_k)\cdot(t_2 - t_1)^k.
\end{array}
$$

We successively apply to (\ref{inc:init_prop_thom_enc_eq}) the weak inferences
$$
\begin{array}{rcl}
(t_2 - t_1)\cdot S_{o} \ge 0, \ S_{o} > 0 & \ \, \vdash \, \ & t_1 \le t_2, 
\\[3mm]
(t_2 - t_1)\cdot S_o - S_e = 0, \ S_e \ge 0
& \vdash & 
(t_2 - t_1)\cdot S_o \ge 0, 
\\[3mm]
P^{(q)}(t_1) = 0, \ P^{(q)}(t_2) = 0 
& \vdash & 
(t_2 - t_1)\cdot S_o - S_e = 0.
\end{array}
$$
By Lemmas \ref{lem_prod_le_g_then_fact_le} and \ref{lemma_sum_of_pos_and_zer_is_pos} 
(items \ref{lemma_sum_of_pos_and_zer_is_pos:1} and \ref{lemma_sum_of_pos_and_zer_is_pos:2})
using (\ref{mixed_tay_for_applied_thom_lemma}),  we obtain 
\begin{equation}\label{inc:aux_prop_thom_enc_eq_2} 
\lda  S_o > 0, \ S_e \ge 0, \ P^{(q)}(t_1) = 0, \ P^{(q)}(t_2) = 0,  \
\cH
\rda 
\end{equation}
with 
monoid part $S \cdot S_{o}^2 $, degree in $w$ bounded by $\delta_w + 4\deg_w P$ and  
degree in $t_1$ and degree in $t_2$ bounded by $\delta_{t} + 4(p  -q ) - 2$.

Then we successively apply to (\ref{inc:aux_prop_thom_enc_eq_2}) the weak inferences
$$
\begin{array}{rcl}
\displaystyle{P^{(q+1)}(a_1) > 0, \
\bigwedge_{3 \le k \le p-q, \atop k \, \hbox{\scriptsize {odd }}}
\eta(q+k)P^{(q+k)}(a_k)\cdot(t_2 - t_1)^{k-1}  \ge 0}
& \ \, \vdash \, \ & 
S_o > 0,
\\[3mm]
\displaystyle{\bigwedge_{2 \le k \le p-q, \atop k \, \hbox{\scriptsize {even }}} 
\eta(q+k)P^{(q+k)}(a_k)\cdot(t_2 - t_1)^k \ge 0}
& \vdash & 
S_e \ge 0.
\end{array}
$$
By Lemmas 
\ref{lemma_sum_of_pos_is_pos} and
\ref{lemma_sum_of_pos_and_zer_is_pos} (item  \ref{lemma_sum_of_pos_and_zer_is_pos:2})
we obtain an incompatibility
\begin{equation}\label{inc:aux_prop_thom_enc_eq_3} 
\begin{array}{c} 
\displaystyle{\Big\downarrow \
P^{(q+1)}(a_1) > 0, \
\bigwedge_{3 \le k \le p-q, \atop k \, \hbox{\scriptsize {odd }}}
\eta(q+k)P^{(q+k)}(a_k)\cdot(t_2 - t_1)^{k-1}  \ge 0,} \\[4mm]
\displaystyle{ \bigwedge_{2 \le k \le p-q, \atop k \, \hbox{\scriptsize {even }}} 
\eta(q+k)P^{(q+k)}(a_k)\cdot(t_2 - t_1)^k \ge 0, 
 \ P^{(q)}(t_1) = 0, \ P^{(q)}(t_2) = 0,
 \
\cH \ \Big\downarrow }
\end{array}
\end{equation}
with 
monoid part $S \cdot P^{(q+1)}(a_1)^2$, degree in $w$ bounded by $\delta_w + 7\deg_w P$ and
degree in $t_1$ and degree in $t_2$ bounded by $\delta_{t} + 7(p  -q ) - 4$.

Then we successively apply to (\ref{inc:aux_prop_thom_enc_eq_3}) for odd $k$,  $3 \le k \le p-q$,
the weak inferences
$$
\begin{array}{rcl}
\eta(q+k)P^{(q+k)}(a_k) \ge 0, \ (t_2 - t_1)^{k-1} \ge 0 
 & \ \, \vdash \, \ &
\eta(q+k)P^{(q+k)}(a_k)\cdot(t_2 - t_1)^{k-1} \ge 0, 
\\[3mm]
\sign(P^{(q+k)}(a_k)) = \eta(q+k) & \vdash & \eta(q+k)P^{(q+k)}(a_k) \ge 0,
 \\[3mm]
 & \vdash & (t_2 - t_1)^{k-1} \ge 0,
\end{array}
$$
and for even $k$, $2 \le k \le p-q$, the weak inference
$$
\begin{array}{rcl}
\eta(q+k)P^{(q+k)}(a_k) \ge 0 , \ (t_2 - t_1)^{k} \ge 0 
 & \ \, \vdash \, \ &
\eta(q+k)P^{(q+k)}(a_k)\cdot(t_2 - t_1)^k  \ge 0,
\\[3mm]
\sign(P^{(q+k)}(a_k)) = \eta(q+k) & \vdash & \eta(q+k)P^{(q+k)}(a_k) \ge 0,
\\[3mm]
 & \vdash & (t_2 - t_1)^{k} \ge 0. 
\end{array}
$$
By Lemma \ref{lemma_basic_sign_rule_1} (items 
\ref{lemma_basic_sign_rule:1},  
\ref{lemma_basic_sign_rule:2} and \ref{lemma_basic_sign_rule:6}) we obtain 
$$
\lda \sign (\Der_+(P)(t_1)) = \eta_1, \ \sign (\Der_+(P)(t_2)) = \eta_2,  
 \ \cH \rda
$$
with the same monoid
part and degree bounds. 
\end{proof}

We can prove now Proposition \ref{thWITL_eq}.

\begin{proof}{Proof of Proposition \ref{thWITL_eq}.}
Consider the initial incompatibility 
\begin{equation}\label{inc:init_prop_thom_enc_eq_prop}
\lda t_1 = t_2, \ \cH \rda 
\end{equation}
where $\cH$ is a system of sign conditions in $\K[v]$. 

We successively apply to (\ref{inc:init_prop_thom_enc_eq_prop}) the weak inferences
$$
\begin{array}{rcl}
t_1 \ge t_2, \ t_1 \le t_2 & \ \, \vdash \, \ & t_1 = t_2, \\[3mm] 
\sign (\Der_+(P)(t_1)) = \eta_1, \ \sign (\Der_+(P)(t_2)) = \eta_2  
& \vdash & t_1 \le t_2, \\[3mm]
\sign (\Der_+(P)(t_2)) = \eta_2, \ \sign (\Der_+(P)(t_1)) = \eta_1  
& \vdash & t_2 \le t_1. 
\end{array}
$$
By Lemmas \ref{lemma_greatereq_and_lowereq_then_eq} and \ref{lem:eq_thom_enc_eq_root},
we obtain 
$$
\lda \sign (\Der_+(P)(t_1)) = \eta_1, \ \sign (\Der_+(P)(t_2)) = \eta_2 ,  \  \cH \rda 
$$
with monoid part
$
S\cdot P^{(q+1)}(t_1)^2\cdot P^{(q+1)}(t_2)^2,
$
degree in $w$ bounded by $2\delta_w + 14\deg_w P$ and  
degree in $t_1$ and 
degree in $t_2$ bounded by $2\delta_{t} + 14(p-q) -8$,
which serves as the final incompatibility.
\end{proof}

\begin{proposition}\label{thWITL_lower} Let $p \ge 1, 
P
 = y^p + \sum_{0 \le h \le p-1}C_h \cdot y^h\in \K[u][ y]$, 
$\eta_1, \eta_2$ 
be
sign conditions on $\Der_+(P)$
such that exists $q$, $0 \le q \le p-1$, with $\eta_1(q)
\ne \eta_2(q)$ and $\eta_1(k) = \eta_2(k) \ne 0$ for $q+1 \le k \le p-1$, 
and $\eta_1 \prec_P \eta_2$. 
Then
$$
\sign(\Der_+(P)(t_1)) = \eta_1, \ \sign (\Der_+(P)(t_2)) = \eta_2  
\ \ \ \vdash \ \ \ t_1 < t_2.
$$
If we have an initial incompatibility in variables $v \supset (u, t_1, t_2)$ 
with monoid part 
$
S \cdot (t_2 - t_1)^{2e}
$ with $e \ge 1$, degree in $w$ bounded by  $\delta_w$ for some subset of variables $w \subset v$ disjoint
from $(t_1, t_2)$, degree in $t_1$ and degree in $t_2$ 
bounded by $\delta_{t}$, the final incompatibility has monoid part 
$$
S \cdot P^{(q)}(b)^{2e}
$$
with $b = t_2$ if $\eta_2(q) \ne 0$ and
$b = t_1$ otherwise,  degree in $w$ bounded by $\delta_w + (6e + 2)\deg_w P$ and
degree in $t_1$ and degree in $t_2$ bounded by $\delta_{t} + (6e+2)p$.
\end{proposition}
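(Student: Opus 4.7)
The proof will closely mirror that of Lemma~\ref{lem:eq_thom_enc_eq_root}, adapted to establish the strict inequality $t_1 < t_2$ and to produce a final monoid of the form $S \cdot P^{(q)}(b)^{2e}$ rather than one involving $P^{(q+1)}(a_1)$. As a first reduction, by replacing $P$ with $-P$ if necessary (which negates $\eta_1, \eta_2$ and trades the two clauses in the definition of $\prec_P$ in Notation~\ref{not_order_thom_encoding}), we may assume $\eta(q+1) := \eta_1(q+1) = \eta_2(q+1) = 1$; the hypothesis $\eta_1 \prec_P \eta_2$ then forces $\eta_1(q) < \eta_2(q)$, so $(\eta_1(q), \eta_2(q)) \in \{(-1,1), (-1,0), (0,1)\}$, and $b$ is characterized by $P^{(q)}(b) \ne 0$: specifically $b = t_2$ when $\eta_2(q) \ne 0$, and $b = t_1$ when $\eta_2(q) = 0$ (forcing $\eta_1(q) = -1$).

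Second, exactly as in the proof of Lemma~\ref{lem:eq_thom_enc_eq_root}, choose $\varepsilon = (\eta(q+1), -\eta(q+2), \dots, (-1)^{p-q-1}\eta(p))$ and apply the mixed Taylor formula (Proposition~\ref{prop2.4.4}) to $P^{(q)}$ to obtain the polynomial identity
$$P^{(q)}(t_2) - P^{(q)}(t_1) = (t_2 - t_1) \cdot S_o - S_e,$$
where $S_o$ and $S_e$ collect the odd-$k$ and even-$k$ Taylor terms respectively, each written as a positive rational constant times $\eta(q+k) P^{(q+k)}(a_k)$ times an even power of $(t_2 - t_1)$. Under the sign conditions $\sign(\Der_+(P)(t_i)) = \eta_i$, every product $\eta(q+k) P^{(q+k)}(a_k)$ is non-negative, and the $k = 1$ term of $S_o$ is strictly positive (being $N_{\varepsilon,1}|P^{(q+1)}(a_1)|$), so $S_o > 0$ and $S_e \geq 0$. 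Rewriting the identity as $(t_2 - t_1)\cdot S_o \equiv P^{(q)}(t_2) - P^{(q)}(t_1) + S_e$, the right-hand side becomes strictly positive since in either sub-case one summand is strictly positive and the other two are non-negative: when $\eta_2(q) = 1$, $P^{(q)}(t_2) > 0$, $-P^{(q)}(t_1) \geq 0$, $S_e \geq 0$; when $\eta_2(q) = 0$, $-P^{(q)}(t_1) > 0$, $P^{(q)}(t_2) = 0$, $S_e \geq 0$.

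Starting from the initial incompatibility $\lda t_2 - t_1 > 0, \cH \rda$ with monoid $S \cdot (t_2 - t_1)^{2e}$, I would then apply, in reverse logical order, the following chain of weak inferences: (i) $(t_2 - t_1) S_o > 0$, $S_o > 0 \vdash t_2 - t_1 > 0$ via Lemma~\ref{lem_prod_g_g_then_fact_g}, updating the monoid to $S \cdot ((t_2 - t_1) S_o)^{2e}$; (ii) the mixed Taylor polynomial identity together with Lemma~\ref{lem_sust_equiv_greater} to substitute $P^{(q)}(t_2) - P^{(q)}(t_1) + S_e$ for $(t_2 - t_1) S_o$, so the monoid becomes $S \cdot (P^{(q)}(t_2) - P^{(q)}(t_1) + S_e)^{2e}$; (iii) Lemma~\ref{lemma_sum_of_pos_is_pos}, splitting the strict inequality on the sum into its three summands with the strict sign carried by $P^{(q)}(b)$, giving monoid $S \cdot P^{(q)}(b)^{2e}$; and (iv) subsidiary weak inferences using Lemmas~\ref{lemma_basic_sign_rule_1}, \ref{lemma_basic_sign_rule_by_scalar}, \ref{lemma_sum_of_pos_is_pos}, \ref{lemma_sum_of_pos_and_zer_is_pos} to derive the auxiliary hypotheses $P^{(q)}(b) > 0$, the $\geq 0$ or $=0$ condition on $\pm P^{(q)}(\mathrm{other})$, $S_e \geq 0$, and $S_o > 0$ (the last by a further application of Lemma~\ref{lemma_sum_of_pos_is_pos} to the decomposition of $S_o$, whose $k=1$ term is strictly positive and all higher odd-$k$ terms are non-negative).

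The main technical obstacle is the degree bookkeeping so that the totals match $\delta_w + (6e+2)\deg_w P$ in $w$ and $\delta_t + (6e+2)p$ in $t_1, t_2$. Step (i) contributes $2e \deg_w S_o \leq 2e \deg_w P$ in $w$ and at most $2e(p-q-1)$ in $t$; step (iii) contributes up to $2e \deg_w P$ in $w$ and $2e(p-q)$ in $t$ (since each summand has degree at most $\deg_w P$ and $p-q$ respectively); and the derivation of $S_o > 0$ in step (iv) via a further Lemma~\ref{lemma_sum_of_pos_is_pos} adds another $2e \deg_w P$ and $2e(p-q-1)$ by the same mechanism. The remaining sign manipulations (sign-flip of $P^{(q)}(t_1)$, establishing $S_e \geq 0$ term by term) add only $O(\deg_w P)$ and $O(p)$ contributions, so the bounds close up to give exactly $(6e+2)\deg_w P$ and $(6e+2)p$ as claimed.
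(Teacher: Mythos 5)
Your proposal is correct and follows essentially the same route as the paper's proof: normalize so that $\eta(q+1)=1$, apply the mixed Taylor formula to $P^{(q)}$, use Lemma \ref{lem_prod_g_g_then_fact_g} to trade $(t_2-t_1)^{2e}$ for $((t_2-t_1)S_o)^{2e}$, then Lemma \ref{lemma_sum_of_pos_is_pos} (with the weakening $P^{(q)}(t_1)<0\vdash P^{(q)}(t_1)\le 0$ in the case $(\eta_1(q),\eta_2(q))=(-1,1)$) to land the strict sign on $P^{(q)}(b)$, and finally discharge $S_o>0$ and $S_e\ge 0$ exactly as in Lemma \ref{lem:eq_thom_enc_eq_root}. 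The only differences are cosmetic — you merge the paper's second and third weak inferences into one application after moving $S_e$ across, and your degree accounting slightly overestimates the cost of discharging $S_o>0$ (the multiplier there is $1$, not $2e$, since $S_o$ no longer appears in the monoid) — but the totals still fit within the stated bounds.
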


\begin{proof}{Proof.}
The proof is an adaptation of the proof of Lemma \ref{lem:eq_thom_enc_eq_root}. 
For $q+1 \le k \le p$ we denote  $\eta(k) = \eta_1(k) = \eta_2(k)$. If 
$\eta(q+1) = -1$,  we change 
$P$ by $-P$,  $\eta_1$ by $-\eta_1$ and $\eta_2$ by $-\eta_2$; so without loss of generality
we suppose $\eta(q+1) = 1$. 
We replace the first three weak inferences in the proof of Lemma \ref{lem:eq_thom_enc_eq_root}
by  
$$
\begin{array}{rcl}
(t_2 - t_1)S_{o} > 0, \ S_{o} > 0 & \ \, \vdash \, \ & t_1 < t_2, 
\\[3mm]
(t_2 - t_1)S_o - S_e > 0, \ S_e \ge 0
& \vdash & 
(t_2 - t_1)S_o > 0, 
\\[3mm]
\sign(P^{(q)}(t_1)) = \eta_1(q), \ \sign(P^{(q)}(t_2)) = \eta_2(q) 
& \vdash & 
(t_2 - t_1)S_o - S_e > 0.
\end{array}
$$
In fact, just for the case  $\eta_1(q) = -1$ and $\eta_2(q) = 1$, also the 
weak inference
$$
P^{(q)}(t_1) < 0 \ \ \ \vdash \ \ \ P^{(q)}(t_1) \le 0 
$$
from Lemma \ref{lemma_basic_sign_rule_1} (item \ref{lemma_basic_sign_rule:1}) is also needed between the second and 
third weak inference above. 
By Lemmas \ref{lem_prod_g_g_then_fact_g}, \ref{lemma_sum_of_pos_is_pos}
and possibly \ref{lemma_basic_sign_rule_1} (item \ref{lemma_basic_sign_rule:1}), 
we obtain 
$$\lda  S_o > 0, \ S_e \ge 0, \ 
\sign(P^{(q)}(t_1)) = \eta_1(q), \ \sign(P^{(q)}(t_2)) = \eta_2(q),   \
\cH
\rda 
$$
with 
monoid part $S \cdot P^{(q)}(b)^{2e}$ with $b = t_2$ if $\eta_2(q) = 1$ and
$b = t_1$ otherwise,  degree in $w$ bounded by $\delta_w + 6e\deg_w P$ and  
degree in $t_1$  and degree in $t_2$ bounded by 
$\delta_{t} + 2e(3(p  -q ) - 1)$.

The rest of the proof is as in the proof of Lemma \ref{lem:eq_thom_enc_eq_root}.
\end{proof}

\subsection{Conditions on the parameters fixing the Thom encoding}
\label{sect_elim_family_fact_order}

Given $P
, Q
\in \K[u][y]$, with $P
$ monic in $y$ and  $u=(u_1,\ldots,u_k)$,
our goal is to define a family of polynomials in $\K[u]$ whose signs fix the
Thom encoding of the real roots of $P
$ and the signs of $Q
$ at these roots;
the family composed by the principal minors of Hermite matrices of $P$ and 
products of (a small number of) its derivatives
with $1, Q$ or $Q^2$ has this property by sign determination (see \cite[Theorem 27]{PerR}).

We introduce some notation and definitions.

\begin{notation}\label{not:mult} 
Let $P
\in \K[u][y]$ monic in $y$ with $\deg_y P =p \ge 1$. 

For $\eta \in \{-1,0,1\}^{\Der(P)}$, we denote by $\eta_+ \in \{-1,0,1\}^{\Der_+(P)}$
the extension of $\eta$ to $\Der_+(P)$ given by $\eta_+(0) = 0$. 

For $\eta \in \{-1,0,1\}^{\Der_+(P)}$, the 
number ${\rm mu}(\eta,P)$  
is the smallest index $i$, $0 \le i \le p$,
such that $\eta(i)\not=0$.
Note that if the real root $\theta$ of $P$ has Thom encoding $\eta$,
the multiplicity of $\theta$ as a root of $P$ is ${\rm mu}(\eta,P)$.

For $\eta \in \{-1,0,1\}^{\Der(P)}$, the 
number ${\rm mu}(\eta,P)$  
is ${\rm mu}(\eta_+,P)$.

For a list of distinct sign conditions
${\bm \eta}= [\eta_1 , \dots , \eta_{\#{\bm \eta}}]$ 
on ${\Der(P)}$,  
the  
vector ${\rm vmu}({\bm \eta})$ is the list 
${\rm mu}(\eta_{1},P),\ldots,{\rm mu}(\eta_{{\#{\bm \eta}}},P)$
in non-increasing order. 

We define the order $\prec_{P}^{{\rm mu}}$ on $\{-1, 0, 1\}^{\Der(P)}$, 
given by $\eta_1 \prec_{P}^{{\rm mu}} \eta_2$ if 
${\rm mu}(\eta_{1},P) > 
{\rm mu}(\eta_{2},P)$ or 
${\rm mu}(\eta_{1},P) = 
{\rm mu}(\eta_{2},P)$ and
$\eta_{1, +} \prec_{P} \eta_{2, +}$.
\end{notation}

\begin{definition}\label{def:Thom_system} 
Let  $p \ge 1$,  
$P
 = y^p + \sum_{0 \le h \le p-1} C_h \cdot y^h \in \K[u][y]$,   
$({\bm \mu}, {\bm \nu}) \in \Lambda_m \times \Lambda_n$ with $m+2n = p$,
${\bm \eta}=  [\eta_1 , \dots , \eta_{\#{\bm \eta}}]$  
be
a 
list of distinct
sign conditions on $\Der(P)$ with $\#{\bm \mu} =  \#{\bm \eta}$,  
$t = (t_1, \dots, t_{\#{\bm \mu}})$ and 
$z = (z_1, \dots, z_{\# {\bm \nu}})$.
We define the system of sign conditions 
$${\rm Th}(P)^{{\bm \mu}, {\bm \nu}, {\bm \eta}}(t, z)$$
in $\K[u][ t, a, b]$
as
$$
{\rm Fact}(P)^{{\bm \mu}, {\bm \nu}}(t, z), \
\bigwedge_{1 \le j \le \#{\bm \mu}}  {\rm sign}(\Der(P)(t_j)) = \eta_j.
$$
\end{definition} 

Note that in Definition \ref{def:Thom_system}, since the multiplicity of the real roots of $P$ can be read both 
from ${\bm \mu}$ and ${\bm \eta}$, there should be some restrictions on ${\bm \mu}$ and ${\bm \eta}$
in order that the system ${\rm Th}(P)^{{\bm \mu}, {\bm \nu}, {\bm \eta}}(t, z)$ admits a real solution. 
Nevertheless, we will still need the definition in the general case, with 
the only restriction on ${\bm \mu}$ and ${\bm \eta}$ given by 
$\#{\bm \mu} = \#{\bm \eta}$.

\begin{definition} \label{notaDPTP}
Let $p \ge 1$,  $P
 = y^p + \sum_{0 \le h \le p-1} C_h \cdot y^h\in \K[u][y]$, $Q
  \in \K[u][y]$ and
$i \in \N$. We define
$$\begin{array}{rcl}
 {\rm PDer}_i(P) &=& \Big\{ \displaystyle{\prod_{1\le h \le p-1} (P^{(h)}) ^{\alpha_h}}
\ | \ 
\alpha 
\in {\{0,1,2\}} ^{\{1,\ldots,p-1\}}, \, \#\{h  \ | \ \alpha_h \ne 0\}\le i \Big\}
\subset \K[u][y],
\\[4mm]
{\rm PDer}_{i}(P;Q) &= &\{AB \ | \ A \in {\rm PDer}_{i}(P), \, 
B \in \{Q, Q^2\} \} \subset \K[u][ y], 
\\[4mm]
{\rm ThElim}(P)& = &\displaystyle{\bigcup_{A \in 
{\rm PDer}_{{\rm bit}\{p\}}(P)} } {\rm HMi}(P; A)\subset \K[u],
\\[4mm]
{\rm ThElim}(P;Q)& = & \displaystyle{\bigcup_{A \in {
{\rm PDer}}_{{\rm bit}\{p\}-1}(P;Q)} } {\rm HMi}(P; A)\subset \K[u].
\end{array}$$
\end{definition}

The following two results show the connection between signs conditions on the sets
${\rm ThElim}(P)$ and ${\rm ThElim}(P;Q)$ and the Thom encodings of the real roots of $P$ and 
the sign of $Q$ at these roots.

\begin{theorem}[Fixing the Thom encodings]\label{ThomwithP}
 Let $p \ge 1$,  $P
 = y^p + \sum_{0 \le h \le p-1} C_h \cdot y^h\in \K[u][y]$.
 For every  realizable  sign condition $\tau$ on ${\rm ThElim}(P)$, there exist
unique $({\bm \mu}(\tau),{\bm \nu}(\tau)) \in \Lambda_m \times \Lambda_n$ with $m + 2n = p$, and 
a unique list ${\bm \eta}(\tau)$ 
of distinct
sign conditions on $\Der(P)$ ordered with respect to $\prec_P^{\rm mu}$
 such that for every $\vartheta \in {\rm Real}(\tau, \R)$
there exist 
$\theta  \in \R^{{\#{\bm \mu}(\tau)}}, 
\alpha \in \R^{\# {\bm \nu}(\tau)}, 
\beta  \in \R^{\# {\bm \nu}(\tau)}$ such that
$${\rm Th}(P(\vartheta
))^{{\bm \mu}(\tau), {\bm \nu}(\tau), {\bm \eta}(\tau)}(\theta, \alpha + i\beta).$$
\end{theorem}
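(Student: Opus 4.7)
The plan is to fix any $\upsilon \in {\rm Real}(\tau, \R)$ and to read the data $(\bm\mu(\tau),\bm\nu(\tau),\bm\eta(\tau))$ off the real factorisation of $P(\upsilon,y)=\prod_j (y-\theta_j)^{\mu_j}\prod_k ((y-\alpha_k)^2+\beta_k^2)^{\nu_k}$, defining $\eta_j:={\rm sign}({\rm Der}(P(\upsilon,y))(\theta_j))$ and ordering the distinct $\eta_j$ with respect to $\prec_P^{\rm mu}$. Thom's lemma (i.e.\ the fact that a real root is uniquely determined by the signs of the derivatives of $P$ at that root) guarantees that the $\eta_j$ so defined are pairwise distinct and that the ordered list ${\bm\eta}$ together with its associated non-increasing multiplicity vectors $\bm\mu,\bm\nu$ is intrinsically attached to $\upsilon$; the whole content of the theorem is that these three vectors depend only on $\tau$ and not on the particular choice of $\upsilon$ in the realisation set.

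The bridge between $\tau$ and the factorisation is Hermite's theorem (Theorem \ref{hermite}) combined with the principal-minor description of rank and signature (Theorem \ref{bezoutiansignature}): for every $A\in{\rm PDer}_{{\rm bit}\{p\}}(P)$ the minors ${\rm HMi}_i(P;A)$ lie in ${\rm ThElim}(P)$, so their signs at $\upsilon$ are prescribed by $\tau$, and these signs determine the rank and signature of ${\rm Her}(P(\upsilon,y);A(\upsilon,y))$, hence the two quantities
$$\#\{\theta\in\R[i] : P(\upsilon,\theta)=0,\,A(\upsilon,\theta)\neq 0\}\quad\text{and}\quad \#\{P=0,A>0\}-\#\{P=0,A<0\}.$$
Specialising $A=(P^{(m)})^2$ (which has support size one, hence belongs to ${\rm PDer}_1(P)\subset{\rm PDer}_{{\rm bit}\{p\}}(P)$) for $m=1,\dots,p$, and combining rank and signature, one recovers separately the number of distinct real roots and distinct complex conjugate pairs of each prescribed multiplicity; this fixes $\bm\mu(\tau)$ and $\bm\nu(\tau)$ uniquely.

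The main step is the recovery of $\bm\eta(\tau)$. Since distinct real roots have distinct Thom encodings, at most $p$ encodings in $\{-1,0,1\}^{{\rm Der}(P)}$ are realised. A Ben-Or--Kozen--Reif style argument shows that to decide which encodings are realised it suffices to know, for every $H\subset\{1,\dots,p-1\}$ with $\#H\le{\rm bit}\{p\}$ and every $\varepsilon\in\{-1,0,1\}^H$, the cardinality
$$\#\Big\{\theta\in\R : P(\upsilon,\theta)=0,\,\bigwedge_{h\in H}{\rm sign}(P^{(h)}(\theta))=\varepsilon(h)\Big\},$$
the logarithmic bound coming from the fact that two distinct encodings differ in at least one coordinate so $\lceil\log_2 p\rceil$ simultaneous sign queries suffice to separate them by binary search. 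Each such cardinality is an explicit integer linear combination of signatures ${\rm Si}({\rm Her}(P(\upsilon,y);A))$ with $A=\prod_{h\in H_1}P^{(h)}\cdot\prod_{h\in H_2}(P^{(h)})^2$ for $H_1\sqcup H_2\subset H$, the squared factors serving to localise onto the prescribed zero-support of the encoding and the non-squared factors to extract the signs of the nonzero components; all these $A$ lie in ${\rm PDer}_{{\rm bit}\{p\}}(P)$, so the cardinalities, and hence ${\bm\eta}(\tau)$, are determined by $\tau$. The delicate point, and the heart of the argument, is the careful formulation of the BKR extraction: one must separate roots according to the zero-support pattern of ${\rm Der}(P)$ before resolving signs, and check that the coupling of squared and unsquared derivatives never exceeds the budget ${\rm bit}\{p\}$ on the total support of $A$.
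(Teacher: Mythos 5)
Your proposal follows essentially the same route as the paper: Theorem \ref{bezoutiansignature} converts the sign condition $\tau$ on ${\rm ThElim}(P)$ into the rank and signature of ${\rm Her}(P;A)$ for every $A\in{\rm PDer}_{{\rm bit}\{p\}}(P)$, and a BKR-style sign-determination argument then recovers the real factorization and the Thom encodings. The paper simply outsources your ``main step'' to the cited result \cite[Theorem 27]{PerR}, which is precisely the statement you sketch (and which is what justifies the ${\rm bit}\{p\}$ support budget and the recovery of the multiplicities, where your ad hoc use of $A=(P^{(m)})^2$ alone would not quite suffice), so the two proofs coincide.
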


\begin{proof}{Proof.}
As said in Theorem \ref{bezoutiansignature} (Hermite's Theory (2)),  
a sign condition $\tau$ on ${\rm ThElim}(P)$ 
determines the rank and signature of ${\rm Her}(P; A)$ for every 
$A \in {\rm PDer}_{{\rm bit}\{p\}}(P)$.
By sign determination \cite[Theorem 27]{PerR}, this 
is enough to determine the 
decomposition of $P$ into ireducible real factors
and the
Thom encodings of the real roots of $P$. 
\end{proof}

\begin{theorem}[Fixing the Thom encodings with a Sign]\label{ThomwithPQ}
  Following the notation of Theorem \ref{ThomwithP}, for every realizable sign condition $(\tau,\tau')$ on ${\rm ThElim}(P)\cup{\rm ThElim}(P;Q)$, 
there exists 
a unique list ${\bm \epsilon}(\tau,\tau')=[\epsilon_1(\tau,\tau'),\ldots, \epsilon_{{\#{\bm \mu}(\tau)}}(\tau,\tau')]$
of signs 
 such that for every $\vartheta \in {\rm Real}((\tau,\tau'), \R)$
 there exist 
$\theta  \in \R^{{\#{\bm \mu}}(\tau)}, 
\alpha \in \R^{\# {\bm \nu}(\tau)}, 
\beta  \in \R^{\# {\bm \nu}(\tau)}$ such that
 $$
{\rm Th}(P(\vartheta
))^{{\bm \mu}(\tau), {\bm \nu}(\tau), {\bm \eta}(\tau)}(\theta, \alpha + i\beta), \ 
\bigwedge_{1 \le j \le \# {\bm \mu}(\tau)} \s(Q(\theta_j)) = \epsilon_j(\tau, \tau').
$$
\end{theorem}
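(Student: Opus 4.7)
The plan is to prove this statement in the same spirit as Theorem \ref{ThomwithP}, combining Hermite's theory (Theorem \ref{bezoutiansignature}) with the fact that the Thom encodings of the real roots of $P$ are already fixed by $\tau$.

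First, I would invoke Theorem \ref{ThomwithP}: the sign condition $\tau$ on ${\rm ThElim}(P)$ determines $({\bm \mu}(\tau), {\bm \nu}(\tau), {\bm \eta}(\tau))$, so for every $\upsilon \in {\rm Real}(\tau, \R)$, the real roots $\theta_1, \dots, \theta_{\# {\bm \mu}(\tau)}$ of $P(\upsilon, y)$ carry fixed Thom encodings $\eta_1(\tau), \dots, \eta_{\# {\bm \mu}(\tau)}(\tau)$. Consequently, the sign $\s(P^{(h)}(\upsilon, \theta_j))$ is determined by $\tau$ for every $0 \le h \le p-1$ and every $1 \le j \le \# {\bm \mu}(\tau)$, and hence, for every $A \in {\rm PDer}_{{\rm bit}\{p\}-1}(P)$, the sign $\s(A(\upsilon, \theta_j))$ is also determined by $\tau$ alone.

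Next, I would use the additional information coming from $\tau'$. By Theorem \ref{bezoutiansignature}, the signs of the principal minors in ${\rm ThElim}(P;Q)$ determine the rank and signature of ${\rm Her}(P(\upsilon, y); A(\upsilon, y) B(\upsilon, y))$ for every $A \in {\rm PDer}_{{\rm bit}\{p\}-1}(P)$ and every $B \in \{Q, Q^2\}$. By Theorem \ref{hermite}, the rank of ${\rm Her}(P; AQ^2)$ counts the real roots $\theta_j$ for which $A(\upsilon, \theta_j) \ne 0$ and $Q(\upsilon, \theta_j) \ne 0$ (together with the analogous count for pairs of complex conjugate roots, which is already fixed by ${\bm \nu}(\tau)$), while the signature of ${\rm Her}(P; AQ)$ computes the signed count $\#\{j \mid \s(A(\upsilon, \theta_j)Q(\upsilon, \theta_j)) = 1\} - \#\{j \mid \s(A(\upsilon, \theta_j)Q(\upsilon, \theta_j)) = -1\}$.

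The main step is to show that this family of signed counts, parametrized by $A \in {\rm PDer}_{{\rm bit}\{p\}-1}(P)$, together with the Thom encodings already fixed by $\tau$, determines the sign $\epsilon_j := \s(Q(\upsilon, \theta_j))$ at each real root separately. This is a Ben-Or--Kozen--Reif type sign determination: since each $\theta_j$ is distinguished from the other real roots by its Thom encoding, and since $A$ ranges over all products involving at most ${\rm bit}\{p\} - 1$ distinct derivatives of $P$ (with exponents in $\{0,1,2\}$), products of at most ${\rm bit}\{p\}$ factors from $\Der(P) \cup \{Q\}$ are tested, which suffices to isolate the sign of $Q(\upsilon, \theta_j)$ at each $j$; this is exactly the argument used in \cite[Theorem 27]{PerR} extended to include the extra factor $Q$. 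Since the recovered values of $\epsilon_j$ depend only on $(\tau, \tau')$ and not on $\upsilon$, setting ${\bm \epsilon}(\tau, \tau') := [\epsilon_1, \dots, \epsilon_{\# {\bm \mu}(\tau)}]$ gives both existence and uniqueness. The only delicate point, which I expect to be the main obstacle, is to verify that the logarithmic bound ${\rm bit}\{p\}$ on the number of factors is indeed sufficient; but this is precisely the content of the Ben-Or--Kozen--Reif style statement in \cite{PerR}, so the proof reduces to citing it.
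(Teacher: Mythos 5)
Your proposal is correct and follows essentially the same route as the paper: invoke Theorem \ref{ThomwithP} to fix the decomposition and Thom encodings from $\tau$, observe that $\tau'$ determines (via Theorem \ref{bezoutiansignature}) the rank and signature of ${\rm Her}(P;A)$ for every $A \in {\rm PDer}_{{\rm bit}\{p\}-1}(P;Q)$, and then cite \cite[Theorem 27]{PerR} for the sign-determination step. The paper's proof is just a terser version of exactly this argument, including the final reduction to \cite{PerR} that you correctly identify as the crux.
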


\begin{proof}{Proof :}
The claim follows using Theorem \ref{ThomwithP} 
and the fact that a
sign condition
$\tau'$ on ${\rm ThElim}(P;Q)$ 
additionally determines the rank and signature of ${\rm Her}(P; A)$ for every 
$A \in {\rm PDer}_{{\rm bit}\{p\} - 1}(P; Q)$, and therefore, by sign determination \cite[Theorem 27]{PerR},
the signs of $Q$ at the real roots of $P$.
\end{proof}

Before giving the weak inference versions of Theorems \ref{ThomwithP} and \ref{ThomwithPQ}, we define new auxiliary functions
(see Definitions \ref{defg4} and \ref{def:func_aux_hilb}). 

\begin{definition}\label{def:func_aux_hilb_1_and_2}
\begin{enumerate}
\item Let ${\rm g}_{H,1} : \N_* \to \N$, ${\rm g}_{H,1}\{p\} = {\rm g}_H\{p, 2{\rm bit}\{p\}(p-1)\}$.

\item Let $\tilde{\rm g}_{H,1} : \N_* \to \N$, $\tilde{\rm g}_{H,1}\{p\} = 
{\rm bit}\{p\}2^{2^{\frac12(p-1)p+2} -2}{\rm g}_{H,1}\{p\}^{2^{\frac12(p-1)p}-1}({\rm g}_{H,1}\{p\} + 2)$.

\item Let ${\rm g}_{H,2} : \N_* \times \N \to \N$, ${\rm g}_{H,2}\{p, q\} = 
{\rm g}_H\{p,2({\rm bit}\{p\}-1)(p-1)+2q\}$.

\item Let $\tilde{\rm g}_{H,2} : \N_* \times \N \to {\mathbb R}$, $\tilde{\rm g}_{H,2}\{p,q\} = 
{\rm bit}\{p\}2^{2^{\frac12p^2+2} -2}{\rm g}_{H,2}\{p,q\}^{2^{\frac12p^2}-1}({\rm g}_{H,2}\{p,q\} + 2)$.

\item Let ${\rm g}_5 : \N \times \N \times \N \times \N \times \N \to {\mathbb R}$,
$$ {\rm g}_5\{p, e, f, g, e'\}
={\rm g}_4\{p\}
\max\{e', \tilde{\rm g}_{H,1}\{p\}\}^{2^{\frac32p^2}}
\max\{ e, g, \tilde{\rm g}_{H,1}\{p\}\}^{2^{\frac{1}{2}p^2}}
\max\{ f, \tilde{\rm g}_{H,1}\{p\}\}^{2^{\frac{1}{2}p}}.
$$

\end{enumerate}
 
\end{definition}

\begin{Tlemma}\label{tlem:aux_theorem_fix_Thom} For every 
$(p, e, f, g, e') \in \N_* \times \N \times \N \times \N \times \N$, 
$$
2^{p+ (((p-1)p+2)2^{(p-1)p}  - 2 )(2^{\frac12p^2} + 2^{\frac12p} + 1)}{\rm g}_4\{p\} 
 \max\{e', \tilde{\rm g}_{H,1}\{p\}\}^{(2^{(p-1)p}-1)(2^{\frac{1}{2}p^2}+2^{\frac{1}{2}p}+1) + 1}\cdot
$$
$$
\cdot \max\{ e, g, \tilde{\rm g}_{H,1}\{p\} \}^{2^{\frac{1}{2}p^2}}
\max\{ f, \tilde{\rm g}_{H,1}\{p\} \}^{2^{\frac{1}{2}p}}\le 
$$
$$
\le {\rm g}_5\{p, e, f, g, e'\}.
$$
\end{Tlemma}
\begin{proof}{Proof.} See Section \ref{section_annex}.
\end{proof}

Now, we first give weak inference versions of Theorems \ref{ThomwithP} and \ref{ThomwithPQ}, and then the proofs of them.

\begin{theorem}[Fixing the Thom encodings as a weak existence]\label{weaksigndet} 
Let  $p \ge 1$,  
$P
= y^p + \sum_{0 \le h \le p-1} C_h \cdot y^h$ $\in \K[u][y]$ and   
$\tau$ 
be
 a realizable 
sign condition on ${\rm ThElim}(P)$. Then, using the notation of Theorem \ref{ThomwithP},
 $$
\s({\rm ThElim}(P))=\tau \ \ \ \vdash \ \ \
 \exists (t, z) \ 
[\; {\rm Th}(P)^{{\bm \mu}(\tau), {\bm \nu}(\tau), {\bm \eta}(\tau)}(t, z) \;]
$$
where $t =   (t_1, \dots, t_{\#{\bm \mu}(\tau)})$ and
$z =   (z_1, \dots, z_{\# {\bm \nu}(\tau)})$.

Suppose we have an initial incompatibility 
in  $\K[v][ t, a, b]$, 
where
$v \supset u$, and $t, a, b$ are disjoint from $v$, with monoid part
$$
S \cdot 
\prod_{1 \le j < j' \le \# {\bm \mu}(\tau)}(t_{j} - t_{j'})^{2e_{{j}, j'}} 
\cdot
\prod_{1 \le k \le \# {\bm \nu}(\tau)}b_{k}^{2f_{k}}
\cdot 
$$
$$
\cdot
\prod_{1 \le k < k' \le \# {\bm \nu}(\tau)}
{\rm R}( z_{k}, z_{k'})
^{2g_{{k}, k'}} 
\cdot
\prod_{1 \le j  \le \# {\bm \mu}(\tau), \, 1 \le h \le p-1,\atop
\eta_j(\tau)(h) \ne 0
} 
P^{(h)}(t_j)^{2e'_{j,h}}
$$
with
$e_{j, j'} \le e$, 
$f_{k} \le f$, 
$g_{k, k'} \le g$, 
$e'_{j,h} \le e'$, 
degree in $w$ bounded by $\delta_{w}$ for some subset of variables $w \subset v$,
degree in $t_{j}$ bounded by $\delta_{t}$ 
and 
degree in $(a_{k}, b_{k})$ bounded by $\delta_{z}$. Then 
the final incompatibility has monoid part 
$$
S^h \cdot 
\prod_{H \in {\rm ThElim}(P), \atop \tau(H) \ne 0} H^{2h'_H} 
$$
with
$h, h'_H \le {\rm g}_5\{p, e, f, g, e'\}$,
and degree in $w$ bounded by 
$$
{\rm g}_5\{p, e, f, g, e'\}\Big(
\max\{\delta_w , 
\tilde{\rm g}_{H,1}\{p\}\deg_w P
\} + 
\max\{\delta_t , \delta_z ,
\tilde{\rm g}_{H,1}\{p\}\}\deg_wP
\Big).
$$
\end{theorem}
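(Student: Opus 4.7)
The plan is to bridge the gap between the initial incompatibility (which assumes the specific Thom system ${\rm Th}(P)^{{\bm \mu}(\tau),{\bm \nu}(\tau),{\bm \eta}(\tau)}$) and the target incompatibility (which only assumes $\s({\rm ThElim}(P)) = \tau$) by an explicit case-by-case argument over all possible triples $({\bm \mu}',{\bm \nu}',{\bm \eta}')$, using Theorem \ref{hermitetrsubresw} to kill the wrong triples and the initial incompatibility to kill the right one.

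First I would start from the initial incompatibility in $\K[v,t,a,b]$ and wrap it in a weak disjunction. Applying Theorem \ref{thLaplacewithmult} (Real Irreducible Factors with Multiplicities) yields a weak inference
\[
\vdash \ \ \  \bigvee_{m'+2n'=p,\, ({\bm \mu}',{\bm \nu}') \in \Lambda_{m'}\times\Lambda_{n'}} \exists (t,z) \; [\; {\rm Fact}(P)^{{\bm \mu}',{\bm \nu}'}(t,z) \;].
\]
Then, for each factorization type $({\bm \mu}',{\bm \nu}')$, I would refine this by applying the case-by-case weak inference of Lemma \ref{lem:multiple_case_by_case_with_signs} to the family of polynomials $\{P^{(h)}(t_j)\}_{1\le j \le \# {\bm \mu}', 1\le h \le p-1}$, producing a further disjunction over lists ${\bm \eta}'$ of sign conditions on $\Der(P)$. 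The net effect is a weak inference
\[
\vdash \ \ \ \bigvee_{({\bm \mu}',{\bm \nu}',{\bm \eta}')} \exists (t,z) \; [\; {\rm Th}(P)^{{\bm \mu}',{\bm \nu}',{\bm \eta}'}(t,z) \;].
\]

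Next, I would produce, for every triple $({\bm \mu}',{\bm \nu}',{\bm \eta}') \neq ({\bm \mu}(\tau),{\bm \nu}(\tau),{\bm \eta}(\tau))$ (with $\# {\bm \mu}' = \# {\bm \eta}'$), an incompatibility of the system
\[
\s({\rm ThElim}(P)) = \tau, \ \ {\rm Th}(P)^{{\bm \mu}',{\bm \nu}',{\bm \eta}'}(t,z), \ \ \cH.
\]
The argument behind Theorems \ref{ThomwithP} and \ref{bezoutiansignature} tells us that there exists a specific $A \in {\rm PDer}_{{\rm bit}\{p\}}(P)$ for which the sign condition $\tau$ on ${\rm HMi}(P;A)$ predicts a rank/signature of ${\rm Her}(P;A)$ incompatible with the one computed from the factorization $({\bm \mu}',{\bm \nu}')$ together with $\s(A(t_j))$ (which is determined by ${\bm \eta}'$) and $\inv(A(z_k))$ (which can be case-split via Lemma \ref{CasParCas_1} applied to $A\re(a_k,b_k), A\im(a_k,b_k)$ and then killed using Theorem \ref{hermitetrsubresw} in each branch). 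In short, for each wrong triple, a direct application of Theorem \ref{hermitetrsubresw} (with suitable $\eta$ and $\kappa$ read off from ${\bm \eta}'$ and the invertibility case-split) provides the required incompatibility, with monoid part of the form imposed by that theorem, and degrees bounded essentially by $\tilde{\rm g}_{H,1}\{p\}$ (since $A\in {\rm PDer}_{{\rm bit}\{p\}}(P)$ has degree in $y$ at most $2{\rm bit}\{p\}(p-1)$). For the correct triple, we simply plug in the given initial incompatibility.

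Finally, I would glue all these incompatibilities together by feeding them as the initial data of the compound weak inference assembled above; this is exactly the input needed by Theorem \ref{thLaplacewithmult} and Lemma \ref{lem:multiple_case_by_case_with_signs}. The main obstacle is the bookkeeping of monoid parts and degrees: each case-by-case step multiplies the monoid exponents by powers of two that grow with the number of cases, and each application of Theorem \ref{hermitetrsubresw} contributes a polynomial in $p$ and ${\rm bit}\{p\}$; combining Theorem \ref{thLaplacewithmult}'s bounds $\max\{\cdot\}^{2^{\frac12 p^2}}{\rm g}_4\{p\}$ with the per-case bounds $\tilde{\rm g}_{H,1}\{p\}$ obtained above, Lemma \ref{tlem:aux_theorem_fix_Thom} is precisely the numerical inequality needed to collapse everything into ${\rm g}_5\{p,e,f,g,e'\}$. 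The delicate point will be to verify that the exponent $2^{\frac12 p^2}$ in the degree bound of Theorem \ref{thLaplacewithmult} absorbs the extra factors coming from the case-by-case reasoning on the signs of the $p-1$ derivatives at each of the $\#{\bm \mu}'$ real roots, and that the exponent $2^{\frac12 p}$ correctly handles the contribution of the $b_k$ factors.
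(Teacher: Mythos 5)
Your overall strategy coincides with the paper's: case-by-case over all triples $({\bm \mu}',{\bm \nu}',{\bm \eta}')$, refuting the ``wrong'' ones via Theorem \ref{hermitetrsubresw}, and gluing with Lemma \ref{lem:multiple_case_by_case_with_signs} (for the signs of the derivatives at the $t_j$), Lemma \ref{lem:multiple_case_by_case} (for the invertibility conditions at the $z_k$) and Theorem \ref{thLaplacewithmult}. There is, however, one concrete step that fails as you state it: you claim that \emph{every} triple other than $({\bm \mu}(\tau),{\bm \nu}(\tau),{\bm \eta}(\tau))$ can be killed by Theorem \ref{hermitetrsubresw}. The quantities ${\rm Rk}_{\rm Fact}$ and ${\rm Si}_{\rm Fact}$ depend only on the \emph{multiset} of sign (resp.\ invertibility) conditions, not on their assignment to the individual roots $t_1,\dots,t_{\#{\bm \mu}(\tau)}$. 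Consequently, any list ${\bm \eta}$ obtained from ${\bm \eta}(\tau)$ by permuting the encodings of roots of equal multiplicity produces exactly the same rank and signature for every $A\in{\rm PDer}_{{\rm bit}\{p\}}(P)$ as the correct list does; the hypothesis $({\rm Rk}_{\rm HMi}(\tau),{\rm Si}_{\rm HMi}(\tau))\ne({\rm Rk}_{\rm Fact},{\rm Si}_{\rm Fact})$ of Theorem \ref{hermitetrsubresw} is then false and the theorem cannot be invoked. These triples are not ``wrong'' at all --- they describe the same realizable configuration with the roots listed in a different order --- so no refutation exists; instead you must supply, for each such permutation, a copy of the given initial incompatibility with the variables $t_j$ renamed accordingly (this is exactly how the paper handles it, and it is why its degree bookkeeping tracks the exponents $e_{j,j'}$, $f_k$, $g_{k,k'}$, $e'_{j,h}$ on the $t$- and $z$-dependent factors of the monoid part through the whole construction, rather than only the bound for the single canonical triple).

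Apart from this, your outline is sound and matches the paper's two-stage organization (first collapse the disjunction over ${\bm \eta}$ for the fixed $({\bm \mu}(\tau),{\bm \nu}(\tau))$ and separately refute every other factorization type $({\bm \mu},{\bm \nu})$, then apply Theorem \ref{thLaplacewithmult}), and your identification of Lemma \ref{tlem:aux_theorem_fix_Thom} as the numerical inequality that collapses the accumulated bounds into ${\rm g}_5\{p,e,f,g,e'\}$ is correct.
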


\begin{theorem}[Fixing the Thom encodings with a Sign as a weak existence] \label{weaksigndetgen} 
Let  $p \ge 1$,  
$P
 = y^p + \sum_{0 \le h \le p-1} C_h\cdot y^h \in \K[u][y]$, $Q 
 \in \K[u][y]$ with
$\deg_y Q = q$ and  $\tau$ and $\tau'$   
be
sign conditions on 
${\rm ThElim}(P)$ and ${\rm ThElim}(P;Q)$
respectively such that $(\tau, \tau')$ is a realizable sign condition on 
${\rm ThElim}(P) \cup {\rm ThElim}(P;Q)$. Then using the notation of Theorem \ref{ThomwithPQ},
$$
\s({\rm ThElim}(P;Q))= \tau', \
{\rm Th}(P)^{{\bm \mu}(\tau), {\bm \nu}(\tau), {\bm \eta}(\tau)}(t, z)
 \ \ \ 
\vdash \ \ \ 
\bigwedge_{1 \le j \le \# {\bm \mu}(\tau)} \s(Q(t_j)) = \epsilon_j(\tau, \tau')$$
where $t =   (t_1, \dots, t_{\# {\bm \mu}(\tau)})$ and 
$z = (z_1, \dots, z_{\# {\bm \nu}(\tau)})$.

Suppose we have an initial incompatibility in $\K[v]$, where $v \supset (u, t, a, b)$, 
with monoid part
$$
S \cdot
\prod_{1 \le j  \le \# {\bm \mu}(\tau), \atop \epsilon_j(\tau, \tau') \ne 0} 
Q(t_j)^{2h_{j}},
$$
with
$h_{j} \le h$, 
degree in $w$ bounded by $\delta_{w}$ for some subset of variables $w \subset v$ disjoint from 
$(t, a, b)$,
degree in $t_{j}$ bounded by $\delta_{t}$ 
and
degree in $(a_{k}, b_{k})$ bounded by $\delta_{z}$.
Then, the final incompatibility has monoid part 
$$
S^{h'}\cdot 
\prod_{H \in {\rm ThElim}(P;Q), \atop \tau'(H) \ne 0} H^{2h'_H} 
\cdot
\prod_{1 \le j < j' \le \# {\bm \mu}(\tau)}(t_{j} - t_{j'})^{2e_{{j}, j'}} 
\cdot
\prod_{1 \le k \le \# {\bm \nu}(\tau)}b_{k}^{2f_{k}}
\cdot
$$
$$
\cdot
\prod_{1 \le k < k' \le \# {\bm \nu}(\tau)}
{\rm R}(z_{k}, z_{k'})
^{2g_{{k}, k'}} 
\cdot 
\prod_{1 \le j  \le \# {\bm \mu}(\tau),
1 \le h \le p-1, \atop \eta_j(\tau)(h) \ne 0}
P^{(h)}(t_j)^{2e'_{h,j}}
$$
with
$
h' \le 
2^{(p + 2)2^{p}-2p-2}
\max\{h, \tilde{\rm g}_{H,2}\{p,q\}\}^{2^{p}-1}$, 
$h'_H, 
e_{j, j'},  
f_{k},
g_{k, k'}, 
 \le  
2^{(p + 2)2^{p}-2} \max\{
h, \tilde{\rm g}_{H,2}\{p,q\}\}^{2^{p}-1}\tilde {\rm g}_{H,2}\{p,q\}$,
degree in $w$ bounded by 
$$
2^{(p + 2)2^{p}-2}
\max\{
h, \tilde{\rm g}_{H,2}\{p,q\}\}^{2^{p}-1}
\max\{
\delta_w, \tilde{\rm g}_{H,2}\{p,q\}\max\{\deg_w P, \deg_wQ\} \},
$$
degree in $t_{j}$ bounded by 
$$
2^{(p + 2)2^{p}-2}
\max\{h, \tilde{\rm g}_{H,2}\{p,q\}\}^{2^{p}-1}
\max\{
\delta_t, \tilde{\rm g}_{H,2}\{p,q\}\}
$$
and
degree in $(a_{k}, b_{k})$ bounded by 
$$
2^{(p + 2)2^{p}-2}
\max\{h, \tilde{\rm g}_{H,2}\{p,q\}\}^{2^{p}-1}
\max\{
\delta_z, \tilde{\rm g}_{H,2}\{p,q\}\}.
$$
\end{theorem}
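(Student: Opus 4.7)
My plan is to mirror the strategy of the proof of Theorem \ref{weaksigndet}, but now track the additional piece of data---namely the signs of $Q$ at the real roots of $P$---using Theorem \ref{hermitetrsubresw} (Hermite's Theory as an incompatibility) applied to polynomials of the form $A \cdot Q$ and $A \cdot Q^2$ where $A \in {\rm PDer}_{{\rm bit}\{p\}-1}(P)$. The starting point is the given initial incompatibility, which has $\prod_{j} Q(t_j)^{2h_j}$ in its monoid part, and the goal is to replace these factors by powers of the elements of ${\rm ThElim}(P;Q)$ (together with the standard factors from $\rm Th(P)^{{\bm \mu}, {\bm \nu}, {\bm \eta}}$).

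First, I would apply case-by-case reasoning (Lemma \ref{lem:multiple_case_by_case_with_signs}) on the vector $\epsilon \in \{-1,0,1\}^{\#{\bm \mu}(\tau)}$ encoding the possible signs of $Q(t_1), \dots, Q(t_{\#{\bm \mu}(\tau)})$, producing $3^{\#{\bm \mu}(\tau)}$ sub-cases. In the unique ``correct'' case $\epsilon = {\bm \epsilon}(\tau, \tau')$ the initial incompatibility applies directly, after possibly adjusting exponents to fit the shape required by Lemma \ref{lem:multiple_case_by_case_with_signs}. In every ``incorrect'' case $\epsilon \neq {\bm \epsilon}(\tau, \tau')$, I need to produce an incompatibility of the form
\begin{equation*}
\lda {\rm Th}(P)^{{\bm \mu}(\tau), {\bm \nu}(\tau), {\bm \eta}(\tau)}(t,z),\ \bigwedge_j \sign(Q(t_j))=\epsilon_j,\ \sign({\rm ThElim}(P;Q))=\tau' \rda
\end{equation*}
from Theorem \ref{hermitetrsubresw}.

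The key algebraic ingredient, which is precisely what is invoked in the proof of Theorem \ref{ThomwithPQ} via \cite[Theorem 27]{PerR}, is that for any wrong $\epsilon$ there exist $A \in {\rm PDer}_{{\rm bit}\{p\}-1}(P)$ and $B \in \{Q, Q^2\}$ such that the pair $({\rm Rk}_{\rm Fact}({\bm \eta}',{\bm \kappa}'), {\rm Si}_{\rm Fact}({\bm \eta}'))$---with ${\bm \eta}'$ and ${\bm \kappa}'$ computed from $({\bm \mu}(\tau), {\bm \nu}(\tau), {\bm \eta}(\tau), \epsilon)$ as the sign conditions of $A\cdot B$ at the real and complex roots of $P$---differs from the pair $({\rm Rk}_{\rm HMi}(\sigma), {\rm Si}_{\rm HMi}(\sigma))$, where $\sigma$ is the sign condition given by $\tau'$ on ${\rm HMi}(P;A\cdot B)$. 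For such $A,B$, Theorem \ref{hermitetrsubresw} directly produces an incompatibility, since the signs of $Q(t_j)$ and the signs of the derivatives $P^{(h)}(t_j)$ together determine the signs of $A\cdot B$ at every $t_j$ (via Lemma \ref{lemma_basic_sign_rule_1} and the product rules) and similarly for the complex roots. The monoid of this incompatibility contributes terms involving $H \in {\rm ThElim}(P;Q)$, $(t_j-t_{j'})$, $b_k$, ${\rm R}(z_k,z_{k'})$, $P^{(h)}(t_j)$ and $Q(t_j)$, which dovetail with those in the final monoid shape.

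Finally, I would assemble the $3^{\#{\bm \mu}(\tau)}$ sub-incompatibilities using Lemma \ref{lem:multiple_case_by_case_with_signs}, and carry out the degree/monoid bookkeeping. The multiple-case-by-case lemma accounts for the prefactor $2^{(p+2)2^{p}-2}\max\{h,\tilde{\rm g}_{H,2}\{p,q\}\}^{2^{p}-1}$ in the bounds, while the per-case bounds from Theorem \ref{hermitetrsubresw} are absorbed into the factor $\tilde{\rm g}_{H,2}\{p,q\}$ (which is precisely the analog of $\tilde{\rm g}_{H,1}\{p\}$ used in Theorem \ref{weaksigndet}, with the degree parameter adjusted to reflect the degree $2({\rm bit}\{p\}-1)(p-1)+2q$ of polynomials in ${\rm PDer}_{{\rm bit}\{p\}-1}(P;Q)$). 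The main obstacle will not be any single step but rather the simultaneous tracking, for each of the $3^{\#{\bm \mu}(\tau)}$ sub-cases, of the several exponents $h'_H, e_{j,j'}, f_k, g_{k,k'}, e'_{h,j}$ and of the degrees in $w$, $t_j$ and $(a_k, b_k)$ through both Theorem \ref{hermitetrsubresw} and Lemma \ref{lem:multiple_case_by_case_with_signs}; the elementary-recursive character of the stated bound relies crucially on the exponents stabilizing under the case-by-case combination.
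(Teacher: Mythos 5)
Your proposal follows essentially the same route as the paper's proof: for each wrong sign vector ${\bm \epsilon}$ (and each choice of invertibility conditions at the complex roots, handled by the case-by-case Lemma \ref{lem:multiple_case_by_case} exactly as in Theorem \ref{weaksigndet}), one selects $\tilde Q = A\cdot Q^{\beta}\in {\rm PDer}_{{\rm bit}\{p\}-1}(P;Q)$ realizing a rank/signature conflict, invokes Theorem \ref{hermitetrsubresw}, transfers the sign data through the product rules, and finally assembles everything with Lemma \ref{lem:multiple_case_by_case_with_signs}, which is precisely where the prefactor $2^{(p+2)2^p-2}\max\{h,\tilde{\rm g}_{H,2}\{p,q\}\}^{2^p-1}$ arises. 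The approach and the attribution of the bounds are correct.
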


\begin{proof}{Proof of Theorem \ref{weaksigndet}.}
Consider the initial incompatibility 
\begin{equation} \label{inc:init_theorem_weaksigndet}
\Big\downarrow \,
{\rm Th}(P)^{{\bm \mu}(\tau), {\bm \nu}(\tau), {\bm \eta}(\tau)}(t, z), \
\cH \, \Big\downarrow_{\K[v][ t, a, b] }
\end{equation} 
where $\cH$ is a system of sign conditions in $\K[v]$. 

In order to proceed by case by case reasoning,
our first aim is to obtain incompatibilities 
$$\Big\downarrow \,
\sign({\rm ThElim}(P)) = \tau, \ {\rm Th}(P)^{{\bm \mu}(\tau), {\bm \nu}(\tau), {\bm \eta}}, \
\cH \, \Big\downarrow_{\K[v][ t, a, b] }$$
for every list of sign condition ${\bm \eta} = [\eta_1, \dots, \eta_{\#{\bm \mu}(\tau)}]$ on 
$\Der(P)$, including those ${\bm \eta}$ such that the system 
${\rm Th}(P)^{{\bm \mu}(\tau), {\bm \nu}(\tau), {\bm \eta}}(t, z)$
has obviously no solution because of some real root of $P$ having two different multiplicities according to 
${\bm \mu}(\tau)$ and ${\bm \eta}$.

We consider first the case that 
$ {\bm \eta}$ can be obtained from $ {\bm \eta}(\tau)$  
through  permutations of elements corresponding to real  roots 
with the same multiplicity. In this case, 
by simply renaming variables within the set of variables $t$ 
in (\ref{inc:init_theorem_weaksigndet}),
we  obtain 
\begin{equation} \label{inc:aux_theorem_weaksigndet_1}
\lda \,
{\rm Th}(P)^{{\bm \mu}(\tau), {\bm \nu}(\tau), {\bm \eta}}(t, z), \
\cH \, \rda
_{\K[v][ t, a, b]}
\end{equation} 
with the same monoid part up to permutations within  $t$ 
and the same degree bounds. 

We consider now the case that ${\bm \eta}$ cannot be obtained from $ {\bm \eta}(\tau)$ 
through  permutations as above. Let ${\bm \kappa}= [\kappa_1, \dots, \kappa_{\# {\bm \nu}(\tau)}]$
be a list of invertibility conditions on $\Der(P)$. 
By 
Theorem \ref{ThomwithP} (Fixing the Thom encodings) there exists $\alpha \in \{0, 1, 2\}^{1, \dots, p-1}$ 
with $\#\{h  \ | \ \alpha_h \ne 0\} \le {\rm bit}\{p\}$
such that $Q = \prod_{1 \le h \le p-1}(P^{(h)})^{\alpha_h} \in {\rm PDer}_{{\rm bit}\{p\}}(P)$ 
verifies
$$
({\rm Rk}_{\rm HMi}(\tau) , 
{\rm Si}_{\rm HMi}(\tau)) \ne 
({\rm Rk}_{\rm Fact}({\bm \eta}^\alpha, {\bm \kappa}^\alpha), 
{\rm Si}_{\rm Fact}({\bm \eta}^\alpha)),
$$
where ${\bm \eta}^\alpha$ is the list of 
sign conditions satisfied by $Q$ 
on $t$ 
when ${\bm \eta}$ is the 
list of 
sign conditions satisfied by $\Der(P)$ 
on $t$ 
and 
${\bm \kappa}^\alpha$ is defined analogously. 
By Theorem \ref{hermitetrsubresw} (Hermite's Theory as an incompatibility)
there is an incompatibility
\begin{equation}\label{inc:aux_theorem_weaksigndet_2}
\begin{array}{c}
\Big\downarrow  \
\sign({\rm ThElim}(P)) = \tau, \ 
{\rm Fact}(P)^{{\bm \mu}(\tau), {\bm \nu}(\tau)}(t,z), \\[4mm]
\displaystyle{\bigwedge_{1\leq j\le \# {\bm \mu}(\tau)}  {\rm sign}(Q(t_j)) =  \eta_j^{\alpha},  \
\bigwedge_{1\leq k \le \# {\bm \nu}(\tau)} {\rm inv}(Q(z_k)) =  \kappa_k^{\alpha}  }
\ \Big\downarrow
_{\K[u][ t, a, b]}
\end{array}
\end{equation} 
with monoid part 
$$
\prod_{H \in {\rm HMi}(P; Q), \atop \tau(H) \ne 0}H^{2\tilde g_H}
\cdot
\prod_{1 \le j < j' \le \# {\bm \mu}(\tau)}(t_{j} - t_{j'})^{2\tilde e_{{j}, j'}} 
\cdot
\prod_{1 \le k \le \# {\bm \nu}(\tau)}b_{k}^{2\tilde f_{k}}
\cdot
$$
$$
\cdot
\prod_{1 \le k < k' \le \# {\bm \nu}(\tau)}
{\rm R}( z_{k}, z_{k'})
^{2\tilde g_{{k}, k'}} 
\cdot 
\prod_{1 \le j \le \# {\bm \mu}(\tau), \atop \eta^{\alpha}_j \ne 0}Q(t_j)^{2\tilde e'_j}
\prod_{1 \le k \le \# {\bm \nu}(\tau), \atop \kappa^{\alpha}_k \ne 0}(Q^2_{\re}(z_k) + Q^2_{\im}(z_k))^{2\tilde f'_k}
$$
with 
$\tilde g_H, \tilde e_{j, j'}, \tilde f_k, \tilde g_{k, k'}, 
\tilde e'_j, \tilde f'_k \le {\rm g}_{H,1}\{p\}$, 
degree in $w$ bounded by $2{\rm bit}\{p\}{\rm g}_{H,1}\{p\}\deg_w P$ and 
degree in $t_j$ and
degree in $(a_k, b_k)$ bounded by ${\rm g}_{H,1}\{p\}$.

Since the sign and invertibility of a product is determined by 
the sign and invertibility of each factor,
by applying to (\ref{inc:aux_theorem_weaksigndet_2}) the weak inferences in 
Lemmas \ref{lemma_basic_sign_rule_1} (items \ref{lemma_basic_sign_rule:4},
\ref{lemma_basic_sign_rule:5} and
\ref{lemma_basic_sign_rule:7}) and \ref{lem:comb_lin_zero_zero},  we obtain   
\begin{equation}\label{inc:aux_theorem_weaksigndet_3}
\lda \sign({\rm ThElim}(P)) = \tau, \  
{\rm Th}(P)^{{\bm \mu}(\tau), {\bm \nu}(\tau), {\bm \eta}}(t, z), \
\bigwedge_{1 \le k  \le \# {\bm \nu}(\tau)} {\rm inv}(\Der(P)(z_k)) = \kappa_k
 \rda
 _{\K[u][ t, a, b]}
\end{equation} 
with  monoid part 
$$
\prod_{H \in {\rm HMi}(P; Q), \atop \tau(H) \ne 0}H^{2\tilde g_H}
\cdot
\prod_{1 \le j < j' \le \# {\bm \mu}(\tau)}(t_{j} - t_{j'})^{2\tilde e_{{j}, j'}} 
\cdot
\prod_{1 \le k \le \# {\bm \nu}(\tau)}b_{k}^{2\tilde f_{k}}
\cdot
\prod_{1 \le k < k' \le \# {\bm \nu}(\tau)}
{\rm R}( z_{k}, z_{k'})
^{2\tilde g_{{k}, k'}} 
\cdot
$$
$$
\cdot 
\prod_{1 \le j  \le \# {\bm \mu}(\tau), \, 1 \le h \le p-1, \atop \eta^{\alpha}_j \ne 0} 
P^{(h)}(t_j)^{2\alpha_h \tilde e'_{j}}
\cdot
\prod_{1 \le k  \le \# {\bm \nu}(\tau), \, 1 \le h \le p-1, \atop \kappa^{\alpha}_k \ne 0} 
({P^{(h)}_{\re}}(z_k)^2  + {P^{(h)}_{\im}}(z_k)^2 )^{2\alpha_h \tilde f'_{k}},
$$
degree in $w$ bounded by $2{\rm bit}\{p\}({\rm g}_{H,1}\{p\}+1)\deg_w P$,
and degree in $t_j$ 
and
degree in $(a_k, b_k)$ bounded by 
${\rm g}_{H,1}\{p\}$.
Note that Lemma \ref{lem:comb_lin_zero_zero} is used for the weak inference saying that,
for $1 \le k \le \#{\bm \nu}(\tau)$, ${\rm inv}(Q(z_k)) = 0$ when the invertibility of some factor of $Q$ at $z_k$ is $0$.

Then we successively apply to (\ref{inc:aux_theorem_weaksigndet_3}) 
the weak inferences
$$
\sum_{1 \le k \le \# {\bm \nu}(\tau),
1 \le h \le p-1, \atop \kappa_k(h) = 0} 
({P^{(h)}_{\re}}(z_k)^2 + {P^{(h)}_{\im}}(z_k)^2 )= 0
\ \ \ \vdash
\ \ \ 
\bigwedge_{1 \le k \le \# {\bm \nu}(\tau),
1 \le h \le p-1, \atop \kappa_k(h) = 0} 
P^{(h)}_{\re}(z_k) = 0, \ P^{(h)}_{\im}(z_k) = 0
$$
and
$$
\bigwedge_{1 \le k \le \# {\bm \nu}(\tau),
1 \le h \le p-1, \atop \kappa_k(h) = 0} 
{P^{(h)}_{\re}}(z_k)^2 + {P^{(h)}_{\im}}(z_k)^2 = 0
\ \ \ \vdash
\ \ \
\sum_{1 \le k \le \# {\bm \nu}(\tau),
1 \le h \le p-1, \atop \kappa_k(h) = 0} 
({P^{(h)}_{\re}}(z_k)^2 + {P^{(h)}_{\im}}(z_k)^2 )= 0.
$$
By Lemmas \ref{sos_non_pos_disjunct} and \ref{lemma_sum_of_pos_and_zer_is_pos}
(item \ref{lemma_sum_of_pos_and_zer_is_pos:1}) we obtain
\begin{equation}\label{inc:aux_theorem_weaksigndet_3_bis_bis}
\begin{array}{c}
\displaystyle{\Big \downarrow \
\sign({\rm ThElim}(P)) = \tau, \  
{\rm Th}(P)^{{\bm \mu}(\tau), {\bm \nu}(\tau), {\bm \eta}}(t, z),
}
\\[5mm]
\displaystyle{\bigwedge_{1 \le k \le \# {\bm \nu}(\tau),
1 \le h \le p-1, \atop \kappa_k(h) \ne 0} 
P^{(h)}_{\re}(z_k)^2 + P^{(h)}_{\im}(z_k)^2 \ne 0,} \\[5mm]
\displaystyle{\bigwedge_{1 \le k \le \# {\bm \nu}(\tau),
1 \le h \le p-1, \atop \kappa_k(h) = 0} 
P^{(h)}_{\re}(z_k)^2 + P^{(h)}_{\im}(z_k)^2 = 0} \
\Big \downarrow
_{\K[u][ t, a, b]}
\end{array}
\end{equation}
with monoid part 
$$
\prod_{H \in {\rm HMi}(P; Q), \atop \tau(H) \ne 0}H^{4\tilde g_H}
\cdot
\prod_{1 \le j < j' \le \# {\bm \mu}(\tau)}(t_{j} - t_{j'})^{4\tilde e_{{j}, j'}} 
\cdot
\prod_{1 \le k \le \# {\bm \nu}(\tau)}b_{k}^{4\tilde f_{k}}
\cdot
\prod_{1 \le k < k' \le \# {\bm \nu}(\tau)}
{\rm R}(z_{k}, z_{k'})
^{4\tilde g_{{k}, k'}} 
\cdot
$$
$$
\cdot 
\prod_{1 \le j  \le \# {\bm \mu}(\tau), 1 \le h \le p-1, \atop \eta^{\alpha}_j \ne 0} 
P^{(h)}(t_j)^{4\alpha_h \tilde e'_{j}}
\cdot 
\prod_{1 \le k  \le \# {\bm \nu}(\tau),1 \le h \le p-1, \atop {\bm \kappa}^{\alpha}_k \ne 0} 
({P^{(h)}_{\re}}(z_k)^2  + {P^{(h)}_{\im}}(z_k)^2 )^{4\alpha_h \tilde f'_{k}},
$$
degree in $w$ bounded by $(4{\rm bit}\{p\}({\rm g}_{H,1}\{p\}+1) + 2)\deg_w P$,
degree in $t_j$ bounded by $2{\rm g}_{H,1}\{p\}$ 
and
degree in $(a_k, b_k)$ bounded by 
$2({\rm g}_{H,1}\{p\} + p - 1)$.

Then we fix ${\bm \eta}$ and we apply 
to incompatibilities  (\ref{inc:aux_theorem_weaksigndet_3_bis_bis}) 
for ${\bm \eta}$ and every ${\bm \kappa}$,
the weak inference, 
$$
\vdash \ \ \ 
\bigvee_{K \in \mathcal{K} 
} 
\Big(\bigwedge_{(k,h) \not \in K} {P^{(h)}_{\re}}(z_k)^2  + {P^{(h)}_{\im}}(z_k)^2  \ne 0,
\bigwedge_{(k,h) \in K} {P^{(h)}_{\re}}(z_k)^2  + {P^{(h)}_{\im}}(z_k)^2  = 0\Big)
$$
where
$$\mathcal{K}=\{K\mid K \subset \{1 \le k \le \# {\bm \nu}(\tau)\} \times \{1 \le h \le p-1\}\}.$$
By Lemma \ref{lem:multiple_case_by_case} 
we obtain 
\begin{equation}\label{inc:aux_theorem_weaksigndet_3_bis}
\lda {\rm sign}({\rm ThElim}(P)) = \tau, \ 
{\rm Th}(P)^{ {\bm \mu}(\tau), {\bm \nu}(\tau), {\bm \eta}}(t,z) \rda
_{\K[u][ t, a, b]}
\end{equation}
with monoid part 
$$
\prod_{H \in {\rm ThElim}(P), \atop \tau(H) \ne 0}H^{2\hat g_H}
\cdot
\prod_{1 \le j < j' \le \# {\bm \mu}(\tau)}(t_{j} - t_{j'})^{2\hat e_{{j}, j'}} 
\cdot
\prod_{1 \le k \le \# {\bm \nu}(\tau)}b_{k}^{2\hat f_{k}}
\cdot
$$
$$
\cdot
\prod_{1 \le k < k' \le \# {\bm \nu}(\tau)}
{\rm R}(z_{k}, z_{k'})
^{2 \hat g_{{k}, k'}} 
\cdot
\prod_{1 \le j  \le \# {\bm \mu}(\tau),
1 \le h \le p-1, \atop \eta_j(h) \ne 0}
P^{(h)}(t_j)^{2\hat  e'_{j,h}}
$$
with $\hat g_H, \hat e_{j, j'}, 
\hat f_{k},
\hat g_{k, k'}, 
\hat e'_{j,h} \le 
\tilde{\rm g}_{H,1}\{p\}$,
degree in $w$ bounded by 
$
\tilde{\rm g}_{H,1}\{p\}\deg_w P$ and degree in $t_j$ and degree in 
$(a_k, b_k)$ bounded by 
$
\tilde{\rm g}_{H,1}\{p\}$.

Now we have already obtained the necessary incompatibilities for every ${\bm \eta}$. 
Then we apply to incompatibilities 
(\ref{inc:aux_theorem_weaksigndet_1}) and 
(\ref{inc:aux_theorem_weaksigndet_3_bis}) 
the weak inference
$$
\vdash \ \ \ 
\bigvee_{(J,J')\in \mathcal{J}}
\Big(\bigwedge_{(j,h) \in J'} P^{(h)}(t_j) > 0, \ 
\bigwedge_{(j,h) \not \in  
J \cup J'}
P^{(h)}(t_j) < 0, \
\bigwedge_{(j,h) \in J} P^{(h)}(t_j) = 0\Big)
$$
where
$$ \mathcal{J} =
\{(J,J') \mid J \subset \{1 \le j \le \# {\bm \mu}(\tau)\} \times \{1 \le h \le p-1\}, J' \subset \{1 \le j \le \# {\bm \mu}(\tau)\} \times \{1 \le h \le p-1\} \setminus J\}.
$$
By Lemma  \ref{lem:multiple_case_by_case_with_signs} 
we obtain
\begin{equation}\label{inc:aux_theorem_weaksigndet_4}
\lda {\rm sign}({\rm ThElim}(P)) = \tau, \ {\rm Fact}(P)^{{\bm \mu}(\tau), {\bm \nu}(\tau)}(t,z),   \ \cH \rda
_{\K[v][ t, a, b]}
\end{equation}
with monoid part 
$$
S^{\hat h'}\cdot
\prod_{H \in {\rm ThElim}(P), \atop \tau(H) \ne 0}H^{2\hat g'_H}
\cdot
\prod_{1 \le j < j' \le \# {\bm \mu}(\tau)}(t_{j} - t_{j'})^{2 \hat e'_{{j}, j'}} 
\cdot
\prod_{1 \le k \le \# {\bm \nu}(\tau)}b_{k}^{2\hat f'_{k}}
\cdot
\prod_{1 \le k < k' \le \# {\bm \nu}(\tau)}
{\rm R}(z_{k}, z_{k'})
^{2 \hat g'_{{k}, k'}} 
$$
with 
$\hat h'
\le f'_0
$,
$
\hat g'_H \le
f'_0\tilde{\rm g}_{H,1}\{p\} 
$,
$
\hat e'_{j, j'} \le
f'_0\max\{e, \tilde{\rm g}_{H,1}\{p\}\} 
$,
$
\hat f'_{k} \le 
f'_0\max\{f, \tilde{\rm g}_{H,1}\{p\}\} 
$,
$
\hat g'_{k, k'} \le 
f'_0\max\{g, \tilde{\rm g}_{H,1}\{p\}\} 
$,
degree in $w$ bounded by 
$
f'_0
\max\{\delta_w , 
\tilde{\rm g}_{H,1}\{p\}\deg_w P
\}
$,
degree in $t_j$ bounded by
$
f'_0\max\{\delta_t , 
\tilde{\rm g}_{H,1}\{p\}
\}
$
and degree in $(a_k, b_k)$ bounded by 
$
f'_0\max\{\delta_z, 
\tilde{\rm g}_{H,1}\{p\}
\}
$,
where
$$f'_0 = 
2^{((p-1)p+2)2^{(p-1)p}  - 2 }
\max\{e', \tilde{\rm g}_{H,1}\{p\}\}^{2^{(p-1)p}-1}. 
$$
We rename variables $t$ and $z$ in (\ref{inc:aux_theorem_weaksigndet_4}) 
as $t_{{\bm \mu}(\tau)}$ and $z_{{\bm \nu}(\tau)}$ respectively.

Our next aim is to obtain incompatibilities
$$
\lda {\rm sign}({\rm ThElim}(P)) = \tau, \ {\rm Fact}(P)^{{\bm \mu},{\bm \nu}}(t_{{\bm \mu}},z_{{\bm \nu}}), \ \cH  \rda
_{\K[v][ t_{{\bm \mu}}, a_{{\bm \nu}}, b_{{\bm \nu}}]}
$$
for every $({\bm \mu}, {\bm \nu}) \in \cup_{m + 2n = p} \Lambda_{m} \times \Lambda_{n}$, where
$t_{\bm \mu} = (t_{{\bm \mu}, 1}, \dots, t_{{\bm \mu}, {\# {\bm \mu}}})$
and $z_{\bm \nu} = (z_{{\bm \nu}, 1}, \dots, z_{{\bm \nu}, {\# {\bm \nu}}})$, in order to 
be able to apply 
Theorem \ref{thLaplacewithmult} (Real Irreducible Factors with Multiplicities as a weak existence).
For $({\bm \mu}(\tau), {\bm \nu}(\tau))$, we already have incompatibility
(\ref{inc:aux_theorem_weaksigndet_4}), so now we suppose 
$({\bm \mu}, {\bm \nu}) \ne ({\bm \mu}(\tau), {\bm \nu}(\tau))$.

By Theorem \ref{ThomwithP} (Fixing the Thom encodings)
for every
$ {\bm \eta}$  
list of  
sign conditions on $\Der(P)$ and 
${\bm \kappa}$   
list of invertibility conditions on $\Der(P)$,
there exists $\alpha \in \{0, 1, 2\}^{1, \dots, p-1}$ 
with $\#\{h  \ | \ \alpha_h \ne 0\} \le {\rm bit}\{p\}$
such that 
$Q = \prod_{1 \le h \le p-1}(P^{(h)})^{\alpha_h} \in {\rm PDer}_{{\rm bit}\{p\}}(P)$ 
verifies
$$
({\rm Rk}_{\rm HMi}(\tau) , 
{\rm Si}_{\rm HMi}(\tau)) \ne 
({\rm Rk}_{\rm Fact}({\bm \eta}^\alpha, {\bm \kappa}^\alpha), 
{\rm Si}_{\rm Fact}({\bm \eta}^\alpha)).
$$
Proceeding as before, we obtain
\begin{equation}\label{inc:aux_theorem_weaksigndet_5}
\lda {\rm sign}({\rm ThElim}(P)) = \tau, \ {\rm Fact}(P)^{{\bm \mu},{\bm \nu}}(t_{{\bm \mu}},z_{{\bm \nu}})  \rda
_{\K[u][ t_{{\bm \mu}}, a_{{\bm \nu}}, b_{{\bm \nu}}]}
\end{equation}
with monoid part 
$$
\prod_{H \in {\rm ThElim}(P), \atop \tau(H) \ne 0}H^{2\hat g''_H}
\cdot
\prod_{1 \le j < j' \le m}(t_{{\bm \mu}, j} - t_{{\bm \mu}, j'})^{2 \hat e''_{{j}, j'}} 
\cdot
\prod_{1 \le k \le n}b_{{\bm \nu}, k}^{2\hat f''_{k}}
\cdot
\prod_{1 \le k < k' \le n}
{\rm R}(z_{k}, z_{k'})
^{2 \hat g''_{{k}, k'}} 
$$
with 
$
\hat g''_H, 
\hat e''_{j, j'},
\hat f''_{k},
\hat g''_{k, k'} \le f'_0\tilde{\rm g}_{H,1}\{p\}$,  
degree in $w$ bounded by 
$
f'_0\tilde{\rm g}_{H,1}\{p\}\deg_w P
$,
and degree in $t_{{\bm \mu}, j}$ and  
degree in $(a_{{\bm \nu}, k}, b_{{\bm \nu}, k})$ bounded by 
$
f'_0\tilde{\rm g}_{H,1}\{p\}
$.

Finally, we apply to  incompatibility (\ref{inc:aux_theorem_weaksigndet_4}) 
and incompatibilities (\ref{inc:aux_theorem_weaksigndet_5}) 
for every $({\bm \mu}, {\bm \nu}) \ne
({\bm \mu}(\tau), {\bm \nu}(\tau))$
the weak inference
$$
\vdash \ \ \ \bigvee_{ m + 2n = p
\atop
({\bm \mu},{\bm \nu}) \in 
\Lambda_{m} \times \Lambda_{n}}
\exists (t_{{\bm \mu}}, z_{{\bm \nu}}) \ [\; {\rm Fact}(P)^{{\bm \mu},{\bm \nu}}(t_{{\bm \mu}}, z_{{\bm \nu}}) \;].
$$
By Theorem \ref{thLaplacewithmult} (Real Irreducible Factors with Multiplicities as a weak existence), taking into account that
$\# \cup_{m + 2n = p} \Lambda_{m} \times \Lambda_{n} \le 2^p$, 
and using Lemma \ref{tlem:aux_theorem_fix_Thom}, we obtain 
$$
\lda {\rm sign}({\rm ThElim}(P)) = \tau, \  \cH \rda
_{\K[v]}
$$
with monoid part
$$
S^h
\cdot
\prod_{H \in {\rm ThElim}(P), \atop \tau(H) \ne 0} H^{2h'_H} 
$$
with 
\begin{eqnarray*}
h &\le& {\rm g}_4\{p\}
{f'_0}^{2^{\frac{1}{2}p^2}+2^{\frac{1}{2}p}+1}
\max\{ e, g, \tilde{\rm g}_{H,1}\{p\} \}^{2^{\frac{1}{2}p^2}}
\max\{ f, \tilde{\rm g}_{H,1}\{p\} \}^{2^{\frac{1}{2}p}} \le \\[2mm]
& \le & {\rm g}_5\{p, e, f, g, e'\},\\
h'_H &\le& 
2^p{\rm g}_4\{p\} \tilde{\rm g}_{H,1}\{p\}
{f'_0}^{2^{\frac{1}{2}p^2}+2^{\frac{1}{2}p}+1}
\max\{ e, g, \tilde{\rm g}_{H,1}\{p\} \}^{2^{\frac{1}{2}p^2}}
\max\{ f, \tilde{\rm g}_{H,1}\{p\} \}^{2^{\frac{1}{2}p}}  \le \\[2mm]
& \le & {\rm g}_5\{p, e, f, g, e'\}, 
\end{eqnarray*}
and degree in $w$ bounded by 
$$
\begin{array}{rrl}
& {\rm g}_4\{p\}
{f'_0}^{2^{\frac{1}{2}p^2}+2^{\frac{1}{2}p}+1}
\max\{ e, g, \tilde{\rm g}_{H,1}\{p\} \}^{2^{\frac{1}{2}p^2}}
\max\{ f, \tilde{\rm g}_{H,1}\{p\} \}^{2^{\frac{1}{2}p}}
\cdot & \\
& \cdot
\Big(
\max\{\delta_w , 
\tilde{\rm g}_{H,1}\{p\}\deg_w P
\} + 
\max\{\delta_t , \delta_z ,
\tilde{\rm g}_{H,1}\{p\}\}\deg_wP
\Big)
&\le \\
\le & {\rm g}_5\{p, e, f, g, e'\}
\Big(
\max\{\delta_w , 
\tilde{\rm g}_{H,1}\{p\}\deg_w P
\} + 
\max\{\delta_t , \delta_z ,
\tilde{\rm g}_{H,1}\{p\}\}\deg_wP
\Big),
\end{array}
$$
which serves as the final incompatibility.
\end{proof}

\begin{proof}{Proof of Theorem \ref{weaksigndetgen}.} 
We simplify the notation by renaming ${\bm \mu} = {\bm \mu}(\tau)$,  
${\bm \nu} = {\bm \nu}(\tau)$ and
$ {\bm \eta} =  {\bm \eta}(\tau)$.
Consider the initial incompatibility 
\begin{equation}\label{inc:init_inc_sec_the_chap_six}
\Big \downarrow \  \bigwedge_{1 \le j \le \# {\bm \mu}
} \s(Q(t_j)) = \epsilon_j(\tau, \tau')
, \  {\cal H} \ {\Big \downarrow}_{\K[v]}
\end{equation}
where ${\cal H}$ is a system of sign conditions in $\K[v]$. 

Once again, our aim is to proceed by case by case reasoning. 
Let ${\bm \epsilon} = [\epsilon_1, \dots, \epsilon_{\# {\bm \mu}}]$ be a 
list of 
sign conditions
on $Q$ with ${\bm \epsilon} \ne {\bm \epsilon}(\tau, \tau')$, 
${\bm \kappa} = [\kappa_1, \dots, \kappa_{\# {\bm \nu}}]$ 
be
a list of invertibility conditions
on $\Der(P)$
and ${\bm \rho} = [\rho_1, \dots, \rho_{\# {\bm \nu}}]$
be
 a list of invertibility conditions
on $Q$. 
By Theorem \ref{ThomwithPQ} (Fixing the Thom encodings with a Sign)
there exist $\alpha \in \{0, 1, 2\}^{1, \dots, p-1}$ 
with $\#\{h  \ | \ \alpha_h \ne 0\} \le {\rm bit}\{p\}-1$ and
$\beta \in \{1,2\}$
such that $\tilde Q = (\prod_{1 \le h \le p-1}(P^{(h)})^{\alpha_h})Q^{\beta} \in 
{\rm PDer}_{{\rm bit}\{p\}-1}(P;Q)$ 
verifies
$$
({\rm Rk}_{\rm HMi}(\tau) , 
{\rm Si}_{\rm HMi}(\tau)) \ne 
({\rm Rk}_{\rm Fact}( {\bm \eta}
^\alpha{\bm \epsilon}^\beta, {\bm \kappa}^\alpha {\bm \rho}^\beta), 
{\rm Si}_{\rm Fact}( {\bm \eta}
^\alpha{\bm \epsilon}^\beta)),
$$
where $ {\bm \eta}
^\alpha{\bm \epsilon}^\beta$ is the list of 
sign conditions satisfied by $\tilde Q$ 
on $t$ 
if ${\rm Th}(P)^{ {\bm \mu}
,{\bm \nu}, {\bm \eta}
}(t,z)$ 
holds and ${\bm \epsilon}$ is the 
list of 
sign conditions satisfied by $Q$ on $t$ and 
${\bm \kappa}^\alpha{\bm \rho}^\beta$ is defined analogously. 
By Theorem \ref{hermitetrsubresw} (Hermite's Theory as an incompatibility)
there is an incompatibility
\begin{equation}\label{inc:aux_theorem_weaksigndet_sec_2}
\begin{array}{c}
\Big\downarrow  \
\sign({\rm ThElim}(P;Q)) = \tau', \ 
{\rm Fact}(P)^{{\bm \mu},{\bm \nu}}(t,z), \\[4mm]
\displaystyle{\bigwedge_{1\leq j\le \# {\bm \mu}}  {\rm sign}(\tilde Q(t_j)) = 
\eta_j
^\alpha\epsilon_j^\beta,  \
\bigwedge_{1\leq k \le \# {\bm \nu}
} {\rm inv}(\tilde Q(z_k)) = \kappa_k^\alpha \rho_k^\beta }
\ \Big\downarrow_{\K[u][ t, a, b]} \end{array}
\end{equation} 
with monoid part 
$$
\prod_{H \in {\rm HMi}(P; \tilde Q), \atop \tau'(H) \ne 0}H^{2\tilde g_H}
\cdot
\prod_{1 \le j < j' \le \# {\bm \mu}}(t_{j} - t_{j'})^{2\tilde e_{{j}, j'}}
\cdot
\prod_{1 \le k \le \# {\bm \nu}}b_{k}^{2\tilde f_{k}}
\cdot
$$
$$
\cdot
\prod_{1 \le k < k' \le \# {\bm \nu}}
{\rm R}( z_{k}, z_{k'})
^{2\tilde g_{{k}, k'}} 
\cdot
\prod_{1 \le j \le \# {\bm \mu}
, \atop \eta_j^\alpha\epsilon_j^\beta \ne 0}
\tilde Q(t_j)^{2\tilde e'_j}
\cdot
\prod_{1 \le k \le \# {\bm \nu}, \atop  \kappa_j^\alpha\rho_k^\beta \ne 0}
(\tilde Q^2_{\re}(z_k) + \tilde Q^2_{\im}(z_k))^{2\tilde f'_k}
$$
with
$\tilde g_H, 
\tilde e_{j, j'}, 
\tilde f_k, 
\tilde g_{k, k'}, 
\tilde e'_j, \tilde f'_k \le {\rm g}_{H,2}\{p,q\}$, 
degree in $w$ bounded by 
$
2{\rm bit}\{p\}{\rm g}_{H,2}\{p,q\}
\max\{\deg_w P, \deg_wQ\}
$
and
degree in $t_j$ 
and
degree in $(a_k, b_k)$ bounded by ${\rm g}_{H,2}\{p,q\}$.

Since the sign and invertibility of a product is determined by 
the sign and invertibility of each factor,
by applying to (\ref{inc:aux_theorem_weaksigndet_sec_2}) the weak inferences in 
Lemmas \ref{lemma_basic_sign_rule_1} (items \ref{lemma_basic_sign_rule:4},
\ref{lemma_basic_sign_rule:5} and
\ref{lemma_basic_sign_rule:7}) and \ref{lem:comb_lin_zero_zero} (used as in the proof of Theorem \ref{weaksigndet}),  we obtain 
\begin{equation}\label{inc:aux_theorem_weaksigndet_second_3}
\begin{array}{c}
\Big \downarrow 
\sign({\rm ThElim}(P;Q)) = \tau', \ 
{\rm Th}^{ {\bm \mu}, {\bm \nu}, {\bm \eta}}(t, z), \ 
\displaystyle{\bigwedge_{1 \le j \le \# {\bm \mu}} \s(Q(t_j)) = \epsilon_j,}
 \\[4mm]
\displaystyle{\bigwedge_{1 \le k \le \# {\bm \nu}} {\rm inv}({\Der}(P)(z_k)) = \kappa_k, \
\bigwedge_{1 \le k \le \# {\bm \nu}} {\rm inv}(Q(z_k)) = \rho_k}
\Big \downarrow
_{\K[u][ t, a, b]}
\end{array}
\end{equation} 
with  monoid part 
$$
\prod_{H \in {\rm HMi}(P; \tilde Q), \atop \tau'(H) \ne 0}H^{2\tilde g_H}
\cdot
\prod_{1 \le j < j' \le \# {\bm \mu}}(t_{j} - t_{j'})^{2\tilde e_{{j}, j'}} 
\cdot
\prod_{1 \le k \le \# {\bm \nu}}b_{k}^{2\tilde f_{k}}
\cdot
\prod_{1 \le k < k' \le \# {\bm \nu}}
{\rm R}( z_{k}, z_{k'})
^{2\tilde g_{{k}, k'}} 
\cdot
$$
$$
\cdot
\prod_{1 \le j  \le \# {\bm \mu}, \atop \eta_j^{\alpha}\epsilon^{\beta}_j \ne 0}
\Big( 
\prod_{1 \le h \le p-1} P^{(h)}(t_j)^{2\alpha_h \tilde e'_{j}}
\Big)
\cdot
Q(t_j)^{2 \beta \tilde e'_j}
\cdot
$$
$$
\cdot
\prod_{1 \le k  \le \# {\bm \nu}, \atop \kappa_k^{\alpha}\rho^{\beta}_k \ne 0} 
\Big(
\prod_{1 \le h \le p-1}
({P^{(h)}_{\re}}(z_k)^2  + {P^{(h)}_{\im}}(z_k)^2 )^{2\alpha_h \tilde f'_{k}}
\Big)
\cdot
(Q^2_{\re}(z_k)  + Q^2_{\im}(z_k))^{2\beta \tilde f'_k},
$$
degree in $w$ bounded by 
$
2{\rm bit}\{p\}({\rm g}_{H,2}\{p,q\} + 1)
\max\{\deg_w P, \deg_wQ\}
$
and
degree in $t_j$ 
and
degree in $(a_k, b_k)$ bounded by 
${\rm g}_{H,2}\{p,q\}$.

Then we successively apply to (\ref{inc:aux_theorem_weaksigndet_second_3}) 
the weak inferences
$$
\sum_{1 \le k \le \# {\bm \nu}
,
1 \le h \le p-1, \atop \kappa_k(h) = 0} 
({P^{(h)}_{\re}}(z_k)^2 + {P^{(h)}_{\im}}(z_k)^2 ) 
+
\sum_{1 \le k \le \# {\bm \nu}
, \atop \rho_k = 0 } 
 (Q^2_{\re}(z_k) + Q^2_{\im}(z_k)) 
= 0
\ \ \ \vdash
$$
$$
\vdash \ \ \ 
\bigwedge_{1 \le k \le \# {\bm \nu}
,
1 \le h \le p-1, \atop \kappa_k(h) = 0} 
({P^{(h)}_{\re}}(z_k) = 0, \  {P^{(h)}_{\im}}(z_k) = 0 ),   
\bigwedge_{1 \le k \le \# {\bm \nu}
, \atop \rho_k = 0 } 
(Q_{\re}(z_k) =0, \ Q_{\im}(z_k) = 0)
$$
and
$$
\bigwedge_{1 \le k \le \# {\bm \nu},
1 \le h \le p-1, \atop \kappa_k(h) = 0} 
{P^{(h)}_{\re}}(z_k)^2 + {P^{(h)}_{\im}}(z_k)^2 =0 , \  
\bigwedge_{1 \le k \le \# {\bm \nu}
, \atop \rho_k = 0 } 
 Q^2_{\re}(z_k) + Q^2_{\im}(z_k) = 0
\ \ \ \vdash
$$
$$
\vdash  \ \ \
\sum_{1 \le k \le \# {\bm \nu}
,
1 \le h \le p-1, \atop \kappa_k(h) = 0} 
({P^{(h)}_{\re}}(z_k)^2 + {P^{(h)}_{\im}}(z_k)^2 ) 
+  
\sum_{1 \le k \le \# {\bm \nu}
, \atop \rho_k = 0 } 
 (Q^2_{\re}(z_k) + Q^2_{\im}(z_k)) 
= 0.
$$
By Lemmas \ref{sos_non_pos_disjunct} and \ref{lemma_sum_of_pos_and_zer_is_pos}
(item \ref{lemma_sum_of_pos_and_zer_is_pos:1}) we obtain 
\begin{equation}\label{inc:aux_theorem_weaksigndet_second_3_bis_bis}
\begin{array}{c}
\displaystyle{
\Big \downarrow
\sign({\rm ThElim}(P;Q)) = \tau',  \
{\rm Th}^{ {\bm \mu}, {\bm \nu}, {\bm \eta}}(t, z), \,
\bigwedge_{1 \le j \le \# {\bm \mu}
} \s(Q(t_j)) = \epsilon_j, 
}
\\[6mm]
\displaystyle{ 
\bigwedge_{1 \le k \le \# {\bm \nu}
,
1 \le h \le p-1, \atop \kappa_k(h) \ne 0} 
{P^{(h)}_{\re}}(z_k)^2 + {P^{(h)}_{\im}}(z_k)^2 \ne 0,} 
\displaystyle{ 
\bigwedge_{1 \le k \le \# {\bm \nu}
,
1 \le h \le p-1, \atop \kappa_k(h) = 0} 
{P^{(h)}_{\re}}(z_k)^2 + {P^{(h)}_{\im}}(z_k)^2 =0} ,\\[6mm]
\displaystyle{
\bigwedge_{1 \le k \le \# {\bm \nu}
, \atop \rho_k \ne 0 } 
 Q^2_{\re}(z_k) + Q^2_{\im}(z_k) \ne  0, 
\
\bigwedge_{1 \le k \le \# {\bm \nu}
, \atop \rho_k = 0 } 
Q^2_{\re}(z_k) + Q^2_{\im}(z_k) = 0
\Big \downarrow_{\K[u][ t, a, b]}}
\end{array}
\end{equation}
with monoid part 
$$
\prod_{H \in {\rm HMi}(P; \tilde Q), \atop \tau'(H) \ne 0}H^{4\tilde g_H}
\cdot 
\prod_{1 \le j < j' \le \# {\bm \mu}
}(t_{j} - t_{j'})^{4\tilde e_{{j}, j'}} 
\cdot
\prod_{1 \le k \le \# {\bm \nu}
}b_{k}^{4\tilde f_{k}}
\cdot
\prod_{1 \le k < k' \le \# {\bm \nu}
}
{\rm R}( z_{k}, z_{k'})
^{4\tilde g_{{k}, k'}} 
\cdot
$$
$$
\cdot
\prod_{1 \le j  \le \# {\bm \mu}, \atop \eta_j^{\alpha}\epsilon^{\beta}_j \ne 0}
\Big( 
\prod_{1 \le h \le p-1} P^{(h)}(t_j)^{4\alpha_h \tilde e'_{j}}
\Big)
\cdot
Q(t_j)^{4 \beta \tilde e'_j}
\cdot
$$
$$
\cdot
\prod_{1 \le k  \le \# {\bm \nu}, \atop \kappa_k^{\alpha}\rho^{\beta}_k \ne 0} 
\Big(
\prod_{1 \le h \le p-1}
({P^{(h)}_{\re}}(z_k)^2  + {P^{(h)}_{\im}}(z_k)^2 )^{4\alpha_h \tilde f'_{k}}
\Big)
\cdot
(Q^2_{\re}(z_k)  + Q^2_{\im}(z_k))^{4\beta \tilde f'_k},
$$
degree in $w$ bounded by 
$
(4{\rm bit}\{p\}({\rm g}_{H,2}\{p,q\} + 1) + 2)
\max\{\deg_w P, \deg_wQ\}
$,
degree in $t_j$ bounded by $2{\rm g}_{H,2}\{p,q\}$
and
degree in $(a_k, b_k)$ bounded by 
$2({\rm g}_{H,2}\{p,q\} + \max\{p-1, q\})$.

Then we fix ${\bm \epsilon}$ and we apply 
to incompatibilities (\ref{inc:aux_theorem_weaksigndet_second_3_bis_bis})
for ${\bm \epsilon}$ and every ${\bm \kappa}$ and ${\bm \rho}$,
the weak inference
$$
\vdash \ \ \ 
\bigvee_{K\in \mathcal{K},\atop
K' \in \mathcal{K}'} 
\Big(
\bigwedge_{(k, h) \not \in K'} {P^{(h)}_{\re}}(z_k)^2  + {P^{(h)}_{\im}}(z_k)^2  \ne 0, \ 
\bigwedge_{(k,h) \in K'} {P^{(h)}_{\re}}(z_k)^2  + {P^{(h)}_{\im}}(z_k)^2 = 0, 
$$
$$
\bigwedge_{k \not \in K} Q^2_{\re}(z_k)  + Q^2_{\im}(z_k)  \ne 0, \ 
\bigwedge_{k \in K} Q^2_{\re}(z_k)  + Q^2_{\im}(z_k) = 0
\Big),
$$
where 
$$
\mathcal{K} = \{K\mid K \subset \{1 \le k \le \# {\bm \nu}\}\} \quad \hbox{and} \quad
\mathcal{K}' = \{K'\mid K' \subset \{1 \le k \le \# {\bm \nu}\} \times \{1, \dots, p-1 \}\}.
$$
By Lemma \ref{lem:multiple_case_by_case} 
we obtain 
\begin{equation}\label{inc:aux_theorem_weaksigndet_second_3_bis}
\Big \downarrow \ \sign({\rm ThElim}(P;Q)) = \tau', \
{\rm Th}^{{\bm \mu}, {\bm \nu}, {\bm \eta}}(t, z), \,
\bigwedge_{1 \le j \le \# {\bm \mu}
} \s(Q(t_j)) = \epsilon_j \ {\Big \downarrow}_{\K[u][ t, a, b]}
\end{equation}
with monoid part 
$$
\prod_{H \in {\rm ThElim}(P; Q), \atop \tau'(H) \ne 0}H^{2\hat g_H}
\cdot
\prod_{1 \le j < j' \le \# {\bm \mu}
}(t_{j} - t_{j'})^{2\hat e_{{j}, j'}} 
\cdot
\prod_{1 \le k \le \# {\bm \nu}
}b_{k}^{2\hat f_{k}}
\cdot
$$
$$
\cdot
\prod_{1 \le k < k' \le \# {\bm \nu}
}
{\rm R}(z_{k}, z_{k'})
^{2 \hat g_{{k}, k'}} 
\cdot 
\prod_{1 \le j  \le \# {\bm \mu}, \, 1 \le h \le p-1, \atop \eta_j(h) \ne 0}
P^{(h)}(t_j)^{2\hat  e'_{j,h}}
\cdot
\prod_{1 \le j  \le \# {\bm \mu},  \atop \epsilon_j \ne 0}
Q(t_j)^{2\hat e'_j}
$$
with 
$
\hat g_H, 
\hat e_{j, j'}, 
\hat f_{k}, 
\hat g_{k, k'}, 
\hat e'_{j,h},
\hat e'_{j}  \le \tilde{\rm g}_{H,2}\{p,q\}$, 
degree in $w$ bounded by 
$\tilde{\rm g}_{H,2}\{p,q\}
\max\{\deg_w P, \deg_wQ\}
$ and
degree in $t_j$  and 
degree in $(a_k, b_k)$ bounded by 
$\tilde{\rm g}_{H,2}\{p,q\}$.

Finally, we apply 
to incompatibilities (\ref{inc:init_inc_sec_the_chap_six})
and (\ref{inc:aux_theorem_weaksigndet_second_3_bis}) for every ${\bm \epsilon} \ne {\bm \epsilon}(\tau, \tau')$
the weak inference
$$
\vdash \ \ \ 
\bigvee_{J \subset \{1, \dots, \# {\bm \mu}\}
\atop J' \subset \{1, \dots,  {\bm \mu}\} \setminus J} 
\Big(\bigwedge_{j \in J'} Q(t_j) > 0, \ 
\bigwedge_{j  \not \in J \cup J'} Q(t_j) < 0,  \
\bigwedge_{j \in J} Q(t_j) = 0\Big).
$$
By Lemma  \ref{lem:multiple_case_by_case_with_signs} 
we obtain 
$$
\lda {\rm sign}({\rm ThElim}(P;Q)) = \tau', \ 
{\rm Th}(P)^{{\bm \mu}, {\bm \nu}, {\bm \eta}},    \ \cH \rda
_{\K[v]}
$$
with monoid part 
$$
S^{h'}\cdot
\prod_{H \in {\rm ThElim}(P;Q), \atop \tau'(H) \ne 0} H^{2h'_H} 
\cdot
\prod_{1 \le j < j' \le \# {\bm \mu}
}(t_{j} - t_{j'})^{2e_{{j}, j'}} 
\cdot
\prod_{1 \le k \le \# {\bm \nu}
}b_{k}^{2f_{k}}
$$
$$
\cdot
\prod_{1 \le k < k' \le \# {\bm \nu}
}
{\rm R}(z_{k}, z_{k'})
^{2g_{{k}, k'}} 
\cdot  
\prod_{1 \le j  \le \# {\bm \mu}
, 1 \le h \le p-1, \atop \eta_j(h) \ne 0}
P^{(h)}(t_j)^{2e'_{h,j}}
$$
with
\begin{eqnarray*}
h'&\le &
2^{(p + 2)2^{p} - 2p - 2}
\max\{
h, \tilde{\rm g}_{H,2}\{p,q\}
\}^{2^{p}-1},\\
h'_H, 
e_{j, j'},
f_{k},
g_{k, k'},
e'_{h,j}
&\le& 2^{(p + 2)2^{p}  - 2}
\max\{
h, \tilde{\rm g}_{H,2}\{p,q\}
\}^{2^{p}-1}\tilde{\rm g}_{H,2}\{p,q\},
\end{eqnarray*}
and degree in $w$ bounded by 
$$
2^{(p + 2)2^{p}-2}
\max\{
h, \tilde{\rm g}_{H,2}\{p,q\}\}^{2^{p}-1}
\max\{
\delta_w, \tilde{\rm g}_{H,2}\{p,q\}\max\{\deg_w P, \deg_wQ\} \},
$$
degree in $t_{j}$ bounded by 
$$
2^{(p + 2)2^{p}-2}
\max\{h, \tilde{\rm g}_{H,2}\{p,q\}\}^{2^{p}-1}
\max\{
\delta_t, \tilde{\rm g}_{H,2}\{p,q\}\}
$$
and
degree in $(a_{k}, b_{k})$ bounded by 
$$
2^{(p + 2)2^{p}-2}
\max\{h, \tilde{\rm g}_{H,2}\{p,q\}\}^{2^{p}-1}
\max\{
\delta_z, \tilde{\rm g}_{H,2}\{p,q\}\},
$$
which serves as 
the final incompatibility. 
\end{proof}

We finish this subsection with the following remark, which will be used in Subsection \ref{sect_factor_family}.

\begin{remark}\label{rem:degree_and_number_thelim} Following Definition \ref{notaDPTP},  there are
$$
\sum_{0 \le j \le i} {{p-1}\choose{j}}2^j \le 2p^{i}
$$
elements in ${\rm PDer}_i(P)$.
Therefore, there are 
at most $2p^{{\rm bit}\{p\} + 1}$ elements in 
${\rm ThElim}(P)$ and, by Remark \ref{rem:degree_bound_for_hmin}, 
their degrees in $u$ are bounded by 
$$
p\Big(
(2(p-1){\rm bit}\{p\} + 2p - 2)\deg_u P + 2{\rm bit}\{p\}\deg_u P
\Big) \le 2p^2( {\rm bit}\{p\}+1)\deg_u P.
$$
Similarly, there are at most
$4p^{{\rm bit}\{p\}}$  elements in ${\rm ThElim}(P;Q)$ and, again by 
Remark \ref{rem:degree_bound_for_hmin}, 
their degrees in $u$ are bounded by 
\begin{eqnarray*}
&&p\Big(
(2(p-1)({\rm bit}\{p\}-1) + 2q + 2p - 2)\deg_u P
+
2({\rm bit}\{p\}- 1)\deg_u P + 2 \deg_u Q 
\Big) =
\\
&=&
p\Big(
( 2p{\rm bit}\{p\} + 2q - 2)  \deg_u P + 2\deg_u Q
\Big).
\end{eqnarray*}
\end{remark}

\subsection{Conditions on the parameters fixing the real root order on a family}
\label{sect_factor_family}

Consider now a finite family $\cQ$ of polynomials in $\K[u][y]$ monic in the variable $y$, with $u=(u_1,\ldots,u_k)$. 
Our aim is to define a family ${\rm Elim}({\mathcal Q})\subset \K[u]$  such  that
the list of realizable 
sign conditions on ${\rm Elim}({\mathcal Q})$ 
fixes the factorization
and relative order between the real roots of all polynomials in ${\cal Q}$.

\begin{definition}\label{def:sign_det_sect_three_first} 
Let $\cQ$ be
a finite family of polynomials in $\K[u][y]$ monic in the variable $y$. 
We denote by 
$$
 \Der_+({\cal Q}) = \bigcup_{P \in {\cal Q}}\Der_+(P) \subset \K[u][y].
$$

We define 
$$ {\rm Elim}({\mathcal Q}) =
\displaystyle{ 
\bigcup_{ P\in {\cal Q}}
\Big(  {\rm ThElim}(P) \
\bigcup 
\ \bigcup_{Q \in  \Der_+(\cQ) \setminus \Der_+(P)} {\rm ThElim}(P;Q) \Big)
} \subset \K[u].
$$
\end{definition}

In order to prove that the family ${\rm Elim}({\mathcal Q})$ satisfies the required property, 
we introduce some notation and defintions. 

\begin{notation}\label{tableofrealroots}
Let $\cQ$ be
a finite family of polynomials in $\K[u][y]$ monic in the variable $y$. 
We define the set ${\rm H}({\cal Q})$, whose elements  give a description 
of the total list of real roots of ${\cal Q}$.
 An element of  ${\rm H}({\cal Q})$
 is
a list
$ {\bm \eta} =  [\eta_1,   \dots , \eta_r]$ 
of distinct
sign conditions on $\Der_+({\cal Q})$ such that
\begin{itemize}
\item 
for every $1 \le j \le r$, there exists $P \in \cQ$ such that  $\eta_j(P) = 0$.

\item for every $1 \le j \le r$ and every $P \in \cQ$ such that $\eta_j(P) = 0$, 
$\eta_{j'} \prec_{P} \eta_{j}$ for $ 1 \le j' < j$ and 
$\eta_{j} \prec_{P} \eta_{j'}$ for $ j < j' \le r$.

\item for every $1 \le j < j' \le r$ and every $P \in \cQ$, 
$\eta_{j} \preceq_P \eta_{j'}$.
\end{itemize}

For $ {\bm \eta} \in {\rm H}({\cal Q})$ and $P\in \cQ$ we define ${\bm \eta}(P)$ as the 
(possibly empty)
ordered 
sublist 
of 
${\bm \eta}\mid_{\Der(P)}$ containing ${\eta_j}\mid_{\Der(P)}$ for those $1 \le j  \le r $ such that $\eta_j(P)=0$.

Given  $ {\bm \eta} \in {\rm H}({\cal Q})$,
we define the set ${\rm N}({\cal Q}, {\bm \eta})$, whose elements give a description 
of the multiplicity of the complex roots of the polynomials in ${\cal Q}$, given the description
$ {\bm \eta}$ of their real roots, by
$$
{\rm N}({\cal Q}, {\bm \eta})=
\prod_{P\in \cQ} \ \Lambda_{\frac12(\deg_y P-|{\rm vmu}( {\bm \eta}(P))|)}.
$$
(cf Notation \ref{not:mult}).
\end{notation}

Note that every choice of $\vartheta\in \R^k$
defines an element $ {\bm \eta}$ of ${\rm H}({\cal Q})$ and an element ${\bm \nu}$ of ${\rm N}({\cal Q},{\bm \eta})$ by 
considering the list of signs of $\Der_+({\cal Q}(\vartheta))$ at the roots $\theta_1,\ldots,\theta_r$ of the polynomials in ${\cal Q}(\vartheta) 
\subset \K[y]$ 
as well as the vectors of multiplicities of their complex roots.

\begin{definition}\label{def:sign_det_sect_three_sec} 
Let $\cQ$ be
a finite family of polynomials in $\K[u][y]$ monic in the variable $y$ and
$ {\bm \eta}\in {\rm H}({\cal Q}), {\bm \nu} \in {\rm N}({\cal Q}, {\bm \eta})$ with $ {\bm \eta} = [\eta_1, \dots, \eta_r]$, 
$t =   (t_1, \dots, t_r)$, 
$t_P$
be
 the vector formed by those $t_j$ 
whose indices appear in ${\bm \eta}(P)$ in the order $\prec^{\rm mu}_P$,
$z_P = (z_{P,1}, \dots, z_{P,
\# {\bm \nu}(P)})$
for  $P \in {\cal Q}$ and
$z = (z_P)_{P \in {\cal Q}}$.
We define the system of sign conditions
$${\rm OFact}({\cal Q})^{ {\bm \eta},{\bm \nu}}(t, z)$$ in $\K[u][ t, a, b]$ 
describing the 
decompostion into irreducible real factors 
and the relative order between the real roots
of all polynomials in $\cQ$:  
$$
\bigwedge_{P\in {\mathcal Q}}{\rm Fact}(P)^{
{\rm vmu}({\bm \eta}(P)), {\bm \nu}(P)}(t_P, z_P), 
\ \bigwedge_{1  \le j < j' \le r} t_{j} <  t_{j'}.
$$
\end{definition}

The folowing result show the connection between a sign condition on the set 
${\rm Elim}({\mathcal Q})$ and the order between the real roots of the family 
${\mathcal Q}$.

\begin{theorem}[Fixing the Ordered List of the Roots]\label{OThom}
 For every realizable sign condition $\tau$ on ${\rm Elim}({\cal Q})$, there exist 
 $ {\bm \eta}(\tau) \in {\rm H}({\cal Q}),$ ${\bm \nu}(\tau) \in {\rm N}({\cal Q}, {\bm \eta}(\tau))$
 such that for every $\vartheta\in {\rm Real}(\tau, \R)$
  there exist 
 $\theta \in \R^{\#{\bm \eta}(\tau)}$, 
$\alpha \in \R^s, 
\beta   \in \R^s$
with $s = \sum_{P \in \cQ}{\# {\bm \nu}(\tau)
}$  such that
$${\rm OFact}({\cal Q}
(\vartheta)
)^{ {\bm \eta}(\tau),{\bm \nu}(\tau)}(\theta, \alpha+i\beta).$$ 
\end{theorem}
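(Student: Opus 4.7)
The strategy is to extract the combinatorial data $({\bm \eta}(\tau),{\bm \nu}(\tau))$ from any single witness $\upsilon_0 \in {\rm Real}(\tau,\R)$, then argue that this data is in fact independent of the witness, so that it works uniformly for every $\upsilon \in {\rm Real}(\tau,\R)$. Concretely, I pick $\upsilon_0 \in {\rm Real}(\tau,\R)$ and enumerate the real roots of the polynomials in ${\cal Q}(\upsilon_0)$ in increasing order as $\theta_1 < \cdots < \theta_r$. For each $1 \le j \le r$, I set $\eta_j \in \{-1,0,1\}^{\Der_+({\cal Q})}$ by $\eta_j(Q) = \sign(Q(\upsilon_0,\theta_j))$, and define ${\bm \eta}(\tau) = [\eta_1,\ldots,\eta_r]$. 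For each $P \in {\cal Q}$, I let ${\bm \nu}_P(\tau)$ be the multiplicity vector of the non-real complex roots of $P(\upsilon_0,y)$, and set ${\bm \nu}(\tau) = ({\bm \nu}_P(\tau))_{P \in {\cal Q}}$.

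The heart of the argument is to show this data depends only on $\tau$. For each $P \in {\cal Q}$, Theorem \ref{ThomwithP} applied to the restriction $\tau|_{{\rm ThElim}(P)}$ fixes both $({\bm \mu}_P(\tau),{\bm \nu}_P(\tau))$ and the list of Thom encodings with respect to $P$ of the real roots of $P(\upsilon_0,y)$, i.e. their sign vectors on $\Der_+(P)$. Next, for every pair $(P,Q)$ with $Q \in \Der_+({\cal Q}) \setminus \Der_+(P)$, Theorem \ref{ThomwithPQ} applied to $\tau|_{{\rm ThElim}(P) \cup {\rm ThElim}(P;Q)}$ fixes the sign of $Q(\upsilon_0,\cdot)$ at each real root of $P(\upsilon_0,y)$, indexed by its Thom encoding with respect to $P$. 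Combining over all $Q \in \Der_+({\cal Q})$ yields, for each real root of each polynomial in ${\cal Q}(\upsilon_0)$, the complete sign vector in $\{-1,0,1\}^{\Der_+({\cal Q})}$, determined by $\tau$ alone.

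Two distinct real roots $\theta \ne \theta'$ of polynomials in ${\cal Q}(\upsilon_0)$ always give distinct sign vectors on $\Der_+({\cal Q})$: if some common $P$ has $P(\theta) = P(\theta') = 0$, they have distinct Thom encodings with respect to $P$ by Definition \ref{def:Thom}; otherwise, say $\theta$ is a root of some $P$ while $\theta'$ is not, and they already differ on $P \in \Der_+({\cal Q})$. Hence $[\eta_1,\ldots,\eta_r]$ is a list of distinct sign vectors, and the natural ordering $\theta_1 < \cdots < \theta_r$ on $\R$ induces an ordering on this list that is intrinsic to $\tau$. The ordering clauses in the definition of ${\rm H}({\cal Q})$ follow by applying Proposition \ref{lemma_order_thom_encoding} to each $P \in {\cal Q}$ separately: if $\eta_j|_{\Der_+(P)} \ne \eta_{j'}|_{\Der_+(P)}$, the strict inequality $\theta_j < \theta_{j'}$ forces $\eta_j \prec_P \eta_{j'}$ (otherwise the proposition would give $\theta_{j'} < \theta_j$); the non-strict inequality $\eta_j \preceq_P \eta_{j'}$ is trivial when the restrictions coincide. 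The strict version for indices $j$ with $\eta_j(P) = 0$ follows from the fact that a real root of $P$ is uniquely determined by its Thom encoding.

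Finally, for any $\upsilon \in {\rm Real}(\tau,\R)$, the same construction applied to $\upsilon$ in place of $\upsilon_0$ produces exactly the pair $({\bm \eta}(\tau),{\bm \nu}(\tau))$, so there are real numbers $\theta_j(\upsilon)$ realizing $\eta_j$ and, for each $P \in {\cal Q}$, conjugate pairs $(\alpha_{P,k}(\upsilon),\beta_{P,k}(\upsilon))$ realizing ${\bm \nu}_P(\tau)$ in the factorization of $P(\upsilon,y)$ over $\R$. Assembling $t_P$ (the subvector of $\theta$ indexed according to ${\bm \eta}(P)$) together with $(\alpha_P,\beta_P)$ gives ${\rm Fact}(P)^{{\rm vmu}({\bm \eta}(P)),{\bm \nu}_P(\tau)}(t_P,z_P)$ for each $P$, and the inequalities $t_j < t_{j'}$ for $j < j'$ are built in by the chosen enumeration; this establishes ${\rm OFact}({\cal Q})^{{\bm \eta}(\tau),{\bm \nu}(\tau)}(\theta,\alpha+i\beta)$. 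The subtle point I expect to be the main obstacle is the coherent gluing in the second paragraph: one must carefully track that the sign vectors recovered independently from each pair $(P,Q)$ are indexed by the same real root when they should be, which is precisely what the shared indexing by Thom encodings with respect to $P$, fixed once and for all by Theorem \ref{ThomwithP}, is designed to guarantee.
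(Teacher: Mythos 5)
Your proof is correct and follows essentially the same route as the paper: the paper's own proof is a two-line appeal to \cite[Remark 10.84]{BPRbookcurrent} and \cite[Theorem 27]{PerR}, and your argument is precisely the unpacking of those citations using the paper's Theorems \ref{ThomwithP} and \ref{ThomwithPQ} (to pin down, per polynomial, the factorization data and the full sign vectors of the real roots on $\Der_+({\cal Q})$) together with Proposition \ref{lemma_order_thom_encoding} (to recover the relative order of the roots from their Thom encodings). The details you supply — in particular the coherent gluing across pairs $(P,Q)$ via the shared indexing by Thom encodings, and the trichotomy of $\prec_P$ on realized sign conditions — are exactly what the cited results encapsulate.
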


\begin{proof}{Proof.}
By usual properties of Thom encoding \cite[Proposition 2.28]{BPRbook}  and sign determination \cite[Theorem 27]{PerR}
a 
sign condition $\tau$ on ${\rm Elim}({\cal Q})$ 
determines 
the 
decomposition into irreducible real factors 
and the relative order between the real roots
of all polynomials in $\cQ$. 
\end{proof}

Before giving a weak inference form of Theorem \ref{OThom}, we define new auxiliary functions
(see Definitions \ref{defg4} and \ref{def:func_aux_hilb_1_and_2}).

\begin{definition} \begin{enumerate}
                    \item Let $\tilde{\rm g}_{H,3} : \N_* \to {\mathbb R}$, $\tilde{\rm g}_{H,3}\{p\} = \tilde{\rm g}_{H, 2}\{p, p\}$.
\item Let ${\rm g}_{6} : \N \times \N \times \N \times \N \times \N \to {\mathbb R}$,
$$\begin{array}{rcl}
{\rm g}_6\{p,s,e,f,g\}&=&\Big({\rm g}_4\{p\}g^{2^{\frac12p^2}}f^{2^{\frac12p}}
\Big)^{\frac{2^{s(\frac32p^2 + 2)}-1}{2^{\frac32p^2 + 2}-1}}
2^{(p+4)(2^{s(s-1)p^2}-1)2^{s(\frac32p^2 + 2)}} \cdot \\
& &\cdot
\max\{(ps-1)e + s-1, \tilde{\rm g}_{H,3}\{p\} \}
^{2^{s(s-1)p^2 + s(\frac32p^2 + 2)}-1}.
\end{array}
$$
                     \end{enumerate}
\end{definition}

We now give a weak inference form of Theorem \ref{OThom}.

\begin{theorem}[Fixing the Ordered List of the Roots as a weak existence]\label{weaksigndettab} 
Let $p \ge 1$,  $\cQ$
be 
a family of $s$ polynomials in $\K[u][y] \setminus \K$, monic in the variable $y$ with $\deg_y P \le p$ for 
every $P \in \cQ$, 
and 
$\tau$
be
  a realizable 
sign condition 
on ${\rm Elim}({\cal Q})$. Then
$$
\s({\rm Elim}({\cal Q}))=\tau \ \ \ \vdash \ \ \   \exists (t, z)\ 
[\; {\rm OFact}({\cal Q})^{ {\bm \eta}(\tau),{\bm \nu}(\tau)}(t, z) \;]
$$
where  
$t = (t_1, \dots, t_r)$ with $r = \#{\bm \eta}(\tau)$, $z_P = (z_{P, 1}, \dots, z_{P, \#{\bm \nu}(\tau)(P)})$ for $P \in {\cal Q}$ 
and $z = (z_P)_{P \in {\cal Q}}$.

Suppose we have an initial incompatibility in variables $(v, t, a, b)$, where
$v \supset u$, and $t, a, b$ are disjoint from $v$, with monoid part
$$
S \cdot 
\prod_{1 \le j < j' \le r}(t_{j} - t_{j'})^{2e_{{j}, j'}} 
\cdot
\prod_{P \in \cQ,
\atop 1 \le k \le 
\# {\bm \nu}(\tau)(P)
}b_{P,k}^{2f_{P,k}}
\cdot
\prod_{P \in \cQ,
\atop
1 \le k < k' \le 
\# {\bm \nu}(\tau)(P)
}
{\rm R}( z_{P,k}, z_{P,k'})
^{2g_{P, {k}, k'}},
$$
with 
$e_{j, j'} \le e  \in \N_*$,
$f_{P,k} \le f \in \N_*$,
$g_{P, k, k'} \le g \in \N_*$,
degree in $w$ bounded by $\delta_w$ for some subset of variables $w \subset v$, 
degree in $t_j$ bounded by $\delta_t$
and
degree in $(a_{P,k}, b_{P,k})$ bounded by $\delta_z$.
Then the final incompatibility has monoid part
$$
S^h \cdot
\prod_{H \in {\rm Elim}(\cQ), \atop \tau(H) \ne 0} H^{2h'_H} 
$$
with 
$
h, h'_H \le 
{\rm g}_6\{p,s,e,f,g\}\max\{(ps-1)e + s-1, 
\tilde{\rm g}_{H,3}\{p\}
\}
$
and degree in $w$ bounded by 
$$
\begin{array}{rll}
& {\rm g}_6\{p,s,e,f,g\}\cdot
\Big(\max\Big\{ 2^{ps(s-1)}(\delta_w + (ps(ps-1)(3e+1)+14)\deg_w\cQ), 
\tilde{\rm g}_{H,3}\{p\}
\deg_w \cQ
 \Big\} \\
+ & \max\Big\{
2^{ps(s-1)}(\delta_t + ((ps-1)(6e+2) + 15)p-8) + p,
2^{ps(s-1)}\delta_z + p,
\tilde{\rm g}_{H,3}\{p\}
\Big\}\deg_w\cQ\Big),
\end{array}$$
where 
$\deg_w \cQ = \max\{\deg_w P \ | \ P \in \cQ\}$.
\end{theorem}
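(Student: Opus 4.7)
The plan is to start from an initial incompatibility $\lda {\rm OFact}(\cQ)^{{\bm \eta}(\tau),{\bm \nu}(\tau)}(t,z), \cH \rda$ and progressively eliminate the atomic conditions it contains in three stages: first the strict inequalities $t_j < t_{j'}$ among the real roots, then, for each $P \in \cQ$, the sign conditions of the other polynomials evaluated at roots of $P$, and finally, the Thom encoding of the roots of $P$ together with the factorization system ${\rm Fact}(P)$. At the end only $\s({\rm Elim}(\cQ)) = \tau$ and $\cH$ will remain on the left of the derivation.

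First I would use the description of ${\bm \eta}(\tau)$ to remove the strict inequalities. By the third defining property of ${\rm H}(\cQ)$ in Notation \ref{tableofrealroots}, for $1 \le j < j' \le r$ there exists some $P \in \cQ$ such that $\eta_{j,+} \prec_P \eta_{j',+}$, and one of $\eta_j(P),\eta_{j'}(P)$ is zero. Applying Proposition \ref{thWITL_lower} iteratively for each pair $(j,j')$ trades each inequality $t_j < t_{j'}$ for the pair of Thom encoding conditions $\sign(\Der_+(P)(t_j)) = \eta_{j,+},\ \sign(\Der_+(P)(t_{j'})) = \eta_{j',+}$, at the cost of introducing a factor $P^{(q)}(b)^{2e'}$ in the monoid part and adding $O(e' \deg_w P)$ to the degree in $w$. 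Since these are sign conditions on elements of $\Der_+(\cQ)$, they can be grouped, for each polynomial $P$, into a single set of conditions that (i) repeats ${\rm Fact}(P)^{{\rm vmu}({\bm \eta}(\tau)(P)),{\bm \nu}(\tau)(P)}$, (ii) fixes the Thom encoding of each $t_j$ with $j \in {\bm \eta}(\tau)(P)$ with respect to $P$, and (iii) fixes, for every $Q \in \Der_+(\cQ) \setminus \Der_+(P)$, the signs $\s(Q(t_j)) = \eta_j(Q)$. There are at most $\binom{r}{2}$ inequalities to remove with $r \le ps$, which explains an overall factor of roughly $2^{ps(s-1)}$ in the degree bound and a monoid contribution of the form $\prod_j P^{(q_j)}(\cdot)^{2e_j'}$.

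Next I would enumerate the polynomials in $\cQ$ in some fixed order $P_1,\dots,P_s$, and for $i = 1,\dots,s$ perform two nested applications. The inner application uses Theorem \ref{weaksigndetgen} repeatedly: for each $Q \in \Der_+(\cQ) \setminus \Der_+(P_i)$, the sign conditions $\s(Q(t_j)) = \eta_j(Q)$ (for the $t_j$ indexed by ${\bm \eta}(\tau)(P_i)$) are eliminated in exchange for sign conditions on ${\rm ThElim}(P_i;Q)$. Each such elimination introduces an exponentiation by $2^p$ in both the monoid multiplicities and in the bound on $h$; since $\#(\Der_+(\cQ)\setminus \Der_+(P_i)) \le ps - 1$, this produces the factor $\max\{(ps-1)e+s-1,\tilde{\rm g}_{H,3}\{p\}\}^{2^{s(s-1)p^2+\cdots}}$ appearing in ${\rm g}_6$. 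The outer application then invokes Theorem \ref{weaksigndet} on $P_i$: the remaining Thom encoding conditions on $t_j$'s indexed by ${\bm \eta}(\tau)(P_i)$ together with ${\rm Fact}(P_i)^{{\rm vmu}({\bm \eta}(\tau)(P_i)),{\bm \nu}(\tau)(P_i)}$ are eliminated in exchange for sign conditions on ${\rm ThElim}(P_i)$, introducing a factor controlled by ${\rm g}_5$, hence a further multiplication by $({\rm g}_4\{p\}g^{2^{p^2/2}}f^{2^{p/2}}) \cdot (\cdots)^{2^{3p^2/2+2}}$, which explains the geometric series exponent $\tfrac{2^{s(3p^2/2+2)}-1}{2^{3p^2/2+2}-1}$ in ${\rm g}_6$. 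After processing $P_s$, all the auxiliary variables $t,a,b$ have been eliminated and only $\s({\rm Elim}(\cQ))=\tau$ plus $\cH$ remains on the left.

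The main obstacle is not conceptual but bookkeeping: the monoid part must be tracked through each step and the five sources of growth (number of pairs $(j,j')$, number of polynomials $Q$ per $P$, number of polynomials $P$, the $2^p$ exponentiation from Theorem \ref{weaksigndetgen}, and the doubly exponential $\tilde{\rm g}_{H,1}\{p\}$ from Theorem \ref{weaksigndet}) must be composed in the correct order so the final bound fits the shape of ${\rm g}_6\{p,s,e,f,g\}$. The recursive structure, where the output monoid exponents of one iteration become the input $e,e',h$ of the next, produces a tower controlled by the sum $\sum_{i=0}^{s-1} 2^{i(3p^2/2+2)}$, and verifying that the final degree bound in $w$ matches the stated expression requires carefully combining the additive $\deg_w P$ contributions from Propositions \ref{thWITL_eq}--\ref{thWITL_lower} with the multiplicative contributions from Theorems \ref{weaksigndet} and \ref{weaksigndetgen}, using the monotonicity of ${\rm g}_6$ and the dominance inequalities of the kind established in the annex for $\tilde{\rm g}_{H,1}$ and $\tilde{\rm g}_{H,3}$.
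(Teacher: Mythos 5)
Your overall architecture coincides with the paper's: strip the order inequalities $t_j<t_{j'}$ via Proposition \ref{thWITL_lower}, then eliminate the sign conditions $\s(Q(t_j))$ for $Q\in\Der_+(\cQ)\setminus\Der_+(P)$ via Theorem \ref{weaksigndetgen}, and finally eliminate each Thom system together with its variables via Theorem \ref{weaksigndet}. (The paper runs all the \ref{weaksigndetgen} applications in one pass over the pairs $(P,Q)$ and then all the \ref{weaksigndet} applications, rather than interleaving them per polynomial as you do, but that is an inessential reordering.)

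There is, however, one genuine missing step. In ${\rm OFact}(\cQ)^{{\bm \eta}(\tau),{\bm \nu}(\tau)}(t,z)$ (Definition \ref{def:sign_det_sect_three_sec}) the tuples $t_P$ are \emph{overlapping} sublists of the single tuple $t=(t_1,\dots,t_r)$: a root $t_j$ common to several polynomials of $\cQ$ is represented by the same variable in each ${\rm Fact}(P)^{\dots}(t_P,z_P)$. Your ``grouping, for each polynomial $P$, into a single set of conditions'' leaves these variables shared, and then the weak existence of Theorem \ref{weaksigndet} applied to $P_1$ would eliminate variables that still occur in the conditions attached to $P_2,\dots,P_s$, so the sequential application does not typecheck. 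The paper resolves this by introducing disjoint copies $t'_{P,j}$ for each polynomial, substituting one distinguished copy for each original $t_j$, and then discharging the resulting equalities $t'_{P_{\gamma(j)},\alpha(j,P_{\gamma(j)})}=t'_{P,\alpha(j,P)}$ by Proposition \ref{thWITL_eq} (two points with the same Thom encoding with respect to $P_{\gamma(j)}$ coincide). This is precisely where the factor $2^{r(s-1)}\le 2^{ps(s-1)}$ in the degree bound comes from --- each application of Proposition \ref{thWITL_eq} squares the monoid part and doubles the degree, and there are at most $s-1$ extra copies per root --- not from the $\binom{r}{2}$ applications of Proposition \ref{thWITL_lower}, which only contribute additively. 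Your passing mention of Proposition \ref{thWITL_eq} in the last paragraph suggests you sensed it was needed, but without the duplication of shared roots the argument as written cannot be carried out.
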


\begin{proof}{Proof.}
We simplify the notation by renaming
$ {\bm \eta}(\tau) =  {\bm \eta}$,
and
${\bm \nu}(\tau) = {\bm \nu}$. 
Consider the initial incompatibility 
\begin{equation}\label{inc:init_inc_mixed_theorem}
\lda 
{\rm OFact}({\cal Q})^{ {\bm \eta},{\bm \nu}}(t, z),  \
\cH
\rda_{\K[v][ t, a, b]}
\end{equation}
where $\cH$ is a system of sign conditions in $\K[v]$. 

For $1 \le j < j' \le r$ there exists a polynomial $P$ in $\cQ$
such that $\eta_{j}(P) = 0$ and $\eta_{j} \prec_P \eta_{j'}$. 
We successively apply to (\ref {inc:init_inc_mixed_theorem}) for each such pair $(j, j')$ 
the weak inference
$$
t_{j} < t_{j'} \ \ \ \vdash \ \ \ t_{j} \ne t_{j'}
$$
if it is the case that exists $Q \in \cQ$ with ${\rm mu}(\eta_j, Q) > 0$
and ${\rm mu}(\eta_{j'}, Q) > 0$
and
$$
\s(\Der_+(P)(t_{j})) = \eta_{j}, \ 
\s(\Der_+(P)(t_{j'})) = \eta_{j'}
\ \ \ \vdash \ \ \ t_{j} < t_{j'}
$$
in every case. 
By Lemma \ref{lemma_basic_sign_rule_1} (item \ref{lemma_basic_sign_rule:1.5}) and 
Proposition \ref{thWITL_lower} we obtain 
\begin{equation}\label{inc:aux_inc_mixed_theorem_1}
\begin{array}{c}
\displaystyle{
\Big \downarrow
 \bigwedge_{P\in {\mathcal Q}} 
P
 \equiv 
{\rm F}^{{\rm vmu}(\bm \eta(P)), {\bm \nu}(P)}(t_P, z_P)
 ,  \ 
\bigwedge_{P\in {\mathcal Q}, \atop 1 \le k  \le 
\# {\bm \nu}(P)
}
b_{P,k} \ne 0, 
}
\\[4mm]
\displaystyle{
\bigwedge_{P\in {\mathcal Q}, \atop 1 \le k < k' \le 
\# {\bm \nu}(P)}
{\rm R}(z_{k}, z_{k'})
\ne 0, \
\bigwedge_{1  \le j \le r} \s(\Der_+(\cQ)(t_{j})) = \eta_{j}}, \
\cH
\Big \downarrow_{\K[v][ t, a, b]}
\end{array}
\end{equation}
with monoid part
$$
S \cdot
\prod_{1 \le j \le r, \,
Q \in \Der_+(\cQ), \atop \eta_j(Q) \ne 0}Q(t_j)^{2e_{Q, j}} 
\cdot
\prod_{P \in \cQ,
\atop
1 \le k \le 
\# {\bm \nu}(P)}b_{P,k}^{2f_{P,k}}
\cdot 
\prod_{P \in \cQ,
\atop
1 \le k < k' \le 
\# {\bm \nu}
(P)
}
{\rm R}(z_{P,k}, z_{P,k'})
^{2g_{P, {k}, k'}} 
$$
with $e_{Q, j} \le (r-1)e$, degree in $w$ bounded by $\delta_w + r(r-1)(3e+1)\deg_w \cQ$, 
degree in $t_j$ bounded by $\delta_t + (r-1)(6e+2)p$ and 
degree in $(a_{P, k}, b_{P, k})$ bounded by $\delta_z$.

For each $1 \le j \le r$, suppose that 
$\cQ_j$ is the list of polynomials $P$ in $\cQ$ such that
${\rm mu}(\eta_j, P) > 0$, $t_j$ is the $\alpha(j,P)$-th element in $t_P$ for 
$P\in \cQ_j$ and $P_{\gamma(j)}$ the first element of $\cQ_j$.
Conversly, suppose that for $P \in \cQ$ and $1 \le j' \le 
\# {\bm \eta}(P)$, 
the $j'$-th element in $t_P$ is $t_{\beta(P, j')}$.
We consider new variables $t'_P = (t'_{P, 1}, \dots, t'_{P,
\#  {\bm \eta} (P)})$ 
for every $P \in \cQ$ and we substitute 
$t_j$ by $t'_{P_{\gamma(j)}, \alpha(j,P_{\gamma(j)})}$ in (\ref{inc:aux_inc_mixed_theorem_1}) for $1 \le j 
\le r$. For
each $P \in \cQ$, let $\tilde t_P$ be the result obtained in each $t_P$ 
after these substitutions. 
Then we 
apply  the weak inference
$$
\bigwedge_{1 \le j \le r, \atop
P \in \cQ_j\setminus \{P_{\gamma(j)}\}} 
t'_{P_{\gamma(j)}, \alpha(j,P_{\gamma(j)})} =
t'_{P, \alpha(j,P)},  \
\bigwedge_{P \in \cQ} 
P
 \equiv 
{\rm F}^{{\rm vmu}(\bm \eta(P)), {\bm \nu}(P)}(t'_P, z_P)
  \ \ \ \vdash 
$$
$$
\vdash \ \ \  
\bigwedge_{P \in \cQ} P
\equiv 
{\rm F}^{{\rm vmu}(\bm \eta(P)), {\bm \nu}(P)}(\tilde t_P, z_P).
$$
By Lemma \ref{lem:comb_lin_zero_zero} we obtain 
\begin{equation}\label{inc:aux_inc_mixed_theorem_2}
\begin{array}{c}
\displaystyle{
\Big \downarrow
\bigwedge_{1 \le j \le r, \atop
P \in \cQ_j\setminus \{P_{\gamma(j)}\}} 
t'_{P_{\gamma(j)}, \alpha(j,P_{\gamma(j)})} = t'_{P, \alpha(j,P)}, \ 
\bigwedge_{P\in {\mathcal Q}} 
P
\equiv 
{\rm F}^{{\rm vmu}(\bm \eta(P)),{\bm \nu}(P)}(t'_P, z_P),  }
\\[5mm]
\displaystyle{
\bigwedge_{P\in {\mathcal Q}, \atop 1 \le k  \le 
{\# {\bm \nu}}(P)}  b_{P,k} \ne 0, \
\bigwedge_{P\in {\mathcal Q}, \atop 1 \le k < k' \le
{\# {\bm \nu}}(P)}
{\rm R}(z_{P,k}, z_{P,k'})
\ne 0,
} \\[5mm]
\displaystyle{
\bigwedge_{P\in {\mathcal Q}, \atop 1 \le j \le 
{\# {\bm \eta}}(P)}
\s(\Der_+(\cQ)(t'_{P, j})) = \eta_{\beta(P, j)}, \
\cH}
{\Big \downarrow}
_{\K[v][ (t'_P)_{P \in Q}, a, b]}
\end{array}
\end{equation}
with monoid part
$$
S \cdot
\prod_{P \in \cQ,
\atop
1 \le j \le 
{\# {\bm \eta}}(P)}
\prod_{Q \in \Der_+(\cQ), \atop \eta_{\beta(P,j)}(Q) \ne 0}Q(t'_{P,j})^{2e_{P,  Q, j}} 
\cdot
\prod_{P \in \cQ,
\atop 1 \le k \le 
{\# {\bm \nu}}(P)}
b_{P,k}^{2f_{P,k}}
\cdot
\prod_{P \in \cQ,
\atop
1 \le k < k' \le 
{\# {\bm \nu}}(P)}
{\rm R}( z_{P,k}, z_{P,k'})
^{2g_{P, {k}, k'}} 
$$
with $e_{P, Q, j} \le (r-1)e$, degree in $w$ bounded by 
$\delta_w + r(r-1)(3e+1)\deg_w \cQ$, 
degree in $t'_{P,j}$ bounded by $\delta_t + ((r-1)(6e+2) + 1)p$ and 
degree in $(a_{P, k}, b_{P, k})$ bounded by $\delta_z$. 
For simplicity we rename $t'_P$ as $t_P$ for every $P \in \cQ$ and $(t'_P)_{P \in Q}$ as $t$.

Then we successively apply to (\ref{inc:aux_inc_mixed_theorem_2}) for $1 \le j \le r$ 
and $P \in {\cal Q}_j \setminus \{P_{{\gamma(j)}}\}$ the weak inference 
$$
\s(\Der_+(P_{\gamma(j)})(
t_{P_{\gamma(j)}, \alpha(j,P_{\gamma(j)})}
)) = \eta_{j}
, \ 
\s(\Der_+(P_{\gamma(j)})(t_{P, \alpha(j,P)})) = \eta_{j}
\ \ \ \vdash
$$
$$
\vdash
\ \ \ t_{P_{\gamma(j)}, \alpha(j,P_{\gamma(j)})} = t_{P, \alpha(j,P)}.
$$
By Proposition \ref{thWITL_eq}, we obtain 
\begin{equation}\label{inc:aux_inc_mixed_theorem_3}
\begin{array}{c}
\displaystyle{
\Big \downarrow \bigwedge_{P\in {\mathcal Q}} 
P
 \equiv 
{\rm F}^{{\rm vmu}(\bm \eta(P)), {\bm \nu}(P)}(t_P, z_P),  \ 
\bigwedge_{P\in {\mathcal Q}, \atop 1 \le k  \le 
{\# {\bm \nu}}(P)}  b_{P,k} \ne 0, 
}
\\[4mm]
\displaystyle{
\bigwedge_{P\in {\mathcal Q}, \atop 1 \le k < k' \le 
{\# {\bm \nu}}(P)}
{\rm R}(z_{P,k}, z_{P,k'})
\ne 0,
}
\displaystyle{
\bigwedge_{P\in {\mathcal Q}, \atop 1 \le j \le 
\# {\bm \eta}(P)} 
\s(\Der_+(\cQ)(t_{P, j})) = \eta_{\beta(P, j)}, \
\cH
\Big \downarrow
_{\K[v][ t, a, b]}
}\end{array}
\end{equation}
with monoid part
$$
S \cdot
\prod_{P \in \cQ,
\atop
1 \le j \le 
\# {\bm \eta}(P)}
\prod_{Q \in \Der_+(\cQ), \atop \eta_{\beta(P,j)}(Q) \ne 0}Q(t_{P,j})^{2e'_{P,  Q, j}} 
\cdot 
\prod_{P \in \cQ,
\atop 1 \le k \le 
{\# {\bm \nu}}(P)}
b_{P,k}^{2f_{P,k}}
\cdot 
\prod_{P \in \cQ,
\atop
1 \le k < k' \le 
{\# {\bm \nu}}(P)}
{\rm R}( z_{P,k}, z_{P,k'})
^{2g_{P, {k}, k'}} 
$$
with $e'_{P, Q, j} \le (r-1)e + s-1 =: e'$, degree in $w$ bounded by 
$2^{r(s-1)}(\delta_w + (r(r-1)(3e+1)+14)\deg_w \cQ)$, 
degree in $t_{P,j}$ bounded by $2^{r(s-1)}(\delta_t + ((r-1)(6e+2) + 15)p-8)$
and 
degree in $(a_{P, k}, b_{P, k})$ bounded by $2^{r(s-1)}\delta_z$.

Then we apply to (\ref{inc:aux_inc_mixed_theorem_3})
the weak inference
$$
\bigwedge_{P \in {\cal Q}} P
\equiv 
{\rm F}^{{\rm vmu}(\bm \eta(P)), {\bm \nu}(P)}(t_P, z_P)
 \ \ \ \vdash \ \ \ 
\bigwedge_{P \in {\cal Q}, \atop 1 \le j \le 
\# {\bm \eta}(P)}
P(t_{P, j}) = 0. 
$$
By Lemma \ref{lem:comb_lin_zero_zero} we obtain 
\begin{equation}\label{inc:aux_inc_mixed_theorem_4}
\begin{array}{c}
\displaystyle{
\Big \downarrow \bigwedge_{P\in {\mathcal Q}}  P
 \equiv 
{\rm F}^{{\rm vmu}(\bm \eta(P)), {\bm \nu}(P)}(t_P, z_P)
,  \ 
\bigwedge_{P\in {\mathcal Q}, \atop 1 \le k  \le 
{\# {\bm \nu}}_{ P}
}  b_{P,k} \ne 0, \ 
\bigwedge_{P\in {\mathcal Q}, \atop 1 \le k < k' \le 
{\# {\bm \nu}}(P)}
{\rm R}( z_{P,k}, z_{P,k'})
\ne 0,
}
\\[7mm]
\displaystyle{
\bigwedge_{P \in {\cal Q}, \, Q \in \Der_+(\cQ) \setminus\{P\}, \atop 1 \le j \le \# {\bm \eta}( P) }
\s(Q(t_{P, j})) = \eta_{\beta(P, j)}(Q)}
, \
\cH
\Big \downarrow_{\K[v][ t, a, b]}
\end{array}
\end{equation}
with the same monoid part,
degree in $w$ bounded by 
$
\delta'_w := 2^{r(s-1)}(\delta_w + (r(r-1)(3e+1)+14)\deg_w \cQ),
$ 
degree in $t_{P,j}$ bounded by 
$
\delta'_t := 2^{r(s-1)}(\delta_t + ((r-1)(6e+2) + 15)p-8) + p
$ 
and 
degree in $(a_{P, k}, b_{P, k})$ bounded by 
$
\delta'_z := 2^{r(s-1)}\delta_z + p.
$

Now we fix an arbitrary order $(P_1, Q_1), \dots, (P_m, Q_m)$ in the set 
$\{(P, Q) \in \cQ \times \Der_+(\cQ) \ | \ Q \not \in \Der_+(P)\}$, note that
$m \le s(s-1)p$. For $1 \le i \le m$, we 
successively apply to (\ref{inc:aux_inc_mixed_theorem_4}) 
the weak inference
$$
\s({\rm ThElim}(P_{i}, Q_{i})) = \tau
, \ 
{\rm Th}(P_{i})^{{\rm vmu}(\bm \eta(P_{i})), {\bm \nu}(P_{i}), \bm \eta(P_{i})}(t_{P_{i}}, z_{P_{i}}) 
\ \ \ \vdash
$$
$$
\vdash
\ \ \ 
\bigwedge_{1 \le j \le 
{\# {\bm \eta}}( P_i)} 
\s(Q_{i}(t_{P_{i}, j})) = \eta_{\beta(P_{i}, j)}
.
$$
Using 
Theorem \ref{weaksigndetgen} (Fixing the Thom encodings with a Sign as a weak existence), 
it can be proved by induction on $i$  that for $1 \le i \le m$, after
the application of the $i$-{th} weak inference,  we obtain 
an incompatibility in $\K[v][ t, a, b]$
with monoid part
$$
S^{\tilde h_i}
\cdot
\prod_{H \in {\rm Elim}(\cQ), \atop \tau(H) \ne 0} H^{2\tilde h'_{H, i}} 
\cdot
\prod_{P \in \cQ,
\atop 1 \le j \le {\# {\bm \eta}(P)}
}
\Big(
\prod_{
1 \le h \le \deg_y P - 1,  \atop \eta_{\beta(P, j)}(P^{(h)}) \ne 0}P^{(h)}(t_{P, j})^{2\tilde e'_{P, j, h, i}}  
\cdot
\prod_{Q \in \Der_+(\cQ) \setminus \Der_+(P), \atop \eta_{\beta(P,j)}(Q) \ne 0}Q(t_{P,j})^{2\tilde e''_{P,  Q, j, i}} 
\Big)
\cdot
$$
$$
\cdot
\prod_{P \in \cQ,
\atop
1 \le j < j' \le 
{\# {\bm \eta}}(P)}(t_{P, j} - t_{P, j'})^{2\tilde e_{P,j, j', i}}
\cdot
\prod_{P \in \cQ,
\atop 1 \le k \le 
{\# {\bm \nu}}(P)}b_{P,k}^{2\tilde f_{P,k, i}}
\cdot 
\prod_{P \in \cQ,
\atop
1 \le k < k' \le 
{\# {\bm \nu}}(P)}
{\rm R}( z_{P,k}, z_{P,k'})
^{2\tilde g_{P, {k}, k', i}} 
$$
with 
\begin{eqnarray*}
\tilde e''_{P,  Q, j, i}& \le& 2^{((p+2)2^p - 2p - 2)\frac{2^{i p}-1}{2^p-1}}
\max\{e', \tilde{\rm g}_{H,3}\{p\} \}^{2^{i p}},\\
\tilde h_{i} &\le& 2^{((p+2)2^p - 2p - 2)\frac{2^{i p}-1}{2^p-1}}
\max\{e', \tilde{\rm g}_{H,3}\{p\} \}^{2^{i p}-1},\\
\tilde h'_{H, i}, \tilde e_{P, j, j', i} &\le&
2^{(p+2)2^p - 2}(2^{(p+2)2^p - 2p - 2}+1)^{\frac{2^{i p}-1}{2^p-1}-1}
\max\{e', \tilde{\rm g}_{H,3}\{p\} \}^{2^{i p}-1}\tilde{\rm g}_{H,3}\{p\},
\\
\tilde e'_{P, j, h, i} &\le&
2^{(p+2)2^p-1}(2^{(p+2)2^p - 2p - 2}+1)^{\frac{2^{i p}-1}{2^p-1}-1}
\max\{e', \tilde{\rm g}_{H,3}\{p\} \}^{2^{i p}},\\
\tilde f_{P, k, i} &\le&
2^{(p+2)2^p - 1}(2^{(p+2)2^p - 2p - 2}+1)^{\frac{2^{i p}-1}{2^p-1}-1}
\max\{e', \tilde{\rm g}_{H,3}\{p\} \}^{2^{i p}-1}\tilde{\rm g}_{H,3}\{p\}f,
\\
\tilde g_{P, k, i} &\le&
2^{(p+2)2^p - 1}(2^{(p+2)2^p - 2p - 2}+1)^{\frac{2^{i p}-1}{2^p-1}-1}
\max\{e', \tilde{\rm g}_{H,3}\{p\} \}^{2^{i p}-1}\tilde{\rm g}_{H,3}\{p\}g,
\end{eqnarray*}
degree in $w$ bounded by 
$
2^{((p+2)2^p - 2)\frac{2^{i p}-1}{2^p-1}}
\max\{e', \tilde{\rm g}_{H,3}\{p\} \}^{2^{i p}-1}
\max 
\{
\delta'_w,  
\tilde{\rm g}_{H,3}\{p\}\deg_w \cQ
\},
$
degree in $t_{P,j}$ bounded by 
$
2^{((p+2)2^p - 2)\frac{2^{i p}-1}{2^p-1}}
\max\{e', \tilde{\rm g}_{H,3}\{p\} \}^{2^{i p}-1}
\max 
\{
\delta'_t,  
\tilde{\rm g}_{H,3}\{p\}
\}
$
and 
degree in $(a_{P,k}, b_{P, k})$ bounded by 
$
2^{((p+2)2^p - 2)\frac{2^{i p}-1}{2^p-1}}
\max\{e', \tilde{\rm g}_{H,3}\{p\} \}^{2^{i p}-1}
\max 
\{
\delta'_z,  
\tilde{\rm g}_{H,3}\{p\}\}.
$
Therefore, at the end we obtain an incompatibility
\begin{equation}\label{inc:aux_inc_mixed_theorem_5}
\lda
\s({\rm Elim}(\cQ)) = \tau, \ 
\bigwedge_{P\in {\mathcal Q}} 
{\rm Th}(P)^{ {\rm vmu}({\bm \eta}(P)), {\bm \nu}(P), {\bm \eta}(P)}(t_P, z_P), \
\cH
\rda_{\K[v][ t, a, b]}
\end{equation}
with monoid part
$$
S^{\tilde h} \cdot
\prod_{H \in {\rm Elim}(\cQ), \atop \tau(H) \ne 0} H^{2\tilde h'_H} 
\cdot
\prod_{P \in \cQ,
\atop
1 \le j < j' \le 
{\# {\bm \eta}}(P)
}(t_{P, j} - t_{P, j'})^{2\tilde e_{P,j, j'}}
\cdot
\prod_{P \in \cQ,
\atop
1 \le k \le 
{\# {\bm \nu}}(P)}b_{P,k}^{2\tilde f_{P,k}}
\cdot
$$
$$
\cdot
\prod_{P \in \cQ,
\atop
1 \le k < k' \le 
{\# {\bm \nu}}(P)}
{\rm R}(z_{P,k}, z_{P,k'})
^{2\tilde g_{P, {k}, k'}} 
\cdot
\prod_{P \in \cQ,
\atop
1 \le j \le 
{\# {\bm \eta}}(P)} 
\prod_{
1 \le h \le \deg_y P - 1,  \atop \eta_{\beta(P, j)}(P^{(h)}) \ne 0}P^{(h)}(t_{P, j})^{2\tilde e'_{P, j, h}}  
$$
with 

\begin{eqnarray*}
\tilde h,
\tilde h'_{H},
\tilde e_{P, j_1, j_2}, \tilde e'_{P,  j, h}
&\le&
2^{(p+4)(2^{s(s-1)p^2}-1)}
\max\{e', \tilde{\rm g}_{H,3}\{p\} \}^{2^{s(s-1)p^2}},\\
\tilde f_{P, k} &\le&
2^{(p+4)(2^{s(s-1)p^2}-1)}
\max\{e', \tilde{\rm g}_{H,3}\{p\} \}^{2^{s(s-1)p^2}}
f,\\
\tilde g_{P, k_1, k_2} &\le&
2^{(p+4)(2^{s(s-1)p^2}-1)}
\max\{e', \tilde{\rm g}_{H,3}\{p\} \}^{2^{s(s-1)p^2}}g,
\end{eqnarray*}
degree in $w$ bounded by 
$
2^{(p+4)(2^{s(s-1)p^2}-1)}
\max\{e', \tilde{\rm g}_{H,3}\{p\} \}^{2^{s(s-1)p^2}-1}
\max 
\{
\delta'_w,  
\tilde{\rm g}_{H,3}\{p\}\deg_w \cQ
\},
$
degree in $t_{P,j}$ bounded by 
$
2^{(p+4)(2^{s(s-1)p^2}-1)}
\max\{e', \tilde{\rm g}_{H,3}\{p\} \}^{2^{s(s-1)p^2}-1}
\max 
\{
\delta'_t,  
\tilde{\rm g}_{H,3}\{p\}\},
$
and 
degree in $(a_{P,k}, b_{P, k})$ bounded by 
$
2^{(p+4)(2^{s(s-1)p^2}-1)}
\max\{e', \tilde{\rm g}_{H,3}\{p\} \}^{2^{s(s-1)p^2}-1}
\max 
\{
\delta'_z,  
\tilde{\rm g}_{H,3}\{p\}\}.
$

Finally we fix an arbitrary order $P_1, \dots, P_s$ in $\cQ$ and for $1 \le i \le s$ we 
successively apply to (\ref{inc:aux_inc_mixed_theorem_5}) 
the weak inference
$$
\s({\rm ThElim}(P_i) ) = \tau
\ \ \
\vdash
\ \ \ \exists (t_{P_i}, z_{P_i}) \ [\; \Thom^{ {\rm vmu}({\bm \eta}(P_i)),{\bm \nu}(P_i), {\bm \eta}(P_i)}(t_{P_i}, z_{P_i}) \;].
$$
Using Theorem \ref{weaksigndet} (Fixing the Thom encodings as a weak existence), it can be proved by induction on $i$
that for $1 \le i \le s$, after the application of the $i$-th weak inference, 
we obtain an incompatibility in $\K[v][ t_{P_{i + 1}}, \dots,$ $t_{P_{s}},$  
$a_{P_{i + 1}}, b_{P_{i + 1}}, \dots,$ $a_{P_{s}}, b_{P_{s}}]$
with monoid part 
$$
S^{h_{i}}
\cdot
\prod_{H \in {\rm Elim}(\cQ), \atop \tau(H) \ne 0} H^{2 h'_{H, i}} 
\cdot
\prod_{i + 1 \le i' \le s,
\atop 
1 \le j < j' \le 
{\# {\bm \eta}}(P_i)}(t_{P_i, j} - t_{P_i, j'})^{2e_{P_i,j, j', i}}
\cdot
\prod_{i + 1 \le i' \le s,
\atop
1 \le k \le 
{\# {\bm \eta}}(P_{i'})}b_{P_{i'},k}^{2f_{P_{i'},k, i}}
\cdot
$$
$$
\cdot
\prod_{i + 1 \le {i'} \le s,
\atop
1 \le k < k' \le 
{\# {\bm \nu}}(P_{i'})}
{\rm R}( z_{P,k}, z_{P,k'})
^{2g_{P_{i'}, {k}, k', i}} 
\cdot
\prod_{i + 1 \le {i'} \le s,
\atop
1 \le j \le 
{\# {\bm \eta}}(P_{i'})} 
\prod_{1 \le h \le \deg_y P_{i'} - 1,  \atop \eta_{\beta(P_{i'}, j)}(P^{(h)}) \ne 0}P^{(h)}(t_{P_{i'}, j})^{2 e'_{P_{i'}, j, h, i}}   
$$
with, denoting
$$G_i:=
\Big(
{\rm g}_4\{p\}g^{2^{\frac12p^2}}f^{2^{\frac12p}}
\Big)^{\frac{2^{i(\frac32p^2 + 2)}-1}{2^{\frac32p^2 + 2}-1}}
2^{(p+4)(2^{s(s-1)p^2}-1)2^{i(\frac32p^2 + 2)}}
\max\{e', \tilde{\rm g}_{H,3}\{p\} \}
^{2^{s(s-1)p^2 + i(\frac32p^2 + 2)}-1},
$$
\begin{eqnarray*}
h_i, h'_{H, i}, e_{P_{i'}, j, j', i}, e'_{P_{i'}, j, h, i} &\le& G_i \max\{e', \tilde{\rm g}_{H,3}\{p\} \}
\\
f_{P_{i'},k, i} &\le& G_i \max\{e', \tilde{\rm g}_{H,3}\{p\} \} f,
\\
g_{P_{i'}, k, k', i} &\le&
G_i \max\{e', \tilde{\rm g}_{H,3}\{p\} \}g,
\end{eqnarray*}
degree in $w$ bounded by 
$
G_i
\Big(\max\{\delta'_w, 
\tilde{\rm g}_{H,3}\{p\}\deg_w \cQ
 \} + 
\max\{\delta'_t,   \delta'_z, 
\tilde{\rm g}_{H,3}\{p\}\}\deg_w\cQ\Big),
$
degree in $t_{P_{i'}, j}$ bounded by 
$
G_i
\max\{\delta'_t, \tilde{\rm g}_{H,3}\{p\}\}
$
and degree in $(a_{P_{i'}, k}, _{P_{i'}, k})$ bounded by 
$
G_i
\max\{\delta'_z,
\tilde{\rm g}_{H,3}\{p\}\}.
$
Therefore, at the end we obtain 
$$
\lda  \s({\rm Elim}(\cQ)) = \tau, \
\cH
\rda 
_{\K[v]}
$$
with monoid part
$$
S^h \cdot
\prod_{H \in {\rm Elim}(\cQ), \atop \tau(H) \ne 0} H^{2h'_H} 
$$
with the respective bounds replacing $i$ by $s$,
which serves as the final incompatibility. 
\end{proof}

We finish this subsection with the following remark, which will be used in Section \ref{secMainTh}.

\begin{remark}\label{rem:degree_and_number_elim}  Let $\cQ = \{P_1, \dots, P_s\} \subset \K[u][y]$
with $P_i$ monic in the variable $y$ and $\deg_y P_i \le p$ for $1 \le i \le s$.
Following Definition \ref{def:sign_det_sect_three_first}, by 
Remark \ref{rem:degree_and_number_thelim}  there are at most 
$$
4s^2p^{{\rm bit}\{p\} + 1}$$
elements in  ${\rm Elim}(\cQ)$ and
their degrees in $u$ are bounded by 
$$
2p^2( {\rm bit}\{p\}+1)\max\{\deg_u P_i \ | \ 1 \le i \le s\} \le 
4p^3\max\{\deg_u P_i \ | \ 1 \le i \le s\}.
$$
\end{remark}

\subsection{Realizable sign conditions on a family of polynomials}
\label{sect_cond_family}

From the family ${\rm Elim}({\mathcal Q})\subset \K[u]$  defined in Subsection \ref{sect_factor_family}, 
we deduce now the list of realizable 
sign conditions  on ${\mathcal Q}$.

\begin{theorem}[Elimination of One Variable]\label{defSIGN}
 For every realizable sign condition $\tau$ on ${\rm Elim}({\cal Q})$, there exists a list of 
 sign conditions on ${\cal Q}$ 
 $$
{\rm SIGN}( {\mathcal Q}  \, | \tau )
$$
 such that for every $\vartheta = (\vartheta_1, \dots, \vartheta_k)\in {\rm Real}(\tau, \R)$,
the list of realizable sign conditions on ${\cal Q}(\vartheta)$
 is
${\rm SIGN}( {\mathcal Q}  \, | \tau )$.
\end{theorem}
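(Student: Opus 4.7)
The plan is to apply Theorem \ref{OThom} as a black box: for a realizable sign condition $\tau$ on ${\rm Elim}(\mathcal{Q})$, the pair $({\bm \eta}(\tau), {\bm \nu}(\tau))$ is fixed, and for every $\upsilon \in {\rm Real}(\tau, \R)$ there exist $\theta = (\theta_1,\ldots,\theta_r) \in \R^r$ with $\theta_1 < \cdots < \theta_r$ and complex numbers $\alpha + i\beta$ satisfying ${\rm OFact}(\mathcal{Q})^{{\bm \eta}(\tau), {\bm \nu}(\tau)}(\theta, \alpha + i\beta)$. In particular, $\theta_1 < \cdots < \theta_r$ is the ordered list of all real roots of all polynomials in $\mathcal{Q}(\upsilon, y)$, so every realizable sign condition on $\mathcal{Q}(\upsilon, y)$ is attained on one of the $2r+1$ subsets of $\R$ formed by the $r$ points $\{\theta_j\}$ and the $r+1$ open intervals $(\theta_j, \theta_{j+1})$, where we set $\theta_0 := -\infty$ and $\theta_{r+1} := +\infty$.

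I will then define ${\rm SIGN}(\mathcal{Q} \mid \tau)$ as the list, without repetitions, of sign conditions on $\mathcal{Q}$ produced by the following recipe, which refers only to $({\bm \eta}(\tau), {\bm \nu}(\tau))$. At $y = \theta_j$, the sign of $P \in \mathcal{Q}$ is $\eta_j(P)$, directly encoded in ${\bm \eta}(\tau)$. In $(\theta_j, \theta_{j+1})$ for $1 \le j \le r - 1$, each $P$ has constant nonzero sign, since no polynomial in $\mathcal{Q}(\upsilon, y)$ vanishes there; this sign equals $\eta_j(P)$ when $\eta_j(P) \neq 0$, and equals $\eta_j(P^{(\mu_j)})$ with $\mu_j = {\rm mu}(\eta_j, P)$ when $\eta_j(P) = 0$, by the Taylor expansion $P(y) = \tfrac{(y-\theta_j)^{\mu_j}}{\mu_j!} P^{(\mu_j)}(\theta_j) + O((y-\theta_j)^{\mu_j+1})$ on the right of $\theta_j$. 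In $(\theta_r, +\infty)$ the sign is $+1$ by monicity, and in $(-\infty, \theta_1)$ it is $(-1)^{\deg_y P}$, where $\deg_y P = |{\rm vmu}({\bm \eta}(P))| + 2|{\bm \nu}(P)|$ can be read off from $({\bm \eta}(\tau), {\bm \nu}(\tau))$.

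Since all ingredients of this recipe come from $({\bm \eta}(\tau), {\bm \nu}(\tau))$, which depends only on $\tau$, the resulting list is independent of $\upsilon \in {\rm Real}(\tau, \R)$. By construction it enumerates all sign conditions attained by $\mathcal{Q}(\upsilon, y)$ on the partition of $\R$ induced by the $\theta_j$, and hence all realizable sign conditions on $\mathcal{Q}(\upsilon, y)$. The main point to verify is the internal consistency of the recipe on open intervals: the sign prescribed from the right of $\theta_j$ must match the one obtained from the left of $\theta_{j+1}$, namely $(-1)^{\mu_{j+1}(P)} \eta_{j+1}(P^{(\mu_{j+1}(P))})$ with $\mu_{j+1}(P) = {\rm mu}(\eta_{j+1}, P)$; both expressions compute the sign of $P$ on $(\theta_j, \theta_{j+1})$ under any realization of ${\rm OFact}(\mathcal{Q})^{{\bm \eta}(\tau),{\bm \nu}(\tau)}$, so they agree. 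Once this bookkeeping is settled, the statement of the theorem follows with no further analytic input.
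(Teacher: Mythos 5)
Your proposal is correct and follows exactly the route of the paper, which dismisses this as ``immediate from Theorem \ref{OThom}'' by looking at the partition of the real line induced by the ordered roots; you have merely made explicit the bookkeeping (signs at the points $\theta_j$ via $\eta_j(P)$, signs on the open intervals via the Taylor/multiplicity argument and monicity) that the paper leaves implicit and later formalizes in Definition \ref{defSIGNbis} and Lemmas \ref{partition_after_fact_1}--\ref{partition_after_fact_4}.
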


\begin{proof}{Proof.}
The result is immediate from Theorem  \ref{OThom} (Fixing the Ordered List of the Roots), 
since once the factorization
and relative order between the real roots of all the polynomial in ${\cal Q}$ is fixed,  
the
list of all realizable 
sign conditions on ${\cal Q}$
can be determined 
by looking at the partition of the real line given by the set of real roots. 
\end{proof}

Before stating  the main result of Section \ref{sect_elim_of_one_var}, we introduce an auxiliary 
function.

\begin{definition}\label{def_func_g_5}
Let ${\rm g}_7: \N \times \N \times \N \to {\mathbb R}$, ${\rm g}_7\{p,s,e\} = 
2^{2^{3(\frac{p}2)^p + s^2(\frac32p^2 + 3) + 8}}e^{2^{6p^2s^2}}$.
\end{definition}

\begin{Tlemma}\label{tlemma_almost_final} For every $p, s, e \in \N_*$, 
$$
2^{ps(s-1)+2}e^2s^4p\,{\rm g}_6\{p, s, 2eps + 8(eps)^2, (ps+1)ep + 4e^2p^3s^2, 1\}\max\{8e^2p^3s^3, \tilde{\rm g}_{H,3}\{p\} \}
\le
$$
$$ 
\le {\rm g}_7\{p, s, e\}.
$$
\end{Tlemma}
\begin{proof}{Proof.} See Section \ref{section_annex}.
\end{proof}

The main result of Section \ref{sect_elim_of_one_var} is the following weak inference form of Theorem \ref{defSIGN}.

\begin{theorem}[Elimination of One Variable as a weak inference]\label{elimination_theorem}
Let $p \ge 1$,  $\cQ$ be
a family of $s$ polynomials in $\K[u][y] \setminus \K$, monic in the variable $y$ with $\deg_y P \le p$ for 
every $P \in \cQ$ and 
$\tau$ be a realizable 
sign condition 
on ${\rm Elim}({\cal Q})$.
Then
$$
\s({\rm Elim}({\cal Q}))=\tau \ \ \ \vdash \ \ \  
\bigvee_{\sigma \in {\rm SIGN}( {\mathcal Q}  \, | \tau )}
\s({\mathcal Q})= \sigma.
$$

Suppose we have initial incompatibilities with monoid part
$$
S_{\sigma}\cdot 
\prod_{P \in {\cal Q}, \atop \sigma(P) \ne 0}P
^{2e_{P, \sigma}}
$$
with $e_{P, \sigma} \le e \in \N_*$ and 
degree in $w \subset v$ bounded
by $\delta_w$. Then, the final incompatibility 
has monoid part 
$$
\prod_{\sigma \in {\rm SIGN}( {\mathcal Q}  \, | \tau )}
S_{\sigma}^{h_{\sigma}}
\cdot
\prod_{H \in {\rm Elim}({\cal Q}), \atop \tau(H) \ne 0}H^{2h'_H}
$$
with 
$h_{\sigma}, h'_H \le  {\rm g}_7\{p,s,e\}$
and 
degree in $w$ bounded by 
$
{\rm g}_7\{p,s,e\}  \max\{\delta_w ,  \deg_w \cQ\}.
$ 
\end{theorem}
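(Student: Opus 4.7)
The plan is to reduce this theorem to Theorem \ref{weaksigndettab} (Fixing the Ordered List of the Roots as a weak existence). I would first construct an intermediate incompatibility of the form $\lda {\rm OFact}(\cQ)^{\bm\eta(\tau), \bm\nu(\tau)}(t, z), \cH \rda_{\K[v, t, a, b]}$ with auxiliary variables $t = (t_1,\ldots,t_r)$ and $z$, and then apply Theorem \ref{weaksigndettab} to eliminate these variables and produce the final incompatibility.

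The intermediate incompatibility is built by case-by-case reasoning on the position of the variable $y$ (which lies in $v$) relative to $t_1, \ldots, t_r$. Since the chain $t_1 < t_2 < \cdots < t_r$ is part of ${\rm OFact}$, the sign pattern of $(y - t_1, \ldots, y - t_r)$ falls into one of $2r+1$ consistent patterns: $y<t_1$, or $y = t_j$ for some $1 \le j \le r$, or $t_j<y<t_{j+1}$ for some $1\le j\le r-1$, or $y>t_r$. In each case, the factorization $P(y) \equiv \prod_i (y - t_{P,i})^{\mu_i} \prod_k ((y - a_{P,k})^2 + b_{P,k}^2)^{\nu_k}$ from ${\rm Fact}(P)^{\bm\mu(P), \bm\nu(P)}$ determines $\s(P(y))$ for every $P \in \cQ$: the quadratic factors are positive via $b_{P,k} \ne 0$, and each linear factor $y - t_{P,i}$ has sign fixed by the case (since each $t_{P,i}$ coincides with some $t_j$). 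A root case $y = t_{j_0}$ gives $P(y) = 0$ when $\eta_{j_0}(P) = 0$ and $\s(P(y)) = \eta_{j_0}(P)$ otherwise; an interval case gives $P(y) \ne 0$ with explicit signs for all $P$. The sign condition on $\cQ$ thus obtained equals one $\sigma \in {\rm SIGN}(\cQ \mid \tau)$, and one invokes the corresponding initial incompatibility after expanding the monoid factor $P(y)^{2e_{P,\sigma}}$ via the factorization into a product of powers of $y - t_j$ and of positive quadratic factors. Combining all cases via Lemma \ref{lem:multiple_case_by_case_with_signs} applied to $y - t_1, \ldots, y - t_r$ (the inconsistent patterns being handled trivially via the ordering $t_j < t_{j'}$ in ${\rm OFact}$) produces the intermediate incompatibility.

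Applying Theorem \ref{weaksigndettab} with the parameters extracted from this intermediate incompatibility, then invoking Lemma \ref{tlemma_almost_final}, will yield the stated bound ${\rm g}_7\{p, s, e\}$. The main obstacle is tracking exponents through the case analysis: one must verify that in the intermediate monoid part the exponents of $(t_j - t_{j'})$ and $b_{P, k}$ are bounded by $O((eps)^2)$ and $O(e^2 p^3 s^2)$ respectively, and that no ${\rm R}(z_{P,k}, z_{P,k'})$ factors appear (so $g = 1$ suffices). These orders arise from accumulating contributions across the $2r+1 \le 2ps+1$ cases, the $s$ polynomials, and the factorization expansions of each $P(y)^{2e_{P,\sigma}}$; they match exactly the arguments $2eps + 8(eps)^2$, $(ps+1)ep + 4e^2p^3s^2$, and $1$ fed into ${\rm g}_6$ in Lemma \ref{tlemma_almost_final}. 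A further combinatorial factor of order $2^{ps(s-1)}$ from the multi-case combination, together with $\tilde{\rm g}_{H,3}\{p\}$ coming from Theorem \ref{weaksigndettab}'s internal bounds, completes the match with ${\rm g}_7\{p, s, e\}$.
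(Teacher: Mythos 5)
Your overall route is exactly the paper's: the intermediate incompatibility you describe is precisely Proposition \ref{weaksigndettab2}, whose proof is the case analysis on the position of $y$ in the partition of the line given by $t_1<\dots<t_r$ (Lemmas \ref{partition_after_fact_1}--\ref{partition_after_fact_4}), and the final step is Theorem \ref{weaksigndettab} followed by Lemma \ref{tlemma_almost_final}. The sign determination in each cell, the role of $b_{P,k}\ne 0$ for the quadratic factors, and the parameters $2eps+8(eps)^2$, $(ps+1)ep+4e^2p^3s^2$, $g=1$ all match.

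The one point where your proposal would fail is the device for combining the $2r+1$ cases. You invoke Lemma \ref{lem:multiple_case_by_case_with_signs} on the $r$ polynomials $y-t_1,\dots,y-t_r$; that lemma produces a final monoid exponent of order $2^{2^{r+1}+r2^r}e^{2^r-1}$ and a degree blow-up of the same order, i.e.\ doubly exponential in $r\le ps$. This is incompatible with the exponent bounds $e_{j,j'}\le 2eps+8(eps)^2$ and $e_{P,k}\le (ps+1)ep+4e^2p^3s^2$ that must hold for the intermediate incompatibility in order for Lemma \ref{tlemma_almost_final} to close the gap with ${\rm g}_7\{p,s,e\}$ (note ${\rm g}_7$ has the shape $2^{2^{3(p/2)^p+s^2(\frac32p^2+3)+8}}e^{2^{6p^2s^2}}$, whereas a factor $2^{2^{ps}}$ in the exponent of $e$ or in the monoid part already exceeds it for $p$ small and $s$ large). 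The paper instead uses Proposition \ref{propSegLine}, which exploits the linear order: the $2r+1$ cells are combined by a chain of $r$ three-way splits (Lemma \ref{CasParCas_3}), so the monoid parts are merely multiplied together with exponents inflated by at most $2(e_j+f_j)\le 4e$ and the degrees add up, keeping everything polynomial in $e$, $p$, $s$. You need this ordered-chain combination, not the generic $3^r$-way case split, for the degree accounting to go through.
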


As said before, once the factorization
and relative order between the real roots of all polynomials in ${\cal Q}$ is fixed,  
the
list of all realizable  
sign conditions on ${\cal Q}$
can be determined 
by looking at the partition of the real line given by the set of real roots. 
We prove now  weak inference version of some auxiliary results in this direction.

\begin{proposition} 
\label{propSegLine} 
$$
\vdash \ \ \ 
 y < t_1 \ \vee \ \ y = t_1 \  \vee  ( \ t_1 < y, \ y < t_2)  \
 \vee \ \dots \ \vee  \ ( \ t_{r-1} < y, \  y < t_{r}) \ \vee \ y = t_r  \ \vee \ t_r < y.
$$

Suppose we
have initial incompatibilities in variables $v \supset (t_1, \dots, t_r, y) $ with monoid part 
$$S'_{1}(y-t_1)^{2e_1}, \ S_1, \ S'_{2}(y-t_1)^{2f_1}(y-t_2)^{2e_2},  \
 \dots, \ S'_{r}(y-t_{r-1})^{2f_{r-1}}(y-t_r)^{2e_r},  \ S_r, \  \ S'_{r+1}(y-t_r)^{2f_r}$$
with $e_j \le e$ and $f_j \le e$ and
degree in $w$ bounded by $\delta'_{w, 1},$ $\delta_{w,1},$ $\delta'_{w, 2}, \dots,$ $\delta'_{w, r},$ 
$\delta_{w, r},$ $\delta'_{w, r+1}$ for some subset of variables $w \supset v$.  
Then, the final incompatibility has monoid part 
$$
\prod_{1 \le j \le r+1} S'_j 
\cdot
\prod_{1 \le j \le r} S_j^{2(e_j+f_j)}
$$
and degree in $w$ bounded by 
$\sum_{1 \le j \le r+1}\delta'_{w, j} + 4e \cdot \sum_{1 \le j \le r}\delta_{w,j}$.
\end{proposition}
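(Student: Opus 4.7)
The plan is to proceed by induction on $r$, using Lemma \ref{CasParCas_3} as the core case-by-case reasoning engine applied each time to a polynomial of the form $y - t_j$.

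For the base case $r = 1$, the disjunction $y < t_1 \vee y = t_1 \vee t_1 < y$ is exactly the instance $P > 0 \vee P < 0 \vee P = 0$ of Lemma \ref{CasParCas_3} with $P = y - t_1$. The three given initial incompatibilities have monoid parts $S'_1(y-t_1)^{2e_1}$, $S'_2(y-t_1)^{2f_1}$, and $S_1$, so the lemma directly produces a final incompatibility with monoid part $S'_1 S'_2 S_1^{2(e_1+f_1)}$ and degree in $w$ bounded by $\delta'_{w,1}+\delta'_{w,2}+2(e_1+f_1)\bigl(\delta_{w,1}-\deg_w(y-t_1)\bigr) \le \delta'_{w,1}+\delta'_{w,2}+4e\,\delta_{w,1}$, as claimed.

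For the inductive step with $r \ge 2$, I would apply Lemma \ref{CasParCas_3} with $P = y - t_r$, splitting into the cases $y > t_r$, $y < t_r$, and $y = t_r$. The cases $y > t_r$ and $y = t_r$ are already covered by the last two given initial incompatibilities (with monoid parts $S'_{r+1}(y-t_r)^{2f_r}$ and $S_r$). For the case $y < t_r$, I would invoke the inductive hypothesis applied to $t_1,\dots,t_{r-1}$ with the enlarged system $\cH \cup \{y < t_r\}$. The required initial incompatibilities for this inductive call are the first $2r-1$ of the given ones, the first $2r-2$ being used verbatim; the delicate point is the last one, where an incompatibility of $\{t_{r-1} < y < t_r, \cH\}$ with monoid part $S'_r(y-t_{r-1})^{2f_{r-1}}(y-t_r)^{2e_r}$ is reinterpreted as an incompatibility of $\{t_{r-1} < y, \cH \cup \{y < t_r\}\}$ whose monoid part has the required form $S''_r(y-t_{r-1})^{2f_{r-1}}$ with $S''_r := S'_r(y-t_r)^{2e_r} \in \scM((\cH \cup \{y < t_r\})_{\ne}^2)$, which is legitimate precisely because $y - t_r$ has now become a nonzero hypothesis.

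The inductive hypothesis then yields an incompatibility of $\{\cH, y < t_r\}$ whose monoid part is $\prod_{j=1}^r S'_j \cdot (y-t_r)^{2e_r} \cdot \prod_{j=1}^{r-1} S_j^{2(e_j+f_j)}$, with degree in $w$ bounded by $\sum_{j=1}^r \delta'_{w,j} + 4e\sum_{j=1}^{r-1}\delta_{w,j}$. Feeding this together with the two unused initial incompatibilities into Lemma \ref{CasParCas_3}, taking the exponents of $(y-t_r)^2$ to be $f_r$ for the $y > t_r$ branch and $e_r$ for the $y < t_r$ branch, gives the final monoid part $S'_{r+1} \cdot \prod_{j=1}^r S'_j \cdot \prod_{j=1}^{r-1}S_j^{2(e_j+f_j)} \cdot S_r^{2(f_r+e_r)} = \prod_{j=1}^{r+1} S'_j \cdot \prod_{j=1}^r S_j^{2(e_j+f_j)}$, and degree in $w$ bounded by $\sum_{j=1}^{r+1}\delta'_{w,j}+4e\sum_{j=1}^r\delta_{w,j}$, completing the induction. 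The only nontrivial piece of bookkeeping is tracking the $(y-t_r)^{2e_r}$ factor as it migrates from the monoid part of the $t_{r-1}<y<t_r$ case, through the inductive call, and is finally absorbed into the $S_r^{2(e_r+f_r)}$ factor produced by the outer application of Lemma \ref{CasParCas_3}.
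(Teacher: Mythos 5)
Your proof is correct, and it rests on the same two ingredients as the paper's: induction on $r$ with Lemma \ref{CasParCas_3} as the case-splitting engine, with identical monoid and degree bookkeeping. The only structural difference is the order of operations. The paper first applies Lemma \ref{CasParCas_3} to the \emph{last three} initial incompatibilities (via $\vdash y < t_r \vee y = t_r \vee t_r < y$) to produce a single incompatibility of $\{t_{r-1} < y,\ \cH\}$ with monoid part $S'_r\cdot S'_{r+1}\cdot S_r^{2(e_r+f_r)}\cdot(y-t_{r-1})^{2f_{r-1}}$, which then serves directly as the last initial incompatibility for the inductive call on $t_1,\dots,t_{r-1}$ with the same $\cH$. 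You instead recurse first, inside the $y < t_r$ branch, over the augmented system $\cH \cup \{y<t_r\}$, and apply Lemma \ref{CasParCas_3} at the outer level afterwards. Your route requires the extra (correctly handled) observation that $(y-t_r)^{2e_r}$ may be absorbed into the $\scM$-part of the monoid once $y<t_r$ is a hypothesis; the paper's ordering avoids both this reinterpretation and the enlargement of $\cH$, which is why it is marginally cleaner, but the two arguments are equivalent and yield the same final monoid part and degree bound.
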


When $t_1, \dots, t_r$ are not variables but elements in $\K$,  
similar degree estimations are due to Warou \cite{War}.

\begin{proof}{Proof.} 
Consider the initial incompatibilities
\begin{equation}\label{inc:init_part}
\lda y < t_1, \ \cH \rda, \   \dots, \ 
\lda t_{r-1} < y, \ y < t_r, \ \cH \rda, \   \lda y = t_r, \ \cH \rda, \ \lda t_r < y, \ \cH \rda_{\K[v]}
\end{equation}
where $\cH$ is a system of sign conditions in $\K[v]$. 

We proceed by induction on $r$. If $r = 1$, the result follows from 
Lemma \ref{CasParCas_3}. 
Suppose now $r > 1$. 
We apply to the last three initial incompatibilities (\ref{inc:init_part}) the weak inference 
$$ 
\vdash \ \ \
y < t_r  \ \vee \ y = t_r \ \vee \  t_r < y.
$$
By Lemma \ref{CasParCas_3} 
we obtain an incompatibility
\begin{equation}\label{inc:aux_part_1}
\lda t_{r-1} < y, \ \cH \rda_{\K[v]} 
\end{equation}
with monoid part 
$$S'_r \cdot S'_{r+1} \cdot S_r^{2(e_r + f_r)} \cdot (y - t_{r-1})^{2f_{r-1}}$$
and degree in $w$ bounded by 
$\delta'_{w, r} + \delta'_{w, r+1} +  4e \cdot \delta_{w, r}$.
The result follows by applying 
the inductive hypothesis to the remaining initial incompatibilities 
(\ref{inc:init_part}) and 
(\ref{inc:aux_part_1}). 
\end{proof}

\begin{lemma}\label{partition_after_fact_1}
Let $p \ge 1$,  $\cQ$ 
be
a family of $s$ polynomials in $\K[u][y] \setminus \K$, monic in the variable $y$ with 
$\deg_y P \le p$ for 
every $P \in \cQ$, 
$\tau$
be
 a realizable 
sign condition
on ${\rm Elim}({\cal Q})$, $ {\bm \eta}(\tau) = [\eta_1, \dots, \eta_r]$ with $r \ge 1$
and $1 \le j_0 \le r$. 
Then, defining
$\varepsilon_P=(-1)^{\sum_{j_0 +1 \le j' \le r} {\rm mu}(\eta_{j'}, P)}$,
$$ 
\exists  (t, z)  \ [\; 
{\rm OFact}({\cal Q})^{ {\bm \eta}(\tau),{\bm \nu}(\tau)}(t, z), \ y = t_{j_0} \;] 
\ \ \ \vdash
\ \ \ 
\bigwedge_{P \in \cQ, \atop {\rm mu}(\eta_{j_0},P) > 0} P
 = 0, \  
\bigwedge_{P \in \cQ, \atop  {\rm mu}(\eta_{j_0},P) = 0} \s(P
) = \varepsilon_P
$$
where $t = (t_1, \dots, t_r)$, $z = (z_P)_{P \in \cQ}$ and 
$z_P = (z_{P, 1}, \dots, z_{P,
{\# {\bm \nu}}(\tau)(P)})$.

Suppose we have an initial incompatibility in variables $v \supset (u, y)$
with monoid part 
$$
S \cdot 
\prod_{P \in \cQ, \atop {\rm mu}(\eta_{j_0} ,P)  = 0}   P
^{2e_P}
$$
with $e_P \le e \in \N_*$ and  
degree in $w$ bounded by $\delta_w$ for some set of variables $w \subset v$. 
Then, the final incompatibility has 
monoid part 
$$
S \cdot
\prod_{1 \le j' \le r \atop j' \ne j_0}(t_{j_0}-t_{j'})^{2e_{j'}} 
\cdot 
\prod_{P \in \cQ
\atop
1 \le k \le 
{\# {\bm \nu}}(\eta)(P)}b_{P,k}^{2e_{P,k}}
$$
with 
$e_{j'} \le eps$, $e_{P, k} \le ep$, 
degree in $w$ bounded by $\delta_w$, 
degree in $t_j$ bounded by $2epsr$ 
and 
degree in $(a_{P, k}, b_{P, k})$  
bounded by $2ep$.
\end{lemma}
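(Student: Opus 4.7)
The plan is to transform the initial incompatibility by eliminating, one polynomial at a time, the hypotheses $P(y)=0$ and $\s(P(y))=\varepsilon_P$, deducing them from the factorization supplied by ${\rm OFact}(\cQ)^{\bm\eta(\tau),\bm\nu(\tau)}$ together with $y=t_{j_0}$ and the strict ordering of the $t_{j'}$.

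For $P$ with ${\rm mu}(\eta_{j_0},P)>0$, the hypothesis $P(y)=0$ appears in the ideal part of the initial incompatibility as $W_P\cdot P(y)$. Since $j_0$ belongs to $\bm\eta(P)$, the variable $t_{j_0}$ is one of the $t_{P,j}$, and $(y-t_{j_0})$ divides the product $\prod_j(y-t_{P,j})^{\mu_j}\cdot\prod_k((y-a_{P,k})^2+b_{P,k}^2)^{\nu_k}$. Combining the polynomial identity $P(y)\equiv F(y,t,z)$ supplied by ${\rm Fact}(P)^{{\rm vmu}(\bm\eta(P)),\bm\nu(P)}$ with $y-t_{j_0}=0$ via Lemma \ref{lem:comb_lin_zero_zero} rewrites each $W_P P(y)$ as a combination of $(y-t_{j_0})=0$ and the coefficient identities in $P\equiv F$, eliminating the hypothesis $P(y)=0$ in favor of ${\rm Fact}(P)$ and $y=t_{j_0}$ without enlarging the monoid.

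For $P$ with ${\rm mu}(\eta_{j_0},P)=0$, the hypothesis $\s(P(y))=\varepsilon_P$ places $P(y)^{2e_P}$ in the monoid. Using Lemma \ref{lem_sust_equiv_greater} (together with Lemma \ref{lemma_basic_sign_rule_by_scalar} when $\varepsilon_P=-1$), I replace $P(y)$ by its factorization in the sign condition, then derive the sign of the product factor by factor: for $j''\in\bm\eta(P)$ with $j''<j_0$ the ordering $t_{j''}<t_{j_0}$ combined with $y=t_{j_0}$ yields $(y-t_{j''})^{\mu_{j''}(P)}>0$; for $j''>j_0$ it yields $\sign((y-t_{j''})^{\mu_{j''}(P)})=(-1)^{\mu_{j''}(P)}$; and Lemma \ref{lemma_basic_sign_rule_1} (items \ref{lemma_basic_sign_rule:2},\ref{lemma_basic_sign_rule:3}) together with Lemma \ref{lemma_sum_of_pos_is_pos} gives $((y-a_{P,k})^2+b_{P,k}^2)^{\nu_k}>0$ from $b_{P,k}\ne 0$. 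Multiplying the signs via Lemma \ref{lemma_basic_sign_rule_1} (items \ref{lemma_basic_sign_rule:6},\ref{lemma_basic_sign_rule:7}) produces the required sign $\varepsilon_P$. Concurrently the monoid term $P(y)^{2e_P}$ becomes $\prod_j(t_{j_0}-t_{P,j})^{2\mu_j e_P}\cdot\prod_k((t_{j_0}-a_{P,k})^2+b_{P,k}^2)^{2\nu_k e_P}$; applying Lemma \ref{lemma_sum_of_pos_is_pos} to each quadratic factor with $P_1=b_{P,k}^2$ and $P_2=(t_{j_0}-a_{P,k})^2$ reduces it to $b_{P,k}^{4\nu_k e_P}$ in the monoid.

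The main obstacle is the careful bookkeeping of monoid exponents and degrees when aggregating over all $P\in\cQ$. The exponent of $(t_{j_0}-t_{j'})^2$ in the final monoid is $e_{j'}=\sum_{P:\,{\rm mu}(\eta_{j_0},P)=0}\mu_{j'}(P)e_P\le p\cdot s\cdot e$, matching the bound $e_{j'}\le eps$; the exponent of $b_{P,k}^2$ is $e_{P,k}=2\nu_{P,k}e_P\le pe$. The degree in $w$ is preserved because the new variables $t,z$ are disjoint from $w$; the degree in each $t_j$ is bounded by $\sum_{P,j''}2\mu_{j''}(P)e_P\le 2epsr$, and the degree in $(a_{P,k},b_{P,k})$ is bounded by $\sum_k 4\nu_{P,k}e_P\le 2ep$, all within the claimed bounds.
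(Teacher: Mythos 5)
Your proposal is correct and follows essentially the same route as the paper's proof: Lemma \ref{lem:comb_lin_zero_zero} applied to $P(y)\equiv{\rm F}^{{\rm vmu}({\bm\eta}(P)),{\bm\nu}(P)}(t_P,z_P,y)$ together with $y=t_{j_0}$ for the roots, Lemma \ref{lem_sust_equiv_greater} plus the basic sign rules and Lemma \ref{lemma_sum_of_pos_is_pos} for the nonvanishing polynomials, and the same exponent and degree bookkeeping. The only cosmetic difference is that you derive the signs of the factors $(y-t_{j'})$ directly from $t_{j'}<t_{j_0}$ and $y=t_{j_0}$, whereas the paper first works with the orderings relative to $y$ and converts to the $t_{j'}<t_{j_0}$ form in a final step; both give the stated bounds.
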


\begin{proof}{Proof.}
We simplify the notation by renaming
$ {\bm \eta}(\tau) =  {\bm \eta}$ and
${\bm \nu}(\tau) =  {\bm \nu} $. 
 Consider the initial incompatibility 
\begin{equation}\label{inc:init_lemma_root_sign}
\Big \downarrow \   \bigwedge_{P \in \cQ, \atop {\rm mu}(\eta_{j_0}, P) > 0} P
 = 0, \  
\bigwedge_{P \in \cQ, \atop {\rm mu}(\eta_{j_0}, P)  = 0} \s(P
) = 
\varepsilon_P, \ \cH  \ {\Big \downarrow}_{\K[v]}
\end{equation}
where $\cH$ is a system of sign conditions in $\K[v]$.

Following the notation from Definition \ref{defFact} and Definition \ref{def:sign_det_sect_three_sec},  
we  apply to (\ref{inc:init_lemma_root_sign}) the weak inference
$$
\bigwedge_{P \in {\cal Q}, \atop {\rm mu}(\eta_{j_0}, P) > 0}
P
 \equiv 
{\rm F}^{ {\rm vmu}( {\bm \eta}(P)),    {\bm \nu}(P)}(t_P,z_P)
,  \ y = t_{j_0} 
\ \ \ \vdash
\ \ \ 
\bigwedge_{P \in {\cal Q}, \atop {\rm mu}(\eta_{j_0}, P) > 0} P
= 0.
$$
By Lemma \ref{lem:comb_lin_zero_zero}, we obtain 
\begin{equation}\label{inc:aux_lemma_root_sign_1}
\begin{array}{c}
\Big \downarrow
\displaystyle{
\bigwedge_{P \in {\cal Q}, \atop {\rm mu}(\eta_{j_0}, P) > 0}
P
 \equiv {\rm F}^{ {\rm vmu}( {\bm \eta}(P)),    {\bm \nu}(P)}(t_P,z_P ), }\
y = t_{j_0}, \\[4mm]
\displaystyle{ \bigwedge_{P \in \cQ, \atop {\rm mu}(\eta_{j_0}, P) = 0} \s(P
) = 
\varepsilon_P, } \ 
\cH \
\Big \downarrow
_{\K[v][ t, a, b]}
\end{array}
\end{equation}
with the same monoid part, 
degree in $w$ bounded by $\delta_w$, 
degree in $t_j$  and 
degree in $(a_{P, k}, b_{P, k})$  bounded by $p$. 

Then we  successively apply to (\ref{inc:aux_lemma_root_sign_1}) for $P \in {\cal Q}$ with ${\rm mu}(\eta_{j_0},P) = 0$
the weak inferences
$$
P
 \equiv {\rm F}^{ {\rm vmu}( {\bm \eta}(P)),    {\bm \nu}(P)}(t_P,z_P
  ), \ 
\s( {\rm F}^{ {\rm vmu}( {\bm \eta}(P)),    {\bm \nu}(P)}(t_P,z_P
) )
=\varepsilon_P
  \ \, \vdash \ \, 
\s(P
) = 
\varepsilon_P$$
and
$$
 \displaystyle{
 \bigwedge_{1 \le j' \le j_0 - 1} t_{j'} < y, \ 
\bigwedge_{j_0 + 1 \le j' \le r} y < t_{j'}, \
\bigwedge_{1 \le k \le 
{\# {\bm \nu}}(P)}(y-a_{P,k})^2 + b_{P,k}^2 > 0 } \ \, \vdash $$
$$ \vdash \ \, 
\sign( 
{\rm F}^{ {\rm vmu}( {\bm \eta}(P)),    {\bm \nu}(P)}(t_P,z_P
) )
 =
 \varepsilon_P, 
$$
and for $P \in {\cal Q}$ with ${\rm mu}(\eta_{j},P) = 0$
and $1 \le k \le 
{\# {\bm \nu}}(P)$
the weak inferences
$$
\begin{array}{rcl}
(y-a_{P,k})^2 \ge 0, \ b^2_{P, k} > 0  & \ \, \vdash \ \, &
(y-a_{P,k})^2 + b_{P,k}^2 > 0, \\[3mm]
& \vdash & (y-a_{P,k})^2 \ge 0,\\[3mm]
b_{P,k} \ne 0 & \vdash & 
b_{P,k}^2 > 0.
\end{array}
$$
By Lemmas \ref{lem_sust_equiv_greater}, \ref{lemma_basic_sign_rule_1} 
(items \ref{lemma_basic_sign_rule:7}, \ref{lemma_basic_sign_rule:2} and \ref{lemma_basic_sign_rule:3})
and \ref{lemma_sum_of_pos_is_pos} 
we obtain 
\begin{equation}\label{inc:aux_lemma_root_sign_2}
\begin{array}{c}
\Big \downarrow
\displaystyle{
\bigwedge_{P \in {\cal Q}}
P
 \equiv {\rm F}^{ {\rm vmu}( {\bm \eta}(P)),    {\bm \nu}(P)}(t_P,z_P
  ) )}, \\[4mm]
\displaystyle{y = t_{j_0}, \ \bigwedge_{1 \le j' \le j_0 - 1} t_{j'} < y, \ 
\bigwedge_{j_0 + 1 \le j' \le r} y < t_{j'}, \
\bigwedge_{P \in \cQ
\atop
1 \le k \le 
{\# {\bm \nu}}(P)} b_{P, k} \ne 0, \ 
\cH 
}
\Big \downarrow
_{\K[v][ t, a, b]}
\end{array}
\end{equation}
with monoid part 
$$
S \cdot
\prod_{1 \le j' \le r, \atop j' \ne j_0}(y-t_{j'})^{2e_{j'}} 
\cdot
\prod_{P \in \cQ
\atop 1 \le k \le 
{\# {\bm \nu}}(P)}
b_{P,k}^{2e_{P,k}}
$$
with 
$e_{j'} \le eps$,  
$e_{P, k} \le ep$, 
degree in $w$ bounded by $\delta_w$, 
degree in $t_j$ bounded by $2eps$ 
(taking into account that ${\rm mu}(\eta_{j_0}, P_0) > 0$ for at least one $P_0 \in \cQ$)
and 
degree in $(a_{P, k}, b_{P, k})$  
bounded by $2ep$ (taking into account that for each $P \in \cQ$, either ${\rm mu}(\eta_{j_0}, P) > 0$ 
or ${\rm mu}(\eta_{j_0}, P) = 0$).

Finally, we successively apply to (\ref{inc:aux_lemma_root_sign_2}) for $1 \le j' \le j_0 - 1$ the weak inference
$$
t_{j'} < t_{j_0}, \ t_{j_0} = y \ \ \  \vdash \ \ \  t_{j'} < y  
$$
and for  $j + 1  \le j' \le r$ the weak inference
$$
y = t_{j_0}, \ t_{j_0} < t_{j'} \ \ \ \vdash \ \ \  y < t_{j'}. 
$$
By Lemma \ref{lemma_sum_of_pos_is_pos} 
we obtain 
$$
\begin{array}{c}
\Big \downarrow
\displaystyle{
\bigwedge_{P \in {\cal Q}}
P
\equiv {\rm F}^{ {\rm vmu}( {\bm \eta}(P)),    {\bm \nu}(P)}(t_P,z_P
) )}, \\[4mm] 
\displaystyle{y = t_{j_0}, \ \bigwedge_{1 \le j' \le j_0 - 1} t_{j'} < t_{j_0}, \ 
\bigwedge_{j_0 + 1 \le j' \le r} t_{j_0} < t_{j'}, \
\bigwedge_{P \in \cQ
\atop
1 \le k \le 
{\# {\bm \nu}}(P)} b_{P, k} \ne 0, \ 
\cH 
}
\Big \downarrow
_{\K[v][ t, a, b]}
\end{array}
$$
with monoid part 
$$
S \cdot
\prod_{1 \le j' \le r, \atop j' \ne j_0}(t_{j_0}-t_{j'})^{2e_{j'}} 
\cdot
\prod_{P \in \cQ
\atop
1 \le k \le 
{\# {\bm \nu}}(P)}b_{P,k}^{2e_{P,k}} 
$$
with 
degree in $w$ bounded by $\delta_w$, 
degree in $t_j$ bounded by $2epsr$
and 
degree in $(a_{P, k}, b_{P, k})$  
bounded by $2ep$,
which serves as the final incompatibility. 
\end{proof}

\begin{lemma}\label{partition_after_fact_2}
Let $p \ge 1$,  $\cQ$ 
be
a family of $s$ polynomials in $\K[u][y] \setminus \K$, monic in the variable $y$ with $\deg_y P \le p$ for 
every $P \in \cQ$, 
$\tau$
be
 a realizable 
sign condition 
on ${\rm Elim}({\cal Q})$, $ {\bm \eta}(\tau) = [\eta_1, \dots, \eta_r]$  with $r > 1$
and $1 \le j_0 \le r-1$. 
Then, 
defining
$\varepsilon_P=(-1)^{\sum_{j_0 +1 \le j' \le r} {\rm mu}(\eta_{j'}, P)}$,
$$ 
\exists  (t, z)  \ [\; 
{\rm OFact}({\cal Q})^{ {\bm \eta}(\tau),{\bm \nu}(\tau)}(t, z), \ t_{j_0} < y, \ y < t_{j_0+1}\;] 
\ \ \ \vdash
\ \ \ 
\bigwedge_{P \in \cQ} \s(P
) = 
\varepsilon_P
$$
where $t = (t_1, \dots, t_r)$, $z = (z_P)_{P \in \cQ}$ and 
$z_P = (z_{P, 1}, \dots, z_{P, 
{\# {\bm \nu}}(\tau)(P)})$.

Suppose we have an initial incompatibility in variables $v \supset (u, y)$
with monoid part 
$$
S \cdot
\prod_{P \in \cQ}   P
^{2e_P}
$$
with $e_P \le e \in \N_*$ and 
degree in $w$ bounded by $\delta_w$ for some set of variables $w \subset v$.
Then, the final incompatibility has 
monoid part 
$$
S \cdot (y - t_{j_0})^{2e_{j_0}} \cdot (y - t_{j_0+1})^{2e_{j_0+1}}
\cdot 
\prod_{1 \le j' \le j_0 - 1}(t_{j_0}-t_{j'})^{2e_{j'}} 
\cdot
\prod_{j_0 + 2 \le j' \le r}(t_{j_0+1}-t_{j'})^{2e_{j'}} 
\cdot 
\prod_{P \in \cQ
\atop 
1 \le k \le 
{\# {\bm \nu}}(P)}b_{P,k}^{2e_{P,k}},
$$
with 
$e_j \le eps$,  
$e_{P, k} \le ep$, 
degree in $w$ bounded by $\delta_w$, 
degree in $t_j$ bounded by $2epsr$ 
and 
degree in $(a_{P, k}, b_{P, k})$  
bounded by $2ep$.
\end{lemma}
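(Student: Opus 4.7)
The plan is to adapt the reasoning of Lemma \ref{partition_after_fact_1} to the situation where $y$ lies strictly between two consecutive roots of the family rather than coinciding with one of them, so that no polynomial in $\cQ$ vanishes at $y$. Starting from the initial incompatibility
$$\lda \bigwedge_{P \in \cQ} \s(P(y)) = \varepsilon_P, \ \cH \rda$$
with monoid part $S \cdot \prod_P P(y)^{2e_P}$, I would first apply for each $P \in \cQ$ the weak inference
$$P(y) \equiv {\rm F}^{{\rm vmu}({\bm \eta}(P)),{\bm \nu}(P)}(t_P, z_P, y),\ \s\bigl({\rm F}^{{\rm vmu}({\bm \eta}(P)),{\bm \nu}(P)}(t_P, z_P, y)\bigr) = \varepsilon_P \vdash \s(P(y)) = \varepsilon_P$$
justified by Lemma \ref{lem_sust_equiv_greater}; this substitutes the sign condition on $P(y)$ by the sign condition on its factorization, introducing the auxiliary variables $t_P$ and $z_P$.

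Next, I would derive the sign $\varepsilon_P$ of each factorization as a product of signs of its irreducible factors, using Lemma \ref{lemma_basic_sign_rule_1} (item \ref{lemma_basic_sign_rule:7}) together with its variants for negative signs. Every quadratic factor $(y-a_{P,k})^2 + b_{P,k}^2$ is shown to be strictly positive from the hypothesis $b_{P,k} \ne 0$ by the same chain of weak inferences used at the end of the proof of Lemma \ref{partition_after_fact_1}, namely Lemma \ref{lemma_basic_sign_rule_1} (items \ref{lemma_basic_sign_rule:2}, \ref{lemma_basic_sign_rule:3}) combined with Lemma \ref{lemma_sum_of_pos_is_pos}. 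The novel step is the treatment of each linear factor $(y - t_{P,i})$: writing $t_{P,i} = t_{j'}$ for the corresponding index $j'$, if $j' \le j_0$ I would first chain together the strict inequalities $t_{j'} < t_{j'+1} < \dots < t_{j_0}$ from ${\rm OFact}^{{\bm \eta}(\tau),{\bm \nu}(\tau)}$ via Lemma \ref{lemma_sum_of_pos_is_pos} to get $t_{j_0} - t_{j'} > 0$, and then sum this with $y - t_{j_0} > 0$ via the same lemma, using the identity $y - t_{j'} = (y - t_{j_0}) + (t_{j_0} - t_{j'})$, to obtain $y - t_{j'} > 0$; symmetrically, for $j' \ge j_0+1$ I would sum $t_{j'} - t_{j_0+1}$ with $t_{j_0+1} - y$ to obtain $t_{j'} - y > 0$. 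This is exactly the step that produces in the final monoid part the factors $(y - t_{j_0})$, $(y - t_{j_0+1})$, $(t_{j_0}-t_{j'})$ for $j' < j_0$, and $(t_{j_0+1}-t_{j'})$ for $j' > j_0+1$.

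The main obstacle is the bookkeeping of exponents in the monoid part and of degrees in every set of variables. Each initial exponent $2e_P \le 2e$ propagates through the factorization into at most $p$ linear factors, with multiplicities at most $p$, across $s$ polynomials, and for each root index $j'$ the resulting exponent of $(y - t_{j'})$ is then distributed along a transitivity chain of length at most $r$; tracking this carefully, as in the previous lemma, should produce the exact bounds $e_j \le eps$, $e_{P,k} \le ep$, degree in $t_j$ bounded by $2epsr$, and degree in $(a_{P,k}, b_{P,k})$ bounded by $2ep$. Since $y$ is not eliminated (we have only $t_{j_0} < y < t_{j_0+1}$, not an equality), the final monoid retains factors involving $y$, in contrast with Lemma \ref{partition_after_fact_1}; beyond this and the routine accounting, no new mathematical idea is needed.
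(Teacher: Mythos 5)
Your proposal follows essentially the same route as the paper: substitute $\s(P(y))$ by the sign of its factorization via Lemma \ref{lem_sust_equiv_greater}, decompose the sign of the product into signs of the irreducible factors, dispose of the quadratic factors using $b_{P,k}\ne 0$, and convert each linear factor's sign into the conditions $t_{j_0}<y$, $y<t_{j_0+1}$ together with order relations among the roots, which is exactly where the factors $(y-t_{j_0})$, $(y-t_{j_0+1})$, $(t_{j_0}-t_{j'})$ and $(t_{j_0+1}-t_{j'})$ enter the monoid part.

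One small correction: the transitivity chain $t_{j'}<t_{j'+1}<\dots<t_{j_0}$ you propose is unnecessary, because ${\rm OFact}({\cal Q})^{{\bm \eta}(\tau),{\bm \nu}(\tau)}$ contains \emph{all} pairwise order conditions $t_{j}<t_{j'}$ for $j<j'$, not only the consecutive ones; the paper therefore uses the single weak inference $t_{j'}<t_{j_0},\ t_{j_0}\le y \vdash t_{j'}<y$ directly. This is not merely cosmetic: if you actually performed the chaining via Lemma \ref{lemma_sum_of_pos_is_pos}, the monoid factor $(t_{j_0}-t_{j'})^{2e_{j'}}$ would be further replaced by a single consecutive difference $(t_{j''+1}-t_{j''})^{2e_{j'}}$, which would still give a valid incompatibility of the same system but would not match the monoid part asserted in the statement (and tracked downstream in Proposition \ref{weaksigndettab2}). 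Dropping the chaining and invoking the pairwise condition directly recovers the stated monoid part and degree bounds.
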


\begin{proof}{Proof.} 
We simplify the notation by renaming
$ {\bm \eta}(\tau) =  {\bm \eta}$ and
${\bm \nu}(\tau) = {\bm \nu} 
$. 
Consider the initial incompatibility 
\begin{equation}\label{inc:init_lemma_root_sign_sec}
\Big \downarrow \ \bigwedge_{P \in \cQ} \s(P
) = 
\varepsilon_P, \ \cH \ {\Big \downarrow}_{\K[v]}
\end{equation}
where $\cH$ is a system of sign conditions in $\K[v]$.

We  successively apply to (\ref{inc:init_lemma_root_sign_sec}) 
for $P \in {\cal Q}$ the weak inferences
$$ 
P
 \equiv 
{\rm F}^{ {\rm vmu}( {\bm \eta}(P)),    {\bm \nu}(P)}(t_P,z_P
) ), \
\s\Big( 
{\rm F}^{ {\rm vmu}( {\bm \eta}(P)),    {\bm \nu}(P)}(t_P,z_P
) )
\Big) 
=
\varepsilon_P
 \ \, \vdash \ \, 
\s(P
) = 
\varepsilon_P $$
and
$$
\displaystyle{\bigwedge_{1 \le j' \le j_0 } t_{j'} < y, \ 
\bigwedge_{j_0 + 1 \le j' \le r} y < t_{j'}, \
\bigwedge_{1 \le k \le 
{\# {\bm \nu}}(P)}(y-a_{P,k})^2 + b_{P,k}^2 > 0}  \ \, \vdash 
$$
$$
\vdash \ \, 
\sign( 
{\rm F}^{ {\rm vmu}( {\bm \eta}(P)),    {\bm \nu}(P)}(t_P,z_P
) )
)
 =\varepsilon_P, 
$$
and for $P \in {\cal Q}$ 
and $1 \le k \le 
{\# {\bm \nu}}(P)$
the weak inferences
$$
\begin{array}{rcl}
(y-a_{P,k})^2 \ge 0, \ b^2_{P, k} > 0  & \ \, \vdash \ \, &
(y-a_{P,k})^2 + b_{P,k}^2 > 0, \\[3mm]
& \vdash & (y-a_{P,k})^2 \ge 0,\\[3mm]
b_{P,k} \ne 0 & \vdash & 
b_{P,k}^2 > 0.
\end{array}
$$
By Lemmas \ref{lem_sust_equiv_greater}, \ref{lemma_basic_sign_rule_1} 
(items \ref{lemma_basic_sign_rule:7}, \ref{lemma_basic_sign_rule:2} and \ref{lemma_basic_sign_rule:3})
and \ref{lemma_sum_of_pos_is_pos} 
we obtain 
\begin{equation}\label{inc:aux_lemma_root_sign_sec_2}
\begin{array}{c}
\Big \downarrow
\displaystyle{
\bigwedge_{P \in {\cal Q}}
P
\equiv {\rm F}^{ {\rm vmu}( {\bm \eta}(P)),    {\bm \nu}(P)}(t_P,z_P
), } \\[4mm]
\displaystyle{\bigwedge_{1 \le j' \le j_0} t_{j'} < y, \ 
\bigwedge_{j_0 + 1 \le j' \le r} y < t_{j'}, \
\bigwedge_{P \in \cQ
\atop
1 \le k \le 
{\# {\bm \nu}}(P)} b_{P, k} \ne 0, \ 
\cH 
}
\Big \downarrow
_{\K[v][ t, a, b]}
\end{array}
\end{equation}
with monoid part 
$$
S \cdot
\prod_{1 \le j \le r}(y-t_j)^{2e_j} 
\cdot
\prod_{P \in \cQ
\atop
1 \le k \le 
{\# {\bm \nu}}(P)}b_{P,k}^{2e_{P,k}}
$$
with 
$e_j \le eps$,  
$e_{P, k} \le ep$, 
degree in $w$ bounded by $\delta_w$, 
degree in $t_j$ bounded by $2eps$
and 
degree in $(a_{P, k}, b_{P, k})$  
bounded by $2ep$.

Finally, we successively apply to (\ref{inc:aux_lemma_root_sign_sec_2}) for $1 \le j' \le j_0 - 1$ 
the weak inferences
$$
\begin{array}{rcl}
t_{j'} < t_{j_0}, \ t_{j_0} \le y & \; \  \vdash \; \ &  t_{j'} < y, \\[3mm]
t_{j_0} < y & \vdash & t_{j_0} \le y
\end{array}
$$
and for  $j_0 + 2  \le j' \le r$ the weak inferences
$$
\begin{array}{rcl}
y \le t_{j_0+1}, \ t_{j_0+1} < t_{j'}  & \; \  \vdash \; \ &  y < t_{j'}, \\[3mm]
y < t_{j_0+1} & \vdash & y \le t_{j_0+1}.
\end{array}
$$
By Lemmas \ref{lemma_sum_of_pos_is_pos} and \ref{lemma_basic_sign_rule_1} 
(item \ref{lemma_basic_sign_rule:1})
we obtain 
$$
\begin{array}{c}
\Big \downarrow
\displaystyle{
\bigwedge_{P \in {\cal Q}}
P
 \equiv {\rm F}^{ {\rm vmu}( {\bm \eta}(P)),    {\bm \nu}(P)}(t_P,z_P
  ), } \\[4mm]
\displaystyle{t_{j_0}< y, \ y < t_{j_0+1}, \ 
\bigwedge_{1 \le j' \le j_0 - 1} t_{j'} < t_{j_0}, \ 
\bigwedge_{j_0 + 2 \le j' \le r} t_{j_0 + 1} < t_{j'}, \
\bigwedge_{P \in \cQ
\atop
1 \le k \le 
{\# {\bm \nu}}(P)} b_{P, k} \ne 0, \ 
\cH 
}
\Big \downarrow
_{\K[v][ t, a, b]}
\end{array}
$$
with monoid part 
$$
S \cdot
(y - t_{j_0})^{2e_{j_0}}
\cdot
(y - t_{j_0+1})^{2e_{j_0+1}}
\cdot
\prod_{1 \le j' \le j_0 - 1}(t_{j_0}-t_{j'})^{2e_{j'}} 
\cdot
\prod_{j_0 + 2 \le j' \le r}(t_{j_0+1}-t_{j'})^{2e_{j'}} 
\cdot
\prod_{P \in \cQ
\atop
1 \le k \le 
{\# {\bm \nu}}(P)}b_{P,k}^{2e_{P,k}},
$$
degree in $w$ bounded by $\delta_w$, 
degree in $t_j$ bounded by $2epsr$
and 
degree in $(a_{P, k}, b_{P, k})$  
bounded by $2ep$, 
which serves as the final incompatibility. 
\end{proof}

We state below two more lemmas corresponding to the other cases needed to
analyze the whole partition of the real line given by the set of roots. 
We omit their proofs since they are very similar 
to the proof of Lemma \ref{partition_after_fact_2}.

\begin{lemma}\label{partition_after_fact_3} 
Let $p \ge 1$,  $\cQ$ 
be
a family of $s$ polynomials in $\K[u][y] \setminus \K$, monic in the variable $y$ with $\deg_y P \le p$ for 
every $P \in \cQ$, 
$\tau$ 
be
a realizable 
sign condition 
on ${\rm Elim}({\cal Q})$ and $ {\bm \eta}(\tau) = [\eta_1, \dots, \eta_r]$  with $r \ge 1$.
Then
$$ 
\exists  (t, z)  \ [\; 
{\rm OFact}({\cal Q})^{ {\bm \eta}(\tau),{\bm \nu}(\tau)}(t, z), \ t_{r} < y\;] 
\ \ \ \vdash
 \ \ \ 
\bigwedge_{P \in \cQ} P
 > 0  
$$
where $t = (t_1, \dots, t_r)$, $z = (z_P)_{P \in \cQ}$ and 
$z_P = (z_{P, 1}, \dots, z_{P, 
{\# {\bm \nu}}(\tau)(P)}
)$.

Suppose we have an initial incompatibility in variables $v \supset (u, y)$
with monoid part 
$$
S \cdot \prod_{P \in \cQ}   P
^{2e_P}
$$
with $e_P \le e \in \N_*$ and
degree in $w$ bounded by $\delta_w$ for some set of variables $w \subset v$.
Then, the final incompatibility has 
monoid part 
$$
S \cdot
(y - t_{r})^{2e_{r}} \cdot 
\prod_{1 \le j \le r - 1}(t_{r}-t_j)^{2e_j} 
\cdot
\prod_{P \in \cQ
\atop
1 \le k \le 
{\# {\bm \nu}}(P)}b_{P,k}^{2e_{P,k}},
$$
with 
$e_j \le eps$,  
$e_{P, k} \le ep$, 
degree in $w$ bounded by $\delta_w$, 
degree in $t_j$ bounded by $2epsr$
and 
degree in $(a_{P, k}, b_{P, k})$  
bounded by $2ep$.
\end{lemma}

\begin{lemma}\label{partition_after_fact_4}
Let $p \ge 1$,  $\cQ$
be 
a family of $s$ polynomials in $\K[u][y] \setminus \K$, monic in the variable $y$ with $\deg_y P \le p$ for 
every $P \in \cQ$, 
be
$\tau$ a realizable 
sign condition 
on ${\rm Elim}({\cal Q})$ and $ {\bm \eta}(\tau) = [\eta_1, \dots, \eta_r]$  with $r \ge 1$.
Then
$$ 
\exists  (t, z)  \ [\; 
{\rm OFact}({\cal Q})^{ {\bm \eta}(\tau),{\bm \nu}(\tau)}(t, z), \  y < t_{1}\;] 
\ \ \ 
\vdash \ \ \ 
\bigwedge_{P \in \cQ} \s(P
) = 
(-1)^{\sum_{1 \le j \le r} {\rm mu}(\eta_j,P)}
$$
where $t = (t_1, \dots, t_r)$, $z = (z_P)_{P \in \cQ}$ and 
$z_P = (z_{P, 1}, \dots, z_{P,
{\# {\bm \nu}}(\tau)(P)
})$.

Suppose we have an initial incompatibility in variables $v \supset (u, y)$
with monoid part 
$$
S \cdot   \prod_{P \in \cQ}   P
^{2e_P}
$$
with $e_P \le e \in \N_*$ and 
degree in $w$ bounded by $\delta_w$ for some set of variables $w \subset v$.
Then, the final incompatibility has 
monoid part 
$$
S \cdot (y - t_{1})^{2e_{1}}\cdot 
\prod_{ 2 \le j \le r}(t_{1}-t_j)^{2e_j} 
\prod_{P \in \cQ \atop
1 \le k \le 
{\# {\bm \nu}}(P)}b_{P,k}^{2e_{P,k}},
$$
with 
$e_j \le eps$, 
$e_{P, k} \le ep$, 
degree in $w$ bounded by $\delta_w$, 
degree in $t_j$ bounded by $2epsr$
and 
degree in $(a_{P, k}, b_{P, k})$  
bounded by $2ep$.
\end{lemma}

We introduce an auxiliary definition.

\begin{definition}
\label{defSIGNbis} Let $\tau$ be a relizable sign condition on ${\rm Elim}(\cQ)$ and 
${\bm \eta}(\tau) = [\eta_1, \dots, \eta_r]$. If $\vartheta \in \R^k$, 
$\theta \in \R^r, \alpha \in \R^s, 
\beta  \in \R^s$ with $s=\sum_{P\in {\mathcal Q}}
{\# {\bm \nu}}(\tau)(P)$
verifies
$\s({\rm Elim}(\cQ)(\vartheta)) = \tau$ and 
$
{\rm OFact}({\cal Q}(\vartheta))^{ {\bm \eta}(\tau),{\bm \nu}(\tau)}(\theta,\alpha + i \beta)$,
we denote $\sigma_j$ the sign condition $\sign(\cQ(\vartheta, \theta_j))$ for $1 \le j \le r$
and $\sigma_{(j-1,j)}$ the sign condition $\sign(\cQ)(\iota)$ for any $\iota \in 
(\theta_{j-1},\theta_j)$ for $1 \le j \le r+1$, where $\theta_0 = -\infty$ and 
$\theta_{r+1} = + \infty$. 
\end{definition}

\begin{proposition} \label{weaksigndettab2} 
Let $p \ge 1$,  $\cQ$ 
be
a family of $s$ polynomials in $\K[u][y] \setminus \K$, monic in the variable $y$ with $\deg_y P \le p$ for 
every $P \in \cQ$, 
$\tau$ 
be
a realizable 
sign condition 
on ${\rm Elim}({\cal Q})$, 
$ {\bm \eta}(\tau) = [\eta_1, \dots, \eta_r]$, 
$t = (t_1, \dots, t_r)$ and $z = (z_P)_{P \in \cQ}$ where $z_P = (z_{P, 1}, \dots,
z_{P, 
{\# {\bm \nu}}(\tau)(P)})$.  
Then
$$
\exists \,(t, z) \ [\;   
{\rm OFact}({\cal Q})^{ {\bm \eta}(\tau),{\bm \nu}(\tau)}(t, z) \,] \ \ \ 
\vdash \ \ \  
\bigvee_{\sigma \in {\rm SIGN}( {\mathcal Q}  \, | \tau )}
\s({\mathcal Q})= \sigma.
$$

Suppose we have for 
$\sigma = \sigma_{(0,1)}, \sigma_1, \dots, \sigma_{(r, r+1)}$ 
an initial incompatibility in variables $v \supset (u, y)$ with monoid part
$$
S_\sigma \cdot 
\prod_{P \in {\cal Q}, \atop \sigma(P) \ne 0}P
^{2e_{P, \sigma}}
$$
with $e_{P, \sigma} \le e \in \N_*$ and 
degree in $w$ bounded by 
$\delta_w$ for some subset of variables $w\subset v$. 
Then the final incompatibility has monoid part
$$
\prod_{1 \le j \le r+1} S_{\sigma_{(j-1,j)}}
\cdot
\prod_{1 \le j \le r} S_{\sigma_j}^{e_j}
\cdot
\prod_{1 \le j < j' \le r} (t_{j'} - t_{j})^{2e_{j,j'}} 
\cdot
\prod_{P \in \cQ
\atop
1 \le k \le 
{\# {\bm \nu}}(\tau)(P)}b_{P,k}^{2e_{P,k}}
$$
with 
$e_j \le 4eps$, 
$e_{j,j'} \le 2eps + 8(eps)^2$,
$e_{P,k} \le (ps+1)ep + 4e^2p^3s^2$, 
degree in $w$ bounded by 
$(ps+1+4ep^2s^2)\delta_w$, 
degree in $t_j$ bounded by $2(ps+1+4ep^2s^2)ep^2s^2$ and degree in 
$(a_{P, k}, b_{P, k})$ bounded by 
$2(ps+1+4ep^2s^2)ep$.
\end{proposition}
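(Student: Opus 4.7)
{Proof proposal.}
The plan is to dispose of the $2r+1$ possible positions of $y$ relative to the ordered roots $t_1 < \cdots < t_r$ one by one, then glue the resulting incompatibilities together via Proposition \ref{propSegLine}.

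First, for each of the $r$ ``root'' sign conditions $\sigma_j$, I would apply Lemma \ref{partition_after_fact_1} (with $j_0 = j$) to the initial incompatibility indexed by $\sigma_j$: its monoid part already has the form prescribed there, since the polynomials $P \in \mathcal{Q}$ with ${\rm mu}(\eta_j, P) > 0$ are precisely those with $\sigma_j(P) = 0$. This produces an incompatibility of the system ${\rm OFact}({\cal Q})^{{\bm \eta}(\tau), {\bm \nu}(\tau)}(t,z), \ y = t_j$ with monoid part $S_{\sigma_j}$ times a product of powers of $t_j - t_{j'}$ and $b_{P,k}$ of controlled degree. Similarly, for each ``interval'' sign condition $\sigma_{(j-1,j)}$ with $2 \le j \le r$, I apply Lemma \ref{partition_after_fact_2} (with $j_0 = j-1$), yielding an incompatibility of $\{t_{j-1} < y, \ y < t_j, \ {\rm OFact}\}$; for $\sigma_{(0,1)}$, Lemma \ref{partition_after_fact_4} yields an incompatibility of $\{y < t_1, \ {\rm OFact}\}$; for $\sigma_{(r,r+1)}$, Lemma \ref{partition_after_fact_3} yields an incompatibility of $\{t_r < y, \ {\rm OFact}\}$. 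Each of the four lemmas produces a monoid part whose extra factors are of the shape $\prod (t_{j_1} - t_{j_2})^{2e_{\ldots}} \cdot \prod b_{P,k}^{2e_{P,k}}$ times (for the non-root cases) a factor $(y - t_{\ldots})^{2e}$, with exponents at most $eps$ on $t$-differences and $ep$ on $b_{P,k}$.

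Next, I combine these $2r+1$ incompatibilities by applying Proposition \ref{propSegLine} with the variable $y$ and the ordered ``endpoints'' $t_1, \dots, t_r$. The incompatibilities indexed by $\sigma_{(j-1,j)}$ and $\sigma_{(r,r+1)}$, $\sigma_{(0,1)}$ play the role of the $S'_j$ incompatibilities (already of the right form thanks to Lemmas \ref{partition_after_fact_2}--\ref{partition_after_fact_4}, whose monoid parts contain the required $(y - t_j)^{2e_j}$ factors), while those indexed by $\sigma_j$ play the role of the $S_j$ incompatibilities (no $(y-t_j)$ factor needed). The application of Proposition \ref{propSegLine} multiplies the $S'$-monoid parts together and raises each $S$-monoid part to an exponent bounded by $2(e_j + f_j) \le 4eps$, while the degree in $w$ grows by the sum of the $\delta'_{w,j}$'s plus $4eps$ times the sum of the $\delta_{w,j}$'s, which with our uniform $\delta_w$ on the inputs gives the $(ps + 1 + 4ep^2 s^2)\delta_w$ bound claimed.

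The only real work is bookkeeping the growth of the auxiliary exponents $e_{j,j'}$ on $(t_{j'} - t_j)$ and $e_{P,k}$ on $b_{P,k}$ through the combining step. After each of the four lemmas we have these exponents of size $O(eps)$ on $t$-differences and $O(ep)$ on $b_{P,k}$; after Proposition \ref{propSegLine} they accumulate linearly over the $r+1$ $S'$-blocks and get multiplied by the factor $4eps$ from the $r$ $S$-blocks. A direct computation gives $e_{j,j'} \le 2eps + 8(eps)^2$ and $e_{P,k} \le (ps+1)ep + 4e^2p^3s^2$, and the degrees in $t_j$ and $(a_{P,k}, b_{P,k})$ follow since each individual lemma produced degree bounds $2epsr$ and $2ep$ respectively, which get multiplied by the $(ps + 1 + 4ep^2s^2)$ scale factor from Proposition \ref{propSegLine}. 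I do not expect any genuine obstacle here---the argument is a careful assembly of already-established weak inferences---but verifying that the monoid-part bounds from Proposition \ref{propSegLine} match the form required by the statement, in particular that each $\sigma_j$-block gets exponent $e_j \le 4eps$ and not something larger, requires writing out the four types of inputs side by side.
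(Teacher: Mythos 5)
Your proposal is correct and follows exactly the paper's route: the paper's own proof of this proposition is precisely the one-line instruction to apply Lemmas \ref{partition_after_fact_1}, \ref{partition_after_fact_2}, \ref{partition_after_fact_3} and \ref{partition_after_fact_4} and then Proposition \ref{propSegLine}, and your assignment of the root incompatibilities to the $S_j$-slots, the interval incompatibilities to the $S'_j$-slots, and your bookkeeping of the exponents and degrees all match the stated bounds. The only point you leave implicit is the degenerate case $r=0$ (no real roots), which the paper treats separately as trivial since then ${\rm SIGN}({\mathcal Q}\,|\,\tau)$ reduces to the single sign condition assigning $1$ to every polynomial of $\mathcal Q$.
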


\begin{proof}{Proof.} 
We consider 
first the case that at least one polynomial in $\cQ$ has a real root, this is to say, $r > 0$.
In this case, the proof is done by applying to the initial incompatibilities the weak inferences in 
Lemmas \ref{partition_after_fact_1}, 
\ref{partition_after_fact_2}, \ref{partition_after_fact_3} and \ref{partition_after_fact_4} and
Proposition \ref{propSegLine}. 

In the case that every polynomial in $\cQ$ has no real root, this is to say, $r = 0$,   
the set of variables $t = (t_1, \dots, t_r)$ is actually empty. 
Moreover, it is clear that ${\rm SIGN}( {\mathcal Q}  \, | \tau )$  has only the element
$1^{\cQ}$, since every $P$ is monic and without real roots. 
We omit the proof since it is very easy. 
\end{proof}

We are finally ready for the proof of the main result of the section.

\begin{proof}{Proof of Theorem \ref{elimination_theorem}.}
Consider the initial incompatibilities
\begin{equation}\label{inc:init_inc_main_the_chap_six}
\lda \s(\cQ) = \sigma, \ \cH \rda
\end{equation}
where ${\cal H}$ is a system of sign conditions in $\K[v]$. 

We apply to (\ref{inc:init_inc_main_the_chap_six}) the weak inference
$$
\exists \,(t, z) \ [\;   
{\rm OFact}({\cal Q})^{ {\bm \eta}(\tau),{\bm \nu}(\tau)}(t, z) \,] \ \ \ 
\vdash \ \ \  
\bigvee_{\sigma \in {\rm SIGN}( {\mathcal Q}  \, | \tau )}
\s({\mathcal Q})= \sigma.
$$
By Proposition \ref{weaksigndettab2}  we obtain 
\begin{equation}\label{inc:aux_inc_main_the_chap_six_1}
\lda  
{\rm OFact}({\cal Q})^{ {\bm \eta}(\tau),{\bm \nu}(\tau)}(t, z), 
\ \cH \rda_{\K[v][ t, a, b]},
\end{equation}
where
$ {\bm \eta}(\tau) = [\eta_1, \dots, \eta_r]$, 
$t = (t_1, \dots, t_r)$, $z = (z_P)_{P \in \cQ}$ with $z_P = (z_{P, 1}, \dots,
z_{P, 
{\# {\bm \nu}}(\tau)(P)})$,
with monoid part
$$
\prod_{1 \le j \le r+1} S_{\sigma_{(j-1,j)}}
\cdot
\prod_{1 \le j \le r} S_{\sigma_j}^{e_j}
\cdot
\prod_{1 \le j < j' \le r} (t_{j'} - t_{j})^{2e_{j,j'}} 
\cdot
\prod_{P \in \cQ
\atop
1 \le k \le 
{\# {\bm \nu}}(\tau)(P)}b_{P,k}^{2e_{P,k}}
$$
with 
$e_j \le 4eps$, 
$e_{j,j'} \le 2eps + 8(eps)^2$,
$e_{P,k} \le (ps+1)ep + 4e^2p^3s^2$, 
degree in $w$ bounded by 
$(ps+1+4ep^2s^2)\delta_w$, 
degree in $t_j$ bounded by $2(ps+1+4ep^2s^2)ep^2s^2$ and degree in 
$(a_{P, k}, b_{P, k})$ bounded by 
$2(ps+1+4ep^2s^2)ep$.

Finally we apply to (\ref{inc:aux_inc_main_the_chap_six_1})
the weak inference 
$$
\s({\rm Elim}({\cal Q}))=\tau \ \ \ \vdash \ \ \   \exists (t, z)\ 
[\; {\rm OFact}({\cal Q})^{ {\bm \eta}(\tau),{\bm \nu}(\tau)}(t, z) \;].
$$
By Theorem \ref{weaksigndettab} (Fixing the Ordered List of the Roots as a weak existence), we obtain 
$$
\lda \s({\rm Elim}({\cal Q}))=\tau, \ \cH \rda
$$
with monoid part 
$$
\prod_{\sigma \in {\rm SIGN}( {\mathcal Q}  \, | \tau )}
S_{\sigma}^{h_{\sigma}}
\cdot
\prod_{H \in {\rm Elim}({\cal Q}), \atop \tau(H) \ne 0}H^{2h'_H}
$$
with 
$$
h_{\sigma}, h'_{H} \le 4eps{\rm g}_6\{p, s, 2eps + 8(eps)^2, (ps+1)ep + 4e^2p^3s^2, 1\}
\max\{8e^2p^3s^3, \tilde{\rm g}_{H,3}\{p\} \},
$$
and degree in $w$ bounded by 
$$
\begin{array}{rll}
& {\rm g}_6\{p, s, 2eps + 8(eps)^2, (ps+1)ep + 4e^2p^3s^2, 1\} \cdot 
\\[3mm]
\cdot & \Big(\max\{ 2^{ps(s-1)}(6ep^2s^2\delta_w + 24e^2p^4s^4\deg_w\cQ), 
\tilde{\rm g}_{H,3}\{p\}\deg_w \cQ
 \} 
+  \\[3mm]
& \max\{
2^{ps(s-1)}56e^2p^4s^4,
\tilde{\rm g}_{H,3}\{p\}\}\deg_w\cQ\Big) \le
\\[3mm]
\le & 
2^{ps(s-1)+1}e^2s^4\tilde{\rm g}_{H,3}\{p\} {\rm g}_6\{p, s, 2eps + 8(eps)^2, (ps+1)ep + 4e^2p^3s^2, 1\} \max\{\delta_w, \deg_w \cQ\},
\end{array}
$$
which serves as the final incompatibility, using Lemma \ref{tlemma_almost_final}.
\end{proof}


\section{Proof of the main theorems}\label{secMainTh}
\setcounter{equation}{0}
\setcounter{theorem}{0}

In this section we prove Theorem \ref{Theoremfinal} (Positivstellensatz with elementary recursive
degree estimates) and Theorem \ref{Theoremfinal17} (Hilbert 17-th problem with elementary recursive
degree estimates),
which are the main results of this paper. The proof proceeds by successive elimination
of the variables, using at each stage Theorem  \ref{elimination_theorem} (Elimination of One Variable as a weak inference).
This is the only result from previous sections which is used in this section.

First, we introduce some notation, 
new auxiliary functions and  
a final auxiliary lemma.

\begin{notation}
For ${\cal Q} \subset \K[x_1,\ldots,x_k]$,
${\rm SIGN}({\cal Q})$ is the set of realizable sign conditions on $\cal Q$ in $\R^k$.
\end{notation}
Note that by Theorem \ref{defSIGN} (Elimination of One Variable), 
$$\displaystyle{{\rm SIGN}({\cal Q})
=
\bigcup_{\tau \in {\rm SIGN}({\rm Elim}({\cal Q}))}{\rm SIGN}({\cal Q} \, | \tau )}.
$$

\begin{definition}
\label{defg8} \begin{itemize}
               \item Let ${\rm g}_{8}: \N \times \N \times \N \times \N \to {\mathbb R}$, 
$$
\begin{array}{c}
{\rm g}_{8}\{d, s, k, i\} = \\[3mm]
= {\rm g}_7\left\{ 
4^{\frac{4^{k-i}-1}{3}}d^{4^{k-i}} , 
s^{2^{k-i}}\max\{2, d\}^{(16^{k-i}-1){\rm bit}\{d\}}, 
2^{2^{\Big(2^{\max\{2,d\}^{4^{k-i}}} + s^{2^{k-i}}\max\{2, d\}^{16^{k-i}{\rm bit}(d)}\Big) }}
\right\}. 
\end{array}
$$

\item Let ${\rm g}_{9}: \N \times  \N \times \N \to {\mathbb R}$, 
$${\rm g}_{9}\{d, k, i\} = {\rm g}_7\left\{ 
4^{\frac{4^{k-i}-1}{3}}d^{4^{k-i}} , 
d^{(16^{k-i}-1){\rm bit}\{d\}}, 
2^{\Big(2^{2^{d^{4^{k-i}}} } -2 \Big)}
\right\}.$$

              \end{itemize}

\end{definition}

\begin{Tlemma}\label{tlem_sect7_1}
\begin{enumerate}

\item \label{tlem_sect7_1:iti} For every $d, s, k, i \in \N_*$ with $1 \le i \le k$, 
$$
{\rm g}_{8}\{d,s,k,i\}
\cdot
2^{2^{\Big(2^{\max\{2,d\}^{4^{k-i}}} + s^{2^{k-i}}\max\{2, d\}^{16^{k-i}{\rm bit}(d)}\Big)}}
\le $$
$$
\le
2^{2^{\Big(2^{\max\{2,d\}^{4^{k-i+1}}} + s^{2^{k-i+1}}\max\{2, d\}^{16^{k-i+1}{\rm bit}(d)}\Big)}}.
$$

\item \label{tlem_sect7_1:itii} For every $d, k, i \in \N_*$ with $1 \le i \le k$ and $d\ge 2$, 
$$
{\rm g}_{9}\{d, k, i\} \cdot 2^{\left(
2^{
2^{d^{4^{k-i}}}
}-2\right)
}
\le 
2^{\left(
2^{
2^{d^{4^{k-i+1}}}
}-2\right)
}.
$$
\end{enumerate}
 
\end{Tlemma}
\begin{proof}{Proof.} See Section \ref{section_annex}.
\end{proof}

Given a set of polynomials ${\cal P}$ and a 
polynomial $\ell$, we 
denote by 
${\cal P} \circ \ell$  
the set of compositions $\{ P \circ \ell \ | \ P \in {\cal P} \}$.
Similarly, if ${\cal F} = [ {\cal F}_{\neq},\,{\cal F}_\ge,\, {\cal F}_=]$ is a system
of sign conditions, we denote by ${\cal F} \circ \ell$ the system 
$[ {\cal F}_{\neq} \circ \ell,\,{\cal F}_\ge \circ \ell,\, {\cal F}_= \circ \ell]$.

We are ready now to prove our main theorems.

\begin {proof}{Proof of Theorem \ref{Theoremfinal}.} 
We define ${\cal P}_{k}$ as $|{\cal F}|$ (see Notation \ref{not:set_of_pol_under_eq_rel}), 
note that without loss of generality we can assume ${\cal F}\subset \K[x] \setminus \K$. 
For $i = k, \dots, 1$, we define
inductively finite families
${\cal Q}_{i} \subset \K[x_1, \dots, x_{i}]$ and
 ${\cal P}_{i-1} \subset \K[x_1, \dots, x_{i-1}]$.
Let
$\ell_{i}: \K[x_1, \dots, x_{i}] \to \K[x_1, \dots, x_{i}]$ be a linear change of variables
such that for every polynomial $P \in {\cal P}_{i}, P \circ \ell_i(x_1, \dots, x_i)$ 
is quasimonic in the variable $x_{i}$;  
we define
\begin{itemize}
\item ${\cal Q}_{i}$ as the family 
obtained by dividing each polynomial 
$P\circ\ell_i(x_1, \dots, x_i)$ in $\cP_{i} \circ \ell_i$
by its 
leading coefficient in
the variable $x_i$,
\item ${\cal P}_{i-1} = {\rm Elim}({\cal Q}_i) \setminus \K$, considering
$(x_1,\ldots,x_{i-1})$ as parameters and $x_i$ as the main variable.
\end{itemize}

Following Remark \ref{rem:degree_and_number_elim},
it can be easily proved by induction that
for $i = k, \dots, 1$, 
$$
\deg \cP_{i} \le 4^{\frac{4^{k-i}-1}{3}}d^{4^{k-i}}.
$$
Also using Remark \ref{rem:degree_and_number_elim}, we will prove
that
$$
\# \cP_{i} \le s^{2^{k-i}}\max\{2, d\}^{(16^{k-i}-1){\rm bit}\{d\}}. 
$$
Indeed, $\# \cP_{k} = s$ and for $i = k, \dots, 2$, 
\begin{eqnarray*}
\# \cP_{i-1} &\le& 4 s^{2^{k-i+1}} \max\{2, d\}^{2 (16^{k-i}-1){\rm bit}\{d\}}
(4^{\frac{4^{k-i}-1}{3}}d^{4^{k-i}})^{{\rm bit}\{4^{\frac{4^{k-i}-1}{3}}d^{4^{k-i}}\} + 1} \le \\
&\le&
s^{2^{k-i+1}}\max\{2, d\}^{2 + 2(16^{k-i}-1){\rm bit}\{d\} + (2\frac{4^{k-i}-1}{3} + 4^{k-i})(2\frac{4^{k-i}-1}{3} + 4^{k-i}{\rm bit}\{d\} + 1)} \le \\
&\le& s^{2^{k-(i-1)}}\max\{2, d\}^{(16^{k-(i-1)}-1){\rm bit}\{d\}}. 
\end{eqnarray*}

For $1 \le i \le k$, we denote by $\ell_{[k, i]}$ the polynomial $\ell_k \circ \dots \circ \ell_i$. 
Let us show by induction in $i = k, \dots, 0$, that for every realizable
sign condition $\sigma$
on ${\cal P}_{i}$ we have an incompatibility 
\begin{equation}\label{inc:final_th}
\lda 
{\rm sign}({\cal P}_{i}) = \sigma,    \  
{\cal F} \circ \ell_{[k, i+1]}
  \rda
\end{equation}
with monoid part
$$
\prod_{H \in \cP_i, \, \sigma(H) \ne 0}H^{2e_H}
$$
with $e_{H}$ bounded by 
$$
2^{2^{\Big(2^{\max\{2,d\}^{4^{k-i}}} + s^{2^{k-i}}\max\{2, d\}^{16^{k-i}{\rm bit}(d)}\Big) }}
$$
for $H \in {\rm Elim}(\cP_i)$ with $\sigma(H) \ne 0$ and degree bounded by
$$
2^{
2^{\Big(2^{\max\{2,d\}^{4^{k-i}}} + s^{2^{k-i}}\max\{2, d\}^{16^{k-i}{\rm bit}(d)} \Big) }
}.
$$

For $i = k$, 
$|{\cal F}|$ and ${\cal P}_{i}$ are 
the same sets of polynomials. 
Moreover, for every strict sign condition $\sigma$ which is realizable for 
$|{\cal F}|$, there 
must be a polynomial $P \in |{\cal F}|$ such that $\sigma(P)$ is incompatible
with the system of sign conditions ${\cal F}$. 
It is easy to check that, in all possible cases, the algebraic identity 
$$P^2 - P^2 = 0$$
serves as the corresponding
 incompatibility
(see Example \ref{exsquare-square}). So $e_H \le 1$ for 
$H \in \cP_i $ with $\sigma(H) \ne 0$ and the degree of the incompatibility (\ref{inc:final_th})
is bounded by $2d$. 

Suppose now that the 
induction hypothesis holds for some value of $i > 0$ and 
let $\tau$ be a realizable strict sign condition on $\cP_{i-1}$. 
For every realizable strict sign condition $\sigma$
on ${\cal P}_{i}$ we compose the incompatibility we have already
by induction hypothesis
with $\ell_i$ to obtain an incompatibility for 
$$\lda 
{\rm sign}({\cal P}_{i}\circ \ell_i) = \sigma , \
{\cal F} \circ \ell_{[k, i]}
\rda$$
with the same bounds for the degree and the exponents in the monoid part as 
(\ref{inc:final_th}). 
We denote $\sigma'$ the strict sign condition on ${\cal Q}_{i}$ 
obtained from a strict $\sigma$ on ${\cal P}_{i}\circ \ell_i$ by replacing $>$ for $<$ and vice versa
when the leading coefficient of the corresponding polynomial in ${\cal P}_{i}\circ \ell_i$ is negative.
It is clear that
$$
{\rm SIGN}({\cal Q}_{i})=
\{\sigma' \ | \ \sigma \in {\rm SIGN}({\cal P}_{i} \circ \ell_i) \}.
$$
So, we have for every 
realizable strict sign condition $\sigma'$
on ${\cal Q}_{i}$ an incompatibility 
\begin{equation}\label{inc:finalfinal}
\lda 
{\rm sign}({\cal Q}_{i}) =  \sigma', \ 
{\cal F} \circ \ell_{[k, i]}
\rda
\end{equation}
with the same bounds as (\ref{inc:final_th}).
We apply to (\ref{inc:finalfinal}) for every 
$\sigma' \in  {\rm SIGN}( {\mathcal Q}_i  \, | \tau )$ the weak inference
$$
\s(\cP_{i-1})=\tau \ \ \ \vdash \ \ \  
\bigvee_{\sigma' \in {\rm SIGN}( {\mathcal Q}_i  \, | \tau )}
\s({\mathcal Q}_i)= \sigma'
$$
of Theorem \ref{elimination_theorem} (Elimination of One Variable as a weak inference). We obtain in this way an incompatibility
$$
\lda 
\s(\cP_{i-1}) = \tau 
, \
{\cal F} \circ \ell_{[k, i]}
\rda
$$
with monoid part
$$
\prod_{H \in \cP_{i-1}, \, \sigma(H) \ne 0}H^{2e'_H}
$$
with $e'_{H}$ bounded by 
${\rm g}_8\left\{d,s,k,i\right\} $
and degree bounded by 
$$
{\rm g}_8\left\{d,s,k,i\right\} 
\cdot
2^{2^{\Big(2^{\max\{2,d\}^{4^{k-i}}} + s^{2^{k-i}}\max\{2, d\}^{16^{k-i}{\rm bit}(d)}\Big)}}.
$$
The claim follows then by Lemma \ref{tlem_sect7_1} (item \ref{tlem_sect7_1:iti}).

Since ${\cal P}_{0} \subset \K$, 
after the inductive procedure described above is finished,
we obtain a single incompatibility 
$$\lda {\cal F} \circ \ell_{[k, 1]}  \rda$$
with degree bounded by
$$
2^{
2^{\Big(2^{\max\{2,d\}^{4^{k}}} + s^{2^{k}}\max\{2, d\}^{16^{k}{\rm bit}(d)} \Big)}
}.
$$
Our result follows then by composing this incompatibility with 
$\ell_{[k, 1]}^{-1}$
which does not change the degree bound. 
\end{proof}

\begin {proof}{Proof of Theorem \ref{Theoremfinal17}.} The sketch of the proof is the 
following: first we proceed as in the proof of 
Theorem \ref{Theoremfinal} (Positivstellensatz with elementary recursive
degree estimates) but obtaining a slightly better bound
which holds for the particular case 
when the original system has only one polynomial. 
Then we proceed as in the proof of Theorem \ref{th:Hilbert_by_Stengle} (Improved Hilbert 17-th problem). 

The initial system $\cF$ we consider is 
$$ 
P\not=0,-P\ge 0
$$
and the initial incompatibility between $\cF$ and $P\ge 0$ is
$$P^2-P^2=0.$$
Note that since $P$ is nonnegative in $\R^k$, $d$ is even and therefore $d \ge 2$.

Proceeding as in the proof
of Theorem \ref{Theoremfinal} and using Lemma \ref{tlem_sect7_1} (item \ref{tlem_sect7_1:itii})
(instead of 
Lemma \ref{tlem_sect7_1} (item \ref{tlem_sect7_1:iti})),  we prove that for $i = k, \dots, 0$, for every realizable
strict sign condition $\sigma$
on ${\cal P}_{i}$ we have an incompatibility 
$$
\lda 
{\rm sign}({\cal P}_{i}) = \sigma,    \  
{\cal F} \circ \ell_{[k, i+1]}
  \rda
$$
with monoid part
$$
\prod_{H \in \cP_i, \, \sigma(H) \ne 0}H^{2e_H}
$$
with $e_{H}$ bounded by 
$$
2^{\left(
2^{
2^{d^{4^{k-i}}}
}-2\right)
}
$$
for $H \in {\rm Elim}(\cP_i)$ with $\sigma(H) \ne 0$ and degree bounded by
$$
2^{\left(
2^{
2^{d^{4^{k-i}}}
}-2\right)
}.
$$

After finishing the inductive procedure and composing with $\ell^{-1}_{[k, 1]}$ as before, 
we obtain a final incompatibility of 
$\cF,$
$$
\lda P \not= 0, -P\geq 0 \rda, 
$$
of
type 
$$
P^{2e}+ N_1 - N_2P=0
$$
with $e \in \N$, $N_1, N_2 \in \scN(\emptyset)$ and degree bounded by  
$$
2^{\left(
2^{
2^{d^{4^{k}}}
}-2\right)
}.
$$
From this we deduce, 
as in the proof of Theorem \ref{th:Hilbert_by_Stengle} (Improved Hilbert 17-th problem),
\begin{equation}\label{inc:final_hilbert}
P=\frac{N_2P^2}{P^{2e}+ N_1}=\frac{N_2P^2(P^{2e}+ N_1)}{(P^{2e}+ N_1)^2}. 
\end{equation}
After expanding the numerator in (\ref{inc:final_hilbert}) we obtain an expression 
$$
P =  \sum_{i}\omega_i\frac{P_i^2}{Q^2}
$$
with $\omega_i \in \K, \omega_i > 0, P_i \in \K[x], Q = P^{2e} + N_1 \in \K[x]$ 
and 
$$
\deg P_i^2 \le  2^{\left(
2^{
2^{d^{4^{k}}}
}-1 \right)} +  d \le 2^{
2^{
2^{d^{4^{k}}}
}
}
$$
for every $i$ and
$$\deg Q^2 \le 
2^{\left(
2^{
2^{d^{4^{k}}}
}-1\right)
} \le 2^{
2^{
2^{d^{4^{k}}}
}
}
.$$
\end{proof}

\section{Annex}\label{section_annex}
\setcounter{equation}{0}

Here we include the proof of technical lemmas from the previous sections. 

\begin{proof}{Proof of Technical Lemma \ref{aux_ineq_tfa}.} 

We first prove item \ref{aux_ineq_tfa:0}. 
$$
3{\rm g}_1\{p-1, p\} = 3\cdot 2^{3\cdot 2^{p-1}}p^{p} \le 2^{3\cdot 2^{p-1} + p^2} \le 
2^{2^{3\frac{p}2} } = 
{\rm g}_2\{p\}.
$$

Now we prove item \ref{aux_ineq_tfa:1}. 
We check separately that the inequality holds 
for $p = 4$ and $p =  6$ and we suppose that $p \ge 8$. Then we have
$$
\frac3{16}  p^9  {\rm n}\{p\}^{{\rm n}\{p\}+1}2^{4{{{\rm n}\{p\}+1}\choose 2}}{\rm g}_2^{{\rm n}\{p\}+1}\{{\rm n}\{p\}\}
\le 
2^{ \frac{1}{2}p^4 + 
\frac{1}{2}p^2 2^{ 3(\frac{ p^2-p   }4)^{2^{{\rm r}\{p\}-1}}  } 
}.$$
The lemma follows since
$$
 \frac{1}{2}p^4 + 
\frac{1}{2}p^2 2^{ 3(\frac{ p^2-p   }4)^{2^{{\rm r}\{p\}-1}}  } 
\le
 p^2 2^{ 3(\frac{ p^2-p   }4)^{2^{{\rm r}\{p\}-1}}  } 
\le
2^{ 3(\frac{ p   }2)^{2^{{\rm r}\{p\}}}  }.
$$
\end{proof}

\begin{proof}{Proof of Technical Lemma \ref{tlem:aux_comp_fact}.} We first prove item \ref{it:tlem_aux_comp_fact_1}. 
$$
3(2p+1){\rm g}_1\{p-1, p\}{\rm g}_3\{p-1\} 
\le 2^{1 + p^2 + 3\cdot2^{p-1}+  2^{3(\frac{p-1}2)^{p-1} + 1}  }
\le 2^{2^{3(\frac{p-1}2)^{p-1} + 3} }
\le {\rm g}_3\{p\}.
$$

Now we prove item \ref{it:tlem_aux_comp_fact_2}. 
$$ 
6p^3 {\rm g}_1\{p-2, p-1\}{\rm g}_2\{p\}{\rm g}_3^2\{p-2\}
\le
2^{p^2 + 3\cdot2^{p-2} + 2^{3(\frac{p}2)^{p}} + 2^{3(\frac{p-2}2)^{p-2} + 2} } \le {\rm g}_3\{p\}.
$$
\end{proof}

\begin{proof}{Proof of Technical Lemma \ref{tlem:aux_theorem_fix_Thom}.}
It is easy to see that it is enough to prove that
$$
2^{p+ (((p-1)p+2)2^{(p-1)p}  - 2 )(2^{\frac12p^2} + 2^{\frac12p} + 1)}
\le \tilde{\rm g}_{H,1}\{p\}^{2^{\frac32p^2} - (2^{(p-1)p}-1)(2^{\frac12p^2} + 2^{\frac12p} + 1) - 1}.
$$
Indeed, since $2^{(2^{\frac12(p-1)p+2}-2)} \le \tilde{\rm g}_{H,1}\{p\}$ and
$2^{\frac32p^2} - (2^{(p-1)p}-1)(2^{\frac12p^2} + 2^{\frac12p} + 1) - 1 \ge 0$, the
lemma follows from  
$$
p+ (((p-1)p+2)2^{(p-1)p}  - 2 )(2^{\frac12p^2} + 2^{\frac12p} + 1)
\le
 ((p-1)p+2)2^{\frac32p^2 - 1}
\le 
$$
$$
\le
(2^{\frac12(p-1)p+2}-2)2^{\frac32p^2 - 1}
\le
(2^{\frac12(p-1)p+2}-2)(2^{\frac32p^2} - (2^{(p-1)p}-1)(2^{\frac12p^2} + 2^{\frac12p} + 1) - 1).
$$
\end{proof}

\begin{proof}{Proof of Technical Lemma \ref{tlemma_almost_final}.} First, it is easy to 
prove that for every $p \in \N_*$ we have that 
$\tilde{\rm g}_{H,3}\{p\} \le 2^{(9p^2 + 14p + 3)2^{\frac12p^2 + 2} - 2}.$ 
Then, 
$$
2^{ps(s-1)+2}e^2s^4p\,{\rm g}_6\{p, s, 2eps + 8(eps)^2, (ps+1)ep + 4e^2p^3s^2, 1\} \max\{8e^2p^3s^3, \tilde{\rm g}_{H,3}\{p\} \} \le 
$$
$$
\le 2^{ps(s-1)+2}e^2s^4p\Big(
{\rm g}_4\{p\}(6e^2p^3s^2)^{2^{\frac12p}}
\Big)^{\frac{2^{s(\frac32p^2 + 2)}-1}{2^{\frac32p^2 + 2}-1}}
2^{(p+4)(2^{s(s-1)p^2}-1)2^{s(\frac32p^2 + 2)}}
\cdot
$$
$$
\cdot
\max\{8e^2p^3s^3, \tilde{\rm g}_{H,3}\{p\}\}
^{2^{s(s-1)p^2 + s(\frac32p^2 + 2)}} 
\le
$$
$$
\le
\Big(
(6p^3)^{2^{\frac12p}}2^{ 2^{3(\frac{p}2)^p + 2} }
\Big)^{\frac{2^{s(\frac32p^2 + 2)}-1}{2^{\frac32p^2 + 2}-1}}
2^{\alpha_1\{p, s\}} 
s^{\beta_1\{p, s\}}
e^{\gamma_1\{p, s\}},
$$
where 
$$\alpha_1\{p, s\} = (p+4)2^{s(s-1)p^2 + s(\frac32p^2 + 2)} + 
((9p^2 + 14p + 3)2^{\frac12p^2 + 2} - 2)2^{s(s-1)p^2 + s(\frac32p^2 + 2)}, $$
$$\beta_1\{p, s\} =4 + 2^{\frac12 p+1}\frac{2^{s(\frac32p^2 + 2)}-1}{2^{\frac32p^2 + 2}-1} 
+ 3 \cdot 2^{s(s-1)p^2 + s(\frac32p^2 + 2) } , $$
and
$$\gamma_1\{p, s\} = 2  + 2^{\frac12 p + 1}\frac{2^{s(\frac32p^2 + 2)}-1}{2^{\frac32p^2 + 2}-1} 
+  2^{s(s-1)p^2 + s(\frac32p^2 + 2) + 1 }
.$$
Then we have
$$
\Big(
(6p^3)^{2^{\frac12p}}2^{ 2^{3(\frac{p}2)^p + 2} }
\Big)^{\frac{2^{s(\frac32p^2 + 2)}-1}{2^{\frac32p^2 + 2}-1}}
2^{\alpha_1\{p, s\}} 
\le
2^{\alpha_2\{p, s\}}$$
and
$$
s^{\beta_1\{p, s\}} \le 2^{\alpha_2'\{p, s\}}
$$
where
$$\alpha_2\{p, s\} = 
2^{s(\frac32p^2 + 2)} + 
2^{3(\frac{p}2)^p + 2 + s(\frac32p^2 + 2)} +
$$
$$
+(p+4)2^{s(s-1)p^2 + s(\frac32p^2 + 2)} 
+((9p^2 + 14p + 3)2^{\frac12p^2 + 2} - 2)2^{s(s-1)p^2 + s(\frac32p^2 + 2)}
$$
and
$$
\alpha'_2\{p, s\} = 
(s-1)(
2^{s(\frac32p^2 + 2)} 
+ 3 \cdot 2^{s(s-1)p^2 + s(\frac32p^2 + 2) }).
$$
But then, 
$$\alpha_2\{p, s\} + \alpha_2'\{p, s\}
\le 
2^{3(\frac{p}2)^p + 2 + s(\frac32p^2 + 2)} 
+ 2^{\frac12p^2 + p + 7 + s(s-1)p^2 + s(\frac32p^2 + 2)}
+ 2^{s(s-1)p^2 + s(\frac32p^2 + 3) } \le
$$
$$
\le
2^{3(\frac{p}2)^p + s^2(\frac32p^2 + 3) + 8}.
$$

On the other hand, 
$$
\gamma_1\{p, s\} \le  2^{s(\frac32p^2 + 2)}
+  2^{s(s-1)p^2 + s(\frac32p^2 + 2) + 1 } \le 2^{6s^2p^2}
$$ 
and the lemma follows. 
\end{proof}

\begin{proof}{Proof of Technical Lemma \ref{tlem_sect7_1}.} We prove item 
\ref{tlem_sect7_1:iti} 
and the proof of item \ref{tlem_sect7_1:itii} can be done in a similar way.
$$
{\rm g}_{8}\{d, s,k, i\}
\cdot
2^{2^{\Big(2^{\max\{2,d\}^{4^{k-i}}} + s^{2^{k-i}}\max\{2, d\}^{16^{k-i}{\rm bit}(d)}\Big)}}
= 
2^{2^{\alpha\{d, s, k\}
}}
2^{2^{
\beta\{d, s, k\}
}}
2^{2^{
\gamma\{d, s, k\}
}} 
$$
where
$$
\alpha\{d, s, k\} = 
3\left( 2^{2\frac{4^{k-i}-1}{3}-1}d^{4^{k-i}} \right)^{2^{2\frac{4^{k-i}-1}{3}}d^{4^{k-i}}} 
+ 
$$
$$
+
s^{2^{k-i+1}}\max\{2, d\}^{2(16^{k-i}-1){\rm bit}\{d\}}\left( \frac32  2^{4\frac{4^{k-i}-1}{3}}d^{2\cdot4^{k-i}}    + 3  \right) 
+
8,
$$
$$
\beta\{d, s, k\} = 
2^{\max\{2,d\}^{4^{k-i}}} + 
$$
$$
+s^{2^{k-i}}\max\{2, d\}^{16^{k-i}{\rm bit}(d)}
+ 6 \cdot 2^{4\frac{4^{k-i}-1}{3}}d^{2\cdot4^{k-i}} s^{2^{k-i+1}}
\max\{2, d\}^{2(16^{k-i}-1){\rm bit}\{d\}}, 
$$
and
$$
\gamma\{d, s, k\} = 
2^{\max\{2,d\}^{4^{k-i}}} + s^{2^{k-i}}\max\{2, d\}^{16^{k-i}{\rm bit}(d)}.
$$

The inequality holds since 

\begin{eqnarray*}
\alpha\{d, s, k\}
&\le&
2^{2^{2(k-i) + 2\frac{4^{k-i}-1}{3} }d^{1 + 4^{k-i}}} + 
s^{2^{k-i+1}}\max\{2, d\}^{2(16^{k-i}-1){\rm bit}\{d\} + 4\frac{4^{k-i}-1}{3}
+ 2\cdot 4^{k-i} + 4}\\
&\le& 2^{\max\{2,d\}^{4^{k-i+1}}} + s^{2^{k-i+1}}\max\{2, d\}^{16^{k-i+1}{\rm bit}(d)} - 1,
\end{eqnarray*}
\begin{eqnarray*}
\beta\{d, s, k\}&\le &
2^{\max\{2,d\}^{4^{k-i+1}}} + 7s^{2^{k-i+1}} 
\max\{2, d\}^{4\frac{4^{k-i}-1}{3} + 2\cdot4^{k-i} +  2(16^{k-i}-1){\rm bit}(d)} \\
&
\le &
2^{\max\{2,d\}^{4^{k-i+1}}} + s^{2^{k-i+1}}\max\{2, d\}^{16^{k-i+1}{\rm bit}(d)} - 2,
\end{eqnarray*}
and
$$
\gamma\{d, s, k\}\le 
2^{\max\{2,d\}^{4^{k-i+1}}} + s^{2^{k-i+1}}\max\{2, d\}^{16^{k-i+1}{\rm bit}(d)} - 2.
$$
\end{proof}

\end{document}